\documentclass[10pt,a4paper]{amsbook}

\usepackage{amsmath,amsthm,amsfonts,amssymb,xcolor,mathtools,enumitem,comment,mathrsfs,subdepth}
\usepackage{hyperref}
\usepackage{microtype}
\usepackage[open=true,openlevel=2,depth=3]{bookmark}
\usepackage[utf8]{inputenc}

\newcommand{\R}{\mathbb R}
\newcommand{\C}{\mathbb C}

\newcommand{\GG}{\mathfrak{G}}
\newcommand{\UU}{\mathfrak{U}}
\newcommand{\N}{\mathbb{N}}
\newcommand{\G}{\mathcal{G}}
\newcommand{\Hh}{\mathcal{H}}
\newcommand{\Gs}{\mathcal{G}^s}
\newcommand{\U}{\mathcal{U}}
\newcommand{\J}{\mathcal{J}}
\renewcommand{\O}{\mathcal{O}}

\newcommand{\va}[1]{\left|#1\right|}
\newcommand{\set}[1]{\left\{#1\right\}}
\newcommand{\p}[1]{\left( #1 \right)}
\newcommand{\n}[1]{\left\| #1 \right\|}
\newcommand{\jap}[1]{\left\langle #1 \right\rangle}
\newcommand{\re}{\textup{Re}}

\newcommand{\FBI}{\mathrm{FBI}}

\let\Im\relax
\let\Re\relax
\DeclareMathOperator{\Im}{Im}
\DeclareMathOperator{\Re}{Re}
\DeclareMathOperator{\Op}{Op}
\DeclareMathOperator{\Tr}{Tr}
\DeclareMathOperator{\WF}{WF}
\DeclareMathOperator{\vol}{vol}
\DeclareMathOperator{\Dom}{Dom}

\newtheorem{theorem}{Theorem}
\newtheorem{prop}{Proposition}[chapter]
\newtheorem{lemma}{Lemma}[chapter]
\newtheorem{reduc}{Reduction}
\newtheorem{corollary}{Corollary}[chapter]

\theoremstyle{definition}
\newtheorem{definition}{Definition}[chapter]
\newtheorem{remark}{Remark}[chapter]

\author{Yannick Guedes Bonthonneau}
\address{LAGA, CNRS, Université Sorbonne Paris Nord}

\author{Malo Jézéquel}
\address{CNRS, Univ Brest, UMR 6205, Laboratoire de Mathématiques de Bretagne Atlantique, France}

\title{FBI transform in Gevrey classes and Anosov flows}

\begin{document}

\maketitle

\tableofcontents

\chapter*{Introduction}

\subsection*{Spectral theory and Anosov flows}

Dimitri V. Anosov introduced the flows that bear his name in \cite{anosovGeodesicFlowsClosed1967}. He wanted to study geodesic flows on the unit tangent bundle of compact Riemannian manifolds with (\emph{a priori} non-constant) negative sectional curvature. Since then, Anosov flows (and more generally smooth uniformly hyperbolic dynamics) have been widely studied by numerous authors using a large variety of different tools (Markov partitions, specification, tower constructions, coupling methods, \dots). In the last two decades, the so-called functional analytic approach to statistical properties of uniformly hyperbolic dynamics has been a very active field of research, due to the introduction in dynamical systems of the notion of \emph{Banach spaces of anisotropic distributions} (see \cite{quest,demersGentleIntroductionAnisotropic2018} for surveys on this subject).

Let us recall the basic ideas behind this approach. Let $\p{\phi_t}_{t \in \R}$ denote a $\mathcal{C}^\infty$ Anosov flow on a compact $\mathcal{C}^\infty$ manifold $M$ (the precise definition of an Anosov flow is recalled in the introduction of Chapter \ref{part:Anosov-flow}). As a dynamical system, $\phi_t$ is very chaotic, so much so that it is hopeless to try to describe the long-time behaviour of all of its orbits. The complexity of the pointwise dynamics suggests that we should consider instead the action of the flow $\phi_t$ on objects that are reminiscent of the smooth structure of $M$. This is one of the motivations for the introduction of the \emph{Koopman operator} defined, for $t \geq 0$, by 
\begin{equation}\label{eq:Koopman_the_first}
\begin{split}
\mathcal{L}_t : u \longmapsto u \circ \phi_t,
\end{split}
\end{equation}
acting for instance on $\mathcal{C}^\infty\p{M}$ or on the space $\mathcal{D}'\p{M}$ of distributions on $M$. The adjoint of the Koopman operator is the well-known \emph{transfer operator}
\[
 f \longmapsto \frac{f\circ\phi_{-t}}{|\det \mathrm{d}\phi_t| \circ \phi_{-t}}.
\]
Our aim is now to understand the asymptotics of the operator $\mathcal{L}_t$ when $t$ tends to $+ \infty$. The advantage of this approach is that if $u \in \mathcal{C}^\infty\p{M}$, we have:
\begin{equation*}
\begin{split}
\frac{\mathrm{d}}{\mathrm{d}t}\p{\mathcal{L}_t u} = X\p{\mathcal{L}_t u},
\end{split}
\end{equation*}
where $X$ denotes the generator of the Anosov flow $\phi_t$ (we identify $X$ with its Lie derivative). That is, we have replaced the non-linear dynamics of $\phi_t$ on the finite-dimensional manifold $M$ by a linear ODE on an infinite-dimensional space ($\mathcal{C}^\infty\p{M}$ for instance), which is presumably easier to understand. Indeed, we expect that, as in the case of finite-dimensional linear ODEs, the long-time behaviour of the operator $\mathcal{L}_t$ should be ruled by the spectrum of the differential operator $X$. 

Following this observation, much effort was invested in developing a spectral theory for Anosov flows. The first decisive steps in this field were made by David Ruelle \cite{Ruelle-86} and Mark Pollicott \cite{Pollicott-85}, so that \emph{Ruelle--Pollicott resonances} -- that will be defined below -- now bear their names. The main difficulty is that the spectrum of $X$ acting on $L^2\p{M}$ is in general very wild, in sharp contrast with the case of elliptic differential operators such as the Laplacian on a compact Riemannian manifold.

It is only with the introduction of the Banach spaces of anisotropic distributions that this conundrum was completely resolved. They are functional spaces tailored to fit the dynamics of $\phi_t$. Using the spectral properties of $X$ acting on such spaces, we may define a discrete spectrum for $X$ called \emph{Ruelle--Pollicott spectrum} (see the introduction of Chapter \ref{part:Anosov-flow} for more details). The article of Michael Blank, Gerhard Keller, and Carlangelo Liverani \cite{BKL} is often considered the first appearance of Banach spaces of anisotropic distributions in the dynamical literature. However the notion was already implicitly present in the works of Hans Henrik Rugh \cite{R1,R2} and David Fried \cite{Fried-95} in the analytic category. A consequence of the theory of Banach spaces of anisotropic distributions is that the resolvent of $X$, defined for $\Re z \gg 1$ and $u \in \mathcal{C}^\infty\p{M}$ by
\begin{equation}\label{eq:notre_amie_la_resolvante}
\begin{split}
\p{z- X}^{-1} u = \int_0^{+ \infty} e^{-z t} \mathcal{L}_t u \mathrm{d}t,
\end{split}
\end{equation}
admits, as an operator from $\mathcal{C}^\infty\p{M}$ to $\mathcal{D}'\p{M}$, a meromorphic continuation to $\C$ with residues of finite ranks. The poles of $\p{z-X}^{-1}$ are called the \emph{Ruelle--Pollicott resonances} of $X$ or $\phi_t$ (the rank of the residue being the multiplicity of the resonance). These resonances are powerful tools in the study of the statistical properties of the flow $\phi_t$ -- for example see the article of Oliver Butterley \cite{Butterley-16}.

In some particular cases, mostly of algebraic nature, it is possible to obtain quite detailed informations on the distribution of these resonances (geodesic flows in constant negative curvature or constant-time suspensions of cat maps for instance). However, in the general case, there is no reason that we should be able to obtain very precise results, much less compute explicitly the values of the Ruelle--Pollicott resonances of $X$. We have to settle for qualitative counting estimates.

It is well-known that much information about the global dynamics of $\phi_t$ may be retrieved from its periodic orbits -- description of invariant measures, relation between the pressure and Poincar\'e series, etc. It may not come as a surprise then that the Ruelle--Pollicott spectrum can be completely determined from the periodic orbits of $\phi_t$, via a sort of zeta function, as we explain now.

For each periodic orbit $\gamma$, let $T_\gamma$ denote (resp. $T_\gamma^\#$) its length (resp. primitive length). We also denote by $\mathcal{P}_\gamma$ its linearized Poincaré map, i.e. $\mathcal{P}_\gamma = \mathrm{d}\varphi_{T_\gamma}(x)_{|_{E_u \oplus E_s}}$ for some $x$ in $\gamma$ (the definition of the stable and unstable directions $E_s$ and $E_u$ is recalled in the introduction of Chapter \ref{part:Anosov-flow}). These data are gathered in the \emph{dynamical determinant}, defined for $\Re z \gg 1$ by
\begin{equation}\label{eq:def-det-dyn}
\zeta_{X}(z) \coloneqq \exp\left( -\sum_\gamma \frac{T_\gamma^{\#}}{T_\gamma}\frac{e^{-z T_\gamma} }{\va{\det(I- \mathcal{P}_\gamma)}}\right).
\end{equation}
Since the flow $\phi_t$ is $\mathcal{C}^\infty$, it follows from the work of Paolo Giulietti, Liverani and Pollicott \cite{GLP} (see also the article of Semyon Dyatlov and Maciej Zworski \cite{DZdet}) that $\zeta_X$ extends to a holomorphic function on $\C$, whose zeros \emph{are} the Ruelle--Pollicott resonances of $X$. It is then of the utmost importance to understand the complex analytic properties of $\zeta_X$ in order to apprehend the distribution of Ruelle--Pollicott resonances. On a historical note, the Ruelle--Pollicott resonances were originally introduced by Ruelle and Pollicott as the zeros of some zeta function closely related to \eqref{eq:def-det-dyn}. 

The regularity of the flow $\phi_t$ plays an important role in the approach described above. Indeed, if the flow $\phi_t$ were only $\mathcal{C}^k$ for some $k > 1$, then the meromorphic continuations of the resolvent \eqref{eq:notre_amie_la_resolvante} and of the dynamical determinant \eqref{eq:def-det-dyn} would \emph{a priori} only be defined on a half plane of the form $\set{z \in \C : \Re z > - A}$, with $A$ that goes to $+ \infty$ when $k$ tends to $+ \infty$ (see \cite{port}). On the contrary, if the flow $\phi_t$ is real-analytic then Rugh \cite{R1,R2} (in dimension $3$) and Fried \cite{Fried-95} (in higher dimension) are able to prove a bound on the growth of $\zeta_X$ that implies that its order is less than the dimension of $M$. What has to be understood here is that the functional analytic tools available in the real-analytic category are usually stronger than those available in the $\mathcal{C}^\infty$ category. These tools have also been used in hyperbolic dynamics to understand other systems than Anosov flows: this is the case in the pioneering work of Ruelle \cite{Ruelle-76} for instance. Functional analytic tools from the real-analytic category have been used a lot recently, when studying for instance expanding maps or perturbations of cat maps (see for example\cite{faureRuellePollicottResonancesReal2006,naudRuelleSpectrumGeneric2012,adamGenericNontrivialResonances2017,slipantschukCompleteSpectralData2017,bandtlowLowerBoundsRuelle2019}).

This situation suggests to study Anosov flows in regularity classes that are intermediate between $\mathcal{C}^\infty$ and real-analytic. The hope here is that certain properties that fail for $\mathcal{C}^\infty$ Anosov flow but hold for real-analytic ones could in fact be true in larger classes of regularity. Following this idea, the second author was able to establish a trace formula for certain ultradifferentiable Anosov flows in \cite{jezequelGlobalTraceFormula2019}. In this paper, we focus on the Gevrey classes of regularity. We will give later in this introduction a precise definition, but let us just recall for now that these are classes of regularity indexed by a parameter $s \geq 1$ that have been introduced by Maurice Gevrey \cite{Gevrey-18} in order to study the regularity of the solutions of certain PDEs. There are several reasons for which we decided to work with this particular level of regularity. First, it is convenient that when $s= 1$ the Gevrey regularity is in fact the real-analytic regularity: we will be able to confront our results with those available in the literature. Moreover, the Gevrey classes of regularity are notoriously very well-behaved, so that we could hope to develop practicable tools in this setting. Secondly and more importantly, we were interested in a very particular question: the finiteness of the order of the dynamical determinant \eqref{eq:def-det-dyn}, and we thought, for heuristic reasons explained below, that the Gevrey regularity was the good setting to understand this question. 
 
These expectations have been met, as our main result writes:
\begin{theorem}\label{thm:main}
Let $s \in \left[1, + \infty\right[$. Let $M$ be an $n$-dimensional compact $s$-Gevrey manifold endowed with a $s$-Gevrey Anosov flow $\p{\phi_t}_{t \in \R}$ generated by a vector field $X$. Then there is a constant $C>0$ such that for every $z\in \C$, 
\[
|\zeta_X(z)| \leq C \exp\left( C |z|^{ns}\right).
\]
In particular, $\zeta_X$ has finite order less than $ns$.
\end{theorem}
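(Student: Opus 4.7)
The plan is to realize $\zeta_X(z)$ as a (regularized) Fredholm determinant of an operator built from the resolvent $(z-X)^{-1}$ acting on a suitable Gevrey-anisotropic Hilbert space $\mathcal{H}$, and then to estimate the singular values of that operator. The space $\mathcal{H}$ should be constructed via the $s$-Gevrey FBI transform developed in the earlier chapters of the monograph: one conjugates the standard $L^2$ norm on $M$ by a weight $e^{G}$ where $G$ is an escape function on $T^*M$, constant along the flow, positive on the unstable cone and negative on the stable cone, and belonging to an appropriate $s$-Gevrey symbol class. The point of the FBI calculus is to make sense of such conjugations in the Gevrey category, where pseudodifferential quantizations of non-analytic symbols are usually forbidden.

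Following the strategy of \cite{GLP} and \cite{DZdet}, I would first express $\log\zeta_X(z)$ in terms of flat traces of $e^{-zt}\mathcal{L}_t$ integrated against appropriate cut-offs, and translate this into a Fredholm determinant identity of the form $\zeta_X(z) = \det(I - K(z))$ (possibly after multiplication by a holomorphic prefactor without zeros, or by regularizing to a higher Schatten class) for a compact operator $K(z)$ on $\mathcal{H}$. By the Weyl–Fredholm bound $|\det(I-K(z))| \leq \exp(\|K(z)\|_{\mathcal{S}_1})$, the whole problem is reduced to controlling a suitable singular-value sum of $K(z)$.

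The core estimate, and the main obstacle, is to prove that $K(z)$ has at most $O(|z|^{ns})$ singular values of modulus $\geq 1$, or equivalently a singular value decay of the form $s_j(K(z)) \lesssim \exp(-c(j/|z|^{ns})^{1/(ns)})$. Heuristically, the Gevrey-$s$ FBI transform of an $s$-Gevrey function decays like $\exp(-c|\xi|^{1/s})$ in the cotangent direction, so in order to beat the factor $e^{-zt}$ one must control frequencies up to $|\xi| \sim |z|^s$. The number of unit Liouville cells in the phase-space region $\{|\xi| \lesssim |z|^s\}$ of the $2n$-dimensional symplectic manifold $T^*M$ is of order $|z|^{ns}$, which is the origin of the exponent. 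Turning this into a rigorous bound requires a precise symbolic calculus for the Gevrey FBI transform (pseudolocality, composition, quantization of the weight $e^{G}$), together with a sharp estimate on the norm of $\mathcal{L}_t$ on $\mathcal{H}$ and on the off-diagonal decay of its Schwartz kernel after the FBI conjugation.

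Once the singular-value estimate is established, a standard argument using the Weyl–Fredholm inequality and, if needed, a contour or subharmonicity argument à la Jensen to absorb polynomial counter-terms from the regularized determinant, yields the bound $|\zeta_X(z)| \leq C\exp(C|z|^{ns})$, and hence the order bound $\operatorname{ord}(\zeta_X) \leq ns$. The case $s=1$ should reproduce the Fried–Rugh bound $\operatorname{ord}(\zeta_X) \leq n$, providing a consistency check.
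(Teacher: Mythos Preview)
Your outline is in the right direction, but two technical points diverge from what the paper actually does, and at least one of them matters for getting the sharp exponent $ns$.

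First, the paper does \emph{not} build $\mathcal{H}$ by conjugating $L^2$ by a weight $e^G$ on $T^*M$ (the ``Martinez'' approach you describe). As explained in the introduction, for $s$-Gevrey data the escape function $G$ must be a symbol of order $1/s$, and then the error terms coming from that kind of conjugation are of order $\langle\xi\rangle^{2/s-1}$, which is unbounded when $s<2$; in particular the Koopman semigroup need not be bounded on such a space. Instead the paper uses the Helffer--Sj\"ostrand mechanism: deform $T^*M$ to an I-Lagrangian $\Lambda=\exp(\tau H_G^{\omega_I})(T^*M)$ inside the complexified cotangent bundle, restrict the (globally analytic) FBI transform $Tu$ to $\Lambda$, and take a weighted $L^2$ norm there with weight $e^{-H/h}$ determined by $\mathrm{d}H=-\Im\theta|_\Lambda$. (A small correction: $G$ is not ``constant along the flow'' but is built so that its $\omega_I$-Poisson bracket with $\Re\tilde p$ is negative and elliptic away from the flow direction; see the escape-function Lemma.) On this $\mathcal{H}_\Lambda^0$ one proves the hypo-ellipticity $\|u\|_{\mathcal{H}_\Lambda^{1/s}}\lesssim\|u\|_{\mathcal{H}_\Lambda^0}+\|Xu\|_{\mathcal{H}_\Lambda^0}$, and together with a Weyl bound for the inclusion $\mathcal{H}_\Lambda^{1/s}\hookrightarrow\mathcal{H}_\Lambda^0$ this gives the polynomial singular-value decay $s_k((z-X)^{-1})=O(k^{-1/(ns)})$, hence $(z-X)^{-1}\in\mathcal{S}_p$ for every $p>ns$. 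No exponential singular-value decay is proved or needed.

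Second, the passage from this Schatten information to the order of $\zeta_X$ is not a direct Weyl--Fredholm bound $|\det(I-K)|\le e^{\|K\|_{\mathcal{S}_1}}$. Applied to $\det_m(I+(\lambda-z)(z-X)^{-1})$ with $m>ns$, such a bound only gives $\exp(C|\lambda|^m)$, i.e.\ order $\le m$ for each $m>ns$, hence order $\le ns$ but not the explicit estimate $|\zeta_X(\lambda)|\le C\exp(C|\lambda|^{ns})$ claimed in the theorem. The paper instead first deduces the resonance counting bound $N(R)=O(R^{ns})$ from the Schatten estimate (via Weyl's inequality on eigenvalues), then proves a trace formula identifying $\zeta_X(\lambda)=\det_m\!\big(I+(\lambda-z)(z-X)^{-1}\big)\,e^{Q_z(\lambda)}$ for a fixed $z\gg1$ and $m$ the smallest integer $>ns$, and finally estimates the Weierstrass product $\prod_k E\!\big((\lambda-z)/(\lambda_k-z),\,m-1\big)$ by splitting into $|\lambda_k|\lessgtr 5|\lambda|$ and inserting the counting bound. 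It is this last step that recovers the sharp exponent $ns$.
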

 
As already mentioned, when $s=1$, the Gevrey regularity coincides with the real-analytic regularity. In that case, Theorem \ref{thm:main} is a consequence of the results of Rugh \cite{R1,R2} and Fried \cite{Fried-95} mentioned above. However, when $s > 1$, our results is new. As far as we know, no tools were available to study specifically Anosov flows of Gevrey regularity before the present work.

According to Jensen's Lemma, the theorem implies that the number of resonances in disks centered at zero of radius $R$ is $\O(R^{ns})$. This gives information on the counting of resonances far from the $L^2$ spectrum, or, so to speak, deep in the complex region. It is to be contrasted with the much sharper local results obtained by Faure and Johannes Sj\"ostrand \cite{FauSjo} and more recently by Faure and Masato Tsujii \cite{Faure-Tsujii-17-FractalWeylLaw}. Their results hold only for counting in boxes near the $L^2$ spectrum, but are much better upper bounds. The only available general lower bound to our knowledge is almost linear, and due to Long Jin, Zworski (and Fr\'ed\'eric Naud) \cite{ltfzwor}. In the contact flow case, there are much sharper results, see \cite{faureBandStructureRuelle2013,Datchev-Dyatlov-Zworski}.

Regarding the optimality of Theorem \ref{thm:main}, it is relevant to observe that according to Ruelle's result \cite{Ruelle-76}, if the stable and unstable foliations of the flow are themselves analytic, the bound on the order is much better than the dimension. In particular, for the geodesic flow of a hyperbolic manifold of dimension $n$, acting on the unit cosphere of dimension $2n-1$, the order of the dynamical determinant is exactly $n+1$ -- instead of the dimension $2n-1$ -- as follows from the Selberg trace formula, linking Ruelle--Pollicott resonances to eigenvalues of the Laplacian. We suspect that the use of additional microlocal refinement could improve in general our bound, in the spirit of the Fractal Weyl law from \cite{Faure-Tsujii-17-FractalWeylLaw}.

The techniques employed here are quite different from those of Rugh and Fried. They enable us to study directly the resolvent $(z-X)^{-1}$ on relevant spaces. Several results follow, that are gathered in the beginning of Chapter \ref{part:Anosov-flow}. We obtain:
\begin{itemize}
	\item a weighted version of Theorem \ref{thm:main} (see Theorem \ref{thm:main-strong} and following discussion);
    \item a characterization of the resonant states by their Gevrey wavefront set (see Proposition \ref{prop:wfs_resonant_state});
	\item a ``global'' version of the stability of Ruelle--Pollicott resonances under stochastic perturbation, proven by Dyatlov and Zworski in \cite{dyatlovStochasticStabilityPollicott2015} (see Theorem \ref{theorem:viscosite});
	\item a Gevrey version of linear response, see \S \ref{sec:linear_response}.
\end{itemize}

The last three items are new even in the analytic case. In particular, in the real-analytic case, we are able to establish an actual Kato theory for perturbations of real-analytic Anosov flows, which implies in particular Theorem \ref{theorem:SRB} below. This result is hardly surprising, but we are not aware of any proof in the literature devoted to this subject (see \cite[Corollary 1]{Katok-Knieper-Pollicott-Weiss} however for a related result).
\begin{theorem}\label{theorem:SRB}
Let $\epsilon \mapsto X_\epsilon$ be a real-analytic family of real-analytic vector fields on a real-analytic manifold $M$, defined for $\epsilon$ near $0$. We assume that $X_0$ generates an Anosov flow that admits a unique SRB measure $\mu_0$. Let $\mu_\epsilon$ denote the unique SRB measure of the Anosov flow generated by $X_\epsilon$, for $\epsilon$ near zero. Then the map
\begin{equation*}
\begin{split}
\epsilon \mapsto \mu_\epsilon \in \U^1\p{M}
\end{split}
\end{equation*}
is real-analytic on a neighbourhood of zero. 
\end{theorem}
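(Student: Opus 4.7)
The plan is to realise $\mu_\epsilon$ as the image of a fixed functional under the spectral projector of $X_\epsilon$ onto the leading Ruelle--Pollicott resonance at $0$, and then to deduce real-analytic dependence on $\epsilon$ from the Kato-type perturbation theory alluded to just before the statement. The starting point is that, in the real-analytic category (i.e.\ $s=1$ in the framework of Theorem~\ref{thm:main}), one can construct a single anisotropic Banach space $\mathcal{B}$, adapted to $X_0$, on which every $X_\epsilon$ with $\epsilon$ in a complex neighbourhood of $0$ acts as a closed unbounded operator with discrete spectrum near $0$, and such that the family $\epsilon \mapsto X_\epsilon$ is analytic in the sense of Kato.

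On that fixed space, one identifies the leading resonance. Since $X_0$ admits a unique SRB measure $\mu_0$, standard arguments ensure that $z=0$ is a simple isolated Ruelle--Pollicott resonance of $X_0$, with the constant function~$1$ spanning the kernel of $X_0$ and $\mu_0$ spanning the kernel of the adjoint. By the openness of the ``isolated simple resonance'' property under Kato-analytic perturbations, for $\epsilon$ small the spectrum of $X_\epsilon$ inside a small disc around $0$ reduces to a single simple resonance; since the constant $1$ is killed by every $X_\epsilon$, this resonance is $z=0$ itself. One then defines the spectral projector
\[
\Pi_\epsilon = \frac{1}{2\pi i}\oint_\gamma \p{z - X_\epsilon}^{-1}\,\mathrm{d}z
\]
along a fixed small circle $\gamma$ enclosing only $0$. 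Kato's theorem yields that $\Pi_\epsilon$ is a rank-one projector on $\mathcal{B}$ depending real-analytically on $\epsilon$. Normalising $\Pi_\epsilon^* \nu$ by its pairing with the constant $1$, for any fixed reference functional $\nu \in \mathcal{B}'$ with $\langle \nu, 1\rangle \neq 0$, gives an $X_\epsilon$-invariant probability functional that must coincide with the SRB measure $\mu_\epsilon$, by uniqueness. Hence $\epsilon \mapsto \mu_\epsilon$ is real-analytic as a map into $\mathcal{B}'$.

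The last step is to upgrade this analyticity from $\mathcal{B}'$ to $\U^1\p{M}$ (the space of real-analytic functionals on $M$). Here one invokes the characterisation of resonant states by their Gevrey wavefront set (Proposition~\ref{prop:wfs_resonant_state}): the range of $\Pi_\epsilon$, and dually the range of $\Pi_\epsilon^*$, consists of objects with prescribed $1$-Gevrey (i.e.\ analytic) wavefront set, and the FBI-side estimates behind this characterisation are uniform in $\epsilon$ near $0$. Combined with the $\mathcal{B}'$-analyticity of $\Pi_\epsilon^*\nu$, this yields the required real-analyticity of $\epsilon \mapsto \mu_\epsilon$ in the topology of $\U^1\p{M}$.

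The main obstacle is the first step: building a single Banach space $\mathcal{B}$, tuned to $X_0$, that simultaneously accommodates all nearby $X_\epsilon$ with a common core (e.g., analytic functions on $M$), uniform resolvent bounds along $\gamma$, and analytic dependence in the Kato sense. The difficulty is that the stable/unstable decomposition depends on the flow, so the anisotropic weights defining $\mathcal{B}$ are genuinely $X_0$-dependent; one must therefore show that these weights remain admissible for $X_\epsilon$ at the level of the FBI transform, and that $X_\epsilon - X_0$ is relatively bounded with small relative bound. This is precisely the kind of statement that is inaccessible in the $\mathcal{C}^\infty$ setting but becomes tractable in the real-analytic category using the tools of this paper, because the anisotropic weights can be chosen analytic and the perturbation $X_\epsilon - X_0$ depends analytically on $\epsilon$ with coefficients in the same Gevrey-$1$ class as the base flow.
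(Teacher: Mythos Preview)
Your overall strategy matches the paper's: fix an I-Lagrangian space $\mathcal{H}_\Lambda^0$ adapted to $X_0$, show that the resolvent $(r-X_\epsilon)^{-1}$ depends real-analytically on $\epsilon$ in $\mathcal{L}(\mathcal{H}_\Lambda^0,\mathcal{H}_\Lambda^1)$ (this is Theorem~\ref{thm:lanalyticite_engendre_lanalyticite}, and is indeed the main technical point), identify $\mu_\epsilon$ as the suitably normalised left eigenvector at the simple resonance $0$, and conclude analyticity of $\epsilon\mapsto\mu_\epsilon$ in the dual of $\mathcal{H}_\Lambda^0$ by Kato theory. The paper cites \cite[Lemma 5.1]{Butterley-Liverani-07} for the simplicity of the resonance at $0$ and the identification with the SRB measure; your description of this step is essentially the same.

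Where you diverge is the final upgrade from $(\mathcal{H}_\Lambda^0)'$ to $\U^1(M)$. Invoking Proposition~\ref{prop:wfs_resonant_state} here is not the right tool: that proposition constrains the $\G^s$ wavefront set of resonant states to lie in $E_s^*$, but this neither places the objects in $\U^1(M)$ nor gives you control of the $\U^1(M)$ topology; and $\mu_\epsilon$ is a \emph{left} eigenvector anyway. The paper's argument is much simpler and purely functional-analytic: recall that $\U^1(M)=\varprojlim_R (E^{1,R}(M))'$, so analyticity in $\U^1(M)$ means analyticity in each $(E^{1,R})'$. For any fixed $R$, Corollary~\ref{cor:completude} gives a continuous inclusion $E^{1,R}\hookrightarrow \mathcal{H}_\Lambda^0$ once $\tau$ is small enough (depending on $R$); dualising, $(\mathcal{H}_\Lambda^0)'\hookrightarrow (E^{1,R})'$. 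Since Theorem~\ref{thm:lanalyticite_engendre_lanalyticite} holds for \emph{all} sufficiently small $\tau$, and the intrinsic object $\mu_\epsilon$ does not depend on $\Lambda$, one obtains analyticity of $\epsilon\mapsto\mu_\epsilon$ in $(E^{1,R})'$ for every $R$, hence in $\U^1(M)$. You should replace the wavefront-set argument by this duality step.
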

The space $\U^1\p{M}$ of analytic ultradistributions that appears in Theorem \ref{theorem:SRB} is defined in \S \ref{sec:def_Gev}.

The proof of Theorem \ref{thm:main} relies on quite advanced technical tools, whose construction takes up most pages of this article. However, we can give a (relatively) simple heuristic argument indicating that the Gevrey regularity is the good setting when trying to establish that the dynamical determinant \eqref{eq:def-det-dyn} has finite order. To present this argument, we have to catch a very brief glimpse of the technical side of the theory for $\mathcal{C}^\infty$ Anosov flows (mentioned in the beginning of this introduction).

Let us describe here the strategy introduced by Faure, Nicolas Roy and Sj\"ostrand in \cite{Faure-Roy-Sjostrand-08} to study $\mathcal{C}^\infty$ Anosov diffeomorphisms, and adapted to flows in \cite{FauSjo}. The anisotropic spaces from \cite{FauSjo} are of the form $\Op(e^{G}) \cdot L^2\p{M}$, where $G$ is a symbol of logarithmic size called an escape function and $\Op$ is a Weyl quantization. The action of $X$ on $\Op(e^{G}) \cdot L^2\p{M}$ is equivalent to the action on $L^2$ of
\begin{equation}\label{eq:FS_en_deux_deux}
\begin{split}
\Op\p{e^{G}}^{-1} X \Op\p{e^G} = X + A_G.
\end{split}
\end{equation}
Here, the operator $A_G$ is a pseudo-differential operator of logarithmic order, and $G$ has been chosen so that the real part of the symbol of $A_G$ is negative. This can be used with the sharp G\r{a}rding inequality to invert the operator $X - \lambda$, up to a compact operator. The meromorphic continuation of \eqref{eq:notre_amie_la_resolvante} follows then from Fredholm analytic theory. However, when trying to invert the conjugated operator $\Op(e^{G})^{-1} \p{X - \lambda} \Op(e^G) = X + A_G - \lambda$, the negative sign of the real part of the symbol of $A_G$ competes with the possible positivity of $- \Re \lambda$. Fortunately, we may change $G$, and consequently make $\Re A_G$ even more negative. Hence the strategy above allows to continue $\p{X - \lambda}^{-1}$ meromorphically to $\set{\lambda \in \C : \Re \lambda > - N_G}$, for arbitrarily large $N_G$. Notice in particular that the construction from \cite{FauSjo} requires the use of a scale of spaces, since the resolvent is meromorphically continued ``strip by strip''. This feature is essential in the $C^\infty$ setting. Indeed for the method to work, one needs that the growth of $e^G$ be matched by the decay of the Fourier transform of a typical $C^\infty$ function. Somehow, to obtain a continuation to $\C$ with only one space in the $C^\infty$ case, we would need a \emph{Fréchet} anisotropic space, whose theory has yet to be written.

In order to improve the situation that we just described, one could try to make that operator $A_G$ (or rather its symbol) ``larger''. However, this requires to take $G$ with a growth faster than logarithmic. It seems then natural to wonder what happens if we work in a smaller class of regularity. Indeed, Fourier transforms of functions that are more regular than $\mathcal{C}^\infty$ should decay faster, and, morally, it should allow to soften the constraint on the growth of $G$. We recall here that the question that we want to consider is the finiteness of the order of the dynamical determinant \eqref{eq:def-det-dyn}. It is classical to derive such a bound from Schatten estimates. Thus, we would like the resolvent of the operator \eqref{eq:FS_en_deux_deux} to be Schatten. A natural way to try to achieve that is to make the operator \eqref{eq:FS_en_deux_deux} hypo-elliptic. To do so, we want the pseudo-differential operator $A_G$ to have positive order $\epsilon > 0$, but this requires that $G$ itself has order $\epsilon > 0$. Intuitively, in order to handle such a $G$, we need to work in a class of regularity in which compactly supported functions have a Fourier transform that decays like a stretched exponential $\exp( - \langle \xi \rangle^\epsilon/C)$. This is exactly the decay which characterizes the Fourier transform of $1/\epsilon$-Gevrey functions. This is why this paper is set in the framework of Gevrey classes of regularity.

We used above the construction of anisotropic spaces from \cite{FauSjo} in order to explain the motivation behind Theorem \ref{thm:main}. However, this heuristic proof could have been formulated using any other construction of anisotropic spaces and the idea would be the same: one wants to use the extra regularity to replace some weak weight by a stronger one. Unfortunately, we have not been able to use the available constructions of anisotropic Banach spaces to make the heuristic proof explained above rigorous. The main difficulty is that the weight $e^G$ is not a symbol in the usual sense when $G$ is a symbol of order $\epsilon>0$. 

The first two directions we explored turned out to be unfruitful. The constructions using local adapted Littlewood--Paley decompositions as in \cite{jezequelGlobalTraceFormula2019} give spaces for which small time dynamics of the Koopman operator \eqref{eq:Koopman_the_first} is not sufficiently well controlled. On the other hand, microlocal constructions, as the one from \cite{FauSjo} described above, could maybe be adapted, using results of Luisa Zanghirati \cite{zanghiratiPseudodifferentialOperatorsInfinite1985} and Luigi Rodino \cite{Rodino-93-book} on spaces of infinite order Gevrey pseudo-differential operators. However, this would restrict our work to the case of large $s$, and it is not clear in the first place that the spaces constructed in that way carry dynamical information. It would also not yield a bound on the order as good as the one we obtain.

Thankfully, we were able to construct satisfying spaces -- see the more precise result Theorem \ref{thm:spectral-theory}:
\begin{theorem}
Let $s \in \left[1, + \infty\right[$. Let $M$ be an $n$-dimensional compact $s$-Gevrey manifold endowed with a $s$-Gevrey Anosov flow $\p{\phi_t}_{t \in \R}$ generated by a vector field $X$. There exists a Hilbert space $\Hh$ of $s$-Gevrey ultradistributions, and a constant $C$ such that $X+C$ is invertible on $\Hh$, and $(X+C)^{-1}$ is $p$-Schatten on $\Hh$ for any $p>ns$. Additionally, $\Hh$ contains a dense subspace of $s$-Gevrey functions, and $(\mathcal{L}_t)_{t\geq 0}$ is a strongly continuous semi-group of bounded operators on $\Hh$ (with generator $X$).
\end{theorem}
Further study of the spectral properties of $X$ acting on $\Hh$ enables to prove Theorems \ref{thm:main} and \ref{theorem:SRB}.

\subsection*{FBI transform}

Our construction relies on an FBI -- for Joseph Fourier, Jacques Bros and David Iagolnitzer -- transform. This is a linear map taking functions of a space variable $x\in M$ to functions on the phase space $\alpha \in T^\ast M$. It is an integral transform whose kernel has an oscillating Gaussian behaviour.

FBI Transforms are a sort of ``localized'' Fourier transform. Such transforms are common in the linear PDE literature, and we gathered some historical remarks to give a bit of context. Maybe the first popular occurrence is the one introduced by Dennis Gabor \cite{Gabor-46} for use in signal processing. Later on, a continuous version of his construction was given, taking the form
\[
G_x(\tau,\omega) = \int_{\R} x(t) e^{-\pi (t-\tau)^2 - i \omega t} \mathrm{d}t.
\]
To this day, the Gabor transform is a staple in signal analysis. It is a particular case of a Short Time Fourier Transform whose general expression is
\[
\mathrm{STFT}\{x\}(\tau,\omega) = \int_{\R^n} x(t) w(t-\tau) e^{-i \omega t} \mathrm{d}t,
\]
where $w$ is a window function.

Another occurrence of generalized Fourier transform, more closely related to our main interest, is the so-called Bargmann--Segal transform. It was introduced by Valentine Bargmann \cite{Bargmann-61} and Irving Ezra Segal \cite{Segal-63}, as
\[
\C^n \owns z \mapsto Bf(z) = \int_{\R^n} e^{-\frac{1}{2}( \jap{z, z} - 2\sqrt{2}\jap{z, x} + \jap{x, x})} f(x)\mathrm{d}x.
\]
It realizes an isometry between $L^2(\R^n)$ and the Bargmann--Segal space of holomorphic functions $F$ on $\C^n$ such that
\[
\int_{\C^n} |F(z)|^2 e^{-|z|^2} \mathrm{d}z < \infty.
\]

Bros and Iagolnitzer introduced their ``generalized Fourier transform'' in \cite{Bros-Iagolnitzer-75}. Its form was
\begin{equation}\label{eq:original-FBI}
\mathcal{F} u(v,v_0,X) = \int_{\R^n} u(x) e^{i \langle v, x\rangle - v_0 (x-X)^2} \mathrm{d}x,
\end{equation}
(for $u$ in some reasonable space, for example, $u$ can be a tempered distribution). Their purpose was to define and study an analytic version of the wavefront set. This was based upon previous works of Iagolnitzer and Henry Stapp \cite{Iagolnitzer-Stapp-69}. At that time, another notion of analytic wavefront set had been proposed by Mikio Sato, Takahiro Kawai and Masaki Kashiwara \cite{Sato-Kawai-Kashiwara-73}. Jean Michel Bony \cite{Bony-77} then proved that these definitions were equivalent. 

Nowadays, it is more common to use a semiclassical version of the transform, defined on $\R^n$ by 
\begin{equation}\label{eq:def-flat-FBI-transform}
T_{\R^n} u(x,\xi;h):= \frac{1}{(2\pi h)^{3n/2}} \int e^{\frac{i}{h}(\langle x-x',\xi\rangle + \frac{i}{2} |x-x'|^2) } u(x')\mathrm{d}x'.
\end{equation}
In the case of manifolds, to account for non-linear changes of variable, it is more convenient to take a slightly different scaling for the phase. 

In the second chapter of this article, given a compact real-analytic manifold $M$, we will construct an FBI transform $T$, which is an operator from $\mathcal{D}'(M)$ to $\mathcal{C}^\infty(T^\ast M)$ given for $\alpha = (\alpha_x,\alpha_\xi) \in T^\ast M$ by 
\begin{equation*}
Tu(\alpha) = \int_M K_T(\alpha,y) \mathrm{d}y.
\end{equation*}
Here $K_T$ is an analytic kernel, negligible away from the ``diagonal'' $\{y = \alpha_x\}$, and which near this diagonal has roughly the behaviour of
\begin{equation*}
e^{\frac{i}{h} (\langle \alpha_x - y, \alpha_\xi\rangle + \frac{i}{2}\langle\alpha_\xi\rangle(\alpha_x - y)^2) } a(\alpha,y),
\end{equation*}
where a is an analytic symbol, elliptic in the relevant class. Contrary to \eqref{eq:def-flat-FBI-transform}, we use here a phase that is a symbol of order $1$ in $\alpha$. The basic properties of this transform are investigated in Chapter \ref{part:FBI}.

Let us explain now why FBI transforms are interesting objects. What motivated the works of Bros and Iagolnitzer is the following simultaneous observations. Let $f$ be a tempered distribution in $\R^n$. Then, $(x,\xi)$ is \emph{not} in the $C^\infty$ wavefront set of $f$, if and only if, for all $(x',\xi')$ sufficiently close to $(x,\xi)$, as $h\to 0$,
\[
T_{\R^n} f(x',\xi';h) = \O(h^\infty).
\]
On the other hand, if $f$ is a real analytic function is some open set $U\subset \R^n$, then for $x\in U$, and $\xi\in \R^n$, there exists a constant $C>0$ such that:
\[
T_{\R^n} f(x,\xi;h) = \O\left(\exp\left( - \frac{1}{Ch} \right)\right).
\]
This suggests to define the \emph{analytic wavefront set} of $f$ as the set of $(x,\xi)\in \R^{2n}$ such that $T_{\R^n}f$ does \emph{not} satisfy this last bound uniformly in any neighbourhood of $(x,\xi)$.

The PDE specialist may wonder what is the purpose of introducing one more definition of the wavefront set, and the Dynamical Systems expert speculate why we have to consider wavefront sets in the first place. The answer is twofold. 

The first point is (very) closely related to the Segal--Bargmann transform. It is the observation that the FBI transform enables to represent (pseudo)differential operators as \emph{multiplication operators}. This feature alone makes it useful for studying elliptic regularity problems.

Usually, one can chose the transform $T$ such that $T^\ast T$ is the identity, so that $T$ is an isometry between $L^2(M)$ and its image in $L^2(T^\ast M)$. It turns out that, for a suitable class of functions $p\in C^\infty(T^\ast M)$,
\[
u \mapsto T^\ast p T u
\]
is a pseudo-differential operator with principal symbol $p$. Denoting $\Pi = TT^\ast$, it is the orthogonal projector on the image of $T$. We then have
\[
\Pi p \Pi (Tu) = T ( T^\ast p T u).
\]
This formula relates a Toeplitz-like operator -- $\Pi p \Pi$ -- with a pseudo-differential operator -- $T^\ast p T$.

On top of being a very practical tool, the fact that the FBI transform relates pseudo-differential operators with Toeplitz-like operators is thus a bridge between the quantization of cotangent spaces via the algebra of pseudo-differential operators, and the quantization of compact symplectic manifolds.

The second point is that for a function $u$, its transform $Tu(x,\xi)$ is a function of parameters belonging to the \emph{classical} phase space. In particular, one expects that (microlocal) propagation phenomenon can be observed directly on $Tu$. A particularly striking consequence is the flexibility with which escape functions can be used. This deserves a detailed explanation

\subsection*{Helffer--Sj\"ostrand theory.}

Our use of the FBI transform is deeply inspired by the work of Bernard Helffer and Sj\"ostrand \cite{Helffer-Sjostrand-86,Sjostrand-96-convex-obstacle}. We will rely in particular on the notion of \emph{complex Lagrangian deformation} of the cotangent bundle. This concept is closely related with the notion of \emph{escape function}, which is maybe more popular nowadays. The idea of escape function has a long history. It is linked to the technique of positive commutators, that appears in H\"ormander's proof of propagation of singularities or in \'Eric Mourre's estimates \cite{Mourre-81} for example. It was introduced by Helffer and Sj\"ostrand \cite{Helffer-Sjostrand-86} to study quantum tunneling effects for some electric Schr\"odinger operators. The central idea is the following: given a (pseudo-)differential operator $P$ with principal symbol $p$, if one can build a function $G$ that decreases along the Hamiltonian flow $(\Phi_t)_{t \in \R}$ of $p$ in some region of phase space, then one can gain some sub-principal micro-ellipticity in that region. Helffer and Sj\"ostrand introduced a framework involving the FBI transform to implement this idea. Escape functions have since been popularized and have become very much an independent technique.

We will give first a (very) condensed presentation of the Helffer-Sj\"ostrand method itself before coming back to historical considerations. If $M$ is a real analytic manifold, one can choose a complexification $\smash{\widetilde{M}}$ for $M$, and then $T^* \smash{\widetilde{M}}$ is a complexification for the cotangent space $T^* M$ of 
$M$. It is possible to construct an FBI transform $T$ whose kernel is real analytic. This implies that if $u\in \mathcal{D}'(M)$, then $Tu$ has a holomorphic extension to a complex neighbourhood of $T^\ast M$ in $T^\ast \smash{\widetilde{M}}$. In particular, if $\Lambda \subseteq T^\ast \smash{\widetilde{M}}$ is sufficiently close to $T^\ast M$, we can consider the restriction $Tu_{|\Lambda}$, but we need to choose $\Lambda$ wisely if we want this to be useful. In particular, there are some geometric conditions that have to be fulfilled by $\Lambda$.

Notice indeed that the complex manifold $T^\ast \smash{\widetilde{M}}$ is endowed with a rich \emph{real} symplectic structure, since both the real part $\omega_R$ and imaginary part $\omega_I$ of the canonical complex symplectic form $\omega$ on $T^\ast \smash{\widetilde{M}}$ are real symplectic forms on $T^\ast \smash{\widetilde{M}}$. Notions from the symplectic geometry of $\omega_I$ (resp. $\omega_R$) will be designed with an I (resp. R) -- I-Lagrangian, I-symplectic... Since $\omega$ is an exact symplectic form ($\omega = \mathrm{d}\theta$ if one denotes by $\theta = \xi \mathrm{d}x$ the canonical Liouville one-form), the $2$-forms $\omega_I$ and $\omega_R$ also are.

If $G$ is a real valued symbol of order $1$ on $T^\ast \smash{\widetilde{M}}$ --that will play the role of an escape function in our context --, we consider for $\tau \geq 0$ small enough
\[
\Lambda_\tau := \exp(\tau H^{\omega_I}_G)(T^\ast M) \subset T^\ast \widetilde{M}.
\]
Since $T^\ast M$ is I-Lagrangian, $\Lambda_\tau$ also is: this is the complex Lagrangian deformation of $T^* M$ that we announced. The one form $\Im \theta$ is thus closed on $\Lambda_\tau$. The global description as the image of $T^\ast M$ implies the existence of a global solution $H_\tau$ on $\Lambda_\tau$ to the equation $\mathrm{d}H_\tau = - \Im\theta_{|\Lambda_\tau}$. Since $G$ is a symbol, $H_\tau$ also is. Since $T^\ast M$ is R-symplectic, $\Lambda_\tau$ also is for $\tau$ small enough, so that $\omega_R^n / n!$ defines a volume form $\mathrm{d}\alpha$ on $\Lambda_\tau$.

Let $P$ be a semi-classical differential operator with analytic coefficients, with (analytic) principal symbol $p$. Then for $u$ analytic and $\tau$ small enough, we will see in Chapter \ref{part:FBI} that under relevant assumptions, we have
\begin{equation}\label{eq:flat-space-scalar-product}
\int_{\Lambda_\tau} T Pu \ \overline{Tu}\ e^{-2H_\tau /h} \mathrm{d}\alpha = \int_{\Lambda_\tau} (p+ O(h))\ |T u|^2\  e^{-2H_\tau/h} \mathrm{d}\alpha.
\end{equation}
Here, we have identified $p$ with its holomorphic extension. The idea of the method is to consider (instead of $L^2(M)$) the space
\begin{equation}\label{eq:first-def-H_Lambda}
\mathcal{H}_{\Lambda_\tau} := \left\{u\ \middle|\ Tu_{|\Lambda_\tau} \in L^2( e^{-H_\tau/h}\mathrm{d}\alpha) \right\}.
\end{equation}
(here $u$ is assumed to be in a space of hyperfunctions we will define precisely later on). On $\mathcal{H}_{\Lambda_\tau}$, the operator $P$ has an effective principal symbol $p_{|\Lambda_\tau}$, which may be (if $G$ is suitably chosen) more elliptic than $p_{|T^\ast M}$.

This method also applies to analytic pseudo-differential operators. It can be extended to the case of Gevrey operators, after the necessary adjustments we present at the end of the introduction. The rigorous statements and proofs of the results that we just mentioned may be found in Chapter \ref{part:FBI}.

Let us now give a bit of context and more explanations. We start by presenting what we call the ``Martinez method'', and explain why it is not suited to our needs. As we have seen before, if $\sigma$ is a function satisfying suitable estimates and $T$ some FBI transform, $T^\ast \sigma T$ is a pseudo-differential operator with principal symbol $\sigma$. However, the operator $T^\ast \sigma T$ is a well defined bounded operator on $L^2$, even if $\sigma$ is only a $L^\infty$ function. This observation suggests to quantize a class of functions much larger than the usual class of symbols. Instead of the usual construction of microlocal spaces in the form (for some function $G$ that will be called an \emph{escape function})
\[
\Op(e^{G}) L^2(M), 
\]
defined with $\Op$, the Weyl quantization, one can define a space $\Hh_G$ with
\[
\| f \|_{G}^2 := \int_{T^\ast M} e^{-2G} |Tu|^2.
\]
Formally, this is the space $(T^\ast e^G T) L^2(M)$. It is considerably easier to define than the space $\Hh_\Lambda$ introduced in \eqref{eq:first-def-H_Lambda}. This approach was taken in Faure and Tsujii's work \cite{Faure-Tsujii-17-FractalWeylLaw}. Similar spaces appear in the works of Andr\'e Martinez \cite{Martinez-94-interaction}, Shu Nakamura \cite{nakamuraMartinezMethodPhase1995} and Klaus Jung \cite{jungPhaseSpaceTunneling2000}. These authors study tunneling effects for the bottom of the spectrum of elliptic operators, and instead of $\exp(G)$ consider stronger weights in the form $\exp(G/h)$, with $h >0$ a semi-classical parameter.

The main technical point in Faure and Tsujii's paper is that the essential spectral radius of the Koopman operator \eqref{eq:Koopman_the_first} acting on the space $\Hh_G$ is bounded by 
\[
\sup_{x} \limsup_{\xi\to \infty}\ \exp( G\circ\Phi_{t} - G)(x,\xi),
\]
where $(\Phi_t)_{t \in \R}$ denotes the symplectic lift of $(\phi_t)_{t \in \R}$. This is a sort of ``integrated version'' of a G\r{a}rding-type estimate on which Martinez et. al. rely. Indeed, they prove that 
\begin{equation}\label{eq:multiplication-formula-Martinez}
\langle Pf, f\rangle_{\Hh_{G/h}} = \int_{T^\ast M} p_G(x,\xi)  e^{\frac{2G}{h}} |T u|^2,
\end{equation}
where $p_G$ is the principal symbol of $P$, shifted by $G$ according to some precise formula. When $p$ is real, the imaginary part of $p_G$ is given in first approximation, for $G$ small enough, by
\[
\nabla_\xi G \cdot \nabla_x p - \nabla_x G \cdot \nabla_\xi p.
\]
The appearance of this Poisson bracket is revealing of some symplectic phenomenon at play here. Actually, if one were to take $G_1$ to be the real part of an almost analytic extension of $G$, one would obtain (in the relevant symbol class)
\begin{equation}\label{eq:DL-symbol-rough}
p(\exp(\tau H_{G_1}^{\omega_I}(\alpha))) = p(\alpha) + i \tau(\nabla_\xi G \cdot \nabla_x p - \nabla_x G \cdot \nabla_\xi p) + \O(\tau^2).
\end{equation}
In particular, if $\Lambda$ is a I-Lagrangian corresponding to $G_1$, the spaces $\Hh_G$ and $\Hh_{\Lambda}$ have (at least approximately) the same microlocal behaviour. On the other hand, in the Helffer--Sjostrand method, one does not have to assume that the symbol $G$ is constructed as the extension of a symbol defined on $T^\ast M$. For our purpose, this point will be crucial, and justifies the use of I-symplectic geometry instead of the -- \emph{a priori} simpler -- point of view of Martinez. Indeed, unlike the references cited above, for the study of analytic flows, we need to assume that $G$ is symbol of order $1$. In that case, the estimate \eqref{eq:DL-symbol-rough} can be rewritten (for $p$ of order $1$)
\[
p(\exp(\tau H_{G_1}^{\omega_I}(\alpha))) = p(\alpha) + i \tau(\nabla_\xi G \cdot \nabla_x p - \nabla_x G \cdot \nabla_\xi p) + \O_{L^\infty}(\langle\alpha\rangle \tau^2).
\]
The remainder is small as a symbol of order $1$, but is \emph{not} bounded. For this reason the Koopman operator may lose the property of semi-group on the spaces $\Hh_G$. To circumvent this, we have to build our escape function directly on $T^\ast \widetilde{M}$.

Let us come back to the Helffer--Sj\"ostrand method. Our presentation is more general than what appeared in the original article \cite{Helffer-Sjostrand-86}, so let us present the novelty in our approach. In its first version, the method dealt with operators on the flat space $\R^n$, and the weight $G$ was assumed to be compactly supported in the $\xi$ variable. The theory was then adapted to manifolds, by Sj\"ostrand \cite{Sjostrand-96-convex-obstacle} and then Sj\"ostrand and Zworski \cite{Sjostrand-Zworski-99}, to obtain asymptotics for the counting function of resonances of an analytic convex obstacle, somehow closing the discussion opened by the Bardos--Lebeau--Rauch paper \cite{Bardos-Lebeau-Rauch-87}.

However, in these articles, there remained the restriction that $G$ be compactly supported in $\xi$. Additionally, the transform introduced did not have a globally analytic kernel, preventing the study of hyperfunctions. This was not a problem since the main interest were solutions of the wave equation, which are highly regular \emph{a priori}. In our study of analytic and, more generally, Gevrey Anosov flows, the eigenfunctions -- or \emph{resonant states} -- are not regular, and this limitation had to be lifted. We thus obtain a transform whose kernel is globally analytic. 

Additionally, we are able to deal with symbols $G$ of order $1$ -- this is the natural limit on arbitrary real analytic manifolds. Indeed, when $G$ is of order $1$, the manifold $\Lambda$ is contained in a Grauert tube of size $\sim 1$ of $T^\ast M$. If one tries to take a symbol of order larger than $1$, it would require to work with Grauert tubes of infinite radius, and only very particular real analytic manifolds possess such structure -- for example, it is not the case of the real hyperbolic space.

At the same time as we were elaborating this article, Jeffrey Galkowski and Zworski \cite{Zworski-Galkowski-1,Zworski-Galkowski-2} were studying a very similar extension in the analytic category. They obtain a version of the Helffer--Sj\"ostrand framework for symbols of order $1$, on tori. They have already found another application to the technique in \cite{Zworski-Galkowski-3}.

With \cite{lascarFBITransformsGevrey1997}, Bernard and Richard Lascar extended the Helffer--Sj\"ostrand method to the case of Gevrey regularity on manifolds. Again, they only considered symbols with compact support in $\xi$, while we are able to consider symbols of order $1/s$ for $s$-Gevrey problems. We will see that this is the natural limit. Moreover, the ranges of $s$'s that are allowed in \cite{lascarFBITransformsGevrey1997} is quite restrictive, a constraint that we will also lift.

We close this short presentation of the Helffer-Sj\"ostrand method with the following heuristic consideration. The gist of the technique is that given a linear (pseudo-)differential operator $P$, for the purpose of studying the regularity of the solutions to some equation involving $P$, the regularity of $P$ can be exhausted by taking the right I-Lagrangian $\Lambda$. Replacing $L^2(M)$ by $\Hh_\Lambda$ is akin to replacing $T^\ast M$ by $\Lambda$ for all practical purposes. Instead of using analytic microlocal analysis on $M$ -- which may be complicated -- one uses $C^\infty$ microlocal analysis on $\Lambda$ -- which may be considerably simpler. We hope that our application of the method to analytic Anosov flows in \S~\ref{part:Anosov-flow} can be an illustration of this principle.

For the reader wanting yet another perspective on FBI transform on manifolds, we recommend Jared Wunsch and Zworski's paper \cite{Wunsch-Zworski-01}, which deals with the $C^\infty$ case. Our results on analytic FBI transform on compact manifold are detailed at the beginning of Chapter \ref{part:FBI}.

\subsection*{Gevrey microlocal analysis}

Before we explain how the results in the analytic case have to be adapted in the Gevrey case, let us recall some definitions. The class of Gevrey functions was introduced by Maurice Gevrey in \cite{Gevrey-18} to study the regularity of solutions to certain linear PDEs, in particular the heat equation -- it turns out that the heat kernel is $2$-Gevrey with respect to the time variable.

Let $s \geq 1$ be fixed. Let $U$ be an open subset of $\R^n$. A function $f : U \to \C$ is said to be $s$-Gevrey, or, for short, $\G^s$, if $f$ is $\mathcal{C}^\infty$ and if, for every compact subset $K$ of $U$, there are constants $C,R > 0$, such that, for all $\alpha \in \N^n$ and $x \in K$, we have
\begin{equation}\label{eqdefgev_intro}
\begin{split}
\va{\partial^\alpha f (x)} \leq C R^{\va{\alpha}} \alpha!^s.
\end{split}
\end{equation}
The constant $R$ in \eqref{eqdefgev_intro} may be interpreted as the inverse of a (Gevrey) radius of convergence. Notice that when $s = 1$, this describes the class of real-analytic function on $U$. When $s > 1$, the class of $s$-Gevrey function is non-quasianalytic: it contains compactly supported functions. 

In \cite{Roumieu-58-Fourier}, Charles Roumieu made the crucial observation that given a $s$-Gevrey function $f$, compactly supported in $\R^n$, for some constants $c,C>0$, and $\xi\in \R^n$,
\[
|\hat{f}(\xi)| \leq C e^{-c |\xi|^{1/s}}.
\]
Conversely, a function whose Fourier transform satisfies such an estimate is $s$-Gevrey. He also initiated the study of Gevrey ultradistributions, i.e. the continuous linear functionals on spaces of Gevrey functions. Later on, Hikosaburo Komatsu \cite{Ko1,Ko2,Ko3} gave a systematic treatment of such objects.

Since Gevrey regularity can be characterized by the decay of Fourier transforms, it is only natural to expect that a specific version of microlocal analysis can be developed for this class of regularity. Such a study was inaugurated less than a decade later by Louis Boutet de Monvel and Paul Krée in \cite{Boutet-Kree-67}. They studied an algebra of pseudo-differential operators on $\R^n$ whose symbols $\sigma$ satisfy estimates of the form
\[
|\partial_x^\alpha \partial_\xi^\beta \sigma|\leq C R^{|\alpha|+|\beta|} \alpha !^s \beta! \langle\xi\rangle^{m-|\beta|}\ ,
\]
for all multi-indices $\alpha,\beta\in \N^n$. In other words, the symbols are $s$-Gevrey in the $x$ variable, and real analytic in the $\xi$ variable. We will denote this algebra by $\mathcal{G}^s_x\mathcal{G}^1_\xi\Psi$. It contains the differential operators with $s$-Gevrey coefficients. They prove that the usual statement of $C^\infty$ pseudo-differential calculus can be adapted, so that:
\begin{itemize}
	\item modulo some compactness of support assumption, $\G^s_x\G^1_\xi \Psi$ indeed forms an algebra;
	\item the usual $\O(\langle\xi\rangle^{-\infty})$ remainders, and $C^\infty$-smoothing properties can be replaced by $\O(\exp(- \jap{\xi}^{1/s}/C))$ remainders and $\G^s$ smoothing properties. In particular,
	\item given $P\in \G^s_x\G^1_\xi\Psi$ elliptic, there exists $E$ another such operator, so that $PE- I$ is $\G^s$ smoothing ($E$ is called a parametrix for $P$).
\end{itemize}

With the development and popularization of microlocal analysis, their study was furthered by many other authors. Notably, a group of Italian mathematicians, and among them Zanghirati \cite{zanghiratiPseudodifferentialOperatorsInfinite1985} and Rodino \cite{Rodino-93-book}. When compared with \cite{Boutet-Kree-67}, the main innovation is the following: they study pseudo-differential operators whose order is not finite, such as $\exp t (-\Delta)^{\epsilon}$, with $\epsilon>0$ small enough. This would not make sense when working with $C^\infty$ regularity. 

The striking result of Bardos, Lebeau and Rauch \cite{Bardos-Lebeau-Rauch-87} created new interest for Gevrey microlocal analysis. The theory of propagation of singularities was established by several authors, notably Yoshinori Morimoto with Kazuo Taniguchi \cite{Morimoto-Taniguchi-85} and Lascar \cite{B-Lascar-85}. In the latter, a different class of operators is considered, corresponding to symbols satisfying the estimates
\[
|\partial_x^\alpha\partial_\xi^\beta \sigma| \leq C R^{|\alpha|+|\beta|} (\alpha! \beta!)^s \langle\xi\rangle^{m-|\beta|}.
\]
Now, the symbol is Gevrey in $x$ and $\xi$, and we will refer to such operators as $\G^s_x\G^s_\xi$ or simply $\Gs$ pseudors. When working with propagation of singularities, since Hamiltonian flows do not necessarily preserve the fibers in $T^\ast M$, they cannot preserve the mixed $\G^s_x \G^1_\xi$ regularity from before. It is thus natural to consider such operators.

Finally, we already mentioned \cite{lascarFBITransformsGevrey1997}, where appears a semi-classical -- i.e. with an $h$ -- version of the theory. In this situation, the remainders of size $\O(\exp(-\langle \xi \rangle^{1/s}/C))$ from \cite{Boutet-Kree-67} become remainders of size $\exp( - 1/Ch^{1/s})$. The purpose of Lascar \& Lascar was to study the FBI transform in Gevrey classes, to improve upon the result of Sj\"ostrand \cite{Sjostrand-96-convex-obstacle}, itself a refinement of \cite{Bardos-Lebeau-Rauch-87}. In some sense, the present article completes \cite{lascarFBITransformsGevrey1997} in several directions. 

We already mentioned several technical extensions we will provide with respect to the I-Lagrangian spaces method. In Chapter \ref{part:Gevrey-microlocal}, the reader can also find a turnkey theory of $\Gs$ pseudors, which we did not find in the literature. Indeed, while algebraic stability of $\Gs_x\G^1_\xi$ pseudors was considered, it is not the case of the $\Gs$ class. To our understanding, when $\Gs$ pseudors appeared in articles such as \cite{lascarFBITransformsGevrey1997,jungPhaseSpaceTunneling2000}, the origin of such an operator was either not explicitly mentioned or it was actually a $\G^1_x\Gs_\xi$ operator. Since there was only one $\Gs$ operator, the authors were not concerned with $\Gs\Psi$ as a class of operators. We have thus taken the opportunity to prove that the class of $\Gs$ pseudors is stable by composition, and several other results that enable one to practice microlocal analysis in that class. 

Let us come back to the I-Lagrangian spaces. When studying Gevrey PDE problems instead of analytic ones, we have to rely on almost-analytic extensions instead of holomorphic extensions. A central fact is that if $f$ is $s$-Gevrey, it admits $s$-Gevrey almost analytic extensions, that satisfy estimates in the form
\[
\left| \overline{\partial} f(x+iy) \right| \leq C \exp\left( - \frac{1}{ C |y|^{\frac{1}{s-1}}} \right).
\]
For this to be a $s$-Gevrey remainder $\O(\exp(-1/Ch^{1/s}))$, we have to impose that $|y|\leq C h^{1-1/s}$. In the analytic case, for the I-Lagrangian spaces, we work in a neighbourhood on size $1$ of the reals. In the $s$-Gevrey case, we have to consider shrinking complex neighbourhoods of the reals as $h\to 0$. For this reason, in the $s$-Gevrey case, instead of working with I-Lagrangians $\Lambda= e^{\tau H_G}(T^\ast M)$ with $G$ a symbol of order $1$, we will have to consider
\[
\Lambda = \exp( \tau h^{1-1/s} G )(T^\ast M), 
\]
with $G$ a symbol of order $1/s$. For a $s$-Gevrey Anosov flows, this means that we can at most gain a $\jap{\xi}^{1/s}$ subprincipal term, leading to $ns$-Schatten estimates, and Theorem \ref{thm:main}.

Let us mention a direction in which our work can certainly be extended. Given a logarithmically convex sequence $A=(A_k)_{k\geq 0}$ of positive real numbers, the associated Denjoy--Carleman class on $\R^n$ is the set of smooth functions $f$ such that for some constant $C>0$, and for all $\alpha \in \N^n$,
\[
|\partial^\alpha f(x)|\leq C^{1+\va{\alpha}} A_{\va{\alpha}} \alpha!\ .
\]
Denjoy--Carleman classes thus generalize Gevrey classes, since the $s$-Gevrey class is the $(k!^{s-1})_{k\geq 0}$--Denjoy--Carleman class. Since $A$ is logarithmically convex, the associated regularity can be characterized with the Fourier transform, as was observed in \cite{Roumieu-58-Fourier}. It follows that one can probably reformulate all our results in some of these more general classes. In order to extend our results to more general Denjoy--Carleman classes, one would need to understand the almost analytic extension of functions in these classes. A discussion of this topic may be found in \cite{dyncprimekinPseudoanalyticContinuationSmooth1976}.

In another work of the second author \cite{jezequelGlobalTraceFormula2019}, it was observed that to obtain global trace formulae for flows, it was sufficient to work in Denjoy--Carleman classes much less regular than Gevrey. It is also probably the case that for Anosov diffeomorphisms, the right scale of regularity necessary to obtain Schatten estimates is much less than Gevrey, using I-Lagrangian spaces or another technique.

\subsection*{Structure of the paper}

Chapter \ref{part:Gevrey-microlocal} is devoted to recalling some basic facts about the Gevrey regularity and describing the algebra of pseudo-differential operators with Gevrey symbols acting on manifolds. Chapter \ref{part:FBI} of this monograph will be devoted to describing an FBI transform suited to our needs. To this end, we will revisit and expand the  \S 1 of \cite{Sjostrand-96-convex-obstacle}. This is quite independent of Anosov flows, and we hope that it can be usefully applied to other situations. Finally, we will apply this tool to the study of Gevrey Anosov flows in Chapter \ref{part:Anosov-flow} and prove in particular Theorem \ref{thm:main}.

Throughout the paper $h > 0$ will denote a small implicit parameter thought as tending to $0$.

\subsection*{Acknowledgements}

Thanks to M. Zworski's suggestion to use H\"ormander's $\overline{\partial}$ trick, we were able to include the analytic case ($s=1$), we are in his debt.

We would also like to thank Baptiste Morisse, San V\~{u} Ng\d{o}c and Johannes Sj\"ostrand for inspiring discussions.

Most of this work has been done while the second author was supported by the European Research Council (ERC) under the European Union’s Horizon 2020 research and innovation programme (grant agreement No 787304). Part of this work was accomplished during the second author's residence at the Mathematical Sciences Research Institute in Berkeley, California, during the Fall 2019 Micro-local Analysis Program supported by the National Science Foundation under Grant No. DMS-1440140. The second author benefits from the support of the French government “Investissements d’Avenir” program integrated to France 2030, bearing the following reference ANR-11-LABX-0020-01.

\renewcommand{\theequation}{\arabic{chapter}.\arabic{equation}}
\setcounter{equation}{0}

\chapter{Gevrey microlocal analysis on manifolds}
\label{part:Gevrey-microlocal}

This first chapter is divided in three sections. In \S \ref{sec:def_Gev}, the reader will find definitions, and some useful lemmas about spaces of Gevrey functions, symbols, and almost-analytic extensions thereof. In \S \ref{sec:oscillatory-integrals}, we present some techniques to deal with Gevrey oscillating integrals. In \S \ref{sec:toolbox_gevrey_pseudor}, we review the basic elements of a Gevrey theory of semi-classical microlocal analysis. It is in this section that we present the details of the algebraic properties of $\Gs$ pseudors announced in the introduction. 

Some classical microlocal questions that we do not deal with are the following: 
\begin{itemize}
	\item the existence of a good parametrix in the $\Gs$ class when $s > 1$;
	\item the related question of the functional calculus in that class;
	\item propagation of singularities and radial estimates.
\end{itemize}

\section{Gevrey spaces of functions and symbols}\label{sec:def_Gev}

\subsection{Gevrey functions and ultradistributions on manifolds}
\label{sec:appendix-Gevrey-regularity}

\subsubsection{First definitions}
\label{sec:def-Gevrey}

We start by recalling the definition of Gevrey classes of regularity. Let $s \geq 1$ be fixed. Let $U$ be an open subset of $\R^n$. A function $f : U \to \C$ is said to be $s$-Gevrey if $f$ is $\mathcal{C}^\infty$ and if, for every compact subset $K$ of $U$, there are constants $C,R > 0$ such that, for all $\alpha \in \N^n$ and $x \in K$, we have
\begin{equation}\label{eqdefgev}
\begin{split}
\va{\partial^\alpha f (x)} \leq C R^{\va{\alpha}} \alpha!^s.
\end{split}
\end{equation}
Notice that when $s = 1$, we retrieve the class of real-analytic functions on $U$. When $s > 1$, the class of $s$-Gevrey functions is non-quasianalytic: it contains compactly supported functions. We will often write $\G^s$ instead of $s$-Gevrey and denote by $\G^s\p{U}$ the space of $\G^s$ functions on $U$ and by $\G_c^s\p{U}$ the space of compactly supported elements of $\G^s\p{U}$. The non-quasianalyticity of $\G^s$ when $s > 1$ implies in particular that there are $\G^s$ partitions of unity. As pointed out in the introduction, the inverse of the constant $R$ in \eqref{eqdefgev} will sometimes be called a $\G^s$ radius of convergence.

The definition above extends immediately to the case of Banach valued functions. A function $f$ from $U$ to some Banach space $B$ is said to be $\G^s$ if \eqref{eqdefgev} holds with the modulus replaced by the norm of $B$. With this definition, the class of $\G^s$ functions is stable by composition, as was proved by Gevrey in his original paper \cite{Gevrey-18}. In fact, the class of $\G^s$ functions is very well-behaved and, when $s > 1$, quite flexible: for instance, it is Cartesian closed, stable by differentiation, solving ODEs, Implicit Function Theorem, etc, and there are versions of Borel's and Whitney's theorem for $\G^s$ functions (see \cite{Kriegl-Michor-Rainer-09} for details).

Since the class $\G^s$ is stable by composition, we have a natural definition of a $\G^s$ structure on a manifold: a $\G^s$ manifold is a Hausdorff topological space with countable basis endowed with a maximal $\G^s$ atlas. Here, a $\G^s$ atlas is defined to be an atlas with $\G^s$ change of charts (notice that we retrieve the usual notion of real-analytic manifold when $s=1$). As usual, if $M$ and $N$ are two $\G^s$ manifolds, then a map $f : M \to N$ is said to be $\G^s$ if it is $\G^s$ ``in charts". Since the Implicit Function Theorem holds in the class $\G^s$, most elementary results from differential geometry are easily checked to be true in the $\G^s$ category. In particular, there is a well-defined notion of $\G^s$ (vector-)bundle, and each usual bundle associated with a $\G^s$ manifold $M$ (tangent, cotangent, etc) admits a natural $\G^s$ structure. As a consequence, it makes sense to say that a vector field over $M$ is $\G^s$.

\begin{remark}\label{remark:structure_analytique}
Of course, a real-analytic manifold has a natural structure of $\G^s$ manifold, since real-analytic maps are $\G^s$. As pointed out in \cite{lascarFBITransformsGevrey1997}, all $\G^s$ manifolds may be described in this way. Indeed, there is a Gevrey version of the famous Whitney's Embedding Theorem \cite{whitneyDifferentiableManifolds} : \emph{every $\G^s$ compact manifold is $\G^s$-diffeomorphic to a real-analytic submanifold of an Euclidean space.} The adaptation of the proof of Whitney's Theorem to our setting is straightforward. Since the Inverse Function Theorem holds in the $\G^s$ category, it suffices to follow the lines of the proof of \cite[Chapter 4, Theorem 7.1]{hirschDifferentialTopology1994}, replacing $C^\infty$ by $\G^s$ at every step. This embedding produces an analytic structure on any compact $\G^s$ manifold, compatible with its $\G^s$ structure. 
\end{remark}

We want now to define ultradistributions on a $\G^s$ manifold $M$. To do so, we need to give a structure of topological vector space to the space $\G^s_c\p{M}$ of compactly supported $\G^s$ functions on $M$. If $\p{U,\kappa}$ is a $\G^s$ chart for $M$ and $K$ a relatively compact subset of $U$, then we define for every $R > 0$ and function $f$, infinitely differentiable on a neighbourhood of $K$, the semi-norm
\begin{equation}\label{eqnormrn}
\begin{split}
\n{f}_{s,R,K} \coloneqq \sup_{\substack{x \in \kappa\p{K} \\ \alpha \in \N^n}} \frac{\va{\partial^{\alpha}\p{f \circ \kappa^{-1}} \p{x}}}{R^{\va{\alpha}}\alpha!^s}.
\end{split}
\end{equation}
We extend this definition to any relatively compact subset $K$ of $M$ by covering $K$ by a finite number $K_1,\dots,K_N$ of compact sets included in some domains of charts and setting
\begin{equation*}
\begin{split}
\n{f}_{s,R,K} \coloneqq \sum_{j = 1}^N \n{f}_{s,R,K_j}.
\end{split}
\end{equation*}
Then, if $K$ is a relatively compact subset of $M$ and $R > 0$, we define $\widetilde{E}^{s,R}\p{K}$ to be the Banach space of functions $f\in\mathcal{C}^\infty(M)$, supported in $K$, such that $\n{f}_{s,R,K} < + \infty$, endowed with the norm $\n{\cdot}_{s,R,K}$. For $s=1$, consider this definition as temporary, as we will give a more practical but equivalent scale of spaces of real analytic functions in the next section \S \ref{sec:Grauert}. 

Notice that these spaces heavily depend on the choices of charts that we made above. It is, however, not the case of the inductive limit, when $U$ is an open subset of $M$,
\begin{equation}\label{eqtopgsigma}
\begin{split}
\G^s_c\p{U} \coloneqq \varinjlim_{K \Subset U} \varinjlim_{R > 0} \widetilde{E}^{s,R}\p{K}.
\end{split}
\end{equation}
Here, the first limit is taken over compact subsets $K$ of $U$, and the inductive limit is taken in the category of locally convex topological vector spaces. Notice that the underlying set of this limit is indeed the set of compactly supported $\G^s$ functions on $U$. In particular, when $s=1$, if $U$ is not compact itself then $\G^s_c\p{U}$ is zero-dimensional. In the case that $M$ itself is compact, we will write $\G^s(M)$ for $\G^s_c(M)$.

We define the space $\U^s\p{U}$ of ultradistributions on $U$ to be the strong dual of $\G^s_c\p{U}$. Notice that when $U= M$ and $M$ is compact, there is a nice description on $\U^s\p{M}$, that we detail now when $s > 1$ (the case $s = 1$ will be dealt with in \S \ref{sec:Grauert}). Define for $R > 0$ the closure $E^{s,R}(M)$ of analytic functions in $\widetilde{E}^{s,R}(M)$. This way, the inclusion of $E^{s,R}(M)$ in $E^{s,R'}(M)$ when $R' > R$ has dense image, and thus induces an injection from $(E^{s,R'}(M))'$ to $E^{s,R}(M))'$. This will be important when constructing functional spaces of Gevrey ultradistributions. One can then prove that \eqref{eqtopgsigma} becomes using this new scale of spaces
\begin{equation*}
\G^s\p{M} = \varinjlim_{R > 0} E^{s,R}\p{M}.
\end{equation*}
Indeed, for every $R > 0$, there is $R' > 0$ such that $\widetilde{E}^{s,R}(M)$ is a subset of $E^{s,R'}(M)$. This is for instance a consequence of the analysis in Chapter \ref{part:FBI} (see in particular Theorem \ref{thm:existence-good-transform} and Lemma \ref{lemma:S_BMT}).

Then, using \cite{dieudonne_schwartz}, we see that $\U^s\p{M}$ identifies with the projective limit
\begin{equation}\label{eq:inductive_ultradistribution}
\begin{split}
\U^s\p{M} = \varprojlim_{R > 0} \p{E^{s,R}\p{M}}'.
\end{split}
\end{equation}
In particular, $\U^s\p{M}$ is a Fr\'echet space. It is a Roumieu-type space of ultradistributions. More details on Roumieu and Beurling spaces can be found in \cite{Braun-Meise-Taylor-90}. These topological considerations will play almost no role here, as we will work directly with the spaces $E^{s,R}\p{M})'$ most of the time.

\begin{remark}\label{remark:choix_de_cartes}
The fact that the limit \eqref{eqtopgsigma} does not depend on the choice of charts in the definition of the norms $\n{\cdot}_{s,R,K}$ follows from the stability by composition of the class $\G^s$. Indeed, if we denote by $(\n{\cdot}_{s,R,K}')_{R > 0}$ the family of norms that is obtained with another choice of charts, one sees (applying Faa di Bruno's formula or adapting the original proof of Gevrey \cite{Gevrey-18}) that there is a constant $C > 0$ such that for every $R > 0$ we have
\begin{equation}\label{eq:presque_equivalence}
\begin{split}
\n{\cdot}_{s,C \max\p{1,R},K} \leq C \n{\cdot}_{s,R,K}' \textup{ and }  \n{\cdot}_{s,C \max\p{1,R},K}' \leq C \n{\cdot}_{s,R,K}.
\end{split}
\end{equation}
Consequently, when $U$ is an open subset of $\R^n$, we may and will assume that the chart $\kappa$ used to define the norms $\n{\cdot}_{s,R,K}$ is the inclusion of $U$ in $\R^n$.
\end{remark}

\begin{remark}
Let us explain how we will use Landau notations. We assume that $M$ is a compact manifold. If $w : \R_+^* \to \R_+^*$ is a function and $\p{u_h}_{h > 0}$ a family of function on $M$ depending on a small parameter $h > 0$, we will oftentimes write that $u_h$ is an $\O\p{w(h)}$ in $\G^s\p{M}$ (or even in $\G^s$), or that $u= \O_{\G^s}(w(h))$. It has to be understood in the following way: there is an $R > 0$, not depending on $h$, such that $u_h$ is an $\O\p{w(h)}$ in $E^{s,R}\p{M}$. 

If $U$ is a subset of $M$, we will also write that $u_h$ is an $\O\p{w(h)}$ in $\G^s$ \emph{outside of $U$}. It means that there is an $R > 0$ such that
\begin{equation*}
\begin{split}
\n{u_h}_{s,R,M \setminus U} = \O\p{w(h)}.
\end{split}
\end{equation*}
Notice that we retrieve the previous case by taking $U = \emptyset$. We will not be concerned with regularity with respect to the parameter $h$. However, when there are other parameters, in a function $f$ of $(x,y;h)$ for example, more care is required. We will thus use the notation $f_h(x,y) = \O_{\G^s_x}\p{w(h,y)}$ to indicate that for every $y$ the function $x \mapsto f(x,y)$ is an $\O_{\G^s}\p{w(h,y)}$, with constants uniform in $y$. It is not to be confused with $f_h(x,y)=\O_{\G^s}(w(h))$ which indicates Gevrey regularity in both $x$ and $y$.
\end{remark}

\begin{remark}\label{remark:Gs_sections}
Let $s \geq 1$. In order to discuss perturbations of $\G^s$ Anosov flows, we need to define a topology on the space of $\G^s$ sections of a $\G^s$ vector bundle. Let $p : F \to M$ denote a real $\G^s$ vector bundle on $M$ (the case of complex vector bundle is similar). Let $\p{U,\kappa}$ be a $\G^s$ chart for $M$ and $\p{\kappa,\Psi} : p^{-1}\p{U} \to \kappa\p{U} \times \R^d$ be a trivialization for $p : F \to M$. Then, if $K$ is a compact subset of $U$ and $R > 0$, we define for every $\mathcal{C}^\infty$ section $f$ of $F$ the semi-norm
\begin{equation*}
\begin{split}
\n{f}_{s,R,K} = \sup_{\substack{x \in \kappa\p{K} \\ \alpha \in \N^n}} \frac{\n{\partial^\alpha \Psi\p{ f \circ \kappa^{-1}} (x)}}{R^{\va{\alpha}} \alpha!^s},
\end{split}
\end{equation*}
where $\n{\cdot}$ denotes any norm on $\R^d$. Using this semi-norm to replace \eqref{eqnormrn}, the case of sections of the vector bundle $F$ is dealt with as the case of the trivial line bundle over $M$. In particular, when $M$ is a compact manifold, we have a definition of the spaces $E^{s,R}\p{M;F}$, for $R > 0$, and a topology on the space $\G^s\p{M;F}$ of $\G^s$ sections of $F$.
\end{remark}

\subsubsection{The particular case of real-analytic functions}\label{sec:Grauert}

We want now to rewrite the definitions of the previous paragraph in a way that may be more intuitive, in the case $s=1$. Indeed, it may seem more natural to describe real analytic functions as restrictions of holomorphic functions. If $K$ is a compact subset of $\R^n$, then we see that a smooth function $f$ defined on a neighbourhood of $K$ such that $\n{f}_{1,R,K} < + \infty$ admits a holomorphic extension to a complex neighbourhood of $K$ of size $\p{CR}^{-1}$ (for some $C > 0$ that does not depend on $R$), the $L^\infty$ norm of this extension being bounded by $C\n{f}_{1,R,K}$. Reciprocally, if $f$ admits a bounded holomorphic extension to a complex neighbourhood of size $C R^{-1}$, then $\n{f}_{1,R,K}$ is finite and controlled by the $L ^\infty$ norm of this extension (independently on $R$). We will now explain how this remark generalizes to the case of compact manifold.

Let then $M$ be a compact real-analytic manifold of dimension $n$. By a result of François Bruhat and Hassler Whitney \cite{Bruhat-Whitney-59}, the manifold $M$ admits a complexification $\smash{\widetilde{M}}$. That is, $\smash{\widetilde{M}}$ is a holomorphic manifold of complex dimension $n$ endowed with a real-analytic embedding $M \subseteq \widetilde{M}$, such that $M$ is a totally real submanifold of $\smash{\widetilde{M}}$. This means that at each $p\in M$, we have $T_p M \cap i T_p M = \{0\}$. It follows then that if $N$ is a complex manifold and $f : M \to N$ is a real-analytic map, then $f$ extends to a holomorphic map from a neighbourhood of $M$ in $\smash{\widetilde{M}}$ to $N$. In particular, if $\smash{\widetilde{M}'}$ is another complexification for $M$ then the identity of $M$ extends to a biholomorphism between a neighbourhood of $M$ in $\smash{\widetilde{M}}$ and a neighbourhood of $M$ in $\smash{\widetilde{M}'}$.

\begin{remark}\label{remark:analytique_intrinseque}
If $\smash{\widetilde{M}}$ is a complexification for $M$, let $B(\widetilde{M})$ denote the space of bounded holomorphic functions on $\smash{\widetilde{M}}$. Then, we may give a new definition of the space of real-analytic functions on $M$ by
\begin{equation*}
\begin{split}
\G^1\p{M} = \varinjlim_{\widetilde{M}} B\big(\widetilde{M}\big).
\end{split}
\end{equation*}
Here, the inductive limit (in the category of locally convex topological vector spaces) is over all the complexifications $\smash{\widetilde{M}}$ of $M$. This coincides with \eqref{eqtopgsigma} when $s = 1$. This definition may be quite appealing because it is very intrinsic. However, we will rather use a more concrete description of $\G^1(M)$, that boils down to choosing a particular basis of complex neighbourhoods for $M$.
\end{remark}

We will use particular complexifications of $M$ called Grauert tubes. The notion of Grauert tube first appeared in \cite{grauertLeviProblemImbedding1958}, but we will rely on the exposition from 
\cite{guilleminGrauertTubesHomogeneous1991}. First, according to \cite{morreyAnalyticEmbeddingAbstract1958}, there is a real-analytic embedding of $M$ into an Euclidean space. Hence, we may choose a real-analytic Riemannian metric $g$ on $M$. According to \cite{Bruhat-Whitney-59}, there exists a complexification $\smash{\widetilde{M}}$ of $M$ endowed with an anti-holomorphic involution $z \mapsto \bar{z}$ such that $M$ is the set of fixed point of $z \mapsto \bar{z}$. Then, since the square of the distance induced by $g$ on $M$ is real-analytic near the diagonal, it extends to a holomorphic function on a neighbourhood of the diagonal of $M$ in $\smash{\widetilde{M} \times \widetilde{M}}$. Following \cite{guilleminGrauertTubesHomogeneous1991}, we define $\rho$ on $\smash{\widetilde{M}}$ (up to taking $\smash{\widetilde{M}}$ smaller) by
\begin{equation*}
\begin{split}
\rho(z) = - \frac{1}{4} d(z,\bar{z})^2.
\end{split}
\end{equation*}
From \cite{guilleminGrauertTubesHomogeneous1991}, we know that $\rho$ defines a strictly plurisubharmonic function on $\smash{\widetilde{M}}$ such that $\smash{M = \{z \in \widetilde{M} : \rho(z) = 0\}}$. Then, if $\epsilon > 0$ is small, we define the Grauert tube $(M)_\epsilon$ as the sub-level of $\rho$:
\begin{equation}\label{eq:def_Grauert_tube}
\begin{split}
(M)_\epsilon \coloneqq \set{z \in \widetilde{M} : \rho(z) < \epsilon^2}.
\end{split}
\end{equation}
Notice that, since $\rho$ is strictly plurisubharmonic, the Grauert tube $(M)_\epsilon$ is strictly pseudo-convex. Moreover, the real $(1,1)$-form $i \partial \bar{\partial} \rho$ is Kähler and the associated hermitian form coincides with $g$ on $M$. We will consequently still denote this hermitian form by $g$.

Using the notion of Grauert tube, we can replace the spaces $\widetilde{E}^{1,R}\p{M}$ that appeared in \S \ref{sec:def-Gevrey} with a more convenient scale. If $R \geq 1$ is large enough, denote by $\widetilde{E}^{1,R}\p{M}$ the space of bounded holomorphic functions on $(M)_{1/R}$ (endowed with the sup norm). Here, we only work with $R$ large enough so that $(M)_{1/R}$ is well-defined and $\widetilde{E}^{1,R}(M)$ is non-trivial. The spaces $\widetilde{E}^{1,R}\p{M}$ defined in this way do not need to coincide with those of \S \ref{sec:def-Gevrey}, but they give rise to the same inductive limit (in the category of locally convex topological vector spaces):
\begin{equation}\label{eq:description_gone}
\begin{split}
\G^1\p{M} = \varinjlim_{R \to + \infty} \widetilde{E}^{1,R}\p{M}.
\end{split}
\end{equation}
Hence, we will always assume that we use this definition of the spaces $\widetilde{E}^{s,R}\p{M}$ in the case $s=1$. To see that we get the same topology on $\G^1\p{M}$ as in \S \ref{sec:def-Gevrey}, work as in Remark \ref{remark:choix_de_cartes} (using Cauchy's and Taylor's formulae to prove an estimate similar to \eqref{eq:presque_equivalence}). As in the case $s > 1$, it will be convenient to work with slightly smaller spaces than the $\widetilde{E}^{1,R}(M)$. Hence, we define $E^{1,R}(M)$ as the closure of $\widetilde{E}^{1,R'}(M)$ in $\widetilde{E}^{1,R}(M)$ for $R' > R$. This definition makes sense when $R$ is large enough (so that we can choose $R' > R$ with $\smash{(M)_{1/R'}}$ well defined) and it follows from the Oka--Weil Theorem that it does not depend on the choice of $R'$. Notice that we can then replace $\widetilde{E}^{1,R}(M)$ by $E^{1,R}(M)$ in \eqref{eq:description_gone} and that \eqref{eq:inductive_ultradistribution} also holds when $s = 1$. However, the choice of topologies on $\G^1\p{M}$ and $\U^1(M)$ is not very important, since we will have to work directly with the spaces $E^{1,R}\p{M}$ and $(E^{1,R}(M))'$ in order to get the best order for the dynamical determinant in Theorem \ref{thm:main}. As in the case $s > 1$, the point of using smaller spaces is that if $R < R'$ then we have an injection of $\smash{(E^{1,R'}(M))'}$ in $\smash{(E^{1,R}(M))'}$.

Let us point out that there is another way to describe Grauert tubes. Indeed, since the Riemannian metric $g$ is real-analytic, so is its exponential map. Consequently, if $x$ is a point in $M$, then $\exp_x : T_x M \to M$ extends to a holomorphic map, still denoted by $\exp_x$, from a neighbourhood of $0$ in $T_x M \otimes \C$ to $\smash{\widetilde{M}}$. Then, the map
\begin{equation}\label{eq:exp_Grauert}
\begin{split}
(x,v) \mapsto \exp_x(iv)
\end{split}
\end{equation}
defines a real analytic diffeomorphism between a neighbourhood of the zero section in $TM$ and a neighbourhood of $M$ in $\smash{\widetilde{M}}$. For $\epsilon > 0$ small enough, under the map in \eqref{eq:exp_Grauert}, the Grauert tube $(M)_\epsilon$ is the image of
\begin{equation*}
\begin{split}
\set{(x,v) \in TM : g(v,v) < \epsilon^2}
\end{split}
\end{equation*}
by the map \eqref{eq:exp_Grauert}. With this description of the Grauert tube $(M)_\epsilon$, we see that the projection $TM \to M$ induces a real-analytic projection from $(M)_\epsilon$ to $M$, that we will denote by $\Re$. We define also the function $\va{\Im}: (M)_\epsilon \to \R_+$ as the square root of $\rho$. We will sometimes, slightly abusively, write $\va{\Im z}$ instead of $\va{\Im}(z)$.

Since $M$ is compact, we could have chosen any decreasing basis of neighbourhoods for $M$ in $\smash{\widetilde{M}}$ to define the spaces $E^{1,R}\p{M}$. However, we will need to consider real-analytic functions defined on $T^* M$ (for instance symbols) or more generally on products of the type $M^{N_1} \times \p{T^* M}^{N_2}$. Since these manifolds are non-compact, the choice of a complex neighbourhood for $T^* M$ becomes non-trivial. As we want to consider symbols on $T^* M$, it seems natural to introduce Grauert tubes using a metric adapted to the Kohn--Nirenberg classes of symbol. Consequently, we define a metric $g_{KN}$ on $T^* M$, that will be called Kohn--Nirenberg metric, in the following way. The Levi--Civita connexion associated with $g$ gives a splitting $T(T^\ast M) = V \oplus H$ into vertical and horizontal bundles, where both subbundles are identified with $TM$, so that we can define
\[
g_{KN}(x,\xi)\coloneqq g_H(x,\xi) + \frac{1}{1 + |\xi|^2_x} g_V(x,\xi).
\] 
In charts, it is uniformly equivalent to its flat version
\[
g_{KN}^{\text{flat}} = \mathrm{d}x^2 + \frac{1}{1+\xi^2} \mathrm{d}\xi^2.
\]
The curvature of $g_{KN}$ is bounded, and so are all its covariant derivatives, as one can check using the tools presented in \cite{Gudmundsson-Kappos}, and one can check that it admits Grauert tubes, which look like conical neighbourhoods of $T^\ast M$ at infinity. 

More precisely, the cotangent space $T^* \smash{\widetilde{M}}$ of $\smash{\widetilde{M}}$ is a complexification of $T^* M$. Notice that there is a natural inclusion of $T^* M \otimes \C$ into $\smash{T^* \widetilde{M}}$ and that the anti-holomorphic involution that fixes $T^* M$ is given on $T^* M \otimes \C$ by $(x,\xi) \mapsto (x,\bar{\xi})$. As above, we find a strictly plurisubharmonic function $\rho_{KN}$ defined on a neighbourhood of $T^*M $ in $\smash{T^* \widetilde{M}}$. Then, mimicking \eqref{eq:def_Grauert_tube}, we set for $\epsilon > 0$ small enough
\begin{equation*}
\begin{split}
\p{T^* M}_\epsilon \coloneqq \set{ \alpha \in T^* \widetilde{M} : \rho_{KN}(\alpha) < \epsilon^2}.
\end{split}
\end{equation*}
As in the compact case, $\va{\Im \alpha}$ will denote the square root of $\rho_{KN}(\alpha)$. To describe these tubes in more concrete terms, we may examine them in local coordinates. Given a real-analytic chart for $M$, it extends holomorphically to a chart for $\smash{\widetilde{M}}$. It hence defines a holomorphic trivialization for $\smash{T^* \widetilde{M}}$, mapping $T^* M$ on $T^* \R^n$. If we denote these coordinates by $\tilde{x} = x + iy, \tilde{\xi} = \xi + i \eta$, then for a point $\alpha \in T^* \widetilde{M}$ that writes $(\tilde{x},\tilde{\xi})$ in local coordinates, the quantity $\va{\Im \alpha}$ is uniformly equivalent to 
\begin{equation*}
\begin{split}
\va{\Im \alpha} \asymp \va{y} + \frac{\va{\eta}}{\jap{\xi}}
\end{split}
\end{equation*}
when $\alpha$ remains in a small conical neighbourhood of $T^* M$. This gives a rough but tractable idea of the shape of Grauert tubes in local coordinates. 

Let us discuss some others notations. Since $x\mapsto g_x$ is a K\"ahler metric, the map $\alpha\mapsto g_{\alpha_x}(\alpha_\xi,\alpha_\xi)$ is real analytic and non-negative. On the other hand, we can consider the holomorphic extension $\tilde{g}$ of $g$, so that $\alpha\mapsto \tilde{g}_{\alpha_x}(\alpha_\xi,\alpha_\xi)$ is a holomorphic map. With the determination of the square root positive on the reals, we define for $\alpha$ in $\smash{T^* \widetilde{M}}$ the Japanese brackets
\begin{equation}\label{eq:def_jap_brack}
\begin{split}
\jap{\alpha}  = \sqrt{1 + \tilde{g}_{\alpha_x}(\alpha_\xi,\alpha_\xi)} \textup{ and } \jap{\va{\alpha}} = \sqrt{1 + g_{\alpha_x}(\alpha_\xi,\alpha_\xi)}.
\end{split}
\end{equation}
Hence, $\jap{\alpha}$ is holomorphic in $\alpha$, while $\jap{\va{\alpha}}$ is not. However, notice that on a Grauert tube $\p{T^* M}_\epsilon$, for $\epsilon > 0$ small enough, the positive quantities $\jap{\va{\alpha}},\va{\jap{\alpha}}$ and $\Re \jap{\alpha}$ are uniformly equivalent. Notice also that we may define a Kohn--Nirenberg metric on $T^* \widetilde{M}$ (since $T^* \widetilde{M}$ identifies with the cotangent bundle of $\smash{\widetilde{M}}$ seen as a real-analytic manifold).

Finally, let us mention that we will use simpler definitions of the notions above when working on $\R^n$. If $U$ is an open subset of $R^n$ and $\epsilon > 0$ small, we define the complex neighbourhood
\begin{equation}\label{eq:Grauert_rn}
\begin{split}
\p{T^* U}_\epsilon = \set{ (x,\xi) \in \C^n : d(x,U) < \epsilon \textup{ and } \va{\Im \xi} < \epsilon \jap{\Re \xi}}
\end{split}
\end{equation}
of $T^* U$. We think of $\p{T^* U}_\epsilon$ as (an approximation of) the Grauert tube of $T^* U$ for the Kohn--Nirenberg metric, even if this case is not covered by our discussion above (since $U$ is not a compact manifold). Similarly, for $\xi \in \C^n$ with $\Re \jap{\xi,\xi} \geq 0$ we define the Japanese brackets (we take the determination of the square root that is positive on $\R_+^*$)
\begin{equation*}
\begin{split}
\jap{\xi} = \sqrt{1 + \jap{\xi,\xi}} \textup{ and } \jap{\va{\xi}} = \sqrt{1 + \n{\xi}^2}.
\end{split}
\end{equation*}
Here, the norm in the definition of $\jap{\va{\xi}}$ is the one given by the identification $\C^n \simeq \R^{2n}$, while $\jap{\xi,\xi} = \sum_{j = 1}^n \xi_j^2$ if $\xi = (\xi_1,\dots,\xi_n)$. Hence $\jap{\xi}$ is holomorphic but $\jap{\va{\xi}}$ is not. As in the manifold case, for $U$ an open subset of $\R^n$ and $\p{x,\xi} \in (T^* U)_\epsilon$, the quantities $\jap{\va{\xi}}, \jap{\Re \xi}, \va{\jap{\xi}}$ and $\Re \jap{\xi}$ are equivalent.

\subsubsection{Almost analytic extensions}

Let $\p{M,g}$ be a compact real-analytic Riemannian manifold. To study deformations in the Grauert tube of $M$ (or of $T^\ast M$), we will make extensive use of almost-analytic extensions of smooth functions on $M$. The notion of almost analytic extension was introduced by Lars H\"ormander \cite{hormanderLectureNotes1969} and then by Louis Nirenberg \cite{nirenbergProofMalgrangePreparation1971}. It has become a very common notion in microlocal analysis, and are essential in \cite{Melin-Sjostrand-75} for instance.

Recall that if $f$ is a $\mathcal{C}^\infty$ function on $M$ then an almost-analytic extension for $f$ is a compactly supported $\mathcal{C}^\infty$ functions $\tilde{f}$ on some $(M)_\epsilon$ that coincides with $f$ on $M$ and such that $\bar{\partial} \tilde{f}$ vanishes to all orders on $M$. It is classical that such a $\tilde{f}$ exists \cite[Theorem 3.6]{zworskiSemiclassicalAnalysis2012}. While this is hardly surprising, it will be crucial in our analysis that if $f$ is $\G^s$ then $\tilde{f}$ may be chosen $\G^s$ as well. This will allow us to make the flatness of $\bar{\partial} \tilde{f}$ near $M$ quantitative using Lemma \ref{lemma:decay-extension-gevrey} below. 

Observe that in \cite{furdosAlmostAnalyticExtensions2019} (for example), weaker forms of almost analytic extensions were obtained with the adequate decay. However, they are only $C^1$, which is not sufficient for our purposes. One can also find in \cite{jungPhaseSpaceTunneling2000} another notion of almost analytic extension: instead of working with a single function $\tilde{f}$ such that $\bar{\partial}\tilde{f}$ vanishes to all orders on $M$, Jung introduces a family of functions $(\tilde{f}_h)_{h > 0}$, where $f_h$ is defined on a tube whose size depends on $h$ and such that $\bar{\partial}f_h$ vanishes when $h$ tends to $0$.

We start by proving that Gevrey functions admit Gevrey almost analytic extensions.
\begin{lemma}\label{lemma:almost-analytic-extension-Gs}
Given a Grauert tube $(M)_\epsilon \supset M$, for each $s>1$, there exists a constant $C_s$ and a compact neighbourhood $K \subseteq (M)_\epsilon$ of $M$ such that for all $R \geq 1$, there exists a bounded map $f \mapsto \tilde{f}$ from $\widetilde{E}^{s,R}(M)$ to $\widetilde{E}^{s,C_s R}(K)$, such that $\tilde{f}$ is an almost-analytic extension for $f$.
\end{lemma}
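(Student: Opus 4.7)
The plan is to reduce to the flat case via a $\G^s$ partition of unity $(\chi_j)$ subordinate to a finite cover of $M$ by real-analytic charts $(U_j,\kappa_j)$. Such a partition exists because $s>1$ makes $\G^s$ non-quasianalytic, and it is fixed once and for all independently of $R$, so that multiplication by a fixed $\chi_j$ and composition with the real-analytic change of charts act boundedly from $E^{s,R}$ to $E^{s,CR}$ with a constant $C$ depending only on the chart data and on $s$. It therefore suffices to construct, for each compactly supported $f\in E^{s,R}(K_0)$ with $K_0\Subset\R^n$, an almost analytic extension $\tilde f$ supported in a fixed complex neighbourhood $K_1$ of $K_0$ and satisfying $\|\tilde f\|_{s,C_s R,K_1}\le C\|f\|_{s,R,K_0}$; the global extension is then reassembled as $\tilde f = \sum_j \tilde\chi_j\cdot\widetilde{(\chi_j f)\circ\kappa_j^{-1}}\circ\Phi_j$, where $\Phi_j$ is the holomorphic extension of $\kappa_j$ to a neighbourhood in $(M)_\epsilon$ and $\tilde\chi_j$ is a fixed $\G^s$ cutoff localizing to the image of $\Phi_j$.

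In the flat case I would set
\[
\tilde f(x+iy) \;=\; \chi(y)\sum_{\alpha\in\N^n}\frac{(iy)^\alpha}{\alpha!}\,\partial^\alpha f(x)\,\phi\bigl(R\,|\alpha|^{s-1}\,y\bigr),
\]
where $\chi,\phi\in\G^s_c$ equal $1$ near $0$, with $\chi$ localizing $\tilde f$ to $K_1$. The scale $R|\alpha|^{s-1}$ inside $\phi$ is dictated by optimization of the Taylor remainder: from $|\partial^\alpha f|\le CR^{|\alpha|}\alpha!^s$ and Stirling's formula, the sharp truncation of the formal Taylor series along $y$ occurs at $|\alpha|\simeq (R|y|)^{-1/(s-1)}$, which is exactly what $\phi(R|\alpha|^{s-1}y)$ enforces. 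At $y=0$ the factor $(iy)^\alpha$ kills all terms but $\alpha=0$, so $\tilde f_{|\R^n}=f$; moreover the formal Taylor series is holomorphic in $x+iy$, hence $\bar\partial$ only hits the cutoffs $\chi$ and $\phi$, so $\bar\partial\tilde f$ vanishes to all orders on $\R^n$ (this is the same mechanism that quantitatively gives the decay $\exp(-c/|y|^{1/(s-1)})$ used in the subsequent decay lemma).

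The main obstacle is to verify that $\tilde f$ itself belongs to $E^{s,C_s R}(K_1)$. One differentiates the defining series in $(x,y)$, applies Leibniz to each summand, and bounds each term using $|\partial^{\beta+\alpha}f|\le CR^{|\beta+\alpha|}(\beta+\alpha)!^s$, the $\G^s$ regularity of $\phi$ and $\chi$, and the elementary inequality $(\beta+\alpha)!\le 2^{|\beta+\alpha|}\beta!\alpha!$. The delicate point is the $\alpha$-dependent dilation inside $\phi$: each $y$-derivative of $\phi(R|\alpha|^{s-1}y)$ brings down a factor $R|\alpha|^{s-1}$, which is not uniformly bounded in $\alpha$. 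One has to check that on the support of $\phi(R|\alpha|^{s-1}y)$, where $|y|\lesssim (R|\alpha|^{s-1})^{-1}$, the products $|y|^{|\alpha|}\,\alpha!^s R^{|\alpha|}\,(R|\alpha|^{s-1})^{|\gamma|}$ appearing after differentiation still yield a geometrically summable series in $\alpha$, and that the dependence on the outer multi-indices $(\beta,\gamma)$ packages into a clean bound $C\,\tilde R^{|\beta|+|\gamma|}(\beta!\gamma!)^s$ with $\tilde R\le C_s R$. A careful bookkeeping using Stirling's formula absorbs the $|\alpha|$-powers into the geometric factor on the support of $\phi$ and produces the desired constant $C_s$, depending only on $s$ and on the fixed cutoffs $\chi,\phi$.
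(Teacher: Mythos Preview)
Your reduction to the Euclidean case via a $\G^s$ partition of unity subordinate to real-analytic charts is exactly how the paper proceeds. Your flat construction, however, is genuinely different. You use the classical smoothed Taylor series
\[
\tilde f(x+iy) = \chi(y)\sum_{\alpha}\frac{(iy)^\alpha}{\alpha!}\partial^\alpha f(x)\,\phi\bigl(R|\alpha|^{s-1}y\bigr),
\]
with the $\alpha$-dependent cutoff enforcing the optimal Borel truncation at $|\alpha|\simeq (R|y|)^{-1/(s-1)}$. This is correct: for each $y$ the sum is finite, the restriction to $\R^n$ and the infinite-order vanishing of $\bar\partial\tilde f$ follow from the telescoping you describe, and the Gevrey bound on $\tilde f$ is, as you say, the only laborious step. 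The bookkeeping you outline does go through: after Leibniz, use $(\alpha+\beta)!\le 2^{|\alpha|+|\beta|}\alpha!\beta!$, note that only $\alpha\ge\gamma-\gamma'$ contribute, and on the support of the relevant derivative of $\phi(R|\alpha|^{s-1}y)$ trade each factor $R|\alpha|^{s-1}$ against a power of $|y|$; the remaining sum over $|\alpha|$ is controlled by Stirling and yields $C(C_sR)^{|\beta|+|\gamma|}(\beta!\gamma!)^s$ with $C_s$ independent of $R$.

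The paper instead invokes Carleson's solution of the universal moment problem: it builds orthogonal polynomials $(P_m)$ for the weight $w(t)=(1+t^2)^{-1}\exp(-2s|t|^{1/s}/e)$ and defines
\[
\tilde f(x+iy) = \chi(y)\int_{\R^n} e^{ib\langle t,y\rangle}\, L(s(x))(t)\,\bar w(t)\,\mathrm{d}t,
\]
where $L$ is a bounded operator from a sequence space (carrying the rescaled jets $s_\alpha(x)=\partial^\alpha f(x)/b^{|\alpha|}$) to the weighted $L^2(\bar w)$. The Gevrey estimates then follow cleanly from boundedness of $L$ and from Gevrey bounds on $y\mapsto (t\mapsto e^{ib\langle t,y\rangle})$ in that weighted $L^2$. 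Your route is more elementary and self-contained, and makes the link with ``sommation au plus petit terme'' transparent; the paper's route hides the combinatorics inside a single Hilbert-space boundedness statement, at the cost of importing Carleson's estimate $\sum_\ell |P_\ell^{(k)}(0)|^2\le C r^{2k}k!^{2(1-s)}$.
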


Covering $M$ by real-analytic charts and then using a $\G^{s}$ partition of unity, we reduce to the Euclidean case, that is, we only need to prove:

\begin{lemma}\label{lemma:aae_Gs_flat}
Let $s> 1$. Let $K$ be a compact subset of $\R^n$ and $K'$ be a compact neighbourhood of $K$ in $\C^n$. Then, there is $M > 0$, and for each $R > 0$ there is a continuous map $f \mapsto \tilde{f}$ from $\widetilde{E}^{s,R}\p{K}$ to $\widetilde{E}^{s,MR}\p{K'}$ such that for every $f \in \widetilde{E}^{s,R}\p{K}$, the function $\tilde{f}$ is an almost analytic extension for $f$.
\end{lemma}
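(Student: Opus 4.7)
My plan is to adapt the classical Hörmander--Mather--Dyn'kin almost analytic extension, where the cutoffs are rescaled so that only finitely many terms of the formal Taylor series contribute at each height $|\Im z|$, in the way dictated by the Gevrey growth. Fix once and for all a $\G^s$ bump $\chi : \R \to [0,1]$ equal to $1$ near $0$ and supported in $[-1,1]$ (such a $\chi$ exists since $s>1$), choose a small absolute constant $c>0$, and for each $\alpha \in \N^n$ set
\[
\chi_\alpha(y) := \chi\!\left(c^{-1} R\,(\va{\alpha}+1)^{s-1} \va{y}\right).
\]
The candidate extension is then
\[
\tilde f(x+iy) := \sum_{\alpha \in \N^n}\frac{(iy)^\alpha}{\alpha!}\,\partial^\alpha f(x)\,\chi_\alpha(y),
\]
defined on a slab $K' \subset \{x+iy : x\in K_\epsilon,\ \va{y}<\epsilon\}$, where $\epsilon$ is chosen in terms of $K'$. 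I would then have to verify four things: (i) the series converges to a smooth function with Gevrey norm controlled in terms of $R$; (ii) $\tilde f_{|\R^n}=f$, which is obvious since every $\alpha\neq 0$ term vanishes at $y=0$; (iii) $\bar\partial\tilde f$ is flat on $\R^n$ with the precise Gevrey-type decay; and (iv) the whole construction is continuous in $f$ (which is automatic from the pointwise linearity and the uniform bounds used in (i)).

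For (iii) the key cancellation is the Borel-type telescoping that underlies every almost analytic extension construction: writing $\bar\partial_j=\tfrac12(\partial_{x_j}+i\partial_{y_j})$ and applying it termwise, the parts where $\bar\partial$ hits $\partial^\alpha f\cdot(iy)^\alpha/\alpha!$ rearrange (shifting $\alpha$ by $e_j$) so that they would cancel completely were it not for the factor $\chi_\alpha$; what survives is exactly the sum of terms where $\bar\partial$ lands on $\chi_\alpha$. On the support of $\chi'_\alpha$ one has $\va{y} \sim (R(\va{\alpha}+1)^{s-1})^{-1}$, so each such term is bounded by
\[
\frac{\va{y}^{\va{\alpha}}}{\alpha!}\,CR^{\va{\alpha}}\alpha!^{s}\cdot \n{\chi'}_{L^\infty}R(\va{\alpha}+1)^{s-1} \ \leq\ C'\theta^{\va{\alpha}},
\]
with $\theta<1$ once $c$ is small, using $\alpha!^{s-1}\leq e^{\va{\alpha}}\va{\alpha}^{\va{\alpha}(s-1)}$. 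Summing the geometric tail over the indices $\va{\alpha}\gtrsim 1/(R\va{y})^{1/(s-1)}$ that actually contribute at height $\va{y}$ yields the stretched-exponential bound $\va{\bar\partial\tilde f(x+iy)}\leq C\exp(-c'\va{y}^{-1/(s-1)})$, which is more than enough flatness.

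For (i) I would estimate $\partial_x^\beta\partial_y^\gamma$ of each summand by the Leibniz rule, distributing derivatives among $\partial^\alpha f(x)$ (producing a factor $C R^{\va{\alpha}+\va{\beta}}(\alpha+\beta)!^s$), the polynomial $(iy)^\alpha/\alpha!$ (producing $\binom{\alpha}{\gamma'}$ and a factor $\va{y}^{\va{\alpha}-\va{\gamma'}}$), and the cutoff $\chi_\alpha$ (producing $\p{R(\va{\alpha}+1)^{s-1}}^{\va{\gamma''}}\va{\gamma''}!^{s}$ after rescaling). On the support of $\chi_\alpha$ the factor $\va{y}^{\va{\alpha}-\va{\gamma'}}$ kills precisely the right power of $\va{\alpha}^{(s-1)(\va{\alpha}-\va{\gamma'})}$ hidden in $\alpha!^{s-1}$, and the remaining combinatorics collapse to a bound of the form $C(AR)^{\va{\beta}+\va{\gamma}}(\va{\beta}+\va{\gamma})!^{s}\cdot \eta^{\va{\alpha}}$ with $\eta<1$, allowing summation in $\alpha$. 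The main technical obstacle is exactly this bookkeeping: one must track simultaneously the Gevrey blow-up in $\alpha$ coming from $f$, the Gevrey growth of derivatives of $\chi$ rescaled by $R(\va{\alpha}+1)^{s-1}$, and the binomial factors from Leibniz, and verify that they reassemble into a clean Gevrey-$s$ estimate with a constant $A$ independent of $R$---the calibration of the cutoff radius as $\sim (R(\va{\alpha}+1)^{s-1})^{-1}$ is what makes everything balance.
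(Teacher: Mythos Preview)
Your approach is correct and is the classical Dyn'kin--H\"ormander--Mather construction; it is genuinely different from what the paper does. The paper builds the extension via Carleson's solution of the universal moment problem: it introduces the weight $w(t)=(1+t^2)^{-1}e^{-2s|t|^{1/s}/e}$ and the associated orthogonal polynomials $(P_m)$, packages the derivatives $\partial^\alpha f(x)$ into a sequence in a weighted $\ell^2$ space, and defines $\tilde f(x+iy)$ by an integral of the form $\chi(y)\int e^{ib\langle t,y\rangle}L(s(x))(t)\,\bar w(t)\,\mathrm dt$, where $L$ is a bounded operator into $L^2(\bar w)$. The Gevrey-$s$ bound on $\tilde f$ then follows from two Hilbert-space estimates (one on $x\mapsto s(x)$, one on $y\mapsto e^{ib\langle\cdot,y\rangle}$), and the almost analyticity is checked by a one-line moment identity showing $\partial_x^\alpha\partial_y^\beta F(x)=i^{|\beta|}\partial^{\alpha+\beta}f(x)$ on the reals. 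Your route trades this external input (Carleson's estimate on $\sum_\ell |P_\ell^{(k)}(0)|^2$) for the self-contained but combinatorially heavier Leibniz bookkeeping you describe; both give the same linear dependence $R\mapsto AR$ with $A$ independent of $R$.

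One small correction to your sketch of (iii): after the index shift in the telescoping, what survives is not only the terms where $\bar\partial_j$ hits $\chi_\alpha$, but also the mismatch terms $\frac{(iy)^\alpha}{\alpha!}\partial^{\alpha+e_j}f(x)\bigl(\chi_\alpha(y)-\chi_{\alpha+e_j}(y)\bigr)$ coming from the fact that consecutive cutoffs differ. Both families of terms are supported where $|y|\sim c\bigl(R(|\alpha|+1)^{s-1}\bigr)^{-1}$ and satisfy the same geometric bound $C'\theta^{|\alpha|}$, so the stretched-exponential estimate on $\bar\partial\tilde f$ goes through exactly as you indicate.
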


It seems to be folklore that this may be deduced from results of Lennart Carleson on universal moment problems \cite{carlesonUniversalMomentProblems1961}, but we are not aware of any reference containing a proof, and thus we provide one.

\begin{proof}
First of all, we need to extend to higher dimensions the one dimensional results of Carleson. This is quite straightforward, since most difficulties are already present in the one dimensional case.  For $x \in \R$ we define
\[
w(x) = \p{1 + x^2}^{-1} \exp\p{- \frac{2 s \va{x}^{\frac{1}{s}}}{e}}.
\]
Let us denote by $H$ the Hilbert space of measurable functions (up to modification on zero measure sets) $u$ from $\R^n$ to $\C$ that satisfy
\begin{equation*}
\n{u}_H^2 \coloneqq \int_{\R^n} \va{u(x)}^2 \bar{w}(x) \mathrm{d}x < + \infty,
\end{equation*}
where $\bar{w}(x) = \prod_{j=1}^n w(x_j)$. Let $\p{P_m}_{m \in \N}$ be the sequence (depending on $s$) of orthogonal polynomials, defined, up to a sign, by
\begin{equation*}
\forall m,p \in \N : \int_{\R} P_m(t) P_p(t) w(t) \mathrm{d}t = \begin{cases} 1 & \textrm{ if } m = p, \\ 0 & \textrm{ otherwise}, \end{cases}
\end{equation*}
and $\deg P_m = m$ for every $m \in \N$. Then define for $\alpha \in \N^n$ the polynomial
\begin{equation*}
P_\alpha (x) = \prod_{j=1}^n P_{\alpha_j}(x_j),
\end{equation*}
and notice that the $P_\alpha$'s form an orthogonal family in $H$. According to \cite[(2.6)]{carlesonUniversalMomentProblems1961}, there are constants $C,r > 0$ such that for every $k \in \N$ we have
\begin{equation*}
\sum_{\ell=0}^{+ \infty} \va{P_\ell^{(k)}\p{0}}^2 \leq C r^{2k} k!^{2(1- s)}.
\end{equation*}
For some new constants $C$ and $r$, and for every $\beta \in \N^n$, it follows that
\begin{equation*}
\sum_{\alpha \in \N^n} \va{\partial^\beta P_{\alpha} (0)}^2 \leq C r^{2\va{\beta}} \beta!^{2(1 - s)}.
\end{equation*}
Now, let $S$ denote the Hilbert space of sequences $s = \p{s_\alpha}_{\alpha \in \N^n}$ of complex numbers such that 
\begin{equation*}
\n{s}_S^2 \coloneqq \sum_{\alpha \in \N^n} \p{\frac{\va{s_\alpha} \p{2r}^{\va{\alpha}}}{\alpha!^s}}^2 < + \infty.
\end{equation*}
If $s \in S$, define the sequence $\p{b_\alpha}_{\alpha \in \N^n}$ by
\begin{equation*}
b_\alpha = \sum_{\beta \in \N^n} \frac{\partial^\beta P_\alpha(0)}{\beta!} s_\beta, 
\end{equation*}
for $\alpha \in \N^n$ (notice that this sum is finite). Then we have
\begin{align*}
\sum_{\alpha \in \N^n} \va{b_\alpha}^2 & = \sum_{\alpha \in \N^n} \va{\sum_{\beta \in \N^n} \frac{\partial^\beta P_\alpha(0)}{\beta!} s_\beta }^2 \ , \\ 
    & \leq \n{s}_S^2 \sum_{\alpha \in \N^n} \p{\sum_{\beta \in \N^n} \frac{\va{\partial^\beta P_\alpha(0)}^2}{\beta!^2} \beta!^{2s} \frac{1}{\p{2 r}^{2\va{\beta}}}} \ , \\
    & \leq \n{s}_S^2 \sum_{\beta \in \N^n} \beta!^{2(s-1)} \frac{1}{\p{2 r}^{2 \va{\beta}}} \sum_{\alpha \in \N^n} \va{\partial^\beta P_\alpha(0)}^2 \ , \\
    & \leq C \n{s}_S^2 \sum_{\beta \in \N^n} 2^{- 2\va{\beta}} = C \p{\frac{4}{3}}^n \n{s}_S^2.
\end{align*}
Thus, if we set
\begin{equation*}
L(s) = \sum_{\alpha \in \N^n} b_\alpha P_\alpha,
\end{equation*}
then $L$ is a bounded operator from $S$ to $H$. The main point about $L$ is that for every $s \in S$ and $\alpha \in \N^n$ an elementary computation ensures that
\begin{equation}\label{eqL}
\int_{\R^n} L(s)(t) t^\alpha \bar{w}(t) \mathrm{d}t = s_\alpha.
\end{equation}

Now, if $f \in \widetilde{E}^{s,R}\p{K}$, we define for every $x \in \R^n$ the sequence $s(x) = \p{s_\alpha(x)}_{\alpha \in \N^n}$ by
\begin{equation*}
s_\alpha (x) = \frac{ \partial^\alpha f(x)}{b^{\va{\alpha}}}, 
\end{equation*}
where $b =  3\times 2^s R r$. Notice that $s(x) \in S$ and define $\tilde{f}$ for $x,y \in \R^n$ by
\begin{equation}\label{eq:def-almost-analytic-extension}
\tilde{f}(x+iy) = \chi(y) \int_{\R^n} e^{ib\jap{t,y}} L(s(x))(t) \bar{w}(t) \mathrm{d}t = \chi(y) \langle e^{i b \jap{y, \cdot}}, L(s(x)) \rangle_{H},
\end{equation}
where $\chi$ is a compactly supported $s'$-Gevrey function for some $1 < s' < s$, identically equal to $1$ on a neighbourhood of $0$. One easily checks that the map $x \mapsto s(x) \in S$ is $\mathcal{C}^\infty$ and supported in $K$. Moreover, if $x \in K$ and $\beta \in \N^n$ then we have
\begin{equation}\label{eqdevs}
\partial^\beta s(x) = \p{\frac{\partial^{\alpha + \beta} f(x)}{b^{\va{\alpha}}}}_{\alpha \in \N^n}.
\end{equation}
Thus, thanks to our choice of $b$ we have for all $\beta \in \N^n$ and $x \in K$
\begin{equation}\label{eqgev2}
\n{\partial^\beta s(x)}_S \leq 3^n \n{f}_{s,R,K} (2^s R)^{\va{\beta}} \beta!^s.
\end{equation}
Thus $x \mapsto s(x)$ is $\G^s$ with inverse radius $2^sR$. By dominated convergence, we see that the map $G : y \mapsto (t \to e^{ib\jap{t,y}})$ is $\mathcal{C}^\infty$ from $\R^n$ to $H$. Moreover, if $\alpha \in \N^n$ the derivative $\partial^\alpha G$ is $y \mapsto \p{t \mapsto (ibt)^{\va{\alpha}} e^{ib\jap{t,y}}}$ and its norm at $y \in \R^n$ is
\begin{align*}
\n{\partial^\alpha G(y)}_H^2 & = b^{2 \va{\alpha}} \int_{\R^n} \va{t^\alpha}^2 \bar{w}(t) \mathrm{d}t = b^{2 \va{\alpha}} \prod_{j=1}^n \int_{\R} \va{t}^{2 \alpha_j} w(t) \mathrm{d}t \ , \\
     & \leq b^{2\va{\alpha}} \p{\frac{e}{2s}}^{2s \va{\alpha}} \p{\frac{s e^s}{s^s}}^n \prod_{j=1}^n \Gamma\p{s(2 \alpha_j+1)}.
\end{align*}
Then, applying Stirling's formula, we see that there are constants $C,M > 0$ such that for every $\alpha \in \N^n$ and $y \in \R^n$ we have
\begin{equation}\label{eqgev1}
\n{\partial^\alpha G(y)}_H \leq C (MR)^{\va{\alpha}} \alpha!^s.
\end{equation}
Consequently, the map $F : x+iy \mapsto \langle e^{i b \jap{y, \cdot}}, L(s(x)) \rangle_{H}$ is $\mathcal{C}^\infty$ on $\C^n$ and if $\alpha, \beta \in \N^n$ and $x,y \in \R^n$ then we have 
\begin{equation}\label{eqderivees}
\partial_x^\alpha \partial_y^\beta F(x+iy) = \langle \partial^\beta G(y) , L(\partial^\alpha s(x)) \rangle_H.
\end{equation}
Thus, from \eqref{eqgev2} and \eqref{eqgev1}, we see that the map $f \mapsto \tilde{f}$ is continuous from $\tilde{E}^{s,R}\p{K}$ to $\tilde{E}^{s,MR}\p{K'}$ for $K'$ a compact complex neighbourhood of $K$ (we may and do assume that $M \geq 2^s$). 

It remains to see that $\tilde{f}$ is indeed an almost analytic extension for $f$. Since $\tilde{f}$ and $F$ coincide near $\R^n$, we only need to study $F$ to do so. Notice that for all $x \in \R^n$ and $\alpha,\beta \in \N^n$, according to \eqref{eqderivees}, \eqref{eqL} and \eqref{eqdevs}, we have
\begin{equation*}
\partial_x^\alpha \partial_y^\beta F(x) = \int_{\R^n} (ibt)^{\beta} L\p{\partial^\alpha s(x)}(t) \bar{w}(t) \mathrm{d}t = i^{\va{\beta}} \partial^{\alpha + \beta} f(x).
\end{equation*}
In particular, $F$ and $f$ coincide on $\R^n$. Now, if $j \in \set{1,\dots,n}$, we have $\frac{\partial F}{\partial \bar{z}_j} = \frac{1}{2} \p{\frac{\partial F}{\partial x_j} + i \frac{\partial F}{\partial y_j}}$, and thus if $x \in \R^n$ and $\alpha,\beta \in \N^n$ we have
\begin{align*}
\frac{\partial }{\partial \bar{z}_j}\p{\partial_x^\alpha \partial_y^\beta F}(x) & = \frac{1}{2}\p{\partial_x^{\alpha+e_j} \partial_y^\beta F + i \partial_x^\alpha \partial_y^{\beta+ e_j} F} \\
    & = \frac{1}{2}\p{i^{\va{\beta}} \partial^{\alpha + \beta + e_j} f(x) + i^{\va{\beta} + 2} \partial^{\alpha + \beta + e_j} f(x)} = 0.
\end{align*}
Consequently, $\bar{\partial}F$ vanishes to all orders on $\R^n$, and $\tilde{f}$ is indeed an almost analytic extension of $f$.
\end{proof}

In order to apply Lemma \ref{lemma:almost-analytic-extension-Gs}, we need to investigate the way a Gevrey function can be flat. To do so, we will apply the ``sommation au plus petit terme'', a method for regularizing certain divergent series that is particularly well suited for Taylor series of Gevrey functions. The interested reader may refer to \cite{ramisSeriesDivergentesTheories} for details and historical references.
\begin{lemma}\label{lemma:decay-extension-gevrey}
Let $U$ be an open subset of $\R^n$ and $K$ a compact and convex subset of $U$. Then for every $s > 1$, there are constants $C, C_0 > 0$ such that for every $R > 0$ and $f \in \mathcal{C}^\infty\p{U}$ such that the quantity $\n{f}_{s,R,K}$ defined by \eqref{eqnormrn} is finite, if $x ,y \in K$ then we have
\begin{equation}\label{eq:Taylor_Gevrey}
\begin{split}
\va{f(y) - \sum_{\va{\alpha} \leq \frac{\p{R \va{x-y}}^{- \frac{1}{s-1}}}{C_0}} \frac{\partial^\alpha f(x)}{\alpha!} \p{y - x}^\alpha} \leq C \n{f}_{s,R,K} \exp\p{- \frac{1}{C\p{R\va{x-y}}^{\frac{1}{s-1}}}}. 
\end{split}
\end{equation} 
\end{lemma}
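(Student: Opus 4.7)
The plan is to apply one-dimensional Taylor's formula along the segment from $x$ to $y$ (legitimate because $K$ is convex), bound the remainder using the Gevrey hypothesis, and then optimize the order of truncation $N$ via the classical \emph{sommation au plus petit terme}.

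Concretely, I would set $g(t) = f(x + t(y-x))$ for $t \in [0,1]$. The one-dimensional Taylor identity
\[
g(1) = \sum_{k=0}^N \frac{g^{(k)}(0)}{k!} + \int_0^1 \frac{(1-t)^N}{N!}\, g^{(N+1)}(t)\, dt
\]
combined with the multinomial formula $g^{(k)}(t) = \sum_{|\alpha|=k} \frac{k!}{\alpha!} \partial^\alpha f(x+t(y-x))(y-x)^\alpha$ rewrites exactly as
\[
f(y) - \sum_{|\alpha|\leq N} \frac{\partial^\alpha f(x)}{\alpha!}(y-x)^\alpha = \int_0^1 \frac{(1-t)^N}{N!}\, g^{(N+1)}(t)\, dt.
\]
Since $x+t(y-x) \in K$ for all $t \in [0,1]$, the Gevrey bound gives $|\partial^\alpha f(x+t(y-x))| \leq \|f\|_{s,R,K} R^{|\alpha|} \alpha!^s$. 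Combined with $|(y-x)^\alpha| \leq |y-x|^{|\alpha|}$, $\alpha! \leq (N+1)!$, and a crude count $\#\{|\alpha|=N+1\} \leq C_n (N+1)^{n-1}$, this yields
\[
\left| \int_0^1 \frac{(1-t)^N}{N!}\, g^{(N+1)}(t)\, dt \right| \leq C_n\, \|f\|_{s,R,K}\,(N+1)^{n-1}\,(R|y-x|)^{N+1}\,(N+1)!^{s-1}.
\]

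Next I would apply Stirling to rewrite $(N+1)!^{s-1}$ as $((N+1)/e)^{(s-1)(N+1)}$ up to polynomial factors. The resulting bound takes the form of a polynomial prefactor times $\bigl(R|y-x|\cdot((N+1)/e)^{s-1}\bigr)^{N+1}$. Taking the logarithm, one sees it is minimized in $N$ when $N+1$ is of order $(R|y-x|)^{-1/(s-1)}$, and at the optimum the base inside the $(N+1)$-th power is a constant strictly less than $1$ in absolute value, producing the stretched exponential $\exp(-c/(R|y-x|)^{1/(s-1)})$.

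The only subtle point is to make this optimization rigorous and uniform. I would proceed by dichotomy: when $R|x-y|$ exceeds a small absolute constant depending on $s$ and $n$, the right-hand side of \eqref{eq:Taylor_Gevrey} is bounded below by a constant multiple of $\|f\|_{s,R,K}$ and the inequality is trivial (take $N=0$ and absorb everything into the constant $C$). Otherwise, I would choose $N = \lfloor (R|x-y|)^{-1/(s-1)}/C_0 \rfloor$ for $C_0 = C_0(s,n)$ large enough, and compute explicitly via Stirling that the resulting polynomial prefactor together with the geometric factor $\bigl(R|y-x|\cdot((N+1)/e)^{s-1}\bigr)^{N+1}$ produces a bound of the form $C \|f\|_{s,R,K}\exp(-1/(C(R|x-y|)^{1/(s-1)}))$; the polynomial correction is absorbed into the exponential by enlarging $C$ at the cost of slightly shrinking the inner constant. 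The main bookkeeping obstacle is purely combinatorial, namely tracking how the constants $C_n$, $C_0$ and the implicit constants in Stirling's formula combine so that the single constant $C$ at the end depends only on $s$, $n$, and $K$, but not on $R$ or on $f$.
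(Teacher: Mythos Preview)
Your proposal is correct and follows essentially the same strategy as the paper: Taylor's formula with integral remainder along the segment $[x,y]\subset K$, followed by the Gevrey bound and truncation at order $\sim (R|x-y|)^{-1/(s-1)}$. The paper's version is marginally slicker in that it uses the multinomial identity $\sum_{|\alpha|=k}\frac{k!}{\alpha!}=n^k$ to get the clean bound $(nR|x-y|)^k k!^{s-1}$ directly, avoiding your separate count of multi-indices and the Stirling detour; it then simply plugs in $k=\lfloor \tfrac14 (nR|x-y|)^{-1/(s-1)}\rfloor+1$ and reads off the exponential, with no dichotomy needed.
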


We explain in Remark \ref{remark:size_d_bar} below how Lemma \ref{lemma:decay-extension-gevrey} allows to control the size of the Cauchy--Riemann operator applied to an almost analytic extension of a Gevrey function.

\begin{proof}[Proof of Lemma \ref{lemma:decay-extension-gevrey}]
Taylor's formula gives for every positive integer $k$
\begin{equation*}
f(y) = \sum_{\va{\alpha} < k} \frac{\partial^\alpha f(x)}{\alpha!} \p{y - x}^\alpha + \sum_{|\alpha|=k} \frac{k}{\alpha !}(y-x)^\alpha\int_0^{1} (1-t)^{k-1}  \partial^\alpha f(x+t(y-x)) \mathrm{d}t.
\end{equation*}
Thus, recalling the definition \eqref{eqnormrn} of the norm $\n{\cdot}_{s,R,K}$, we find by direct estimation
\begin{align*}
\va{f(y) - \sum_{\va{\alpha} < k} \frac{\partial^\alpha f(x)}{\alpha!} \p{y - x}^\alpha} 	&\leq \n{f}_{s,R,K} \p{n R \va{x-y}}^k \times k!^{s-1} \ ,\\
			&\leq \n{f}_{s,R,K} \p{n R \va{x-y}}^k k^{\p{s - 1}k}.
\end{align*}
This suggests to take $C_0 = e\ n^{1/(s-1)}$. When $n R|x-y| \geq e^{1-s}$, estimate \eqref{eq:Taylor_Gevrey} follows from a $L^\infty$ bound on $f$. When on the other hand $n R|x-y| < e^{1-s}$, we get $ k = \left\lfloor \frac{1}{e} \p{n R \va{x-y}}^{- \frac{1}{s - 1}} \right\rfloor + 1$. Studying the variations of the function $x\log x$ precisely, we get consequently, for some $C > 0$ depending only on $s$,
\begin{equation*}
\begin{split}
& \va{f(y)- \sum_{\va{\alpha} \leq \frac{1}{e}\p{n R \va{x - y}}^{- \frac{1}{s-1}}} \frac{\partial^\alpha f(x)}{\alpha!} \p{y - x}^\alpha} \\ & \qquad \qquad \qquad \quad \leq C \n{f}_{s,R,K} \exp\p{- \frac{(s-1)}{e \p{n R \va{x-y}}^{\frac{1}{s-1}}} },
\end{split}
\end{equation*}
and the lemma is proved.
\end{proof}

\begin{remark}\label{remark:choix_C0}
We will mostly use Lemma \ref{lemma:decay-extension-gevrey} with an $f$ that vanishes to infinite order at $x$. In that case, we get a control on the size of $f(y)$ for $y$ near $x$. However, it will sometimes be useful to have the general result at our disposal. Concerning the general result, notice that the constant $C_0$ in \eqref{eq:Taylor_Gevrey} may be chosen arbitrarily large (up to taking $C$ larger). Indeed, the proof of Lemma \ref{lemma:decay-extension-gevrey} gives $C_0 = 4 (nR)^{\frac{1}{s-1}}$, and we can always replace $R$ by a larger number in the proof.
\end{remark}

\begin{remark}\label{remark:size_d_bar}
Let us explain how Lemma \ref{lemma:decay-extension-gevrey} allows to control the size of an almost analytic extension of a Gevrey function. Let $s,K,K',M$ and $R$ be as in Lemma \ref{lemma:aae_Gs_flat}. We may assume that $K'$ is convex. Then, if $f \in \widetilde{E}^{s,R}\p{K}$ is a $\G^s$ function, we know that it admits a $\G^s$ almost analytic extension $\tilde{f} \in \widetilde{E}^{s,MR}\p{K'}$. Then if $R_1 > MR$, we see that the components of $\bar{\partial} \tilde{f}$ belongs to $\widetilde{E}^{s,R_1}\p{K'}$. By assumption, these components vanish at infinite order on $\R^n$.  Hence, it follows from Lemma \ref{lemma:decay-extension-gevrey} that there are constants $C > 0$ (that only depends on $K$ and $K'$) and $C_R$ (that may also depend on $R$) such that for $z \in \C^n$ we have
\begin{equation}\label{eq:size_d_bar}
\begin{split}
\va{\bar{\partial} \tilde{f}(z)} \leq C_R \n{f}_{s,R,K} \exp\p{- \frac{1}{C\p{R \va{\Im z}}^{\frac{1}{s-1}}}}.
\end{split}
\end{equation}
Here, we used the fact that the norms of the coordinates of $\bar{\partial} \tilde{f}$ in $\widetilde{E}^{s,R_1}\p{K'}$ are controlled by $\n{f}_{s,R,K}$. 

It will be useful to control also the derivatives of $\bar{\partial} \tilde{f}$. In fact, we can improve \eqref{eq:size_d_bar} into a Gevrey estimates. Indeed, if $\alpha,\beta \in \N^n$ then the components of $\partial_x^\alpha \partial_y^\beta \bar{\partial} \tilde{f}$ are $\G^s$ (we write $z = x+iy$ for the coordinates in $\C^n$). To see so, just notice that for $\alpha',\beta' \in \N^n, j \in \set{1,\dots,n}$ and $z \in K'$ we have ($C > 0$ only depends on $K$ and $K'$ and may vary from one line to another, $C_R$ may also depend on $R$)
\begin{equation*}
\begin{split}
& \va{\partial_x^{\alpha + \alpha'} \partial_y^{\beta + \beta'} \frac{\partial \tilde{f}}{\partial \bar{z}_j}(z)} \\ & \qquad \qquad \leq \n{\tilde{f}}_{s,MR,K'} \p{MR}^{1+\va{\alpha} + \va{\alpha'} + \va{\beta} + \va{\beta'}} \p{\alpha + \alpha' + e_j}!^s\p{\beta + \beta' + e_j}!^s \\
    & \qquad \qquad \leq C_R \n{f}_{s,R,K} \p{C R}^{\va{\alpha} + \va{\beta}} \alpha!^s \beta!^s \p{CR}^{\va{\alpha'} + \va{\beta'}} \p{\alpha'}!^s \p{\beta'}!^s.
\end{split}
\end{equation*}
This estimate can be rewritten as
\begin{equation*}
\begin{split}
\n{\partial_x^\alpha \partial_y^\beta \frac{\partial \tilde{f}}{\partial \bar{z}_j}}_{s,CR,K'} \leq C_R \n{f}_{s,R,K} \p{C R}^{\va{\alpha} + \va{\beta}} \alpha!^s \beta!^s.
\end{split}
\end{equation*}
Since $\partial_x^\alpha \partial_y^\beta \frac{\partial \tilde{f}}{\partial \bar{z}_j}$ vanishes at all orders on $\R^n$, it follows from Lemma \ref{lemma:decay-extension-gevrey} that there are constants $C > 0$, that only depends on $K$ and $K'$, and $C_R > 0$ that may also depend on $R$, such that for every $z \in \C^n$ and $\alpha,\beta \in \N^n$ we have
\begin{equation*}
\begin{split}
\va{\partial_x^\alpha \partial_y^\beta \bar{\partial}\tilde{f}(z)} \leq C_R \n{f}_{s,R,K} \p{C R}^{\va{\alpha} + \va{\beta}} \alpha!^s \beta!^s \exp\p{- \frac{1}{C\p{R \va{\Im z}}^{\frac{1}{s-1}}}}.
\end{split}
\end{equation*}
\end{remark}

To close this section, we present here a trick that will be useful to derive Gevrey bounds from $L^\infty$ bounds on almost analytic extensions. This trick is a slight refinement of the proof of \cite[Proposition 3.8]{furdosAlmostAnalyticExtensions2019} in the Gevrey case (which is in some sense the reciprocal of Lemma \ref{lemma:aae_Gs_flat}). 

\begin{lemma}[Bochner--Martinelli trick]\label{lemma:Bochner-Martinelli-trick}
Let $s \geq 1$. Let $D$ be a ball in $\R^n$. There is a constant $\widetilde{C}> 0$ such that, for every $R \geq 1$, there is a constant $C_R$ such that the following holds. For every $\lambda \geq 1$, let $f_\lambda$ be a function from $\R^n$ to $\C$. We assume that for all $\lambda \geq 1 $ the map $f_\lambda$ admits a $\mathcal{C}^1$ extension $F_\lambda$ to a neighbourhood of $D$ in $\C^n$, and there exist $R > 0$ and $C > 0$ such that
\begin{enumerate}[label=(\roman*)]
	\item for every $\lambda >1$ and $x \in \C^n$ if $x$ is at distance less than $R^{-\frac{1}{s}} \lambda^{\frac{1}{s}-1}$ of $\overline{D}$ then 
\begin{equation*}
\va{F_\lambda(x)} \leq C \exp\p{- \p{\frac{\lambda}{R}}^{\frac{1}{s}}};
\end{equation*}
	\item for every $\lambda >1$ and $x \in \C^n$ if $x$ is at distance less than $R^{-1/s} \lambda^{\frac{1}{s}-1}$ of $\overline{D}$ then
\begin{equation*}
\va{\bar{\partial} F_\lambda(x)} \begin{cases} \leq C \exp\p{- (R |\Im x|)^{- \frac{1}{s-1}}} & \textrm{ if } s > 1 \\ =  0 & \textrm{ if } s = 1. \end{cases}
\end{equation*}
\end{enumerate}
Then for $R' = \widetilde{C} R$ and every $\lambda \geq 1$ we have 
\begin{equation*}
\n{f_\lambda}_{s,R',\overline{D}} \leq C_R C \exp\p{-\p{\frac{\lambda}{\widetilde{C} R}}^{\frac{1}{s}}}.
\end{equation*}
\end{lemma}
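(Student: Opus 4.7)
The plan is to fix $x_0 \in \overline D$ and $\alpha \in \N^n$ and to bound $|\partial^\alpha f_\lambda(x_0)|$ by applying the Bochner--Martinelli representation of $F_\lambda$ on the ball $B = B(x_0, r_0) \subset \C^n$ of radius $r_0 = R^{-1/s}\lambda^{1/s-1}$, which is contained in the domain of $F_\lambda$ by hypotheses $(i)$--$(ii)$. Since $F_\lambda$ is $\mathcal{C}^1$, the Bochner--Martinelli formula applies; differentiating its kernel in $z^\alpha$ and evaluating at $z = x_0$ gives
\begin{equation*}
|\partial^\alpha f_\lambda(x_0)| \leq C_n (|\alpha|+2n-1)! \left( r_0^{-|\alpha|} \sup_{B} |F_\lambda| + \int_B \frac{|\bar\partial F_\lambda(\zeta)|}{|\zeta - x_0|^{|\alpha|+2n-1}}\, \mathrm{d}m(\zeta)\right).
\end{equation*}
The combinatorial prefactor satisfies $(|\alpha|+2n-1)!/\alpha! = O(|\alpha|^{2n-1})$, which can be absorbed into $(R')^{|\alpha|}$ by slightly enlarging $R'$.

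For the boundary term, hypothesis $(i)$ bounds it by $C\alpha! R^{|\alpha|/s} \lambda^{(s-1)|\alpha|/s} e^{-(\lambda/R)^{1/s}}$. Splitting $e^{-(\lambda/R)^{1/s}}$ as the product $e^{-(\lambda/R)^{1/s}/2} \cdot e^{-(\lambda/(2^s R))^{1/s}}$, setting $t = (\lambda/R)^{1/s}/2$ and $a = (s-1)|\alpha|$, the elementary inequality $t^a e^{-t} \leq a^a e^{-a}$ together with the Stirling estimate $\alpha! \cdot |\alpha|^{(s-1)|\alpha|} \leq C_s^{|\alpha|} \alpha!^s$ converts the polynomial-in-$\lambda$ and polynomial-in-$\alpha$ factors into $(C'R)^{|\alpha|} \alpha!^s$, while the other half of the exponential provides the required $\exp(-(\lambda/\widetilde C R)^{1/s})$ with $\widetilde C = 2^s$.

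For the volume term, hypothesis $(ii)$ gives the Gevrey-flat bound $|\bar\partial F_\lambda(\zeta)| \leq C \exp(-(R|\Im\zeta|)^{-1/(s-1)})$. I would pass to polar coordinates $\zeta - x_0 = \rho \omega$, $\omega \in S^{2n-1}$: using that $|\Im\omega|$ on the sphere has density $\sim u^{n-1}$ near $u = 0$, the substitution $v = (R\rho|\Im\omega|)^{-1/(s-1)}$ yields
\begin{equation*}
\int_{S^{2n-1}} \exp\bigl(-(R\rho|\Im\omega|)^{-1/(s-1)}\bigr)\, \mathrm{d}\omega \lesssim (R\rho)^{1/(s-1)} \exp\bigl(-(R\rho)^{-1/(s-1)}\bigr).
\end{equation*}
A further substitution $w = (R\rho)^{-1/(s-1)}$ reduces the remaining $\rho$-integral to an incomplete Gamma expression $(s-1) R^{|\alpha|-1} \int_{w_0}^\infty w^{(s-1)|\alpha|-1-s} e^{-w}\,\mathrm{d}w$, with $w_0 = (Rr_0)^{-1/(s-1)} = (\lambda/R)^{1/s}$. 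The key splitting $e^{-w} \leq e^{-w_0/2} e^{-w/2}$ on $[w_0, \infty)$ extracts the decay factor $e^{-w_0/2} = \exp(-(\lambda/(2^s R))^{1/s})$, while the remaining integral $2^{(s-1)|\alpha|-s} \Gamma((s-1)|\alpha|-s)$ combines via Stirling with the prefactor $(|\alpha|+2n-1)!$ to produce precisely $(R')^{|\alpha|} \alpha!^s$, matching the desired Gevrey bound with the same $\widetilde C = 2^s$.

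The main obstacle is the bookkeeping in the volume term: the exponential decay $e^{-w_0/2}$ must be extracted \emph{before} applying Stirling's asymptotics to the incomplete Gamma, so that the exponential factor and the Gevrey factor separate cleanly, and one must verify that $\widetilde C = 2^s$ suffices uniformly in $R$, $\lambda$ and $|\alpha|$. The analytic case $s=1$ is immediate: hypothesis $(ii)$ forces $\bar\partial F_\lambda \equiv 0$, so only the boundary estimate remains and it reduces to a direct Cauchy inequality on $B(x_0, r_0)$.
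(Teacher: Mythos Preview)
Your proposal is correct and follows the same route as the paper: apply the Bochner--Martinelli representation on a domain of radius $\sim R^{-1/s}\lambda^{1/s-1}$, differentiate the kernel to get the $C(n)^{|\alpha|}\alpha!/|\zeta-x_0|^{2n+|\alpha|-1}$ bound, then treat the boundary and volume contributions separately via the splitting $e^{-w}=e^{-w/2}e^{-w/2}$ and Stirling-type estimates.

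The one simplification you missed, which the paper exploits, is in the volume term. Since $x_0\in\R^n$ is real, one has $|\zeta-x_0|\ge|\Im\zeta|$, so the volume integrand is bounded \emph{pointwise} by
\[
\frac{\exp\bigl(-(R\,t)^{-1/(s-1)}\bigr)}{t^{\,|\alpha|+2n-1}},\qquad t=|\Im\zeta|\in\bigl[0,\tfrac12 R^{-1/s}\lambda^{1/s-1}\bigr].
\]
The paper then simply takes the supremum over $t$ (the volume of the domain being $O_R(1)$ and absorbed into $C_R$), and this one-variable sup is handled in a line by exactly the splitting you describe. Your polar-coordinates computation with the incomplete Gamma function leads to the same conclusion but is substantially longer; the pointwise bound $|\zeta-x_0|\ge|\Im\zeta|$ makes the angular integration unnecessary.
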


\begin{proof}
The case $s = 1$ is just an application of Cauchy's formula. Consequently, we focus on the case $s > 1$ which is more crucial.

As the name suggests, the proof in the case $s > 1$ is based on an application of the Bochner--Martinelli formula. For every $\lambda \geq 1$ denote by $D_\lambda$ the $R^{-1/s}\lambda^{\frac{1}{s}-1}/2$ neighbourhood of $D$. Then choose $x \in D$ and write the Bochner--Martinelli formula for $F_\lambda$ at $x$ to find
\begin{equation}\label{eqBMf}
\begin{split}
f_\lambda(x) = \int_{\partial D_\lambda} F_\lambda(z) \omega(z,x) - \int_{D_\lambda} \bar{\partial}F_\lambda(z) \wedge \omega(z,x),
\end{split}
\end{equation}
where $\omega(\cdot,x)$ is the $(n,n-1)$ form (the hat means that the corresponding factor is omitted)
\begin{equation*}
\begin{split}
\omega(z,x) = \frac{\p{n-1}!}{\p{2 i \pi}^n} \frac{1}{\va{z - x}^{2n}} \sum_{j=1}^n \p{\bar{z}_j - \bar{x}_j} \mathrm{d}\bar{z}_1 \wedge \mathrm{d}z_1 \wedge \dots \wedge \widehat{\mathrm{d}\bar{z}_j} \wedge \dots \wedge \mathrm{d}\bar{z}_n \wedge \mathrm{d}z_n.
\end{split}
\end{equation*}
By Fa\`a di Bruno's formula and the Leibniz rule, we get as in \cite[Proposition 3.8]{furdosAlmostAnalyticExtensions2019}
\begin{equation*}
\begin{split}
\va{\partial^\alpha_x\p{\frac{\bar{z}_j - \bar{x}_j}{\va{z - x}^{2n}}}} \leq \frac{C(n)^{\va{\alpha}} \alpha!}{\va{x- z}^{2n + \va{\alpha} - 1}},
\end{split}
\end{equation*}
where the constant $C(n)$ only depends on the dimension $n$. Hence, by differentiation under the integral, we see that $f_\lambda$ is $\mathcal{C}^\infty$ with the estimate (for $\lambda \geq 1$, the constant $C > 0$ is from the statement of the lemma and $C_1$ does not depend on $C$)
\begin{equation}\label{eq:vers_BMT}
\begin{split}
\va{\partial^\alpha f_\lambda(x)} & \leq C C_1^{1 + \va{\alpha}} C(n)^{\va{\alpha}} \alpha!\p{R^{\frac{1}{s}} \lambda^{1 -\frac{1}{s}}}^{2n + \va{\alpha} -1} \exp\p{-  \p{\frac{\lambda}{R}}^{\frac{1}{s}}} \\ 
&  \qquad \qquad \qquad+ C C_1 C(n)^{\va{\alpha}} \alpha! \sup_{t \in \left[0,\frac{1}{2}R^{- \frac{1}{s}}\lambda^{\frac{1}{s} - 1} \right]} \frac{\exp\p{- \p{R t}^{- \frac{1}{s-1}}}}{t^{2n + \va{\alpha} - 1}}.
\end{split}
\end{equation}
Then, notice that $\sup_{r \in \R_+^*} e^{-r} r^{(s-1)(2n + \va{\alpha} - 1)} \leq C(s,n)^{1+|\alpha|}\alpha!^{s-1} $, so that
\begin{equation*}
\begin{split}
 & \exp\p{- \frac{ 1 }{2} \p{\frac{\lambda}{R}}^{\frac{1}{s}}} \p{ R^{\frac{1}{s} - 1} \lambda^{1 - \frac{1}{s}}}^{2n + \va{\alpha} -1} \\ & \qquad \qquad \qquad \qquad \qquad \qquad \qquad \leq \sup_{t \in \R_+^*} \exp\p{-\frac{t^{1/s}}{2}} t^{(1 - 1/s)(2n + \va{\alpha} - 1)} \\
    \\ & \qquad \qquad \qquad \qquad \qquad \qquad \qquad \leq  2^{(s-1)(2n + \va{\alpha} - 1)} \sup_{r \in \R_+^*} e^{-r} r^{(s-1)(2n + \va{\alpha} - 1)} \\
    \\ & \qquad \qquad \qquad \qquad \qquad \qquad \qquad \leq C_1^{1 + \va{\alpha}} \alpha!^{s-1},
\end{split}
\end{equation*}
where the constant $C_1 > 0$ only depends on $n$ and $s$. We also have for $t \in \left[0,\frac{1}{2} R^{- \frac{1}{s}} \lambda^{\frac{1}{s} - 1} \right]$ that
\begin{equation*}
\begin{split}
\frac{\exp\p{- \p{R t}^{- \frac{1}{s-1}}}}{t^{2n + \va{\alpha} - 1}} & \leq \exp\p{- 2^{\frac{s}{s-1}} \p{\frac{\lambda}{R}}^{\frac{1}{s}}} \sup_{\tau \in \R_+} \frac{\exp\p{- \frac{1}{2}\p{R \tau}^{- \frac{1}{s-1}}}}{\tau^{2n + \va{\alpha} - 1}} \\
    &\hspace{-20pt} \leq \p{2^{s-1} R}^{2n + \va{\alpha} - 1} \exp\p{- 2^{\frac{s}{s-1}} \p{\frac{\lambda}{R}}^{\frac{1}{s}}} \sup_{r \in \R_+} e^{-r} r^{(s-1)(2n + \va{\alpha} - 1)} \\
    &\hspace{-20pt} \leq C_1^{1 + \va{\alpha}} \alpha!^{s-1} R^{2n + \va{\alpha} - 1} \exp\p{- 2^{\frac{s}{s-1}} \p{\frac{\lambda}{R}}^{\frac{1}{s}}}.
\end{split}
\end{equation*}
The announced result follows by plugging the last two estimates in \eqref{eq:vers_BMT}.
\end{proof}

The Bochner--Martinelli trick will be quite useful when studying oscillatory integrals, for the following reason:
\begin{lemma}\label{lemma:norm-exponentials}
Let $U\subset \R^n$ be a bounded open set, and $\Phi : U \to \C$ be a $\G^s$ function. Let $K\subset U$ be a compact set. Then there exists $C,R_0,h_0>0$ such that for $R\geq R_0$, there exists a constant $C_R>0$, so that for $h\in ]0,h_0]$,
\[
\left\| x \mapsto e^{\frac{i}{h}\Phi(x)} \right\|_{s,R,K} \leq C_R \exp\left( - \frac{1}{h}\inf_{K} \Im \Phi + \frac{1}{C(hR)^{1/s}} \right).
\]
\end{lemma}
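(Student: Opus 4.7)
The plan is to apply the Bochner--Martinelli formula to an almost analytic extension of $e^{i\Phi/h}$, on a complex ball around each point of $K$ whose radius $\rho$ shrinks at the natural $\G^s$ rate, in the spirit of Lemma~\ref{lemma:Bochner-Martinelli-trick}. The twist is that $e^{i\Phi/h}$ does not decay off the reals; one trades the Lipschitz loss $|\Im\tilde{\Phi}(z)/h|\lesssim \rho/h$ for the $(hR)^{-1/s}$ factor that appears in the final bound.

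I would pick a $\G^s$ almost analytic extension $\tilde{\Phi}$ of $\Phi$ on a fixed complex neighbourhood of $K$ (by Lemma~\ref{lemma:aae_Gs_flat} when $s>1$; by the holomorphic extension to a Grauert tube when $s=1$, in which case $\bar{\partial}\tilde{\Phi}\equiv 0$). Remark~\ref{remark:size_d_bar} gives $|\bar{\partial}\tilde{\Phi}(z)|\leq C_0\exp(-c_0|\Im z|^{-1/(s-1)})$ for $s>1$. Set $F_h(z):=e^{i\tilde{\Phi}(z)/h}$; since $\tilde{\Phi}$ is Lipschitz near $K$ and $\Im\tilde{\Phi}|_{\R^n}=\Im\Phi$,
\[
|F_h(z)|\leq \exp\!\Bigl(-\tfrac{1}{h}\inf_K\Im\Phi+C|\Im z|/h\Bigr),\qquad |\bar{\partial}F_h(z)|\leq \tfrac{C}{h}|\bar{\partial}\tilde{\Phi}(z)|\cdot|F_h(z)|.
\]
For $R\geq R_0$ large and $h\leq h_0$ small, the key choice is $\rho:=R^{-1/s}h^{(s-1)/s}$, so that $\rho/h=(hR)^{-1/s}$; then on any complex ball $B(x,\rho)$ centered at $x\in K$ one has $|F_h|\leq \exp(-h^{-1}\inf_K\Im\Phi+C(hR)^{-1/s})$.

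Applying Bochner--Martinelli to $F_h$ on $B(x,\rho)$ and differentiating $|\alpha|$ times in its center $x$, using $|\partial_x^\alpha\omega(z,x)|\leq C^{|\alpha|}\alpha!/|z-x|^{2n+|\alpha|-1}$, produces a boundary contribution bounded by $C^{|\alpha|+1}\alpha!\,\rho^{-|\alpha|}\sup_{\partial B}|F_h|$ and, for $s>1$, an interior contribution bounded, after polar integration in $z$ around $x$, by $C^{|\alpha|+1}\alpha!(C/h)\sup_B|F_h|\int_0^\rho r^{-|\alpha|}e^{-c_0 r^{-1/(s-1)}}\,dr$ (this term vanishes when $s=1$). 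The numerical heart of the argument is the elementary inequality $t^k\leq \Gamma(k+1)e^{ct}/c^k$ (from $\sup_{s\geq 0}s^ke^{-s}=k^ke^{-k}\leq k!$), applied with $t=(hR)^{-1/s}$ and $k=(s-1)|\alpha|$, combined with $\Gamma((s-1)|\alpha|+1)\leq C^{|\alpha|}\alpha!^{s-1}$. Rewriting
\[
\rho^{-|\alpha|}=R^{|\alpha|}\bigl((hR)^{-1/s}\bigr)^{(s-1)|\alpha|}
\]
this yields $\rho^{-|\alpha|}\leq(CR)^{|\alpha|}\alpha!^{s-1}\exp(c(hR)^{-1/s})$. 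The interior integral is handled identically via $\sup_{r>0}e^{-c_0r^{-1/(s-1)}}r^{-k}\leq C^k k!^{s-1}$, and the residual $h^{-1}$ prefactor, being polynomial in $(hR)^{-1/s}$ (since $h^{-1}=R((hR)^{-1/s})^s$), is absorbed into $C_R\exp(c(hR)^{-1/s})$. Collecting,
\[
|\partial^\alpha(e^{i\Phi/h})(x)|\leq C_R(CR)^{|\alpha|}\alpha!^s\exp\!\bigl(-h^{-1}\inf_K\Im\Phi+C(hR)^{-1/s}\bigr),
\]
and, since $\n{\cdot}_{s,R,K}$ is decreasing in $R$, replacing $R$ by $CR$ and enlarging $R_0$ accordingly gives the claim.

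The main obstacle is only bookkeeping: tracking constants through the two applications of $t^ke^{-ct}\lesssim k!$ so that the final exponent comes out as $+(1/C)(hR)^{-1/s}$ and the Gevrey radius as exactly $R$. The conceptual picture is transparent once the scale $\rho=R^{-1/s}h^{(s-1)/s}$ is identified as the natural one on which the Lipschitz loss $\rho/h=(hR)^{-1/s}$ off the reals is balanced by the Gevrey gain $\rho^{-|\alpha|}\lesssim(CR)^{|\alpha|}\alpha!^{s-1}e^{c(hR)^{-1/s}}$ in the $\partial_x^\alpha$-estimate.
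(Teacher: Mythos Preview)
Your approach is correct and essentially identical to the paper's: both take an almost analytic extension $\tilde\Phi$, work on a complex ball of radius $\rho=R^{-1/s}h^{1-1/s}$ around points of $K$, and apply the Bochner--Martinelli formula with the derivative bound $|\partial_x^\alpha\omega(z,x)|\leq C^{|\alpha|}\alpha!|z-x|^{-(2n+|\alpha|-1)}$. The only cosmetic difference is that the paper first normalizes to $\inf_K\Im\Phi=0$ and then invokes Lemma~\ref{lemma:Bochner-Martinelli-trick} as a black box (applied to $\exp(-2C(hR)^{-1/s})e^{i\Phi/h}$ with $\lambda=1/h$), whereas you unroll that lemma explicitly and carry the $\inf_K\Im\Phi$ term throughout; the scale choice $\rho/h=(hR)^{-1/s}$ and the conversion $\rho^{-|\alpha|}=R^{|\alpha|}((hR)^{-1/s})^{(s-1)|\alpha|}\leq (CR)^{|\alpha|}\alpha!^{s-1}e^{c(hR)^{-1/s}}$ are the same in both arguments.
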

In particular, taking $R>0$ large enough, we can obtain an arbitrarily small constant before the $h^{-1/s}$ term.

\begin{proof}
Up to replacing $\Phi$ by $\Phi - i \inf_{K} \Im \Phi$, we may assume that $\inf_{K} \Im \Phi = 0$. Then, we pick $\tilde{\Phi}$ an almost analytic extension for $\Phi$, and we observe that for $x$ at distance at most $ R^{-1/s} h^{1-1/s}$ from $K$, we have
\[
\Im \tilde{\Phi} \geq - C  \frac{h^{1-\frac{1}{s}}}{R^{\frac{1}{s}}}.
\]
On the other hand,
\begin{align*}
\left| \overline{\partial}_x \left( e^{\frac{i}{h}\widetilde{\Phi}(x)}\right) \right| &\leq C \exp\left( C \frac{h^{-1/s}}{R^{\frac{1}{s}}} -  \frac{1}{\p{R_0 |\Im x|}^{\frac{1}{s-1}}} \right),\\
				& \leq C \exp\left( C \frac{h^{-1/s}}{R^{\frac{1}{s}}} -  \frac{1}{\p{R |\Im x|}^{\frac{1}{s-1}}} \right),
\end{align*}
provided that $R \geq R_0$. We can then apply the Bochner--Martinelli trick to $\exp(- 2 C R^{-1/s} h^{ - 1/s}) e^{i \frac{\Phi(x)}{h}}$ and the result follows.
\end{proof}

\subsection{Gevrey and analytic symbols}\label{sec:Gevrey-symbols}

\subsubsection{Definitions}

In anticipation of the study of $\G^s$ pseudors in \S \ref{sec:toolbox_gevrey_pseudor}, we define here Gevrey symbol classes. As explained in the introduction, we will work with larger classes of symbols (and hence of pseudors) than in the classical references dealing with Gevrey pseudo-differential calculus \cite{Boutet-Kree-67,zanghiratiPseudodifferentialOperatorsInfinite1985,Rodino-93-book}. Let us fix an open subset $U$ of $\R^n$ for the remainder of this section.

\begin{definition}\label{def:gevrey_symbol}
Let $m \in \R$, $s\geq 1$. A $\mathcal{C}^\infty$ function $a :T^* U \simeq U \times \R^n \to \C$ belongs to the symbol class $S^{s,m}\p{T^* U}$ if, for every compact subset $K$ of $U$, there are constants $C,R > 0$ such that, for every $\alpha,\beta \in \N^n$ and $(x,\xi) \in K \times \R^n$, we have
\begin{equation}\label{eq:estimee_symbole}
\begin{split}
\va{\partial_x^\alpha \partial_\xi^\beta a(x,\xi)} \leq C R^{\va{\alpha} + \va{\beta}} \p{\alpha! \beta!}^s \jap{\xi}^{m - \va{\beta}}.
\end{split}
\end{equation}
In a more transparent way, we will also write that $a$ is \emph{a $\Gs$ symbol (of order $m$)}.
\end{definition}

\begin{remark}
Symbols are allowed to depend on the small implicit parameter $h > 0$ but then the estimates in Definition \ref{def:gevrey_symbol} (and Definition \ref{def:gevrey_formal_symbol} below) are assumed to hold uniformly in $h > 0$.
\end{remark}

\begin{remark}\label{remark:symboles_generaux}
For $N_1, N_2 \in \N$, we define \emph{mutatis mutandis} classes of symbols 
\[
S^{s,m}\p{\p{T^* U}^{N_1} \times U^{N_2}}.
\]
It is understood that the results and definitions below extend naturally to those more general classes of symbols. Later on, we will mostly consider symbols $a(\alpha,x)$ in $S^{s,m}\p{T^* M \times M}$ that are in fact only defined when $x$ and $\alpha_x$ are close to each other.
\end{remark}

As we did for functions in \S \ref{sec:Grauert}, one can characterize analytic symbols using holomorphic extensions. To do so, we refer to the notion of approximate Grauert tube of an open subset of $\R^n$ introduced in \eqref{eq:Grauert_rn}. Using Taylor's and Cauchy's Formula we can prove (the proof is left as an exercise to the reader):

\begin{lemma}
Let $m \in \R$. Let $a : T^* U \to \C$ be a function. Then $a$ belongs to the symbol class $S^{1,m}\p{T^* U}$ if and only if, for every relatively compact open subset $\Omega$ of $U$, there is $\epsilon > 0$ such that $a$ admits a holomorphic extension to $\p{T^* \Omega}_\epsilon$, which satisfies for some $C > 0$ and every $\p{x,\xi} \in \p{T^* \Omega}_\epsilon$
\begin{equation*}
\begin{split}
\va{a(x,\xi)} \leq C \jap{\Re \xi}^m.
\end{split}
\end{equation*}
\end{lemma}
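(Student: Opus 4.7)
The plan is to prove both implications by Taylor/Cauchy arguments on polydiscs, exploiting the fact that the approximate Grauert tube $(T^* \Omega)_\epsilon$ defined in \eqref{eq:Grauert_rn} has radius of order $1$ in the $x$ direction and of order $\jap{\Re \xi}$ in the $\xi$ direction, which matches exactly the anisotropy of the symbol estimates \eqref{eq:estimee_symbole} when $s=1$.

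For the direct implication, fix a relatively compact $\Omega \subset U$ and a compact $K \subset U$ with $\overline{\Omega}$ in the interior of $K$. From the estimates \eqref{eq:estimee_symbole} with constants $C,R$ on $K$, the two-variable Taylor series
\[
a(x_0 + h, \xi_0 + k) = \sum_{\alpha,\beta \in \N^n} \frac{\partial_x^\alpha \partial_\xi^\beta a(x_0,\xi_0)}{\alpha! \beta!} h^\alpha k^\beta
\]
at any real basepoint $(x_0,\xi_0) \in \overline{\Omega} \times \R^n$ converges absolutely whenever $|h_j| < 1/(2nR)$ and $|k_j| < \jap{\xi_0}/(2nR)$, and is bounded there by $C' \jap{\xi_0}^m$. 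Choosing $\epsilon > 0$ small enough depending only on $R$ and on $d(\Omega, \partial U)$, these local expansions glue (by analytic continuation from their common real trace) into a holomorphic extension $\tilde a$ on $(T^*\Omega)_\epsilon$. Since $\jap{\Re \xi} \asymp \jap{\xi_0}$ uniformly on $(T^*\Omega)_\epsilon$ with $\xi_0 = \Re \xi$, the announced bound $|\tilde a(x,\xi)| \leq C \jap{\Re\xi}^m$ follows.

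For the converse, assume $a$ extends holomorphically to $(T^*\Omega)_\epsilon$ with $|a| \leq C \jap{\Re \xi}^m$. Given a real $(x_0,\xi_0) \in \Omega \times \R^n$ and setting $r_1 = \epsilon/(2n)$, $r_2 = \epsilon\jap{\xi_0}/(2n)$, the polydisc
\[
P = \set{(x,\xi) \in \C^{2n} : |x_j - x_{0,j}| = r_1,\ |\xi_j - \xi_{0,j}| = r_2 \text{ for all } j}
\]
lies in $(T^*\Omega)_{\epsilon}$ for $\epsilon$ small enough (this is where the conical shape of the tube in \eqref{eq:Grauert_rn} is crucial). Cauchy's formula in $2n$ variables yields
\[
\partial_x^\alpha \partial_\xi^\beta a(x_0,\xi_0) = \frac{\alpha! \beta!}{(2i\pi)^{2n}} \oint_P \frac{a(x,\xi)}{\prod_j (x_j - x_{0,j})^{\alpha_j + 1}(\xi_j - \xi_{0,j})^{\beta_j + 1}} \mathrm{d}x \, \mathrm{d}\xi,
\]
and bounding $|a| \leq C \jap{\Re \xi}^m \leq C' \jap{\xi_0}^m$ on $P$ gives
\[
\va{\partial_x^\alpha \partial_\xi^\beta a(x_0,\xi_0)} \leq C'' \alpha! \beta! \p{\tfrac{2n}{\epsilon}}^{\va\alpha + \va\beta} \jap{\xi_0}^{m - \va\beta},
\]
which is precisely \eqref{eq:estimee_symbole} with $R = 2n/\epsilon$ and $s = 1$.

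Neither direction presents a real obstacle; the only point requiring a moment of care is the geometric verification that the polydiscs of anisotropic size $(r_1, r_2) \sim (1, \jap{\xi_0})$ indeed sit inside $(T^*\Omega)_\epsilon$. This is immediate from \eqref{eq:Grauert_rn} since $|\Im \xi| \leq r_2\sqrt{n} \lesssim \epsilon \jap{\xi_0} \asymp \epsilon \jap{\Re \xi}$ on $P$.
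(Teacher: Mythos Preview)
Your proof is correct and follows precisely the approach the paper intends: the paper states this lemma with the remark ``Using Taylor's and Cauchy's Formula we can prove (the proof is left as an exercise to the reader)'' and gives no further details. You have carried out exactly that exercise, using Taylor expansion on anisotropic polydiscs for the forward direction and Cauchy's estimate on the same polydiscs for the converse, with the key geometric observation that the tube \eqref{eq:Grauert_rn} has width $\sim 1$ in $x$ and $\sim \jap{\Re\xi}$ in $\xi$.
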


\begin{remark}\label{remark:aae_for_symbol}
In the absence of holomorphic extensions in the case $s>1$, we will work with almost analytic extensions. For this, we will use many times that for $a \in S^{s,m}\p{T^* U}$ and every relatively compact open subset $\Omega$ of $U$, the symbol $a$ admits an almost analytic extension on $\p{T^* \Omega}_\epsilon$ for some $\epsilon > 0$. This extension satisfies $\G^s$ symbolic estimates as in Definition \ref{def:gevrey_symbol}. In particular, by rescaling Lemma \ref{lemma:decay-extension-gevrey}, we find that there is a constant $C > 0$ such that for every $(x,\xi) \in \p{T^* \Omega}_\epsilon$ we have
\begin{equation}\label{eq:size_dbar_symbol}
\begin{split}
\va{\bar{\partial} a (x,\xi)} \leq C \jap{\Re \xi}^m \exp\p{ - \frac{1}{C} \p{\va{\Im x} + \frac{\va{\Im \xi}}{\jap{\Re \xi}}}^{- \frac{1}{s-1}}}.
\end{split}
\end{equation}
The proof of the existence of such an extension for $a$ is based on Lemma \ref{lemma:almost-analytic-extension-Gs} and a standard rescaling argument, using for instance a partition of unity of Paley--Littlewood type. Observe that the quantity in the exponential is comparable to $\| \Im (x,\xi) \|_{g_{KN}}^{-1/(s-1)}$. As in Remark \ref{remark:size_d_bar}, the estimate \eqref{eq:size_dbar_symbol} can be upgraded to a Gevrey estimates: there are constants $C,R > 0$ such that for every $(x,\xi) \in \p{T^* \Omega}_\epsilon$ and $\alpha,\beta \in \N^{2n}$ we have -- under the identification $\R^{2n} \simeq \C^n$,
\begin{equation*}
\begin{split}
& \va{\partial_x^\alpha \partial_\xi^\beta \bar{\partial} a (x,\xi)} \\
& \qquad \qquad \leq C R^{\va{\alpha} + \va{\beta}} \alpha!^s \beta!^s \jap{\Re \xi}^{m - \va{\beta}} \exp\p{ - \frac{1}{C} \p{\va{\Im x} + \frac{\va{\Im \xi}}{\jap{\Re \xi}}}^{- \frac{1}{s-1}}}.
\end{split}
\end{equation*}
\end{remark}

In the case $s=1$, we will construct in \S \ref{sssecparametrix} a parametrix for elliptic pseudors. To do so, we need the notions of formal analytic symbols and their realization, that we define now.

\begin{definition}\label{def:gevrey_formal_symbol}
Let $m \in \R$. We define an element of the class of formal symbols $FS^{1,m}\p{T^* U}$ to be a formal series $\sum_{k \geq 0} h^k a_k$ where the $a_k$'s are $\mathcal{C}^\infty$ functions from $T^* U$ to $\C$ such that, for every open relatively compact subset $\Omega$ of $U$, there are $\epsilon > 0$ and, for each $k \geq 0$, a holomorphic extension of $a_k$ to $\p{T^* \Omega}_\epsilon$,  and constants $C,R > 0$ such that for every $k \in \N$ and $(x,\xi) \in \p{T^* \Omega}_\epsilon$ we have
\begin{equation}\label{eq:formal_symbol}
\begin{split}
\va{a_k(x,\xi)} \leq C R^k k! \jap{\Re \xi}^{m - k}.
\end{split}
\end{equation}
\end{definition}

\begin{remark}\label{remark:slight_abuse}
If $V$ is an open subset of $\p{T^* U}_\epsilon$ and $\p{a_k}_{k \geq 0}$ is a family of holomorphic functions on $V$ that satisfies \eqref{eq:formal_symbol} then we will also say that $\sum_{k \geq 0} h^k a_k$ is a formal symbol on $V$.
\end{remark}

\begin{definition}\label{def:realisation_Gevrey}
Let $m \in \R$ and $\underline{a} = \sum_{k \geq 0} h^k a_k \in FS^{1,m}\p{T^* U}$. Let $\Omega$ be an open subset of $U$. A realization of $\underline{a}$ on $\Omega$ is an element $a$ of $S^{1,m}\p{T^*\Omega}$ such that, for every open relatively compact subset $\Omega'$ of $\Omega$, there is $\epsilon > 0$ such that $a$ and the $a_k$'s have holomorphic extensions to $\p{T^* \Omega'}_\epsilon$ such that, for every $C_0 > 0$ large enough, there is a constant $C > 0$, such that, for every $(x,\xi) \in \p{T^* \Omega'}_\epsilon$, we have
\begin{equation}\label{eq:def_realisation}
\begin{split}
\va{a(x,\xi) - \sum_{0  \leq k \leq \frac{\langle\Re \xi\rangle}{C_0h} } h^k a_k(x,\xi)} \leq C \exp\p{ - \frac{\jap{\Re \xi}}{C h}}.
\end{split}
\end{equation}
\end{definition}

We will prove in \S \ref{sec:HormanderGrauert} that any formal analytic symbol admits a realization, namely:

\begin{lemma}\label{lemma:existence_realisation}
Let $m \in \R$. Let $\underline{a} \in FS^{1,m}\p{T^* U}$ and let $\Omega$ be a relatively compact open subset of $U$. Then $\underline{a}$ admits a realization $a \in S^{1,m}\p{T^* \Omega}$ on $\Omega$.
\end{lemma}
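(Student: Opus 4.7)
The plan is to implement the Hörmander--Grauert technique announced by the title of \S \ref{sec:HormanderGrauert}: first build a $\mathcal{C}^\infty$ ``almost holomorphic'' realization by a smooth truncation of the formal series at the optimal index, and then correct it by solving a $\bar{\partial}$ equation with a semiclassical weight to get an honest holomorphic realization.

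Fix a relatively compact $\Omega'\Subset U$ and choose $\epsilon>0$ so that all $a_k$ have holomorphic extensions to $(T^*\Omega')_\epsilon$ satisfying \eqref{eq:formal_symbol}. Pick a cut-off $\chi\in\mathcal{C}^\infty(\R,[0,1])$ with $\chi=0$ on $(-\infty,1/2]$ and $\chi=1$ on $[1,+\infty)$. For a constant $C_0>0$ to be chosen (much larger than $R$), set
\[
\tilde a(x,\xi) := a_0(x,\xi) + \sum_{k\geq 1} h^k\,\chi\!\left(\frac{\Re\jap{\xi}}{C_0\,k\,h}\right)\,a_k(x,\xi).
\]
For each $(x,\xi)\in (T^*\Omega')_\epsilon$ the sum is finite (only indices $k\lesssim \Re\jap{\xi}/h$ contribute), and combining \eqref{eq:formal_symbol} with Stirling gives, for $C_0>eR$, the bound $|\tilde a(x,\xi)|\leq C\Re\jap{\xi}^m$ and analogous bounds for all $(x,\xi)$-derivatives: $\tilde a$ is a smooth symbol of order $m$. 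Moreover $\tilde a$ differs from the truncation appearing in \eqref{eq:def_realisation} only by terms for which $\chi$ is not $0$ or $1$, i.e.\ $k\asymp \Re\jap{\xi}/(C_0 h)$; each such term is bounded by $C(R/C_0)^k k!\,\Re\jap{\xi}^{m-k}\lesssim \Re\jap{\xi}^m\exp(-\Re\jap{\xi}/(Ch))$ after summation, again via Stirling.

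Next compute $\bar\partial\tilde a$. Because the $a_k$'s are holomorphic, $\bar\partial\tilde a$ only picks up $\bar\partial\chi(\cdot)$, which is supported where $\Re\jap{\xi}/(C_0 k h)\in[1/2,1]$, i.e.\ on the ``transition band''. On this band the worst term has $k\asymp \Re\jap{\xi}/(C_0 h)$, and the same Stirling calculation yields, for $C_0$ large enough,
\[
|\bar\partial\tilde a(x,\xi)|\leq C'\,\Re\jap{\xi}^m \exp\!\left(-\frac{\Re\jap{\xi}}{C'h}\right),\qquad (x,\xi)\in (T^*\Omega')_\epsilon,
\]
with analogous bounds on derivatives. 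We then solve $\bar\partial u=\bar\partial\tilde a$ on the strictly pseudoconvex tube $(T^*\Omega')_\epsilon$. Taking the plurisubharmonic weight $\phi=\Re\jap{\xi}/(C''h)$ added to a multiple of the defining function $\rho_{KN}$ of the tube, Hörmander's weighted $L^2$ estimate produces a solution $u$ with
\[
\int_{(T^*\Omega')_\epsilon}|u|^2 e^{-2\phi}\,\mathrm{d}V \;\lesssim\; \int_{(T^*\Omega')_\epsilon}|\bar\partial\tilde a|^2 e^{-2\phi}\,\mathrm{d}V \;<\;\infty.
\]
Setting $a:=\tilde a-u$ on $(T^*\Omega')_\epsilon$ gives a holomorphic function; passing from the $L^2$ bound to pointwise Cauchy estimates on a slightly smaller tube shows that $u$ and its derivatives are $O(\Re\jap{\xi}^m\exp(-\Re\jap{\xi}/(C h)))$, so $a\in S^{1,m}(T^*\Omega)$ and, combined with the first paragraph, $a$ is a realization of $\underline{a}$ in the sense of \eqref{eq:def_realisation}.

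The main obstacle is calibrating the semiclassical weight in Hörmander's $\bar\partial$ estimate so that it is simultaneously plurisubharmonic on $(T^*\Omega')_\epsilon$, comparable to $\Re\jap{\xi}/h$ at infinity, and compatible with the symbolic estimates one wants on $u$. A convenient way is to work on a slightly thicker tube $(T^*\Omega'')_{\epsilon'}\supset (T^*\Omega')_\epsilon$ on which $\tilde a$ and $\bar\partial\tilde a$ still enjoy the same exponential bounds, use Hörmander there, and then deduce symbol estimates on the smaller tube via Cauchy's inequalities; the strict pseudoconvexity of the tube (coming from the Kohn--Nirenberg metric, cf.\ \S \ref{sec:Grauert}) is what makes Hörmander's theorem applicable.
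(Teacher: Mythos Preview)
Your proposal is correct and follows essentially the same approach as the paper: build a smooth candidate by truncating the formal series with cutoffs $\chi(\jap{\xi}/(C_0 kh))$, check that $\bar\partial$ of this candidate is $O(\exp(-\jap{\xi}/(Ch)))$ because only the transition band contributes, and then correct by solving a weighted $\bar\partial$ problem. The paper packages the last step into Lemma~\ref{lemma:approximation_analytique_Rn} (whose proof it omits, pointing to H\"ormander's original argument), whereas you sketch it directly; your last paragraph correctly identifies the weight calibration as the only delicate point, and the paper's Lemma~\ref{lemma:plurisousharmonique} shows one concrete choice, namely $\varphi=-\jap{\va{\alpha}}(1-A\rho_{KN})$, whose Levi form picks up a term $A\jap{\va{\alpha}}\,\omega_{KN}$ giving the required growth of the lowest eigenvalue---note that your first guess $\Re\jap{\xi}/(C''h)$ alone is pluri\emph{harmonic} and would not by itself give a positive lower bound on the Hessian eigenvalues at infinity, so the coupling with $\rho_{KN}$ is essential.
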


\begin{remark}\label{remark:formal-Gs-symbols}
Let us explain why we do not introduce formal $\G^s$ symbols. The natural notion of such a symbol would be a formal sum
\[
\sum h^k a_k,
\]
with an estimate of the form
\[
\left| \partial_x^\alpha \partial_\xi^\beta a_k\right| \leq C R^{|\alpha|+|\beta|} (\alpha+\beta+2k)!^s \langle\xi\rangle^{m-|\beta|}.
\]
To build an actual symbol from a formal symbol, the only procedure we have is the so-called \emph{sommation au plus petit terme}. The problem is that the result is a symbol that is well-defined modulo an error in the class
\[
\O_{\G^s_{x,\xi}}\p{\exp\p{- \p{\frac{\langle\xi\rangle}{Ch}}^{\frac{1}{2s-1}} }}.
\]
While this has the right \emph{regularity}, it does not decay fast enough to produce a $\G^s$ smoothing remainder. In the class of $\Gs_x \G^1_\xi$ symbols, we could consider formal symbols, and this is what is done in \cite{Boutet-Kree-67,zanghiratiPseudodifferentialOperatorsInfinite1985,Rodino-93-book}.
\end{remark}

We will also need Gevrey symbols defined on manifolds. There is a natural definition that is valid for any $s \geq 1$: we ask for Definition \ref{def:gevrey_symbol} to be satisfied in any charts. This definition is not empty. Indeed, it follows from Fa\`a di Bruno's formula, that the class of symbols presented in Definition \ref{def:gevrey_symbol} is stable by $\G^s$ change of coordinates.

\begin{definition}\label{def:symbole_manifold}
Let $M$ be a $\G^s$ manifold and $m \in \R$. We say that $a : T^* M \to \C$ belongs to the class of symbol $S^{s,m}\p{T^* M}$ if for every $\G^s$ chart $\p{U,\kappa}$ for $M$ the function
\begin{equation*}
\begin{split}
a^{\kappa}(x,\xi) \coloneqq a\p{\kappa^{-1}(x), {}^{T} \mathrm{d}_{\kappa^{-1}(x)} \kappa \cdot \xi}
\end{split}
\end{equation*}
belongs to $S^{s,m}\p{T^*\p{\kappa(U)}}$. Similarly, we say that $\underline{a} = \sum_{k \geq 0} h^k a_k$ is a formal analytic symbol in $FS^{1,m}\p{T^*M}$ if, for every analytic chart $\p{U,\kappa}$ for $M$, the formal symbol $\sum_{k \geq 0} h^k a_k^{\kappa}$ belongs to $FS^{1,m}\p{T^*\p{\kappa(U)}}$.
\end{definition}

\begin{remark}
As in the Euclidean case, we extend these definitions in the natural way to symbols defined on products of the form $\p{T^* M}^{N_1} \times M^{N_2}$. Moreover, if $M$ is compact and $s > 1$, we see by taking a partition of unity that if $a$ is in $S^{s,m}\p{T^* M}$ then it admits an almost analytic extension to $\p{T^* M}_{\epsilon}$ (for some $\epsilon > 0$) in the sense of Remark \ref{remark:aae_for_symbol}. If $s=1$, we do not need to use a partition of unity (by the principle of analytic continuation) to prove that $a$ admits a holomorphic extension to some $\p{T^* M}_\epsilon$ with growth at most polynomial. The following characterization of analytic symbol follows. 
\end{remark}

\begin{lemma}\label{lemma:symb_analytique_manifold}
Let $(M,g)$ be a compact real-analytic Riemannian manifold and $m \in \R$. Then $a : T^* M \to \C$ belongs to $S^{1,m}\p{T^* M}$ if and only if there are $C,\epsilon > 0$ such that $a$ admits a holomorphic extension to $\p{T^* M}_\epsilon$ that satisfies for every $\alpha \in \p{T^*M}_\epsilon$
\begin{equation*}
\begin{split}
\va{a(\alpha)} \leq C \jap{\va{\alpha}}^m.
\end{split}
\end{equation*}
Similarly, $\underline{a} = \sum_{k \geq 0} h^k a_k$ belongs to $FS^{1,m}\p{T^* M}$ if and only if there are constants $C,R,\epsilon > 0$ such that the $a_k$'s admit holomorphic extensions to $\p{T^* M}_\epsilon$ that satisfy for every $k \in \N$ and $\alpha \in \p{T^* M}_\epsilon$:
\begin{equation*}
\begin{split}
\va{a_k(\alpha)} \leq C R^k k! \jap{\va{\alpha}}^{m-k}.
\end{split}
\end{equation*}
\end{lemma}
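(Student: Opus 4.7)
The strategy is to establish the equivalence in charts, using Taylor's formula in one direction and Cauchy's formula in the other, and then to globalize by analytic continuation using the compactness of $M$. The key geometric fact I will rely on is that the Kohn--Nirenberg Grauert tube $(T^*M)_\epsilon$ is, for $\epsilon$ small enough, covered by a finite family of ``flat'' complex neighbourhoods of the form $(T^*\kappa(U))_{\epsilon'}$ (in the sense of \eqref{eq:Grauert_rn}) coming from a finite analytic atlas $\{(U_i,\kappa_i)\}$, and conversely that a finite intersection of such flat neighbourhoods contains some $(T^*M)_{\epsilon}$. This holds because, in any analytic chart, $\rho_{KN}$ is comparable to $|y|^2 + |\eta|^2/\langle\xi\rangle^2$, as recalled in \S\ref{sec:Grauert}.

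\textbf{Forward direction.} Given $a\in S^{1,m}(T^*M)$, in an analytic chart $(U,\kappa)$ with relatively compact image, the Gevrey-$1$ estimate $|\partial_x^\alpha \partial_\xi^\beta a^\kappa(x,\xi)| \leq C R^{|\alpha|+|\beta|} \alpha! \beta! \langle\xi\rangle^{m-|\beta|}$ shows that the full Taylor series of $a^\kappa$ at any real point $(x_0,\xi_0)$ converges absolutely on the polydisc $\{|x-x_0|<\delta,\ |\xi-\xi_0|<\delta\langle\xi_0\rangle\}$ for some $\delta>0$ depending only on $R$ and $n$. Summing yields a holomorphic extension of $a^\kappa$ to some flat tube $(T^*\kappa(U))_{\epsilon'}$, satisfying a polynomial bound $|a^\kappa(x+iy,\xi+i\eta)|\leq C'\langle\Re\xi\rangle^m$. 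By a finite cover argument and uniqueness of analytic continuation (local extensions agree on overlaps since they agree on real open subsets of $T^*M$), these patch into a holomorphic extension on some $(T^*M)_\epsilon$ satisfying $|a(\alpha)|\leq C \langle|\alpha|\rangle^m$.

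\textbf{Reverse direction and formal symbols.} Conversely, a holomorphic extension $\tilde a$ on $(T^*M)_\epsilon$ with $|\tilde a(\alpha)|\leq C\langle|\alpha|\rangle^m$ restricts, in a chart, to a holomorphic extension of $a^\kappa$ on some flat tube $(T^*\kappa(U))_{\epsilon'}$ with the same polynomial growth. Applying Cauchy's formula on the polydisc centred at a real point $(x,\xi)$ with radii $\epsilon'/2$ in $x$ and $\epsilon'\langle\xi\rangle/2$ in $\xi$, I obtain $|\partial_x^\alpha\partial_\xi^\beta a^\kappa(x,\xi)|\leq C\, \alpha!\,\beta!\,(2/\epsilon')^{|\alpha|+|\beta|}\langle\xi\rangle^{m-|\beta|}$, so $a^\kappa \in S^{1,m}(T^*\kappa(U))$. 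The formal symbol part is proved \emph{verbatim}, applying the same two arguments to each $a_k$ separately: the bound $|a_k(\alpha)|\leq C R^k k! \langle|\alpha|\rangle^{m-k}$ on $(T^*M)_\epsilon$ is, modulo the equivalence between flat and Kohn--Nirenberg tubes, exactly what Definition~\ref{def:gevrey_formal_symbol} asks of each $a_k^\kappa$ in each chart. The only nontrivial ingredient is thus the geometric comparison between the two families of tubes, which is a purely local calculation near the reals, so I do not expect any real obstacle beyond keeping track of how $\epsilon$ and $\epsilon'$ transform under this comparison.
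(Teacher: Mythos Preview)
Your proposal is correct and follows essentially the same approach the paper indicates: the paper states the Euclidean version of this lemma (just before Definition~\ref{def:gevrey_formal_symbol}) with the remark that it is proved ``using Taylor's and Cauchy's Formula,'' and then notes before Lemma~\ref{lemma:symb_analytique_manifold} that in the case $s=1$ one does not need a partition of unity, since the principle of analytic continuation allows the local holomorphic extensions to patch into a global one on some $(T^*M)_\epsilon$. Your write-up simply makes this sketch explicit, including the comparison between the flat tubes $(T^*\kappa(U))_{\epsilon'}$ and the Kohn--Nirenberg tube $(T^*M)_\epsilon$, which is exactly the geometric input the paper alludes to in \S\ref{sec:Grauert}.
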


\begin{remark}
As in Definition \ref{def:symbole_manifold}, the notion of realization for symbols on manifolds is deduced immediately from the notion on Euclidean spaces by taking charts. In the case of analytic symbols on a compact manifold, we may use Lemma \ref{lemma:symb_analytique_manifold} to reformulate this notion: $a \in S^{1,m}\p{T^*M}$ is a realization for $\sum_{k \geq 0}h^k a_k$ if and only if for every $C_0$ large enough, there are constants $C,\epsilon > 0$ such that for every $\alpha \in \p{T^* M}_\epsilon$ we have
\begin{equation*}
\begin{split}
\va{a(\alpha) - \sum_{0 \leq k \leq \frac{\jap{\va{\alpha}}}{C_0 h}} h^k a_k(\alpha)} \leq C \exp\p{- \frac{\jap{\va{\alpha}}}{Ch}}.
\end{split}
\end{equation*}
As stated below, realizations of formal analytic symbols on manifold do exist. We will indeed use H\"ormander's solution to the $\bar{\partial}$-equation to prove the following lemma (the proof is delayed to \S \ref{sec:HormanderGrauert}).
\end{remark}

\begin{lemma}\label{lemma:existence_realisation_mfld}
Let $M$ be a compact $\G^1$ manifold and $m \in \R$. Let $\underline{a} = \sum_{k \geq 0} h^k a_k$ be a formal analytic symbol in $FS^{1,m}\p{T^* M}$. Then $\underline{a}$ admits a realization on $M$.
\end{lemma}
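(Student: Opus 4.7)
The plan is to reduce the global statement to the Euclidean one of Lemma~\ref{lemma:existence_realisation} via a cover of $M$ by analytic charts, then to glue the local realizations by an almost analytic partition of unity and correct the resulting non-holomorphic function with a solution of a $\bar\partial$-equation on a Grauert tube, following Hörmander's method.

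More concretely, fix a finite cover $(U_j,\kappa_j)_{1 \leq j \leq N}$ of $M$ by analytic charts with $\kappa_j(U_j)$ relatively compact in $\R^n$. For each $j$, the pullbacks $\kappa_j^* a_k$ form a formal analytic symbol on $T^*\kappa_j(U_j)$, so Lemma~\ref{lemma:existence_realisation} produces a realization $b_j \in S^{1,m}(T^*V_j)$ on a slightly smaller $V_j\Subset\kappa_j(U_j)$, still covering $M$ after pulling back. Push $b_j$ back to $T^*M$ and extend it holomorphically to some Grauert tube $(T^*M)_{\epsilon_j}$ of width depending on $j$; taking $\epsilon>0$ smaller than all the $\epsilon_j$'s, I obtain holomorphic realizations $\tilde a^{(j)}$ on the common tube $(T^*M)_\epsilon$ (restricted to $\pi^{-1}(U_j)$, where $\pi:T^*M \to M$ is the projection). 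The key observation is that on the overlap $U_i\cap U_j$ the uniqueness statement built into Definition~\ref{def:realisation_Gevrey} forces $\tilde a^{(i)} - \tilde a^{(j)} = O\bigl(\exp(-\jap{\va\alpha}/(Ch))\bigr)$ on $(T^*(U_i\cap U_j))_\epsilon$.

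Choose a smooth (say $\G^2$) partition of unity $(\chi_j)$ subordinate to $(U_j)$ on $M$, extend each $\chi_j$ to a compactly supported almost analytic function $\tilde\chi_j$ on $(M)_\epsilon$ (Lemma~\ref{lemma:almost-analytic-extension-Gs}), and lift to $(T^*M)_\epsilon$ by composition with $\pi$. Set
\[
\tilde a \coloneqq \sum_{j=1}^N \tilde\chi_j \,\tilde a^{(j)}.
\]
Then $\tilde a$ is smooth on $(T^*M)_\epsilon$, equals a realization on $T^*M$, and its $\bar\partial$ is controlled: by Remark~\ref{remark:size_d_bar} applied to the $\tilde\chi_j$'s and by the overlap estimate above,
\[
\bar\partial \tilde a \;=\; \sum_j (\bar\partial\tilde\chi_j)\,(\tilde a^{(j)} - \tilde a^{(j_0)}) \;+\; \sum_j \tilde\chi_j \,\bar\partial \tilde a^{(j)},
\]
where the last sum vanishes (each $\tilde a^{(j)}$ is holomorphic on its domain) and where, fixing any reference index $j_0$ on each overlap, the first sum is $O\bigl(\jap{\va\alpha}^m\exp(-\jap{\va\alpha}/(Ch))\bigr)$ pointwise, uniformly on $(T^*M)_{\epsilon/2}$.

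Now apply Hörmander's $L^2$ solution of the $\bar\partial$-equation on the strictly pseudoconvex domain $(T^*M)_{\epsilon/2}$ (this is where the Grauert tube structure with its strictly plurisubharmonic exhaustion $\rho_{KN}$ is essential), with a weight of the form
\[
\Phi_h(\alpha) \;=\; \frac{\jap{\va\alpha}}{C_1 h} + C_2\, \rho_{KN}(\alpha),
\]
with $C_2$ large enough to absorb curvature contributions and ensure plurisubharmonicity. The form $\bar\partial\tilde a$ is $\bar\partial$-closed and satisfies $\int |\bar\partial\tilde a|^2 e^{-2\Phi_h}<\infty$ with bound uniform in $h$. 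Hörmander's theorem then provides $u\in L^2_{\mathrm{loc}}$ with $\bar\partial u = \bar\partial\tilde a$ and $\int |u|^2 e^{-2\Phi_h}\lesssim 1$; translating back to pointwise bounds via the sub-mean-value property gives $|u(\alpha)|\lesssim \jap{\va\alpha}^m \exp(-\jap{\va\alpha}/(Ch))$ on $(T^*M)_{\epsilon/4}$. Finally, $a\coloneqq \tilde a - u$ is holomorphic on $(T^*M)_{\epsilon/4}$, belongs to $S^{1,m}(T^*M)$, and still satisfies the realization estimate from Definition~\ref{def:realisation_Gevrey} (the correction $u$ being absorbed in the exponential remainder).

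The only real obstacle is the quantitative $\bar\partial$-step: one must choose the weight $\Phi_h$ and the domain so that the estimate of $u$ recovered from an $L^2$ bound is \emph{pointwise} of size $\exp(-\jap{\va\alpha}/(Ch))$ with the exact exponential rate dictating a realization. This is achieved by combining a fixed strictly plurisubharmonic piece (to guarantee Hörmander's hypothesis on the whole Grauert tube) with the $h$-dependent piece $\jap{\va\alpha}/(C_1 h)$; checking that the latter remains plurisubharmonic (up to a controlled correction absorbed in $C_2\rho_{KN}$), and that $\bar\partial\tilde a$ is integrable against $e^{-2\Phi_h}$ with uniform bounds, is the part that will require the most care.
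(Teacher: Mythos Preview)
Your gluing strategy is workable, but there is a concrete gap in the $\bar\partial$-estimate. In your displayed identity for $\bar\partial\tilde a$ you implicitly use $\sum_j\bar\partial\tilde\chi_j=0$; this is false for almost analytic extensions of a partition of unity, since $\sum_j\tilde\chi_j$ need only agree with $1$ to infinite order on the real locus. The missing term is $\bigl(\sum_j\bar\partial\tilde\chi_j\bigr)\,\tilde a^{(j_0)}$, which by Remark~\ref{remark:size_d_bar} is only $\O\bigl(\jap{\va\alpha}^m\exp(-c|\Im\alpha_x|^{-1})\bigr)$ and carries no decay in $\jap{\va\alpha}$, so Lemma~\ref{lemma:approximation_analytique} does not apply to it. The fix is cheap: do not take almost analytic extensions. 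Instead choose smooth cutoffs $\chi_j$ directly on $(M)_\epsilon$ (for instance $\chi_j\circ\Re$) with $\sum_j\chi_j\equiv 1$ exactly; then $\sum_j\bar\partial\chi_j=0$ identically, and on the support of each $\bar\partial\chi_j$ you are in an overlap where $\tilde a^{(j)}-\tilde a^{(j_0)}=\O(\exp(-\jap{\va\alpha}/(Ch)))$, giving the required bound on $\bar\partial\tilde a$. After that, your final $\bar\partial$-step is precisely Lemma~\ref{lemma:approximation_analytique} (with the weight from Lemma~\ref{lemma:plurisousharmonique}), which you may simply cite rather than rederive.

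For comparison, the paper's route is more direct and avoids gluing entirely. By Lemma~\ref{lemma:symb_analytique_manifold} the $a_k$'s are already globally holomorphic on a fixed $(T^*M)_\epsilon$ with $|a_k(\alpha)|\leq CR^k k!\jap{\va\alpha}^{m-k}$, so one can copy the Euclidean proof verbatim: set $b(\alpha)=\sum_{k\geq 0}\chi\bigl(\jap{\va\alpha}/(C_1 kh)\bigr)h^k a_k(\alpha)$ globally on $(T^*M)_\epsilon$, check that $b$ satisfies the realization estimate and that $\bar\partial b=\O(\exp(-\jap{\va\alpha}/(Ch)))$ (only the cutoffs are non-holomorphic), then correct with Lemma~\ref{lemma:approximation_analytique}. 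Your approach has the advantage of reducing cleanly to the Euclidean black box, but at the cost of the Čech-type bookkeeping above; the paper's approach exploits that on a compact analytic manifold the formal symbol is already a global holomorphic object, so no patching is needed.
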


\subsubsection{H\"ormander's problem on Grauert tube and analytic approximations}\label{sec:HormanderGrauert}

In order to define the kernel of the FBI transform $T$ and to find realizations of formal analytic symbols, we need a tool to construct globally defined real analytic functions. This will be performed using the version of H\"ormander's estimates \cite{Hormander-65-dbar,Hormander-66-complex-analysis} exposed in \cite{Demailly-book-website}. Notice that it is common to apply H\"ormander's solution to the $\bar{\partial}$ equation to solve such problems, see for instance the appendix of \cite{Sjostrand-96-I-lagrangian} or \cite[Proposition 1.2]{Sjostrand-82-singularite-analytique-microlocale}.  We will prove the following lemma.

\begin{lemma}\label{lemma:approximation_analytique}
Let $\p{M,g}$ be a compact real analytic Riemannian manifold. Let $\epsilon > 0$ be small enough. Let $f$ be a $(0,1)$-form on $\p{T^* M}_{\epsilon}$ (that may depend on the implicit parameter $h$) with $\mathcal{C}^\infty$ coefficients and such that $\bar{\partial} f = 0$. Assume that there is a constant $C > 0$ such that for every $\alpha \in \p{T^* M}_{\epsilon}$ we have
\begin{equation}\label{eq:petit_dbar}
\begin{split}
\va{f(\alpha)} \leq C \exp\p{- \frac{\jap{\va{\alpha}}}{C h}}.
\end{split}
\end{equation}
Then, there are $\epsilon_1 \in \left]0,\epsilon\right[$ and a constant $C' > 0$ such that, for $h$ small enough, there is a $\mathcal{C}^\infty$ function $r$ on $\p{T^* M}_{\epsilon_1}$ such that $\bar{\partial} r = f$ and
\begin{equation*}
\begin{split}
r(\alpha) = \O\p{\exp\p{- \frac{\jap{\va{\alpha}}}{C' h}}}
\end{split}
\end{equation*}
on $(T^* M)_{\epsilon_1}$. Moreover, the same result holds with the natural modifications if we replace $\p{T^* M}_\epsilon$ by a manifold of the form $\p{T^* M}_\epsilon^{N_1} \times \p{M}_\epsilon^{N_2}$.
\end{lemma}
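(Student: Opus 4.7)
I would apply H\"ormander's $L^2$-estimate for the $\bar\partial$-equation on the Grauert tube. The tube $(T^*M)_\epsilon$ is a strictly pseudoconvex (hence Stein) open subset of $T^*\widetilde{M}$, so H\"ormander's theorem (as presented in \cite{Demailly-book-website}) applies; the argument is in the spirit of the appendix of \cite{Sjostrand-96-I-lagrangian} and \cite{Sjostrand-82-singularite-analytique-microlocale}.

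I would pick an intermediate radius $\epsilon_1 < \epsilon_2 < \epsilon$ and construct a plurisubharmonic weight $\phi$ on $(T^*M)_{\epsilon_2}$ satisfying $\phi(\alpha) \asymp \langle|\alpha|\rangle/h$, and such that the Levi form $i\partial\bar\partial \phi$ dominates a fixed K\"ahler form on the tube. Such a $\phi$ can be built by combining the K\"ahler potential of the Grauert tube (strictly plurisubharmonic, providing strict positivity, cf.\ \S\ref{sec:Grauert}) with a plurisubharmonic function of size $\langle|\alpha|\rangle/h$, manufactured from smoothed versions of $|\xi|$ in local trivialisations of $T^*\widetilde{M}$, patched via a partition of unity and corrected by a multiple of $\rho_{KN}$ to restore global plurisubharmonicity. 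H\"ormander's theorem then produces $r$ with $\bar\partial r = f$ and
\[
\int_{(T^*M)_{\epsilon_2}} |r|^2 e^{-2\phi} \, dV \leq \int_{(T^*M)_{\epsilon_2}} \langle (i\partial\bar\partial\phi)^{-1} f, f\rangle \, e^{-2\phi} \, dV,
\]
where the right-hand side is finite (and in fact exponentially small in $1/h$) thanks to the assumed decay $|f(\alpha)| \leq C\exp(-\langle|\alpha|\rangle/(Ch))$ beating the polynomial Kohn--Nirenberg volume growth.

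To pass from the weighted $L^2$ bound to pointwise exponential decay on $(T^*M)_{\epsilon_1}$, I would use a Bochner--Martinelli / mean-value argument: at each $\alpha_0 \in (T^*M)_{\epsilon_1}$, pick a small $\omega_{KN}$-ball $B$ around $\alpha_0$ contained in $(T^*M)_{\epsilon_2}$ and decompose $r = r_\ell + v$ on $B$, where $r_\ell$ is a local Cauchy--Pompeiu solution of $\bar\partial r_\ell = f$ on $B$ (inheriting pointwise exponential decay directly from $|f|$, exactly as in the proof of Lemma \ref{lemma:Bochner-Martinelli-trick}) and $v := r - r_\ell$ is holomorphic on $B$. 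The sub-mean-value inequality applied to $|v|^2$, combined with the weighted $L^2$ bound on $r$, yields a pointwise bound of the form $|v(\alpha_0)| \leq C''\exp(-\langle|\alpha_0|\rangle/(C'h))$ for suitable $C',C''$, and summing with $r_\ell(\alpha_0)$ gives the desired estimate. The product case $(T^*M)_\epsilon^{N_1} \times (M)_\epsilon^{N_2}$ follows by the same scheme applied to the product tube with the product K\"ahler structure and the sum of the individual weights.

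The main obstacle is the construction of the plurisubharmonic weight $\phi$: it must simultaneously grow in $\langle|\alpha|\rangle$ fast enough to absorb the exponential decay of $f$ into an integrable right-hand side, satisfy a strong enough Levi-form lower bound so that the curvature gain in H\"ormander's estimate is quantitative, and be compatible with the unboundedness of the Grauert tube in the cotangent fibres. Carefully balancing these three constraints (together with the passage from weighted $L^2$ to pointwise exponential decay, which fixes the loss in the constant $C' > C$) is the heart of the technical argument.
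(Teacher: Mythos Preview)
Your overall strategy (H\"ormander's weighted $L^2$ estimate on the tube, then a local Bochner--Martinelli/mean-value argument to pass to pointwise bounds) is indeed the one the paper uses. However, there is a genuine sign problem in your weight construction that makes the argument break down.

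You propose a plurisubharmonic weight $\phi$ with $\phi(\alpha)\asymp \jap{\va{\alpha}}/h$ (positive, growing), and then apply the estimate
\[
\int |r|^2 e^{-2\phi}\,dV \;\leq\; \int \langle (i\partial\bar\partial\phi)^{-1} f,f\rangle\, e^{-2\phi}\,dV.
\]
But with $\phi>0$ the factor $e^{-2\phi}$ \emph{decays} at infinity, so finiteness of $\int |r|^2 e^{-2\phi}$ is compatible with $r$ growing like $e^{\phi}$. Concretely, in your mean-value step you get
\[
|v(\alpha_0)|^2 \;\lesssim\; \int_{B(\alpha_0)}|v|^2 \;\lesssim\; e^{2\max_{B(\alpha_0)}\phi}\int_{B(\alpha_0)}|v|^2 e^{-2\phi}
\;\lesssim\; e^{2\phi(\alpha_0)}\cdot\text{(global RHS)},
\]
and the right-hand side, while exponentially small in $1/h$, carries a factor $e^{2\phi(\alpha_0)}\asymp e^{2\jap{\va{\alpha_0}}/h}$ that \emph{grows} in $\alpha_0$. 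So the claimed pointwise bound $|v(\alpha_0)|\leq C''\exp(-\jap{\va{\alpha_0}}/(C'h))$ does not follow.

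To conclude decay of $r$ from an $L^2$ estimate you need the weight $e^{-\varphi}$ to \emph{grow} at infinity, i.e.\ $\varphi$ must be negative of size $-\jap{\va{\alpha}}$, while still satisfying $i\partial\bar\partial\varphi>0$. These two requirements are in tension (a naive choice like $-\jap{\va{\alpha}}$ is not plurisubharmonic), and resolving it is precisely the content of the paper's Lemma~\ref{lemma:plurisousharmonique}: one takes
\[
\varphi \;=\; -\jap{\va{\alpha}}\bigl(1-A\,\rho_{KN}\bigr),
\]
which is negative on $(T^*M)_\epsilon$ (since $A\rho_{KN}<1$ there), but whose Levi form is dominated by the term $A\jap{\va{\alpha}}\,i\partial\bar\partial\rho_{KN}=A\jap{\va{\alpha}}\,\omega_{KN}$, which for $A$ large beats the non-positive contribution from $-i\partial\bar\partial\jap{\va{\alpha}}$. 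One then runs H\"ormander with $\nu\varphi/h$ for $\nu$ small (so that the decay of $f$ beats $e^{-\nu\varphi/h}\asymp e^{\nu\jap{\va{\alpha}}/(Ch)}$), and the resulting bound $\int|r|^2 e^{-\nu\varphi/h}\leq C$ genuinely forces $|r(\alpha)|\lesssim e^{-\jap{\va{\alpha}}/(C'h)}$ after averaging. Your partition-of-unity recipe would not produce such a $\varphi$: patching local plurisubharmonic functions does not preserve plurisubharmonicity, and in any case you were building a positive $\phi$, which is the wrong sign.
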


\begin{remark}\label{remark:approximation_analytique}
Most of the time, we will apply Lemma \ref{lemma:approximation_analytique} with $f = -\bar{\partial} F$ where $F$ is a $\mathcal{C}^\infty$ function on $\p{T^* M}_\epsilon$. Then, if we define $\widetilde{F} = F + r$ on $\p{T^* M}_{\epsilon_1}$, we see that $\widetilde{F}$ is holomorphic and satisfies
\begin{equation*}
\begin{split}
\widetilde{F}\p{\alpha} = F\p{\alpha} + \O\p{\exp\p{- \frac{\jap{\va{\alpha}}}{C' h}}}
\end{split}
\end{equation*}
on $\p{T^* M}_{\epsilon_1}$. This is the method that we will use to construct a FBI transform with analytic kernel for instance (see Lemma \ref{lemma:existence-transform}).
\end{remark}

\begin{remark}\label{rmq:Grauet_est_Kahler}
In order to make sense of assumption \eqref{eq:petit_dbar} in Lemma \ref{lemma:approximation_analytique}, we need to specify which metric we use to measure the $(0,1)$-form $f$. To do so, recall from \S \ref{sec:Grauert} that, associated with the Kohn--Nirenberg metric $g_{KN}$, there is a non-negative plurisubharmonic function $\rho_{KN}$ on the complexification of $T^* M$. The $(1,1)$-form $\omega_{KN} = i \partial \bar{\partial} \rho$ defines a K\"ahler metric on $\p{T^* M}_\epsilon$ that coincides with $g_{KN}$ on $T^* M$. Then $\omega_{KN}$ induces a hermitian metric on $T\p{T^* M}_\epsilon$, hence on $T^* \p{T^* M}_{\epsilon}$ and following on $T^*\p{T^* M}_\epsilon \otimes_\R \C$. This is the metric that appears in \eqref{eq:petit_dbar}. 
\end{remark}

We will also need the following version of Lemma \ref{lemma:approximation_analytique} on $\R^n$, whose proof is slightly easier (one may rely directly on the original version of H\"ormander's argument \cite{Hormander-65-dbar}) and will consequently be omitted.

\begin{lemma}\label{lemma:approximation_analytique_Rn}
Let $U$ be an open subset of $\R^n$. Let $\Omega$ be a relatively compact open subset of $U$. Let $\epsilon > 0$ be small enough. Let $f$ be a $(0,1)$-form on $\p{T^* U}_{\epsilon}$ (that may depend on the implicit parameter $h$) with $\mathcal{C}^\infty$ coefficients and such that $\bar{\partial} f = 0$. Assume that there is a constant $C > 0$ such that for every $\alpha \in \p{T^* U}_{\epsilon}$ we have
\begin{equation*}
\begin{split}
\va{f(\alpha)} \leq C \exp\p{- \frac{\jap{\va{\alpha}}}{C h}}.
\end{split}
\end{equation*}
Then, there are $\epsilon_1 \in \left]0,\epsilon\right[$ and a constant $C' > 0$ such that, for $h$ small enough, there is a $\mathcal{C}^\infty$ function $r$ on $\p{T^* \Omega}_{\epsilon_1}$ such that $\bar{\partial} r = f$ and
\begin{equation*}
\begin{split}
r(\alpha) = \O\p{\exp\p{- \frac{\jap{\va{\alpha}}}{C' h}}}
\end{split}
\end{equation*}
on $(T^* \Omega)_{\epsilon_1}$. Moreover, the same result holds with the natural modifications if we replace $\p{T^* U}_\epsilon$ by a manifold of the form $\p{T^* U}_\epsilon^{N_1} \times \p{U}_\epsilon^{N_2}$.
\end{lemma}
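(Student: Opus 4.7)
The plan is to apply H\"ormander's $L^2$ existence theorem for $\bar{\partial}$ on a pseudoconvex subdomain of the Grauert tube, combined with a holomorphic change of gauge that converts the $h$-exponential smallness of $f$ into a uniform smallness of the conjugated source.

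For $\epsilon$ small enough, the quantity $1 + \jap{\alpha_\xi,\alpha_\xi}$ has positive real part on $\p{T^* U}_\epsilon$, so its principal square root defines a holomorphic function $\jap{\alpha}$ whose real part is uniformly equivalent to $\jap{\va{\alpha}}$ on the tube. Fix $C_1 > 2C$ and set $\mu(\alpha) = \jap{\alpha}/(C_1 h)$; seek the solution in the form $r = e^{-\mu} v$. Since $\mu$ is holomorphic, $\bar{\partial} r = f$ is equivalent to $\bar{\partial} v = g$ with $g \coloneqq e^{\mu} f$, and the hypothesis on $f$ combined with $\Re \jap{\alpha} \leq (1 + \O(\epsilon))\jap{\va{\alpha}}$ on the tube yields
\[
\va{g(\alpha)} \leq C' \exp\p{-c \jap{\va{\alpha}}/h}
\]
for some $c > 0$, provided $\epsilon$ is small and $C_1$ is large.

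Next, choose a smooth, strictly pseudoconvex subdomain $D$ of $\p{T^* U}_\epsilon$ containing $\p{T^* \Omega}_{\epsilon_1}$ for a suitable $\epsilon_1 < \epsilon$. Applying H\"ormander's $L^2$ theorem on $D$ with the plurisubharmonic weight $\phi(\alpha) = 2 \log(1 + \va{\alpha}^2)$ produces a solution $v$ of $\bar{\partial} v = g$ on $D$ with
\[
\int_D \va{v}^2 (1 + \va{\alpha}^2)^{-2} \mathrm{d}V \leq C_2 \int_D \va{g}^2 \mathrm{d}V = \O\p{h^N}
\]
for some $N$, since $\va{g}^2$ is exponentially decaying in $h^{-1}\jap{\va{\alpha}}$ and integrates to a quantity polynomial in $h$. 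To upgrade the $L^2$ bound to a pointwise bound at any $\alpha_0 \in \p{T^* \Omega}_{\epsilon_1}$, fix a small Euclidean ball $B \subset D$ centered at $\alpha_0$ and decompose $v = v_{\mathrm{loc}} + v_{\mathrm{hol}}$: the local piece $v_{\mathrm{loc}}$ solves $\bar{\partial} v_{\mathrm{loc}} = g$ on $B$ via the Bochner--Martinelli kernel, so $\va{v_{\mathrm{loc}}}$ is bounded by $\sup_B \va{g}$, hence exponentially small in $h^{-1} \jap{\va{\alpha_0}}$; the remainder $v_{\mathrm{hol}} = v - v_{\mathrm{loc}}$ is holomorphic on $B$, hence pointwise bounded by its $L^2$ norm via the mean-value inequality. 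This gives $\va{v(\alpha_0)} = \O\p{h^{-N'}}$, and therefore
\[
\va{r(\alpha_0)} = e^{-\Re \mu(\alpha_0)} \va{v(\alpha_0)} \leq C_3 h^{-N'} \exp\p{-\frac{\jap{\va{\alpha_0}}}{C_4 h}},
\]
using $\Re \jap{\alpha} \geq c' \jap{\va{\alpha}}$ on the tube. Absorbing the polynomial factor into a slightly worse exponential rate yields the claimed bound. The product case $\p{T^* U}_\epsilon^{N_1} \times \p{U}_\epsilon^{N_2}$ is handled identically with a factorized gauge, using that products of pseudoconvex domains are pseudoconvex.

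The main obstacle I expect is the bookkeeping of the nested tubes and constants, ensuring simultaneously that (a) the holomorphic square root $\jap{\alpha}$ is well-defined on the full Grauert tube, (b) the local ball $B$ fits inside the pseudoconvex subdomain $D$ uniformly in $\alpha_0 \in \p{T^*\Omega}_{\epsilon_1}$, and (c) the exponent of the final bound comes out with a constant larger than (and independent of) $C$. The underlying $\bar{\partial}$ theory on pseudoconvex domains of $\C^n$ is classical and provides the main technical input.
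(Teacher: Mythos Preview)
Your approach is correct and takes a genuinely different route from the one the paper indicates (the paper omits the proof of this $\R^n$ lemma, referring to the manifold case, Lemma~\ref{lemma:approximation_analytique}, as the model). The paper's method applies H\"ormander's theorem directly with an $h$-dependent plurisubharmonic weight $\nu\varphi/h$, where $\varphi$ is constructed (Lemma~\ref{lemma:plurisousharmonique}) so that $\varphi \asymp -\jap{\va{\alpha}}$ and the Levi form of $\varphi$ dominates $\jap{\va{\alpha}}\,g_{KN}$; the exponential decay of $r$ then comes out of the weighted $L^2$ estimate itself. You instead factor the exponential into a holomorphic gauge $e^{-\mu}$ with $\mu = \jap{\alpha}/(C_1 h)$, so that the transformed problem $\bar\partial v = e^{\mu}f$ has a source that is still exponentially small but now only needs H\"ormander with a fixed polynomial weight; the decay of $r$ is recovered at the end from $r = e^{-\mu}v$. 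This is a legitimate and somewhat more elementary alternative in the Euclidean setting, where the global holomorphic function $\jap{\alpha}$ is available; it avoids having to build and check the curvature bounds on a bespoke plurisubharmonic $\varphi$. The paper's route, by contrast, is what generalizes to the manifold case, where no such global holomorphic symbol exists.

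Two small points to tighten. First, your pointwise bound $\va{v(\alpha_0)} = \O(h^{-N'})$ is slightly misstated: the mean-value step on the ball $B$ picks up the local weight $(1+\va{\alpha_0}^2)^2$, so you actually get $\va{v(\alpha_0)} = \O(\jap{\va{\alpha_0}}^2 h^{N/2})$. This is harmless, since the polynomial in $\jap{\va{\alpha_0}}$ is absorbed by $e^{-\Re\mu(\alpha_0)}$ just as the polynomial in $h^{-1}$ is. Second, your sentence ``choose a smooth, strictly pseudoconvex subdomain $D$'' deserves one line of justification: in the Euclidean tube you can take, e.g., $D = \{d(x,\Omega') < \epsilon'\} \times \{\va{\Im\xi} < \epsilon'\jap{\Re\xi}\}$ for $\Omega \Subset \Omega' \Subset U$ and $\epsilon_1 < \epsilon' < \epsilon$, and exhibit a plurisubharmonic exhaustion as in Lemma~\ref{lemma:conical-is-pseudo-convex} (or simply note that H\"ormander's $L^2$ theorem on $\C^N$ only needs pseudoconvexity, which such product-type domains enjoy).
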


Let us mention that there are no particular difficulties to deal with a manifold of the form $\p{T^* M}_\epsilon^{N_1} \times \p{M}_\epsilon^{N_2}$ rather than with $\p{T^* M}_\epsilon$ in Lemma \ref{lemma:approximation_analytique} and thus we will only write the proof in the latter case. Our arguments rely on Theorem \ref{thm:resoudre_lequation} below, which is \cite[Theorem VIII.6.5]{Demailly-book-website} applied to the holomorphic line bundle $ \p{X \times \C} \otimes \Lambda^n T X$ where $X \times \C$ denotes the trivial line bundle on $X$ (see the remark below Theorem VIII.6.5 in \cite{Demailly-book-website}). Notice that we apply here a result which is true in a very general context (namely weakly pseudo-convex K\"ahler manifold) to a much more favorable case (Grauert tubes are strictly pseudo-convex). In particular, one could probably retrieve ``by hand'' Lemma \ref{lemma:approximation_analytique} by adapting H\"ormander's argument \cite{Hormander-65-dbar,Hormander-66-complex-analysis}. However, we thought that it was easier to rely on an available powerful technology, since it avoids to rediscuss boundary and non-compactness issues. Moreover, it seemed to us that a more elementary proof of Lemma \ref{lemma:approximation_analytique} would have been mostly a repetition of some technical estimates that are very well detailed in \cite{Demailly-book-website}.

\begin{theorem}[\cite{Demailly-book-website}]\label{thm:resoudre_lequation}
Let $(X,\omega)$ be a weakly pseudo-convex K\"ahler manifold of complex dimension $d$. Let $\varphi$ be a $\mathcal{C}^\infty$ function from $X$ to $\R$. For every $x \in X$ denotes by $\lambda_1(x) \leq \dots \leq \lambda_d(x)$ the eigenvalues of $\textup{Ricci}(\omega) + i \partial \bar{\partial} \varphi$ and assume that $\lambda_1(x) > 0$. Then, if $f$ is a form of type $(0,1)$ on $X$, with $\mathcal{C}^\infty$ (resp. $L^2_{\textup{loc}}$) coefficients, that satisfies $\bar{\partial}f = 0$ and
\begin{equation}\label{eq:estimee_L2_forme}
\begin{split}
\int_X \frac{1}{\lambda_1} \va{f}^2 e^{- \varphi} \mathrm{d Vol} < + \infty.
\end{split}
\end{equation}
Then, there is a $\mathcal{C}^\infty$ (resp. $L^2_{\textup{loc}}$) function $r$ on $X$ such that $\bar{\partial}r = f$ and 
\begin{equation}\label{eq:estimee_L2_fonction}
\begin{split}
\int_X \va{r}^2 e^{- \varphi} \mathrm{d Vol} \leq \int_X \frac{1}{\lambda_1} \va{f}^2 e^{- \varphi} \mathrm{d Vol}.
\end{split}
\end{equation}
\end{theorem}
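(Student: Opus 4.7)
The proof would follow the classical Hörmander–Andreotti–Vesentini–Demailly $L^2$-method for the $\bar\partial$ equation. I would organize the argument in three steps: reduction to a setting where integration by parts is legitimate, an a priori estimate via the Bochner–Kodaira–Nakano identity, and a Hahn–Banach argument to pass from the a priori estimate to existence.

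First, I would reduce to a complete Kähler manifold. Weak pseudoconvexity of $X$ provides a $\mathcal{C}^\infty$ plurisubharmonic exhaustion $\psi$. Either exhausting $X$ by the Stein sublevel sets $X_c := \{\psi < c\}$ (where the classical Hörmander estimates apply directly) or replacing $\varphi$ by $\varphi_\nu := \varphi + \chi_\nu\circ\psi$ with $\chi_\nu$ a smooth convex increasing function growing fast enough to guarantee completeness of the modified metric, one reduces to a complete Kähler setting in which the Andreotti–Vesentini density lemma holds: $\mathcal{C}^\infty_c(X,\Lambda^{\bullet,\bullet}T^*X)$ is dense in $\Dom(\bar\partial)\cap\Dom(\bar\partial^*_\varphi)$ for the graph norm, so that integration-by-parts identities extend to the full domains. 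Since $\chi_\nu\circ\psi$ is plurisubharmonic, the curvature hypothesis of the statement is preserved (indeed only improved) under this modification, and a final limit $\nu\to\infty$ will recover the sharp weight.

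Next, I would establish the a priori $L^2$ estimate via Bochner–Kodaira–Nakano. Using the canonical isomorphism $\Lambda^{0,1}T^*X \cong \Lambda^{n,1}T^*X \otimes \Lambda^n TX$, the problem is recast for $(n,1)$-forms valued in the holomorphic line bundle $L := \Lambda^n TX$ equipped with the metric $h\cdot e^{-\varphi}$, where $h$ is the metric on $\Lambda^n TX$ induced by $\omega$; the Chern curvature of this metrized bundle equals $\textup{Ricci}(\omega) + i\partial\bar\partial\varphi$, whose eigenvalues are precisely $\lambda_1\leq\dots\leq\lambda_d$. For compactly supported smooth $u$ of bidegree $(n,1)$ with values in $L$, the Kähler Bochner–Kodaira–Nakano identity $\Delta'' = \Delta' + [i\Theta_L,\Lambda]$ yields, after dropping the nonnegative $\Delta'$ contribution,
\begin{equation*}
\|\bar\partial u\|^2_\varphi + \|\bar\partial^*_\varphi u\|^2_\varphi \;\geq\; \int_X \langle [i\Theta_L,\Lambda]u,u\rangle\, e^{-\varphi}\mathrm{dVol}.
\end{equation*}
A fiberwise linear-algebra computation in an $\omega$-unitary coframe diagonalizing $\Theta_L$ identifies the curvature operator on $(n,1)$-forms explicitly in terms of the $\lambda_j$'s, and a Cauchy–Schwarz bound of the form $|\langle f,u\rangle|^2 \leq (\lambda_1+\dots+\lambda_d)^{-1}|f|^2\cdot \langle[i\Theta_L,\Lambda]u,u\rangle$ (valid pointwise) integrates together with the previous inequality to
\begin{equation*}
|\langle f,u\rangle_\varphi|^2 \;\leq\; \Bigl(\int_X \frac{|f|^2}{\lambda_1+\dots+\lambda_d}e^{-\varphi}\mathrm{dVol}\Bigr)\bigl(\|\bar\partial u\|^2_\varphi + \|\bar\partial^*_\varphi u\|^2_\varphi\bigr).
\end{equation*}

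Finally, I would deduce existence by Hahn–Banach. Since $\bar\partial f = 0$, for every $u\in\Dom(\bar\partial^*_\varphi)$ the orthogonal decomposition $u = u_1 + u_2$ with $u_1\in\ker\bar\partial$ and $u_2\in(\ker\bar\partial)^\perp \subset \ker\bar\partial^*_\varphi$ (the last inclusion using $\bar\partial^2=0$) gives $\langle f,u\rangle_\varphi = \langle f,u_1\rangle_\varphi$ while $\bar\partial^*_\varphi u = \bar\partial^*_\varphi u_1$. Applying the a priori estimate to $u_1$, which satisfies $\bar\partial u_1 = 0$, one obtains $|\langle f,u\rangle_\varphi| \leq \|f\|_\star \cdot \|\bar\partial^*_\varphi u\|_\varphi$ where $\|f\|_\star^2$ is the right-hand side of \eqref{eq:estimee_L2_forme}. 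Hence the linear functional $\bar\partial^*_\varphi u \mapsto \langle f,u\rangle_\varphi$ is well-defined and bounded on $\mathrm{Im}(\bar\partial^*_\varphi) \subset L^2_{(0,0)}(X, e^{-\varphi})$; Hahn–Banach plus Riesz representation produces $r \in L^2_{(0,0)}(X, e^{-\varphi})$ with $\bar\partial r = f$ (in the distributional sense) and $\|r\|_\varphi^2 \leq \|f\|_\star^2$, which is exactly \eqref{eq:estimee_L2_fonction}. When $f$ is smooth, smoothness of $r$ follows from interior elliptic regularity for the $\bar\partial$-Laplacian applied to the orthogonal projection of $r$ onto $(\ker\bar\partial)^\perp$. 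The main technical obstacle is the first step: carefully tuning the exhaustion or the weight modification so that the completeness, the density of test forms, and the curvature hypothesis coexist, and then extracting a weak $L^2_{\text{loc}}$ limit as $\nu \to \infty$ that saturates the sharp weight $(\lambda_1+\dots+\lambda_d)^{-1}$ from the original statement.
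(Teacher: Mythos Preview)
Your sketch is a correct outline of the standard H\"ormander--Andreotti--Vesentini--Demailly argument, and in particular your line-bundle reformulation via $\Lambda^n TX$ matches exactly how the paper itself explains the statement. However, the paper does not prove this theorem: it is quoted verbatim as \cite[Theorem VIII.6.5]{Demailly-book-website} (applied to the line bundle $(X\times\C)\otimes\Lambda^n TX$), with a remark referring the reader to Demailly's book for definitions and details. So there is nothing to compare against beyond the citation; your proposal is essentially the proof one would find in that reference.
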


\begin{remark}
We refer to \cite{Demailly-book-website} for precise definitions and extensive discussions of the complex analytic notions that appear in the statement of Theorem \ref{thm:resoudre_lequation} (see also, for instance, \cite{demailly_1982}). Let us just mention that $\mathrm{d Vol}$ denotes the Lebesgue measure associated with the Hermitian form associated with $\omega$ (this measure is identified with the volume form $\omega^d/d!$) and that $\textup{Ricci}(\omega)$ denotes the Ricci curvature of $\omega$.
\end{remark}

We want to apply Theorem \ref{thm:resoudre_lequation} with $X = \p{T^* M}_\epsilon$ and $\omega = \omega_{KN}$ (as defined in Remark \ref{rmq:Grauet_est_Kahler}). To do so, we will start by checking that:

\begin{lemma}\label{lemma:conical-is-pseudo-convex}
$\p{ \p{T^* M}_\epsilon, \omega_{KN}}$ is a weakly pseudo-convex K\"ahler manifold.
\end{lemma}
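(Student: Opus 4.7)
The plan is to verify separately the two ingredients in the definition: that $\omega_{KN}$ is a K\"ahler form on $(T^*M)_\epsilon$ and that the tube admits a smooth plurisubharmonic exhaustion function. Throughout, I would shrink $\epsilon$ once so that $(T^*M)_\epsilon$ is contained in the neighborhood of $T^*M$ on which $\rho_{KN}$ is strictly plurisubharmonic, as recalled just before the lemma.

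The K\"ahler part is immediate from the definition. Positivity of the smooth $(1,1)$-form $\omega_{KN} = i\partial\bar\partial\rho_{KN}$ is equivalent to the strict plurisubharmonicity of $\rho_{KN}$, and closedness follows from the identity $\omega_{KN} = d(i\bar\partial \rho_{KN})$ together with $\bar\partial^2 = 0$, so that $\omega_{KN}$ is even exact.

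For the exhaustion I would set
\[
\psi(\alpha) := -\log\p{\epsilon^2 - \rho_{KN}(\alpha)} + \log\va{\jap{\alpha}^2},
\]
where $\jap{\alpha}^2 = 1 + \tilde g_{\alpha_x}(\alpha_\xi,\alpha_\xi)$ is the single-valued holomorphic function from \eqref{eq:def_jap_brack}. The first summand is the composition of the smooth convex nondecreasing function $t \mapsto -\log(\epsilon^2 - t)$, defined on $t < \epsilon^2$, with the plurisubharmonic function $\rho_{KN}$, hence plurisubharmonic. The second is the logarithm of the modulus of a nowhere vanishing holomorphic function: the paper establishes that $\Re \jap{\alpha}$ is uniformly equivalent to $\jap{\va{\alpha}} \geq 1$ on $(T^*M)_\epsilon$, so $\jap{\alpha}^2$ never vanishes there, and $\log\va{\jap{\alpha}^2}$ is therefore smooth and even pluriharmonic on the tube. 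Hence $\psi$ is smooth and plurisubharmonic.

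Finally I would check exhaustion. Both summands are bounded below on $(T^*M)_\epsilon$: the first because $\rho_{KN} \geq 0$ gives $-\log(\epsilon^2 - \rho_{KN}) \geq -\log \epsilon^2$, the second because the same equivalence gives $\va{\jap{\alpha}^2} \geq c\jap{\va{\alpha}}^2 \geq c$ for some $c > 0$. A sublevel set $\set{\psi \leq C}$ therefore forces $\rho_{KN}$ to stay bounded away from $\epsilon^2$ and $\va{\jap{\alpha}}$ to stay bounded, which via the Kohn--Nirenberg description of the tube recalled in the excerpt places $\alpha$ inside a relatively compact subset of $(T^*M)_\epsilon$. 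The only subtle point in the argument is the choice of a globally defined plurisubharmonic function controlling fiber infinity, since the naive candidate $\va{\alpha}^2$ is not plurisubharmonic; the paper's holomorphic Japanese bracket $\jap{\alpha}^2$ is precisely tailored for this use.
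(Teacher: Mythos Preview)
Your proof is correct and follows essentially the same approach as the paper: a boundary-controlling term $-\log(\epsilon^2 - \rho_{KN})$ plus a pluriharmonic term controlling fiber infinity. The only difference is cosmetic: the paper uses $\Re\,\tilde g_{\alpha_x}(\alpha_\xi,\alpha_\xi)$ for the second summand, while you use $\log\va{\jap{\alpha}^2} = \log\va{1 + \tilde g_{\alpha_x}(\alpha_\xi,\alpha_\xi)}$; both are pluriharmonic functions built from the holomorphic extension of the metric and both grow at fiber infinity.
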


\begin{proof}
We already saw in \S \ref{sec:Grauert} that $\p{ \p{T^* M}_\epsilon, \omega_{KN}}$ is K\"ahler. Recall that a complex manifold is said to be pseudoconvex if it admits a plurisubharmonic exhaustion function $\psi$. In the case of $\p{T^*M}_\epsilon$, we can take for instance
\begin{equation}\label{eq:exhaustons}
\begin{split}
\psi : \alpha \mapsto -\log\p{\epsilon^2 - \rho(\alpha)} + \Re g_{\alpha_x}(\alpha_\xi,\alpha_\xi).
\end{split}
\end{equation}
The first term in \eqref{eq:exhaustons} is plurisubharmonic because it is a convex increasing function of a plurisubharmonic function. The second term is plurisubharmonic because it is pluriharmonic, as the real part of a holomorphic function ($g$ denotes here the holomorphic extension of the Riemannian metric on $M$). To see that $\psi$ is indeed an exhaustion function, notice that both terms in \eqref{eq:exhaustons} are bounded from below. Thus, the first term ensures that the sublevel sets $\set{\psi < c}$ remain away from the boundary of $\p{T^* M}_\epsilon$ while the second term ensures that they remain bounded in the fibers (recall that in $\p{T^* M}_\epsilon$ the quantity $\Re g_{\alpha_x}(\alpha_\xi,\alpha_\xi)$ is equivalent to the size of $\alpha_\xi$).
\end{proof}

We need then to construct the function $\varphi$ that appears in Theorem \ref{thm:resoudre_lequation}. Before doing so, recall that if $u$ is a real $(1,1)$-form on a complex finite-dimensional vector space $V$ there is a unique hermitian form $\mathfrak{h}_u$ on $V$ that satisfies 
\begin{equation*}
\begin{split}
\forall \xi,\eta \in V : u(\xi,\eta) = - 2 \Im \mathfrak{h}_u(\xi,\eta).
\end{split}
\end{equation*}
Moreover,if we choose coordinates $(z_1,\dots,z_d)$ on $V$, the map $u \mapsto \mathfrak{h}_u$ writes
\begin{equation*}
\begin{split}
\frac{i}{2} \sum_{1 \leq j,k \leq n} h_{jk} \mathrm{d}z_j \wedge \mathrm{d}\bar{z}_k \mapsto \sum_{1 \leq j,k \leq n} h_{jk} \mathrm{d}z_j \otimes \mathrm{d}\bar{z}_k.
\end{split}
\end{equation*}
Now, if $V$ is the tangent space of $\p{T^* M}_{\epsilon}$ at some point $\alpha$, we define the eigenvalues of $u$ as the diagonal elements of the matrix of $\mathfrak{h}_u$ in an orthonormal basis for $\mathfrak{h}_{\omega_{KN}}$ that diagonalizes $\mathfrak{h}_u$ (these are the eigenvalues that appear in the statement of Theorem \ref{thm:resoudre_lequation}). We can now construct the weight $\varphi$ that we are going to use.

\begin{lemma}\label{lemma:plurisousharmonique}
Assume that $\epsilon > 0$ is small enough. Then there is a strictly plurisubharmonic function $\varphi$ on $\p{T^* M}_\epsilon$ with the following properties:
\begin{enumerate}[label=(\roman*)]
\item there is $C > 0$ such that for all $\alpha \in \p{T^*M}_\epsilon$ we have
\begin{equation*}
\begin{split}
-C \jap{\va{\alpha}} \leq \varphi(\alpha) \leq - \frac{\jap{\va{\alpha}}}{C};
\end{split}
\end{equation*}
\item there is $C > 0$ such that for all $\alpha \in \p{T^* M}_\epsilon$ we have
\begin{equation*}
\begin{split}
\mathfrak{h}_{i \partial \bar{\partial} \varphi} \geq \frac{\jap{\va{\alpha}}}{C} \mathfrak{h}_{\omega_{KN}}.
\end{split}
\end{equation*}
\end{enumerate}
\end{lemma}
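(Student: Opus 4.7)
\emph{Plan.} The idea is to exploit two observations. First, since $\langle\alpha\rangle$ is holomorphic on the tube (with $\Re\langle\alpha\rangle$ uniformly equivalent to $\langle|\alpha|\rangle$ by the discussion at the end of \S\ref{sec:Grauert}), the function $-\Re\langle\alpha\rangle$ has the desired linear negative growth but is merely pluriharmonic. Second, the Guillemin-type defining function $\rho_{KN}$ is strictly plurisubharmonic with complex Hessian equal to $\omega_{KN}$ itself but is bounded by $\epsilon^2$ on $(T^*M)_\epsilon$, so it provides strict plurisubharmonicity of the wrong order of magnitude. My candidate is the product-type combination
\[
\varphi(\alpha) := -\Re\langle\alpha\rangle\bigl(1-\rho_{KN}(\alpha)\bigr)
=-\Re\langle\alpha\rangle + \rho_{KN}(\alpha)\,\Re\langle\alpha\rangle,
\]
on a tube $(T^*M)_\epsilon$ with $\epsilon>0$ chosen small enough at the end.

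\emph{Verifying (i).} Since $\rho_{KN}\in[0,\epsilon^2)$ on the tube, $1-\rho_{KN}\in(1/2,1]$ once $\epsilon^2<1/2$; combined with the equivalence $\Re\langle\alpha\rangle \asymp \langle|\alpha|\rangle$ from \S\ref{sec:Grauert}, this yields $\varphi\asymp -\langle|\alpha|\rangle$ directly.

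\emph{Verifying (ii).} Because $\Re\langle\alpha\rangle$ is pluriharmonic and $i\partial\bar\partial\rho_{KN}=\omega_{KN}$, the Leibniz rule gives
\[
i\partial\bar\partial\varphi \;=\; \Re\langle\alpha\rangle\,\omega_{KN}
\;+\; i\,\partial\Re\langle\alpha\rangle\wedge\bar\partial\rho_{KN}
\;+\; i\,\partial\rho_{KN}\wedge\bar\partial\Re\langle\alpha\rangle.
\]
The first (main) term already yields a Hermitian form bounded below by $\langle|\alpha|\rangle/C\cdot\mathfrak h_{\omega_{KN}}$, so the core of the proof is to absorb the two cross terms. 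Using the pointwise inequality $i\partial(\delta^{1/2}f-\delta^{-1/2}g)\wedge\bar\partial(\delta^{1/2}f-\delta^{-1/2}g)\ge 0$ applied with $f=\Re\langle\alpha\rangle$, $g=\rho_{KN}$, I get, for any $\delta>0$,
\[
\bigl|\,i\partial f\wedge\bar\partial g + i\partial g\wedge\bar\partial f\,\bigr|
\;\le\; \delta\, i\partial f\wedge\bar\partial f \;+\; \delta^{-1}\, i\partial g\wedge\bar\partial g,
\]
as Hermitian forms. Both rank-one forms on the right are controlled in terms of the $g_{KN}$-norms of the gradients: explicit coordinate computation (using the KN co-metric and the holomorphic extension of $g$) yields $|\nabla^{KN}\Re\langle\alpha\rangle|_{g_{KN}}\le C\langle|\alpha|\rangle$, so $i\partial f\wedge\bar\partial f\le C\langle|\alpha|\rangle^2\mathfrak h_{\omega_{KN}}$; and since $\rho_{KN}\ge 0$ vanishes on the totally real submanifold $T^*M$ with non-degenerate complex Hessian $\omega_{KN}$, Taylor expansion gives $|\nabla^{KN}\rho_{KN}|_{g_{KN}}\le C|\Im\alpha|_{g_{KN}}\le C\epsilon$, and hence $i\partial g\wedge\bar\partial g\le C\epsilon^2\mathfrak h_{\omega_{KN}}$. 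Plugging in $\delta = \epsilon/\langle|\alpha|\rangle$, the cross terms are bounded below by $-2C\epsilon\langle|\alpha|\rangle\,\mathfrak h_{\omega_{KN}}$, which is at most half of the main term as soon as $\epsilon$ is small enough. This yields (ii).

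\emph{Main obstacle.} The only delicate point is the pointwise Cauchy--Schwarz-type bound on the cross $(1,1)$-forms combined with the two KN-gradient estimates; these reduce to routine but careful coordinate computations calibrated to the anisotropic scaling of the Kohn--Nirenberg metric, together with the Taylor expansion of $\rho_{KN}$ at its zero set.
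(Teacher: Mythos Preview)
Your proof is correct and follows essentially the same strategy as the paper: take a product of a linearly growing function with $(1 - A\rho_{KN})$ and absorb the cross terms using that $d\rho_{KN}$ vanishes on $T^*M$, hence has $g_{KN}$-norm $\O(\epsilon)$ on the tube.

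The one genuine difference is your choice of the linearly growing factor. The paper uses the \emph{real} bracket $\jap{\va{\alpha}}$, which is not pluriharmonic; this produces an extra term $-i\partial\bar\partial\jap{\va{\alpha}}$ of size $\O(\jap{\va{\alpha}})\,\mathfrak h_{\omega_{KN}}$ that must be beaten, forcing the introduction of a large coupling constant $A$ in front of $\rho_{KN}$ (and then $\epsilon<1/(2A)$ for~(i)). You instead use the \emph{holomorphic} bracket $\Re\jap{\alpha}$, which is pluriharmonic, so that extra term simply disappears and no large parameter is needed. Your Cauchy--Schwarz argument with $\delta=\epsilon/\jap{\va{\alpha}}$ for the cross terms is exactly equivalent to the paper's direct estimate $\mathfrak h_u=\O(\epsilon\jap{\va{\alpha}})\,\mathfrak h_{\omega_{KN}}$; both rely on the same two gradient bounds $|\nabla^{KN}\Re\jap{\alpha}|\lesssim\jap{\va{\alpha}}$ and $|\nabla^{KN}\rho_{KN}|\lesssim\epsilon$. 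So your variant is slightly cleaner, while the paper's is marginally more robust (it would work with any order-$1$ symbol in place of the bracket, not just a pluriharmonic one).
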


\begin{proof}
Choose $A > 0$. We will see that if $A$ is large enough and $\epsilon$ is small enough then the function
\begin{equation*}
\begin{split}
\varphi = -\jap{\va{\alpha}}\p{1 - A \rho}
\end{split}
\end{equation*}
satisfies the desired properties. To do so, we compute
\begin{equation}\label{eq:Levy}
\begin{split}
i \partial \bar{\partial} \varphi = - i \partial \bar{\partial} \p{\jap{\va{\alpha}}} + A\p{\jap{\va{\alpha}} \omega_{KN} + u},
\end{split}
\end{equation}
where
\begin{equation*}
\begin{split}
u = i \partial \rho \wedge \bar{\partial}\p{\jap{\va{\alpha}}} + i \partial\p{\jap{\va{\alpha}}} \wedge \bar{\partial} \rho + i \rho \partial \bar{\partial} \jap{\va{\alpha}}.
\end{split}
\end{equation*}
In terms of Hermitian forms, \eqref{eq:Levy} rewrites
\begin{equation*}
\begin{split}
\mathfrak{h}_{i \partial \bar{\partial} \varphi} = \mathfrak{h}_{-i \partial \bar{\partial}\p{\jap{\va{\alpha}}}} + A\p{\jap{\va{\alpha}} \mathfrak{h}_{\omega_{KN}} + \mathfrak{h}_u}.
\end{split}
\end{equation*}
Working in local coordinates, we see that the regularity of $g_{KN}$ (it has bounded derivatives with respect to itself, i.e. it satisfies symbolic estimates) implies that $\mathfrak{h}_{\omega_{KN}}$ is uniformly equivalent to the Kohn--Nirenberg metric on $T^* \C^n$. For the same reason, $\rho$ is a symbol of order $0$. Using that the differential of $\rho$ vanishes on the real and that $\jap{\va{\alpha}}$ is a symbol of order $1$, we see that
\begin{equation*}
\begin{split}
\mathfrak{h}_u = \O\p{\epsilon \jap{\va{\alpha}}} \mathfrak{h}_{\omega_{KN}}.
\end{split}
\end{equation*}
Hence, if $\epsilon$ is small enough, we have
\begin{equation*}
\begin{split}
\jap{\va{\alpha}} \mathfrak{h}_{\omega_{KN}} + \mathfrak{h}_u \geq \frac{\jap{\va{\alpha}}}{2} \mathfrak{h}_{\omega_{KN}}.
\end{split}
\end{equation*}
From the symbolic behavior of $\jap{\va{\alpha}}$, we deduce that
\begin{equation*}
\begin{split}
\mathfrak{h}_{-i \partial \bar{\partial}\p{\jap{\va{\alpha}}}} = \O\p{\jap{\va{\alpha}}} \mathfrak{h}_{\omega_{KN}},
\end{split}
\end{equation*}
and thus (ii) holds if $A$ is large enough (we may even take $C$ arbitrarily small). Finally, if we impose that $\epsilon < \frac{1}{2A}$, it is immediate that (i) holds.
\end{proof}

We are now ready to prove Lemma \ref{lemma:approximation_analytique}.

\begin{proof}[Proof of Lemma \ref{lemma:approximation_analytique}]
We want to apply Theorem \ref{thm:resoudre_lequation} with $\varphi$ replaced by $\frac{\nu \varphi}{h}$, where $\nu$ is a small positive constant and $\varphi$ is from Lemma \ref{lemma:plurisousharmonique}. To do so, we need to check that the eigenvalues $\lambda_1 \leq \dots \leq \lambda_{2n}$ of
\begin{equation*}
\begin{split}
\textup{Ricci}(\omega_{KN}) + \frac{i \nu \partial \bar{\partial} \varphi}{h}
\end{split}
\end{equation*}
are positive. Using again the regularity of the Kohn--Nirenberg metric, we see that
\begin{equation*}
\begin{split}
\mathfrak{h}_{\textup{Ricci}(\omega_{KN})} = \mathcal{O}\p{ \mathfrak{h}_{\omega_{KN}}}.
\end{split}
\end{equation*}
Hence, with (ii) in Lemma \ref{lemma:plurisousharmonique}, we have for some constant $C > 0$
\begin{equation*}
\begin{split}
\mathfrak{h}_{\textup{Ricci}(\omega_{KN}) + \frac{i \nu \partial \bar{\partial} \varphi}{h}} \geq \p{\frac{\nu \jap{\va{\alpha}}}{Ch} - C} \mathfrak{h}_{\omega_{KN}}.
\end{split}
\end{equation*}
Hence, if $h$ is small enough (depending on $\nu$), we have
\begin{equation}\label{eq:minoration_vp}
\begin{split}
\lambda_1 \geq \frac{\nu \jap{\va{\alpha}}}{2 C h} > 0,
\end{split}
\end{equation}
and we may apply Theorem \ref{thm:resoudre_lequation} to the $(0,1)$-form $f$.

Indeed, thanks to \eqref{eq:petit_dbar}, (i) in Lemma \ref{lemma:plurisousharmonique} and \eqref{eq:minoration_vp}, we see that for $\nu$ and $h$ small enough we have
\begin{equation}\label{eq:estimeeL2dbar}
\begin{split}
\int_{\p{T^* M}_\epsilon} \frac{1}{\lambda_1} \va{f}^2 e^{ - \frac{\nu \varphi}{h}} \mathrm{d Vol} \leq C < + \infty,
\end{split}
\end{equation}
where $C$ does not depend on $h$. Hence, according to Theorem \ref{thm:resoudre_lequation}, there is a $\mathcal{C}^\infty$ function $r$ on $\p{T^* M}_\epsilon$ such that $\bar{\partial} r = f$ and
\begin{equation}\label{eq:estimeeL2}
\begin{split}
\int_{\p{T^* M}_\epsilon} \va{r}^2 e^{- \frac{\nu \varphi}{h}} \mathrm{d Vol} \leq C.
\end{split}
\end{equation}
It remains to show that $r$ is an $\O(\exp( - \jap{\va{\alpha}}/C' h ))$ in $(T^* M)_{\epsilon_1}$ for some $\epsilon_1 \in \left]0,\epsilon\right[$. However, this fact follows easily from \eqref{eq:estimeeL2dbar} and \eqref{eq:estimeeL2} (recall that $\bar{\partial} r = f$) by averaging Bochner--Martinelli's Formula.
\end{proof}

\begin{remark}\label{remark:regularite_solution_d_bar}
It will be useful to know the dependence of $r$ on $f$ in Lemma \ref{lemma:approximation_analytique}. We will see that the solution to the $\bar{\partial}$ equation in Theorem \ref{thm:resoudre_lequation} may be obtained by the application of a continuous linear operator. In particular, if $f$ depends smoothly on a parameter in the space of forms that satisfy \eqref{eq:petit_dbar} then $r$ has the same dependence on this parameter (in a space of very rapidly decaying functions). We will need this fact to check that the measurability assumption in Fubini's Theorem is satisfied in the proof of Theorem \ref{theorem:functional_calculus} below.

To see that in Theorem \ref{thm:resoudre_lequation}, the function $r$ may be deduced from $f$ by the application of a continuous linear operator, denote by $\mathcal{H}_{2}$ the Hilbert space of forms $f$ of type $\p{0,1}$ with $L^2_{\textup{loc}}$ coefficients that satisfy \eqref{eq:estimee_L2_forme} and $\bar{\partial}f = 0$ (in the sense of distributions). Define also the space $\mathcal{H}_1$ of $L^2_{\textup{loc}}$ functions $r$ on $X$ such that $\bar{\partial} r \in \mathcal{H}_2$ and
\begin{equation*}
\begin{split}
\int_{X} \va{r}^2 e^{- \varphi} \mathrm{dVol} < + \infty.
\end{split}
\end{equation*}
The space $\mathcal{H}_1$ is made a Hilbert space when endowed with the norm
\begin{equation*}
\begin{split}
\n{r}_{\mathcal{H}_2} \coloneqq \sqrt{\int_{X} \va{r}^2 e^{- \varphi} \mathrm{dVol} + \n{\bar{\partial} r}_{\mathcal{H}_2}^2}.
\end{split}
\end{equation*}
Then the operator $\bar{\partial}$ is clearly a bounded operator between $\mathcal{H}_1$ and $\mathcal{H}_2$. Denote by $p$ the orthogonal projection in $\mathcal{H}_2$ on the closed subspace $\p{\ker \bar{\partial}}^{\perp}$. Now, if $f \in \mathcal{H}_2$, we know by Theorem \ref{thm:resoudre_lequation} that there is an $r \in \mathcal{H}_1$ such that $\bar{\partial} r = f$ with the estimates \eqref{eq:estimee_L2_fonction}. Then $p\p{r}$ is the unique element of $\p{\ker \bar{\partial}}^{\perp}$ such that $\bar{\partial} \p{p\p{r}} = f$, and consequently we may define a linear operator $L$ from $\mathcal{H}_2$ to $\mathcal{H}_1$ by $L f = p\p{r}$. For every $f \in \mathcal{H}_2$, we have $\bar{\partial} Lf = r$ by definition. Moreover we have
\begin{equation*}
\begin{split}
\int_X \va{Lf}^2 e^{- \varphi} \mathrm{dVol} & = \n{L f}_{\mathcal{H}_1}^2 - \n{f}_{\mathcal{H}_2}^2 \\
      & \leq \n{r}_{\mathcal{H}_1}^2 - \n{f}_{\mathcal{H}_2}^2 = \int_X \va{r}^2 e^{- \varphi} \mathrm{dVol} \\
      & \leq \n{f}_{\mathcal{H}_2}^2.
\end{split}
\end{equation*}
Hence, $Lf$ also satisfies \eqref{eq:estimee_L2_fonction}. Finally, when $r$ has $\mathcal{C}^\infty$ coefficients then so does $Lf$ by ellipticity of the $\bar{\partial}$ operator.
\end{remark}

We are now ready to prove Lemma \ref{lemma:existence_realisation}. The proof of Lemma \ref{lemma:existence_realisation_mfld} is very similar, applying Lemma \ref{lemma:approximation_analytique} instead of Lemma \ref{lemma:approximation_analytique_Rn}, and we will omit it.

\begin{proof}[Proof of Lemma \ref{lemma:existence_realisation}]
Let us choose a relatively compact open subset $\Omega'$ of $U$ such that $\overline{\Omega} \subseteq \Omega'$. Let also $\chi : \R \to \left[0,1\right]$ be a $\mathcal{C}^\infty$ function such that $\chi(t) = 0$ if $t \leq \frac{1}{2}$ and $\chi(t) = 1$ if $t \geq 1$. Then define for $k \geq 1$
\begin{equation*}
\begin{split}
\chi_{k}(\xi) = \chi\p{\frac{\jap{\va{\xi}}}{C_1 k h}}
\end{split}
\end{equation*}
for $\xi \in \C^n$. Here $C_1$ is a large constant to be fixed later. We also define $\chi_0$ to be identically equal to $1$. Then for $\p{x,\xi} \in \p{T^* \Omega'}_\epsilon$ we define
\begin{equation*}
\begin{split}
b(x,\xi)= \sum_{k \geq 0} \chi_k(\xi) h^k a_k(x,\xi).
\end{split}
\end{equation*}
Notice that if $(x,\xi) \in \p{T^* \Omega'}$ we have for any $C_0 \geq C_1$
\begin{equation}\label{eq:premier_essai}
\begin{split}
\va{b(x,\xi) - \sum_{0 \leq k \leq \frac{\jap{\va{\xi}}}{C_0 h}}h^k a_k(x,\xi)} & \leq \sum_{k > \frac{\jap{\va{\xi}}}{C_0 h}} \va{\chi_k(\xi)} h^k \va{a_k(x,\xi)}.
\end{split}
\end{equation}
By assumption there are constants $C,R > 0$ such that 
\begin{equation*}
\begin{split}
h^k \va{a_k(x,\xi)} \leq C R^{k} k^{k} h^k \jap{\va{\xi}}^{m - k}.
\end{split}
\end{equation*}
Notice then that if $\chi_k(\xi) \neq 0$ then $kh\jap{\va{\xi}}^{-1}< 2/ C_1$ and consequently
\begin{equation*}
\begin{split}
h^k \va{a_k(x,\xi)} \leq C \jap{\va{\xi}}^{m} \exp\p{ k \ln\p{\frac{2 R}{C_1}}}.
\end{split}
\end{equation*}
By taking $C_1$ large enough, we ensure that $\ln\p{2R / C_1} < 0$ and we find then from \eqref{eq:premier_essai} that
\begin{equation*}
\begin{split}
\va{b(x,\xi) - \sum_{0 \leq k \leq \frac{\jap{\va{\xi}}}{C_0 h}}h^k a_k(x,\xi)} & \leq C \jap{\va{\xi}}^m \sum_{k > \frac{\jap{\va{\xi}}}{C_0 h}} \exp\p{ k \ln\p{\frac{2 R}{C_1}}} \\
    & \leq C \jap{\va{\xi}}^m \frac{\exp\p{ \ln\p{\frac{2 R}{C_1}} \frac{\jap{\va{\xi}}}{C_0 h}} }{1 - \exp\p{ \ln\p{\frac{2 R}{C_1}}}}.
\end{split}
\end{equation*}
Hence, $b$ satisfies the bound \eqref{eq:def_realisation}, but $b$ is not analytic in $\xi$. We will correct this using Hörmander's solution to the $\bar{\partial}$ equation. Notice indeed that
\begin{equation*}
\begin{split}
\bar{\partial} b(x,\xi) = \sum_{k \geq \frac{\jap{\va{\xi}}}{C_1 h}} h^k a_k(x,\xi) \bar{\partial} \chi_k(\xi).
\end{split}
\end{equation*}
The same computation than above shows then that, provided that $C_1$ is large enough, $\bar{\partial} b(x,\xi)$ is a $\O(\exp(- \jap{\va{\xi}}/Ch ))$. Thus, we end the proof by applying Lemma \ref{lemma:approximation_analytique_Rn}.
\end{proof}

\section{Gevrey oscillatory integrals}
\label{sec:oscillatory-integrals}

Behind most results of this exposition, lies a careful analysis of oscillatory integrals with parameter, of the form
\begin{equation}\label{eq:oscillatory_general}
(x,h)\mapsto \int_{\Omega} e^{\frac{i}{h}\Phi(x,y)}a(x,y) \mathrm{d}y.
\end{equation}
We will either want to prove that they are small, in some functional space, or give a precise expansion as $h\to 0$ (and possibly as $x$ tends to $\infty$ in some sense). Such estimates will fall in the usual category of non-stationary and stationary phase estimates. We expect the reader to be accustomed with these techniques, at least in the case that $\Phi$ is a real valued function, and the integrands are $C^\infty$. However, since we will be working in Gevrey regularity, there are some particularities, and tricks that we gathered here. We hope that this can help in understanding the details of the proofs.

To study oscillatory integrals, it is usual to use many integration by parts. This can be a useful technique in the Gevrey setting. However, it imposes to count derivatives precisely, and introduce some formal norms. It is the approach of \cite{Boutet-Kree-67,Gramchev-87-stationary-phase-gevrey,lascarFBITransformsGevrey1997}, and many others. On the other hand, in the analytic case, it is often more convenient to use Cauchy estimates. Instead of having to find $\mathcal{C}^k$ estimates for real values of the parameter, only $L^\infty$ estimates are necessary, but for values of the parameter in a \emph{complex neighbourhood} of the reals. 

Using almost analytic extensions, one can mimic this tactic in the Gevrey case. It is the point of view that we have adopted as much as possible throughout the paper. Naturally, this should also be possible in the $\mathcal{C}^\infty$ case, and we leave this to the reader's imagination. Integrations by parts still appear, but they only serve one purpose: showing that some non-absolutely converging oscillatory integrals are well defined. 

\subsection{Non-stationary phase}\label{sec:non-stationary}

The assumptions on the functions $\Phi$, $a$, and the set $\Omega$ in \eqref{eq:oscillatory_general} will depend on the context. In the general case, $\Im \Phi$ will be allowed to be slightly negative. We will have to distinguish between the \emph{stationary} and \emph{non-stationary} regions for $\Phi$. We deal with the latter case in this section. Let us start with the easy case: the phase is non-stationary of the whole of $U\times \Omega$. 

\begin{prop}\label{prop:non-stationary}
Let $s \geq 1$, and $U\times V$ be an open subset of $\R^n \times \R^k$. Let $\Phi : U\times V \to \C$ be a $\G^s$ function and $\Omega$ be a relatively compact open subset of $V$. For $\lambda \geq 1$ and $a \in \G^s\p{U \times V}$ let
\begin{equation}\label{eq:integrale_non_stationnaire}
\begin{split}
I_\lambda(a) : x  \mapsto \int_{\Omega} e^{i \lambda \Phi(x,y)} a(x,y) \mathrm{d}y.
\end{split}
\end{equation}
We make the following assumptions:
\begin{enumerate}[label=(\roman*)]
	\item the imaginary part of $\Phi$ is non-negative on $U\times \Omega$;
	\item there exists $\epsilon>0$ such that $\Im \Phi(x,y) > \epsilon$ for $x\in U$ and $y\in \partial \Omega$;
	\item the differential $\mathrm{d}_y\Phi$ does not vanish on $U \times V$.
\end{enumerate} 
Let $K$ be a compact subset of $U$. Denote respectively by $K_1$ and $K_2$ compact neighbourhoods of $K$ in $U$ and of $\overline{\Omega}$ in $V$. Then, there exists $C>0$ such that, for every $R \geq 1$, there is a constant $C_R > 0$ such that for all $\lambda \geq 1$ and $a \in \G^s\p{U}$ we have
\begin{equation}\label{eq:avant_rescaling}
\begin{split}
\|I_\lambda(a)\|_{s,C R,K} \leq C_R\|a\|_{s,R, K_1 \times K_2 } \exp\left( - \left(\frac{\lambda}{C R}\right)^{\frac{1}{s}} \right).
\end{split}
\end{equation} 
\end{prop}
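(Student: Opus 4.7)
The plan is to avoid iterated integrations by parts — which would force us to manipulate Gevrey norms directly — and instead rely on the complex deformation of contour / Bochner--Martinelli trick philosophy developed earlier in the chapter. We first extend $\Phi$ and $a$ to almost analytic extensions $\widetilde{\Phi}, \widetilde{a}$ on a complex neighbourhood of $K_1 \times K_2$ (Lemma \ref{lemma:almost-analytic-extension-Gs}), with Gevrey symbolic control of $\bar\partial\widetilde{\Phi}$ and $\bar\partial\widetilde{a}$ via Remark \ref{remark:size_d_bar}. For $x$ in a small complex neighbourhood of $K$ of size $\sim R^{-1/s}\lambda^{1/s-1}$, define
\[
F_\lambda(x) \coloneqq \int_\Omega e^{i\lambda \widetilde{\Phi}(x,y)} \widetilde{a}(x,y)\, \mathrm{d}y.
\]
This extends $I_\lambda(a)$ past the reals in $x$. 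The target estimate \eqref{eq:avant_rescaling} follows from Lemma \ref{lemma:Bochner-Martinelli-trick} once we prove both a pointwise bound $|F_\lambda(x)| \leq C_R \|a\|_{s,R,K_1\times K_2}\exp(-(\lambda/CR)^{1/s})$ and the matching bound on $\bar\partial_x F_\lambda(x) = \int_\Omega e^{i\lambda \widetilde\Phi}(i\lambda \bar\partial_x \widetilde\Phi \cdot \widetilde a + \bar\partial_x \widetilde a)\,\mathrm{d}y$ on this complex neighbourhood.

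The heart of the proof is the pointwise bound on $F_\lambda(x)$, obtained by Stokes deformation in the $y$-variable. Using assumption (iii), the holomorphic vector field
\[
v(x,y) = \frac{\overline{\partial_y \widetilde\Phi(x,y)}}{|\partial_y\widetilde\Phi(x,y)|^2}
\]
is well defined and bounded on the relevant complex neighbourhood. Fix a cutoff $\chi\in\mathcal{C}_c^\infty(V)$ equal to $1$ on $\overline{\Omega}$ minus a thin collar near $\partial\Omega$, and set $\tau = c\,\lambda^{\frac{1}{s}-1}$ for a small constant $c$. Consider the chain
\[
\Gamma_x : y \in \Omega \longmapsto y + i\tau\,\chi(y)\,v(x,y) \in \C^k,
\]
and apply Stokes' theorem to the $k$-form $e^{i\lambda\widetilde\Phi(x,z)}\widetilde a(x,z)\,\mathrm{d}z_1\wedge\cdots\wedge\mathrm{d}z_k$, whose $\bar\partial_z$-image only involves $\bar\partial_z\widetilde\Phi$ and $\bar\partial_z\widetilde a$. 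This rewrites $F_\lambda(x)$ as the sum of an integral over $\Gamma_x$, a bulk error coming from the $\bar\partial_z$-terms, and a boundary contribution localised where $\chi\neq 1$.

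Along $\Gamma_x$, a first order expansion in $\tau$ gives
\[
\Im\widetilde\Phi(x,y+i\tau v) \geq \Im\Phi(x,y) + \tau - O(\tau^2) \geq \tfrac{\tau}{2},
\]
uniformly in $x,y$, so that $|e^{i\lambda\widetilde\Phi}|\leq \exp(-c'\lambda^{1/s})$ on $\Gamma_x$; this handles the $\Gamma_x$-integral. The bulk error carries $\bar\partial_z$-terms bounded (Remark \ref{remark:size_d_bar}) by $\|a\|_{s,R,K_1\times K_2}\exp\bigl(-(CR\tau)^{-1/(s-1)}\bigr)$, and our calibration $\tau\sim \lambda^{\frac{1}{s}-1}/R$ makes this exactly an $\exp(-(\lambda/CR)^{1/s})$-type bound, tensor with the modest $e^{-\lambda\,\Im\widetilde\Phi}\lesssim 1$ factor since $\Im\widetilde\Phi \gtrsim -\tau$ throughout. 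The boundary piece is supported where $\chi\neq 1$, close to $\partial\Omega$; there assumption (ii) gives $\Im\Phi\geq\epsilon/2$ (for $|\Im y|$ small), so the integrand is bounded by $\exp(-\epsilon\lambda/2)$, which is much smaller than required. For $\bar\partial_x F_\lambda$, the extra $i\lambda\bar\partial_x\widetilde\Phi$ factor in the integrand is compensated by the same almost-analytic smallness: performing the same deformation and using $|\bar\partial_x\widetilde\Phi|\lesssim \exp(-(CR|\Im x|)^{-1/(s-1)})$ with $|\Im x|\leq R^{-1/s}\lambda^{1/s-1}$ absorbs the lost $\lambda$. In the analytic case $s=1$, one simply works with honest holomorphic extensions and no $\bar\partial$-errors appear. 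The main technical point is the careful calibration of $\tau$ and of the size of the complex neighbourhood of $K$ so that all three error sources (contour, bulk $\bar\partial$, boundary) fit under the same exponential bound; once this is done, Lemma \ref{lemma:Bochner-Martinelli-trick} converts the $L^\infty$ estimate on $F_\lambda$ into the desired Gevrey seminorm estimate on $I_\lambda(a)$.
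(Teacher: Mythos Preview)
Your approach is essentially the same as the paper's: almost analytic extensions, a Stokes deformation of the $y$-contour into the complex (with $\partial\Omega$ kept fixed via the cutoff), then the Bochner--Martinelli trick to convert the pointwise bound on the complex extension into the Gevrey seminorm estimate. Two minor remarks: since the homotopy fixes $\partial\Omega$, there is no separate ``boundary contribution'' in the Stokes decomposition --- what you are calling the boundary piece is simply the near-$\partial\Omega$ portion of the $\Gamma_x$-integral, handled directly by assumption (ii); and for $\bar\partial_x F_\lambda$ the paper does \emph{not} deform again but uses a direct $L^1$ bound over the real $\Omega$, since the almost-analytic smallness of $\bar\partial_x\widetilde\Phi$ and $\bar\partial_x\widetilde a$ already yields the $\exp\bigl(-(R|\Im x|)^{-1/(s-1)}\bigr)$ decay required by hypothesis (ii) of Lemma \ref{lemma:Bochner-Martinelli-trick}. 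Your version with a second deformation also works but is unnecessary. (Also, your deformation vector $v$ is not holomorphic, but that is only a misnomer; the paper uses $\nabla_y\Im\widetilde\Phi$, which agrees with yours up to normalisation on the reals by Cauchy--Riemann.)
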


We will give a proof of Proposition \ref{prop:non-stationary} based on an application of the Bochner--Martinelli Trick \ref{lemma:Bochner-Martinelli-trick}. Thus, we will give a bound on an almost analytic extension of $I_\lambda(a)$. However, when $a$ and $\Phi$ are analytic in $x$ then $I_\lambda$ has a natural holomorphic extension that satisfies the same bound as the almost analytic extension of $I_\lambda(a)$. It will be useful when studying FBI transforms to have this bound isolated, and this is the point of the following proposition. 

\begin{prop}\label{prop:non-stationary-analytic}
Under the hypothesis of Proposition \ref{prop:non-stationary}, assume in addition that $\Phi$ is analytic and that $a$ is of the form $a(x,y) = b(x,y) u(y)$ with $b \in \G^1\p{U \times V}$ fixed and $u \in \G^s\p{V}$ (we write then $I_\lambda(a) = I_\lambda(u)$).

Let $K$ be a compact subset of $U$ and $K'$ be a compact neighbourhood of $\Omega$ in $V$. Then, there exists a complex neighbourhood $W$ of $K$ such that for every $u \in \G^s\p{V}$, the function $I_\lambda(u)$ has a holomorphic extension to $W$. Moreover, there is $C > 0$ such that, for every $R \geq 1$, there is $C_R > 0$ such that, if $\lambda \geq 1$ and $z \in W$ are such that $\va{\Im z} \leq  (CR)^{-1/s} \lambda^{1 - 1/s}$, then for every $u \in \G^s\p{V}$, we have
\begin{equation}\label{eq:decay_complex}
\va{I_\lambda(u)(z)} \leq C_R \n{u}_{s,R,K'}\exp\left( - \left(\frac{\lambda}{C R}\right)^{\frac{1}{s}} \right).
\end{equation}
\end{prop}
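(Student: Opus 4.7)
Since $\Phi$ and $b$ are real-analytic, they extend holomorphically in $z$ to a product $W \times W'$ of complex neighbourhoods of $K$ and $\overline{\Omega}$. For $u \in \G^s\p{V}$, the integral $I_\lambda(u)(z) = \int_{\Omega} e^{i\lambda \Phi(z,y)} b(z,y) u(y) \mathrm{d}y$ is then holomorphic in $z \in W$ by differentiation under the integral sign, so only the exponential decay \eqref{eq:decay_complex} needs to be established. The plan is to deform the $y$-contour into $\C^k$ using an almost analytic extension of $u$.

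By hypotheses (i)--(iii) and a partition of unity on $\overline{\Omega}$, construct a smooth real-valued vector field $V(z, y)$, defined for $z$ in a complex neighbourhood of $K$ and $y$ near $\overline{\Omega}$, together with constants $c, t_0 > 0$ such that
\[
\Im \Phi\p{z, y + i\tau V(z,y)} \geq c\tau \quad \text{for all } 0 \leq \tau \leq t_0.
\]
Indeed, where $\Im \Phi(\Re z, \cdot)$ is already bounded away from zero, any bounded $V$ works; at the remaining points $\mathrm{d}_y \Im \Phi$ must vanish (otherwise $\Im \Phi$ would change sign nearby), so that $\mathrm{d}_y \Phi = \Re \mathrm{d}_y \Phi$ is a nonzero real vector by (iii), and one picks $V$ in its direction. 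Let $\tilde{u}$ be a $\G^s$ almost analytic extension of $u$, compactly supported in a complex neighbourhood of $K'$, satisfying by Remark \ref{remark:size_d_bar} the bound $\va{\bar{\partial}_y \tilde{u}(y)} \leq C_R \n{u}_{s,R,K'} \exp(-(R\va{\Im y})^{-1/(s-1)})$ for $s > 1$, with $\bar{\partial}_y \tilde{u} = 0$ for $s = 1$. Since $\Phi$ and $b$ are holomorphic in $y$, Stokes' theorem applied to $e^{i\lambda \Phi(z,y)} b(z,y) \tilde{u}(y)\, \mathrm{d}y_1 \wedge \cdots \wedge \mathrm{d}y_k$ on the $(k+1)$-chain $D_{z,t} = \set{y + i\tau V(z,y) : y \in \Omega,\ \tau \in [0,t]}$ yields, for each $t \in [0, t_0]$,
\[
I_\lambda(u)(z) = \int_{\Gamma_{z,t}} e^{i\lambda \Phi} b\, \tilde{u}\, \mathrm{d}y \;\pm\; \int_{D_{z,t}} e^{i\lambda \Phi} b\, \bar{\partial}_y \tilde{u} \wedge \mathrm{d}y \;+\; R_{\partial\Omega}(z,t),
\]
where $\Gamma_{z,t}$ is the deformed contour at $\tau = t$, and the side contribution $R_{\partial\Omega}$, carried by $\partial\Omega \times [0,t]$, is $O(\n{u}_{s,R,K'} e^{-\epsilon' \lambda})$ thanks to hypothesis (ii).

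For $z$ with $\va{\Im z} \leq (CR)^{-1/s} \lambda^{1/s - 1}$ and $C$ large, the perturbation $\Phi(z, \cdot) - \Phi(\Re z, \cdot) = O(\va{\Im z})$ costs at most $\tfrac{1}{2}(\lambda/(CR))^{1/s}$ in the exponent. Consequently, on $\Gamma_{z,t}$ one has $\va{e^{i\lambda \Phi}} \lesssim \exp(-\tfrac{c}{2}\lambda t + \tfrac{1}{2}(\lambda/(CR))^{1/s})$, and on $D_{z,\tau}$ one gets $\va{e^{i\lambda \Phi}\, \bar{\partial}_y \tilde{u}} \lesssim \n{u}_{s,R,K'}\exp(-\tfrac{c}{2}\lambda \tau -(R\tau)^{-1/(s-1)} + \tfrac{1}{2}(\lambda/(CR))^{1/s})$. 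Balancing $\lambda \tau \asymp (R\tau)^{-1/(s-1)}$ identifies the Gevrey saddle $\tau = t^{\ast} \asymp R^{-1/s} \lambda^{1/s - 1}$, at which both competing exponents are of order $(\lambda/R)^{1/s}$; taking $t = \min(t_0, t^{\ast})$ and integrating the $D$-integral over $\tau \in [0, t]$ then produces the announced bound $\va{I_\lambda(u)(z)} \leq C_R \n{u}_{s,R,K'} \exp(-(\lambda/(CR))^{1/s})$. The main technical point is the uniform construction of $V$: one must interpolate smoothly between the two regimes where different choices are forced, and check that the positivity $\Im \Phi \geq c\tau$ survives once $z$ is complex. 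The contour-shift identity itself is a routine imitation of the $s=1$ analytic deformation, with $\bar{\partial}_y \tilde{u}$ replacing the vanishing analytic error.
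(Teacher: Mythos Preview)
Your argument is correct and follows the same overall strategy as the paper: deform the $y$-contour into $\C^k$, apply Stokes' formula, and balance the gain $\lambda t$ from the deformation against the $\bar\partial$-error of the almost analytic extension $\tilde u$ at the Gevrey scale $t\asymp R^{-1/s}\lambda^{1/s-1}$. The holomorphy of $I_\lambda(u)$ in $z$ and the treatment of the $|\Im z|$-correction are exactly as in the paper.

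The one place where you work harder than necessary is the construction of the deformation field $V$. Instead of building $V$ by a partition of unity (case-splitting on whether $\Im\Phi$ is bounded below, then worrying about patching the two regimes), the paper simply takes
\[
\Gamma_t(y)\;=\;y + t\,\Bigl(1-\chi\bigl(\tfrac{d(y,\partial\Omega)}{\epsilon_1}\bigr)\Bigr)\,\nabla_y\Im\Phi(x,y),
\]
where the gradient is computed with $\C^k\simeq\R^{2k}$. Since $\Phi$ is holomorphic, the Cauchy--Riemann equations guarantee that $\nabla_y\Im\Phi$ is nonvanishing wherever $\mathrm d_y\Phi\neq0$, so hypothesis (iii) gives positivity of the Taylor increment uniformly on all of $\Omega$, with no patching required. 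The cutoff $1-\chi$ near $\partial\Omega$ kills the boundary term outright (no $R_{\partial\Omega}$ to estimate), since the deformation is supported where the phase is already strictly positive by hypothesis (ii). This eliminates precisely the ``main technical point'' you flag at the end.
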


\begin{proof}[Proof of Proposition \ref{prop:non-stationary} and \ref{prop:non-stationary-analytic}]
We start with the proof of Proposition \ref{prop:non-stationary} and then explain how the same argument also gives Proposition \ref{prop:non-stationary-analytic}.

Let $\widetilde{\Phi}$ denote either the holomorphic extension of $\Phi$ (if $s=1$) or a $\G^s$ almost analytic extension of $\Phi$ (if $s > 1$). This almost analytic extension exists by Lemma \ref{lemma:aae_Gs_flat}, since when $s > 1$ we may assume that $\Phi$ is compactly supported (once $K_1$ and $K_2$ are fixed).

Then, we fix $R \geq 1$. In the following, $C > 0$ will denote a generic constant that may depend on $\Phi, K_1,K_2$ but not on $R$, while $C_R > 0$ will denote a generic constant that may depend on $\Phi_,K_1,K_2$ and $R$. The value of $C$ and $C_R$ may change from one line to another.

Let $a \in \G^s\p{U \times V}$ be such that $\n{a}_{s,R,K_1 \times K_2} < + \infty$. We denote by $\tilde{a}$ either the holomorphic extension of $a$ if $s = 1$ (which is defined for $x,y$ in $\C^n$ at distance at most $C^{-1} R^{-1}$ from $K_1 \times K_2$) or an almost analytic extension for $a$ given by Lemma \ref{lemma:aae_Gs_flat} if $s > 1$ (without loss of generality, we may assume in that case that $a$ is supported in $K_1 \times K_2$). We can then extend the definition of $I_\lambda(a)$ to a smooth function on a complex neighbourhood of $K$ in $\C^n$ just by replacing $\Phi$ and $a$ in \eqref{eq:integrale_non_stationnaire} respectively by $\widetilde{\Phi}$ and $\widetilde{a}$.

It will be useful to pick a cutoff $\chi \in C^\infty(\R^+)$, such that $\chi$ equals $1$ in $[0,1/2]$, and $0$ in $[1,+\infty[$. We want to perform a contour deformation in the definition of $I_\lambda(a)$ in order to make the imaginary part of the phase positive. Since near the boundary of $\Omega$, the imaginary part of the phase is positive, we will only have to consider deformations in the complex domain for $y \in \Omega$ that are away from $\partial \Omega$. For some small $\epsilon_1 > 0$ and all $t \in \R$, we define the contour 
\[
\Gamma_t : (x,y) \mapsto y + t  \p{1-\chi\p{\frac{d\p{y,\partial\Omega}}{\epsilon_1}}}\nabla_y \Im \widetilde{\Phi}(x,y).
\]
Here, the gradient is defined using the identification $\C^n \simeq \R^n$.

Since the differential of $\Phi$ does not vanish on $K_1 \times K_2$ and $\widetilde{\Phi}$ satisfy the Cauchy--Riemann equations on $\R^n$, we see that $\nabla_y \Im \widetilde{\Phi}$ does not vanish on $K_1 \times K_2$, and hence on a complex neighbourhood $W_1 \times W_2$ of $K_1 \times K_2$. Thus, applying Taylor's formula, we see that there is $t_0 > 0$ such that for all $t \in \left[0,t_0\right], x\in W_1$ and $y \in K_2$ we have
\begin{equation*}
\Im \widetilde{\Phi} \p{x,\Gamma_t(x,y)} \geq \frac{t}{C} \p{1-\chi\p{\frac{d\p{y,\partial\Omega}}{\epsilon_1}}} + \Im \widetilde{\Phi}(x,y).
\end{equation*}
For $x\in K_1$, the imaginary part of the phase is strictly positive when $y$ is near the boundary of $\Omega$ , so that, for $x \in W_1$, this remains true, and we may assume that $\Im \widetilde{\Phi}(x,y)>\epsilon/2$ for $d(y,\partial\Omega) < \epsilon_1/2$ and $x\in W_1$. Away from the boundary, we have $\Im \widetilde{\Phi}(x,y) \geq - C |\Im x|$, for some constant $C>0$, when $x\in W_1$ and $y \in K_2$ (this is because we assumed that the imaginary part of the phase is non-negative for reals $x$ and $y$). Summing up, we have for $t\in [0,t_0],x\in W_1$ and $y \in \Omega$,
\begin{equation}\label{eqgrandim}
\Im \widetilde{\Phi}(x, \Gamma_t(x,y)) \geq \frac{t}{C} - C \va{\Im x}.
\end{equation}

\begin{figure}[h]
\centering
\def\svgwidth{0.7\linewidth}
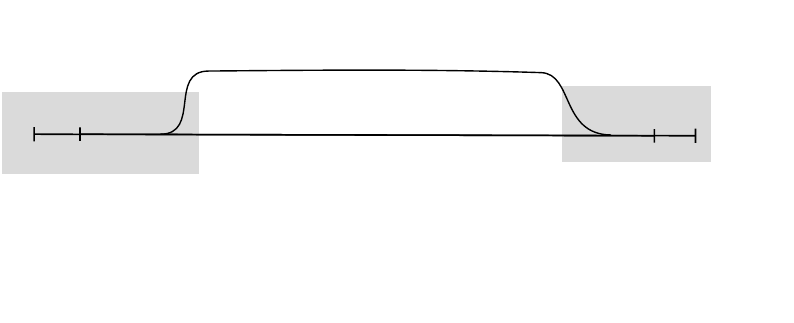
\caption{\label{fig:non-stationary-contour} Sketch of the contour deformation.}
\end{figure}

Now, choose $t_1 \in \left[0,t_0\right]$ and $x \in W_1$ and let $H$ be the map from $M_{t_1} = \{x\}\times \Omega \times \left[0,t_1\right]$ to $\C^n$ defined by $H(x,y,t) = \Gamma_t(x,y)$. If $s=1$, we assume in addition that $t_1 \leq (CR)^{-1}$ and $d(x,K) \leq (CR)^{-1}$ in order to ensure that $\tilde{a}(x,\Gamma_t(x,y))$ is well-defined for every $y \in \Omega$ and $t \in \left[0,t_1\right]$. We apply Stokes' formula to the $n$-form $H^*(e^{i \lambda \tilde{\Phi}(z)} \tilde{a}(z) \mathrm{d}z )$ to get
\begin{equation}\label{eqstokes}
\begin{split}
I_\lambda(a)(x) = \int_{\Omega} e^{i \lambda \widetilde{\Phi}\p{x,\Gamma_{t_1}(x,y)}} & \tilde{a}(x,\Gamma_{t_1}(x,y)) J \Gamma_{t_1} (x,y) \mathrm{d}y  \\ & \qquad \qquad \qquad + \int_{M_{t_1}}\hspace{-12pt} \mathrm{d} H^* \Big(e^{i \lambda \widetilde{\Phi}(x,z)} \tilde{a}(x,z) \mathrm{d}z\Big),
\end{split}
\end{equation}
where $J \Gamma_{t_1}$ denotes the Jacobian of $\Gamma_{t_1}$ as a function of $y$. We did not make any regularity assumption on $\Omega$, but this is not a problem when applying Stokes' Formula, since we only deform away from the boundary of $\Omega$. Notice that there is a uniform bound on the Jacobian $J \Gamma_{t_1}$ for $t_1 \in \left[0,t_0\right]$ and that the supremum of $\tilde{a}$ is controlled by $C_R \n{a}_{s,R,K_1 \times K_2}$ (provided that $d(x,K) \leq (CR)^{-1}$ in the case $s = 1$). Consequently, using \eqref{eqgrandim}, we see that
\begin{equation}\label{eq:bound-deformed-term}
\begin{split}
& \va{\int_{\Omega} e^{i \lambda \widetilde{\Phi}\p{x,\Gamma_{t_1}(x,y)}} \tilde{a}\p{x,\Gamma_{t_1}(x,y)} J \Gamma_{t_1} (x) \mathrm{d}y} \\ & \qquad \qquad \qquad \qquad \qquad \qquad \leq C_R \n{a}_{s,R,K_1 \times K_2} \exp\p{\lambda \p{C|\Im x|-  \frac{t_1}{C}}}.
\end{split}
\end{equation}
This only gets better when $t_1$ increases. However, this will not be the case of the second term if $s > 1$. To bound the second term in \eqref{eqstokes}, notice that
\begin{equation*}
\begin{split}
\mathrm{d}(e^{i \lambda \tilde{\Phi}(x,z)} \tilde{a}(x,z) \mathrm{d}z)& = \bar{\partial}_z\p{e^{i \lambda \tilde{\Phi}(x,z)} \tilde{a}(x,z)} \wedge \mathrm{d}z \\
     & = i \lambda e^{i \lambda \tilde{\Phi}(x,z)} \tilde{a}(x,z) \bar{\partial}_z \widetilde{\Phi}(x,z) \wedge \mathrm{d}z + e^{i \lambda \tilde{\Phi}(x,z)} \bar{\partial}_z \tilde{a}(x,z) \wedge \mathrm{d}z.
\end{split}
\end{equation*}
Hence, the second term in \eqref{eqstokes} is null if $s=1$, since $\tilde{a}$ is holomorphic. Thus, in the case $s=1$, if $x \in \C^n$ is at distance at most $(AR)^{-1}$ of $K$ (for some large $A > 0$ that does not depend on $R$), we find taking $t_1 = \min\p{t_0, (C R)^{-1}}$ that,
\begin{equation*}
\begin{split}
\va{I_\lambda(a)(x)} & \leq C_R \n{a}_{1,R,K_1 \times K_2} \exp\p{\lambda\p{\frac{C}{A R} - \frac{1}{CR}}} \\
    & \leq C_R \n{a}_{1,R,K_1 \times K_2} \exp\p{- \frac{\lambda}{C R}}.
\end{split}
\end{equation*}
On the second line, we assumed that $A$ was large enough and changed the value of $C$. Proposition \ref{prop:non-stationary} in the case $s=1$ follows, since $I_\lambda(a)$ is holomorphic by differentiation under the integral.

We turn now to the case $s > 1$. According to Remark \ref{remark:size_d_bar}, the integrand in the second term in \eqref{eqstokes} is controlled by
\begin{equation}\label{eq:borne_second_terme}
C_R \n{a}_{s,R,K_1\times K_2} \exp\left( C \lambda |\Im x|  - \frac{1}{C |R (t + |\Im x|)|^{\frac{1}{s-1}}}\right).
\end{equation}
Here, we recall that $t \in \left[0,t_1\right]$ for some $t_1 > 0$ that still need to be chosen. Here, we ignored the term $- t/C$ in \eqref{eqgrandim}, this term was needed only to bound the first term in \eqref{eqstokes}. If we take $t_1 = A_0^{-1} R^{-1/s} \lambda^{1/s-1}$ and assume that $\va{\Im x} \leq A_1^{-1} R^{-1/s} \lambda^{\frac{1}{s}-1}$ (for some large $A_0,A_1 > 0$), then the bound \eqref{eq:borne_second_terme} becomes
\begin{equation*}
\begin{split}
& C_R \n{a}_{s,R,K_1\times K_2} \exp\p{ \p{\frac{C}{A_1} - \frac{1}{C(A_0^{-1} + A_1^{-1})^{\frac{1}{s-1}}}} \frac{\lambda^{\frac{1}{s}}}{R^{\frac{1}{s}}}  } \\
   & \qquad \qquad \qquad \qquad \qquad \leq   C_R \n{a}_{s,R,K_1\times K_2} \exp\left( - \left( \frac{\lambda}{CR}\right)^{\frac{1}{s}} \right).
\end{split}
\end{equation*}
We assumed that $A_0$ and $A_1$ were large enough on the second line. Together with \eqref{eqstokes} and \eqref{eq:bound-deformed-term}, this proves that for $|\Im x| < A_1^{-1} \lambda^{1/s-1} R^{-1/s}$ (recall that $t_1 = A_0^{-1} \lambda^{1/s-1} R^{-1/s}$ and assume that $A_0 \gg A_1$),
\begin{equation}\label{eq:le_vrai_truc}
| I_\lambda(a)(x)| \leq C_R \n{a}_{s,R,K_1 \times K_2} \exp\left( - \left( \frac{\lambda}{CR}\right)^{\frac{1}{s}} \right).
\end{equation}
On the other hand, to estimate $\overline{\partial}_x I_\lambda(a)$, we have to give a bound for
\[
\left|\int_{\Omega} e^{i \lambda \widetilde{\Phi}(x,y)}( \overline{\partial}_x \tilde{a}(x,y) + i \lambda \tilde{a}(x,y) \overline{\partial}_x \widetilde{\Phi}(x,y)) \mathrm{d}y \right|.
\]
For this, we do not use a contour shift but a direct $L^1$ bound. Recalling Remark \ref{remark:size_d_bar} and that the imaginary part of $\widetilde{\Phi}(x,y)$ is non-negative when $x$ and $y$ are real, we find that this integral is bounded by
\[
C_R \n{a}_{s,R,K_1 \times K_2}\lambda \exp\left(C \lambda  |\Im x| - \frac{1}{C \p{R \va{\Im x}}^{\frac{1}{s-1}}} \right).
\]
But if $\va{\Im x} < A_1^{-1} \lambda^{1/s-1} R^{- 1/s}$ then
\[
\lambda \leq A_1^{- \frac{s}{s-1}} \va{\Im x}^{- \frac{s}{s-1}} R^{-\frac{1}{s-1}}
\]
and thus the estimate above gives if $A_1$ is large enough
\[
\left|\overline{\partial} I_\lambda\p{ a}(x) \right| \leq C_{R} \| a \|_{s,R,K_1 \times K_2} \exp\left(-\frac{1}{C \p{R |\Im x|}^{\frac{1}{s-1}}} \right).
\]
Proposition \ref{prop:non-stationary} follows then from the Bochner--Martinelli Trick, recalling \eqref{eq:le_vrai_truc}.

To prove Proposition \ref{prop:non-stationary-analytic}, just notice that in that case, one may take $\widetilde{\Phi} = \Phi$ and $\tilde{a}(x,y) = b(x,y) \tilde{u}(y)$ where $\tilde{u}$ is an almost analytic extension for $u$. Then, $I_\lambda\p{u}$ has an holomorphic extension by differentiation under the integral and the estimate that we want to prove is just \eqref{eq:le_vrai_truc} (there is no difference in the proof).
\end{proof}

\begin{remark}
The same result holds if instead of $U\times \Omega\subset \R^{n+k}$, we assume that $U\times \Omega\subset M\times N$, with $M,N$ two compact Riemannian manifolds. The proof is essentially the same. The main difference is that we have to give a more ``geometric'' expression for the contour shift. For example, using the integral lines of $\nabla_y\Im \widetilde{\Phi}$, or using the exponential map. 
\end{remark}

\subsection{Gevrey Stationary Phase}\label{sec:stationary}

It is a classical observation that the Stationary Phase method in Gevrey regularity suffers from a loss of regularity. This comes essentially from the fact that in the usual expansion
\[
\int_{\R} e^{-\lambda w^2} f(w) \mathrm{d}w \sim \sqrt{\frac{\pi}{\lambda}}\sum_{k\geq 0} \frac{1}{(4\lambda)^k k!}f^{(2k)}(0),
\]
to gain a power of $\lambda$, we have to differentiate $f$ \emph{twice}. Let us be a bit more precise. We consider $f$ a $\G^s$ function, defined in a neighbourhood of $0$ in $\R^n \times \R^k$, and for $ U$ a small enough neighbourhood of $0$ in $\R^k$,we define
\[
g_\lambda: x\mapsto \frac{\lambda^{k/2}}{\pi^{k/2}}\int_U e^{- \lambda w^2 } f(x,w) \mathrm{d}w.
\]
By differentiation under the integral, we find that $g_\lambda$ is uniformly $\Gs$ as $\lambda\to +\infty$. Integrating by parts, we also find (for $C,C_0>0$ large enough)
\begin{equation}\label{eq:stationary-phase-sharpest-gevrey-estimate}
g_\lambda(x) = \sum_{0 \leq \ell \leq \frac{\lambda^{\frac{
1}{2s-1}}}{C_0}} \frac{1}{\ell !} \left(\frac{\Delta_w}{4 \lambda}\right)^\ell f(x,0) + \O_{\Gs} \left(\exp\left( - \frac{\lambda^{\frac{1}{2s-1}}}{C}\right) \right).
\end{equation}
Here, notice that while the remainder is measured in $\G^s$, it has the size of an error term in the $\G^{2s- 1}$ category (compare for instance with the bound given in Proposition \ref{prop:non-stationary}). In some sense, the reason behind this is that $I_\lambda$ is (if only formally) $\G^{2s}$ in the small parameter $1/\lambda$; we already encountered a similar problem in Remark \ref{remark:formal-Gs-symbols}. The case of more general phases is considerably worse: if instead of $w^2$ we allow a general phase function $\Phi(x,w)$, we will only recover a $\G^{2s-1}$ estimate in $x$. One can find a general bound in \cite[Theorem 3.1]{Gramchev-87-stationary-phase-gevrey}, for example.

Before we discuss this further, we recall the analytic version of Stationary Phase. This is a well-known tool (see for instance \cite[Th\'eor\`eme 2.8 and Remarque 2.10]{Sjostrand-82-singularite-analytique-microlocale}. We will use the following version of it (which is stated on a real contour in \cite{Sjostrand-82-singularite-analytique-microlocale}, the generalization that we state here is straightforward).

\begin{prop}[Holomorphic Stationary Phase]\label{prop:HSP}
Let $U \times V$ be an open subset of $\C^n \times \C^k$. Let $\Gamma$ be a $k$-dimensional compact real submanifold with boundary of $V$. Let $a,\Phi : U \times V \to \C$ be holomorphic functions. Let $x_0 \in U$ and assume that
\begin{enumerate}[label=(\roman*)]
\item the imaginary part of $\Phi(x_0,y)$ is non-negative for every $y \in \Gamma$;
\item the imaginary part of $\Phi(x_0,y)$ is strictly positive for every $y \in \partial \Gamma$;
\item the function $ y \mapsto \Phi(x_0,y)$ has a unique critical point $y_0$ in $\Gamma$, which is in the interior of $\Gamma$ and is non-degenerate;
\item $\Phi(x_0,y_0) = 0$.
\end{enumerate}
Then there is a neighbourhood $W$ of $x_0$ in $U$ such that for every $x \in W$, the function $y \mapsto \Phi(x,y)$ has a unique critical point $y_c(x)$ near $\Gamma$. We denote by $\Psi(x) = \Phi(x,y_c(x))$ the associated critical value. Then, there is a family $\p{b_m}_{m \geq 0}$ of holomorphic functions on $W$ such that there are constants $C, C_0$ such that, for every $m \in \N$,
\begin{equation*}
\begin{split}
\sup_{x \in W} \va{b_m(x)} \leq C^{1+m} m!
\end{split}
\end{equation*}
and, for every $\lambda \geq 1$,
\begin{equation*}
\begin{split}
\sup_{x \in W} \va{  e^{-i \lambda \Psi(x)} \lambda^{k/2} \int_{\Gamma} e^{i \lambda \Phi(x,y)} a(x,y) \mathrm{d}y - \sum_{0 \leq m \leq \lambda/C_0} \lambda^{-m} b_m(x)} \leq C \exp\p{- \frac{\lambda}{C}}.
\end{split}
\end{equation*}
\end{prop}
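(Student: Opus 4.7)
The plan is to combine a holomorphic implicit function argument, a Morse-type normal form, and a Gaussian reduction, controlling everything with Cauchy estimates. First, since $\partial_y^2\Phi(x_0,y_0)$ is invertible by non-degeneracy, the holomorphic implicit function theorem applied to the equation $\partial_y\Phi(x,y)=0$ produces a holomorphic critical-point map $x\mapsto y_c(x)$ on some neighbourhood $W$ of $x_0$, with $y_c(x_0)=y_0$. Setting $\Psi(x)=\Phi(x,y_c(x))$ then gives a holomorphic function on $W$.

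Next, I would localize. Pick a small complex ball $B$ around $y_0$ and a smooth cutoff $\chi$ supported in $B$ and equal to $1$ on a smaller ball containing all $y_c(x)$ for $x\in W$. On $\Gamma\setminus B$ the phase $\Im\Phi(x_0,\cdot)$ attains a positive minimum (by (i)--(iii)), so by continuity $\Im\Phi(x,\cdot)\geq c>0$ on that region for $x$ close to $x_0$. A contour deformation identical in spirit to the one used in the proof of Proposition \ref{prop:non-stationary-analytic} (using $\Im\Phi>0$ at $\partial\Gamma$ to keep the boundary fixed) then shows that the $(1-\chi)$-part of the integral is $O(e^{-\lambda/C})$ uniformly on $W$, which is already absorbed by the claimed error term.

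For the localized piece near $y_c(x)$, I would apply the holomorphic Morse lemma: there is a biholomorphism $w\mapsto y=\kappa(x,w)$, depending holomorphically on $x\in W$, with $\kappa(x,0)=y_c(x)$ and
\[
\Phi(x,\kappa(x,w))-\Psi(x)=\tfrac12\langle Q w,w\rangle,
\]
where $Q$ is a fixed non-degenerate symmetric matrix (one can reduce to $Q=I$ after a linear change). After a further contour deformation bringing the $w$-contour onto a Lagrangian on which $\tfrac{i}{2}\langle Qw,w\rangle$ has real part $\leq-|w|^2/C$ (the standard \emph{descent contour} through the critical point), the localized integral becomes
\[
e^{i\lambda\Psi(x)}\int e^{\frac{i\lambda}{2}\langle Qw,w\rangle}\tilde a(x,w)\,\mathrm{d}w,
\]
with $\tilde a(x,w)=a(x,\kappa(x,w))\,\chi(\kappa(x,w))\,J_\kappa(x,w)$ holomorphic in $w$ on a fixed polydisc. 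Taylor-expanding $\tilde a(x,w)=\sum_{|\beta|<2N}\frac{\partial_w^\beta\tilde a(x,0)}{\beta!}w^\beta+R_N$ and computing the moments of the complex Gaussian gives the coefficients
\[
b_m(x)=\sum_{|\alpha|=m}c_\alpha\,\partial_w^{2\alpha}\tilde a(x,0),
\]
where $c_\alpha$ are explicit constants depending on $Q$. These $b_m$ are holomorphic on $W$, and Cauchy estimates $|\partial_w^{2\alpha}\tilde a(x,0)|\leq C^{1+2m}(2m)!$ combined with $(2m)!/2^m m!\lesssim C^m m!$ yield $|b_m(x)|\leq C^{1+m}m!$.

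The final step is \emph{sommation au plus petit terme}: the Gaussian remainder after truncation at order $N$ is bounded by $C^{1+N}N!\lambda^{-N}$, which, choosing $N\approx\lambda/C_0$ with $C_0$ large enough, becomes $O(e^{-\lambda/C})$ by Stirling. The hardest technical point will be Step 3: tracking analytic bounds through the composition of the implicit function theorem with the Morse lemma, and choosing the descent contour so that it meets the original $\Gamma$ inside the support of $1-\chi$, so the patching error has already been absorbed in the non-stationary estimate. Once the contour is set up cleanly, everything else is bookkeeping of Cauchy estimates.
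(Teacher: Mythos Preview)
The paper does not prove this proposition; it is quoted as a well-known tool with a reference to \cite[Th\'eor\`eme 2.8 and Remarque 2.10]{Sjostrand-82-singularite-analytique-microlocale}, and the paper only records the strategy in Remark~\ref{remark:expression_coefficients} (Morse reduction to a quadratic phase) together with the Holomorphic Morse Lemma~\ref{lemma:holomorphic_morse}. Your outline is exactly that standard strategy, so there is no real divergence to discuss.

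One technical wrinkle: introducing a \emph{smooth} cutoff $\chi$ in the analytic setting is awkward. You then write $\tilde a(x,w)=a(x,\kappa(x,w))\,\chi(\kappa(x,w))\,J_\kappa(x,w)$ and call it ``holomorphic in $w$ on a fixed polydisc''; this is only true on the small polydisc where $\chi\equiv 1$, which is enough for the Cauchy estimates on $\partial_w^{2\alpha}\tilde a(x,0)$ and hence for the bounds $|b_m|\leq C^{1+m}m!$, but the Taylor \emph{remainder} estimate after truncation has to be carried out on the full descent contour, where $\chi$ is not identically $1$ and $\tilde a$ is not holomorphic. The clean fix (and what is actually done in Sj\"ostrand's proof) is to avoid the cutoff entirely: split the contour $\Gamma$ into a small piece $\Gamma_0$ near $y_0$ and its complement, use $\Im\Phi>0$ on $\Gamma\setminus\Gamma_0$ and on $\partial\Gamma_0$ directly, pass to Morse coordinates on $\Gamma_0$, deform to the descent contour, and then run the Taylor/Gaussian argument with the genuinely holomorphic amplitude $a(x,\kappa(x,w))J_\kappa(x,w)$. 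With that adjustment your proof goes through.
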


Let us come back to the non-analytic case $s>1$. In some cases, the loss of regularity can be mitigated, and we can obtain an estimate as sharp as \eqref{eq:stationary-phase-sharpest-gevrey-estimate}. For example, we obtain this statement, used in the proof of Lemma \ref{lemma:calcul_pseudo}:
\begin{lemma}\label{lemma:stationary_gevrey_real}
Let $s > 1$. Let $U \times V$ be an open subset of $\R^n \times \R^k$. Let $K = K_1 \times K_2$ be a compact subset of $U \times V$. Let $\Phi : U \times V \to \R$ be a $\G^s$ function on $U \times V$. Assume that for every $x \in U$ the function $y \mapsto \Phi(x,y)$ has a unique critical point $y_c(x)$ in $V$ that satisfies   $\Phi(x,y_c(x)) = 0$. Let $a \in E^{s,R}\p{K}$ for some $R > 0$. Define then for $\lambda \geq 1$ the function
\begin{equation}\label{eq:stationary_def}
\begin{split}
I_\lambda(a) : x \mapsto \lambda^{\frac{k}{2}}\int_{V} e^{i \lambda \Phi(x,y)} a(x,y) \mathrm{d}y
\end{split}
\end{equation}
on $U$. Then the function $I_\lambda(a)$ is $\G^s$ uniformly in $\lambda$. More precisely, there is $R_1 > 0$, a family $\p{b_m}_{m \geq 0}$ of functions in $E^{s,R_1}\p{K_1}$ and constants $C, C_0  > 0$ such that, for every $m \in \N$,
\begin{equation*}
\begin{split}
\n{b_m}_{s,R_1,K_1} \leq C^{1 + m} m!^{2s - 1}
\end{split}
\end{equation*}
and, for every $\lambda \geq 1$,
\begin{equation}\label{eq:dev_statio_reel}
\begin{split}
\n{I_\lambda(a) - \sum_{0 \leq m \leq \frac{\lambda^{\frac{1}{2s-1}}}{C_0}} \lambda^{-m} b_m}_{s,R_1,K_1} \leq C \exp\p{- \frac{\lambda^{\frac{1}{2s-1}}}{C}}.
\end{split}
\end{equation}
\end{lemma}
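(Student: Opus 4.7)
The strategy is modeled on the proof of Proposition \ref{prop:non-stationary}: obtain pointwise bounds on a smooth extension of $I_\lambda(a)$ in a complex neighbourhood of $K_1$ of size $\sim \lambda^{-(s-1)/(2s-1)}$, together with matching bounds on its $\bar\partial_x$, and then convert these via the Bochner--Martinelli Trick (Lemma \ref{lemma:Bochner-Martinelli-trick}, applied with $\lambda^{s/(2s-1)}$ playing the role of its $\lambda$) into the Gevrey estimate \eqref{eq:dev_statio_reel}. First, pick $\G^s$ almost analytic extensions $\tilde a, \tilde \Phi$ of $a, \Phi$ (Lemma \ref{lemma:almost-analytic-extension-Gs}) and extend $I_\lambda(a)$ off the reals by replacing $a,\Phi$ with $\tilde a, \tilde\Phi$. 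Then use a $\G^s$ cutoff in $y$ to split the integral into a piece supported near $y_c(x)$ and a complementary piece. On the complement, $\Phi$ has no critical point, so Proposition \ref{prop:non-stationary} already yields an error of size $\exp(-c\lambda^{1/s})$ in $\G^s$, which is much smaller than required.

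For the localised piece, I would perform a contour deformation in the $y$ variable, passing from the real contour to a complex contour $\Gamma(x)\subset\C^k$ through $y_c(x)$ on which $\Im\tilde\Phi(x,y)\gtrsim |y-y_c(x)|^2$ away from $y_c(x)$. Concretely, after diagonalising the (real symmetric) Hessian $\partial_y^2\Phi(x,y_c(x))$ one rotates each eigendirection by $e^{\pm i\pi/4}$ according to the sign of the eigenvalue, so that $i\lambda\tilde\Phi$ acquires a genuine Gaussian profile. Applying Stokes to compare the integral on the real contour with the integral on $\Gamma(x)$ leaves a volume term governed by $\bar\partial_y\tilde a$ and $\bar\partial_y \tilde\Phi$; by Remark \ref{remark:size_d_bar} both quantities are $\O(\exp(-(R|\Im y|)^{-1/(s-1)}))$, and combined with the oscillatory factor $e^{-\lambda \Im \tilde\Phi}$ one obtains an error bounded, after optimizing $|\Im y|\sim \lambda^{-1/(2s-1)}$ in the contour, by $\exp(-\lambda^{1/(2s-1)}/C)$.

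On $\Gamma(x)$, the integrand looks holomorphic to infinite order on $\R^k$, so I apply Proposition \ref{prop:HSP} formally, obtaining the usual expansion
\[
\lambda^{k/2} \int_{\Gamma(x)} e^{i\lambda \tilde\Phi(x,y)} \tilde a(x,y) \, dy \;\sim\; \sum_{m\geq 0} \lambda^{-m} b_m(x),
\]
where $b_m(x)$ is the usual universal differential polynomial in $\tilde a$ and $\tilde\Phi$ evaluated at $y_c(x)$, involving roughly $2m$ $y$-derivatives. Tracking the Gevrey norms of $\tilde a$ and $\tilde \Phi$, and using $(2m)!^s/(2^{2m}m!)\sim C^m m!^{2s-1}$, one obtains the coefficient bound $\|b_m\|_{s,R_1,K_1}\leq C^{1+m}m!^{2s-1}$. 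Truncating at $m\leq \lambda^{1/(2s-1)}/C_0$ and optimizing, the tail of the expansion contributes $\O(\exp(-\lambda^{1/(2s-1)}/C))$, matching the contour-shift error.

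The main obstacle is the triple balancing in the optimization: the size of the complex $x$-neighbourhood on which the extension is defined, the truncation order of the stationary phase expansion, and the size $|\Im y|$ of the contour deformation in $y$ must all be chosen so that the three resulting errors (the $\bar\partial_y$ volume term, the truncation tail, and the $\bar\partial_x$ bound needed to apply Bochner--Martinelli) come out at the common magnitude $\exp(-\lambda^{1/(2s-1)}/C)$. The correct matching emerges from equating the $\bar\partial$-flatness gain $\exp(-(R|\Im \cdot|)^{-1/(s-1)})$ against the oscillatory loss $\exp(C\lambda|\Im\cdot|^2)$, which pins down the scale $\lambda^{-1/(2s-1)}$ everywhere. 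Once all three error terms are at this common size, Lemma \ref{lemma:Bochner-Martinelli-trick} converts the $L^\infty$ plus $\bar\partial_x$ bounds into the announced $\G^s$ estimate \eqref{eq:dev_statio_reel}.
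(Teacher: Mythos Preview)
Your strategy differs substantially from the paper's. You go complex: almost-analytic extensions, a steepest-descent contour shift in $y$, a stationary-phase expansion on $\Gamma(x)$, and finally Bochner--Martinelli in $x$. The paper instead stays on the reals. It applies the $\G^s$ Morse lemma to reduce to the model phase $\Phi(y)=\tfrac12(y_+^2-y_-^2)$ (the $x$-dependence having been absorbed into the amplitude), then takes the Fourier transform in $y$ to write $I_\lambda(a)(x)=f(a,h)(x)=c\int e^{ih(\xi_+^2-\xi_-^2)/2}\,\mathcal F_y a(x,\xi)\,d\xi$ with $h=1/\lambda$. The key observation is that $\partial_h^m f(a,h)=f(P^ma,h)$ for the second-order operator $P=-\tfrac{i}{2}(\Delta_{y_+}-\Delta_{y_-})$, so $\|P^m a\|\le C^{1+m}(2m)!^s$ and $h\mapsto f(a,h)$ is $\G^{2s}$ as a map into $E^{s,R_1}(K_1)$. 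The Banach-valued version of Lemma~\ref{lemma:decay-extension-gevrey} then gives \eqref{eq:dev_statio_reel} directly, with $b_m=P^ma(x,0)/m!$. No contour shifts, no almost analytic extensions, no Bochner--Martinelli.

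Your sketch has two concrete gaps. First, ``diagonalising the Hessian'' is a linear change of coordinates and leaves the cubic-and-higher terms of $\Phi$ intact; you need the full $\G^s$ Morse lemma, after which the phase is exactly quadratic and the $e^{\pm i\pi/4}$ rotation does produce a genuine Gaussian. Second, and more seriously, you cannot ``apply Proposition~\ref{prop:HSP} formally'' on $\Gamma(x)$: that proposition's remainder estimate $\O(e^{-\lambda/C})$ is obtained by further holomorphic contour shifts and simply fails for almost-analytic data. What actually gives the $\O(C^MM!^{2s-1}\lambda^{-M})$ bound on the difference $I_\lambda-\sum_{m\le M}\lambda^{-m}b_m$ is a direct Taylor expansion of the amplitude at the critical point to order $2M$, together with the Gaussian moments; your ``optimizing the tail'' implicitly assumes this bound but does not supply it. With these two fixes your route can be completed, but it is considerably heavier than the paper's Fourier-transform argument.
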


\begin{proof}
We can work locally in $x$. By the Morse Lemma, there is a change of coordinates $\rho_x$ near $y_c(x)$ that changes the phase in $\Phi(x,y) = \frac{y_+^2 - y_-^2}{2}$ for some decomposition $y = (y_+,y_-)$ of $\R^k$. Moreover, since the Implicit Function Theorem is true in the Gevrey category so is the Morse Lemma and hence $\rho_x$ is $\G^s$ (with a $\G^s$ dependence on $x$). Since $y_c(x)$ is the only critical point of $y \mapsto \Phi(x,y)$, we may introduce a cut off function to split the integral $I_\lambda(a)(x)$ into an integral over $y$'s that are close to $y_c(x)$ and $y$'s that are away from $y_c(x)$. The latter is negligible thanks to Proposition \ref{prop:non-stationary}. Hence, we may assume that $a(x,y)$ vanishes when $y$ is too far from $y_c(x)$, in particular if it does not belong to the domain of definition of $\rho_x$. We can then use $\rho_x$ as a change of variable in $I_\lambda(a)(x)$. We just reduced to the case where $\Phi(x,y) = \Phi(y) = \frac{y_+^2 - y_-^2}{2}$.

From now on, the proof is just the Gevrey version of the argument from the proof of \cite[Theorem 3.13]{zworskiSemiclassicalAnalysis2012} in the $\mathcal{C}^\infty$ case. We introduce the new parameter $h = \lambda^{-1}$ and take Fourier Transform in $y$ in order to write (for some constant $c \neq 0$)
\begin{equation*}
\begin{split}
I_\lambda(a)(x) = f(a,h)(x) = c \int_{\R^k} e^{i h \frac{\xi_+^2 - \xi_-^2}{2}} \mathcal{F}_ya (x,\xi) \mathrm{d}\xi.
\end{split}
\end{equation*}
In this form, it is easy to prove that the function $h \mapsto f(a,h)$ is a smooth function from $\left]0,1\right]$ to $\widetilde{E}^{s,R'}\p{K_1}$ where $R' > R$ (we may differentiate under the integral in $x$). To obtain a more precise statement, we introduce the differential operator $P = - \frac{i}{2}(\Delta_{y_+} - \Delta_{y_-})$ and notice that
\begin{equation*}
\begin{split}
\frac{\mathrm{d}^m}{\mathrm{d} h^m} f(a,h) = f\p{P^m a,h}.
\end{split}
\end{equation*}
However, reasoning as in Remark \ref{remark:size_d_bar}, we see that for some $R_0 > 0$, there is a constant $C > 0$ such that for every $m \in \N$, we have
\begin{equation*}
\begin{split}
\n{P^m a}_{s,R_0,K} \leq C^{1 + m} \p{2 m}!^s.
\end{split}
\end{equation*}
It follows that $h \mapsto f(a,h)$ is in fact a $\G^{2 s}$ function from $\left]0,1\right]$ to $\widetilde{E}^{s,R_1}\p{K_1}$ where $R_1 > R_0$. Since the estimates on the derivatives are uniform, it extends to a $\G^{2s}$ function on $\left[0,1\right]$, we can then apply Lemma \ref{lemma:decay-extension-gevrey} (or rather its straightforward extension to Banach valued functions) to get \eqref{eq:dev_statio_reel} with
\begin{equation*}
\begin{split}
b_m(x) = \frac{P^m a(x,0)}{m!}.
\end{split}
\end{equation*}
\end{proof}

Another situation in which we are able to get a $\G^s$ remainder in the stationary phase method is when the phase $\Phi(x,y)$ in \eqref{eq:stationary_def} is purely imaginary and satisfies some sort of positivity condition. More generally, if $\Phi$ is a real analytic phase, with $\Im \Phi\geq 0$ for real parameters, and if $x_0$ is itself a non-degenerate critical point of the critical value $\Phi_c(x)$, then one gets a $\Gs$ remainder; we will not need this result.

In the key Lemma \ref{lemma:TPS-close-diagonal} in the next chapter, a somewhat different phenomenon will take place. The amplitude will be analytic in $x$, and $\Gs$ in $w$, so that we can consider the holomorphic extension of $I_\lambda(a)$ in the $x$ variable. We will be able to control the stationary integral in a region in the complex bigger than a ``$\G^{2s-1}$'' region $\{ |\Im x| \ll \lambda^{1/(2s-1)-1} \}$, but smaller than a ``$\G^s$'' region $\{ |\Im x| \ll \lambda^{1/s -1}\}$. 

\begin{remark}\label{remark:expression_coefficients}
All of the avatars of the Stationary Phase Method that we use are proved using the same global strategy: we reduce to the case of a quadratic phase using some version of the Morse Lemma. Consequently, the coefficients in the expansion
\begin{equation*}
\begin{split}
e^{- i \lambda \Psi(x)}\lambda^{\frac{k}{2}} \int e^{i \lambda \Phi(x,y)} a(x,y) \mathrm{d}y \sim \sum_{m \geq 0} \lambda^{-m} b_m(x)
\end{split}
\end{equation*}
that appear in Proposition \ref{prop:HSP} or Lemma \ref{lemma:stationary_gevrey_real}, are given by the same expression
\begin{equation*}
\begin{split}
b_m(x) = \frac{\p{2 \pi}^{\frac{k}{2}}}{m!} P_m a(x,y_c(x)).
\end{split}
\end{equation*}
Here, $y_c(x)$ denotes the critical point of $y \mapsto \Phi(x,y)$, and $P_m$ is the differential operator acting on the $y$ variable defined using Morse coordinates $\rho_x$, such that
\begin{equation}\label{eq:Morse_cequecest}
\begin{split}
\Phi(x,y) = \Phi(x,y_c(x)) + i \frac{\rho_x(y)^2}{2},
\end{split}
\end{equation}
by the expression
\begin{equation}\label{eq:Pm_thefirst}
\begin{split}
P_m u =  \left[ \p{\frac{\Delta}{2}}^m \p{ u \circ \rho_x^{-1} J \rho_x^{-1}} \right] \circ \rho_x.
\end{split}
\end{equation}
Here, $J \rho_x^{-1}$ denotes the Jacobian of $\rho_x^{-1}$. Concerning \eqref{eq:Morse_cequecest}, let us point out that if $z = (z_1,\dots,z_k) \in \C^k$, we write $ z^2 = \jap{z,z} = \sum_{j = 1}^n z_j^2$. The Laplacian in \eqref{eq:Pm_thefirst} is defined on $\C^k$ using holomorphic derivatives: $\Delta = \sum_{j  = 1}^n \frac{\partial^2}{\partial z_j^2}$ (we retrieve the usual Laplacian when restricting this operator to $\R^k$).

One can check that these are the same operators that appear in the proof of Lemma \ref{lemma:stationary_gevrey_real}. Of particular importance is the first term in the expansion
\begin{equation*}
\begin{split}
b_0(x) = (2 \pi)^{\frac{k}{2}}\frac{a(x,y_c(x))}{\sqrt{\det\p{-i \mathrm{d}_y^2 \Phi(x,y_c(x))}}}.
\end{split}
\end{equation*} 
The choice of the determination of the square root here depends on the choice of orientation on the contour of integration. We will always arrange so that the argument of the square root has non-negative real part and choose the corresponding holomorphic determination of the square root (see \cite{Sjostrand-82-singularite-analytique-microlocale} for details).
\end{remark}

Let us now state the version of the Morse Lemma that will be needed in the Stationary Phase Argument in the proof of Proposition \ref{lemma:TPS-close-diagonal}. The phase there will be analytic rather than Gevrey, so that we will rely on the following version of the Holomorphic Morse Lemma. (This is also the version of the Morse Lemma that is used in the proof of Proposition \ref{prop:HSP}).

\begin{lemma}[Holomorphic Morse Lemma]\label{lemma:holomorphic_morse}
Let $U$ and $V$ be open subsets respectively of $\C^n$ and $\C^k$. Let $\Phi : U \times V \to \C$ be a holomorphic function. Assume that there is $(x_0,y_0) \in U \times V$ such that $y \mapsto \Phi(x_0,y)$ has a non-degenerate critical point at $y_0$. Then there exist open neighbourhoods $U_0, V_0$ and $W$ of $x_0,y_0$ and $0$ respectively in $\C^n,\C^k $ and $\C^k$ and holomorphic maps $\rho : U_0 \times V_0 \mapsto \C^k$ and $y_c : U_0 \to V_0$ such that
\begin{enumerate}[label = (\roman*)]
\item for every $x \in U_0$, the point $y_c(x)$ is the unique critical point of $y \mapsto \Phi(x,y)$ in $V_0$;
\item for every $x \in U_0$, the map $V_0 \ni y \mapsto \rho(x,y)$ is a diffeomorphism onto its image, which contains $W$;
\item for every $x \in U_0$ and $y \in V_0$, we have
\begin{equation*}
\begin{split}
\Phi(x,y) = \Phi(x,y_c(x)) + \frac{i}{2} \rho(x,y)^2.
\end{split}
\end{equation*}
\end{enumerate}
\end{lemma}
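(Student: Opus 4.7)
The strategy is the classical Morse normal form argument, carried out entirely in the holomorphic category so that all constructions depend holomorphically on the parameter $x$. First, I would produce $y_c(x)$ by applying the holomorphic implicit function theorem to the map $(x,y)\mapsto \partial_y\Phi(x,y)$: this map vanishes at $(x_0,y_0)$ and its $y$-differential there is the non-degenerate Hessian $\mathrm{Hess}_y\Phi(x_0,y_0)$, so there are neighbourhoods $U_0\ni x_0$, $V_0\ni y_0$ and a unique holomorphic $y_c:U_0\to V_0$ with $\partial_y\Phi(x,y_c(x))\equiv 0$, which is the desired critical point. Shrinking $V_0$ guarantees uniqueness in $V_0$, giving (i). Setting $\Psi(x,\tilde y)=\Phi(x,y_c(x)+\tilde y)-\Phi(x,y_c(x))$ and applying Taylor's formula with integral remainder twice (using $\Psi(x,0)=0$ and $\partial_{\tilde y}\Psi(x,0)=0$) yields
\begin{equation*}
\Psi(x,\tilde y)= \tilde y^{T} B(x,\tilde y)\,\tilde y, \qquad B(x,\tilde y)=\int_0^1 (1-t)\,\mathrm{Hess}_y\Phi(x,y_c(x)+t\tilde y)\,\mathrm{d}t,
\end{equation*}
with $B$ holomorphic, symmetric-matrix-valued and $B(x_0,0)=\tfrac12\mathrm{Hess}_y\Phi(x_0,y_0)$ invertible.

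Next, I would seek $\rho$ of the form $\rho(x,y)=C(x,y-y_c(x))(y-y_c(x))$ for some holomorphic invertible matrix $C(x,\tilde y)$. The equation $\Phi-\Phi(x,y_c)=\tfrac{i}{2}\rho^2$ then becomes $C^{T}C=M$ where $M(x,\tilde y):=-2iB(x,\tilde y)$, with $M(x_0,0)$ invertible. To solve this holomorphically with parameter, I would first pick a constant invertible $P_0$ with $P_0^{T}M(x_0,0)P_0=I$ (available since any non-degenerate complex symmetric bilinear form is equivalent to the standard one $\sum z_j^2$), and consider $N(x,\tilde y)=P_0^{T}M(x,\tilde y)P_0$, so that $N(x_0,0)=I$. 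On a neighbourhood of $(x_0,0)$ where $\|N-I\|<1$, the symmetric square root
\begin{equation*}
S(x,\tilde y)=\sum_{k\geq 0}\binom{1/2}{k}(N(x,\tilde y)-I)^{k}
\end{equation*}
is well defined, holomorphic, symmetric, and satisfies $S^{T}S=S^{2}=N$. Taking $C=SP_0^{-1}$ gives $C^{T}C=(P_0^{-1})^{T}NP_0^{-1}=M$ as required, and defines $\rho$.

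Finally, (iii) holds by construction, and for (ii) I would observe that $\partial_{\tilde y}\rho(x,y_c(x))|_{\tilde y=0}=C(x,0)$, which at $x_0$ equals $S(x_0,0)P_0^{-1}=P_0^{-1}$, hence invertible. The holomorphic inverse function theorem, with its uniform control in the parameter, then guarantees that after shrinking $U_0$ and $V_0$ the map $y\mapsto\rho(x,y)$ is a biholomorphism from $V_0$ onto its image, and that these images all contain a common open neighbourhood $W$ of $0$. The only genuinely non-trivial step is the construction of a holomorphic ``square root'' $C$ with $C^{T}C=M$ and holomorphic parameter-dependence; this is the main obstacle, but it is dealt with cleanly by the linear reduction $M\leadsto N$ at $(x_0,0)$ followed by the binomial series, which automatically produces a \emph{symmetric} square root (so that $S^{T}S=S^{2}$ rather than merely $S^{2}=N$ without symmetry).
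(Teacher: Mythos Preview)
Your proof is correct and follows the standard approach to the holomorphic Morse lemma with parameters. The paper itself does not give a proof of this result but simply cites \cite{Sjostrand-82-singularite-analytique-microlocale} (Lemme 2.7 and the remark after it); the argument there is precisely the one you outline --- implicit function theorem for $y_c$, Taylor expansion to write $\Psi$ as a quadratic form with holomorphic matrix coefficients, and a holomorphic square root to produce the Morse coordinates. Your treatment of the square root via reduction to $N(x_0,0)=I$ and the binomial series is a clean way to handle the parameter dependence, and your observation that symmetry of $N$ forces symmetry of $S$ (hence $S^TS=S^2=N$) is the key point that makes $C^TC=M$ rather than just $C^2=M$.
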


The proof of this classical result may be found for instance in \cite{Sjostrand-82-singularite-analytique-microlocale} (see Lemme 2.7 there and the remark just after it). In several proofs, in order to apply a method of steepest descent, it will be important to control the imaginary part of the critical value of the phase. The tool for this is the so-called ``fundamental'' lemma, presented in an elaborate version in \cite[Lemme 3.1]{Sjostrand-82-singularite-analytique-microlocale}. We give here a more elementary version.
\begin{lemma}\label{lemma:fundamental-lemma}
Let $\Phi$ be a phase as in Lemma \ref{lemma:holomorphic_morse}. Assume in addition that $x_0$ and $y_0$ are real points, that $\Im \Phi(x_0,y_0) = 0$ and that $\Im \Phi(x,y)\geq 0$ for real $x,y$. Then for $x$ real close to $x_0$,
\[
\Im\Phi(x,y_c(x))\geq 0.
\]
If additionally, we assume that $\Im \mathrm{d}^2\Phi (x_0,y_0)$ is non-degenerate, then for some constant $C>0$, and $x$ real close to $x_0$,
\[
\Im\Phi(x,y_c(x))\geq \frac{1}{C} (x-x_0)^2.
\]
\end{lemma}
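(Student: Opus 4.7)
My plan is to reduce the problem to a choice of a convenient real point in the domain of the Morse chart. By the Holomorphic Morse Lemma (Lemma \ref{lemma:holomorphic_morse}), on a neighbourhood $U_0 \times V_0$ of $(x_0, y_0)$ there is a holomorphic map $\rho : U_0 \times V_0 \to \C^k$ with $\rho(x, y_c(x)) = 0$ and
\[
\Phi(x, y) \;=\; \Psi(x) + \tfrac{i}{2}\,\rho(x, y)^2, \qquad \Psi(x) := \Phi(x, y_c(x)).
\]
Writing $\rho = \Re \rho + i \Im \rho$ with real and imaginary parts in $\R^k$, a direct computation gives $\Im\bigl[\tfrac{i}{2}\zeta^2\bigr] = \tfrac{1}{2}(|\Re \zeta|^2 - |\Im \zeta|^2)$, so for real $x \in U_0 \cap \R^n$ and any $y \in V_0$,
\[
\Im \Psi(x) \;=\; \Im \Phi(x, y) \;-\; \tfrac{1}{2}|\Re \rho(x, y)|^2 \;+\; \tfrac{1}{2}|\Im \rho(x, y)|^2.
\]

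The idea is now to select, for each real $x$ near $x_0$, a \emph{real} point $y = y^{\sharp}(x) \in \R^k$ such that $\Re \rho(x, y^{\sharp}(x)) = 0$. With such a choice, the first term on the right is non-negative because both $x$ and $y^{\sharp}(x)$ are real (and the hypothesis $\Im \Phi \geq 0$ on the reals applies), while the third term is manifestly non-negative; this gives $\Im \Psi(x) \geq 0$. To produce $y^{\sharp}$, I will apply the Implicit Function Theorem to the real-analytic map $F : \R^n \times \R^k \to \R^k$, $F(x, y) = \Re \rho(x, y)$, at the point $(x_0, y_0)$ where $F$ vanishes. The key verification, and the main obstacle, is that $\partial_y F(x_0, y_0) = \Re R$ is invertible as a real linear map, where $R := d_y \rho(x_0, y_0) \in \mathrm{Mat}_k(\C)$.

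This invertibility follows from the hypotheses through the Morse identity. Differentiating $\Phi = \Psi + \tfrac{i}{2}\rho^2$ twice in $y$ and evaluating at $y = y_0$ (where $\rho = 0$) yields $d_y^2 \Phi(x_0, y_0) = i\,R^T R$. Non-degeneracy of the critical point $y_0$ forces $R^T R$, hence $R$, to be invertible. Writing $R = R_r + i R_i$ with $R_r, R_i$ real, a direct expansion gives
\[
\Im\bigl(d_y^2 \Phi(x_0, y_0)\bigr) \;=\; \Re(R^T R) \;=\; R_r^T R_r - R_i^T R_i.
\]
Since $\Im \Phi \geq 0$ on the reals and vanishes at $(x_0, y_0)$, the Hessian of the real function $\Im \Phi$ is positive semi-definite at $(x_0, y_0)$; restricted to the $y$-block, this reads $R_r^T R_r \geq R_i^T R_i$, i.e., $|R_r v|^2 \geq |R_i v|^2$ for every $v \in \R^k$. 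If $R_r v = 0$, then $R_i v = 0$ too, so $R v = 0$, and invertibility of $R$ forces $v = 0$. Thus $R_r = \Re R$ is invertible, the Implicit Function Theorem provides a real-analytic $y^{\sharp}$ on a neighbourhood of $x_0$, and the first assertion follows.

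For the quadratic lower bound, the extra hypothesis is that $\Im d^2 \Phi(x_0, y_0)$ is non-degenerate; combined with the already-established inequality $\Im d^2 \Phi(x_0, y_0) \geq 0$ on $\R^{n+k}$, it is in fact positive definite. A second-order Taylor expansion of $\Im \Phi$ at $(x_0, y_0)$ (whose value and gradient both vanish there) therefore yields a constant $C > 0$ such that $\Im \Phi(x, y) \geq C^{-1}(|x - x_0|^2 + |y - y_0|^2)$ for real $(x, y)$ close to $(x_0, y_0)$. Plugging $y = y^{\sharp}(x)$ into the identity from paragraph one and discarding the non-negative $\tfrac{1}{2}|\Im \rho|^2$ term gives $\Im \Psi(x) \geq \Im \Phi(x, y^{\sharp}(x)) \geq C^{-1} |x - x_0|^2$, which is the desired bound.
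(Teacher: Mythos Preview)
Your proof is correct. The first part is essentially the paper's own argument: both produce a real point $y^\sharp(x)$ with $\Re\rho(x,y^\sharp(x))=0$ by observing that $\Im d_y^2\Phi(x_0,y_0) = R_r^TR_r - R_i^TR_i \geq 0$ forces $R_r = \Re d_y\rho(x_0,y_0)$ to be invertible (the paper phrases this as ``$\rho_x(V_0\cap\R^k)$ is a graph over the reals'', which amounts to the same implicit-function step).

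For the quadratic lower bound your route diverges from the paper's, and is more elementary. The paper computes $D^2\Phi_c(x_0)$ explicitly as the Schur complement of $D^2_{y,y}\Phi$ in $D^2\Phi$, then uses a block-inverse identity to transfer positivity of $\Im D^2\Phi$ to positivity of $\Im D^2\Phi_c$. You instead recycle the point $y^\sharp(x)$ from the first part: plugging it into the Taylor expansion of $\Im\Phi$ (which is positive definite at $(x_0,y_0)$ under the extra hypothesis) directly yields $\Im\Psi(x)\geq \Im\Phi(x,y^\sharp(x))\geq C^{-1}|x-x_0|^2$. Your argument is shorter and avoids any matrix algebra; the paper's computation has the minor advantage of identifying $D^2\Phi_c$ explicitly, but for the statement at hand your approach is cleaner.
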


\begin{proof}
Let $\rho$ be as in Lemma \ref{lemma:holomorphic_morse} and write $\rho_x = \rho(x,\cdot)$ for $x \in U_0$. We start by decomposing the Jacobian matrix of $\rho_{x_0}$ at $y_0$ into real and imaginary part 
\[
D \rho_{x_0}(y_0) = A +i B.
\]
According to the definition of Morse coordinates, we obtain
\[
i {}^t (A+i B) (A+ iB) = D_{y,y}^2 \Phi(x_0,y_0).
\]
Identifying the imaginary part, it follows that ${}^t A A - {}^t B B = \Im D_{y,y}^2 \Phi(x_0,y_0) \geq 0$. Since $A+iB$ has to be invertible, this implies that $A$ is actually invertible. We deduce that, up to taking $V_0$ smaller, $\rho_{x_0}(V_0 \cap \R^k)$ is a graph over the reals, and this remains true by perturbation for $x$ close to $x_0$. We can thus write
\[
\rho_x(V_0 \cap \R^k) = \{ w + i  F_x(w)\ |\ w\in \Re \rho_x(V_0 \cap \R^k) \}.
\]
Here, $F_x$ is a real analytic map in all its variables. In particular, if $x$ is real, at the point $y=\rho_x^{-1}(i F_x(0))$, we have 
\[
\Phi(x,y) = \Phi(x,y_c(x)) - \frac{i}{2} \va{F_x(0)}^2,
\]
so that $\Im \Phi(x,y_c(x)) \geq \va{F_x(0)}^2/2 \geq 0$ since $\Im \Phi(x,y) \geq 0$ by assumption. 

Now, we study the particular case in which $\Im \mathrm{d}^2 \Phi(x_0,y_0)$ is non-degenerate. Since $\Im\Phi(x,y)\geq 0$ for real $x,y$,  the quantity $\Im\Phi(x,y_c(x))$ has to be critical at $x=x_0$. Hence it suffices for our purpose to show that the imaginary part of the Hessian of the critical value is invertible at $x=x_0$. For this, it is easier to directly work with $\Phi$. According to the Inverse function theorem,
\[
D y_c = - (D_{y,y}^2 \Phi)^{-1} D_{y,x}^2 \Phi.
\]
Here, our convention regarding matrices of second differential is that the rows of $D_{y,x}^2 \Phi$ have the size of $x$ and its columns the size of $y$. Letting $\Phi_c : x \mapsto \Phi(x,y_c(x))$, it comes that $\nabla_x \Phi_c (x) = \nabla_x \Phi(x,y_c(x))$, and
\[
D^2 \Phi_c(x) = D_{x,x}^2 \Phi (x, y_c(x)) -  D_{x,y}^2 \Phi(x)(D_{y,y}^2 \Phi(x))^{-1} D_{y,x}^2 \Phi(x).
\]
This is the Schur complement of $D_{y,y}^2 \Phi$ in $D^2 \Phi$. We deduce that $D^2\Phi$ is invertible if and only if $D^2\Phi_c $ is invertible. Since it is the case, with $x=x_0$ and $y=y_0$, the inverse of the Hessian $D^2 \Phi$ is given by
\begin{equation*}
\begin{split}
& \begin{pmatrix}
1 & 0 \\  -\p{D_{y,y}^2 \Phi}^{-1} D_{y,x}^2\Phi & 1
\end{pmatrix}\begin{pmatrix}
\p{D^2 \Phi_c}^{-1} & 0 \\ 0 & \p{D_{y,y}^2\Phi}^{-1} 
\end{pmatrix} \\ & \qquad \qquad \qquad \qquad \qquad \qquad \qquad \qquad \qquad \times \begin{pmatrix}
1 & -  D_{x,y}^2 \Phi \p{D_{y,y}^2 \Phi}^{-1} \\ 0 & 1
\end{pmatrix}.
\end{split}
\end{equation*}
For a vector $u\in \R^n$, we deduce
\[
\jap{ \p{D^2\Phi_c}^{-1} u, u} = \left\langle \p{D^2 \Phi}^{-1} \begin{pmatrix} u\\ 0 \end{pmatrix} ,  \begin{pmatrix} u\\ 0 \end{pmatrix} \right\rangle.
\]
Since $\Im D^2\Phi > 0$, we deduce that $\Im( \p{D^2\Phi}^{-1}) < 0$, so that $\Im \p{D^2\Phi_c}^{-1}<0$ and finally $\Im D^2\Phi_c >0$. This ends the proof.
\end{proof}

\subsection{Further tricks}\label{subsec:further_tricks}

So far, we have considered integrals of the form
\[
\int_\Omega e^{i \lambda \Phi(x,y)} a(x,y) dy,
\]
where $\Omega\subset \R^k$ is relatively compact and $x$ is assumed to be varying in a compact set. However, sometimes, we will need to replace the parameter $x$ by a parameter $\p{x,\xi}$ where $x$ varies in a compact set but $\xi \in \R^n$. In that case, the phase $\Phi$ and the amplitude $a$ will have symbolic behavior with respect to $\xi$ and the large parameter $\lambda$ will be replaced by $\jap{\xi}/h$ where $h > 0$ is small as usual. In this context, Propositions \ref{prop:non-stationary} and \ref{prop:non-stationary-analytic} rewrite:

\begin{prop}\label{prop:non_stationary_symbols}
Let $s \geq 1, m \in \R$ and $U\times V$ be an open subset of $\R^n \times \R^k$. Let $\Phi$ be an element of $S^{s,1}\p{T^* U \times V}$ and $\Omega$ be a relatively compact open subset of $V$. Let $a \in S^{s,m}\p{T^*U \times V}$ and, for $0 < h \leq 1$, define
\begin{equation}\label{eq:integrale_non_stationnaire_symbols}
\begin{split}
I_h(a) : \p{x,\xi}  \mapsto \int_{\Omega} e^{\frac{i}{h}  \Phi(x,\xi,y)} a(x,\xi,y) \mathrm{d}y.
\end{split}
\end{equation}
We make the following assumptions:
\begin{enumerate}[label=(\roman*)]
	\item $\Im \Phi \geq 0$ on $T^*U\times \Omega$;
	\item there exists $\epsilon>0$ such that $\Im \Phi(x,\xi,y) > \epsilon \jap{\xi}$ for $\p{x,\xi} \in T^* U$ and $y\in \partial \Omega$;
	\item there is a $C > 0$ such that for every $\p{x,\xi,y} \in T^* U \times V$ such that $\va{\xi} \geq C$ we have
\begin{equation*}
\begin{split}
\va{\mathrm{d}_y \Phi(x,\xi,y)} \geq  \frac{\jap{\xi}}{C}.
\end{split}
\end{equation*}
\end{enumerate} 
Let $K$ be a compact subset of $U$. Then, there are constants $C,R > 0$ such that, for every $(x,\xi) \in K \times \R^n$ with $\va{\xi} \geq C$ and every $0 < h \leq 1$, we have
\begin{equation}\label{eq:apres_rescaling}
\sup_{\alpha,\beta \in \N^n} \va{\frac{\jap{\xi}^{\va{\beta}} \partial_x^\alpha \partial_\xi^\beta I_h(a)(x,\xi)}{R^{\va{\alpha} + \va{\beta}}\alpha!^s \beta!^s}} \leq C \exp\left( - \left( \frac{\langle\xi\rangle}{Ch}\right)^{\frac{1}{s}} \right).
\end{equation}
\end{prop}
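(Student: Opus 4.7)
The plan is to rescale the frequency variable $\xi$ so that the semiclassical non-stationary integral \eqref{eq:integrale_non_stationnaire_symbols} falls within the scope of Proposition \ref{prop:non-stationary}, with effective large parameter $\lambda = \langle\xi\rangle/h$. The heuristic is that $\Phi/\langle\xi\rangle$ is a symbol of order $0$ and $a/\langle\xi\rangle^m$ is a symbol of order $0$, so once these factors are extracted we are dealing with uniformly $\G^s$ amplitudes and phases to which the non-semiclassical result applies directly.

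First I would dispatch the bounded region $|\xi| \leq C_*$ by Proposition \ref{prop:non-stationary} applied with $\lambda = 1/h$, treating $\xi$ as an additional compact parameter adjoined to the $x$ variable. For $|\xi| = \rho \geq C_*$ with $C_*$ large enough that hypothesis (iii) provides $|\mathrm{d}_y\Phi| \geq \rho/(2C)$, I would introduce the rescaled variable $\eta = \xi/\rho$ ranging in a fixed compact annulus $A \subset \R^n\setminus\{0\}$, and the rescaled phase and amplitude
\begin{equation*}
\Phi_\rho(x,\eta,y) := \rho^{-1}\Phi(x,\rho\eta,y), \qquad a_\rho(x,\eta,y) := \rho^{-m}a(x,\rho\eta,y).
\end{equation*}
The symbolic estimates of Definition \ref{def:gevrey_symbol} imply that $\Phi_\rho$ and $a_\rho$ are $\G^s$ functions on $K_1 \times A \times V$ with semi-norms bounded uniformly in $\rho$: each $\eta$-derivative produces a factor $\rho$ that is exactly compensated by the $\langle\rho\eta\rangle^{-|\beta|} \sim \rho^{-|\beta|}$ gain on $\partial_\xi^\beta\Phi$ and $\partial_\xi^\beta a$. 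In the same way, hypotheses (i)--(iii) translate uniformly in $\rho\geq C_*$ to $\Im\Phi_\rho \geq 0$, $\Im\Phi_\rho \geq \epsilon/2$ on $\partial\Omega$, and $|\mathrm{d}_y\Phi_\rho| \geq 1/(4C)$.

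Applying Proposition \ref{prop:non-stationary} with large parameter $\lambda = \rho/h$ to
\begin{equation*}
I_h(a)(x,\rho\eta) = \rho^m \int_\Omega e^{\frac{i\rho}{h}\Phi_\rho(x,\eta,y)} a_\rho(x,\eta,y)\, \mathrm{d}y,
\end{equation*}
with $(x,\eta) \in K \times A$ playing the role of the compact parameter, then yields for all $\alpha,\gamma \in \N^n$ a bound of the form $C_R \rho^m (CR)^{|\alpha|+|\gamma|}\alpha!^s\gamma!^s \exp\bigl(-(\rho/(CRh))^{1/s}\bigr)$ on $|\partial_x^\alpha\partial_\eta^\gamma I_h(a)(x,\rho\eta)|$, with constants $C,R$ independent of $\rho$ and $h$. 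Translating $\partial_\eta = \rho\,\partial_\xi$ back into $\xi$-derivatives produces exactly the weight $\langle\xi\rangle^{|\beta|}$ appearing on the left of \eqref{eq:apres_rescaling}; the surplus polynomial factor $\rho^m$ is absorbed into the stretched exponential at the cost of enlarging $C$, using that $\rho^m \leq C\exp\bigl((\rho/(2CRh))^{1/s}\bigr)$ for $h \leq 1$ and $\rho \geq C_*$ large enough.

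The only real point to check is the uniform-in-$\rho$ control of the $\G^s$ semi-norms of $\Phi_\rho$ and $a_\rho$, which is an immediate consequence of the symbolic $\langle\xi\rangle^{-|\beta|}$ decay in $\xi$-derivatives: all the ``hard'' analysis (contour deformation, Bochner--Martinelli trick, almost-analytic extensions) has already been carried out once and for all in Proposition \ref{prop:non-stationary}. In that sense there is no genuine obstacle here, only the bookkeeping of how the symbolic orders of $\Phi$ and $a$ transmute into the exponent $1/s$ of the stretched-exponential decay through the rescaling $\xi \mapsto \rho\eta$.
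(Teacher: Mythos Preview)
Your proposal is correct and matches the paper's own approach: the paper explicitly presents the same rescaling argument (set $\eta = \xi/M$ with $|\xi|\sim M$, define $\Phi_M = M^{-1}\Phi(x,M\eta,y)$ and $a_M = M^{-m}a(x,M\eta,y)$, apply Proposition~\ref{prop:non-stationary} with $\lambda = M/h$, and absorb the $M^m$ prefactor into the stretched exponential). One small point of presentation: when you write $\rho = |\xi|$ and then let $\eta$ ``range in an annulus $A$'', you are implicitly fixing a dyadic scale $\rho$ and letting $\xi$ vary with $|\xi|\sim\rho$, which is what the paper does; this is needed so that the $(x,\eta)$-derivatives from Proposition~\ref{prop:non-stationary} translate cleanly into the required $(x,\xi)$-derivatives.
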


\begin{prop}\label{prop:non_stationary_analytic_symbols}
Let $s \geq 1, m \in \R$ and $U\times V$ be an open subset of $\R^n \times \R^k$. Let $\Phi$ be an element of $S^{1,1}\p{T^* U \times V}$, $a$ be an element of $S^{1,m}\p{T^* U \times V}$ and  $\Omega$ be a relatively compact open subset of $V$. Then, for $u \in \G^s\p{V}$ and $0 < h \leq 1$, define
\begin{equation}\label{eq:integrale_non_stationnaire_analytic_symbols}
\begin{split}
I_h(u) : \p{x,\xi}  \mapsto \int_{\Omega} e^{\frac{i}{h}  \Phi(x,\xi,y)} a(x,\xi,y) u(y) \mathrm{d}y.
\end{split}
\end{equation}
We make the following assumptions:
\begin{enumerate}[label=(\roman*)]
	\item $\Im \Phi \geq 0$ on $T^*U\times \Omega$;
	\item there exists $\epsilon>0$ such that $\Im \Phi(x,\xi,y) > \epsilon \jap{\xi}$ for $\p{x,\xi} \in T^* U$ and $y\in \partial \Omega$;
	\item there is a $C > 0$ such that for every $\p{x,\xi,y} \in T^* U \times V$ such that $\va{\xi} \geq C$ we have
\begin{equation*}
\begin{split}
\va{\mathrm{d}_y \Phi(x,\xi,y)} \geq \frac{\jap{\xi}}{C}.
\end{split}
\end{equation*}
\end{enumerate} 
Let $K$ be a compact subset of $U$ and $K'$ be a compact neighbourhood of $\Omega$ in $V$. Then, there is an $\epsilon > 0$ such that for every $u \in \G^s\p{U}$, the function $I_h(u)$ has a holomorphic extension to $\p{T^* K}_\epsilon$. Moreover, there is $C > 0$ such that, for every $R \geq 1$, there is $C_R > 0$, such that if $0 < h \leq 1$ and $(x,\xi) \in \p{T^* K}_{\epsilon}$ are such that
\begin{equation*}
\begin{split}
\va{\xi} \geq C \textup{ and } \va{\Im x} + \frac{\va{\Im \xi}}{\jap{\Re \xi}} \leq  \frac{1}{(CR)^{\frac{1}{s}}} \p{\frac{h}{\jap{\Re \xi}}}^{1 - \frac{1}{s}}
\end{split}
\end{equation*}
and $u \in \G^s\p{V}$ then
\begin{equation*}
\begin{split}
\va{I_h(u)(x,\xi)} \leq C_R \n{u}_{s,R,K'} \exp\p{-  \p{\frac{\jap{\Re \xi}}{C R h}}^{\frac{1}{s}}}.
\end{split}
\end{equation*}
\end{prop}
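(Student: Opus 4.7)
The plan is to adapt the proof of Proposition \ref{prop:non-stationary-analytic} to the symbolic setting, where the effective large parameter is $\lambda = \langle\Re\xi\rangle/h$. Since $\Phi \in S^{1,1}$ and $a \in S^{1,m}$ are analytic, they extend holomorphically to some Grauert tube $\p{T^*U}_\epsilon \times V_\epsilon$ for $\epsilon$ small enough. Lemma \ref{lemma:aae_Gs_flat} provides a compactly supported almost-analytic extension $\tilde u$ of $u$ on a complex neighbourhood of $K'$ (taken simply as the holomorphic extension of $u$ when $s = 1$, so $\bar\partial \tilde u \equiv 0$), controlled by $\n{u}_{s,R,K'}$, with $\bar\partial \tilde u$ satisfying the Gevrey decay of Remark \ref{remark:size_d_bar}. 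For $(x,\xi) \in \p{T^*K}_\epsilon$ I set
\[
I_h(u)(x,\xi) \coloneqq \int_{\Omega} e^{\frac{i}{h}\Phi(x,\xi,y)} a(x,\xi,y) \tilde u(y) \,\mathrm{d}y,
\]
which agrees with \eqref{eq:integrale_non_stationnaire_analytic_symbols} on the reals and is holomorphic in $(x,\xi)$ by differentiation under the integral, since $\Phi$ and $a$ are holomorphic in $(x,\xi)$ while $\tilde u$ is independent of these variables. In contrast with the Gevrey-amplitude situation of Proposition \ref{prop:non-stationary}, no Bochner--Martinelli trick is required here.

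Next, I fix such a complex $(x,\xi)$ with $|\xi| \geq C$ and deform the $y$-contour into $\C^k$. Let $\chi$ be a smooth cutoff equal to $1$ away from $\partial\Omega$ and vanishing near it, and set
\[
\Gamma_t(y) = y + i t \chi(y)\, \overline{\nabla_y \Phi(x,\xi,y)}, \qquad 0 \leq t \leq t_\star,
\]
where the bar denotes complex conjugation of the $\C^k$-valued holomorphic gradient of $\Phi$. Because $\nabla_y\Phi \cdot \overline{\nabla_y\Phi} = |\nabla_y\Phi|^2 \in \R_{\geq 0}$, Taylor's formula combined with assumption (iii) yields, for $t_\star$ and $\epsilon$ small enough,
\[
\Im \Phi(x,\xi,\Gamma_t(y)) \geq \chi(y)\frac{t\langle\Re\xi\rangle^2}{C_0} - C_1\bigl(\langle\Re\xi\rangle|\Im x| + |\Im\xi|\bigr),
\]
using that $\Phi$ is a symbol of order one. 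Near $\partial\Omega$ assumption (ii) preserves $\Im\Phi \gtrsim \langle\Re\xi\rangle$. Applying Stokes' formula to the form $e^{\frac{i}{h}\Phi(x,\xi,z)}a(x,\xi,z)\tilde u(z)\,\mathrm{d}z$ over the region swept by $\Gamma_s(\Omega)$ for $0 \leq s \leq t_\star$ expresses $I_h(u)(x,\xi)$ as the integral over $\Gamma_{t_\star}(\Omega)$ plus a volume integral whose integrand, since $\Phi$ and $a$ are holomorphic in $z$, involves only $\bar\partial \tilde u$.

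Finally, I choose $t_\star = B R^{-1/s} \langle\Re\xi\rangle^{1/s - 2} h^{1 - 1/s}$ and balance the three competing exponents. Using $|a| \lesssim \langle\Re\xi\rangle^m$ and the lower bound above, the integral over $\Gamma_{t_\star}(\Omega)$ is controlled by
\[
\n{u}_{s,R,K'} \langle\Re\xi\rangle^m \exp\!\left(\frac{C_1(\langle\Re\xi\rangle|\Im x| + |\Im\xi|)}{h} - \frac{B}{C_0 R^{1/s}}\left(\frac{\langle\Re\xi\rangle}{h}\right)^{1/s}\right),
\]
while the Stokes volume term, using $|\bar\partial \tilde u(z)| \lesssim \n{u}_{s,R,K'}\exp(-(R|\Im z|)^{-1/(s-1)}/C)$ with $|\Im z| \lesssim t_\star \langle\Re\xi\rangle$ and the identity $(1 - 1/s)/(s - 1) = 1/s$, contributes an exponent of order $-B^{-1/(s-1)}R^{-1/s}(\langle\Re\xi\rangle/h)^{1/s}$. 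The hypothesis $|\Im x| + |\Im\xi|/\langle\Re\xi\rangle \leq (CR)^{-1/s}(h/\langle\Re\xi\rangle)^{1-1/s}$ bounds the complex-parameter loss by $C_1(CR)^{-1/s}(\langle\Re\xi\rangle/h)^{1/s}$. Choosing $B$ of order unity and taking $C$ in the final constant large enough relative to $B, C_0, C_1$ and $s$, both contributions are dominated by $\exp(-(\langle\Re\xi\rangle/(CRh))^{1/s})$, absorbing the polynomial prefactor $\langle\Re\xi\rangle^{m+1}$ into the exponential. The main technical obstacle is precisely this balancing, since all three competing exponents scale as $R^{-1/s}(\langle\Re\xi\rangle/h)^{1/s}$: the numerical constants must be tuned so that the main gain dominates uniformly in $R$.
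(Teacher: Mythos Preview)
Your proof is correct and follows essentially the same approach as the paper: a direct adaptation of the proof of Proposition~\ref{prop:non-stationary-analytic}, with the large parameter $\lambda$ replaced by $\langle\Re\xi\rangle/h$ and the contour displacement rescaled accordingly; your deformation direction $i\overline{\nabla_y\Phi}$ coincides with the paper's $\nabla_y\Im\Phi$ for holomorphic $\Phi$. The paper also offers a cleaner alternative that you allude to but do not carry out: rescale $\eta = \xi/M$ with $M\sim|\xi|$, so that $\Phi_M(x,\eta,y) = \Phi(x,M\eta,y)/M$ and $a_M(x,\eta,y) = M^{-m}a(x,M\eta,y)$ satisfy uniform $\G^1$ bounds on $\{C^{-1}\le|\eta|\le C\}$, and then apply Proposition~\ref{prop:non-stationary-analytic} verbatim with $\lambda = M/h$ --- this avoids redoing the balancing and makes the uniformity in $\xi$ automatic.
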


Proofs of Proposition \ref{prop:non_stationary_symbols} and \ref{prop:non_stationary_analytic_symbols} may be deduced easily from the proofs of Proposition \ref{prop:non-stationary} and \ref{prop:non-stationary-analytic}, replacing the estimate from Remark \ref{remark:size_d_bar} by the bound from Remark \ref{remark:aae_for_symbol} on the $\bar{\partial}$ of almost analytic extension of Gevrey symbols.

However, there is also a standard rescaling argument that allows to deduce Propositions \ref{prop:non_stationary_symbols} and \ref{prop:non_stationary_analytic_symbols} from Propositions \ref{prop:non-stationary} and \ref{prop:non-stationary-analytic}. It also applies to stationary phase estimates. Let us recall this argument, for instance in the case of Proposition \ref{prop:non_stationary_analytic_symbols} . To deal with $\va{\xi} \sim M \gg 1$, we want to use the rescaling $\eta = M^{-1} \xi$ and thus define the phase and amplitude
\begin{equation*}
\begin{split}
\Phi_M\p{x,\eta,y} = \frac{\Phi\p{x,M \eta,y}}{M} \textup{ and } a_M(x,\eta,y) = M^{-m} a\p{x, M \eta,y}
\end{split}
\end{equation*}
that satisfy uniform Gevrey estimates when $\p{x,y}$ is in a compact subset of $U \times V$ and $ C^{-1} \leq \va{\eta} \leq C$ for $C > 0$ arbitrarily large. Then, in the notations on Proposition \ref{prop:non_stationary_symbols}, we have
\begin{equation}\label{eq:rescaling_non_stationary}
\begin{split}
I_h\p{a} (x,\xi) = M^m \int_{\Omega} e^{i \lambda \Phi_M\p{x,\eta,y}} a_M\p{x, \eta ,y} \mathrm{d}y,
\end{split}
\end{equation}
where $\lambda = h^{-1} M$ and $\eta = M^{-1} \xi$. Notice then that when $\va{\xi} \sim M$ we have $\lambda \sim h^{-1} \jap{\xi}$ and the point $\eta$ remains in the domain $C^{-1} \leq \va{\eta} \leq C$ where we have uniform Gevrey estimates on $a_M$ and $\Phi_M$. Hence, the integral in the right hand side of \eqref{eq:rescaling_non_stationary} falls into the domain of application of Proposition \ref{prop:non-stationary}, and \eqref{eq:apres_rescaling} is deduced from \eqref{eq:avant_rescaling} (taking into account the scaling $\xi = M \eta$ and neglecting the factor $M^m$ in \eqref{eq:rescaling_non_stationary} whose growth is annihilated by the decay of the stretched exponential).

The same rescaling argument also gives a symbolic version of Proposition \ref{prop:HSP}.

\begin{prop}\label{prop:HSP_symbol}
Let $U \times V \times W$ be an open subset of $\R^n \times \R^k \times \R^d$. Let $r \in \R$. Let $a \in S^{1,r}(T^* U \times V \times W)$ and $\Phi \in S^{1,0}(T^* U \times V \times W)$. Let $\Gamma$ be a $d$-dimensional compact real submanifold with boundary of $\C^d$. Assume that $\Gamma$ is contained in a small enough complex neighbourhood of $W$ so that $a$ and $\Phi$ are well-defined (and satisfy symbolic estimates) on a complex neighbourhood of $T^* U \times V \times \Gamma$. Let $(x_0,z_1) \in U \times V $, $y_0$ be in the interior of $\Gamma$ and $F \subseteq \R^n$. Assume that
\begin{enumerate}[label=(\roman*)]
\item the imaginary part of $\Phi(x_0,\xi,z_1,y)$ is non-negative for every $y \in \Gamma$ and $\xi \in F$;
\item there is $\epsilon > 0$ such that the imaginary part of $\Phi(x_0,\xi,z_1,y)$ is greater than $\epsilon$ for every $y \in \partial \Gamma$ and $\xi \in F$;
\item for every $\xi \in F$, the function $y \mapsto \Phi(x_0,\xi,z_1,y)$ has a unique critical point in $\Gamma$, which is $y_0$ and is non-degenerate, with symbolic estimates in $\xi$;
\item for every $\xi \in F$ we have $\Phi(x_0,\xi,z_1,y_0) = 0$.
\end{enumerate}
Then there is a neighbourhood $U_0 \times V_0$ of $(x_0,z_1)$ in $\C^{n+k}$ and $\eta > 0$ such that, if we denote by $G$ the set of $\xi \in \C^n$ at distance less than $\eta$ of $F$ for the Kohn--Nirenberg metric, then, for every $(x,z) \in (U_0,V_0)$ and $\xi\in G$, the function $y \mapsto \Phi(x,\xi,z,y)$ has a unique critical point $y_c(x,\xi,z)$ near $\Gamma$. We denote by $\Psi(x,\xi,z) = \Phi(x,\xi,z,y_c(x,\xi,z))$ the associated critical value. Then, there is a formal analytic symbol $\sum_{m \geq 0} h^m b_m$ of order $r$ on $U_0 \times G \times V_0 $ such that for $C_0 > 0$ large enough, there is a constant $C > 0$ such that, for every $0 < h \leq 1, (x,z) \in U_0 \times V_0$ and $\xi \in G$, we have
\begin{equation}\label{eq:estimee_phase_stationnaire}
\begin{split}
& \va{  e^{-i \frac{\jap{\xi}}{h} \Psi(x,\xi,z)}\p{\frac{\jap{\xi}}{2 \pi h}}^{k/2} \int_{\Gamma} e^{i \frac{\jap{\xi}}{h} \Phi(x,\xi,z,y)} a(x,\xi,z,y) \mathrm{d}y -\sum_{0 \leq m \leq \jap{|\xi|}/C_0 h} h^{m} b_m(x,\xi,z)} \\ 
& \hspace{210pt} \leq C \exp\p{- \frac{\jap{\va{\xi}}}{Ch}}.
\end{split}
\end{equation}
\end{prop}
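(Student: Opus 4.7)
The plan is to reduce the statement to the non-parametric holomorphic stationary phase (Proposition \ref{prop:HSP}) via the dyadic rescaling argument already illustrated in the paper for Propositions \ref{prop:non_stationary_symbols} and \ref{prop:non_stationary_analytic_symbols}. For $M \geq 1$ a dyadic scale with $\jap{\xi} \asymp M$, I would set $\eta := \xi/M$ and introduce the rescaled phase and amplitude
\[
\Phi_M(x,\eta,z,y) := \Phi(x,M\eta,z,y), \qquad a_M(x,\eta,z,y) := M^{-r} a(x,M\eta,z,y).
\]
Because $\Phi \in S^{1,0}$ and $a \in S^{1,r}$, the symbolic estimates on $\xi$-derivatives translate into uniform-in-$M$ analytic bounds for $\Phi_M$ and $a_M$ on a fixed complex neighbourhood of the rescaled region $\{\eta : M\eta \in F\}$ (this uses the definition \eqref{eq:Grauert_rn} of Kohn--Nirenberg tubes: $|\Im\xi| < \epsilon \jap{\Re\xi}$ becomes a fixed complex neighbourhood of $|\eta|\sim 1$). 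The integral then takes the form
\[
M^r\, e^{-i\lambda \Psi_M(x,\eta,z)} \Bigl(\tfrac{\lambda}{2\pi}\Bigr)^{k/2} \int_\Gamma e^{i\lambda \Phi_M(x,\eta,z,y)}\, a_M(x,\eta,z,y) \, \mathrm{d}y,
\]
with large parameter $\lambda := \jap{M\eta}/h \asymp M/h$ and $\Psi_M = \Phi_M(\cdot,y_c)$.

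Next I would verify that hypotheses (i)--(iv) of Proposition \ref{prop:HSP} hold uniformly for $\Phi_M$. Hypotheses (i), (ii), (iv) transfer verbatim (note that $\Im\Phi \geq 0$ and $\Phi = 0$ at $(x_0,\xi,z_1,y_0)$ are invariant under the rescaling). For (iii), the non-degeneracy of $\mathrm{d}_y^2 \Phi$ with symbolic estimates in $\xi$ allows one to apply the holomorphic Morse lemma (Lemma \ref{lemma:holomorphic_morse}) to $\Phi_M$ uniformly in $M$, producing a critical point $y_c(x,\xi,z)$ and Morse coordinates $\rho(x,\xi,z,\cdot)$ that are holomorphic on a fixed conical complex neighbourhood $G$ of $F$ (an implicit function theorem argument applied to $\partial_y\Phi$ gives this). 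Proposition \ref{prop:HSP} then yields an expansion
\[
e^{-i\lambda\Psi_M}(\lambda/2\pi)^{k/2}\int_\Gamma e^{i\lambda\Phi_M}a_M\,\mathrm{d}y - \sum_{0 \leq m \leq \lambda/C_0}\lambda^{-m}\tilde b_m(x,\eta,z) = \O(e^{-\lambda/C}),
\]
with holomorphic coefficients $\tilde b_m$ satisfying $|\tilde b_m| \leq C^{m+1}m!$, uniformly in $M$.

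It then remains to assemble these into a formal analytic symbol. By Remark \ref{remark:expression_coefficients}, the coefficients are given by the intrinsic formula $\tilde b_m = (2\pi)^{k/2}(m!)^{-1} P_m[a_M](\cdot,y_c)$ for the Laplacian $P_m$ associated with the Morse coordinates $\rho$, so unwinding the rescaling produces intrinsic coefficients $b_m(x,\xi,z)$ that are independent of the scale $M$. Multiplying by $M^r$ and substituting $\lambda^{-m} = (h/\jap{\xi})^m$ gives the desired expansion with remainder $\exp(-\jap{\xi}/(Ch))$, and one sets $b_m := M^{r-m}\tilde b_m(\cdot,\xi/M,\cdot)$, which is holomorphic on $U_0\times G\times V_0$ by the scale-independence.

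The main obstacle is bookkeeping: one must verify that the $b_m$ so defined satisfy the formal analytic symbol estimate $|b_m| \leq CR^m m!\jap{\Re\xi}^{r-m}$ of Definition \ref{def:gevrey_formal_symbol}, with symbolic behavior in $\xi$ rather than merely pointwise bounds. For the size, this is immediate since $M^{r-m}\asymp\jap{\xi}^{r-m}$; for derivatives in $\xi$, one uses that $\tilde b_m$ is holomorphic on a \emph{fixed} complex annulus in $\eta$, so Cauchy's formula applied on disks of radius $\sim\jap{\Re\xi}^{-1}$ in $\xi$ (equivalently, radius $\sim 1$ in $\eta$) produces exactly the decay $\jap{\Re\xi}^{r-m-|\beta|}$ under $\partial_\xi^\beta$. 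Once this is in place, the desired estimate \eqref{eq:estimee_phase_stationnaire} follows directly from the rescaled version at each dyadic scale.
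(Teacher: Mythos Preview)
Your proposal is correct and follows exactly the approach the paper intends: the paper states just before Proposition~\ref{prop:HSP_symbol} that ``the same rescaling argument also gives a symbolic version of Proposition~\ref{prop:HSP}'' and gives no further proof, so you have precisely filled in the dyadic rescaling $\eta=\xi/M$ reducing to Proposition~\ref{prop:HSP} with uniform estimates, together with the Cauchy-formula bookkeeping needed to recover the formal-symbol estimates on the $b_m$.
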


\begin{remark}\label{remark:stationary_symbol}
In most of the applications, we will have $G = \R^n$ so that $\sum_{m \geq 0} h^m b_m$ is a proper formal analytic symbol on $T^* U_0 \times V_0$ in the sense of Definition \ref{def:gevrey_formal_symbol} (otherwise, see Remark \ref{remark:slight_abuse}). Then, we see that the integral 
\begin{equation}\label{eq:integrale_phase_stationnaire_symbole}
\begin{split}
 e^{-i \frac{\jap{\xi}}{h} \Psi(x,\xi,z)}\p{\frac{\jap{\xi}}{2 \pi h}}^{k/2} \int_{\Gamma} e^{i \frac{\jap{\xi}}{h} \Phi(x,\xi,z,y)} a(x,\xi,z,y) \mathrm{d}y
\end{split}
\end{equation}
is a realization of $\sum_{m \geq 0} h^m b_m$ in the sense of Definition \ref{def:realisation_Gevrey}.
\end{remark}

\begin{remark}\label{remark:stationary_contour}
It will sometimes happen when applying Proposition \ref{prop:HSP_symbol} that the contour $\Gamma$ depends on $x$ or $\xi$ so that the integral \eqref{eq:integrale_phase_stationnaire_symbole} will not necessarily be a holomorphic functions of the corresponding variables. However, $\Gamma$ will always remain uniformly smooth and the assumptions of Proposition \ref{prop:HSP_symbol} will hold uniformly. Thus, the estimate \eqref{eq:estimee_phase_stationnaire} will still hold.

In particular, in the case when $G = \R^n$, then the integral \eqref{eq:integrale_phase_stationnaire_symbole} is approximated with a precision $\exp\p{- \jap{\va{\xi}}/h}$ by any realization of the formal symbol $\sum_{m \geq 0} h^m b_m$.
\end{remark}

We end this section by stating a rescaled version of Lemma \ref{lemma:stationary_gevrey_real}.

\begin{lemma}\label{lemma:stationary_gevrey_real_symbol}
Let $s > 1,r \in \R$. Let $U \times V$ be an open subset of $\R^n \times \R^k$. Let $K_2$ be a compact subset of $V$. Let $\Phi : V \to \R$ be a $\G^s$ function on $V$ and $a \in S^{s,r}\p{T^* U \times V}$. Assume that
\begin{enumerate}[label = (\roman*)]
\item if $(x,\xi) \in T^* U$ and $y \in V \setminus K_2$ then $a(x,\xi,y) = 0$;
\item the function $\Phi$ has a unique critical point $y_0 \in V$, which is non-degenerate and satisfies $\Phi(y_0) = 0$.
\end{enumerate} 
Then
\begin{equation*}
\begin{split}
b(x,\xi) = \p{\frac{\jap{\xi}}{(2 \pi h)}}^{\frac{k}{2}} \int_{V} e^{i \frac{\jap{\xi}}{h} \Phi(y) } a(x,\xi,y) \mathrm{d}y
\end{split}
\end{equation*}
defines a $\G^s$ symbol $b \in S^{s,r}\p{T^* U}$ given at first order by 
\begin{equation*}
\begin{split}
b(x,\xi) = \frac{e^{i \frac{\pi}{4}q}}{\va{\det\p{\mathrm{d}^2 \Phi(y_0)}}^{\frac{1}{2}}}a(x,\xi,y_0) \textup{ mod } h S^{s,r-1}\p{T^* U},
\end{split}
\end{equation*}
where $q$ denotes the signature of $\mathrm{d}^2 \Phi(y_0)$.
\end{lemma}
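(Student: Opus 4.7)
The plan is to deduce this from the already-proved Lemma \ref{lemma:stationary_gevrey_real} by the standard rescaling argument sketched at the end of \S\ref{subsec:further_tricks}, paying attention to how Gevrey estimates in a rescaled variable $\eta$ convert into symbolic estimates in $\xi$.

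First I would handle the trivial region $|\xi|\leq C_0$, for some fixed $C_0 > 0$: there, $\jap{\xi}/h$ still plays the role of a large parameter, and Lemma \ref{lemma:stationary_gevrey_real} applied with parameter $\lambda=\jap{\xi}/h$ directly gives that $b(x,\xi)$ is $\G^s$ in $x$ with constants uniform in $\xi$ on this bounded set. Since $\partial_\xi$ applied under the integral brings down factors $i\Phi(y)\partial_\xi\jap{\xi}/h$ multiplied by the amplitude and its $\xi$-derivatives, all of which satisfy uniform $\G^s$ bounds on $\operatorname{supp} a \cap (K_1\times\{|\xi|\leq C_0\}\times K_2)$, the $\xi$-derivatives also satisfy uniform $\G^s$ estimates. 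Hence on the bounded region the statement is trivial.

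For $|\xi|$ large, I would localize dyadically: let $M\geq 1$ and write $\xi = M\eta$ with $|\eta|\sim 1$. Setting $\lambda = \jap{M\eta}/h\sim M/h$ and
\[
a_M(x,\eta,y) := M^{-r} a(x,M\eta,y),
\]
the symbolic bounds \eqref{eq:estimee_symbole} and the chain rule $\partial_\eta^\beta = M^{|\beta|}\partial_\xi^\beta$ imply that the functions $a_M$ lie in a bounded family of $E^{s,R}(K_1\times\{c^{-1}\leq|\eta|\leq c\}\times K_2)$ uniformly in $M\geq 1$. A direct computation gives
\[
b(x,M\eta)=\frac{M^r}{(2\pi)^{k/2}}\,I_\lambda(a_M)(x,\eta),\qquad I_\lambda(a_M)(x,\eta)=\lambda^{k/2}\int_V e^{i\lambda\Phi(y)}a_M(x,\eta,y)\,\mathrm{d}y.
\]
Applying Lemma \ref{lemma:stationary_gevrey_real} (in which the parameter is now $(x,\eta)$; note the phase $\Phi$ is independent of this parameter, so its unique non-degenerate critical point $y_0$ is the same for all $(x,\eta)$), I obtain a full $\G^s$-expansion
\[
I_\lambda(a_M)(x,\eta) = \sum_{0\leq m\leq \lambda^{1/(2s-1)}/C_0} \lambda^{-m} b_m^{(M)}(x,\eta) + O_{\G^s}\!\bigl(\exp(-\lambda^{1/(2s-1)}/C)\bigr),
\]
with coefficients uniformly bounded in $E^{s,R_1}$ and whose leading term, by Remark \ref{remark:expression_coefficients} applied to the purely imaginary normal form $i\rho^2/2$ coming from Morse coordinates on a real non-degenerate quadratic form, equals
\[
b_0^{(M)}(x,\eta) = \frac{(2\pi)^{k/2} e^{i\pi q/4}}{|\det\mathrm{d}^2\Phi(y_0)|^{1/2}}\, a_M(x,\eta,y_0).
\]

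The last step is to convert the uniform $\G^s$ estimates in $(x,\eta)$ into symbolic estimates in $(x,\xi)$. This is where one must be careful but no new difficulty arises: a $\G^s$ bound $|\partial_x^\alpha\partial_\eta^\beta b(x,M\eta)|\leq C R^{|\alpha|+|\beta|}\alpha!^s\beta!^s$ uniform in $M\sim|\xi|$ translates, via $\partial_\xi^\beta = M^{-|\beta|}\partial_\eta^\beta$ and $M\sim\jap{\xi}$, into exactly the symbol estimate \eqref{eq:estimee_symbole} with the loss $\jap{\xi}^{-|\beta|}$. Thus $b\in S^{s,r}(T^*U)$. Multiplying the leading term by $M^r(2\pi)^{-k/2}$ recovers $M^r a_M(x,\eta,y_0) = a(x,\xi,y_0)$, giving
\[
b(x,\xi) = \frac{e^{i\pi q/4}}{|\det\mathrm{d}^2\Phi(y_0)|^{1/2}}\, a(x,\xi,y_0)\quad \text{mod } h\,S^{s,r-1}(T^*U),
\]
since the next term in the expansion has size $\lambda^{-1}\sim h\jap{\xi}^{-1}$. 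The main technical point — and the only place requiring genuine care — is to verify that the constants in the application of Lemma \ref{lemma:stationary_gevrey_real} (the radii $R_1$, the decay constant $C$, and the cutoff $C_0$) can be chosen independently of the scale $M$. This follows because the reduction to a quadratic phase uses only the fixed analytic function $\Phi$, while the family $(a_M)_{M\geq 1}$ is uniformly bounded in a single space $E^{s,R}$, so every constant produced by the proof of Lemma \ref{lemma:stationary_gevrey_real} is uniform in $M$.
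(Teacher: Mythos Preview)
Your proposal is correct and follows exactly the approach the paper intends: the lemma is stated there as ``a rescaled version of Lemma \ref{lemma:stationary_gevrey_real}'' with no further proof beyond the rescaling argument already described in \S\ref{subsec:further_tricks}, and you have spelled out precisely that argument. One small point worth tightening: you set $\lambda=\jap{M\eta}/h$, which depends on the parameter $\eta$, whereas in Lemma \ref{lemma:stationary_gevrey_real} the large parameter $\lambda$ is fixed; the clean fix is to take $\lambda=M/h$ and absorb the uniformly-$\G^s$ factor $\jap{M\eta}/M$ into the phase (which Lemma \ref{lemma:stationary_gevrey_real} allows to depend on the parameter), after which everything goes through verbatim.
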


\begin{remark}\label{remark:developpement_symbole_gs}
We do not give here the most general version possible of Lemma \ref{lemma:stationary_gevrey_real_symbol}. In particular, we could make $\Phi$ depend on $x$ and $\xi$ (assuming that $\Phi$ is a $\G^s$ symbol of order $0$).

Notice also that we can give a full asymptotic expansion
\begin{equation*}
\begin{split}
b \sim \sum_{m \geq 0} h^m b_m,
\end{split}
\end{equation*}
where $b_m \in S^{s,r - m}\p{T^* U}$. Here, the asymptotic expansion holds in quite a strong sense (not as satisfactory as in the case $s=1$ though). We will not need the full strength of this expansion, and, consequently, let us just mention that, for every $N \geq 0$, we have
\begin{equation*}
\begin{split}
b - \sum_{m = 0}^N h^m b_m \in h^{N+1} S^{s,r - N-1}\p{T^* U}.
\end{split}
\end{equation*}
For the expression of the $b_m$'s we may use either the one that is given in the proof of Lemma \ref{lemma:stationary_gevrey_real} or in Remark \ref{remark:expression_coefficients}.
\end{remark}

\begin{remark}\label{remark:coefficients_symboles}
Taking into account the rescaling, we may deduce from Remark \ref{remark:expression_coefficients} an expression for the coefficients in the expansion
\begin{equation*}
\begin{split}
 e^{-i \frac{\jap{\xi}}{h} \Psi(x,\xi,z)}\p{\frac{\jap{\xi}}{2 \pi h}}^{\frac{k}{2}}\int e^{i \frac{\jap{\xi}}{h} \Phi(x,\xi,z,y)} a(x,\xi,z,y) \mathrm{d}y \sim \sum_{m \geq 0} h^m b_m
\end{split}
\end{equation*}
from Proposition \ref{prop:HSP_symbol} (or Remark \ref{remark:developpement_symbole_gs}). The coefficients $b_m$'s are given by the expression
\begin{equation*}
\begin{split}
b_m(x,\xi,z) = \frac{P_m a(x,\xi,z,y_c(x,\xi,z))}{m!}.
\end{split}
\end{equation*}
Here, the differential operator $P_m$ acts on the $y$ variable and is defined by the formula
\begin{equation*}
\begin{split}
P_m u = \jap{\xi}^{- \frac{k}{2}} \left[ \p{\frac{\Delta}{2}}^m\p{ u \circ \rho_{x,\xi,z}^{-1} J \rho_{x,\xi,z}^{-1} } \right] \circ \rho_{x,\xi,z},
\end{split}
\end{equation*}
where $\rho_{x,\xi,z}$ denotes Morse coordinate such that
\begin{equation*}
\begin{split}
\Phi(x,\xi,z,y) = \Psi(x,\xi,z,y_c(x,\xi,z)) + i \frac{\rho_{x,\xi,z}(y)^2}{2}.
\end{split}
\end{equation*}
For later use, notice that the first term in the expansion is given by
\begin{equation*}
\begin{split}
b_0(x,\xi,z) = \frac{a(x,\xi,z,y_c(x,\xi,z))}{\sqrt{\det\p{-i \mathrm{d}_y^2 \Phi(x,\xi,z,y_c(x,\xi))}}},
\end{split}
\end{equation*}
the choice of the determination of the square root will be dealt with as we explained in Remark \ref{remark:expression_coefficients}.
\end{remark}

\section{Gevrey pseudo-differential operators}\label{sec:toolbox_gevrey_pseudor}

\subsection{Gevrey and analytic pseudo-differential operators on compact manifolds}\label{subsection:Gevrey_pseudor_manifold}

Now that we have introduced classes of $\G^s$ symbols, we are ready to define and study the basic properties of $\G^s$ pseudo-differential operators (or from now on, pseudors). Thus, let $s \geq 1$ and let $M$ be a $n$ dimensional compact $\G^s$ manifold. According to Remark \ref{remark:structure_analytique}, we may endow $M$ with a compatible structure of real-analytic manifold and choose a real-analytic metric $g$ on $M$. Then, we endow $M$ with the Lebesgue measure associated with $g$, so that we may consider Schwartz kernels of operators from $\mathcal{C}^\infty
\p{M}$ to $\mathcal{D}'\p{M}$.

We will work in the semi-classical setting. Hence, throughout the paper, $h > 0$ will be a small implicit parameter, thought as tending to $0$. See \cite{zworskiSemiclassicalAnalysis2012} for an introduction to semi-classical pseudo-differential operators in the $\mathcal{C}^\infty$ category. To lighten the redaction, it is customary to refer to classical (i.e without an $h$) pseudo-differential operators as ``pseudors'' and the semi-classical ones as ``$h$-pseudors''. Since there will be very few classical operators in this text, we will drop the $h$, and write only pseudors in the rest of the paper.

These pseudors are defined by two properties: pseudo-locality and a fixed local model for their kernels near the diagonal.
\begin{definition}[Pseudo-locality]\label{def:pseudo-local}
Let $s\geq 1$, and $P=(P_h)_{h>0}$ a family of operators from $\mathcal{C}^\infty
\p{M}$ to $\mathcal{D}'\p{M}$, with Schwartz kernel $K_P$. We say that $P$ is $\G^s$ \emph{pseudo-local} if its kernel is small in $\G^s$ outside of the diagonal. More precisely, $P$ is $\G^s$ pseudo-local, if, for every compact subset $K$ of $M \times M$ that does not intersect the diagonal of $M \times M$, there are constants $C,R > 0$ such that 
\begin{equation}\label{eq:pseudor_away_diag}
\begin{split}
\n{K_P}_{s,R,K} \leq C \exp\p{- \frac{1}{C h^{\frac{1}{s}}}}.
\end{split}
\end{equation}
\end{definition}

\begin{definition}[Gevrey pseudo-differential operators]\label{def:gevrey_pseudor}
Let $m \in \R$, $s\geq 1$, and $P=(P_h)_{h>0}$ be a family of operators from $\mathcal{C}^\infty
\p{M}$ to $\mathcal{D}'\p{M}$, with Schwartz kernel $K_P$. Let us assume that:
\begin{enumerate}[label=(\roman*)]
\item $P$ is $\G^s$ pseudo-local;
\item near the diagonal, the kernel of $P$ is given (in any relatively compact open subset $V$ of a local $\G^s$ coordinate patch $U$) by the oscillatory integral
\begin{equation}\label{eq:local_pseudor}
\begin{split}
K_P(x,y) =  \frac{1}{(2\pi h)^n} \int_{\R^n} e^{i\frac{\jap{x-y,\xi}}{h}} p(x,\xi) \mathrm{d}\xi + \O_{\G^s}\p{\exp\p{- \frac{1}{C h^{\frac{1}{s}}}}},
\end{split}
\end{equation}
where $p \in S^{s,m}\p{T^* V}$ (in the sense of Definition \ref{def:gevrey_symbol}).
\end{enumerate}
Then we say that $P$ is a $\G^s$ pseudor of order $m$. The set of $\G^s$ pseudors (resp. of order $m$) on $M$ will be denoted by $\G^s \Psi\p{M}$ (resp. $\G^s \Psi^m\p{M}$).
\end{definition}

\begin{remark}\label{remark:g1gs_results}
As we explained before, one could define a smaller class of pseudors by requiring in (ii) that the symbol $p$ is a $\G^s_x \G^1_\xi$ symbol as defined in the introduction. Of course, when $s=1$, this is the same class of symbol. Most results from the current section (Propositions \ref{prop:composition_pseudors}, \ref{prop:adjoint_pseudor}, \ref{prop:quantization_Gs} and \ref{prop:mapping_pseudor}) remain true in the class of $\G^s_x \G^1_\xi$ pseudors (and are in fact more standard). 

Concerning the technical results from the following sections, let us list here the main differences for $\G^s_x \G^1_\xi$ pseudors (these results were present in an earlier version of this paper). In Lemma \ref{lemma:loin_de_la_diagonale}, we have the better bound $\mathcal{O}_{\G^s}\p{\exp(-1/Ch)}$. In Lemma \ref{lemma:calcul_pseudo}, we have an efficient asymptotic expansion for the symbol $b$. The Kuranishi trick Lemma \ref{lmkurtrick} is also true also for $s > 1$ when considering $\G^s_x \G^1_\xi$ pseudors (since it only relies on a contour shift in the $\xi$ variable), and there is a weaker version of Proposition \ref{prop:pseudor_lagrangien_alt} valid when $s > 1$. We also expect that the parametrix construction Theorem \ref{thm:parametrix} can be adapted to the case $s > 1$ if we consider $\G^s_x \G^1_\xi$ pseudors. The reader interested with the pseudo-differential calculus associated to $\G^s_x \G^1_\xi$ symbols may refer to \cite{Boutet-Kree-67,zanghiratiPseudodifferentialOperatorsInfinite1985,Rodino-93-book}.
\end{remark}

We state now the main properties of $\G^s$ pseudors. These results will be proved in \S \ref{sec:pseudor_technique} after technical preparations in \S \ref{sssecKuranishi}. Notice that Definition \ref{def:gevrey_pseudor} implies that $\G^s$ pseudors are in particular pseudors in the usual (i.e. $\mathcal{C}^\infty$) sense. To start with, a $\G^s$ pseudor sends $\mathcal{C}^\infty$ into $\mathcal{C}^\infty$. Hence, it makes sense to compose $\G^s$ pseudor and without surprise we obtain that:
\begin{prop}[Product of $\G^s$ pseudors]\label{prop:composition_pseudors}
Let $P$ and $Q$ be $\G^s$ pseudors on $M$. Then $P Q$ is a $\G^s$ pseudor. Consequently, $\G^s\Psi$ is a subalgebra of $\mathcal{C}^\infty\Psi$.
\end{prop}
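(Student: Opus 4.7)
The plan is to verify the two defining properties of a $\G^s$ pseudor (pseudo-locality and the local oscillatory-integral representation with a $\G^s$ symbol) for the composition $PQ$, working locally and then patching via a partition of unity. The standard device will be to decompose each operator as $P = P^{\mathrm{loc}} + P^{\mathrm{reg}}$, where $K_{P^{\mathrm{loc}}}$ is the local oscillatory-integral part of $K_P$ cut off to a small tubular neighbourhood of the diagonal in some chart, and $K_{P^{\mathrm{reg}}}$ is $\O_{\G^s}(\exp(-1/Ch^{1/s}))$; write similarly $Q = Q^{\mathrm{loc}} + Q^{\mathrm{reg}}$. The four cross-terms split the problem: the three terms involving at least one of $P^{\mathrm{reg}}, Q^{\mathrm{reg}}$ are easily shown to be $\G^s$-smoothing (the convolution of a Gevrey-smoothing kernel with any pseudor is again Gevrey-smoothing, using that the kernels of $P$ and $Q$ send $\G^s$ to $\G^s$-like targets and the definition of pseudo-locality), and only $P^{\mathrm{loc}} Q^{\mathrm{loc}}$ requires genuine calculus.

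\textbf{Pseudo-locality of $PQ$.} For a compact $K \subseteq M \times M$ disjoint from the diagonal, choose a partition of unity on the $z$-variable separating the regions $\{d(x,z) \geq \epsilon\}$ and $\{d(z,y) \geq \epsilon\}$ (possible since $(x,y) \in K$ forces $d(x,y) \geq 2\epsilon$). Writing $K_{PQ}(x,y) = \int K_P(x,z) K_Q(z,y)\, \mathrm{d}z$, in the first region $K_P(x,z)$ is $\G^s$-small by pseudo-locality of $P$, and one uses a weak a priori bound on $K_Q(\cdot,y)$ (obtained from its oscillatory-integral representation) to control the integral; the other region is treated symmetrically. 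Gevrey estimates in $(x,y)$ follow from differentiation under the integral and Gevrey bounds on $K_P, K_Q$.

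\textbf{Local form near the diagonal.} Fix a chart $(U,\kappa)$ and a relatively compact $V \Subset U$. Inserting the oscillatory expressions \eqref{eq:local_pseudor} for $K_P$ and $K_Q$ modulo $\O_{\G^s}(\exp(-1/Ch^{1/s}))$ gives, up to the same error,
\begin{equation*}
K_{PQ}(x,y) = \frac{1}{(2\pi h)^{2n}} \iiint e^{\frac{i}{h}(\langle x-z,\xi\rangle + \langle z-y,\eta\rangle)} p(x,\xi)\, q(z,\eta)\, \mathrm{d}z\, \mathrm{d}\xi\, \mathrm{d}\eta.
\end{equation*}
The change of variables $\zeta = \eta$, $\tilde\xi = \xi - \eta$ yields the phase $\langle x-y,\zeta\rangle + \langle x-z,\tilde\xi\rangle$, and integration in $(z,\tilde\xi)$ is a semiclassical oscillatory integral with a non-degenerate real quadratic phase whose critical point is $(z,\tilde\xi) = (x,0)$. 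This is exactly the situation for the rescaled real stationary phase in Lemma \ref{lemma:stationary_gevrey_real_symbol} (via Remark \ref{remark:coefficients_symboles}), applied to the $\G^s$ amplitude $p(x,\tilde\xi+\zeta)\, q(z,\zeta)$ in the $(z,\tilde\xi)$ variables, with $(x,\zeta)$ playing the role of symbol parameters. The output is a $\G^s$ symbol $r \in S^{s,m_P+m_Q}(T^\ast V)$, so that modulo $\O_{\G^s}(\exp(-1/Ch^{1/s}))$,
\begin{equation*}
K_{PQ}(x,y) = \frac{1}{(2\pi h)^n}\int e^{\frac{i}{h}\langle x-y,\zeta\rangle} r(x,\zeta)\, \mathrm{d}\zeta.
\end{equation*}

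\textbf{Main obstacle.} The delicate point is the stationary-phase step in the Gevrey setting. As explained in \S\ref{sec:stationary}, general Gevrey stationary phase suffers a $\G^s \to \G^{2s-1}$ loss, so one cannot apply a black-box Gevrey stationary phase and expect to stay in $\G^s$. What rescues us is that the phase $\langle x-z,\tilde\xi\rangle$ is linear/quadratic and real, the outer parameter $(x,\zeta)$ enters only through the amplitude, and the stationary-phase expansion therefore reduces to a controlled application of Lemma \ref{lemma:stationary_gevrey_real_symbol} (equivalently Lemma \ref{lemma:calcul_pseudo}). One must still verify uniformity in the $\zeta$-symbolic variable (handled by the rescaling argument of \S\ref{subsec:further_tricks}) and check that non-stationary contributions at large $\tilde\xi$ or away from $z = x$ are $\G^s$-negligible using Proposition \ref{prop:non_stationary_symbols}. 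Finally, invariance under $\G^s$ changes of chart (covered by the standard Kuranishi-type argument of Lemma \ref{lmkurtrick}) globalises the local computation and patches up to a well-defined element of $\G^s\Psi^{m_P+m_Q}(M)$.
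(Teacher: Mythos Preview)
Your overall strategy is correct, but the near-diagonal step takes a different route from the paper and, as written, has a gap. The paper does \emph{not} perform a stationary phase in $(z,\tilde\xi)$. Instead it first uses Lemma~\ref{lemma:droite_et_gauche} to rewrite $K_Q$ as a \emph{right} quantization, $K_Q(z,y)=\frac{1}{(2\pi h)^n}\int e^{i\langle z-y,\eta\rangle/h}q(\eta,y)\,\mathrm d\eta$. With $p(x,\xi)$ a left symbol and $q(\eta,y)$ a right symbol, the $z$-integral in $\int K_P(x,z)K_Q(z,y)\,\mathrm dz$ is an exact Fourier inversion producing $\delta(\xi-\eta)$, so that modulo $\O_{\G^s}(\exp(-1/Ch^{1/s}))$ one obtains directly $\frac{1}{(2\pi h)^n}\int e^{i\langle x-y,\xi\rangle/h}p(x,\xi)q(\xi,y)\,\mathrm d\xi$. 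This is already in the form \eqref{eq:pseudo_Rn} with an amplitude depending on $(x,\xi,y)$, and Lemma~\ref{lemma:calcul_pseudo} eliminates the $y$-dependence. No stationary phase in a non-compact variable is needed; the Gevrey stationary phase is hidden once and for all inside Lemma~\ref{lemma:calcul_pseudo}.

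Your direct stationary-phase route can be made to work, but the invocation of Lemma~\ref{lemma:stationary_gevrey_real_symbol} is not justified as stated: the $\tilde\xi$-integration is over all of $\R^n$ and the amplitude $p(x,\tilde\xi+\zeta)q(z,\zeta)$ is not compactly supported in $\tilde\xi$, so that lemma does not apply literally. To repair this one has to first localize $\tilde\xi$ to a ball of radius $\asymp\langle\zeta\rangle$ via a non-stationary argument in $z$, then rescale; this is essentially reproving Lemma~\ref{lemma:calcul_pseudo} in situ, which the paper's route avoids. Your pseudo-locality sketch is also too loose: the key point, that a $\G^s$ pseudor applied to a $\G^s$-small function remains $\G^s$-small, is exactly Lemma~\ref{lemma:basic-Gs-boundedness-pseudo} (note the a priori bound there grows like $\exp(\delta/h^{1/s})$ and must be beaten by the $\exp(-1/Ch^{1/s})$ smallness of the input), and the paper invokes it explicitly, together with the right-quantization trick to handle the region where $z$ is near $y$ rather than near $x$. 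Finally, Lemma~\ref{lmkurtrick} is stated only for $s=1$; for the coordinate-invariance step in general $s$ the correct reference is Lemma~\ref{lemma:changement_de_variable}.
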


This algebra is stable by taking adjoints:
\begin{prop}[Adjoint of a $\G^s$ pseudor]\label{prop:adjoint_pseudor}
Let $A$ be a $\G^s$ pseudor on $M$, and $\mathrm{d}\mu$ a volume form with $\G^s$ density on $M$. Then the adjoint $A^*$ of $A$  with respect to $\mathrm{d}\mu$ is also a $\G^s$ pseudor.
\end{prop}

As one could expect, $\G^s$ pseudors have better mapping properties than $\mathcal{C}^\infty$ pseudors.
\begin{prop}[Mapping properties for $\G^s$ pseudors]\label{prop:mapping_pseudor}
Let $P$ be a $\G^s$ pseudor on $M$. Then $P$ is continuous from $\G^s\p{M}$ to itself and extends continuously from $\U^s\p{M}$ to itself.
\end{prop}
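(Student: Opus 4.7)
My plan is to decompose $P = P_{\mathrm{near}} + P_{\mathrm{far}}$, where $P_{\mathrm{near}}$ is given locally by the oscillatory integral in \eqref{eq:local_pseudor} and $P_{\mathrm{far}}=P-P_{\mathrm{near}}$ has Schwartz kernel in $\G^s(M\times M)$ with norm $\O(\exp(-1/Ch^{1/s}))$; this decomposition is produced by combining the pseudo-locality estimate \eqref{eq:pseudor_away_diag} with the local form \eqref{eq:local_pseudor}, glued in the $x$-variable. The $P_{\mathrm{far}}$ piece is trivially continuous from $\U^s(M)$ into $\G^s(M)$: pairing the kernel with a distribution $u$ and differentiating under the integral yields
\begin{equation*}
\n{P_{\mathrm{far}} u}_{s,R,M} \leq C \exp\p{-\frac{1}{Ch^{1/s}}}\cdot\n{u}_{(\G^s)'}.
\end{equation*}

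The core of the proof is to show $P_{\mathrm{near}}:\G^s(M)\to\G^s(M)$ continuously. In a chart, after the substitution $\xi=h\eta$ and integration in $y$, the operator takes the form
\begin{equation*}
(P_{\mathrm{near}}u)(x) = \frac{1}{(2\pi)^n}\int_{\R^n} e^{i\jap{x,\eta}} p(x,h\eta)\, \hat{u}(\eta)\,\mathrm{d}\eta.
\end{equation*}
For $s>1$, I plan to use a $\G^s$ partition of unity to reduce to $u\in\G^s_c$ supported in one chart; the Roumieu characterization then furnishes $\va{\hat{u}(\eta)}\leq C_u \exp(-\va{\eta}^{1/s}/B)$. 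Expanding $\partial_x^\alpha (P_{\mathrm{near}}u)$ by Leibniz, the symbol bound $\va{\partial_x^\gamma p(x,h\eta)}\leq C R_p^{\va{\gamma}}\gamma!^s\jap{h\eta}^m$, combined with the radial estimate $\int\va{\eta}^{\va{\beta}}\exp(-\va{\eta}^{1/s}/B)\,\mathrm{d}\eta \leq C^{\va{\beta}+1}\va{\beta}!^s$ (obtained from Stirling applied to the resulting $\Gamma$-factor) and the elementary combinatorial inequality $\binom{\alpha}{\beta}(\alpha-\beta)!^s\beta!^s\leq 2^{\va{\alpha}}\alpha!^s$ valid for $s\geq 1$, will yield the Gevrey bound
\begin{equation*}
\va{\partial_x^\alpha (P_{\mathrm{near}}u)(x)} \leq C\n{u}_{s,R_0,M}\, R^{\va{\alpha}}\alpha!^s.
\end{equation*}

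The case $s=1$ is the main obstacle, as there are no $\G^1_c$ functions and the Fourier-decay argument cannot be localized via cutoffs. Instead I will exploit that $u\in\G^1(M)$ is the restriction of a bounded holomorphic function on a Grauert tube $(M)_\epsilon$, and perform a contour shift in the $y$ variable in the spirit of Proposition \ref{prop:non-stationary-analytic}: the phase $\jap{x-y,\xi}$ has non-vanishing $y$-gradient $-\xi$, so deforming $y\mapsto y-it\xi/\va{\xi}$ along a direction in which $u$ remains holomorphic produces the decay $\exp(-t\va{\xi}/h)$ necessary to control the $\xi$-integration; this will show that $(P_{\mathrm{near}}u)(x)$ extends to a uniformly bounded holomorphic function of $x$ in a small complex neighborhood of each chart, hence defines an element of $\G^1(M)$.

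Finally, continuity of $P$ on $\U^s(M)$ will follow by duality. By Proposition \ref{prop:adjoint_pseudor}, $P^*$ is again a $\G^s$ pseudor, so by the above $P^*$ is continuous on $\G^s(M)$; transposing this map gives the desired continuous extension of $P=(P^*)^*$ to $\U^s(M)$, which is the strong dual of $\G^s_c(M)=\G^s(M)$.
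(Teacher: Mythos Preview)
Your proposal is correct, and the duality argument for $\U^s(M)$ via Proposition~\ref{prop:adjoint_pseudor} is exactly the paper's approach. The treatment of $P_{\mathrm{near}}$ on $\G^s(M)$, however, differs. The paper does not split into cases $s>1$ and $s=1$: it invokes Lemma~\ref{lemma:basic-Gs-boundedness-pseudo} uniformly, which performs a contour shift in the $y$-variable using an almost analytic extension of $u$ (the holomorphic extension when $s=1$), together with a $C^\infty$ cutoff $\chi$ in $y$ whose non-constancy region is pushed away from $x$ and then controlled by a second contour shift in $\xi$ (this is how the boundary-of-chart issue is handled without needing analytic cutoffs). Your Fourier-side estimate for $s>1$, based on the Roumieu decay $|\hat u(\eta)|\le C\exp(-|\eta|^{1/s}/B)$ and Leibniz combinatorics, is a perfectly valid and more elementary alternative that avoids almost analytic extensions entirely; the price is the case split, and for $s=1$ you essentially rediscover the paper's contour-shift strategy. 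One detail to make precise in your $s=1$ sketch: since the $y$-integration is over a chart domain and $u$ is not compactly supported, you need to isolate the contribution of $y$ far from $x$ before deforming (the paper does this with the $C^\infty$ cutoff trick mentioned above, relying on Lemma~\ref{lemma:loin_de_la_diagonale} for the off-diagonal piece).
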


We will also need to discuss the principal symbol of a $\G^s$ pseudor. We will use the following result. The proof in the case $s > 1$ is elementary once some technical facts on $\G^s$ pseudors have been established. The proof in the analytic case will be a bit more involved due to the lack of partition of unity.
\begin{prop}[Principal symbols for $\G^s$ pseudors]\label{prop:existence_principal_symbol}
Let $A$ be a $\G^s$ pseudor of order $m \in \R$ on $M$. Then, there is a symbol $a \in S^{s,m}\p{T^* M}$ such that, for every small enough coordinate patch $U$, if $p$ denotes the symbol such that the kernel of $A$ has the form \eqref{eq:local_pseudor} on $U$, then, after change of coordinates, we have $ p = a_{|T^* U} \textup{ mod } h S^{s,m-1}$.

We say that $a$ is \emph{a $\G^s$ principal symbol for $A$}.
\end{prop}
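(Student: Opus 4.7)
The strategy splits naturally into two parts: first, an invariance statement for the local symbols under change of coordinates, and second, a global gluing argument, which differs substantially depending on whether $s>1$ or $s=1$. The problem is essentially to produce a global section (in $S^{s,m}(T^*M)$) of a sheaf whose local sections are the $p$'s and whose overlap discrepancies lie in $h S^{s,m-1}$.

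\textit{Step 1: Invariance under change of chart.} Suppose $(U,\kappa_1)$ and $(U,\kappa_2)$ are two $\G^s$ charts on the same open set, and write $p_1, p_2$ for the corresponding local symbols in \eqref{eq:local_pseudor}. Identifying each $p_i$ with a function $\tilde p_i$ on $T^*U$ via $\kappa_i$, the claim is
\[
\tilde p_1 - \tilde p_2 \in h\, S^{s,m-1}(T^*U).
\]
This is the usual change-of-coordinates formula for pseudors, and follows by applying the Kuranishi trick (Lemma~\ref{lmkurtrick}) to rewrite the oscillatory integral representing $K_A$ in the $\kappa_2$-coordinates as an integral of the form \eqref{eq:local_pseudor} in $\kappa_1$-coordinates, together with the stationary-phase/composition analysis of Lemma~\ref{lemma:calcul_pseudo} to identify the leading term.

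\textit{Step 2: Gluing when $s>1$.} Here $\G^s$ partitions of unity are available. Pick a finite cover $\{U_\alpha\}$ of $M$ by $\G^s$ chart domains, with local symbols $p_\alpha$, let $\tilde p_\alpha \in S^{s,m}(T^*U_\alpha)$ denote their invariant lifts, and choose a $\G^s$ partition of unity $\{\chi_\alpha\}$ subordinate to $\{U_\alpha\}$. Define
\[
a := \sum_\alpha \chi_\alpha\, \tilde p_\alpha \in S^{s,m}(T^*M).
\]
For any chart $(U,\kappa)$ with lift $\tilde p$,
\[
a - \tilde p = \sum_\alpha \chi_\alpha\,(\tilde p_\alpha - \tilde p),
\]
and each $\tilde p_\alpha - \tilde p$ lies in $h\, S^{s,m-1}$ on $T^*(U\cap U_\alpha)$ by Step~1. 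Multiplication by the $\G^s$ cutoff $\chi_\alpha$ preserves the class, so $a - \tilde p \in h\, S^{s,m-1}$.

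\textit{Step 3: Gluing when $s=1$.} The main obstacle is that no analytic partition of unity exists, so the construction of Step~2 must be corrected so as to produce a genuinely holomorphic symbol in a Grauert tube. Cover $M$ by analytic charts $\{U_\alpha\}$; the local symbols $p_\alpha$ then extend holomorphically to Grauert tubes $(T^*U_\alpha)_\epsilon$ with symbolic bounds. Take a $\mathcal{C}^\infty$ partition of unity $\{\chi_\alpha\}$ subordinate to $\{U_\alpha\}$ and almost analytic extensions $\tilde\chi_\alpha$, and set
\[
\tilde a := \sum_\alpha \tilde\chi_\alpha\, \tilde p_\alpha
\]
on a neighbourhood of $T^*M$ in $T^* \widetilde M$. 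Then $\tilde a$ is a smooth symbol of order $m$ and, exactly as in Step~2, $\tilde a - \tilde p_\beta \in h\,S^{1,m-1}$ in each chart (in the $\mathcal{C}^\infty$ sense). It remains to correct $\tilde a$ to an honest analytic symbol. Using $\sum_\alpha \bar\partial\tilde\chi_\alpha = 0$, write locally
\[
\bar\partial\tilde a = \sum_\alpha \bar\partial\tilde\chi_\alpha\,(\tilde p_\alpha - \tilde p_\beta),
\]
which vanishes to infinite order on $T^*M$ and is bounded by $O\!\bigl(h\,\jap{\Re\xi}^{m-1}\bigr)$ on the Grauert tube, by Step~1. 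Solve $\bar\partial r = -\bar\partial\tilde a$ via the same strategy as in Lemma~\ref{lemma:approximation_analytique} (i.e.\ Theorem~\ref{thm:resoudre_lequation} on $(T^*M)_\epsilon$), but replacing the plurisubharmonic weight $\nu\varphi/h$ by a weight tailored to produce a polynomial gain of order $h\jap{\Re\xi}^{m-1}$ rather than the exponential gain $e^{-\jap\alpha/Ch}$. This yields a correction $r$ with $|r| \leq C h\jap{\Re\xi}^{m-1}$ on a slightly smaller Grauert tube, and similar bounds on its derivatives (giving $r \in h\,S^{1,m-1}$). Setting $a := \tilde a + r$ produces a holomorphic function on a Grauert tube of $T^*M$, hence an element of $S^{1,m}(T^*M)$, satisfying $a - \tilde p \in h\,S^{1,m-1}$ in every chart.

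The main technical obstacle is Step~3: carrying out the $\bar\partial$ argument with the correct polynomial (rather than exponential) weight, which amounts to rerunning the proof of Lemma~\ref{lemma:approximation_analytique} with a different choice of plurisubharmonic function so as to convert the ``$\mathcal{C}^\infty$-flatness on the reals + order~$h$'' information on $\bar\partial\tilde a$ into the desired $h\,S^{1,m-1}$ bound on $r$.
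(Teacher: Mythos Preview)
Your Steps~1 and~2 agree with the paper's treatment (for Step~1 the relevant reference for general $s$ is Lemma~\ref{lemma:changement_de_variable} rather than Lemma~\ref{lmkurtrick}, which is stated only for $s=1$; see also Remark~\ref{remark:developpement_asymptotique}).

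For $s=1$ the paper takes a completely different route that sidesteps the gluing problem. Rather than patch local symbols and correct, it defines the principal symbol globally and intrinsically: build a $\G^1$ family of coherent states $u_\alpha$ with phase $\Phi$ satisfying $\mathrm{d}_x\Phi(\alpha,\alpha_x)=\alpha_\xi$ (this uses the $\bar\partial$ trick, but only with exponentially small data, so Lemma~\ref{lemma:approximation_analytique} applies directly), and set $a(\alpha):=Au_\alpha(\alpha_x)$. Proposition~\ref{prop:pseudor_lagrangien_alt} then shows that $Au_\alpha$ is again a coherent-state family with amplitude $p(x,\mathrm{d}_x\Phi(\alpha,x))$ at first order, and evaluating at $x=\alpha_x$ recovers the local symbol modulo $hS^{1,m-1}$. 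No cocycle ever appears.

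Your Step~3 is a reasonable alternative strategy, but the polynomial-weight $\bar\partial$ estimate is more delicate than you indicate and is not a straightforward variant of Lemma~\ref{lemma:approximation_analytique}. H\"ormander's $L^2$ method with a logarithmic weight $\psi\sim N\log\langle|\alpha|\rangle$ (plus a multiple of $\rho_{KN}$ to ensure positivity of the Levi form) yields $\int|r|^2\langle|\alpha|\rangle^{-2N}\,\mathrm{dVol}<Ch^2$ for some large $N$, but upgrading this to the \emph{precise} pointwise bound $|r|\leq Ch\langle\Re\xi\rangle^{m-1}$---as opposed to $|r|\leq Ch\langle\Re\xi\rangle^{N}$---is not automatic: the Bochner--Martinelli averaging used in the proof of Lemma~\ref{lemma:approximation_analytique} feeds on both the $L^2$ bound and the pointwise bound on $\bar\partial r$, and these give order $N$, not $m-1$. (Compare the gluing in the proof of Theorem~\ref{thm:parametrix}, which works precisely because the cocycle there is exponentially small.) The gap is probably fillable---for instance by a sharper choice of weight, or by recasting the problem as vanishing of $H^1$ for a growth-constrained coherent sheaf on the Stein tube---but it is real work, not the direct rerun you describe. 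The coherent-states definition avoids it entirely.
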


Here, it is important to notice that being a $\G^s$ principal symbol for $A$ is a slightly stronger property than being a representative of the principal symbol of $A$ (defined in the $\mathcal{C}^\infty$ sense) of regularity $\G^s$.

Finally, one may wonder if there are any $\G^s$ pseudors on a given compact manifold $M$. It is straightforward to check that differential operators with $\G^s$ coefficients are indeed $\G^s$ pseudors, but we can do better than that. Indeed, we can quantize any $\G^s$ symbol on $M$.

\begin{prop}[Quantization of $\G^s$ symbols]\label{prop:quantization_Gs}
There is a linear map $\Op$ from $\cup_{m \in \R} S^{s,m}$ to the space of continuous operators from $\mathcal{C}^\infty\p{M}$ to $\mathcal{D}'\p{M}$ such that, for every $p \in \cup_{m \in \R} S^{s,m}$, the operator $\Op\p{p}$ is a $\G^s$ pseudor on $M$ such that $p$ is a principal symbol for $P$.
\end{prop}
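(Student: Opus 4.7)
The plan is to construct $\Op(p)$ by gluing together standard flat quantizations in local coordinate charts, and to verify the two conditions of Definition \ref{def:gevrey_pseudor}. The construction differs slightly in the cases $s > 1$ and $s = 1$, because analytic partitions of unity are unavailable in the latter case.

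First I fix a finite atlas $(U_j,\kappa_j)_{j=1}^N$ of $M$ by $\G^s$ charts (analytic if $s=1$) and denote by $p_{\kappa_j}$ the symbol $p$ read in the chart $\kappa_j$. In the case $s > 1$, I take a $\G^s$ partition of unity $(\chi_j)$ subordinate to $(U_j)$ and companion cutoffs $\chi_j' \in \G^s_c(U_j)$ equal to $1$ on a neighbourhood of the support of $\chi_j$. I then define, for $u \in \G^s(M)$,
\begin{equation*}
\Op(p) u (x) := \sum_j \chi_j(x) \cdot \frac{1}{(2\pi h)^n}\int_{\R^{2n}} e^{\frac{i}{h}\langle \kappa_j(x) - y,\xi\rangle}\, p_{\kappa_j}(\kappa_j(x),\xi)\, (\chi_j' u)(\kappa_j^{-1}(y))\, dy\, d\xi.
\end{equation*}
Pseudo-locality of each summand follows from the non-stationary phase estimate Proposition \ref{prop:non_stationary_symbols} applied to the oscillatory kernel, using that the support of $\chi_j$ is separated from the complement of the support of $\chi_j'$. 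To verify the near-diagonal oscillatory form \eqref{eq:local_pseudor} in any test chart $(U,\kappa)$, I apply the Kuranishi trick (Lemma \ref{lmkurtrick}) to each summand to rewrite it as a flat quantization in the test chart; at principal order the $j$-th summand contributes $\chi_j \chi_j' p = \chi_j p$, and summing in $j$ gives $p$ modulo $h S^{s,m-1}$, which also establishes Proposition \ref{prop:existence_principal_symbol} as a by-product.

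For $s = 1$ the strategy must be modified because there are no analytic partitions of unity. My plan is to first perform the above gluing using $\mathcal{C}^\infty$ cutoffs $(\chi_j,\chi_j')$, producing an auxiliary operator $\widetilde A$ which is a priori only a $\mathcal{C}^\infty$ pseudor with $\mathcal{C}^\infty$ principal symbol equal to $p$. The key observation is that, in any analytic chart, the full asymptotic symbol of $\widetilde A$ coincides, modulo errors arising from the non-analyticity of the $\chi_j$, with a formal analytic symbol whose leading term is $p$. Because $p$ and the changes of chart are analytic, the analytic Kuranishi trick glues these local formal analytic symbols into a global $\underline a \in FS^{1,m}(T^*M)$ with $a_0 = p$. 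Lemma \ref{lemma:existence_realisation_mfld} then produces a realization $a \in S^{1,m}(T^*M)$, and I define $\Op(p)$ as the $\mathcal{C}^\infty$-glued quantization of $a$. The exponentially small remainders in Definition \ref{def:realisation_Gevrey}, combined with the non-stationary phase estimates from Proposition \ref{prop:non_stationary_analytic_symbols}, ensure that $\Op(p)$ is an analytic pseudor with principal symbol $p$.

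The main technical obstacle, most visible for $s = 1$, is the verification that $\widetilde A$ has a \emph{globally analytic} formal symbol in every analytic chart despite being built from non-analytic pieces, and that the discrepancy with a genuine analytic pseudor is absorbed in exponentially small $\O(\exp(-\langle \xi\rangle/(Ch)))$ remainders rather than merely in $\mathcal{C}^\infty$-smoothing ones. This requires delicate control of the analytic Kuranishi trick together with precise decay estimates on the oscillatory kernels away from the diagonal. In the case $s > 1$, the analogous but considerably easier task is to ensure uniform tracking of the Gevrey constants across chart transitions so that the principal symbol extracted in any test chart lies in the $\G^s$ class.
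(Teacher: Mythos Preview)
For $s>1$ your approach is essentially the paper's, which simply invokes $\G^s$ partitions of unity together with Lemma \ref{lemma:changement_de_variable} (note: not Lemma \ref{lmkurtrick}, which is stated only for analytic symbols). For $s=1$, however, there is a genuine gap in your final step. You define $\Op(p)$ as the $\mathcal{C}^\infty$-glued quantization of a global analytic realization $a$, so its kernel is $\sum_j \chi_j(x)\, K_{a_j}(x,y)\, \chi_j'(y)$ with $\chi_j,\chi_j' \in \mathcal{C}^\infty$. Away from the diagonal each $K_{a_j}$ is $\O_{\G^1}\!\p{\exp(-1/(Ch))}$ by Lemma \ref{lemma:loin_de_la_diagonale}, but multiplication by the non-analytic cutoffs destroys the $\G^1$ bound: the product is only $\O_{\mathcal{C}^\infty}\!\p{\exp(-1/(Ch))}$, so condition (i) of Definition \ref{def:gevrey_pseudor} fails. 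Concretely, at a point $x$ with $\chi_1(x)\in(0,1)$ and $y$ far from $x$ with $\chi_1'(y)=1$ and all other $\chi_k'(y)=0$, the kernel reduces to $\chi_1(x)K_{a_1}(x,y)$, whose $x$-derivatives are not factorially bounded. No cancellation rescues this, since different off-diagonal regions carry different surviving cutoffs.

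The paper avoids operator-level gluing altogether in the analytic case. It fixes a global admissible phase $\Phi^*$ on $M$ (Definition \ref{def:nonstandardphase}), sets $\Phi(x,\alpha)=-\overline{\Phi^*(\bar\alpha,\bar x)}$, and uses the $\bar\partial$ trick (Lemma \ref{lemma:approximation_analytique}) to produce a \emph{globally holomorphic} kernel $k(x,\alpha)$ on $(M\times T^*M)_\epsilon$ that equals $e^{i\Phi(x,\alpha)/h}$ near the diagonal up to $\O\!\p{\exp(-\jap{\va{\alpha}}/(Ch))}$ and is exponentially small off it. Then $K_P(x,\alpha_x):=(2\pi h)^{-n}\int_{T^*_{\alpha_x}M} k(x,\alpha)\,p(\alpha)\,\mathrm{d}\alpha_\xi$ defines a $\G^1$ pseudor with principal symbol $p$, the local form \eqref{eq:local_pseudor} being recovered via the analytic Kuranishi trick Lemma \ref{lmkurtrick}. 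No cutoffs appear. A repair closer in spirit to your plan would be to glue the local analytic kernels $K_{a_j}$ by a \v{C}ech--$\bar\partial$ argument on $(M\times M)_\epsilon$, exactly as the paper does later for the parametrix in Theorem \ref{thm:parametrix}; either way the $\bar\partial$ machinery is unavoidable, and the naive $\mathcal{C}^\infty$ gluing you propose does not yield a $\G^1$ pseudor.
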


To close this section, we state some results in the analytic class that will be proved in \S \ref{sec:elliptic-pseudors}. In order to prove Theorem \ref{thm:existence-good-transform}, we will need to construct a parametrix for an elliptic $\G^1$ pseudor. We say that a $\G^1$ pseudor $P$ of order $m$ is (semi-classically) elliptic if there are $c,\epsilon > 0$ and a $\G^1$ principal symbol $p \in S^{1,m}\p{T^* M}$ for $P$ such that for \emph{every} $\alpha \in \p{T^* M}_\epsilon$ we have
\begin{equation}\label{eq:def_elliptique}
\begin{split}
\va{p(\alpha)} \geq c \jap{\va{\alpha}}^m.
\end{split}
\end{equation}
We say that $P$ is classically elliptic if \eqref{eq:def_elliptique} only holds when $\jap{\va{\alpha}}$ is large enough. With this definition, we have the following result, whose proof is given in \S \ref{sssecparametrix}.

\begin{theorem}[Boutet--Krée]\label{thm:parametrix}
Assume that $P$ is a a semi-classically elliptic $\G^1$ pseudor of order $m \in \R$ on $M$. Then, for $h$ small enough, there is a $\G^1$ pseudor $Q$ of order $-m$ such that $PQ = Q P = I$.
\end{theorem}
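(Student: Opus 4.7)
The plan is to follow the classical Boutet--Krée strategy: first build a formal analytic symbol inverting $p$, realize it via Lemma \ref{lemma:existence_realisation_mfld}, and then remove the smoothing remainder by a Neumann series argument carried out in the analytic-smoothing category.

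\emph{Step 1 (formal inverse).} Pick a $\G^1$ principal symbol $p \in S^{1,m}(T^*M)$ for $P$; by ellipticity, $1/p$ extends holomorphically to a suitable Grauert tube with $|1/p|\lesssim \langle|\alpha|\rangle^{-m}$. Working in local charts (the outputs will be patched together at the realization stage, or, alternatively, one works directly with the global composition formula on $T^*M$), I solve the formal identity $\underline p \# \underline q \sim 1$ in $FS^{1,0}(T^*M)$, where $\underline p$ is any full formal symbol attached to $P$ and the unknown is $\underline q = \sum_{k\ge0} h^k q_k \in FS^{1,-m}(T^*M)$. Setting $q_0 = 1/p$ and recursively expressing $q_k$ as a finite combination of derivatives $\partial_\xi^\alpha p_j\cdot\partial_x^\alpha q_\ell$ with $\ell<k$, the Cauchy inequalities on a Grauert tube of $T^*M$ (which cost, at worst, a factor $R$ per derivative) yield the required analytic bound $|q_k|\le CR^k k!\,\langle\Re\xi\rangle^{-m-k}$ of Definition \ref{def:gevrey_formal_symbol}. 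This is the standard Boutet--Krée recursion.

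\emph{Step 2 (realization and remainder).} By Lemma \ref{lemma:existence_realisation_mfld}, $\underline q$ admits a realization $q\in S^{1,-m}(T^*M)$. Set $Q_0 := \Op(q)$, which is a $\G^1$ pseudor of order $-m$ by Proposition \ref{prop:quantization_Gs}. By Proposition \ref{prop:composition_pseudors}, $PQ_0$ is a $\G^1$ pseudor, and its full symbol equals $p\#q$ modulo an analytic-smoothing remainder. Because $q$ realizes $\underline q$ and $\underline p\#\underline q=1$ formally, $p\#q-1=\mathcal{O}\bigl(\exp(-\langle\Re\xi\rangle/Ch)\bigr)$ in the analytic class; translating this back to the Schwartz kernel (via Definition \ref{def:gevrey_pseudor}) shows
\[
PQ_0 = I + R, \qquad \text{with } K_R = \mathcal{O}_{\G^1}\bigl(\exp(-1/Ch)\bigr)
\]
uniformly on $M\times M$. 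A symmetric construction yields $Q_0' P = I + R'$ with $R'$ satisfying the same estimate.

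\emph{Step 3 (Neumann series).} The kernel bound for $R$ forces $\|R\|\le C\exp(-1/Ch)$ for $h$ small, in the operator norm on any reasonable space (say $L^2(M)$, or $\U^1(M)$ via Proposition \ref{prop:mapping_pseudor}); hence $I+R$ is invertible. The crucial point is that $(I+R)^{-1}-I$ is again analytic-smoothing. Iterating the convolution-type identity $K_{R^k}(x,y)=\int K_R(x,z)K_{R^{k-1}}(z,y)\,dz$, and using that $K_R$ extends holomorphically to a fixed Grauert tube $(M)_\epsilon\times(M)_\epsilon$ with $L^\infty$ bound $C\exp(-1/Ch)$, gives (by passing derivatives through the integral and applying Cauchy inequalities on a slightly shrunk tube)
\[
\|K_{R^k}\|_{1,R_1,M\times M} \le C_1^{\,k}\exp(-k/Ch)
\]
for a fixed $R_1$ independent of $k$. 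Summing the geometric series produces $S=\sum_{k\ge1}(-R)^k$ with kernel in $\G^1(M\times M)$ satisfying $\|K_S\|_{1,R_1,M\times M}=\mathcal{O}(\exp(-1/(2Ch)))$, i.e., $I+S=(I+R)^{-1}$ with $S$ analytic-smoothing, hence a $\G^1$ pseudor of any order.

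\emph{Step 4 (conclusion).} Define $Q := Q_0(I+R)^{-1}=Q_0+Q_0 S$. By Proposition \ref{prop:composition_pseudors}, $Q$ is a $\G^1$ pseudor of order $-m$, and $PQ=I$. Applying the same procedure to $Q_0'$ yields a left inverse $Q' := (I+R')^{-1}Q_0'$, also a $\G^1$ pseudor of order $-m$, satisfying $Q'P=I$. Then $Q = (Q'P)Q = Q'(PQ) = Q'$, so $Q$ is a two-sided inverse, as required.

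\emph{Main obstacle.} The delicate step is Step 3: verifying that the Neumann series actually lives in the analytic-smoothing class, rather than merely converging in operator norm. This requires controlling the Gevrey-radius of $K_{R^k}$ uniformly in $k$, which in turn requires doing the composition directly on the Grauert tube so that holomorphy of the kernel is preserved, and checking that the factors $C_1^k$ picked up by Cauchy-type bounds are dominated by $\exp(-k/Ch)$ once $h$ is sufficiently small.
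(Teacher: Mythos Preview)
Your overall architecture is right, but two steps hide the actual work and would not go through as written.

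\textbf{Step 1 is too optimistic about the recursion.} The estimate $|q_k|\le C R^k k!\,\langle\Re\xi\rangle^{-m-k}$ does \emph{not} follow from naive Cauchy inequalities: each $q_k$ is built from up to $k$ derivatives of products of earlier $q_\ell$'s, and a direct bound picks up supernumerary factorials. The paper (Lemma~\ref{lmparametrix}, following Sj\"ostrand's version of the Boutet--Kr\'ee argument) introduces the nested-neighbourhood functionals $f_{k,m}(\underline a)$ and the formal norm $\|\cdot\|_{m,\rho}$, proves submultiplicativity of $\#$ for this norm (Lemma~\ref{lmpasexactementassez}), and then runs a genuine Neumann series \emph{at the level of formal symbols} to get the correct $k!$ growth. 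You should either reproduce that machinery or cite it; ``Cauchy inequalities on a Grauert tube'' is not enough.

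\textbf{Step 2 conflates local and global symbols.} The composition $\#$ and the notion of ``full formal symbol of $P$'' are chart-dependent; there is no global composition formula on $T^*M$ in this paper. Moreover, the quantization $\Op$ of Proposition~\ref{prop:quantization_Gs} is built from a non-standard admissible phase, so the full local symbol of $\Op(q)$ is \emph{not} $q$ but $q$ plus Kuranishi corrections (and, as the paper remarks, $\Op(1)\ne 1$). Thus even if $q$ realizes a formal inverse of the local symbol of $P$, you cannot conclude $P\,\Op(q)=I+R$ with $R$ analytic-smoothing. The paper sidesteps this by working entirely locally with the standard phase: it builds a local parametrix kernel $Q_j$ in each chart, checks that $Q_j-Q_\ell$ is $\O_{\G^1}(e^{-1/Ch})$ on overlaps, and then \emph{glues the kernels} with a $\bar\partial$-trick (Lemma~\ref{lemma:approximation_analytique}) applied to a \v Cech-type correction. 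That gluing step is the substantive content your parenthetical ``patched together at the realization stage'' is hiding.

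Your Steps~3--4 are fine and match the paper; the paper's shortcut $R'=-R-R(1+R')R$ is equivalent to your geometric-series kernel bound.
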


\begin{remark}
We cannot find precisely this statement in \cite{Boutet-Kree-67}, since this reference only deals with pseudors on open sets of Euclidean spaces and discuss polyhomogeneous rather than semi-classical symbolic calculus. However, the proof that we give of Theorem \ref{thm:parametrix} relies in a fundamental way on Sj\"ostrand's version \cite[Theorem 1.5]{Sjostrand-82-singularite-analytique-microlocale} of the argument of \cite{Boutet-Kree-67}. Notice finally that the parametrix construction from \cite{Boutet-Kree-67} is valid for any $s \geq 1$, but in a $\Gs_x \G^1_\xi$ class. 
\end{remark}

Finally, in \S \ref{subsec:functional_calculus}, we deduce from Theorem \ref{thm:parametrix} the following result about the functional calculus for elliptic semi-classical $\G^1$ pseudors. 

\begin{theorem}\label{theorem:functional_calculus}
Let $P$ be a self-adjoint $\G^1$ pseudor, classically elliptic of order $m > 0$. Assume that the spectrum of $P$ is contained in $\left[\varpi,+ \infty\right[$ for some $\varpi \in \R$ and $h$ small enough. Let $\epsilon > 0$ and $f$ be a holomorphic function on
\begin{equation*}
\begin{split}
U_{\varpi,\epsilon} = \set{z \in \C : \Re z > \varpi - \epsilon \textup{ and } \va{\Im z} < \epsilon \jap{\Re z}}.
\end{split}
\end{equation*}
Assume in addition that there is $N \in \R$ and a constant $C > 0$ such that for every $z \in U_{\varpi,\epsilon}$ we have
\begin{equation*}
\begin{split}
\va{f(z)} \leq C \jap{\va{z}}^N.
\end{split}
\end{equation*}
Then, for $h$ small enough, the operator $f(P)$, defined via the spectral theorem, is a $\G^1$ pseudor of order $mN$. Moreover, if $p$ is a $\G^1$ principal symbol for $P$, then $f(p)$ is a $\G^1$ principal symbol for $f(P)$.
\end{theorem}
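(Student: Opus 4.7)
The plan is to represent $f(P)$ by a Dunford–Riesz contour integral of resolvents, each realized as a $\G^1$ pseudor via Theorem~\ref{thm:parametrix}. First, reduce to the fast-decaying case: choose $z_0\in\C\setminus\overline{U_{s,\epsilon}}$ (for instance $z_0=s-2\epsilon$) and an integer $k>N+1$, and set $g(z)=f(z)(z-z_0)^{-k}$. Then $g$ is holomorphic on $U_{s,\epsilon}$ with $|g(z)|\leq C\jap{|z|}^{-2}$. The operator $(P-z_0)^k$ is a $\G^1$ pseudor of order $mk$ with principal symbol $(p-z_0)^k$ by iterated application of Proposition~\ref{prop:composition_pseudors}, and the spectral theorem gives $f(P)=(P-z_0)^kg(P)$. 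It therefore suffices to show $g(P)\in\G^1\Psi^{m(N-k)}$ with principal symbol $g(p)$, as Proposition~\ref{prop:composition_pseudors} then recovers $f(P)\in\G^1\Psi^{mN}$ with principal symbol $(p-z_0)^kg(p)=f(p)$.

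Pick $\delta>0$ small and let $\Gamma\subset U_{s,\epsilon}$ be the positively oriented boundary of the strip $\{\Re z\geq s-\delta,\,|\Im z|\leq\delta\}$, closed on the left by a short vertical segment, so that $\Gamma$ surrounds $[s,+\infty)$ once with $|\Im z|\geq\delta/\sqrt2$ throughout. The Dunford–Riesz formula
\[
g(P)=\frac{1}{2\pi i}\int_\Gamma g(z)(z-P)^{-1}\,dz
\]
converges absolutely in operator norm thanks to $\|(z-P)^{-1}\|\leq|\Im z|^{-1}$ and the decay of $g$. For each $z\in\Gamma$, the full symbol of $z-P$ equals $z-p-\sum_{j\geq1}h^jp_j$ in any analytic chart; since $P$ is classically elliptic and $p\geq s$ (because $P\geq s$ as a self-adjoint operator), one checks that $|z-p(\alpha)|\geq c_z\jap{|\alpha|}^m$ uniformly on a Grauert tube $(T^*M)_{\epsilon'}$, where $c_z>0$ decays at worst polynomially in $\jap{|z|}$. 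Theorem~\ref{thm:parametrix} then produces an inverse $R(z)\in\G^1\Psi^{-m}$, which coincides with $(z-P)^{-1}$ by self-adjointness. The local symbol $q_z$ of $R(z)$ is a realization of the formal analytic symbol $(z-p)^{-1}+\sum_{\ell\geq1}h^\ell q_z^{(\ell)}$ obtained by Neumann-type iterative inversion; each $q_z^{(\ell)}$ is a sum of rational functions in $z-p$ with poles of order at most $\ell+1$, whose numerators are polynomials in the $p_j$ and derivatives of $p$, depending holomorphically on $z\in U_{s,\epsilon}$. Remark~\ref{remark:regularite_solution_d_bar} ensures that $q_z$ depends continuously on $z$ in the relevant $E^{1,R}$ topology, legitimizing Fubini.

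Fubini's theorem then identifies the local symbol of $g(P)$ as $q(\alpha)=(2\pi i)^{-1}\int_\Gamma g(z)q_z(\alpha)\,dz$. Applying Cauchy's formula term by term, and using that $p(\alpha)\in[s,+\infty)$ lies inside $\Gamma$, yields
\[
\frac{1}{2\pi i}\int_\Gamma\frac{g(z)}{(z-p(\alpha))^\ell}\,dz=\frac{g^{(\ell-1)}(p(\alpha))}{(\ell-1)!},
\]
a symbol of order $m(N-k-\ell+1)\leq m(N-k)$. Summing over $\ell$, one obtains $q\in S^{1,m(N-k)}$ with $q\equiv g(p)$ modulo $h\,S^{1,m(N-k)-1}$, so that $g(p)$ is a principal symbol for $g(P)$. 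The uniform pseudo-locality of $R(z)$ on $\Gamma$ transfers to $g(P)$, completing the argument once we multiply by $(P-z_0)^k$.

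The main obstacle is obtaining the $z$-uniformity needed to push the contour integral through the analytic symbolic estimates. Tracking the $z$-dependence in the parametrix construction underlying Theorem~\ref{thm:parametrix}, and in the $\bar\partial$-argument of Lemma~\ref{lemma:existence_realisation_mfld}, one must verify that the radius of the common Grauert tube on which $q_z$ is holomorphic, together with the analytic symbol norms of $q_z$, grow at most polynomially in $\jap{|z|}$ as $z$ ranges over $\Gamma$. The decay $|g(z)|\leq C\jap{|z|}^{N-k}$ with $k$ chosen arbitrarily large absorbs this polynomial loss, rendering the $z$-integral convergent in the analytic symbol topology and yielding the required membership in $S^{1,m(N-k)}$.
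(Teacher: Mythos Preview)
Your overall strategy—reduce to the fast-decaying case, express $g(P)$ via a contour integral of resolvents, invoke the analytic parametrix Theorem~\ref{thm:parametrix} for each resolvent, and push the integral through by Fubini—is the same as the paper's. The difficulty you correctly flag at the end is the decisive one, but your proposed fix does not work, and the repair is precisely the point of the paper's argument.

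The obstruction is not a polynomial growth of norms that $|g(z)|\lesssim\jap{|z|}^{N-k}$ could absorb: it is a \emph{domain} issue. Along your strip contour $\Gamma=\{|\Im z|=\delta\}$, take $z=R+i\delta$ with $R$ large. For $\alpha$ in a Grauert tube of fixed width $\eta$, the holomorphic extension of the principal symbol satisfies $|\Im p(\alpha)|\le C\eta\jap{|\alpha|}^m$; at the scale $\jap{|\alpha|}^m\sim R$ this is of order $C\eta R$, which exceeds $\delta$ as soon as $R>\delta/(C\eta)$. Hence $p(\alpha)$ can hit $z$, the leading term $(z-p(\alpha))^{-1}$ blows up, and the realized symbol $q_z$ of $(z-P)^{-1}$ is \emph{not} holomorphic on any fixed tube uniformly in $z\in\Gamma$: the admissible tube has width $\lesssim\delta/|z|$. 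No choice of $k$ enlarges this domain, so your integral $\int_\Gamma g(z)q_z(\alpha)\,dz$ is not even defined for $\alpha$ off $T^*M$, and you cannot conclude membership in $S^{1,m(N-k)}$ directly from it.

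The paper's remedy is to take instead the \emph{conical} contour $\partial U_{s,\epsilon_1}$ with $\epsilon_1<\epsilon$, along which $|\Im z|\sim\epsilon_1\jap{\Re z}$. Then for $\alpha$ in a tube of width $\eta\ll\epsilon_1$ one has $|\Im p(\alpha)|\le C\eta\jap{|\alpha|}^m\ll\epsilon_1\jap{\Re p(\alpha)}$, so $p(\alpha)$ stays strictly inside the contour and $|z-p(\alpha)|\gtrsim\jap{|\alpha|}^m$ uniformly in $z$. This is the content of Remark~\ref{remark:uniform-bounds-resolvent}: the family $\{(w-p)^{-1}:w\in\partial U_{s,\epsilon_1}\}$ is bounded in $S^{1,-m}$ on a fixed tube, and the parametrix construction inherits this uniformity. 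Two further points the paper handles that you gloss over: the inequality $p\ge s$ does not follow formally from $P\ge s$ but requires a semiclassical argument (Lemma~\ref{lemma:ouestlesymbole}, which in fact only yields $p\ge s-\delta$ for $h$ small), and the validity of the Cauchy representation on an unbounded contour (Lemma~\ref{lemma:fACauchy}) needs a spectral-gap trick to truncate and pass to the limit.
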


\subsection{Gevrey pseudo-differential tricks}\label{sssecKuranishi}

Arguably, microlocal analysis is an application of the study of oscillatory integrals, as exposed in \S \ref{sec:oscillatory-integrals}. However, to apply efficiently the results of that section, many technical tricks are required. They form the core of microlocal tools, and we have gathered (some of) them in this section in the Gevrey setting.

We consider the local model for $\G^s$ pseudors. Hence, let $U$ be an open subset of $\R^n$, let $m \in \R$ and let $a$ a $\G^s$ symbol of order $m$ in $T^* U \times U$. We want to establish the basic properties of the distributional kernel
\begin{equation}\label{eq:pseudo_Rn}
\begin{split}
K_a(x,y) = \frac{1}{\p{2 \pi h}^n} \int_{\R^n} e^{\frac{i\jap{x-y,\xi}}{h}} a(x,\xi,y) \mathrm{d}\xi.
\end{split}
\end{equation}
The integral in \eqref{eq:pseudo_Rn} must be interpreted as an oscillating integral as usual. For technical purposes, we allowed here the symbol $a$ to depend on an additional variable $y$ (that does not appear in \eqref{eq:local_pseudor}), but we will see in Lemma \ref{lemma:calcul_pseudo} that it does not introduce more generality. This will be the main technical result of this section.

Since we required $\G^s$ pseudors to have a negligible kernel away from the diagonal, we need to prove that the local model \eqref{eq:local_pseudor} for their kernels has this property.

\begin{lemma}\label{lemma:loin_de_la_diagonale}
Let $m \in \R$, $s\geq 1$. Let $a$ be a $\G^s$ symbol of order $m$ in $T^* U \times U$. Let $K$ be a compact subset of $U \times U$ that does not intersect the diagonal. Then the kernel $K_a$ defined by \eqref{eq:pseudo_Rn} is $\mathcal{C}^\infty$ on a neighbourhood of $K$, and there are $C,R > 0$ such that
\begin{equation*}
\begin{split}
\n{K_a}_{s,R,K} \leq C \exp\p{- \frac{1}{C h^{\frac{1}{s}}}}.
\end{split}
\end{equation*}
\end{lemma}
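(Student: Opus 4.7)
The plan is to prove the estimate by $N$-fold integration by parts in $\xi$, where the number $N$ of integrations is optimized as a function of $h$ (and of the order of derivatives taken) using the standard \emph{sommation au plus petit terme} trick. Since $K$ is a compact subset of $U\times U$ disjoint from the diagonal, there exists $c_0>0$ such that $\va{x-y}\geq c_0$ for all $(x,y)\in K$. The relevant differential operator is
\[
L = \frac{h}{i\va{x-y}^2}\sum_{j=1}^n (x_j-y_j)\partial_{\xi_j},
\]
which satisfies $L\bigl(e^{i\jap{x-y,\xi}/h}\bigr) = e^{i\jap{x-y,\xi}/h}$. Interpreting $K_a$ as usual through an oscillatory-integral regularization (which becomes unnecessary once $N$ is large enough), integration by parts $N$ times yields
\[
K_a(x,y) = \frac{1}{(2\pi h)^n}\int_{\R^n} e^{i\jap{x-y,\xi}/h}(L^t)^N a(x,\xi,y)\,\mathrm{d}\xi.
\]

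Using the $\Gs$ symbol estimates from Definition \ref{def:gevrey_symbol} together with the multinomial expansion of $\bigl(\sum (x_j-y_j)\partial_{\xi_j}\bigr)^N$, one finds, for $(x,y)$ in a compact neighbourhood of $K$, a pointwise bound $\va{(L^t)^N a(x,\xi,y)}\leq C(h/c_0)^N(C_1R)^N N!^s \jap{\xi}^{m-N}$, where the constant $C_1$ absorbs the combinatorial factors of order $n^N$. For $N>m+n$ the $\xi$-integral of $\jap{\xi}^{m-N}$ is uniformly finite, so
\[
\va{K_a(x,y)} \leq C h^{-n}(C_2 R h/c_0)^N N!^s.
\]
Choosing $N = \lfloor (c_0/(C_2 R h))^{1/s}\rfloor$ and using Stirling, $N!^s(C_2 Rh/c_0)^N$ behaves like $\exp(-sN)$, which gives the required bound $\va{K_a(x,y)}\leq C\exp(-c/h^{1/s})$ for some $c>0$.

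For the derivative bounds, differentiating under the integral yields
\[
\partial_x^\alpha\partial_y^\beta K_a = \frac{1}{(2\pi h)^n}\sum_{\substack{\alpha'\leq\alpha\\ \beta'\leq\beta}}\binom{\alpha}{\alpha'}\binom{\beta}{\beta'}\int e^{i\jap{x-y,\xi}/h}\left(\frac{i\xi}{h}\right)^{\alpha-\alpha'}\left(\frac{-i\xi}{h}\right)^{\beta-\beta'}\partial_x^{\alpha'}\partial_y^{\beta'} a\,\mathrm{d}\xi,
\]
so each term is of the previous form with $a$ replaced by the symbol $(\xi/h)^\mu\partial_x^{\alpha'}\partial_y^{\beta'}a$, where $\mu=(\alpha-\alpha')+(\beta-\beta')$. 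This new symbol has order $m+\va{\mu}$ and acquires an extra prefactor $h^{-\va{\mu}}$, and the $\Gs$ estimates of $\partial_x^{\alpha'}\partial_y^{\beta'}a$ bring an additional factor $R^{\va{\alpha'}+\va{\beta'}}(\alpha'!\beta'!)^s$. Re-running the integration by parts with $N \geq \va{\mu}+m+n+1$ and keeping track of derivatives via Leibniz, one bounds each term by
\[
C h^{-n-\va{\mu}}(C_3R)^{\va{\alpha'}+\va{\beta'}}(\alpha'!\beta'!)^s (C_3Rh/c_0)^N N!^s.
\]
The main obstacle is the joint optimization of $N$ in both constraints: for moderate $\va{\alpha}+\va{\beta}\lesssim h^{-1/s}$ one takes $N\sim h^{-1/s}$ exactly as above and the $(\alpha'!\beta'!)^s$ factor is trivially controlled by $(\alpha!\beta!)^s$; for large $\va{\alpha}+\va{\beta}$ one instead takes $N = \va{\mu}+m+n+1$, and the apparent excess factor $(C_3Rh)^{\va{\mu}}\cdot(\va{\mu}+O(1))!^s$ is tamed using $(A+B)!\leq 2^{A+B}A!B!$ and the multi-index inequality $\va{\gamma}!^s\leq n^{s\va{\gamma}}\gamma!^s$, which, combined with the smallness of $h$, produces a Gevrey factor $(C_4R)^{\va{\alpha}+\va{\beta}}(\alpha!\beta!)^s$ that dominates all remaining terms. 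Reassembling the finite sum over $\alpha'\leq\alpha$, $\beta'\leq\beta$ then gives $\va{\partial_x^\alpha\partial_y^\beta K_a(x,y)}\leq C(C_4R)^{\va{\alpha}+\va{\beta}}(\alpha!\beta!)^s\exp(-c/h^{1/s})$ uniformly on $K$, which is exactly the claimed $\Gs$ bound on $\n{K_a}_{E^{s,R'}(K)}$ for $R'=C_4R$.
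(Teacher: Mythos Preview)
Your approach via repeated integration by parts with optimization of $N$ (the \emph{sommation au plus petit terme}) is a valid and classical alternative to the paper's method, and with a small repair it goes through. The paper takes a different route: after a fixed finite number of integrations by parts to make the integral absolutely convergent, it performs a contour shift in $\xi$ along $\Gamma_t=\{\xi+i\delta t h^{1-1/s}\langle\xi\rangle^{1/s}v\}$ with $v$ chosen so that $(x-y)\cdot v\geq 1$, controls the Stokes error via the $\bar\partial$-decay of an almost analytic extension of the symbol, and then upgrades the resulting $L^\infty$ bound to a $\G^s$ bound using Lemma~\ref{lemma:norm-exponentials} (Gevrey norms of $e^{i\langle x-y,\xi\rangle/h}$ on the deformed contour) combined with Leibniz. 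The paper explicitly favours this contour-shift strategy over IBP precisely to avoid the derivative-counting you are doing (see the opening of \S\ref{sec:oscillatory-integrals}); your method is closer in spirit to \cite{Boutet-Kree-67,Gramchev-87-stationary-phase-gevrey}. What your approach buys is that it is entirely real, with no almost analytic extensions; what the paper's approach buys is that the combinatorics disappear.

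There is one genuine sloppiness in your sketch that should be fixed. In the ``moderate'' regime $|\alpha|+|\beta|\lesssim h^{-1/s}$ you propose $N\sim h^{-1/s}$, but this does not guarantee $N>|\mu|+m+n$ when $|\mu|$ is itself of size $h^{-1/s}$, so the $\xi$-integral need not converge. The clean fix is to abandon the case split and take, uniformly for every term, $N=|\mu|+N_0$ with $N_0=\lfloor Bh^{-1/s}\rfloor$ for a small constant $B$. Then $h^{-|\mu|}(Ch/c_0)^N N!^s\le (C/c_0)^{|\mu|}2^{sN}(|\mu|!)^s\cdot (Ch/c_0)^{N_0}(N_0!)^s$; the last factor gives $\exp(-c/h^{1/s})$ by the usual optimization, while $(|\mu|!)^s$ combines with $(\alpha'!\beta'!)^s$ via $|\mu|!\le(2n)^{|\mu|}(\alpha-\alpha')!(\beta-\beta')!$ and $(\alpha-\alpha')!\alpha'!\le\alpha!$ to produce exactly the Gevrey weight $(C')^{|\alpha|+|\beta|}(\alpha!\beta!)^s$. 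With this uniform choice the argument closes without splitting into regimes.
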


\begin{proof}
Notice that we want to prove a local statement in $U \times U$. Hence, we may cut $K$ in small pieces and then assume that $K$ is contained in the interior of $K_1 \times K_2$, where $K_1$ and $K_2$ are two disjoint balls, and that there is $v \in \R^n$, such that for every $\p{x,y} \in K_1 \times K_2$ we have $(x-y)v \geq 1$. A priori, $K_a$ is not a converging integral. However, when $x\neq y$, we can perform a finite number of integration by parts to reduce to that case, and find
\begin{equation}\label{eq:int_oscill}
\begin{split}
K_a(x,y) = \frac{(-1)^N h^{2N}}{\p{2 \pi h}^n |x-y|^{2N}} \int_{\R^n } e^{\frac{i\jap{x-y,\xi}}{h}} \Delta_\xi^N a(x,\xi,y) \mathrm{d}\xi,
\end{split}
\end{equation}
with $N > \frac{m+n}{2}$. The integral is now convergent, and we can drop the prefactor. We denote by $a_N$ the new symbol. We observe that in the $\xi$ variable, the phase is non-stationary; however, we cannot apply Proposition \ref{prop:non-stationary} because the domain of integration is non-compact. We will have to employ a slightly different method. We consider $\delta>0$ and
\[
\Gamma_t := \set{ \xi + i \delta t h^{1-\frac{1}{s}}\langle\xi\rangle^{\frac{1}{s}} v\ |\ \xi\in\R^n}.
\]
Then, we denote by $\tilde{a}_N$ an almost analytic extension for $a_N$ in the sense of Remark \ref{remark:aae_for_symbol} (or the holomorphic extension of $a_N$ if $s = 1$) and obtain 
\begin{equation}\label{eq:change-of-contour-without-BMT}
\begin{split}
\int_{\R^n} e^{\frac{i\jap{x-y,\xi}}{h}}& a_N(x,\xi,y) \mathrm{d}\xi = \\
			&\int_{\Gamma_1} e^{\frac{i\jap{x-y,\xi}}{h}} \tilde{a}_N(x,\xi,y) \mathrm{d}\xi + \int_{\left[0,1\right] \times \R^n} H^*\p{ e^{\frac{i\jap{x-y,\xi}}{h}} \overline{\partial}_\xi \tilde{a}_N(x,\xi,y) \wedge \mathrm{d}\xi},
\end{split}
\end{equation}
where $H$ denotes the homotopy $\p{t,\xi} \mapsto \xi + i \delta t h^{1-1/s}\langle\xi\rangle^{1/s} v$. Along $\Gamma_t$, the imaginary part of the phase is 
\begin{equation}\label{eq:minoration_im}
\Im\langle x-y,\xi\rangle = \delta t h^{1-\frac{1}{s}}\langle \Re \xi\rangle^{\frac{1}{s}} \langle x-y,v\rangle \geq \delta t h^{1-\frac{1}{s}}\langle \Re \xi\rangle^{\frac{1}{s}}.
\end{equation}
Using the estimate on $\overline{\partial}_\xi\tilde{a}_N$ given by Remark \ref{remark:size_d_bar}, we obtain, for some $C > 0$ that may vary from one line to another,
\[
\begin{split}
& \left|\int_{\R^n}\hspace{-5pt} e^{\frac{i\jap{x-y,\xi}}{h}} a_N(x,\xi,y) \mathrm{d}\xi\right| \\ & \qquad \qquad \qquad \leq C \int_{\Gamma_1} \exp\p{- \frac{\Im\langle x-y,\xi\rangle}{h} } \mathrm{d}\va{\xi} \\
	 & \qquad \qquad \qquad \qquad \qquad + C \sup_{t\in[0,1]} \int_{\Gamma_t} \exp\p{- \frac{\Im\langle x-y,\xi\rangle}{h} - \frac{1}{C}\left|\frac{\Im \xi}{\langle\xi\rangle}\right|^{-\frac{1}{s-1}} } \mathrm{d}\va{\xi} 
	\\ & \qquad \qquad \qquad \leq  C \exp\p{-\frac{1}{C h^{\frac{1}{s}}}},
\end{split}
\]
provided that $\delta$ is small enough . Here $\mathrm{d}\va{\xi}$ denotes the total variation of the measure associated to the $n$-form $\mathrm{d}\xi$ restricted to the contour of integration. Now, we have to obtain a similar bound in $\G^s$ instead of $L^\infty$. Actually, this is not much further effort.

It follows from Lemma \ref{lemma:norm-exponentials} and \eqref{eq:minoration_im} that, for some $C,R > 0$, the $\n{\cdot}_{s,R,K_1 \times K_2}$ norm of $(x,y) \mapsto \exp\p{i \jap{x-y,\xi}/h}$ is an $\mathcal{O}(\exp(- C^{-1} (\jap{\va{\xi}}/h)^{1/s}))$ when $\xi \in \Gamma_1$. Thus, differentiating under the integral and applying Leibniz formula, we find that the first term in \eqref{eq:change-of-contour-without-BMT} is an $\mathcal{O}(\exp(-C^{-1} h^{- 1/s}))$ in $\G^s$. 

To deal with the second term in \eqref{eq:change-of-contour-without-BMT}, we use Lemma \ref{lemma:norm-exponentials} again: given $\epsilon > 0$, we can find $R > 0$ such that the $\n{\cdot}_{s,R,K_1 \times K_2}$ norm of $(x,y) \mapsto \exp\p{i \jap{x-y,\xi}/h}$ is an $\mathcal{O}(\exp(\epsilon (\jap{\va{\xi}}/h)^{1/s}))$ for $\xi \in \cup_{t \in \left[0,1\right]} \Gamma_t$. Recalling the bound on the derivatives of the coordinates of $\bar{\partial}_\xi \tilde{a}_N$ given in Remark \ref{remark:aae_for_symbol}, we see that for some $C, R > 0$ the $\n{\cdot}_{s,R, K_1 \times K_2}$ norm of $(x,y) \mapsto \bar{\partial}_\xi \tilde{a}_N (x,\xi,y)$ is an $\mathcal{O}(\exp(- C^{-1} (\jap{\va{\xi}}/h)^{1/s}))$ for $\xi \in \cup_{t \in \left[0,1\right]} \Gamma_t$. Indeed in that domain, we have
\begin{equation}
\left(\frac{|\Im \xi|}{\langle \Re \xi\rangle} \right)^{-\frac{1}{s-1}} = (\delta t)^{-\frac{1}{s-1}} \left(\frac{h^{1-1/s}}{\langle\Re \xi\rangle^{1-1/s}} \right)^{-\frac{1}{s-1}} \geq \delta^{-\frac{1}{s-1}} \left( \frac{\langle \Re \xi\rangle}{h}\right)^{1/s}
\end{equation} 
Since we may assume that $\epsilon < C^{-1}$, we can differentiate under the integral in the second term of \eqref{eq:change-of-contour-without-BMT} and apply the Leibniz formula to find that the second term in \eqref{eq:change-of-contour-without-BMT} is also an $\mathcal{O}(\exp(-C^{-1} h^{- 1/s}))$ in $\G^s$ (for some other $C > 0$).
\end{proof}

In order to control the action of $\G^s$ pseudors acting on $\G^s$ functions, we will use the following lemma.
\begin{lemma}\label{lemma:basic-Gs-boundedness-pseudo}
Let $m \in \R, s \geq 1$ and $a \in S^{s,m}\p{T^* U}$. Let $D,D'$ and $D''$ be balls such that $\overline{D} \subseteq D', \overline{D}' \subseteq D''$ and $\overline{D}'' \subseteq U$. Then, for every $R > 1$ and $\delta > 0$, there exists a constant $C_R>0$ and $R'  > 0$ such that, for all functions $u$ on $D''$ such that $\n{u}_{s,R,D''} < + \infty$, the function
\begin{equation}\label{eq:basic-Gs-boundedness-pseudo}
\begin{split}
x \mapsto \int_{D'} K_a(x,y) u(y) \mathrm{d}y
\end{split}
\end{equation} 
is $\G^s$ on $D$, with $\n{\cdot}_{s,R',D}$ norm less than
\begin{equation*}
\begin{split}
C_R \n{u}_{s,R,D''} \exp\p{ \frac{\delta}{h^{\frac{1}{s}}}}.
\end{split}
\end{equation*}
\end{lemma}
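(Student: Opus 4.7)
The plan is to split $f(x)=\int_{D'}K_a(x,y)u(y)\,dy$ into a contribution from $y$ close to $x$ and one from $y$ away from $x$. Picking $\eta>0$ with $2\eta<\mathrm{dist}(D,\partial D')$ and writing $f=f_{\mathrm{near}}+f_{\mathrm{far}}$ with $f_{\mathrm{far}}(x)=\int_{D'\setminus B(x,\eta)}K_a(x,y)u(y)\,dy$, I would apply Lemma~\ref{lemma:loin_de_la_diagonale} to the compact set $\{(x,y):x\in\overline D,\ y\in D'\setminus B(x,\eta)\}$. Since $\|u\|_{L^\infty(D'')}$ is controlled by $\|u\|_{E^{s,R}(D'')}$, this shows $\|f_{\mathrm{far}}\|_{s,R_0,D}\le C\|u\|_{E^{s,R}(D'')}\exp(-1/(Ch^{1/s}))$, which is much stronger than the target bound.

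The real content is in $f_{\mathrm{near}}$. For $s>1$, I choose a Gevrey cutoff $\chi\in\G^s_c(U)$ with $\chi\equiv1$ on a neighborhood of $\overline{D'}$ and supported in $D''$, and set $v=\chi u$. Up to a further Gevrey-small error again controlled by Lemma~\ref{lemma:loin_de_la_diagonale}, $f_{\mathrm{near}}(x)=\int_{\R^n}K_a(x,y)v(y)\,dy$, which by Fourier inversion and the change of variable $\xi=h\eta$ equals $(2\pi)^{-n}\int e^{i\langle x,\eta\rangle}a(x,h\eta)\hat v(\eta)\,d\eta$. The Roumieu characterization gives $|\hat v(\eta)|\le C_R\|u\|_{E^{s,R}(D'')}\exp(-(|\eta|/C_R)^{1/s})$. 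Differentiating $\alpha$ times in $x$ under the integral and applying Leibniz yields
\[
\partial_x^\alpha f_{\mathrm{near}}(x)=\sum_{\beta\le\alpha}\binom{\alpha}{\beta}\frac{1}{(2\pi)^n}\int e^{i\langle x,\eta\rangle}(i\eta)^{\alpha-\beta}\partial_x^\beta a(x,h\eta)\hat v(\eta)\,d\eta.
\]
The symbolic bound $|\partial_x^\beta a(x,h\eta)|\le CR_a^{|\beta|}\beta!^s\langle\eta\rangle^{|m|}$ together with the decay of $\hat v$ controls each integral by $C_R\|u\|(C_R)^{|\alpha-\beta|}(|\alpha-\beta|)!^s$. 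The combinatorial identity $\binom{\alpha}{\beta}\beta!^s(\alpha-\beta)!^s\le\alpha!^s$ and a multinomial sum then give $|\partial^\alpha f_{\mathrm{near}}(x)|\le C_R\|u\|_{E^{s,R}(D'')}(R')^{|\alpha|}\alpha!^s$ for some $R'=R'(R,R_a)$ — stronger than the stated bound (no growth in $h$ is even needed).

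The case $s=1$ is the main obstacle, since compactly supported analytic cutoffs are unavailable. Here I would not cut off $u$; instead I would extend $f$ holomorphically to a complex neighborhood of $D$. Using the holomorphic extension of $a(x,\xi)$ in both variables to a Grauert tube (from its membership in $S^{1,m}$), I define a holomorphic extension $\tilde f(z)$ for $z\in\C^n$ near $D$ by using the same oscillatory-integral formula for $K_a(z,y)$, but regularizing the $\xi$-integral by a contour shift $\xi\mapsto\xi-i\sigma h^{1-1/s}\langle\xi\rangle^{1/s}(z-y)/|z-y|$ into the Grauert tube, precisely as in the proof of Lemma~\ref{lemma:loin_de_la_diagonale}. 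A Stokes argument shows this extension is independent of the shift parameters and uniformly bounded by $C_R\|u\|_{E^{1,R}(D'')}$ on a complex $1/R'$-neighborhood of $D$, after which Cauchy estimates yield the analytic bound. The delicate point is producing the holomorphic extension in $z$ while handling the non-absolutely convergent oscillatory integral; the bound $\exp(\delta/h^{1/s})$ that the statement allows provides ample slack in this step, so all contour-shift remainders are harmless.
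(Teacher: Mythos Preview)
For $s>1$ your Fourier argument is correct and genuinely different from the paper. After the change of variable $\xi=h\eta$ the oscillation is $e^{i\langle x,\eta\rangle}$ with no $h$, so differentiating under the integral and using $|\hat v(\eta)|\le C\exp(-c|\eta|^{1/s})$ yields a bound on $\|f\|_{s,R',D}$ that is \emph{uniform in $h$}, stronger than the stated $\exp(\delta/h^{1/s})$. The paper instead keeps the variable $\xi$, shifts the $y$-contour to $\Gamma_\xi=\{y-i\epsilon(h/\langle\xi\rangle)^{1-1/s}\xi/\langle\xi\rangle\}$ using an almost analytic extension $\tilde u$, and then bounds the $\G^s_x$-norm of $e^{i\langle x,\xi\rangle/h}a(x,\xi,y)$ via Lemma~\ref{lemma:norm-exponentials}; this last step costs $\exp(\delta'(\langle\xi\rangle/h)^{1/s})$, which at low frequencies produces the $\exp(\delta/h^{1/s})$ loss. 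The paper's route has the advantage of treating all $s\ge1$ in one stroke, but yours is cleaner when $s>1$.

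For $s=1$ your sketch has a genuine gap. The $\xi$-contour shift you borrow from Lemma~\ref{lemma:loin_de_la_diagonale} is an off-diagonal device: the gain in the phase is $e^{-\sigma\langle\xi\rangle|z-y|/h}$, which degenerates as $y\to\Re z$. The resulting pointwise bound on $K_a(z,y)$ is of order $h^m|z-y|^{-n-m}$, and for $m\ge0$ this is not integrable in $y$ near the diagonal. So you do not obtain a holomorphic extension of $f$ that is uniformly bounded on a fixed complex neighbourhood of $D$, and Cauchy's estimates cannot be invoked. The missing idea, which the paper supplies, is to shift the $y$-contour rather than the $\xi$-contour, using the holomorphic extension of $u$: along $\Gamma_\xi$ the imaginary part of the phase becomes $\epsilon|\xi|^2/(\langle\xi\rangle h)$, positive \emph{uniformly in $y$}, which makes the $\xi$-integral absolutely convergent before one integrates in $y$. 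With that single modification your holomorphic-extension-plus-Cauchy strategy would go through.
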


\begin{proof}
Start by noticing that the integral in \eqref{eq:basic-Gs-boundedness-pseudo} is well-defined for $x \in D$ since the singular support of $K_a$ is contained in the diagonal of $U \times U$. We denote this integral by $Au(x)$. Let $\chi : \R^n \to \left[0,1\right]$ be a $\mathcal{C}^\infty$ function supported in $D'$ and identically equals to $1$ on a neighbourhood of $D$. Then, we may write
\begin{equation*}
\begin{split}
Au(x) = \underbrace{\int_{D'} K_a(x,y) \chi(y) u(y) \mathrm{d}y}_{\coloneqq A_1u(x)} + \underbrace{\int_{D'} K_a(x,y) (1- \chi(y))u(y) \mathrm{d}y}_{\coloneqq A_2u(x)}.
\end{split}
\end{equation*}
The term $A_2u(x)$ may be ignored because $K_a$ is $\Gs$-small outside the diagonal (Lemma \ref{lemma:loin_de_la_diagonale}). For this we do not need $u$ to be $\Gs$. Then, notice that, for $N$ large enough, we have
\begin{equation*}
\begin{split}
A_1 u(x) = \frac{1}{\p{2 \pi h}^n} \int_{D' \times \R^n} \frac{e^{i \frac{\jap{x-y,\xi}}{h}}a(x,\xi)}{\p{1 + \xi^2 }^N} \p{ - h^2 \Delta_y + 1}^N \p{\chi(y) u(y)} \mathrm{d}y \mathrm{d}\xi.
\end{split}
\end{equation*}
Then, let $\tilde{u}$ be an almost analytic extension for $u$ given by Lemma \ref{lemma:almost-analytic-extension-Gs} if $s >1$, or the holomorphic extension of $u$ if $s = 1$. We also define $\tilde{\chi}$ on $\C^n$ by $\tilde{\chi}(z) = \chi\p{\Re z}$. Then, we let $\Gamma_\xi$ be the contour
\begin{equation*}
\begin{split}
\Gamma_\xi = \set{ y - i \epsilon \left(\frac{h}{\langle\xi\rangle}\right)^{1 -\frac{1}{s}}\frac{\xi}{\jap{\xi}} : y \in D'},
\end{split}
\end{equation*}
where $\epsilon > 0$ is small. Then by Stokes' Formula we find that
\begin{equation}\label{eq:decoupeA2}
\begin{split}
A_2 u(x) = \underbrace{\frac{1}{(2 \pi h)^n} \int_{\R^n} \p{\int_{\Gamma_\xi} \frac{e^{i \frac{\jap{x-y,\xi}}{h}}a(x,\xi)}{\p{1 + \xi^2}^N} Fu(y) \mathrm{d}y} \mathrm{d}\xi}_{\coloneqq R_0u(x)} + R_1u(x) + R_2u(x). 
\end{split}
\end{equation}
Here, $Fu(y)$ is a shorthand for $(- h^2 \Delta_y + 1)^N \p{\tilde{\chi}(y) \tilde{u}(y)}$ , and we split the error term coming from Stokes' Formula into the integral $R_1$ that implies application of the Cauchy--Riemann operator $\overline{\partial}_y$ to $\tilde{\chi}$ and its derivatives, and the integral $R_2$ that implies application of $\overline{\partial}_y$ to $\tilde{u}$ and its derivatives.

We start by dealing with $R_1u(x)$. Our definition of $\tilde{\chi}$ implies that if $\Re z$ is near $D$ (in particular near $x$) then $\tilde{\chi}$ is locally constant near $z$. Consequently, in the definition of $R_1u(x)$, we only integrate over points $y$ that are uniformly away from $D$ (and hence $x$). Thus, the argument from the proof of Lemma \ref{lemma:loin_de_la_diagonale} applies: we may shift the contour (in $\xi$) in the integral defining $R_1u(x)$ to prove that $R_1u$ decays like $\exp(- C^{-1} h^{-1/s})$ in $\G^s$, and this does not require $\Gs$ regularity on $\tilde{u}$ nor $\tilde{\chi}$.

We deal now with $R_0u(x)$. Notice that if $y \in \Gamma_\xi$ then 
\begin{equation}\label{eq:encore_un_decalage}
\begin{split}
\Im \p{\frac{- y \xi}{h}} = \epsilon \left(\frac{\langle\xi\rangle}{h}\right)^{1/s} \frac{\xi^2}{\jap{\xi}^{2}}.
\end{split}
\end{equation}
Then, we apply Lemma \ref{lemma:norm-exponentials}, and observe that for some $R'>1$, the $\n{\cdot}_{s,R',D}$ norm of $x \mapsto \exp\p{i \langle x, \xi \rangle / h} a(x,\xi)$ grows at most like $\mathcal{O}(\exp( \delta'(\jap{\xi}/h)^{1/s} ))$ for $\delta' > 0$ arbitrarily small. Finally, with \eqref{eq:encore_un_decalage} and differentiation under the integral, we see that $R_0$ is $\G^s$ with the announced estimates.

It remains to deal with $R_2$. This is very similar to the case of $R_0$, but instead of using the positivity of the imaginary part of the phase to prove the decay of the integrand, we apply Lemma \ref{lemma:decay-extension-gevrey} to control the Cauchy--Riemann operator applied to $\tilde{u}$ and its derivatives: see Remark \ref{remark:size_d_bar}. Since we consider points with imaginary part less than $\epsilon (h/\langle\xi\rangle)^{1 - 1/s}$, we get from the application of the Cauchy--Riemann operator to $\tilde{u}$ and its derivatives a decay in $\G^s$ of the integrand of $R_2$ of order:
\begin{equation*}
\begin{split}
\exp\p{- \frac{1}{C} \p{\frac{\jap{\xi}}{h}}^{\frac{1}{s}}}.
\end{split}
\end{equation*}
Then, we may work as in the case of $R_0$ (or in the proof of Lemma \ref{lemma:loin_de_la_diagonale}), differentiating under the integral to prove that $R_2 u$ decays like $\O(\exp(- C/h^{1/s}))$ in $\G^s$, ending the proof of the lemma. Notice that the growth $Au$ is only due to the term $R_0 u$ in \eqref{eq:decoupeA2}, and more precisely to small frequencies in the integral defining $R_0$.
\end{proof}

Another preliminary that will come in handy is the following
\begin{lemma}\label{lemma:Extension-kernel-s=1}
Let $U\subset \R^n$ be open, and $\Omega\subset U$ a relatively compact open set. Let $a$ be a $\G^1$ symbol on $T^\ast U\times U$. Then we can find $C,\delta>0$ and a distribution $\widetilde{K}_a(x,u)$, defined for $x\in \Omega$, $u\in\R^n$, such that $\widetilde{K}_a$ has a holomorphic extension to 
\begin{equation}\label{eq:domaine_extension}
\{(x,u)\in \C^n\ |\ d(x,\Omega)< \delta,\ u\in\C^n\setminus\{0\},\ |\Im u|< \delta |\Re u|\ \},
\end{equation}
with
\begin{equation}\label{eq:extension_noyau_analytique}
\begin{split}
\widetilde{K}_a(x,u) 	&= K_a(x,x+u) + \O_{\G^1}\p{\exp\p{- \frac{1}{Ch}}} \text{ for $|u|<\delta$, $u\in\R^n$}\\
|\widetilde{K}_a(x,u)|	&\leq  C \exp\p{ - \frac{\va{u}}{Ch}}  \text{ for $|u|>\delta/2$.}
\end{split}
\end{equation}
\end{lemma}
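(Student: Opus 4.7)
The plan is to construct $\widetilde{K}_a$ as a contour-shifted oscillatory integral, after reducing to a symbol independent of $y$ by a Kuranishi-type expansion. Since the statement concerns the analytic case $s=1$, I would first Taylor expand $a$ in the $y$ variable around $y=x$ and use the identity $(y-x)^\alpha e^{i\langle x-y,\xi\rangle/h} = (ih\nabla_\xi)^\alpha e^{i\langle x-y,\xi\rangle/h}$ followed by integration by parts in $\xi$, rewriting $K_a(x,y) = K_b(x,y) + r_N(x,y)$ with $b \in S^{1,m}(T^\ast U)$ depending only on $(x,\xi)$. The analyticity of $a$ yields factorial bounds on $\partial_y^\alpha a$, so truncating the expansion at order $N \sim 1/(Ch)$ makes the remainder $\mathcal{O}_{\G^1}(\exp(-1/C'h))$ uniformly near the diagonal; it therefore suffices to build the extension $\widetilde{K}_b$ for this simpler symbol.

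Next I would cover the unit sphere $S^{n-1}$ by finitely many open sets $V_1,\ldots,V_N$ with representative directions $\omega_j \in V_j$ satisfying $\langle\omega,\omega_j\rangle \geq c_0 >0$ for $\omega\in V_j$, and on each cone $C_j = \{u\in\C^n:\Re u/|\Re u|\in V_j,\ |\Im u|<\delta|\Re u|\}$ I would set
\[
\widetilde{K}_b^j(x,u) := \frac{1}{(2\pi h)^n}\int_{\Gamma_j} e^{-i\langle u,\xi\rangle/h}\,\widetilde{b}(x,\xi)\,\mathrm{d}\xi,\qquad \Gamma_j := \set{\xi - i\theta\omega_j\jap{\xi} : \xi\in\R^n},
\]
with $\widetilde{b}$ the holomorphic extension of $b$ and $\theta>0$ small fixed. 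A direct computation shows that the real part of the exponent on $\Gamma_j$ is bounded above by $-(\theta c_0-\delta)|\Re u|\jap{\xi_0}/h$; choosing $\delta<\theta c_0/2$ then guarantees absolute convergence, holomorphicity of $\widetilde{K}_b^j$ in $(x,u)$ (since $\Gamma_j$ is independent of $u$), and the pointwise bound $|\widetilde{K}_b^j(x,u)|\leq C\exp(-|u|/(Ch))$ whenever $|u|\geq\delta/2$.

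Finally, these local pieces are glued into a single holomorphic function $\widetilde{K}_b$ on the full domain \eqref{eq:domaine_extension} via Stokes' theorem: on overlaps $C_j\cap C_k$, the straight-line homotopy of contours from $\Gamma_j$ to $\Gamma_k$ keeps the integrand absolutely convergent with uniform exponential decay in $|\xi|$ (by the cone condition), and $\overline{\partial}_\xi\widetilde{b} = 0$ eliminates any interior contribution. Setting $\widetilde{K}_a := \widetilde{K}_b$, the second estimate in \eqref{eq:extension_noyau_analytique} then follows from the cone bound, and the first follows by comparing $\widetilde{K}_b^j$ with $K_b(x,x+u)$ through the analogous homotopy from $\Gamma_j$ to the real contour $\R^n$ (which gives equality for real $u\in C_j$, then extends to complex $u$ with $|u|<\delta$ by analytic continuation) and adding back the Kuranishi remainder $r_N$.

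The main obstacle will be the simultaneous calibration of the parameters: the Kuranishi truncation $N \sim 1/h$ must balance the factorial growth of $\partial_y^\alpha a$ against the $h^{|\alpha|}$ factor, $\theta$ must be smaller than the width of the holomorphy strip in which $\widetilde{b}$ is defined, and $\delta < \theta c_0/2$ with $c_0$ depending on the finite cover of $S^{n-1}$. All these constraints have to be threaded so that the final $C$ and $\delta$ are independent of $h$, and one has to verify that every intermediate contour homotopy used to glue or to compare with $K_b$ stays inside the domain of holomorphy of $\widetilde{b}$.
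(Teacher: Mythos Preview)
Your approach via explicit contour deformation is genuinely different from the paper's and more constructive. The paper does not reduce to a $y$-independent symbol at all: it observes (from the proof of Lemma~\ref{lemma:loin_de_la_diagonale}) that $K_a(x,x+u)$ already extends holomorphically to the cone $\{0<|u|<\epsilon,\ |\Im u|<\epsilon|\Re u|\}$, multiplies by a plain $C^\infty$ cutoff $\chi(u)$ supported in $\{|u|<\epsilon/2\}$, and then corrects the resulting non-holomorphicity by solving $\bar\partial r=-K_a\,\bar\partial\chi$ via H\"ormander's $L^2$ estimate (Lemma~\ref{lemma:approximation_analytique_Rn}). Since $\bar\partial\chi$ is supported where $|u|\asymp\epsilon$, and $K_a$ is already $\O_{\G^1}(e^{-1/Ch})$ there, the correction $r$ is automatically $\O(e^{-\langle u\rangle/Ch})$. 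This is a three-line application of existing machinery. Your route avoids the $\bar\partial$ solver entirely and yields an explicit integral formula for $\widetilde K_a$, which is more transparent; the paper's is shorter and reuses tools already in place.

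There is, however, a genuine gap in your Step~1. You claim that truncating the Taylor expansion at $N\sim 1/(Ch)$ makes the remainder $r_N(x,y)$ an $\O_{\G^1}(e^{-1/C'h})$ near the diagonal. But $r_N$ is the oscillatory-integral kernel associated with the symbol $R_N(x,\xi,y)$, which is still of order $m$; when $m\ge -n$ this kernel is only a distribution near $u=0$, not a function, no matter how small $R_N$ is pointwise in $\xi$. The fix is standard: first reduce to order $<-n-1$ by finitely many integrations by parts in $\xi$, writing $K_a=\sum_j h^{|\alpha_j|}\partial_y^{\alpha_j}K_{a_j}$ with the $a_j$ of low order (this is exactly the reduction the paper performs at the start of its proof of Lemma~\ref{lemma:calcul_pseudo}). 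Once $m<-n-1$ the integral defining $r_N$ converges absolutely, its holomorphic extension in $(x,u)$ is immediate, and your factorial bound gives $r_N=\O_{\G^1}(e^{-1/C'h})$ as claimed. With this preliminary order reduction added, the rest of your argument (the conical cover of $S^{n-1}$, the contours $\Gamma_j$, and the Stokes gluing on overlaps) is correct.
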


\begin{proof}
\begin{figure}[h!]
\centering
\def\svgwidth{0.7\linewidth}
\begingroup%
  \makeatletter%
  \providecommand\color[2][]{%
    \errmessage{(Inkscape) Color is used for the text in Inkscape, but the package 'color.sty' is not loaded}%
    \renewcommand\color[2][]{}%
  }%
  \providecommand\transparent[1]{%
    \errmessage{(Inkscape) Transparency is used (non-zero) for the text in Inkscape, but the package 'transparent.sty' is not loaded}%
    \renewcommand\transparent[1]{}%
  }%
  \providecommand\rotatebox[2]{#2}%
  \newcommand*\fsize{\dimexpr\f@size pt\relax}%
  \newcommand*\lineheight[1]{\fontsize{\fsize}{#1\fsize}\selectfont}%
  \ifx\svgwidth\undefined%
    \setlength{\unitlength}{552.37412865bp}%
    \ifx\svgscale\undefined%
      \relax%
    \else%
      \setlength{\unitlength}{\unitlength * \real{\svgscale}}%
    \fi%
  \else%
    \setlength{\unitlength}{\svgwidth}%
  \fi%
  \global\let\svgwidth\undefined%
  \global\let\svgscale\undefined%
  \makeatother%
  \begin{picture}(1,0.45570804)%
    \lineheight{1}%
    \setlength\tabcolsep{0pt}%
    \put(0,0){\includegraphics[width=\unitlength,page=1]{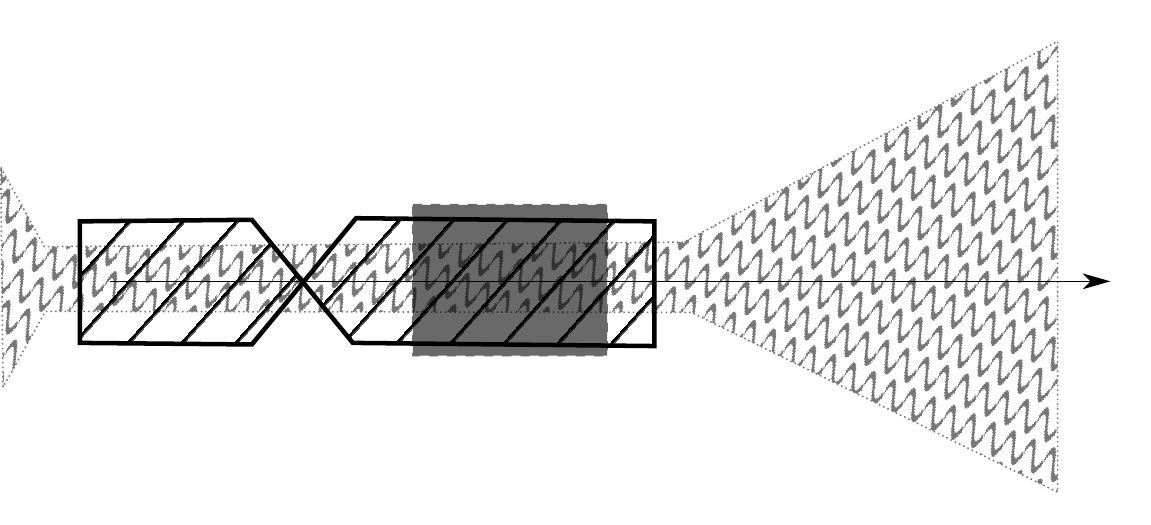}}%
    \put(0.00428189,0.06481696){\color[rgb]{0,0,0}\makebox(0,0)[lt]{\lineheight{1.25}\smash{\begin{tabular}[t]{l}Region of holomorphy of $K_a$\end{tabular}}}}%
    \put(0.35790766,0.00301256){\color[rgb]{0,0,0}\makebox(0,0)[lt]{\lineheight{1.25}\smash{\begin{tabular}[t]{l}Region where ${\chi}$ is not constant\end{tabular}}}}%
    \put(0.39319055,0.43744242){\color[rgb]{0,0,0}\makebox(0,0)[lt]{\lineheight{1.25}\smash{\begin{tabular}[t]{l}Conical neighbourhood $(T^\ast \Omega)_{\epsilon_1}$\end{tabular}}}}%
    \put(0,0){\includegraphics[width=\unitlength,page=2]{elimination-y-extension-kernel.pdf}}%
    \put(0.94578803,0.17008266){\color[rgb]{0,0,0}\makebox(0,0)[lt]{\lineheight{1.25}\smash{\begin{tabular}[t]{l}$\Re u$\end{tabular}}}}%
    \put(0,0){\includegraphics[width=\unitlength,page=3]{elimination-y-extension-kernel.pdf}}%
    \put(0.15806438,0.44195353){\color[rgb]{0,0,0}\makebox(0,0)[lt]{\lineheight{1.25}\smash{\begin{tabular}[t]{l}$\Im u$\end{tabular}}}}%
  \end{picture}%
\endgroup%

\caption{\label{fig:extension-kernel} The complex neighbourhoods involved in the extension of the kernel for $s=1$. }
\end{figure}

For this, we will use the $\overline{\partial}$ inversion trick Lemma \ref{lemma:approximation_analytique}. First of all, it follows from the proof of Lemma \ref{lemma:loin_de_la_diagonale} that, for $\epsilon > 0$ small enough, the distribution $ (x,u) \mapsto K_a(x,x+u)$ has a holomorphic extension to 
\begin{equation*}
\begin{split}
\set{\p{x,u} \in \C^{2n} : d(x,\Omega) < \epsilon, 0 < \va{u} < \epsilon \textup{ and } \va{\Im u} < \epsilon \va{\Re u}}.
\end{split}
\end{equation*} 
Let us choose a $\mathcal{C}^\infty$ function $\chi : \C^{n} \to \left[0,1\right]$ such that $\chi(u) = 0$ if $\va{u} \geq \epsilon/2$ and $\chi(u) = 1$ if $\va{u} \leq \epsilon/4$. Then, we use the bump function $\chi$ to define a form $f$ of type $(1,0)$ by
\begin{equation*}
\begin{split}
f(x,u) = -K_a(x,x+u) \bar{\partial} \chi (u).
\end{split}
\end{equation*}
The $1$-form $f$ is defined for $(x,u) \in \p{T^* \Omega}_{\epsilon_1}$ with $\epsilon_1 \ll \epsilon$ and it follows from Lemma \ref{lemma:loin_de_la_diagonale} that $f(x,u)$ is an $\O(\exp(- \langle|u|\rangle/(Ch)))$. Hence, it follows from Lemma \ref{lemma:approximation_analytique} that there is a smooth function $r : (T^* \Omega)_{\epsilon_2} \to \C$, for some small $\epsilon_2 > 0$, such that $\bar{\partial} r = f$ and $r$ is an $\O(\exp(- \langle\va{u}\rangle/(Ch)))$. We can then define $\widetilde{K}_a$ by
\begin{equation*}
\begin{split}
\widetilde{K}_a(x,u) = \chi(u) K_a(x,x+u)  + r(x,u).
\end{split}
\end{equation*}
For $\delta > 0$ small enough, this makes sense simultaneously as a distribution for $(x,u)$ real in $\Omega\times \R^n$, and as a smooth function on \eqref{eq:domaine_extension}. It follows from Lemma \ref{lemma:loin_de_la_diagonale} and the arguments above that estimates \eqref{eq:extension_noyau_analytique} are satisfied on \eqref{eq:domaine_extension} (provided $\delta<\epsilon/4$ is small enough). It remains to check that $\tilde{K}_a$ is holomorphic on \eqref{eq:domaine_extension}; this follows directly from $\bar{\partial}r=f$.
\end{proof}

After these preliminary steps, we turn to the most basic trick: elimination of variables in symbols. We will see that we can eliminate the dependence on $y$ of $a$ in \eqref{eq:pseudo_Rn}. We will then state the usual consequences of Lemma \ref{lemma:calcul_pseudo} (namely Lemmas \ref{lemma:droite_et_gauche} and \ref{lemma:changement_de_variable}). As the knowledgeable reader will observe, the proof is more subtle than in the $\mathcal{C}^\infty$ case, or even the $\Gs_x \G^1_\xi$ case. For the latter setting, one can find proofs for the $h=1$, $s>1$ case in \cite[Theorem 2.25]{zanghiratiPseudodifferentialOperatorsInfinite1985} (see also the proof of \cite[Theorem 3.2.24]{Rodino-93-book}).

\begin{lemma}\label{lemma:calcul_pseudo}
Let $m \in \R$, $s\geq 1$. Let $a$ be a $\Gs$ symbol of order $m$ in $T^* U \times U$. Let $\Omega$ be a relatively compact open subset of $U$. Then there exists a $\G^s$ symbol $b$ of order $m$ in $T^* \Omega$ such that for every compact subset $K$ of $\Omega \times \Omega$, there are constants $C,R > 0$ such that
\begin{equation}\label{eq:error-removing-variable-y}
\begin{split}
\n{K_a - K_b}_{s,R,K} \leq C \exp\p{ - \frac{1}{Ch^{\frac{1}{s}}}},
\end{split}
\end{equation}
where $K_a$ and $K_b$ are defined by \eqref{eq:pseudo_Rn}.
\end{lemma}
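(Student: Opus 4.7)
The aim is to replace the amplitude $a(x,\xi,y)$ by one independent of $y$, by combining a Taylor expansion of $a$ in $y$ around the diagonal $y=x$ with integration by parts in $\xi$. Starting from the identity
\[
\int e^{i\jap{x-y,\xi}/h}(y-x)^\alpha f(x,\xi,y)\,d\xi=(-ih)^{|\alpha|}\int e^{i\jap{x-y,\xi}/h}\partial_\xi^\alpha f(x,\xi,y)\,d\xi,
\]
the formal expansion $a(x,\xi,y)=\sum_\alpha(y-x)^\alpha\partial_y^\alpha a(x,\xi,x)/\alpha!$ after IBP yields the formal left symbol
\[
b(x,\xi)\sim\sum_{\alpha\in\N^n}\frac{(-ih)^{|\alpha|}}{\alpha!}\partial_\xi^\alpha\partial_y^\alpha a(x,\xi,y)\big|_{y=x}.
\]
The Gevrey bound $|\partial_\xi^\alpha\partial_y^\alpha a|\leq CR^{2|\alpha|}(\alpha!)^{2s}\jap{\xi}^{m-|\alpha|}$ forces the series to diverge. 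To turn it into a bona fide $\Gs$ symbol I perform ``sommation au plus petit terme'' with smooth $\xi$-dependent cutoffs $\chi_\alpha(\xi)$ supported where $|\alpha|\leq c_0(\jap{\xi}/h)^{1/(2s-1)}$, exactly as in the proof of Lemma~\ref{lemma:existence_realisation}. A direct check using Faà di Bruno shows that the resulting function $b$ satisfies the symbolic estimates of Definition~\ref{def:gevrey_symbol} with an enlarged radius, so $b\in S^{s,m}(T^*\Omega)$.

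After IBP the kernel difference takes the form
\[
(K_a-K_b)(x,y)=\frac{1}{(2\pi h)^n}\int_{\R^n} e^{i\jap{x-y,\xi}/h}\,r(x,\xi,y)\,d\xi,
\]
where $r$ gathers the Taylor remainder of order $N(\xi,h)\sim(\jap{\xi}/h)^{1/(2s-1)}$ (pre-IBP) and the tails produced by the cutoffs $\chi_\alpha$. By the Gevrey Taylor estimate (Lemma~\ref{lemma:decay-extension-gevrey}) $r$ vanishes to infinite order on the diagonal $y=x$, and with the choice of $N(\xi,h)$ the pointwise bound $|r|\leq C\jap{\xi}^m\exp(-c(\jap{\xi}/h)^{1/(2s-1)})$ holds. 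This alone is insufficient, since $1/(2s-1)<1/s$ for $s>1$, so an additional mechanism is required to reach the target exponent $1/s$.

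To upgrade the exponent, I extend $r$ almost analytically in $\xi$ (holomorphically when $s=1$) as in Remark~\ref{remark:aae_for_symbol}, and deform the $\xi$-contour to
\[
\Gamma_t=\Bigl\{\xi+it\delta\, h^{1-1/s}\jap{\xi}^{1/s}\,v(x,y) \::\: \xi\in\R^n\Bigr\},
\]
where $v(x,y)$ is a smooth unit vector pointing along $x-y$ (away from the diagonal), following the argument of Lemma~\ref{lemma:loin_de_la_diagonale}. On $\Gamma_t$ the phase acquires an imaginary part of order $\delta t\,h^{-1/s}\jap{\xi}^{1/s}|x-y|$, while the $\overline{\partial}_\xi$-error produced by the almost analytic extension is $\exp(-c\,|\jap{\xi}/\Im\xi|^{1/(s-1)})$. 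Choosing $t$ and the truncation order $N(\xi,h)$ consistently and splitting the regimes $|x-y|\gtrsim h^{1/s}$ (handled by the contour shift) and $|x-y|\lesssim h^{1/s}$ (handled by the diagonal vanishing of $r$ together with a direct non-oscillatory bound) yields $|(K_a-K_b)(x,y)|\leq C\exp(-1/(Ch^{1/s}))$. The Gevrey-norm statement follows by differentiating under the integral, controlling the Gevrey norm of $e^{i\jap{x-y,\xi}/h}$ along $\Gamma_t$ via Lemma~\ref{lemma:norm-exponentials}, and applying the Bochner--Martinelli trick (Lemma~\ref{lemma:Bochner-Martinelli-trick}) to pass from the $L^\infty$ estimate on the almost analytic extension in $(x,y)$ to the required $\|\cdot\|_{s,R,K}$ bound.

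The principal obstacle is the balancing in the last step: calibrating the truncation order $N(\xi,h)$ against the contour-shift parameter $\delta t\,h^{1-1/s}\jap{\xi}^{1/s}$ so that the tail of the formal series, the $\overline{\partial}_\xi$-contribution, and the Taylor remainder near the diagonal \emph{all} sit below $\exp(-1/Ch^{1/s})$. This is precisely what distinguishes the pure $\Gs$ class from the mixed $\Gs_x\G^1_\xi$ class of Remark~\ref{remark:g1gs_results}, where an honest asymptotic expansion of $b$ is available and no contour shift is needed.
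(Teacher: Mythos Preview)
Your approach has a genuine gap precisely where you flag the ``principal obstacle'': the balancing does not close. Optimal truncation of the formal series $\sum_\alpha(-ih)^{|\alpha|}\partial_\xi^\alpha\partial_y^\alpha a/\alpha!$ in the $\G^s_x\G^s_\xi$ class can only produce an error of size $\exp(-c(\jap\xi/h)^{1/(2s-1)})$ --- this is exactly the obstruction of Remark~\ref{remark:formal-Gs-symbols}. Your proposed repair via a $\xi$-contour shift along $(x-y)/|x-y|$ gains a factor $\exp(-c|x-y|(\jap\xi/h)^{1/s})$, which is useless on the diagonal. Near the diagonal you appeal to the Taylor vanishing $(y-x)^N$, but a direct estimate of $|y-x|^N R^N N!^s$ optimised in $N$ for $|y-x|\lesssim h^{1/s}$ gives only $\exp(-c/h^{1/s^2})$, strictly weaker than the target; and if you integrate the $(y-x)^\alpha$ away by parts first, you are back to the $1/(2s-1)$ exponent. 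No choice of splitting scale makes both regimes reach $1/s$.

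The paper proceeds quite differently and avoids truncation altogether. One defines $b_0(x,\xi)=\int e^{i\jap{u,\xi}/h}\widetilde K_a(x,u)\,du$ by Fourier inversion (so $K_{b_0}=K_a$ up to a harmless cutoff error), and the task becomes showing $b_0=b+\O_{\G^s_x}(\exp(-(\jap\xi/h)^{1/s}))$ with $b$ a genuine $\G^s$ symbol --- crucially, no $\xi$-regularity is demanded of the remainder. For $s>1$ one rescales $b_0$ into a double integral $\int e^{i\jap\xi\jap{u,v}/h}a(x,\xi+\jap\xi v,x+u)\,du\,dv$, inserts a cutoff $\chi(|v|)$ near $v=0$, and takes $b$ to be the stationary-phase output of the near part (Lemma~\ref{lemma:stationary_gevrey_real_symbol}). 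The far part, with $|v|\gtrsim\delta$, is handled by \emph{non-stationary} phase in $u$ at fixed $(v,\xi)$, and Gevrey non-stationary phase (Proposition~\ref{prop:non-stationary}) delivers the full $1/s$ exponent. For $s=1$ the same scheme runs with holomorphic stationary phase and a realization of the resulting formal analytic symbol. The moral: the $1/s$ comes from non-stationary phase in an auxiliary variable, not from series truncation.
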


\begin{proof}
The first observation is that for $b \in \mathcal{S}'(\R^{2n})$, 
\[
K_b(x,y) = \frac{1}{(2\pi h)^n} \int_{\R^n} e^{\frac{i}{h}\langle x-y,\xi\rangle} b(x,\xi) \mathrm{d}\xi,
\]
is a partial Fourier transform of $b$. It follows that $b$ may be recovered from the kernel $K_b$ by
\[
b(x,\xi) = \int_{\R^n} e^{\frac{i}{h}\langle u, \xi\rangle} K_b(x,x+u) \mathrm{d}u.
\]
We would like to replace $K_b$ by the kernel $K_a$ in this formula and take this as a definition for $b$, but there are several issues with this formula, and we need consequently to make a few corrections in this expression. 

The first problem is that for $x \in \Omega$, the distribution $K_a(x,x+u)$ is not defined for all $u\in\R^n$, and hence we cannot use this formula directly. When $s>1$, we can introduce a $\G^s$ cutoff $\chi$, equal to $1$ on $[0,\delta/2]$, and supported in $[0,\delta]$ for $\delta>0$ small enough so that the $5\delta$-neighbourhood of $\Omega$ is included in $U$. Then, we replace $a$ by $a_1 =a(x,\xi,y)\chi(|x-y|)$ and set
\[
\widetilde{K}_a(x,u):= K_a(x,x+u) \chi(|u|) = K_{a_1}(x,x+u).
\] 
Without loss of generality, we can assume that $a_1=a$ (the error is negligible considering Lemma \ref{lemma:loin_de_la_diagonale}). When $s=1$, we change the definition of $\widetilde{K}_a(x,u)$, and use instead the kernel given by Lemma \ref{lemma:Extension-kernel-s=1}. In both cases, we let
\begin{equation}\label{eq:def-b_0}
b_0(x,\xi) = \int_{\R^n} e^{\frac{i}{h}\langle u, \xi\rangle} \widetilde{K}_a(x,u) \mathrm{d}u.
\end{equation}
One can see that $\widetilde{K}_a\p{x,y-x}$ is the kernel of a pseudor, and it follows consequently from the usual theory of $\mathcal{C}^\infty$ pseudors that $b_0$ is a symbol in the usual Kohn--Nirenberg classes. Hence, the kernel $K_{b_0}$ is well-defined and from the Fourier Inversion Formula we have $\widetilde{K}_a(x,u) = K_{b_0}\p{x, x+ u}$. Would we know that $b_0 \in S^{s,m}$, we could take $b= b_0$ and the bound \eqref{eq:error-removing-variable-y} would be satisfied. However, it is not obvious that $b_0$ is a $\G^s$ symbol.

In order to bypass this difficulty, we will provide a decomposition:
\begin{equation}\label{eq:decomposition-b_0-symbol+small}
b_0(x,\xi) = \underset{\text{ a $\Gs$ symbol}}{\underbrace{b(x,\xi)}} + \O_{\Gs_{x}}\left(\exp\left(-\frac{ \langle\xi\rangle^{1/s}}{C h^{1/s}}\right)\right).
\end{equation}
Here, the important point is that we are not requiring any regularity beyond $L^\infty$ in the $\xi$ variable for the error. Indeed, it follows from Lemma \ref{lemma:norm-exponentials} that for any $\epsilon > 0$ we have
\begin{equation*}
\begin{split}
e^{i \frac{\jap{x-y,\xi}}{h}} = \O_{\G^s_{x,y}}\p{\exp\p{\epsilon \p{\frac{\jap{\xi}}{h}}^{\frac{1}{s}}}}.
\end{split}
\end{equation*}
Consequently, by taking $\epsilon > 0$ small enough, we have by Leibniz Formula (the value of $C > 0$ may change from one line to another)
\begin{equation*}
\begin{split}
e^{i \frac{\jap{x-y,\xi}}{h}} \times \O_{\Gs_{x}}\left(\exp\left(-\frac{ \langle\xi\rangle^{1/s}}{C h^{1/s}}\right)\right) & = \O_{\G^s_{x,y}}\p{\exp\p{\p{\epsilon - \frac{1}{C}}\p{\frac{\jap{\xi}}{h}}^{\frac{1}{s}}}} \\
     & = \O_{\G^s_{x,y}}\p{\exp\p{ - \frac{1}{C}\p{\frac{\jap{\xi}}{h}}^{\frac{1}{s}}}}.
\end{split}
\end{equation*}
Hence, multiplying \eqref{eq:decomposition-b_0-symbol+small} by $\exp(i \langle x-y,\xi \rangle/h)$ and integrating over $\xi$, we find that
\begin{equation*}
\begin{split}
K_{b_0}(x,y) = K_{b}\p{x,y} + \O_{\G^s_{x,y}}\p{\exp\p{ - \frac{1}{C h^{\frac{1}{s}}}}},
\end{split}
\end{equation*}
and the estimate \eqref{eq:error-removing-variable-y} is then satisfied. Indeed, near the diagonal $K_{b_0}$ and $K_a$ agree (up to an $\O_{\G^1}(\exp( - 1/(Ch)))$ in the case $s=1$) and away from the diagonal both kernels are negligible due to Lemma \ref{lemma:loin_de_la_diagonale}.

Before going into the core of the proof, let us make another reduction. We choose a large integer $N$, then we perform integration by parts in the oscillating integral defining the kernel $K_a$ to write
\begin{equation*}
\begin{split}
K_a = \sum_{j = 1}^{\ell} h^{\va{\alpha_j}} \partial_{y}^{\alpha_j} K_{a_j},
\end{split}
\end{equation*}
where $\ell \in \N$ and, for $j \in \set{1,\dots,\ell}$, we have $a_j \in S^{m - 2N,s}\p{T^* U \times U}$ and $\alpha_j \in \N^n$ satisfies $\va{\alpha_j} \leq 2N$. Hence, we may assume in the following that the order $m$ of $a$ is less than $-n-1$, for if $K_{a_j} - K_{b_j}$ satisfies the bound \eqref{eq:error-removing-variable-y} for $j = 1,\dots,N$, then \eqref{eq:error-removing-variable-y} also holds for $a$ if we set
\begin{equation*}
\begin{split}
b(x,\xi) = \sum_{j = 1}^\ell \p{- i \xi}^{\alpha_j} b_j(x,\xi).
\end{split}
\end{equation*}

\textbf{Analytic case.} We establish \eqref{eq:decomposition-b_0-symbol+small} first in the case $s = 1$. In the case $s=1$, it seems natural to work using holomorphic extension, thus we should rather prove
\begin{equation}\label{eq:decomposition-b_0-symbol+small-alternative}
b_0(x,\xi) = \underset{\text{ a $\G^1$ symbol}}{\underbrace{b(x,\xi)}} + \O_{L^\infty}\left(\exp\left(-\frac{ \langle\xi\rangle}{C h}\right)\right)
\end{equation}
for $\xi \in \R^n$ and $x \in \C^n$ at distance less than $C^{-1}$ of $\Omega$ (for some large $C > 0$). To do so, we will proceed to a succession of contour deformations, in order to decompose $b_0$ into a small remainder, and an oscillatory integral, to which we will be able to apply Proposition \ref{prop:HSP_symbol}. 

We want to get rid of the non-diagonal terms first. For this, we can introduce the contour shift ($\chi$ is the same cutoff function as above)
\begin{equation}\label{eq:Gamma_1}
\Gamma_1 \coloneqq \left\{ u + i \epsilon \p{1-\chi\p{\frac{|u|}{4}}} \frac{\xi}{\langle\xi\rangle}\ \middle|\ u\in \R^n \right\}.
\end{equation}
This deformation is not trivial only for $|u|> 2\delta$, so that deforming from $\R^n$ to $\Gamma_1$, we find
\begin{equation}\label{eq:premiere_expression_b0}
b_0(x,\xi) = \int_{\Gamma_1} e^{\frac{i}{h}\langle u, \xi\rangle} \widetilde{K}_a(x,u) \mathrm{d}u.
\end{equation}
We only deform $\R^n$ away from the singularity $u = 0$ of $\widetilde{K}_a$ and thus there is no error in Stokes' Formula. 

Along $\Gamma_1$, when $|\Re u|> 4\delta$, the imaginary part of the phase is $\epsilon \jap{\xi}^{-1} \va{\xi}^2 $, and the kernel $\widetilde{K}_a$ is $\O(\exp(-1/ C h))$. As a consequence, we can remove the part of the integral corresponding to $|u|> 4\delta $, since this is an $\O(\exp(- C^{-1} \jap{\va{\xi}}/h))$ when $|\Im x| \leq C^{-1}$.

In the remaining region $|u|<4\delta$, we can use the explicit expression for $\widetilde{K}_a$. We must consequently deal with the error term in \eqref{eq:extension_noyau_analytique}. We want to see that a term of the form
\begin{equation}\label{eq:non_stationnaire_contour}
\begin{split}
\int_{\Gamma_1, \va{\Re u} \leq 4 \delta} e^{i \frac{\jap{u,\xi}}{h}} \O_{\G^1_{x,u}}\p{\exp\p{- \frac{1}{Ch}}} \mathrm{d}u
\end{split}
\end{equation}
is negligible. When $\va{\xi}$ is smaller than some given constant, the decay of  the factor $\O_{\G^1_{x,u}}(\exp(- 1/(Ch)))$ is sufficient. When $\xi$ is large, the phase in \eqref{eq:non_stationnaire_contour} is non-stationary, has non-negative imaginary part and is positive on the boundary of the contour of integration. Hence, by a non-stationary phase argument, we find that the term \eqref{eq:non_stationnaire_contour} is in fact an $\O\p{\exp\p{- C^{-1} \frac{\jap{\xi}}{h}}}$. Here, we cannot apply directly Proposition \ref{prop:non_stationary_analytic_symbols} because we integrate on a contour rather than on a compact subset of $\R^n$. However, the same contour shift strategy may be applied, and the reader should have no difficulty completing the argument (adapting for instance the proof of Proposition \ref{prop:non-stationary}).

We may consequently neglect the error term in \eqref{eq:extension_noyau_analytique}. Thus, we may assume that when $\va{u} \leq 4 \delta$ is real the kernel $\widetilde{K}_a(x,u)$ is given by the formula
\begin{equation}\label{eq:en_fait_noscille_pas}
\begin{split}
\widetilde{K}_a(x,u) = \frac{1}{\p{2 \pi h}^n} \int_{\R^n} e^{- i \frac{\jap{u ,\eta}}{h}} a(x,\eta,x+u) \mathrm{d}\eta.
\end{split}
\end{equation}
In general, the integral in the right hand side of \eqref{eq:en_fait_noscille_pas} would be oscillating, but we reduced to the case when the order of $a$ is less than $-n-1$ so that this integral is in fact convergent. Let us give an explicit holomorphic extension for \eqref{eq:en_fait_noscille_pas} when $u$ is away from $0$ before plugging \eqref{eq:en_fait_noscille_pas} into \eqref{eq:premiere_expression_b0}. To do so, we introduce for $u \in \R^n$ such that $\va{u} \leq 4 \delta$ the contour ($\epsilon_1 > 0$ is some small constant)
\begin{equation*}
\begin{split}
\Gamma_2\p{u} \coloneqq \set{ \eta - i \epsilon_1\p{1 - \chi\p{\frac{\va{u}}{2}}} \jap{\eta} u : \eta \in \R^n}.
\end{split}
\end{equation*}
Notice that when $\va{u} \leq 2 \delta$ then $\Gamma_2\p{u}$ coincides with $\R^n$. Then, we may shift contour in \eqref{eq:en_fait_noscille_pas} in order to replace $\R^n$ by $\Gamma_2\p{u}$. We find consequently that for $u \in \C^n$ such that $\va{\Re u} \geq 2 \delta$ and $\va{\Im u} \leq \epsilon_1 \delta^2$, the holomorphic extension of $\widetilde{K}_a$ is given by
\begin{equation}\label{eq:extension_analytique_noyau}
\begin{split}
\widetilde{K}_a(x,u) = \frac{1}{\p{2 \pi h}^n} \int_{\Gamma_2\p{\Re u}} e^{- i \frac{\jap{u ,\eta}}{h}} a(x,\eta,x+u) \mathrm{d}\eta.
\end{split}
\end{equation}
It may not be clear at first sight that the expression given by \eqref{eq:extension_analytique_noyau} is indeed holomorphic (indeed, the dependence of the contour on $u$ is not). However, due to the invariance of the integral under contour shift, we may replace $\Gamma_2\p{\Re u}$ by a locally constant contour when proving holomorphicity. Now, we assume that the constant $\epsilon > 0$ in the definition \eqref{eq:Gamma_1} of $\Gamma_1$ satisfies $\epsilon \ll \epsilon_1$ so that the equality \eqref{eq:extension_analytique_noyau} holds for $u \in \Gamma_1$. Thus, if we form the contour
\begin{equation*}
\begin{split}
\Gamma_3 \coloneqq \set{\p{u, \eta} : u \in \Gamma_1, \eta \in \Gamma_2\p{\Re u}},
\end{split}
\end{equation*}
then $b_0$ is given up to negligible terms by the integral
\begin{equation}\label{eq:deuxieme_expression_b0}
\begin{split}
b_0(x,\xi) \simeq \frac{1}{(2\pi h)^n}\int_{\substack{\p{u,\eta} \in \Gamma_3 \\ \va{\Re u} \leq 4 \delta}} e^{i \frac{\jap{u,\xi - \eta}}{h}} a(x,\eta,x+u) \mathrm{d}\eta \mathrm{d}u.
\end{split}
\end{equation}
We need a further contour deformation, to get rid of the lack of compactness in the $\eta$ variable. To this end, we observe that the phase from \eqref{eq:en_fait_noscille_pas} phase is non-stationary in the $u$ variable when $\eta\neq \xi$. This suggest to consider the contour
\[
\Gamma_4 \coloneqq \left\{ \left(u + i\epsilon_2 \frac{\overline{\xi-\eta}}{\jap{\xi}} ,\ \eta\right)\ \middle|\ \p{u, \eta} \in \Gamma_3 \right\}.
\]
Replacing $\Gamma_3 $ by $\Gamma_4$ in \eqref{eq:deuxieme_expression_b0}, the only error comes from the boundary of the contour of integration in \eqref{eq:deuxieme_expression_b0}. However, we see that for $\p{u,\eta} \in \Gamma_3$ such that $\va{\Re u} = 4 \delta$ the imaginary part of the phase at $\p{u,\eta}$ is given by (provided that $\epsilon \ll \epsilon_1 \ll 1$)
\begin{equation*}
\begin{split}
\Im \p{u,\xi - \eta} & = \epsilon\p{\frac{\jap{\xi}^2}{\jap{\xi}} - \frac{\jap{\xi,\Re \eta}}{\jap{\xi}}} + \epsilon_1 16 \delta^2 \jap{\Re \eta} \\
      & \geq \frac{\p{\jap{\xi} + \jap{\va{\eta}}}}{C}.
\end{split}
\end{equation*}
Since the imaginary part of the phase increases along the homotopy, we see that the boundary term when deforming from $\Gamma_3$ to $\Gamma_4$ is negligible. Hence, we have
\begin{equation}\label{eq:troisime_expression_b0}
\begin{split}
b_0(x,\xi) \simeq \frac{1}{(2\pi h)^n}\int_{\p{u,\eta} \in \Gamma_4, \va{\Re u} \leq 4 \delta} e^{i \frac{\jap{u,\xi - \eta}}{h}} a(x,\eta,x+u) \mathrm{d}\eta \mathrm{d}u.
\end{split}
\end{equation}

When deforming the contour of integration from $\Gamma_3$ to $\Gamma_4$, we increased the imaginary part $\Im \p{u,\eta}$ of the phase by a term $\epsilon_2 \jap{\xi}^{-1} \va{\xi - \eta}^2$. Since the imaginary part of the phase was already non-negative on $\Gamma_3$ (thanks to the assumption $\epsilon \ll \epsilon_1$), we see that the part of the integral in \eqref{eq:troisime_expression_b0} where $\va{\xi - \eta} \geq \delta \jap{\xi}$ is negligible and thus
\[
b_0(x,\xi) \simeq \frac{1}{(2\pi h)^n}\int_{\substack{\p{u,\eta} \in \Gamma_4\\  \va{\Re u} \leq 4 \delta, \va{\xi - \eta} \leq \delta \jap{\xi}}} e^{i \frac{\jap{u,\xi - \eta}}{h}} a(x,\eta,x+u) \mathrm{d}\eta \mathrm{d}u.
\]
The right hand side is now an integral over a bounded set, it is convenient to rescale this integral, writing
\begin{equation}\label{eq:quatrieme_expression_b0}
\begin{split}
b_0(x,\xi) \simeq  \frac{\langle\xi\rangle^n}{(2\pi h)^n} \int_{\Gamma_5} e^{\frac{i\langle\xi\rangle}{h}\langle u,v\rangle} a(x, \xi- \langle\xi\rangle v, x +u )\mathrm{d}u \mathrm{d}v,
\end{split}
\end{equation}
where the contour $\Gamma_5$ is defined by
\begin{equation*}
\begin{split}
\Gamma_5 \coloneqq \set{ \p{u,v} : \p{u, \xi - \jap{\xi}v} \in \Gamma_4, \va{\Re u} \leq 4 \delta \textup{ and} \va{v} \leq \delta}.
\end{split}
\end{equation*}
We want now to approximate the right hand side of \eqref{eq:quatrieme_expression_b0} by a $\G^1$ symbol. It follows from the Holomorphic Stationary Phase Method --- Proposition \ref{prop:HSP_symbol} or \cite[Théorème 2.8]{Sjostrand-82-singularite-analytique-microlocale} --- that there is a formal analytic symbol $\sum_{k \geq 0} h^k c_k$ of order $m$ such that the difference between $\sum_{0 \leq k \leq C^{-1} \jap{\xi}/h} h^k c_k(x,\xi)$, for $C > 0$ large, and the right hand side of \eqref{eq:quatrieme_expression_b0} is negligible. Thus, we can take for $b$ a realization of the formal analytic symbol $\sum_{k \geq 0} h^ k c_k$ (given by Lemma \ref{lemma:existence_realisation}).

\textbf{Gevrey case.} We turn now to the case $s > 1$. In that case, we may directly write
\begin{equation*}
\begin{split}
b_0(x,\xi) = \frac{1}{(2 \pi h)^n} \int_{\R^n \times \R^n} e^{i \frac{\jap{u, \xi - \eta}}{h}} a(x,\eta,x +u) \mathrm{d}\eta \mathrm{d}u.
\end{split}
\end{equation*}
Indeed, $a(x,\eta,x+u)$ vanishes when $\va{u} > \delta$ and the order of $a$ is less than $ - n - 1$. Then, changing variables, we find that
\begin{equation*}
\begin{split}
b_0(x,\xi) = \frac{\jap{\xi}^n}{(2 \pi h)^n} \int_{\R^n \times \R^n} e^{- i \frac{\jap{\xi}}{h}\jap{u,v}} a(x,\xi + \jap{\xi} v,x+u) \mathrm{d}u \mathrm{d}v.
\end{split}
\end{equation*} 
Using the cutoff functions $\chi$ again, we can write
\begin{equation}\label{eq:decomposition_b0_gs}
\begin{split}
b_0(x,\xi) = b(x,\xi) + \frac{\jap{\xi}^n}{(2 \pi h)^n} \int_{\R^n \times \R^n}\hspace{-20pt} e^{- i \frac{\jap{\xi}}{h}\jap{u,v}} \p{1 - \chi(\va{v})}a(x,\xi + \jap{\xi} v,x+u) \mathrm{d}u \mathrm{d}v,
\end{split}
\end{equation}
where 
\begin{equation}\label{eq:def_b_gs}
\begin{split}
b(x,\xi) = \frac{\jap{\xi}^n}{(2 \pi h)^n} \int_{\R^n \times \R^n} e^{- i \frac{\jap{\xi}}{h}\jap{u,v}} \chi(\va{v})a(x,\xi + \jap{\xi} v,x+u) \mathrm{d}u \mathrm{d}v
\end{split}
\end{equation}
is a symbol in $S^{s,m}\p{T^* \Omega}$ according to Lemma \ref{lemma:stationary_gevrey_real_symbol}. Here, it is very important that $\delta > 0$ is small enough so that the integrand in \eqref{eq:def_b_gs} is uniformly $\G^s$ in $v$. Indeed, without the cutoff functions the estimates on the derivatives would deteriorate for $v \simeq - \xi/\jap{\xi}$. 

Now, we need to prove that the remainder term in \eqref{eq:decomposition_b0_gs} is as small as announced in \eqref{eq:decomposition-b_0-symbol+small}. To do so, we will apply the non-stationary phase method to the integral
\begin{equation}\label{eq:non_stationnaire_v}
\begin{split}
\int_{\R^n} e^{- i \frac{\jap{\xi} \jap{v}}{h}\jap{u,\frac{v}{\jap{v}}}} a(x,\xi + \jap{\xi} v,x+u) \mathrm{d}u.
\end{split}
\end{equation}
Here, $v$ and $\xi$ are dealt with merely as parameters, so that the lack of regularity in these variable will not matter. However, the integrand is uniformly $\G^s$ in $x$. When $\va{v} > \delta / 2$, the phase in \eqref{eq:non_stationnaire_v} is non-stationary, so that we can apply Proposition \ref{prop:non-stationary} (with a rescaling arguments as in \S \ref{subsec:further_tricks}) to find that \eqref{eq:non_stationnaire_v} is an
\begin{equation*}
\begin{split}
\O_{\G_x^s}\p{\exp\p{- \p{\frac{\jap{\xi} \jap{v}}{C h}}^{\frac{1}{s}}}}.
\end{split}
\end{equation*}
Thus, \eqref{eq:decomposition-b_0-symbol+small} follows from \eqref{eq:decomposition_b0_gs} after integration over $v$.
\end{proof}

\begin{remark}\label{remark:developpement_asymptotique}
Since the proof of Lemma \ref{lemma:calcul_pseudo} is based on an application of the Stationary Phase Method (Proposition \ref{prop:HSP_symbol} or Lemma \ref{lemma:stationary_gevrey_real_symbol}), we see that the symbol $b$ has the usual asymptotic expansion (recall Remark \ref{remark:developpement_symbole_gs} and Remark \ref{remark:coefficients_symboles})
\begin{equation*}
\begin{split}
b(x,\xi) \sim \sum_{\alpha \in \N^n} h^{\va{\alpha}}\frac{\partial_\xi^\alpha \partial_y^\alpha a(x,\xi,x)}{i^{\va{\alpha}}\alpha!}.
\end{split}
\end{equation*}
When $s = 1$, this expansion holds in the sense of realization of formal analytic symbol (see Definition \ref{def:realisation_Gevrey}). When $s > 1$, the estimate is not as good, but we still know that, for every $N \in \N$, we have
\begin{equation*}
\begin{split}
b(x,\xi) = \sum_{\va{\alpha} < N} h^{\va{\alpha}} \frac{\partial_\xi^\alpha \partial_y^\alpha a(x,\xi,x)}{i^{\va{\alpha}}\alpha!} \textup{ mod } h^N S^{m-N,s}.
\end{split}
\end{equation*}
Consequently, we will have the usual asymptotic expansion for the symbol of the adjoint, the composition, etc, since the proofs that they are pseudors all rely on applications of Lemma \ref{lemma:calcul_pseudo}.
\end{remark}

With essentially the same proof, we could also reduce the kernels to the form
\[
\int_{\R^n} e^{i\frac{\langle x-y,\xi\rangle}{h}} b( tx + (1-t) y, \xi) \mathrm{d}\xi + \O_{\G^s}\p{\exp\p{\frac{1}{C h^{\frac{1}{s}}}}},
\]
for $t\in [0,1]$. Here, we will only need:
\begin{lemma}\label{lemma:droite_et_gauche}
Let $a \in S^{s,m}\p{T^* U}$ and $\Omega$ be a relatively compact subset of $U$. Then there is $b \in S^{s,m}\p{T^* \Omega}$ such that for $x,y \in \Omega$ we have
\begin{equation*}
\begin{split}
K_a(x,y) = \frac{1}{\p{2 \pi h}^n} \int_{\R^n} e^{i \frac{\jap{x-y,\xi}}{h}} b(y,\xi) \mathrm{d}\xi + \O_{\G^s}\p{\exp\p{- \frac{C}{h^{\frac{1}{s}}}}}.
\end{split}
\end{equation*}
\end{lemma}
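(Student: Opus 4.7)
The plan is to reduce Lemma \ref{lemma:droite_et_gauche} to the main technical lemma \ref{lemma:calcul_pseudo} by a simple change of variables that exchanges the roles of the base variables $x$ and $y$ in the phase.

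First, I would observe that the change of variables $\eta = -\xi$ rewrites the kernel as
\begin{equation*}
K_a(x,y) = \frac{1}{(2\pi h)^n} \int_{\R^n} e^{i\langle y - x,\eta\rangle/h}\, \tilde{a}(y,\eta,x)\, \mathrm{d}\eta,
\end{equation*}
where $\tilde{a}(y,\eta,x) := a(x,-\eta)$. Since $a \in S^{s,m}(T^*U)$, the function $\tilde a$ is a $\G^s$ symbol of order $m$ in $S^{s,m}(T^*U\times U)$ that happens to be independent of its first variable $y$. The kernel now has exactly the form of the oscillatory integral in \eqref{eq:pseudo_Rn}, with $y$ and $x$ playing respectively the roles of the first and third variables of Lemma \ref{lemma:calcul_pseudo}.

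Second, I would apply Lemma \ref{lemma:calcul_pseudo} (whose statement is symmetric in its treatment of the two base variables, since it is really a statement about reducing the local model \eqref{eq:pseudo_Rn} after permutation of the labels). This yields a symbol $\tilde b \in S^{s,m}(T^*\Omega)$ such that, for every compact $K \subset \Omega \times \Omega$,
\begin{equation*}
\frac{1}{(2\pi h)^n}\int e^{i\langle y-x,\eta\rangle/h}\tilde{a}(y,\eta,x)\mathrm{d}\eta
= \frac{1}{(2\pi h)^n}\int e^{i\langle y-x,\eta\rangle/h}\tilde{b}(y,\eta)\mathrm{d}\eta
+ \O_{\G^s}\!\p{\exp\p{-\tfrac{1}{Ch^{1/s}}}}.
\end{equation*}
Undoing the change of variables $\eta = -\xi$ on the right-hand side and setting $b(y,\xi) := \tilde{b}(y,-\xi)$ gives the announced formula, and $b \in S^{s,m}(T^*\Omega)$ since the map $\xi \mapsto -\xi$ preserves the symbol class.

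There is no real obstacle here: the whole content of the lemma is packaged in Lemma \ref{lemma:calcul_pseudo}. The only point worth spelling out is that the proof of Lemma \ref{lemma:calcul_pseudo} is phrased so that the symbol $b$ obtained depends on the same base variable as the one appearing with the $+$ sign in the phase (here $y$), and nothing in that argument depends on the labelling of the variables in $U \times U$; alternatively, one may simply re-read the proof verbatim with the substitution $x \leftrightarrow y$ and $\xi \to -\xi$.
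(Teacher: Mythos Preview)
Your proposal is correct and takes essentially the same approach as the paper: both proofs reduce immediately to Lemma~\ref{lemma:calcul_pseudo} by a relabelling trick that swaps the roles of $x$ and $y$ in the phase. The paper achieves this swap via complex conjugation (applying Lemma~\ref{lemma:calcul_pseudo} to $\overline{K_a(y,x)}$ and then conjugating back), while you achieve it via the substitution $\xi\mapsto-\xi$ and a direct relabelling of variables; these are equivalent one-line manoeuvres and yield the same symbol up to the involution $\xi\mapsto-\xi$ versus complex conjugation.
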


\begin{proof}
Apply Lemma \ref{lemma:calcul_pseudo} to the kernel
\begin{equation*}
\begin{split}
\overline{K_a(y,x)} = \frac{1}{(2 \pi h)^n} \int_{\R^n} e^{i \frac{\jap{x-y,\xi}}{h}} \bar{a}(y,\xi) \mathrm{d}\xi
\end{split}
\end{equation*}
to find $b \in S^{s,m}\p{T^* \Omega}$ such that 
\begin{equation*}
\begin{split}
\overline{K_a(y,x)} = K_b(x,y) + \O_{\G^s}\p{\exp\p{- \frac{1}{C h^{\frac{1}{s}}}}}.
\end{split}
\end{equation*}
The result follows then by switching $x$ and $y$ back and taking complex conjugate.
\end{proof}

Using the Kuranishi trick, we deduce from Lemma \ref{lemma:calcul_pseudo} the following result that justifies Definition \ref{def:gevrey_pseudor}. This is a very classical procedure, see for instance \cite[Theorem 9.3]{zworskiSemiclassicalAnalysis2012}. In particular, the proof of Lemma \ref{lemma:changement_de_variable} does not rely on the analytic version of the Kuranishi trick Lemma \ref{lmkurtrick} that we will discuss soon (there is no complex phase or contour shift here).

\begin{lemma}\label{lemma:changement_de_variable}
Let $V$ be another open subset of $\R^n$ and let $\kappa : U \to V$ be a $\G^s$ diffeomorphism. Let $a \in S^{s,m}\p{T^* U}$ and $\Omega$ be a relatively compact subset of $U$. Then there is a symbol $b \in S^{s,m}\p{T^* \kappa\p{\Omega}}$ such that 
\begin{equation*}
\begin{split}
K_a \p{\kappa^{-1}(x),\kappa^{-1}(y)} J \kappa^{-1}(y) = K_b(x,y) + \O_{\G^s}\p{\exp\p{- \frac{1}{C h^{\frac{1}{s}}}}}
\end{split}
\end{equation*}
on $\kappa\p{\Omega}$. ($J \kappa^{-1}$ is the Jacobian of $\kappa^{-1}$).
\end{lemma}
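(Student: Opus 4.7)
The plan is the classical Kuranishi trick, adapted to the Gevrey setting. First, thanks to Lemma \ref{lemma:calcul_pseudo}, I may replace $a(x,\xi,y)$ by a symbol depending only on $(x,\xi)$; committing a $\G^s$-negligible error of size $\exp(-1/(Ch^{1/s}))$, I can assume
\begin{equation*}
K_a(x,y) = \frac{1}{(2\pi h)^n}\int_{\R^n} e^{\frac{i}{h}\langle x-y,\xi\rangle}\, a(x,\xi)\,\mathrm{d}\xi,
\end{equation*}
with $a\in S^{s,m}(T^*U)$. Next, I split the problem near and away from the diagonal. Away from the diagonal of $\kappa(\Omega)\times\kappa(\Omega)$, Lemma \ref{lemma:loin_de_la_diagonale} already tells us that $K_a(\kappa^{-1}(x),\kappa^{-1}(y))\,J\kappa^{-1}(y)$ is $\G^s$-small, so any choice of $b$ will do there (e.g.\ after inserting a $\G^s$ cut-off $\chi(x-y)$ supported near $0$, which by the same lemma costs only an $\O_{\G^s}(\exp(-1/(Ch^{1/s})))$ error).

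Near the diagonal, I use Hadamard's integral formula to write
\begin{equation*}
\kappa^{-1}(x)-\kappa^{-1}(y) = M(x,y)(x-y),\qquad M(x,y)=\int_0^1 D\kappa^{-1}(y+t(x-y))\,\mathrm{d}t,
\end{equation*}
for $x,y$ close enough, with $M(x,x)=D\kappa^{-1}(x)$ invertible and $M$ a $\G^s$ matrix-valued function. Shrinking the support of the cut-off if necessary, $M(x,y)$ stays invertible and $\G^s$. Perform the $\xi$-change of variables $\eta = {}^{t}M(x,y)\,\xi$, obtaining
\begin{equation*}
\langle \kappa^{-1}(x)-\kappa^{-1}(y),\xi\rangle = \langle x-y,\eta\rangle,
\end{equation*}
and
\begin{equation*}
K_a(\kappa^{-1}(x),\kappa^{-1}(y))\,J\kappa^{-1}(y) = \frac{1}{(2\pi h)^n}\int e^{\frac{i}{h}\langle x-y,\eta\rangle}\,\tilde a(x,\eta,y)\,\mathrm{d}\eta,
\end{equation*}
with
\begin{equation*}
\tilde a(x,\eta,y) = a\!\left(\kappa^{-1}(x),\,({}^{t}M(x,y))^{-1}\eta\right)\cdot |\det M(x,y)|^{-1}\cdot J\kappa^{-1}(y).
\end{equation*}
Stability of the class $\G^s$ under composition (and under matrix inversion, via the analytic map $M\mapsto M^{-1}$ on invertible matrices) shows that $\tilde a \in S^{s,m}(T^*\kappa(\Omega)\times \kappa(\Omega))$: the symbolic estimates in $\eta$ follow from those on $a$ because $({}^{t}M)^{-1}\eta$ has size uniformly comparable to $|\eta|$, and the $(x,y)$-derivatives give $\G^s$ bounds through Fa\`a di Bruno / Leibniz. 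Finally, applying Lemma \ref{lemma:calcul_pseudo} once more to $\tilde a$, I obtain a symbol $b\in S^{s,m}(T^*\kappa(\Omega))$ such that
\begin{equation*}
\frac{1}{(2\pi h)^n}\int e^{\frac{i}{h}\langle x-y,\eta\rangle}\,\tilde a(x,\eta,y)\,\mathrm{d}\eta = K_b(x,y) + \O_{\G^s}\bigl(\exp(-1/(Ch^{1/s}))\bigr),
\end{equation*}
which is the desired conclusion.

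I expect the main obstacle to be the bookkeeping in the near-diagonal/far-from-diagonal decomposition, namely carrying out the cut-off in $(x-y)$ and verifying that the contributions we throw away are indeed $\G^s$-negligible uniformly in $h$; this rests on Lemma \ref{lemma:loin_de_la_diagonale} and the fact that the Kuranishi substitution is only well-defined where $M(x,y)$ is invertible. A secondary technical point is checking that composition with $({}^{t}M)^{-1}$ preserves the symbol class $S^{s,m}$ with the correct control in $\eta$, which is standard once one notes that $({}^{t}M)^{-1}$ is bounded above and below uniformly on the support of the cut-off.
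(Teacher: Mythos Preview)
Your proposal is correct and follows exactly the approach the paper indicates: the paper does not spell out a proof but states that the result follows from the classical Kuranishi trick combined with Lemma~\ref{lemma:calcul_pseudo}, referring to \cite[Theorem~9.3]{zworskiSemiclassicalAnalysis2012}, and explicitly notes that no complex phase or contour shift is needed here. Your write-up is precisely that argument. One minor redundancy: in the hypothesis $a\in S^{s,m}(T^*U)$ already depends only on $(x,\xi)$, so your first invocation of Lemma~\ref{lemma:calcul_pseudo} is unnecessary---the essential use of that lemma is only at the end, to eliminate the $y$-dependence introduced by $M(x,y)$.
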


For several reasons, we may encounter operators whose kernel is an oscillatory integral, with a phase that is not exactly the phase $\langle x-y,\xi\rangle$, but close to it. It is then crucial to be able to recognize the kernel of a pseudor, and this is what the Kuranishi trick is for. Let us be more precise and define the class of phases that we will use.

\begin{definition}\label{def:nonstandardphase}
Let $M$ be either a compact real-analytic manifold or an open subset of $\R^n$. A \emph{phase} on $M$ is a holomorphic symbol $\Phi$ of order $1$ defined for $(\alpha,y) \in (T^* M)_\epsilon \times (M)_\epsilon$ with $d(\alpha_x,y) < \delta$ (for some $\epsilon,\delta > 0$) such that
\begin{enumerate}[label = (\roman*)]
\item if $\p{\alpha,y} \in T^* M \times M$ then the imaginary part of $\Phi(\alpha,y)$ is non-negative;
\item $\Phi(\alpha,\alpha_x) = 0$ for $\alpha = (\alpha_x,\alpha_\xi) \in T^* M$;
\item for $\alpha \in T^* M$, we have $\mathrm{d}_y \Phi(\alpha,\alpha_x) = - \alpha_\xi$.
\end{enumerate}
We say that $\Phi$ is an \emph{admissible phase} if it satisfies in addition the coercivity condition:
\begin{enumerate}[label = (\roman*)]
\setcounter{enumi}{3}
\item there is a constant $C > 0$ such that if $\alpha,y$ are real and $\Phi(\alpha,y)$ is defined, then $\Im \Phi(\alpha,y) \geq C^{-1} \jap{\alpha} d\p{\alpha_x,y}^2$.
\end{enumerate}
\end{definition}
The point (ii) and (iii) in Definition \ref{def:nonstandardphase} may be stated loosely as
\begin{equation}\label{eq:Behaviour-Taylor-Phase-admissible}
\begin{split}
\Phi(\alpha_x, \alpha_\xi ,y) = \jap{\alpha_x-y,\alpha_\xi} + \O\p{\jap{\va{\alpha_\xi}} \va{\alpha_x-y}^2}.
\end{split}
\end{equation}
Notice that, with this definition, the standard phase $\Phi(\alpha,y) = \jap{\alpha_x-y,\alpha_\xi}$ is not an admissible phase: it does not satisfy the condition (iv). In some sense, (iv) is satisfied by the standard phase after a contour shift (this is basically the argument in the proof of Lemma \ref{lemma:loin_de_la_diagonale}). Let us also mention that from any $\G^1$ metric $g$ on $M$, we may build an admissible phase, by setting
\begin{equation*}
\Phi(\alpha,y) = -\alpha_\xi(\exp_{\alpha_x}^{-1}(y)) + \langle |\alpha_\xi|_{\alpha_x}\rangle d_g(\alpha_x,y)^2.
\end{equation*}

If $\Phi$ is a phase in the sense of Definition \ref{def:nonstandardphase} on some open subset $U$ of $\R^n$ and $a \in S^{1,m}\p{T^* U \times U}$ is an analytic symbol, we may see using standard integration by parts that the oscillating integral
\begin{equation}\label{eq:noyau_non_standard}
\begin{split}
K_{\Phi,a}(x,y) = \frac{1}{\p{2 \pi h}^n} \int_{\R^n} e^{i \frac{\Phi(x,\xi,y)}{h}} a(x,\xi,y) \mathrm{d}\xi
\end{split}
\end{equation}
defines a distribution near the diagonal of $U \times U$. Adapting the proof of Lemma \ref{lemma:loin_de_la_diagonale}, one can see that if $x,y \in U$ are close enough to the diagonal -- so that $K_{\Phi,a}$ is defined near $(x,y)$ -- but not \emph{on} the diagonal, then $K_{\Phi,a}$ is an $\O(\exp(-1/(Ch)))$ in $\G^1$ near $(x,y)$.

We will say that an operator whose kernel (modulo a small $\G^1$ error) takes the form $K_{\Phi,a}$, is a \emph{pseudor with non-standard phase}. There are two reasons for which we want to study such operators. The first one is that an analytic pseudor with non-standard phase appears in the proof of Theorem \ref{thm:existence-good-transform}. The other one is that we will use pseudors with non-standard phases to prove Proposition \ref{prop:quantization_Gs}. The main result about those operators is the following:

\begin{lemma}[Kuranishi trick]\label{lmkurtrick}
Let $U$ be a bounded open subset of $\R^n$. Let $\Phi_1$ and $\Phi_2$ be phases on $U$ in the sense of Definition \ref{def:nonstandardphase}. Let $m \in \R$ and $a_1 \in S^{1,m}\p{T^* U \times U}$. Let $\Omega$ be a relatively compact open subset of $U$. Then there is $a_2 \in S^{1,m}\p{T^* \Omega \times \Omega}$ such that
\begin{equation*}
\begin{split}
K_{\Phi_1,a_1}(x,y) = K_{\Phi_2, a_2}(x,y)
\end{split}
\end{equation*}
near the diagonal of $\Omega \times \Omega$.
\end{lemma}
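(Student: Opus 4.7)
The plan is to adapt the classical Kuranishi trick to the setting of holomorphic phases, performing a complex change of variables in the $\xi$-integral that transports $\Phi_1$ to $\Phi_2$ and then returning to a real contour via Stokes' theorem. The main steps are as follows.

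First I would factor each phase. Since $\Phi_i(\alpha,\alpha_x)=0$ and $\mathrm{d}_y\Phi_i(\alpha,\alpha_x)=-\alpha_\xi$, Taylor's formula in the $y$ variable (applied to the holomorphic extension of $\Phi_i$, which is legitimate because $\Phi_i$ is analytic) yields
\[
\Phi_i(x,\xi,y) = \langle x-y,F_i(x,\xi,y)\rangle,\qquad i=1,2,
\]
where $F_i$ is a holomorphic symbol of order $1$ defined for $(x,\xi)$ in a complex neighbourhood of $T^*\Omega$ and $y$ close to $x$, satisfying the normalisation $F_i(x,\xi,x)=\xi$. Consequently $\partial_\xi F_i(x,\xi,x)=I$ and, by symbolic continuity, $\partial_\xi F_i(x,\xi,y)$ remains invertible for $y$ sufficiently close to $x$, uniformly on the relevant region.

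The second step is to build the change of variables. I would apply the holomorphic implicit function theorem (in a semiclassical/symbolic form, handling large $\xi$ by rescaling $\xi=\langle\xi\rangle\omega$ so that the implicit problem becomes uniform) to the equation $F_2(x,\eta,y)=F_1(x,\xi,y)$. This produces a holomorphic map $\eta=\Psi(x,\xi,y)$, defined for $x,y$ close together (say $|x-y|<\delta$ for some small $\delta$ depending only on $\Phi_1,\Phi_2,\Omega$) and for $\xi$ in a conical complex neighbourhood of $\R^n$, which is a symbol of order $1$ with $\Psi(x,\xi,x)=\xi$ and $\Psi(x,\xi,y)=\xi+\O(\langle\xi\rangle|x-y|)$. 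Let $\Psi_0(x,\eta,y)$ denote its inverse in the middle variable. By construction
\[
\Phi_1(x,\Psi_0(x,\eta,y),y)=\langle x-y,F_2(x,\eta,y)\rangle=\Phi_2(x,\eta,y).
\]

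Third, I would perform the contour deformation in the oscillatory integral defining $K_{\Phi_1,a_1}$. Setting $\xi=\Psi_0(x,\eta,y)$ moves the $\eta$-contour to $\Gamma_{x,y}:=\Psi(x,\R^n,y)\subset\C^n$, and one obtains
\[
K_{\Phi_1,a_1}(x,y)=\frac{1}{(2\pi h)^n}\int_{\Gamma_{x,y}}e^{i\Phi_2(x,\eta,y)/h}\,a_1(x,\Psi_0(x,\eta,y),y)\,\det(\partial_\eta\Psi_0)\,\mathrm{d}\eta.
\]
I then apply Stokes' theorem along the linear homotopy $\eta_t=(1-t)\Psi(x,\xi,y)+t\xi$ (for $\xi\in\R^n$) to return to $\R^n$. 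The integrand being holomorphic in $\eta$, the only contributions come from the contour at infinity, which must be tamed by an integration-by-parts-plus-cutoff argument analogous to the one in the proof of Lemma~\ref{lemma:loin_de_la_diagonale}; the admissibility condition (iv) on $\Phi_2$, combined with $\Im\Phi_1\ge 0$ and the smallness of $|\Psi(x,\xi,y)-\xi|$ relative to $\langle\xi\rangle$, ensures that $\Im\Phi_2$ remains non-negative (and in fact coercive for large $|\xi|$) along the homotopy, so boundary terms at infinity vanish.

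The outcome is
\[
K_{\Phi_1,a_1}(x,y)=\frac{1}{(2\pi h)^n}\int_{\R^n}e^{i\Phi_2(x,\eta,y)/h}\,a_2(x,\eta,y)\,\mathrm{d}\eta,
\]
with $a_2(x,\eta,y)=a_1(x,\Psi_0(x,\eta,y),y)\,\det\bigl(\partial_\eta\Psi_0(x,\eta,y)\bigr)$. Since $\Psi_0$ is a holomorphic symbol of order $1$ with bounded derivatives in $\eta$ (in the symbolic sense), $a_2$ belongs to $S^{1,m}(T^*\Omega\times\Omega)$, proving the lemma. The principal obstacle I expect is the contour-shift-at-infinity step: one must carry out the deformation uniformly in $(x,y)$ near the diagonal while keeping $\Im\Phi_2\ge 0$ throughout the homotopy, which is what forces the use of both condition (iv) for $\Phi_2$ and the quantitative closeness $\Psi(x,\xi,y)-\xi=\O(\langle\xi\rangle|x-y|)$ obtained from the implicit function step.
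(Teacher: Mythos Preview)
Your overall strategy—factor each phase as $\Phi_i=\langle x-y,F_i\rangle$, build a holomorphic change of variables in the frequency, then shift the resulting complex contour back to $\R^n$—is exactly right and matches the paper's. The obstacle you flag at the end is also the real one. However, your proposed resolution appeals to the coercivity condition (iv) for $\Phi_2$, and the lemma only assumes that $\Phi_1,\Phi_2$ are \emph{phases} in the sense of Definition~\ref{def:nonstandardphase}, not \emph{admissible} phases; condition (iv) is therefore not available to you. Even if it were, along your linear homotopy $\eta_t=(1-t)\Psi(x,\xi,y)+t\xi$ the lower bound from (iv), namely $\Im\Phi_2(x,\Re\eta_t,y)\ge C^{-1}\jap{\xi}|x-y|^2$, competes against a perturbation of the \emph{same} order coming from $\Im\eta_t$ (since $|\nabla_\eta\Phi_2|\sim|x-y|$ and $|\Im\eta_t|\lesssim|x-y|\jap{\xi}$), with independent constants; there is no reason the first dominates.

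The paper sidesteps this by reducing to the case where one of the two phases is the \emph{standard} one $\langle x-y,\eta\rangle$ and then composing. With a linear phase the homotopy $\eta\mapsto\Re\eta+it\Im\eta$ (for $\eta=\theta_{x,y}(\xi)\in\Gamma$) gives
\[
\Im\langle x-y,\eta_t\rangle=t\,\Im\langle x-y,\eta\rangle=t\,\Im\Phi(x,\xi,y)\ge 0
\]
by condition (i) alone; the exponential stays bounded throughout and, after the usual integration by parts to gain decay, the lateral boundary at infinity vanishes. Your direct route is easily repaired either by this two-step reduction ($\Phi_1\to$ standard $\to\Phi_2$), or by replacing your linear homotopy with the one
\[
\eta_t=F_2^{-1}\bigl(x,(1-t)F_1(x,\xi,y)+tF_2(x,\xi,y),y\bigr),\qquad \xi\in\R^n,
\]
for which $\Phi_2(x,\eta_t,y)=(1-t)\Phi_1(x,\xi,y)+t\Phi_2(x,\xi,y)$ and hence $\Im\Phi_2(x,\eta_t,y)\ge 0$ is a convex combination of non-negatives.
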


\begin{proof}[Proof of Lemma \ref{lmkurtrick}]
Instead of considering the general case, it suffices to consider the case when one of the phases $\Phi_{1}$ and $\Phi_2$ is the standard phase $\langle x-y,\xi\rangle$. Thus, we pick some phase $\Phi$ as in Definition \ref{def:nonstandardphase}. According to Taylor's formula (with integral remainder) we have
\begin{equation}\label{eq:factorisation_phase}
\begin{split}
\Phi(x,\xi,y) = \jap{x-y , \theta_{x,y}(\xi)},
\end{split}
\end{equation}
where 
\begin{equation*}
\begin{split}
\theta_{x,y}(\xi) & = \xi - \int_0^1 (1-t) D^2_{y,y} \Phi(x,\xi, x + t(y-x)) (y-x) \mathrm{d}t \\ & = \xi + \O\p{\jap{\va{\xi}} \va{x-y}}.
\end{split}
\end{equation*}
From this, provided $x$ and $y$ are close enough, one may show that there are $\epsilon_1,\epsilon_2 > 0$ such that $\theta_{x,y}$ is injective on $\p{\R^n}_{\epsilon_1}$ and has a right inverse $\Upsilon_{x,y} : \p{\R^n}_{\epsilon_2} \to \p{\R^n}_{\epsilon_1}$ (here, we use the Kohn--Nirenberg metric to define the Grauert tubes of $\R^n$). We may then use $\Upsilon_{x,y}$ as a change of variable to write
\begin{equation}\label{eqapreskur}
K_{\Phi,a}(x,y) = \frac{1}{\p{2 \pi h}^n} \int_{\theta_{x,y}\p{\R^n}} e^{i \frac{\jap{x-y,\xi}}{h}} a(x,\Upsilon_{x,y}\p{\xi},y) J \Upsilon_{x,y}(\xi) \mathrm{d}\xi.
\end{equation}
This is to be understood as an oscillatory integral, which can be put in convergent form using integration by parts. Then, if $x$ and $y$ are close enough, $\theta_{x,y}(\R^n)$ remains uniformly transverse to $i\R^n$, and may thus be written as a graph over $\R^n$. We may thus shift the contour in \eqref{eqapreskur} to replace it by $\R^n$, using for instance the homotopy $[0,1] \times \theta_{x,y}(\R^n) \ni (t,\xi) \mapsto \Re \xi + i t \Im \xi$. We obtain
\begin{equation}\label{eq:deforming-contour-Ktrick}
\begin{split}
K_{\Phi,a}(x,y) &= \frac{1}{\p{2 \pi h}^n} \int_{\R^n } e^{i \frac{\jap{x-y,\xi}}{h}} a(x,\Upsilon_{x,y}\p{\xi},y) J \Upsilon_{x,y}(\xi) \mathrm{d}\xi \\
                  & = K_{\Phi_0, b}(x,y),
\end{split}
\end{equation}
where $\Phi_0(x,\xi,y) = \jap{x-y,\xi}$ denotes the standard phase and the symbol $b$ is defined by $b(x,\xi,y) = a(x,\Upsilon_{x,y}\p{\xi},y) J \Upsilon_{x,y}(\xi)$. We end the proof by noticing that we can reverse the argument since $a$ is retrieved from $b$ using the formula $a(x,\xi,y) = b(x,\theta_{x,y}\p{\xi},y) J \theta_{x,y}\p{\xi}$.
\end{proof}

In order to prove the existence of an analytic principal symbol for a $\G^1$ pseudor (Proposition \ref{prop:existence_principal_symbol}), it will be useful to understand the action of a pseudor on an analytic family of coherent states. This is closely related to the FBI transform that will be the subject of the next chapter. 
\begin{definition}[Coherent states]\label{def:coherent-state}
A $\G^1$ \emph{family of coherent states} of order $\ell \in \R$ on $M$ is a holomorphic function $(\alpha,x) \mapsto u_\alpha(x)$ on $\p{T^* M}_\epsilon \times (M)_\epsilon$ (for some $\epsilon > 0$) such that there exist $\eta,C > 0$ such that for $\alpha = (\alpha_x,\alpha_\xi) \in (T^* M)_\epsilon$ and $x \in (M)_\epsilon$ we have:
\begin{enumerate}[label=(\roman*)]
\item if $d(\alpha_x,x) \geq \eta$ then
\begin{equation*}
\begin{split}
\va{u_\alpha(x)} \leq C \exp\p{ - \frac{\jap{\va{\alpha}}}{Ch}};
\end{split}
\end{equation*}
\item if $d(\alpha_x,x) \leq 2 \eta$ then
\begin{equation*}
\begin{split}
u_\alpha(x) = e^{i \frac{\Phi(\alpha,x)}{h}} a(\alpha,x) + \O\p{\exp\p{- \frac{\jap{\va{\alpha}}}{C h}}},
\end{split}
\end{equation*}
where $a$ and $\Phi$ are analytic symbols respectively in $S^{1,\ell}\p{T^*M \times M}$ and in $S^{1,1}\p{T^* M \times M}$ (maybe defined only for $(\alpha_x,x)$ near the diagonal) that satisfy:
\item $\Phi\p{\alpha,\alpha_x} = 0$;
\item the derivative $\mathrm{d}_x \Phi(\alpha,\alpha_x)$ is elliptic of order $1$ in the sense that
\begin{equation*}
\begin{split}
\va{\mathrm{d}_x \Phi(\alpha,\alpha_x)} \geq  \frac{\jap{\va{\alpha}}}{C}
\end{split}
\end{equation*}
when $\jap{\va{\alpha}} \geq C$;
\item if $\alpha$ and $x$ are real then we have,
\begin{equation*}
\begin{split}
\Im \Phi(\alpha,x) \geq \frac{d\p{\alpha_x,x}^2}{C}.
\end{split}
\end{equation*}
We say that $\Phi$ and $a$ are respectively the phase and the amplitude of $(u_\alpha)$.
\end{enumerate}
\end{definition} 

We can now describe the action of $\G^1$ pseudors on coherent states. Notice that when $P$ is a differential operator, Proposition \ref{prop:pseudor_lagrangien_alt} is just a consequence of the Leibniz formula.
\begin{prop}\label{prop:pseudor_lagrangien_alt}
Let $P$ be a $\G^1$ pseudor of order $m \in \R$ and $u_\alpha$ be a $\G^1$ coherent state of order $\ell \in \R$ on $M$. Then
\begin{equation*}
\begin{split}
(Pu)_\alpha(x):= (P (u_\alpha))(x)
\end{split}
\end{equation*}
defines a $\G^1$ family of coherent states of order $m + \ell$ on $M$ with the same phase as $u_\alpha$. We denote by $b$ the amplitude of $Pu_\alpha$. If the kernel of $P$ is described in a coordinate patch $U$ by \eqref{eq:local_pseudor}, with $p \in S^{1,m}\p{T^* U}$, then $b$ is given at first order on $U$ by
\begin{equation}\label{eq:first-order-expansion-Pu_alpha}
b(\alpha,x) = p\p{x,\mathrm{d}_x \Phi\p{\alpha,x}} a(\alpha,x) \textup{ mod } h S^{1,m+\ell - 1}.
\end{equation}
\end{prop}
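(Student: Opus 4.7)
The plan is to work locally in a coordinate patch $U$ containing a neighbourhood of $\alpha_x$ and to split the computation of $Pu_\alpha(x)=\int K_P(x,y)u_\alpha(y)\,\mathrm{d}y$ into a region where $y$ is close to $\alpha_x$ and a region where it is far. In the far region, the decay hypothesis (i) on $u_\alpha$ together with Lemma \ref{lemma:basic-Gs-boundedness-pseudo} (applied to pieces of $K_P$ that are pseudo-local) yields an $\O(\exp(-\jap{\va{\alpha}}/(Ch)))$ contribution. Similarly, when $x$ is far from $\alpha_x$, the pseudo-locality of $P$ (Definition \ref{def:pseudo-local}) combined with the decay of $u_\alpha$ gives property (i) for $Pu_\alpha$, so it remains only to analyze the contribution where both $x$ and $y$ are close to $\alpha_x$.

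In this region, I would use the representation \eqref{eq:local_pseudor} for $K_P$ together with the oscillatory form of $u_\alpha$ to write, up to $\O(\exp(-\jap{\va{\alpha}}/(Ch)))$ errors,
\[
(Pu_\alpha)(x) = \frac{1}{(2\pi h)^n}\int\!\!\int e^{\frac{i}{h}\bigl(\jap{x-y,\xi}+\Phi(\alpha,y)\bigr)}p(x,\xi)\,a(\alpha,y)\,\chi(y)\,\mathrm{d}y\,\mathrm{d}\xi,
\]
where $\chi$ is a smooth cutoff localizing $y$ near $\alpha_x$. After rescaling $\xi=\jap{\alpha}\eta$, this becomes a semiclassical oscillatory integral with large parameter $\jap{\alpha}/h$ and phase $\psi(y,\eta)=\jap{x-y,\eta}+\Phi(\alpha,y)/\jap{\alpha}$. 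The critical point equations $\partial_\eta\psi=x-y=0$ and $\partial_y\psi=-\eta+\mathrm{d}_y\Phi(\alpha,y)/\jap{\alpha}=0$ yield the unique critical point $y_c=x$, $\eta_c=\mathrm{d}_x\Phi(\alpha,x)/\jap{\alpha}$, with critical value $\Phi(\alpha,x)/\jap{\alpha}$. The Hessian of $\psi$ has the block form $\begin{pmatrix}D^2_{yy}\Phi/\jap{\alpha}&-I\\-I&0\end{pmatrix}$, of determinant $\pm1$ and signature $0$. Positivity of $\Im\psi$ on the real contour follows from (v) for $\Phi$, and its strict positivity on the boundary of the cutoff is ensured by choosing the support of $\chi$ small enough (combined with the rapid decay of the integrand once $y$ strays from $x$). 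One then applies the Holomorphic Stationary Phase Method (Proposition \ref{prop:HSP_symbol}) to obtain an asymptotic expansion of the form $e^{i\Phi(\alpha,x)/h}b(\alpha,x)$, where $b$ is a realization of a formal analytic symbol of order $m+\ell$ on a complex neighbourhood of the diagonal, with leading term $p(x,\mathrm{d}_x\Phi(\alpha,x))\,a(\alpha,x)$ (the Jacobian factor is $1$ and the signature is $0$).

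Finally, I would check that $(Pu_\alpha)(x)$ has a holomorphic extension in $\alpha$ to a Grauert tube of $T^*M$: this follows by differentiation under the integral using the holomorphic dependence of $u_\alpha$ on $\alpha$. The conditions (ii)--(v) of the output coherent state are inherited directly from those of $u_\alpha$, since the phase $\Phi$ is unchanged by the computation. Conditions (i) follows from the two ``far'' estimates discussed above. The main obstacle is justifying the passage to the stationary phase formula uniformly in the symbol parameter $\alpha$ — in particular, ensuring that after the rescaling $\xi=\jap{\alpha}\eta$, all hypotheses of Proposition \ref{prop:HSP_symbol} hold uniformly (symbolic estimates on the phase and amplitude, non-degeneracy, non-negativity of imaginary part), and that the lower-order terms in the stationary phase expansion indeed live in the correct symbol classes $S^{1,m+\ell-k}$. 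This last point uses the expression for the stationary phase coefficients recalled in Remark \ref{remark:coefficients_symboles}, together with the fact that the critical manifold is $y_c=x$, $\eta_c\propto\mathrm{d}_x\Phi(\alpha,x)$, both of which have the appropriate symbolic behavior in $\alpha$.
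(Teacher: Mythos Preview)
Your overall strategy matches the paper's: localize to $x,y$ near $\alpha_x$, write the oscillatory integral with phase $\jap{x-y,\xi}+\Phi(\alpha,y)$, rescale $\xi=\jap{\alpha}\eta$, and apply holomorphic stationary phase. However, there is a genuine technical gap that the paper addresses explicitly and you do not.

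After the rescaling, the amplitude contains the factor $\eta\mapsto p(x,\jap{\alpha}\eta)$. The analytic symbol $p$ is holomorphic only in a cone $|\Im\xi|\lesssim\jap{\Re\xi}$, which after rescaling becomes $|\Im\eta|\lesssim|\Re\eta|$ for large $\jap{\alpha}$. In particular the domain of holomorphy collapses at $\eta=0$: the rescaled amplitude is \emph{not} uniformly analytic on a fixed complex neighbourhood of the real contour, so Proposition~\ref{prop:HSP_symbol} cannot be applied directly over $\eta\in\R^n$. This is exactly where hypothesis (iv) of the coherent state (ellipticity of $\mathrm{d}_x\Phi(\alpha,\alpha_x)$) enters, and you never invoke it. The critical point $\eta_c=\mathrm{d}_x\Phi(\alpha,x)/\jap{\alpha}$ is, by (iv), uniformly bounded away from $0$; the paper first regularizes the oscillatory integral (via an $e^{-\delta|\xi|^2}$ factor and a contour shift making $\Im\psi$ coercive), then uses this coercivity together with the ellipticity to restrict the $\eta$-integration to a small ball around $\eta_c$ that stays away from the origin. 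Only on this restricted contour is the amplitude uniformly holomorphic and Proposition~\ref{prop:HSP_symbol} applicable.

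A secondary point: the integral you write is genuinely oscillatory in $\xi$ (the domain is non-compact), and you need some device to give it meaning and to localize before stationary phase. The paper's $e^{-\delta|\xi|^2}$ limit plus contour shift serves this purpose; your sketch skips it.
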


\begin{proof}
We take $\epsilon > 0$ very small and want to understand $Pu_\alpha(x)$ for $\p{\alpha, x} \in \p{T^* M}_\epsilon \times \p{M}_\epsilon$. Choose $\eta > 0$ small (with $\epsilon \ll \eta$). We start by investigating $Pu_\alpha(x)$ when the distance between $x$ and $\alpha_x$ is larger than $10^{-2} \eta$. We start by assuming that $x \in M$ (i.e is real). In that case, we may choose balls $D_1$ and $D_2$ of radius $10^{-3} \eta$ that do not intersect, with $x\in D_1$ and $\Re \alpha_x\in D_2$. We may also assume that $x$ and $\Re \alpha_x$ remain at distance at least $10^{-4} \eta$ from the boundaries of $D_1$ and $D_2$ (we only need a finite number of such disks to cover all cases). Then write
\begin{equation}\label{eq:decompo_tildek_loin_diag_alt}
\begin{split}
Pu_\alpha(x) = \p{\int_{D_1} + \int_{D_2} + \int_{M \setminus \p{D_1 \cup D_2}}} K_P(x,y) u_\alpha(y) \mathrm{d}y,
\end{split}
\end{equation}
where $K_P(x,y)$ denotes the kernel of $P$. Notice that all the terms in \eqref{eq:decompo_tildek_loin_diag_alt} make sense since the singular support of $K_P$ is contained in the diagonal of $M \times M$. The integral over $M \setminus \p{D_1 \cup D_2}$ is easily dealt with: by assumption, the kernel of $P$ is an $\O(\exp(-C/h))$ and $u_\alpha$ is an $\O(\exp(- C\langle|\alpha|\rangle/h))$ in $\G^1$ there. To tackle the integral over $D_1$, just notice that $u_\alpha$ is an $\O(\exp(-C\langle|\alpha|\rangle/h))$ in $\G^1$ there. Hence, we may apply Lemma \ref{lemma:basic-Gs-boundedness-pseudo} and find that the integral over $D_1$ is an $\O(\exp(- C\langle|\alpha|\rangle/h))$ in $\G^1$. It remains to deal with the integral over $D_2$. Notice that, up to smooth and small terms that we may neglect, the integral over $D_2$ writes
\begin{equation}\label{eq:une_integrale_qui_est_petite}
\begin{split}
\int_{D_2} e^{i \frac{\Phi(\alpha,y)}{h}} a(\alpha,y) K_P(x,y) \mathrm{d}y.
\end{split}
\end{equation}
Here, since $x$ and $y$ are uniformly away from each other the kernel of $K_P$ is an $\O(\exp(- C/h))$ in $\G^1$. For small $\alpha_\xi$, if $\alpha$ is sufficiently close to the reals, $\Im \Phi$ cannot be too negative, so this is sufficient to find a $\O(\exp(- C/h))$ bound in $\G^1$ for the integral. Thanks to the assumption of ellipticity on the derivative of $\Phi$, when $\alpha_\xi$ is large enough the phase is non-stationary in \eqref{eq:une_integrale_qui_est_petite}. Hence, the integral \eqref{eq:une_integrale_qui_est_petite} is an $\O(\exp(- C\langle|\alpha|\rangle/h))$ as an application of Proposition \ref{prop:non_stationary_analytic_symbols}. Hence, we proved that $P u_\alpha(x)$ is negligible when $x$ and $\alpha_x$ are away from each other and $x$ is real. However, we proved an estimate in $\G^1$ so that this estimate remains true when $x \in (M)_\epsilon$ with $\epsilon \ll \eta$.

We want now to understand the kernel $Pu_\alpha(x)$ when $d(x,\Re \alpha_x) < \eta$ -- as above, we may assume  $x$ real and then prove a bound in $\G^1$. To do so, we may assume that $x$ and $\Re \alpha_x$ are contained in a ball $D$ of radius $10 \eta$ and then denote by $D'$ the ball of radius $100 \eta$ with the same center as $D$ (we only need a finite number of such $D$ to cover the whole manifold). Then, write
\begin{equation*}
\begin{split}
Pu_\alpha(x) = \p{\int_{D'} + \int_{M \setminus D'}} K_P(x,y) u_\alpha(y) \mathrm{d}y
\end{split}
\end{equation*}
We obtain a $\O(\exp(-C\langle|\alpha|\rangle/h))$ bound in $\G^1$ for the integral over $M \setminus D'$ as in the previous case (both kernels are smooth and small there). By taking $\eta$ small enough, we may assume that $D'$ is contained in a coordinate patch and work in these coordinates. The arguments from the first part of the proof apply again to show that the contribution to the integral over $D'$ from the error terms in \eqref{eq:local_pseudor} and in the definition of $u_\alpha$ are negligible. Hence, we want to study the kernel given by the oscillating integral
\begin{equation*}
\begin{split}
Pu_\alpha(x)^0 = \frac{1}{(2 \pi h)^n} \int_{\R^n \times D'} e^{i \frac{\jap{x-y,\xi} + \Phi(\alpha,y)}{h}} p(x,\xi) a\p{\alpha,y} \mathrm{d}\xi \mathrm{d}y.
\end{split}
\end{equation*}
The oscillating integral here is well-defined because the singular support of a pseudo-differential operator is contained in the diagonal of $M \times M$. In order to manipulate this integral, we will approximate it by converging integrals. Indeed, we have
\begin{equation}\label{eq:approsimation_hatk_alt}
Pu_\alpha(x)^0 = \lim_{\delta \to 0} \underbrace{\frac{1}{(2 \pi h)^n} \int_{\R^n \times D'} e^{i\frac{\jap{x-y,\xi} + \Phi(\alpha,y) }{h}}  e^{- \delta \va{\xi}^2} p(x,\xi) a\p{\alpha,y} \mathrm{d}\xi \mathrm{d}y}_{\coloneqq k_\delta(x,\alpha)}.
\end{equation}
We would like to deal with \eqref{eq:approsimation_hatk_alt} using a stationary phase method. There are two obstacles that make this computation slightly subtle. The first one is to remove the regularization, which will be done by a contour deformation for large $\langle \xi\rangle$. The second is that when $\alpha_\xi$ grows large, the contributions from $|\xi| \ll |\alpha_\xi|$, must be $\mathcal{O}(\exp(-\langle\alpha\rangle/Ch))$. This low frequency estimate will crucially depend on the ellipticity of $d_x \Phi$. 

We will start by considering the case that $\alpha_\xi$ remains in a bounded set. Let us set
\[
\Psi_{x,\alpha}(y,\xi) =  \langle x-y,\xi\rangle + \Phi(\alpha,y).
\]
Let us introduce then for $\nu>0$ the contour
\begin{equation*}
\begin{split}
\Gamma_\nu^0 = \set{ \p{\xi + i \nu \langle\xi\rangle \overline{(x-y)}, y + i \chi(y) \nu \frac{\overline{\nabla_y \Psi_{x,\alpha}(\xi,y)}}{\jap{\va{\nabla_y \Psi_{x,\alpha}(\xi,y)}}}} : \xi \in \R^n,y \in D'},
\end{split}
\end{equation*}
where the bump function $\chi : \R^n \to \left[0,1\right]$ takes value $1$ on a neighbourhood of $D$ and is supported in the interior of $D'$. We observe that $\Gamma_\nu^0$ remains in the region where the amplitude in \eqref{eq:approsimation_hatk_alt} satisfies uniform symbolic estimates. If $\xi,y \in \R^n$ and $\xi$ is large enough then
\begin{equation*}
\begin{split}
\va{\nabla_y \Psi_{x,\alpha}\p{\xi, y}} \geq  \frac{\jap{\va{\xi}}}{C}.
\end{split}
\end{equation*}
Hence, we find by an application of Taylor's formula that for $\p{y,\xi} \in \Gamma_\nu^0$ with $\xi$ large we have (the constant $C > 0$ may vary from one line to another)
\begin{equation}\label{eq:on_fait_converger_tout_ca}
\begin{split}
\Im (&\Psi_{x,\alpha}(\xi,y)) \\
	&= \Im\p{\Psi_{x,\alpha}(\Re \xi,\Re y)} + \nu \langle \Re \xi\rangle \va{x - \Re y}^2 \\ 
	& \qquad + \nu \chi\p{\Re y} \frac{\va{\nabla_y \Psi_{x,\alpha}\p{\Re \xi, \Re y}}^2}{\jap{|\nabla_y \Psi_{x,\alpha}\p{\Re \xi, \Re y}|}} + \O\p{\nu^2 \chi(\Re y) \langle\xi\rangle} \\
   	&\geq \frac{\nu}{C} \p{\va{x - \Re y}^2 + \chi\p{\Re y}} \jap{\va{\xi}} - C \nu^2 \jap{\va{\xi}} - C \epsilon \\
   	&\geq \p{\nu \p{\frac{1}{C} - C \nu} - C \epsilon} \jap{\va{\xi}} \geq \frac{\nu \jap{\va{\xi}}}{C}. 
\end{split}
\end{equation}
Here, we took $x$ real, but allowed $\alpha$ to have imaginary part of size $\mathcal{O}(\epsilon)$. We used that if $\chi(\Re y) \neq 1$ then $\va{x - \Re y}$ is uniformly bounded from below (because $x$ is in $D$) and we assumed that $\nu$ was small enough and that $\epsilon \ll \nu$. Beware that \eqref{eq:on_fait_converger_tout_ca} is only valid for large $\xi$.

Accordingly, we can get rid of the approximation in \eqref{eq:approsimation_hatk_alt}. For this, first we can change the contour of integration from $\R^n\times D'$ to $\Gamma_\nu^0$ for $\nu>0$ small enough; there is no error term because we are in the $\G^1$ setting, and near the boundary of $\R^n_\xi \times D'_y$, we are only changing contour in the $\xi$ variable. Then we apply the Dominated Convergence Theorem and let $\delta\to 0$. Next, the integral over $\Gamma_\nu^0$ can be split into two parts. The piece corresponding to $|\xi|> C$ for some $C>0$ large enough will be $\mathcal{O}(\exp(-C/h))$, and this can be upgraded to a bound in $\G^1$ using Lemma \ref{lemma:norm-exponentials}. The resulting estimate is
\[\begin{split}
Pu_\alpha(x)^0 = \frac{1}{(2\pi h)^n} \int_{\substack{(y,\xi)\in\Gamma_\nu^0,\\ |\xi|\leq C}} e^{\frac{i}{h} \Psi_{x,\alpha} (\xi,y)} p(x, \xi) a(\alpha,y) \mathrm{d}\xi \mathrm{d}y + \mathcal{O}_{\mathcal{G}^1}\left(\exp\left(-\frac{1}{Ch} \right)\right).
\end{split}
\]
For the integral in the right hand side, we observe that the phase has positive imaginary part near the boundary of the domain, and a single critical point at $y=x$, $\xi= d_x \Phi(\alpha,x)$. At the critical point, $\Psi_{\alpha_x,x}$ vanishes and the Hessian is 
\[
\begin{pmatrix}
D_{x,x}^2 \Phi(\alpha,\alpha_x) & -1 \\ - 1 & 0
\end{pmatrix},
\]
which is invertible with determinant $(-1)^n$. To apply Proposition \ref{prop:HSP}, it remains to check that the imaginary part of the phase is non-negative on the whole contour for some value of the parameters. In the region where $\chi(\Re y)\neq 1$ this imaginary part is already strictly positive before deformation thanks to item (v) of Definition \ref{def:coherent-state}. In the region where $\chi(\Re y)=1$, a Taylor expansion argument shows that the imaginary part of $\Psi_{x,\alpha}$ can only increase going from $\R^n$ to $\Gamma_\nu^0$ along the contour deformation (including near non-degenerate critical points). It remains to observe that before the contour deformation, taking $x=\alpha_x$ and $\alpha_\xi$ real give parameters for which $\Im\Psi_{x,\alpha}\geq 0$ everywhere. We deduce that $Pu_\alpha$ behaves indeed like a $\G^1$ family of coherent states, at least for $\alpha_\xi$ in bounded domains, with the same phase as $u_\alpha$. Additionally, we find that the amplitude, as announced, is given by
\[
b = p(x,d_x\Phi(\alpha,x))a(\alpha,x) \mod h\G^1. 
\]

Now, we need to derive similar estimates, when $\alpha_\xi$ becomes large. Since we are only searching for uniform symbolic estimates, we can work locally in $\alpha_\xi$ (provided we indeed get uniform estimates): we pick $\xi_0\in\R^n$ satisfying $|\xi_0|=1$, $\lambda>C$, and we assume that $\lambda^{-1} \alpha_\xi$ remains in $B(\xi_0, \eta)_\epsilon$. Changing variable in \eqref{eq:approsimation_hatk_alt}, we find
\begin{equation}\label{eq:k_epsilon}
\begin{split}
k_{\delta}(x,\alpha) = \p{\frac{\lambda}{2 \pi h}}^n \int_{\R^n \times D'} \hspace{-10pt} e^{i \frac{\lambda}{h} \Psi_{x,\alpha,\lambda}(\xi,y) } e^{- \delta \lambda^2 \va{\xi}^2} p\p{x,\lambda \xi} a\p{\alpha,y} \mathrm{d}\xi \mathrm{d}y,
\end{split}
\end{equation}
where the phase $\Psi_{x,\alpha,\lambda}$ is defined by
\begin{equation*}
\begin{split}
\Psi_{x,\alpha,\lambda}(\xi,y) = \langle x-y, \xi \rangle + \frac{\Phi(\alpha,y)}{\lambda}.
\end{split}
\end{equation*}
In order to regularize the integral in \eqref{eq:k_epsilon} and get rid of the approximation, we introduce for $\nu > 0$ the contour
\begin{equation*}
\begin{split}
\Gamma_\nu^1 = \set{ \p{\xi + i \nu \va{\xi} \overline{(x-y)}, y + i \chi(y) \nu \frac{\overline{\nabla_y \Psi_{x,\alpha,\lambda}(\xi,y)}}{\jap{\va{\nabla_y \Psi_{x,\alpha,\lambda}(\xi,y)}}}} : \xi \in \R^n,y \in D'}.
\end{split}
\end{equation*}
Along $\Gamma_\nu^1$, the imaginary part of the phase becomes positive, as we explain now. The easiest observation is that estimate \eqref{eq:on_fait_converger_tout_ca} remains valid for $\Psi_{x,\alpha,\lambda}$ when $|\xi|$ is large enough, uniformly in $\lambda$. This is sufficient to see, as before, that
\begin{equation*}
\begin{split}
Pu_\alpha(x)^0 = \p{\frac{\lambda}{2 \pi h}}^n \int_{\Gamma_\nu^1} e^{i \frac{\lambda}{h} \Psi_{x,\alpha,\lambda}(\xi,y) } p\p{x,\lambda \xi} a\p{\alpha,y} \mathrm{d}\xi \mathrm{d}y.
\end{split}
\end{equation*}
The difference with the first part of the argument is that because of the rescaling, we only have uniform estimates on the amplitude on domains (in which $\Gamma_\nu^1$ is thankfully contained) where $\va{\Im \xi} \leq C^{-1} \va{\Re \xi}$ for some $C > 0$ (rather than for $\va{\Im \xi} \leq C^{-1} \jap{\Re \xi}$).

Using the same arguments as before (i.e the lower bound \eqref{eq:on_fait_converger_tout_ca} and Lemma \ref{lemma:norm-exponentials}), we observe again that for some $C>0$, (recall that $\lambda\simeq \jap{\va{\alpha}}$)
\begin{equation*}
 \begin{split}
 Pu_\alpha(x)^0 = \p{\frac{\lambda}{2 \pi h}}^n \int_{\set{ \substack{\xi \in \Gamma_\nu^1 \\ \jap{\va{\xi}} \leq C} }} e^{i \frac{\lambda}{h} \Psi_{x,\alpha,\lambda}(\xi,y) } & p\p{x, \lambda \xi} a\p{\alpha,y} \mathrm{d}\xi \mathrm{d}y \\ & + \O_{\mathcal{G}^1}\p{\exp\p{- \frac{\jap{\va{\alpha}}}{C h}}}.
\end{split}
\end{equation*} 

Contrary to the first case, we do not directly apply a stationary phase lemma at this stage. Instead we observe that for $y,\xi$ real and $\xi$ small enough, and using the ellipticity of $d_x\Phi$,
\begin{equation*}
\left| \nabla_y \Psi_{x,\alpha,\lambda}(\xi,y)\right|\geq \frac{1}{C}.
\end{equation*}
With a computation similar to that in \eqref{eq:on_fait_converger_tout_ca}, we obtain that in $\Gamma_\nu^1$ for $|\xi|$ small enough,
\begin{equation*}
\Im(\Psi_{x,\alpha,\lambda}(\xi,y))\geq \nu/C.
\end{equation*}
Here, the ellipticity gained from $\nabla_y \Psi_{x,\alpha,\lambda}$ is useful when $\chi(\Re y) = 1$, and in the regime where $\chi(\Re y)\neq 1$, we rely on the fact that $y$ cannot be too close to $\alpha_x$, and thus $\Im \Phi(\alpha,x)\geq \lambda \eta^2 /C$. Applying again Lemma \ref{lemma:norm-exponentials} we deduce that
\begin{equation*}
 \begin{split}
 Pu_\alpha(x)^0 = \p{\frac{\lambda}{2 \pi h}}^n \int_{\set{ \substack{\xi \in \Gamma_\nu^1 \\ 1/C\leq  \jap{\va{\xi}} \leq C} }} e^{i \frac{\lambda}{h} \Psi_{x,\alpha,\lambda}(\xi,y) } & p\p{x, \lambda \xi} a\p{\alpha,y} \mathrm{d}\xi \mathrm{d}y \\ & + \O_{\mathcal{G}^1}\p{\exp\p{- \frac{\jap{\va{\alpha}}}{C h}}}.
\end{split}
\end{equation*} 
The method of stationary phase can now be applied to the integral in the right hand side without further problem. Indeed, the amplitude and phase satisfy uniform bounds with respect to the parameters, the phase is uniformly positive on the boundary of the domain on integration, has a unique non-degenerate critical point. We also obtain that $\Im \Psi_{x,\alpha,\lambda}\geq 0$ on the whole contour for $x=\alpha_x$ and $\alpha_\xi$ real with the same justifications as before. We deduce that
\begin{equation*}
Pu_\alpha(x)^0 = e^{i\frac{\Phi(\alpha,x)}{h}}b(\alpha,x) + \mathcal{O}_{\G^1}\left(\exp\left(-\frac{\lambda}{Ch}\right)\right).
\end{equation*}
Actually, we obtain a phase and an amplitude formally depending on $\lambda$, satisfying uniform bounds. However the coefficients appearing in the stationary phase expansion do not depend on the rescaling that we performed, so that they do not actually depend on $\lambda$, and the uniform bounds translate into the expected analytic symbol estimates. We also obtain the expected formula for the first term asymptotics of $b \mod h S^{1,m+\ell-1}$.
\end{proof}

\subsection{Basic properties of Gevrey pseudors}\label{sec:pseudor_technique}

Now that we have obtained the necessary basic results regarding the kernels of Gevrey pseudors, we can prove their basic properties as operators that we stated in \S \ref{subsection:Gevrey_pseudor_manifold}. The elimination of variables Lemma \ref{lemma:calcul_pseudo} will be essential here. We start with the easy observation that the space of Gevrey pseudors is stable by taking adjoints. 

\begin{proof}[Proof of Proposition \ref{prop:adjoint_pseudor}]
Let us denote by $d\mu_0$ the analytic reference volume form with respect to which we have computed kernels so far. Certainly, for some $\G^s$ non-vanishing function $f$, $\mathrm{d}\mu = f \mathrm{d}\mu_0$. If $K_A(x,y)$ denotes the kernel of $A$, then the kernel of the adjoint $A^*$ of $A$ with respect to $d\mu$ is $\overline{K_A(y,x)}f(y)/f(x)$. Hence, it is clear that the kernel of $A^*$ is $\O_{\G^s}(\exp(- 1 /(C h)^{1/s}))$ away from the diagonal (because so is $K_A$). Then, it follows from Lemma \ref{lemma:calcul_pseudo} that the kernel of $A^*$ is of the form \eqref{eq:local_pseudor} in local coordinates (that we may choose volume preserving) as in the proof of Lemma \ref{lemma:droite_et_gauche}.
\end{proof}

It is then straightforward to deduce the mapping properties of $\G^s$ pseudors (Proposition \ref{prop:mapping_pseudor}) from Lemma \ref{lemma:basic-Gs-boundedness-pseudo} and Proposition \ref{prop:adjoint_pseudor}. We turn now to the stability under composition (Proposition \ref{prop:composition_pseudors}). The proof relies on Lemmas \ref{lemma:loin_de_la_diagonale},\ref{lemma:basic-Gs-boundedness-pseudo} and \ref{lemma:calcul_pseudo}.
\begin{proof}[Proof of Proposition \ref{prop:composition_pseudors}]
We start by proving that the kernel $K_{PQ}$ of $PQ$ is small in $\G^s$ away from the diagonal. We only need to consider the $\Gs$ norm of $K_{PQ}$ on $K = D_1 \times D_2$ when $D_1$ and $D_2$ are disjoint closed balls in $M$. Then, we may choose disjoint closed balls $D_1'$ and $D_2'$ such that $D_1$ and $D_2$ are contained respectively in the interior of $D_1'$ and $D_2'$. If $x \in D_1$ and $y \in D_2$, we may write
\begin{equation*}
\begin{split}
K_{PQ}(x,y) & = \int_{M} K_P(x,z) K_Q(z,y) \mathrm{d}z \\
            & = \p{\int_{D_1'} + \int_{D_2'} + \int_{M \setminus \p{D_1' \cup D_2'}}}  K_P(x,z) K_Q(z,y) \mathrm{d}z.
\end{split}
\end{equation*}
This splitting of the integral makes sense, considering the singular supports of $K_P$ and $K_Q$. The integral over $M\setminus \p{D_1' \cup D_2'}$ is easily dealt with since the integrand is an $\O(\exp(-1/( C h)^{1/s} ))$ in $\G^s$. The integral over $D_1'$ is dealt with by Lemma \ref{lemma:basic-Gs-boundedness-pseudo}. The integral over $D_2'$ is dealt with by Lemma \ref{lemma:basic-Gs-boundedness-pseudo} after using Lemma \ref{lemma:calcul_pseudo} to write $K_Q$ as the right quantization of a $\G^s$ symbol.

We need now to understand the kernel $K_{PQ}(x,y)$ near the diagonal, and this can be done in local coordinates. Here, the kernel $K_P$ may be written as a left quantization modulo a small $\Gs$ error, as in \eqref{eq:local_pseudor}. We may decompose $K_Q$ similarly and then, applying Lemma \ref{lemma:calcul_pseudo}, we can express $K_Q$ as a right quantization, i.e.
\begin{equation}\label{eq:local_twisted}
\begin{split}
K_Q(x,y) = \frac{1}{(2 \pi h)^n} \int_{\R^n} e^{i \frac{\jap{x-y,\xi}}{h}} q(\xi,y) \mathrm{d}\xi + \O_{\G^s}\p{\exp\p{- \frac{1}{C h^{\frac{1}{s}}}}},
\end{split}
\end{equation}
where $q$ is a symbol as in Definition \ref{def:gevrey_symbol}. In order to fix notations, let us say that \eqref{eq:local_pseudor} and \eqref{eq:local_twisted} hold in a ball $D'$ (in local coordinates) and that we want to compute the kernel $K_{PQ}(x,y)$ of $PQ$ in a strictly smaller ball $D$. The remainder terms in \eqref{eq:local_pseudor} and \eqref{eq:local_twisted} are dealt with as in the non-diagonal case, applying Lemma \ref{lemma:basic-Gs-boundedness-pseudo}, and we will ignore them.

Let then $\psi$ be a $\mathcal{C}^\infty$ function from $M$ to $\left[0,1\right]$ that vanishes outside $D'$ and is identically equal to $1$ on a neighbourhood of $D$. We may write for $x,y \in D$
\begin{equation}\label{eq:decomposition_KPQ}
\begin{split}
K_{PQ}(x,y) = \int_{D'} K_P(x,z)K_Q(z,y) \psi(z) \mathrm{d}z + \int_{M}K_P(x,z)K_Q(z,y) \p{1 -\psi(z)} \mathrm{d}z.
\end{split}
\end{equation}
The second term in the right hand side of \eqref{eq:decomposition_KPQ} is a $\O(\exp(-Ch^{1/s}))$ in $\G^s$ because it only involves off diagonal parts of $K_P$ and $K_Q$. In order to deal with the first term in \eqref{eq:decomposition_KPQ}, we apply \eqref{eq:local_pseudor} and \eqref{eq:local_twisted} (ignoring the remainder terms as we announced it) to write the oscillatory integral
\begin{equation}\label{eq:noyau_composition}
\begin{split}
		\int_{D'} K_P&(x,z) K_Q(z,y) \psi(z) \mathrm{d}z \\ 
		&=\frac{1}{(2 \pi h)^{2n}} \int_{\R^n \times \R^n \times \R^n}\hspace{-20pt} e^{i \frac{\jap{x-z,\xi} + \jap{z - y,\eta}}{h}} p(x,\xi) \psi(z) q(\eta,y) \mathrm{d}\xi \mathrm{d}z \mathrm{d}\eta \\
		&= \frac{1}{(2 \pi h)^n} \int_{\R^n} e^{i \frac{\jap{x - y,\xi}}{h}} p(x,\xi)q(\xi,y) \mathrm{d}\xi \\
		&\quad + \frac{1}{\p{2 \pi h}^{2n}} \int_{\R^n \times \R^n \times \R^n}\hspace{-20pt} e^{i \frac{\jap{x-z,\xi} + \jap{z - y,\eta}}{h}} p(x,\xi) \p{\psi(z) - 1} q(\eta,y) \mathrm{d}\xi \mathrm{d}z \mathrm{d}\eta.
\end{split}
\end{equation}
Here, we recognized a Fourier inversion formula. The first term in the right hand side of \eqref{eq:noyau_composition} is of the form prescribed by (ii) in Definition \ref{def:gevrey_pseudor} thanks to Lemma \ref{lemma:calcul_pseudo}. Hence, we only need to prove that the second term in the right hand side of \eqref{eq:noyau_composition} is an $\O(\exp(-1/(C h)^{1/s}))$ in $\G^s$. To do so, notice that the factor $1 - \psi(z)$ in the integrand ensures that we only integrate over $z$'s that are uniformly away from $x$ and $y$. Hence, the phase is non-stationary in $\eta$ and $\xi$, and we may shift the contour in $\eta$ and $\xi$ to prove that the integral is negligible (as we did in the proofs of Lemmas \ref{lemma:loin_de_la_diagonale} and \ref{lemma:basic-Gs-boundedness-pseudo}, we do not repeat the details). Notice that we do not need to shift contour in $z$, so that it does not matter that $\psi$ is only $\mathcal{C}^\infty$.
\end{proof}

Now, we use Proposition \ref{prop:pseudor_lagrangien_alt} to construct a principal symbol for a $\G^1$ pseudor and prove Proposition \ref{prop:existence_principal_symbol}.
\begin{proof}[Proof of Proposition \ref{prop:existence_principal_symbol}]
When $s > 1$, the existence of Gevrey partitions of unity make the proof an easy consequence of Lemmas \ref{lemma:loin_de_la_diagonale} and \ref{lemma:changement_de_variable} (see also Remark \ref{remark:developpement_asymptotique}). This is a quite standard procedure and thus we do not detail it (see for instance \cite[Theorem 14.1]{zworskiSemiclassicalAnalysis2012} for the $\mathcal{C}^\infty$ version). Consequently, we turn now to the case $s=1$ which relies on an application of Proposition \ref{prop:pseudor_lagrangien_alt}.

Let $\Phi^\ast$ be some admissible phase, and define
\begin{equation*}
\begin{split}
\Phi(\alpha,x) = - \overline{\Phi^*(\bar{\alpha},\bar{x})}.
\end{split}
\end{equation*}
Certainly, we have
\[
\Phi(\alpha,x)  = \langle x  - \alpha_x ,\alpha_\xi\rangle + \O(\langle|\alpha|\rangle |\alpha_x-x|^2).
\]
Comparing this with \eqref{eq:Behaviour-Taylor-Phase-admissible}, we see that $\Phi$ behaves as an admissible phase, were it not for some sign change. Using the $\bar{\partial}$ trick (Lemma \ref{lemma:approximation_analytique}), we can find a $\G^1$ family of coherent states of order $0$ with phase $\Phi$ and amplitude $1$, denoted $u_\alpha$. 

Let $P\in \G^1\Psi^m$. According to Proposition \ref{prop:pseudor_lagrangien_alt}, $Pu_\alpha$ is another $\G^1$ family of coherent states, and this implies that
\[
p(\alpha):= P u_\alpha(\alpha_x)
\]
defines an element of $S^{1,m}(T^\ast M)$. Since $\mathrm{d}_x\Phi(\alpha,\alpha_x) = \alpha_\xi$, then \eqref{eq:first-order-expansion-Pu_alpha} implies that $p$ is a $\G^1$ principal symbol for $P$.
\end{proof}

\begin{remark}
Notice that a $\G^1$ principal symbol for the $\G^1$ pseudor with kernel $K_{\Phi,a}$ defined by \eqref{eq:noyau_non_standard} is $a(x,\xi,x)$.
\end{remark}

Finally, we use the notion of pseudor with non-standard phase that we introduced in \S \ref{sssecKuranishi} to define a quantization procedure for analytic symbols.

\begin{proof}[Proof of Proposition \ref{prop:quantization_Gs}]
Thanks to Lemma \ref{lemma:changement_de_variable}, the case $s > 1$ is dealt with as in the $\mathcal{C}^\infty$ case: there are $\G^s$ partitions of unity. In order to give a construction for the case $s=1$, let us set for $p$ a $\G^1$ symbol
\begin{equation*}
\begin{split}
K_P(x,\alpha_x):= \frac{1}{(2 \pi h)^n} \int_{T_{\alpha_x}^* M} u_\alpha(x) p(\alpha) \mathrm{d}\alpha_\xi,
\end{split}
\end{equation*}
where $u_\alpha$ is the $\G^1$ family of coherent states from the previous proof. This is well defined as an oscillatory integral. It is a distribution on $M\times M$. It is $\O(\exp(- 1/Ch))$ in $\G^1$ away from the diagonal. Moreover, near the diagonal we may apply the Kuranishi trick (Lemma \ref{lmkurtrick}) in local coordinates to see that $K_P$ is the kernel of a $\G^1$ pseudor $P$ with $\G^1$ principal symbol $p$ (notice that we retrieve an admissible phase in local coordinates, because we changed the sign but we also switched variables $x$ and $\alpha_x$). We may then set $\Op(p) = P$ and the proof is over.
\end{proof}

\begin{remark}
The $\Op$ we have constructed does not satisfy the usual relations $\Op(1)=1$. We can remedy this. Indeed using the parametrix construction from the next section, one can see that the inverse $\Op(1)^{-1}$, which is defined when $h$ is small enough, is a $\G^1$ pseudor, and then define a new quantization by $\widetilde{\Op}(p) = \Op(1)^{-1} \Op(p)$.
\end{remark}

\subsection{Elliptic analytic pseudors}\label{sec:elliptic-pseudors}

When restricting to $\G^1$ pseudors, we can obtain stronger statements, in particular when they are elliptic. In this section we prove Theorem \ref{thm:parametrix} and \ref{theorem:functional_calculus}.

\subsubsection*{A parametrix construction}\label{sssecparametrix}

We study now the construction of a parametrix for an elliptic $\G^1$ pseudor $A$. We will start by focusing on the construction of the formal symbol of the parametrix of $A$ in the Euclidean case. We will then use this local construction to prove Theorem \ref{thm:parametrix}.

Notice that given formal analytic symbols $\underline{a} = \sum_{k \geq 0} h^k a_k$ and $\underline{b} = \sum_{k \geq 0} h^k b_k$, we can always define the formal symbol of the composition $\underline{a} \# \underline{b}$ by
\[
\underline{a} \# \underline{b} = \sum_{k\geq 0} \sum_{\va{\alpha} + \ell + m = k} \frac{h^k}{\alpha!} \partial_\xi^\alpha a_\ell \partial_x^\alpha b_m. 
\]
It is easy to show that it defines in fact a formal analytic symbol. Let $a$ and $b$ be realizations respectively of $\underline{a}$ and $\underline{b}$ (in the sense of Definition \ref{def:realisation_Gevrey}). Let $A$ and $B$ be $\G^1$ pseudors given in local coordinates by the the formula \eqref{eq:local_pseudor} with symbols respectively $a$ and $b$. We know by Proposition \ref{prop:composition_pseudors} that $C = A B$ is a $\G^1$ pseudor, so that it is given in local coordinates by the formula \eqref{eq:local_pseudor} with a certain symbol $c$. Then, recall Remark \ref{remark:developpement_asymptotique}, the symbol $c$ is obtained in the proof of Proposition \ref{prop:composition_pseudors} by an application of Lemma \ref{lemma:calcul_pseudo}, and, consequently, $c$ is a realization of the formal symbol $\underline{a} \# \underline{b}$. The first step in a parametrix construction is hence the following:

\begin{lemma}\label{lmparametrix}
Let $U$ be an open subset of $\R^n$ and let $m \in \R$. Let the formal symbol $\underline{a} = \sum_{k \geq 0} h^k a_k \in FS^{1,m}\p{T^* U}$ be elliptic in the sense that $a_0$ satisfies \eqref{eq:def_elliptique} in any relatively compact subset of $U$ (with a constant $C$ that may depend on the subset). If $\Omega$ is a relatively compact open subset of $U$, then there is a formal analytic symbol $\underline{b}$ of order $-m$ on $\Omega$ such that $\underline{b} \# \underline{a} = 1$.
\end{lemma}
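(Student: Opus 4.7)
The plan is to construct $\underline{b} = \sum_{k\geq 0} h^k b_k$ order by order, by solving the recursion imposed by $\underline{b}\#\underline{a} = 1$, and then to verify that the analytic symbol estimates propagate through this recursion.

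Writing out $\underline{b}\#\underline{a} = 1$ and collecting powers of $h$ gives
\begin{equation*}
b_0 a_0 = 1, \qquad \sum_{\ell + j + |\alpha| = k} \frac{1}{\alpha!}\partial_\xi^\alpha b_\ell \, \partial_x^\alpha a_j = 0 \quad (k\geq 1).
\end{equation*}
The ellipticity of $a_0$ allows one to set $b_0 = 1/a_0$, which is holomorphic on a slightly smaller Grauert tube $(T^*\Omega')_{\epsilon}$ (for any $\Omega'\Subset U$) and is of order $-m$. Isolating the term $\ell = k$, $j=|\alpha|=0$ in the recursion yields
\begin{equation*}
b_k = -\frac{1}{a_0} \sum_{\substack{\ell + j + |\alpha| = k \\ \ell < k}} \frac{1}{\alpha!} \partial_\xi^\alpha b_\ell \, \partial_x^\alpha a_j,
\end{equation*}
which defines $b_k$ inductively as a holomorphic function on a Grauert tube of $T^*\Omega$ (slightly smaller than the one on which $b_0,\dots,b_{k-1}$ were constructed, so that Cauchy estimates can be applied).

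The key quantitative step — and the only nontrivial point — is the propagation of the estimate
\begin{equation*}
|b_k(x,\xi)| \leq C_0 R_0^k \, k! \, \langle \Re\xi\rangle^{-m-k}
\end{equation*}
on a fixed Grauert tube of $T^*\Omega$. I would fix a slightly larger tube $(T^*\Omega')_{2\epsilon}\supset (T^*\Omega)_{\epsilon}$ on which the $a_j$'s satisfy uniform bounds $|a_j|\leq CR^j j!\langle\Re\xi\rangle^{m-j}$, and then prove the bound on $b_k$ inside $(T^*\Omega)_\epsilon$ by induction on $k$. Cauchy's inequalities applied to $b_\ell$ on an intermediate tube of radius depending on $k$ (for instance shrinking by $\epsilon/(Ck)$ at the $k$-th step, in the style of a Cauchy--Kovalevskaya scheme) produce bounds of the form
\begin{equation*}
|\partial_\xi^\alpha b_\ell| \leq C_0 R_0^\ell \ell! \, (C/\epsilon)^{|\alpha|}\alpha! \, k^{|\alpha|} \langle\Re\xi\rangle^{-m-\ell-|\alpha|},
\end{equation*}
and combined with the analogous bounds on $\partial_x^\alpha a_j$ they feed the recursion. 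The combinatorial identity $\sum_{\ell+j+|\alpha|=k,\,\ell<k} \ell!\, j!\,|\alpha|!/\alpha! \lesssim k!$ (with a geometric constant) is the standard Boutet de Monvel--Krée bookkeeping: the product of factorials in the numerator is absorbed by $k!$ up to a constant $C^k$, and the factors $k^{|\alpha|}\lesssim e^{|\alpha|}|\alpha|!/\text{something}$ coming from the Cauchy shrinking are controlled by choosing $R_0$ large compared to $R$, $C$, $1/\epsilon$. This gives the inductive step, hence the uniform estimates that qualify $\underline{b}$ as a formal analytic symbol in $FS^{1,-m}(T^*\Omega)$.

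The main obstacle, therefore, is not the algebra — which is simply triangular inversion — but this combinatorial/analytic control of the factorial growth: one must arrange the Cauchy shrinking and the choice of $R_0$ so that the inductive estimate closes. This is exactly the mechanism behind Sjöstrand's formulation in \cite[Theorem 1.5]{Sjostrand-82-singularite-analytique-microlocale}, and I would follow that scheme, adapted to the semiclassical/Kohn--Nirenberg setting used throughout \S\ref{sec:Gevrey-symbols}. Once the estimates are in place, both relations $\underline{b}\#\underline{a} = 1$ and (by symmetry, or by multiplying on the other side and uniqueness of the formal inverse in the algebra $FS^{1,\cdot}$) the corresponding $\underline{a}\#\underline{b}=1$ hold as identities of formal analytic symbols.
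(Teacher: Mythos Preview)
Your algebraic setup is correct, and the direct recursion you describe can be made to work --- this is essentially the original Boutet de Monvel--Kr\'ee argument. However, the paper takes a genuinely different route, and your sketch of the combinatorial closing is where the comparison is instructive.

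The paper does not carry out the induction on $b_k$ directly. Instead, following Sj\"ostrand, it associates to a formal symbol $\underline{a}$ a family of differential operators $A_k$ acting on nested Banach spaces $B(V_{t,\lambda})$, and packages the estimates into a pseudo-norm $\|\underline{a}\|_{m,\rho} = \sum_k \rho^k f_{k,m}(\underline{a})$. The key lemma is that this norm is \emph{submultiplicative}: $\|\underline{a}\#\underline{b}\|_{m_1+m_2,\rho} \leq \|\underline{a}\|_{m_1,\rho}\|\underline{b}\|_{m_2,\rho}$. The proof of submultiplicativity uses the optimal intermediate-domain trick: when composing $A_{k_1}\circ B_{k_2}$ between domains of radii $s<t$, one passes through the intermediate radius $u$ with $t-u = (t-s)k_1/k$ and $u-s = (t-s)k_2/k$, which gives exactly $k_1^{k_1}k_2^{k_2}(t-u)^{-k_1}(u-s)^{-k_2} \leq k^k(t-s)^{-k}$. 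With this in hand, the parametrix is a clean Neumann series: $b_0 = 1/a_0$, then $b_0\#\underline{a} = 1 - \underline{r}$ with $\underline{r}$ having no leading term, hence $\|\underline{r}\|_{0,\rho}<1$ for small $\rho$, and $\underline{b} = \sum_{k\geq 0}\underline{r}^{\#k}\# b_0$.

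Your direct scheme is the unrolled version of this, but the specific mechanism you describe is not quite right. Shrinking uniformly by $\epsilon/(Ck)$ at step $k$ does not give you $b_k$ on a fixed domain (the harmonic series diverges), and the estimate $k^{|\alpha|}\lesssim e^{|\alpha|}|\alpha|!$ you invoke is false when $|\alpha|$ is small relative to $k$. What actually closes the induction is precisely the non-uniform, index-dependent shrinking encoded in Sj\"ostrand's norm: each factor in the composition must be allotted a share of the total shrinking proportional to its own index, not to $k$. So while you are right to point to \cite[Theorem 1.5]{Sjostrand-82-singularite-analytique-microlocale}, your sketch does not actually implement that mechanism; the paper's formal-norm packaging does, and this is what it buys --- the combinatorics is done once in the submultiplicativity lemma, and the parametrix itself becomes a one-line Neumann argument.
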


The usual $\mathcal{C}^\infty$ parametrix construction gives a candidate for $\underline{b}$, built by induction, and implies that there can be at most one solution to this problem. However, it is not clear then that the formal symbol $\underline{b}$ that we obtain is indeed a formal analytic symbol. To do so, we could probably work as in \cite{Boutet-Kree-67}, but we rather adapt the proof of \cite[Theorem 1.5]{Sjostrand-82-singularite-analytique-microlocale}. Since we only want to establish the regularity of the solution, we may assume that $\Omega$ is a ball.

With a formal analytic symbol $\underline{a} = \sum_{k \geq 0} h^k a_k \in F S^{1,m}\p{T^*U}$, we associate the formal sum of differential operators
\begin{equation*}
\begin{split}
F(\underline{a}) = \sum_{k \geq 0} h^k A_k,
\end{split}
\end{equation*}
where $A_k$ is defined by the finite sum
\begin{equation}\label{eqdefAk}
\begin{split}
A_k = \sum_{ \ell + \va{\alpha} = k} \frac{1}{i^{\va{\alpha}} \alpha!} \partial_\xi^\alpha a_\ell(x,\xi) \partial_x^\alpha.
\end{split}
\end{equation}
If $\underline{b} = \sum_{k \geq 0} h^k b_k$ belongs to $F S^{1,m}\p{T^*U}$, we may define similarly a formal sum $F(\underline{b}) = \sum_{k \geq 0} h^k B_k$ of differential operator. Then, it is natural to define the composition $F(\underline{a}) \circ F(\underline{b})$ by 
\begin{equation}\label{eqcompositionform}
\begin{split}
F(\underline{a}) \circ F(\underline{b}) = \sum_{k \geq 0} h^k \sum_{k_1 + k_2 = k} A_{k_1} \circ B_{k_2}.
\end{split}
\end{equation}
Notice that with this convention we have that $F(\underline{a}) \circ F(\underline{b}) = F(\underline{a} \# \underline{b})$. Moreover, we may retrieve the formal symbol $\underline{a}$ from the operator $F(\underline{a})$ by noticing that $a_k = A_k(1)$. Hence, the associativity of $\circ$ implies the associativity of $\#$.

Now, choose $\epsilon_0 > 0$ small and define for $t \in \left[0, \epsilon_0\right]$ and $\lambda \in \N$ the open subset $V_{t,\lambda}$ of $\p{T^*U}_{\epsilon_0}$ by
\begin{equation*}
\begin{split}
V_{t,0} = \set{(x,\xi) \in \p{T^* U}_{\epsilon_0} : d(x,\Omega) < t \textup{ and } \va{\Re \xi} < 2}
\end{split}
\end{equation*}
and for $\lambda \geq 1$
\begin{equation*}
\begin{split}
V_{t,\lambda} = \set{(x,\xi) \in \p{T^* U}_{\epsilon_0} : d(x,\Omega) < t \textup{ and } 2^{\lambda - 1} < \va{\Re \xi} < 2^{\lambda+1}}.
\end{split}
\end{equation*}
Then, we denote by $B\p{V_{t,\lambda}}$ the Banach space of bounded holomorphic functions on $V_{t,\lambda}$. For $s,t \in [0,\epsilon_0]$, we let $\n{\cdot}_{s,t,\lambda}$ denote the operator norm for operators from $B\p{V_{t,\lambda}}$ to $B\p{V_{s,\lambda}}$. It follows then by Cauchy's formula that there are constants $C,R > 0$ such that for all $s < t \in \left[0 ,\epsilon_0 \right], \lambda \in \N$ and $\alpha \in \N^d$, we have
\begin{equation*}
\begin{split}
\n{\partial_x^\alpha}_{t,s,\lambda} \leq C R^{\va{\alpha}} \alpha! \p{t-s}^{- \va{\alpha}}.
\end{split}
\end{equation*}
Now, using Cauchy's formula again and recalling that $\underline{a}$ is a formal analytic symbol, we see that, provided $\epsilon_0$ is small enough, there are constants $C,R > 0$ such that for all $\ell, \lambda \in \N$ and $\alpha \in \N^d$ the function $\partial_\xi^\alpha a_\ell $ is bounded by $C R^{\va{\alpha} + \ell} \alpha! \ell! 2^{\lambda\p{m - \ell - \va{\alpha}}}$. Hence, up to taking $C,R > 0$ larger, the norm of $A_k$, defined by \eqref{eqdefAk}, as an operator from $B(V_{t,\lambda})$ to $B\p{V_{s,\lambda}}$ satisfies
\begin{equation*}
\begin{split}
\n{A_k}_{t,s,\lambda} \leq C R^k k! 2^{\lambda(m-k)} \p{t-s}^{- k},
\end{split}
\end{equation*}
for all $k,\lambda \in \N$ and $0 \leq s < t \leq \epsilon_0$. Define then for every $k \in \N$ the quantity (the index $m$ reminds us that $\underline{a}$ has order $m$)
\begin{equation}\label{eqdefpseudcoeff}
\begin{split}
f_{k,m}(\underline{a}) = \sup_{\substack{ 0 \leq s < t \leq \epsilon_0 \\ \lambda \in \N}} \frac{\p{t-s}^k \n{A_k}_{s,t,\lambda}}{k^k 2^{\lambda(m-k)}}.
\end{split}
\end{equation}
We have then $f_{k,m}\p{\underline{a}} \leq C R^k$ and hence for some $\rho > 0$ the pseudo-norm
\begin{equation}\label{eqdefpseudonorm}
\begin{split}
\n{\underline{a}}_{m,\rho} \coloneqq \sum_{k \geq 0} \rho^k f_{k,m}\p{\underline{a}}
\end{split}
\end{equation}
is finite. Now, notice that if $\underline{a} = \sum_{k \geq 0} h^k a_k$ is a formal sum such that $a_k \in S^{m-k}\p{ T^* \Omega }$ is defined on $\p{T^* \Omega}_{\epsilon_0}$, then we may still define the operator $A_k$ by \eqref{eqdefAk}, thus the coefficient $f_{k,m}\p{\underline{a}}$ by \eqref{eqdefpseudcoeff} and then the pseudo-norm $\n{\underline{a}}_{m,\rho}$ by \eqref{eqdefpseudonorm}. Then, if there is $\rho > 0$ such that $\n{\underline{a}}_{m,\rho}$ is finite, there are $C,R > 0$ such that $f_{k,m}\p{\underline{a}} \leq C R^k$ and hence using that $a_k = A_k\p{1}$, we find that $\underline{a}$ is a formal analytic symbol in $FS^{1,m}(T^* \Omega)$. From now on, we work with such formal sums of symbols. The advantage is that we have the following lemma.

\begin{lemma}\label{lmpasexactementassez}
Let $\underline{a}$ and $\underline{b}$ be formal sums of symbols as above. Assume that there are $m_1,m_2 \in \R$ and $\rho > 0$ such that $\n{\underline{a}}_{m_1,\rho}$ and $\n{\underline{b}}_{m_2,\rho}$ are finite. Then $\n{\underline{a} \# \underline{b}}_{m_1+m_2,\rho}$ is finite and
\begin{equation*}
\begin{split}
\n{\underline{a} \# \underline{b}}_{m_1+m_2,\rho} \leq \n{\underline{a}}_{m_1,\rho} \n{\underline{b}}_{m_2,\rho}.
\end{split}
\end{equation*}
\end{lemma}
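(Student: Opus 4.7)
The plan is to exploit the multiplicative structure built into the definitions: since $F(\underline{a}\#\underline{b}) = F(\underline{a})\circ F(\underline{b})$, if we write $\underline{a}\#\underline{b} = \sum_k h^k c_k$ and $F(\underline{c}) = \sum_k h^k C_k$, then $C_k = \sum_{k_1+k_2 = k} A_{k_1} \circ B_{k_2}$. I would bound $f_{k,m_1+m_2}(\underline{a}\#\underline{b})$ by writing each $A_{k_1}\circ B_{k_2}$ as a composition between nested domains $V_{s,\lambda} \subset V_{u,\lambda} \subset V_{t,\lambda}$ with a well-chosen intermediate height $u\in (s,t)$.

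More precisely, for fixed $0 \leq s < t \leq \epsilon_0$ and $\lambda \in \N$, and for $k_1+k_2 = k$ with $k_1, k_2 \geq 1$, I would set $u = s + \frac{k_1}{k}(t-s)$. Applying $B_{k_2}: B(V_{t,\lambda}) \to B(V_{u,\lambda})$ and then $A_{k_1}: B(V_{u,\lambda}) \to B(V_{s,\lambda})$ gives
\[
\|A_{k_1} \circ B_{k_2}\|_{s,t,\lambda} \leq k_1^{k_1} k_2^{k_2}\, 2^{\lambda(m_1+m_2-k)} (u-s)^{-k_1}(t-u)^{-k_2}\, f_{k_1,m_1}(\underline{a})\, f_{k_2,m_2}(\underline{b}).
\]
The choice of $u$ is precisely the one that optimizes $(u-s)^{-k_1}(t-u)^{-k_2}$: a direct computation gives $(u-s)^{-k_1}(t-u)^{-k_2} = (t-s)^{-k} k^k / (k_1^{k_1} k_2^{k_2})$, so the factors $k_1^{k_1} k_2^{k_2}$ cancel out and one is left with $k^k 2^{\lambda(m_1+m_2-k)}(t-s)^{-k}$. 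The edge cases $k_1=0$ or $k_2=0$ are handled directly: since $A_0$ (resp.\ $B_0$) is simply multiplication by the symbol $a_0$ (resp.\ $b_0$), it maps $B(V_{s,\lambda})$ to itself without shrinking the domain, and the same bound holds with the convention $0^0 = 1$.

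Summing over $k_1+k_2 = k$ yields
\[
\|C_k\|_{s,t,\lambda} \leq k^k\, 2^{\lambda(m_1+m_2-k)} (t-s)^{-k} \sum_{k_1+k_2=k} f_{k_1,m_1}(\underline{a})\, f_{k_2,m_2}(\underline{b}),
\]
which, taking the supremum over $(s,t,\lambda)$ as in the definition \eqref{eqdefpseudcoeff}, gives the convolutive estimate
\[
f_{k,m_1+m_2}(\underline{a}\#\underline{b}) \leq \sum_{k_1+k_2=k} f_{k_1,m_1}(\underline{a})\, f_{k_2,m_2}(\underline{b}).
\]
Multiplying by $\rho^k$ and summing in $k$ then yields the desired submultiplicativity $\|\underline{a}\#\underline{b}\|_{m_1+m_2,\rho} \leq \|\underline{a}\|_{m_1,\rho}\|\underline{b}\|_{m_2,\rho}$ via a Cauchy product.

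The main substantive step is the choice of the intermediate height $u$ and the verification that the $k_1^{k_1} k_2^{k_2}$ factor arising from the two operator norm estimates is exactly absorbed by the minimization. No deeper obstacle arises: once one interprets $F(\underline{a})\circ F(\underline{b}) = F(\underline{a}\#\underline{b})$ and observes that $A_k$ sends $B(V_{t,\lambda}) \to B(V_{s,\lambda})$ (contracting only in the $x$-direction, with the $\xi$-range unchanged), everything reduces to this optimization, which is a standard device in Sjöstrand-type parametrix constructions.
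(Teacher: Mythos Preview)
Your proof is correct and follows essentially the same approach as the paper: reduce to the convolutive estimate $f_{k,m_1+m_2}(\underline{a}\#\underline{b}) \leq \sum_{k_1+k_2=k} f_{k_1,m_1}(\underline{a})\, f_{k_2,m_2}(\underline{b})$ via the factorization $F(\underline{a}\#\underline{b}) = F(\underline{a})\circ F(\underline{b})$, insert an intermediate domain $V_{u,\lambda}$ with $u$ chosen to balance $(u-s)^{-k_1}(t-u)^{-k_2}$, and observe that the optimal choice makes the $k_1^{k_1}k_2^{k_2}$ factors cancel against $k^k(t-s)^{-k}$. Your explicit treatment of the endpoint cases $k_1=0$ or $k_2=0$ is a small refinement over the paper, which leaves this implicit.
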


\begin{proof}
To do so, we only need to prove that for all $k \in \N$ we have
\begin{equation}\label{eqrusesjo}
\begin{split}
f_{k,m_1+m_2}\p{\underline{a} \# \underline{b}} \leq \sum_{k_1+ k_2=k} f_{k_1,m_1}\p{\underline{a}} f_{k_2,m_2}\p{\underline{b}}.
\end{split}
\end{equation}
We recall that $F\p{\underline{a} \# \underline{b}} = F\p{\underline{a}} \circ F\p{\underline{b}}$ is given by the formula \eqref{eqcompositionform}. But then, if $0 \leq s < t \leq \epsilon_0$ and $\lambda \in \N$, we have (with $k_1 + k_2 = k$ and any $s < u < t$)
\begin{equation*}
\begin{split}
& \n{A_{k_1} \circ B_{k_2}}_{t,s,\lambda}\leq \n{A_{k_1}}_{t,u,\lambda} \n{B_{k_2}}_{u,s,\lambda} \\
      & \qquad \qquad \leq k_1^{k_1} 2^{\lambda(m_1-k_1)}(t-u)^{-k_1} f_{k_1,m_1}\p{\underline{a}} k_2^{k_2} 2^{\lambda(m_2-k_2)} (u-s)^{-k_2} f_{k_2,m_2}\p{\underline{b}} \\
      & \qquad \qquad \leq k_1^{k_1} k_2^{k_2} (t-u)^{-k_1}(u-s)^{-k_2} 2^{\lambda(m_1+m_2 - k)} f_{k_1,m_1}\p{\underline{a}} f_{k_2,m_2}\p{\underline{b}}.
\end{split}
\end{equation*}
Then, we may choose $u$ such that $t-u = (t-s) \frac{k_1}{k_1 + k_2}$ and $u-s = (t-s) \frac{k_2}{k_1 + k_2}$ to find that
\begin{equation*}
\begin{split}
\n{A_{k_1} \circ B_{k_2}}_{t,s,\lambda} \leq k^k (t-s)^{-k} 2^{\lambda(m_1+m_2 -k)} f_{k_1,m_1}\p{\underline{a}} f_{k_2,m_2}\p{\underline{b}}.
\end{split}
\end{equation*}
From this, \eqref{eqrusesjo} follows, and the proof of the lemma is thus over.
\end{proof}

We can now prove Lemma \ref{lmparametrix}.

\begin{proof}[Proof of Lemma \ref{lmparametrix}]
Recall that $\underline{a}$ is supposed to be an elliptic symbol of order $m$. Hence, if we define $b_0 = \frac{1}{a_0}$ then $b_0$ is an analytic symbol of order $-m$. Thus, for some $\rho_0 > 0$ we have $\n{\underline{a}}_{m,\rho_0} < + \infty$ and $\n{b_0}_{-m,\rho_0} < + \infty$. Then, by symbolic calculus we have
\begin{equation*}
\begin{split}
b_0 \# \underline{a} = 1 - \underline{r},
\end{split}
\end{equation*}
where $\underline{r}$ is a formal analytic symbol of order $0$, with no leading coefficient (that is $\underline{r}$ is $h$ times a formal analytic symbol of order $-1$). Hence, we deduce that for $0<\rho \leq \rho_0$, 
\[
\| r \|_{0,\rho} \leq \frac{\rho}{\rho_0}\|1 - b_0 \# \underline{a}\|_{0,\rho_0} \leq \frac{\rho}{\rho_0}\p{1+\|\underline{a}\|_{m,\rho_0}\| b_0 \|_{-m, \rho_0}},
\]
so that $\n{\underline{r}}_{0,\rho} < 1$ if $\rho$ is small enough. Thus, we want to use Von Neumann's argument to say that $\underline{b} = \sum_{k \geq 0} \underline{r}^{\# k} \# b_0$ is the symbol of a parametrix for $\underline{a}$. Applying Lemma \ref{lmpasexactementassez}, we see that for every $\ell \in \N$
\begin{equation*}
\begin{split}
\n{\sum_{p = 0}^{\ell} \underline{r}^{\# p} \# b_0}_{-m,\rho} \leq \frac{\n{b_0}_{-m,\rho}}{1 - \n{\underline{r}}_{0,\rho}}.
\end{split}
\end{equation*}
Since $\underline{r}$ has no leading coefficient, if $\ell$ is large enough (depending on $k$), then $f_{k,-m}\p{\underline{b}} = f_{k,-m}\p{\sum_{p = 0}^{\ell} \underline{r}^{\# p} \# b_0}$. In particular, for all $N\geq 0$, for $\ell_N$ large enough,
\[
\sum_{k=0}^N \rho^k f_{k,-m}(\underline{b}) = \sum_{k=0}^N \rho^k f_{k,-m}\left(\sum_{p = 0}^{\ell_N} \underline{r}^{\# p} \# b_0\right) \leq \frac{\n{b_0}_{-m,\rho}}{1 - \n{\underline{r}}_{0,\rho}}.
\]
Letting $N\to +\infty$, we deduce
\[
\| \underline{b} \|_{-m,\rho} \leq \frac{\n{b_0}_{-m,\rho}}{1 - \n{\underline{r}}_{0,\rho}} < + \infty,
\]
so that $\underline{b}$ is a formal analytic symbol for the reason described above.
\end{proof}

\begin{remark}\label{remark:uniform-bounds-resolvent}
It will be useful to obtain a version with parameters of Lemma \ref{lmparametrix}. We pick a symbol $a\in S^{1,m}(T^\ast U)$, elliptic of order $m > 0$. We assume that there is $V\subseteq \C$, an open set, conical at infinity, so that $\overline{V}\cap a(T^\ast \R^n) = \emptyset$. For $\epsilon > 0$, we see that if we define
\begin{equation*}
\begin{split}
W = \set{ z \in \C : d(z, \C \setminus V) > \epsilon \va{z}},
\end{split}
\end{equation*}
then, for some $\delta>0$ and every $(x,\xi)\in (T^\ast \R^n)_\delta$ , we have $|a(x,\xi)-s|> \delta|a(x,\xi)|$ for all $s\in W$. Then for $s\in W$, the family
\[
s \mapsto \frac{1}{a- s}
\]
is a bounded family of $\G^1$ symbols of order $-m$. In particular, denoting by $\underline{b}(s)$ the formal symbol such that $\underline{b}(s)\#(a-s) = 1$, we see that $\underline{b}(s)$ is a bounded family of formal symbols when $s$ varies in $W$. Moreover, from Remark \ref{remark:regularite_solution_d_bar} and Lemma \ref{lemma:existence_realisation}, we see that we may choose for $s \in W$ a realization $b(s)$ of $\underline{b}(s)$, so that $b(s)$ defines a bounded family of analytic symbol with measurable dependence on $s$.
\end{remark}

We are now ready for the:
\begin{proof}[Proof of Theorem \ref{thm:parametrix}]
We only construct a left parametrix for $P$. The construction of a right parametrix is similar, and then a standard argument ensures that the left and right parametrix coincide. 

By compactness of $M$, we may cover $M$ by a finite number of balls $\p{D_j}_{j \in J}$ such that each $D_j$ is a relatively compact subset of a coordinate patch $U_j$. Since $P$ is an elliptic $\G^1$ pseudor, for each $j \in J$ there is a symbol $a_j$, elliptic of order $m$ on $U_j$ such that in the coordinates on $U_j$ the kernel of $P$ is
\begin{equation*}
\begin{split}
K_j(x,y) = \frac{1}{(2 \pi h)^n} \int_{\R^n} e^{i \frac{\jap{x-y,\xi}}{h}} a_j(x,\xi) \mathrm{d}\xi + \O_{\G^1}\p{\exp\p{- \frac{1}{C h}}}.
\end{split}
\end{equation*}
We apply Lemma \ref{lmparametrix} to $a_j$ to find a formal symbol $\underline{b}_j$ such that $\underline{b}_j \# a_j = 1$. Then, choose a realization $b_j$ for $\underline{b}_j$, and define for $x,y$ in a neighbourhood of $D_j$,
\[
Q_j(x,y) := \frac{1}{(2 \pi h)^n} \int_{\R^n} e^{i \frac{\jap{x-y,\xi}}{h}} b_j(x,\xi) \mathrm{d}\xi.
\]
(this in the coordinate charts of $U_j$). Let us assume that $D_j\cap D_\ell\neq \emptyset$, and consider $D_j \cap D_\ell \subset D \subset D' \subset U_j\cap U_\ell$ open balls relatively compact one in another. Thanks to the definition of $b_j$, and since away from the diagonal the kernel of $\G^1$ pseudors are $\O(\exp(-1/(Ch)))$ in $\G^1$, if $\chi$ is a function supported in $D'$ and identically equal to $1$ on a  neighbourhood of $D$, we find that the kernels of the operators
\begin{equation*}
Q_j - Q_\ell \textup{ and } (Q_j - Q_\ell) \chi P \chi Q_j
\end{equation*}
differ by an $\O_{\G^1}\p{\exp\p{-\frac{1}{C h}}}$ in $D \times D$. To prove this, we start by arguing as in the proof of Proposition \ref{prop:composition_pseudors} that the contributions from off-diagonal term yield $\O_{\G^1}\p{\exp\p{-\frac{1}{C h}}}$ errors even though we added $C^\infty$ cutoffs. This implies that in a neighbourhood of $D\times D$, the kernel of $\chi P \chi Q_j$ takes the form \eqref{eq:pseudo_Rn} modulo a $\O_{\G^1}\p{\exp\p{-\frac{1}{C h}}}$ error. Then we apply Remark \ref{remark:developpement_asymptotique} to see that the corresponding symbol is a realization of the constant symbol $1$, by construction of $\underline{b}_j$. In particular, in a neighbourhood of $D\times D$, the kernel of $\chi P \chi Q_j$ is that of the identity up to a $\O_{\G^1}\p{\exp\p{-\frac{1}{C h}}}$ error. Using again the same arguments as in the proof of Proposition \ref{prop:composition_pseudors}, we deduce the announced estimate.

However, the same arguments apply if we decompose the product as
\begin{equation*}
(Q_j \chi P\chi - Q_\ell P\chi)Q_j,
\end{equation*}
and this proves that in a neighbourhood of $D\times D$,
\begin{equation}\label{eq:local-parametrix-are-close}
(Q_j - Q_\ell)(x,y) =\O_{\G^1}\p{\exp\p{-\frac{1}{C h}}}.
\end{equation}
Hence, the distribution $g_{j,\ell}(x,y) \coloneqq (Q_j - Q_\ell)(x,y)$, for $j,\ell \in J$, is in fact an analytic function. Moreover, $g_{j,\ell}$ has a holomorphic extension to a complex neighbourhood of $D_j \cap D_\ell$ which is an $\O(\exp(- 1/(Ch)))$. It follows from the analytic continuation principle that, wherever it makes sense, these holomorphic extensions satisfy
\begin{equation}\label{eq:cocycle}
\begin{split}
Q_j(x,y) = Q_\ell(x,y) + g_{j,\ell}(x,y) \textup{ and } g_{j,k}(x,y) = g_{j,\ell}(x,y) + g_{\ell,k}(x,y).
\end{split}
\end{equation}
For $j \in J$, we choose a complex neighbourhood $W_j \subseteq \widetilde{M}$ of $\overline{D}_j$ small enough so that $g_{j,\ell}$ is well-defined on $\p{W_j \cap W_\ell}^2$ for $\ell \in J$. Then, we pick $U \subseteq \widetilde{M} \times \widetilde{M}$ an open set such that $\overline{U}$ does not encounter the diagonal. Up to taking the complexification $\widetilde{M}$ of $M$ smaller, we may assume that $U \cup \{ W_j\times W_j \ |\ j\in J\}$ is an open cover of $\widetilde{M}\times \widetilde{M}$. Then, we pick a corresponding $C^\infty$ partition of unity $((\chi_j)_{j \in J},\chi_U)$. As a first approximation for the kernel of the parametrix $Q$, we define the distribution
\begin{equation*}
Q^0(x,y) = \sum_{j} \chi_j(x,y)Q_j(x,y)
\end{equation*}
on $M \times M$. Of course, due to the use of a $\mathcal{C}^\infty$ partition of unity, it is very unlikely for $Q^0(x,y)$ to be the kernel of $\G^1$ pseudor. We will use a $\bar{\partial}$ trick to correct that.

For $\epsilon > 0$ small enough, we define a form $f$ of type $\p{0,1}$ on $(M \times M)_\epsilon$ by
\begin{equation*}
\begin{split}
f(x,y) = \begin{cases}  Q_j(x,y) \bar{\partial} \chi_U(x,y) - \sum\limits_{\ell \in J} g_{\ell,j}(x,y) \bar{\partial} \chi_{\ell}(x,y) & \textup{ if } (x,y) \in W_j \times W_j \\
     0 & \textup{ if } (x,y) \notin \bigcup\limits_{j \in J} W_j \times W_j. \end{cases}
\end{split}
\end{equation*}
Here, the holomorphic extension of $Q_j(x,y)$ away from the diagonal is given by Lemma \ref{lemma:loin_de_la_diagonale} and if one of the $g_{\ell,j}$ does not makes sense then the $\bar{\partial}$ in factor vanishes so that the sum is well-defined. The fact that $f$ is well-defined despite the possible intersection between cases in its definition comes from \eqref{eq:cocycle} (use also the fact that $\bar{\partial}\p{\chi_U + \sum_{j \in J} \chi_j} = 0$). From the estimate on the $g_{\ell,j}$ and Lemma \ref{lemma:loin_de_la_diagonale}, we see that $f$ is an $\O(\exp(- 1/Ch))$ on $(M \times M)_\epsilon$.

It follows then from Lemma \ref{lemma:approximation_analytique} that there are $\epsilon_1 > 0$ and a smooth function $r$ on $(T^* M)_{\epsilon_1}$ which is an $\O(\exp(- 1/Ch))$ and such that $\bar{\partial}r = f$. We use $r$ to define a new approximation of the kernel of our parametrix:
\begin{equation*}
\begin{split}
Q^1(x,y) = Q^0(x,y) + r(x,y).
\end{split}
\end{equation*}
Notice that for $x,y$ in a neighbourhood of $D_j$, we have
\begin{equation}\label{eq:local_presque}
\begin{split}
Q^1(x,y) - Q_j(x,y) = - Q_j(x,y) \chi_U(x,y) + \sum_{\ell \in J} \chi_{\ell}(x,y)g_{\ell,j}(x,y) + r(x,y).
\end{split}
\end{equation}
The right hand side of \eqref{eq:local_presque} has a smooth extension to $W_j \times W_j$ which is holomorphic (because $\bar{\partial}r = f$) and an $\O(\exp(- 1/Ch))$ (due to Lemma \ref{lemma:loin_de_la_diagonale} and the estimates on the $g_{\ell,j}$'s and on $r$). We just proved that
\begin{equation}\label{eq:local_Q1}
\begin{split}
Q^1(x,y) = Q_j(x,y) + \O_{\G^1}\p{\exp\p{- \frac{1}{Ch}}}
\end{split}
\end{equation}
on a neighbourhood of $D_j \times D_j$. Working similarly, we see that $Q^1(x,y)$ is an $\O(\exp(- 1/Ch))$ in $\G^1$ away from the diagonal. Thus, $Q^1(x,y)$ is the kernel of a $\G^1$ pseudor that we also denote by $Q^1$. From \eqref{eq:local_Q1} and the definition of the $Q_j$'s, we see that 
\[
Q^1 P  = 1 + \O_{\G^1} \p{\exp\p{-\frac{1}{Ch}}}.
\]
The right hand side here takes the form $1+ R$, with $\| R\| < 1$ as a bounded operator from $(E^{1,R_0})'$ to $E^{1,R_0}$ (for some large $R_0$ and provided $h$ is small enough). It follows that $1+R$ is invertible as an operator from $(E^{1,R_0})'$ to itself, with inverse $1+R'$, so that
\[
R' = - R - R(1+R')R.
\]
From this formula, we deduce that $R'$ has kernel $\O_{\G^1}(\exp(-1/Ch))$, so that $Q = (1+R') Q^1$ is a $\G^1$ pseudor (due to Proposition \ref{prop:composition_pseudors}), inverse of $P$. 
\end{proof}

\subsubsection*{Functional calculus for analytic pseudors}\label{subsec:functional_calculus}

This last section is dedicated to the proof of Theorem \ref{theorem:functional_calculus}. Consequently, we denote by $A$ a self-adjoint $\G^1$ pseudor, classically elliptic of order $m > 0$ whose spectrum is contained in $[\varpi,+\infty[$. 

Let us start a few reductions. First of all, up to adding a constant to $A$, we may assume that $\varpi = 0$. Then, for any integer $\ell \in \N$, we may write
\begin{equation*}
\begin{split}
f(A) = \p{1 + A}^\ell \p{1 + A}^{- \ell} f(A),
\end{split}
\end{equation*}
hence, we may replace the function $f$ by the function $z \mapsto \frac{f(z)}{(1 + z)^{\ell}}$, and assume that $N < - 3 - \frac{n}{m}$. Let us state two lemmas before proving Theorem \ref{theorem:functional_calculus}. Recall that $A$ admits a $\G^1$ principal symbol $a$ in the sense of Proposition \ref{prop:existence_principal_symbol}. Since $A$ is self-adjoint, we may assume that $a$ is real-valued (up to replace it by $(a+\bar{a})/2$).

\begin{lemma}\label{lemma:ouestlesymbole}
Let $\delta >0$. For $h$ small enough, $a$ is valued in $\left[-\delta, + \infty \right[$.
\end{lemma}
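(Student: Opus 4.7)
The plan is to argue by contradiction, using a Gaussian coherent state concentrated at a would-be point of negativity of $a$, and then using the spectral assumption $A\geq 0$ to derive a contradiction.

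Suppose for contradiction that there exist a sequence $h_k\to 0^+$ and points $\alpha_k\in T^*M$ with $a(\alpha_k)\leq -\delta$. Since $a$ is real-valued (the principal symbol of the self-adjoint operator $A$ may be chosen real, by replacing it by $(a+\bar a)/2$, which differs from $a$ by a symbol of order $m-1$), and since $A$ is classically elliptic of order $m>0$, the bound $|a(\alpha)|\geq c\,\langle|\alpha|\rangle^m$ holds for $\langle|\alpha|\rangle$ large. Because $a$ is real and continuous, this forces $a(\alpha)\geq c\langle|\alpha|\rangle^m>0$ at infinity (the sign cannot flip on a connected neighbourhood of infinity). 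Hence the sublevel set $\{a\leq -\delta\}$ is compact in $T^*M$, and up to extraction $\alpha_k\to\alpha_*\in T^*M$ with $a(\alpha_*)\leq -\delta$.

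Next I would build a normalised $\G^1$ coherent state concentrated at $\alpha_*$. In local real-analytic coordinates around $\alpha_{*,x}$, set
\[
u_h(x)=c_h\,\chi(x)\,\exp\!\left(\tfrac{i}{h}\Phi(\alpha_*,x)\right),
\]
where $\chi$ is an analytic-like cutoff equal to $1$ near $\alpha_{*,x}$, $\Phi$ is an admissible phase with $\Phi(\alpha_*,\alpha_{*,x})=0$, $d_x\Phi(\alpha_*,\alpha_{*,x})=\alpha_{*,\xi}$, and $\Im d_x^2\Phi(\alpha_*,\alpha_{*,x})$ positive definite (existence of such $\Phi$ as a $\G^1$ object is provided by the $\bar\partial$-trick exactly as in the proof of Proposition~\ref{prop:existence_principal_symbol}). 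The normalising constant $c_h\asymp h^{-n/4}$ is chosen so that $\|u_h\|_{L^2}^2=1+O(h^\infty)$, which follows from stationary phase applied to $|c_h|^2\int\chi^2 e^{-2\Im\Phi(\alpha_*,x)/h}dg$, whose phase has a unique non-degenerate critical point at $x=\alpha_{*,x}$ with vanishing critical value. The family $u_h$ is a $\G^1$ family of coherent states in the sense of the paper (with the parameter $\alpha$ frozen at $\alpha_*$, amplitude $\equiv 1$).

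By Proposition~\ref{prop:pseudor_lagrangien_alt}, $Au_h$ is again a coherent state with phase $\Phi(\alpha_*,\cdot)$ and amplitude
\[
b(\alpha_*,x)=a(x,d_x\Phi(\alpha_*,x))\ \text{mod}\ h\,S^{1,m-1},
\]
up to a $\O(\exp(-1/Ch))$ error. Therefore
\[
\langle Au_h,u_h\rangle_{L^2}=|c_h|^2\!\int_M\!\chi(x)^2\,b(\alpha_*,x)\,e^{-2\Im\Phi(\alpha_*,x)/h}\,dg(x)+O(h^\infty).
\]
The phase $-2\Im\Phi(\alpha_*,\cdot)$ has a unique non-degenerate critical point at $x=\alpha_{*,x}$ with critical value $0$ and positive-definite Hessian, so Proposition~\ref{prop:HSP} (applied with $\lambda=h^{-1}$) yields, together with the normalisation of $c_h$,
\[
\langle Au_h,u_h\rangle_{L^2}=a(\alpha_{*,x},d_x\Phi(\alpha_*,\alpha_{*,x}))+O(h)=a(\alpha_*)+O(h).
\]
For $h$ small this is at most $-\delta/2<0$, contradicting $A\geq 0$ (which follows from the spectral assumption after the reduction $s=0$). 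The only nontrivial step is the last stationary phase computation, and it is standard because both the phase and the amplitude are analytic in a full neighbourhood of the critical point, so one may invoke Proposition~\ref{prop:HSP} directly.
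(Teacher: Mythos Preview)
Your approach is essentially correct and parallels the paper's: both argue by contradiction using a semiclassical test state concentrated at a point where $a$ would be negative, then compute $\langle Au,u\rangle$ to contradict $A\geq 0$. The paper uses a WKB state $\chi e^{i\varphi/h}$ with a real phase $\varphi$ satisfying $\mathrm{d}\varphi(\alpha_{*,x})=\alpha_{*,\xi}$ and applies ordinary $\mathcal{C}^\infty$ stationary phase, whereas you use a Gaussian coherent state with complex phase and invoke Proposition~\ref{prop:pseudor_lagrangien_alt}; both routes work.

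Two points deserve tightening. First, your claim that $a>0$ at infinity is not justified by continuity alone: ellipticity only gives $|a|\geq c\langle|\alpha|\rangle^m$ outside a compact set, so $a$ has constant sign on each unbounded component, but you must rule out the negative sign. The paper does this by observing that a negative sign there would, via Weyl's law, force $A$ to have negative eigenvalues, contradicting $A\geq 0$. Second, $a$ depends on $h$, so after extracting $\alpha_k\to\alpha_*$ from $a_{h_k}(\alpha_k)\leq -\delta$ you do not immediately get a bound at $\alpha_*$; you need the uniform equicontinuity of $(a_h)_h$ on compact sets (which follows from the uniform symbol estimates) to conclude $a_{h_k}(\alpha_*)\leq -\delta/2$ for $k$ large. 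The paper handles this explicitly by producing a neighbourhood $V$ of $\alpha_*$ on which $a_{h_k}<-\delta/2$ for large $k$, which is also why its WKB expression $\int a_{h_k}(\mathrm{d}\varphi(x))|\chi(x)|^2\,\mathrm{d}x$ is negative.
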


\begin{proof}
The proof is by contradiction. During this proof, we will write $a =a_h$ and $A = A_h$ to make the dependence on $h$ clearer. Assume that there is a sequence $\p{h_k}_{k \in \N}$ converging to $0$ and a sequence of points $\alpha_k \in T^* M$ such that for every $k \in \N$ we have $a_{h_k}(\alpha_k) < - \delta$. Since we assumed that $A_h$ is classically elliptic, there is a constant $C > 0$ such that for every $\alpha \in T^*M$ and $h > 0$ small enough, we have
\begin{equation*}
\begin{split}
\va{a_h(\alpha)} \geq \frac{\jap{\alpha}^m}{C} - C.
\end{split}
\end{equation*}
Hence, there is a constant $B > 0$ such that $\va{a_h(\alpha)} \geq 1$ if $\jap{\alpha} \geq B$. Consequently, the sign of $a_h$ is constant on the connected components of $\set{\alpha \in T^* M : \jap{\alpha} \geq B}$. This sign has to be positive, otherwise it would follow from the Weyl's law that $A$ has negative eigenvalues.

It follows that the $\alpha_k$'s remain in a compact subset of $T^*M$. Hence, up to extracting a subsequence, we may assume that $\p{\alpha_k}_{k \in \N}$ converges to some $\alpha_\infty = \p{\alpha_{\infty,x},\alpha_{\infty,\xi}} \in T^* M$. By uniform continuity of the $a_{h_k}$'s we see that there is a neighbourhood $V$ of $\alpha_\infty$ in $T^* M$ such that for $k$ large enough and $\alpha \in V$ we have $a_{h_k}(\alpha) < - \frac{\delta}{2}$. Now let $\varphi$ be a $\mathcal{C}^\infty$ function from $M$ to $\R$ such that $\mathrm{d} \varphi(\alpha_{\infty,x}) = \alpha_\infty$. Then, we choose a $\mathcal{C}^\infty$ function $\chi$ from $M$ to $\R$ such that $\chi(\alpha_{\infty,x}) = 1$ and $\mathrm{d}\varphi(x) \in V$ for every $x$ in the support of $\chi$. By the ($\mathcal{C}^\infty$) stationary phase method, we find that
\begin{equation*}
\begin{split}
A_{h_k} \p{ \chi e^{i \frac{\varphi}{h_k}}} \underset{k \to + \infty}{=} a_{h_k}\p{\mathrm{d}\varphi} \chi + \O_{\mathcal{C}^\infty}\p{h_k}.
\end{split}
\end{equation*} 
However, for $k$ large enough, the spectrum of $A_{h_k}$ is by assumption contained in $\R_+$ and thus
\begin{equation*}
\begin{split}
0 \leq \jap{A_{h_k}\p{ \chi e^{i \frac{\varphi}{h_k}}} , \chi e^{i \frac{\varphi}{h_k}} } & \underset{k \to + \infty}{=} \int_{M} a_{h_k} \p{\mathrm{d}\varphi(x)} \va{\chi(x)}^2 \mathrm{d}x + \O\p{h_k} \\
       & \leq - \frac{\delta}{4} \int_M \va{\chi(x)}^2 \mathrm{d}x,
\end{split}
\end{equation*}
where the last line holds for $k$ large enough and is a contradiction.
\end{proof}

\begin{lemma}\label{lemma:fACauchy}
Let $\epsilon_1 \in \left]0,\epsilon \right[$. For $h$ small enough, we have
\begin{equation}\label{eq:fACauchy}
\begin{split}
f(A) = \frac{1}{2 i \pi} \int_{\partial U_{0,\epsilon_1}} f(w) \p{w - A}^{-1} \mathrm{d}w,
\end{split}
\end{equation}
where the integral converges in $L^2$ operator norm for instance (recall that we reduced to the case $N < -3 - \frac{n}{m}$).
\end{lemma}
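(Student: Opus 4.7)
The plan is to derive the formula by combining the spectral theorem for the self-adjoint operator $A$ with the Cauchy integral formula on the unbounded domain $U_{0,\epsilon_1}$. All three ingredients that will enter --- convergence of the right-hand side, Cauchy's formula for a scalar argument, and the Fubini exchange with the spectral measure --- rest on a single geometric estimate: on $\partial U_{0,\epsilon_1}$ one has $d(w, [0,+\infty)) \geq c\jap{w}$ for some $c = c(\epsilon_1) > 0$. This is immediate case by case from the definition of $U_{0,\epsilon_1}$: on the slanted upper and lower boundaries one has $|\Im w| = \epsilon_1 \jap{\Re w}$ while $|w| \asymp \jap{\Re w}$, and the bounded vertical segment $\Re w = -\epsilon_1$ is trivial.

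First, I would check convergence of the right-hand side in $L^2$ operator norm. By self-adjointness and $\mathrm{Spec}(A) \subseteq [0,+\infty)$, one has $\n{(w-A)^{-1}}_{L^2 \to L^2} \leq d(w, \mathrm{Spec}(A))^{-1}$, and with the geometric estimate above this becomes $\n{(w-A)^{-1}} \leq C \jap{w}^{-1}$. Combined with $|f(w)| \leq C \jap{w}^N$ (recall that the reduction before the lemma made $N < -3 - n/m$, in particular $N < -1$) and the comparability $|\mathrm{d}w| \lesssim \mathrm{d}|\Re w|$ on the contour, the integrand is $O(\jap{\Re w}^{N-1})$ in operator norm, which is integrable.

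Second, I would prove the scalar version: for each $\lambda \in [0,+\infty) \subset U_{0,\epsilon_1}$,
\begin{equation*}
f(\lambda) = \frac{1}{2 i \pi} \int_{\partial U_{0,\epsilon_1}} \frac{f(w)}{w-\lambda} \mathrm{d}w.
\end{equation*}
Since $f$ is holomorphic on an open set containing $\overline{U_{0,\epsilon_1}}$ (namely $U_{0,\epsilon}$, because $\epsilon_1 < \epsilon$), this reduces, by truncating to $U_{0,\epsilon_1} \cap \{\Re w < R\}$ and applying the usual Cauchy formula on this bounded domain, to checking that the contribution from the vertical piece at $\Re w = R$ is $O(R^{N}) \to 0$, which is immediate from the decay of $f$ and the fact that this piece has length $O(R)$.

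Finally, starting from the spectral representation $f(A) = \int_{[0,+\infty)} f(\lambda) \mathrm{d}E(\lambda)$ (legitimate because $f$ is bounded on the spectrum, as $N < 0$), I would substitute the scalar Cauchy formula inside and exchange the two integrations by Fubini, the exchange being justified precisely by the bound $\n{f(w)(w-A)^{-1}} \leq C\jap{w}^{N-1}$ from the first step. The outcome is exactly \eqref{eq:fACauchy}. There is no genuine difficulty here: this is the standard Cauchy--Dunford formula adapted to the conical neighbourhood $U_{0,\epsilon_1}$, and the only thing that must be tracked carefully is the growth of $f$ at infinity relative to the shape of the contour --- which is exactly what the assumption $N < -3 - n/m$ (via $N < -1$) supplies.
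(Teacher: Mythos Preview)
Your proof is correct and takes a genuinely different route from the paper's. The paper expands on an orthonormal basis of eigenfunctions $(\psi_k)$ of $A$ (using that $A$ is classically elliptic, hence has compact resolvent), applies the operator-valued Cauchy formula on the truncated domain $U_{0,\epsilon_1}\cap\{\Re w < r_\ell\}$, and then must control $\n{(w-A)^{-1}}$ on the vertical segment $I_\ell=\{\Re w = r_\ell\}$, where $w$ can approach eigenvalues. To do this, the paper uses a Weyl-type estimate $\sum_k \lambda_k^{-(1+n/m)}<\infty$ to find a sequence $r_\ell\to+\infty$ with $d(r_\ell,\sigma(A))\geq r_\ell^{-(2+n/m)}$, and the resulting bound on $\int_{I_\ell}$ is $O(r_\ell^{N+3+n/m})$, which is exactly where the full strength of the reduction $N<-3-n/m$ is consumed. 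Your approach sidesteps this entirely: by proving the scalar Cauchy formula first (where $\lambda$ is fixed, so the truncation at $\Re w=R$ is harmless once $R>2\lambda$), and then invoking the spectral theorem plus Fubini, you never need to place a vertical cut through the spectrum. Your argument only uses $N<0$ and does not require discreteness of the spectrum, so it is both more elementary and more general; the paper's approach, on the other hand, keeps everything in terms of finite-rank projectors and makes the convergence completely explicit.
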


\begin{proof}
Let $\p{\lambda_k}_{k \in \N}$ denotes the sequence of eigenvalues of $A$ (ordered increasingly and counted with multiplicity) and $\p{\psi_k}_{k \in \N}$ denotes an associated orthonormal systems of eigenvectors. By a ($\mathcal{C}^\infty$) parametrix construction, we see that the resolvent of $A$ is bounded from $L^2$ to the Sobolev space $H^{m}$, hence we have (it also follows from Weyl's law)
\begin{equation*}
\begin{split}
\sum_{k \in \N} \frac{1}{\va{\lambda_k}^{1 + \frac{n}{m}}} < + \infty.
\end{split}
\end{equation*}
The operator $f(A)$ is defined to be 
\begin{equation*}
\begin{split}
f(A) = \sum_{k \geq 0} f\p{\lambda_k} \jap{\cdot,\psi_k} \psi_k, 
\end{split}
\end{equation*}
and, since $N \leq -1 - \frac{n}{m}$, we see that this sum converges absolutely in the $L^2$ operator norm. Now, notice that
\begin{equation*}
\begin{split}
\sum_{k \geq 0} \mathrm{Leb}\p{\left[\lambda_k - \lambda_{k}^{-\p{1 + \frac{n}{m}}}, \lambda_k + \lambda_{k}^{-\p{1 + \frac{n}{m}}}\right]} < + \infty,
\end{split}
\end{equation*}
hence, since $\mathrm{Leb}\p{\R_+} = + \infty$ but the Lebesgue measure of any compact subset of $\R_+$ is finite, we may find a sequence $\p{r_\ell}_{\ell \in \N}$ of positive real numbers that tends to $+ \infty$, such that for every natural integers $k,\ell$ we have
\begin{equation*}
\begin{split}
\va{r_\ell - \lambda_k} \geq \lambda_{k}^{-\p{1 + \frac{n}{m}}}.
\end{split}
\end{equation*}
It follows that for $\ell$ large enough and all $k \in \N$ we have
\begin{equation}\label{eq:loinduspectre}
\begin{split}
\va{r_\ell - \lambda_k} \geq r_\ell^{- \p{2 + \frac{n}{m}}}.
\end{split}
\end{equation}
Now, set for $\ell \in \N$
\begin{equation*}
\begin{split}
V_\ell = U_{0,\epsilon_1} \cap \set{z \in \C : \Re z < r_\ell}, I_\ell = U_{0,\epsilon_1} \cap \set{z \in \C : \Re z = r_\ell},
\end{split}
\end{equation*}
and
\begin{equation*}
\begin{split}
J_\ell = \set{z \in \C : \Re z \leq r_l} \cap \partial U_{0,\epsilon_1}.
\end{split}
\end{equation*}
Then, for $\ell$ large, we write Cauchy's formula
\begin{equation}\label{eq:Cauchyintermediaire}
\begin{split}
\sum_{\substack{k \geq 0 \\ \lambda_k \leq r_\ell}} f\p{\lambda_k} \jap{\cdot,\psi_k} \psi_k & = \frac{1}{2 i \pi} \int_{\partial V_\ell} f(w) \p{w - A}^{-1} \mathrm{d}w \\
     & = \frac{1}{2 i \pi} \int_{J_\ell} f(w) \p{w - A}^{-1} \mathrm{d}w \\ & \qquad \qquad \qquad \qquad \qquad+ \frac{1}{2 i \pi} \int_{I_\ell} f(w) \p{w - A}^{-1} \mathrm{d}w.
\end{split}
\end{equation}
Using the bound $\| (w - A)^{-1}\|_{L^2 \to L^2} \leq |\Im w|^{-1}$, we see that $(w-A)^{-1}$ is uniformly bounded in the $L^2$ operator norm on $\partial U_{0,\epsilon_1}$. Hence, since $N < -1$, the integral in the right hand side of \eqref{eq:fACauchy} converges. Moreover, we see that the integral over $J_\ell$ in \eqref{eq:Cauchyintermediaire} tends to the right hand side of \eqref{eq:fACauchy} when $\ell$ tends to $+ \infty$. Consequently, we only need to prove that the integral over $I_\ell$ in \eqref{eq:Cauchyintermediaire} tends to $0$ when $\ell$ tends to $+ \infty$ to end the proof of the lemma.

To do so, notice that, for $\ell$ large enough, when $w \in I_\ell$ then, thanks to \eqref{eq:loinduspectre}, the distance between $w$ and the spectrum of $A$ is greater than $r_\ell^{-(2 + n/m)}$, so that we have $\| (w - A)^{-1}\|_{L^2 \to L^2} \leq r_\ell^{2 + n/m}$. Then, using the decay assumption on $f$ and the fact that the length of $I_\ell$ is a $\O\p{r_\ell}$, we find that the integral over $I_\ell$ in \eqref{eq:Cauchyintermediaire} is a $\O(r_\ell^{N+3 + n/m})$. Hence, this integral tends to $0$ since we reduced to the case $N < - 3 - n/m $.
\end{proof}

We are now ready to prove Theorem \ref{theorem:functional_calculus}.

\begin{proof}[Proof of Theorem \ref{theorem:functional_calculus}]
We want to apply Lemma \ref{lemma:fACauchy} with $\epsilon_1 = \frac{3}{4} \epsilon$. To do so, we need to check that for $w \in \partial U_{0,3\epsilon/4}$, the operator $\p{w - A}^{-1}$ is a semi-classical $\G^1$ pseudor with principal symbol $\p{w-a}^{-1}$. Recall Remark \ref{remark:uniform-bounds-resolvent} and apply Lemma \ref{lemma:ouestlesymbole} with $\delta = \frac{\epsilon}{4}$ to see that $w-a$ for $w \in \partial U_{0,3\epsilon/4}$ is uniformly elliptic. We may then apply Theorem \ref{thm:parametrix} to see that $(w - A)^{-1}$ is a $\G^1$ pseudor.

We explain now why Definition \ref{def:gevrey_pseudor} remains valid after averaging over $\partial U_{0,3\epsilon/4}$. We prove that $f(A)$ satisfies (ii) in Definition \ref{def:gevrey_pseudor} (the point (i) is easier to check). Since $(w - A)^{-1}$ for $w \in \partial U_{0,3\epsilon/4}$ is a $\G^1$ pseudor, its kernel may be written in local coordinates:
\begin{equation}\label{eq:resolvante_pseudo}
\begin{split}
(w- A)^{-1} (x,y) = \frac{1}{(2 \pi h)^n} \int_{\R^n} e^{i \frac{\jap{x-y,\xi}}{h}} p_w(x,\xi) \mathrm{d}\xi + R_w(x,y),
\end{split}
\end{equation}
where $p_w$ is an analytic symbol and $R_w$ is an $\O(\exp(-1/Ch))$ in $\G^1$. Moreover, it follows from Remark \ref{remark:uniform-bounds-resolvent} (see also Remark \ref{remark:regularite_solution_d_bar}) and the proof of Theorem \ref{thm:parametrix} that $p_w$ and $R_w$ satisfies uniform bound in $w$ and have a measurable dependence on $w$. Consequently, we may average \eqref{eq:resolvante_pseudo} over $\partial U_{0,3\epsilon/4}$ and find with Lemma \ref{lemma:fACauchy} and Fubini's Theorem that (the decay assumption on $f$ ensures the integrability)
\begin{equation}\label{eq:fApseudo}
\begin{split}
f(A)(x,y) = \frac{1}{(2 \pi h)^n} \int_{\R^n} e^{i \frac{\jap{x-y,\xi}}{h}} & \underbrace{\p{\int_{\partial U_{0,3\epsilon/4}} f(w) p_w(x,\xi) \mathrm{d}w}}_{\coloneqq q(x,\xi)}  \mathrm{d}\xi\\ & \qquad \qquad+ \underbrace{\int_{\partial U_{0,3\epsilon/4}} f(w) R_w(x,y) \mathrm{d}w}_{\coloneqq R(x,y)}.
\end{split}
\end{equation}
Due to the uniform bounds on $p_w$ and $R_w$, we see that $q$ is an analytic symbol and that $R$ is an $\O(\exp(-1/Ch))$ in $\G^1$. Here, we applied Fubini's Theorem in an oscillating integral, this is made rigorous by integrating against test functions and doing a finite number of integration by parts. Finally, notice that $q$ is given at first order by $f(a)$ since $p_w$ is given at first order by $(w-a)^{-1}$.
\end{proof}

\setcounter{equation}{0}

\chapter{FBI transform on compact manifolds}
\label{part:FBI}

The purpose of this chapter is to present some features of FBI transforms in the Gevrey classes. We consider fixed $(M,g)$ a compact analytic Riemannian manifold without boundary. In \S\ref{sec:FBI-basics}, we recall some symplectic geometry in the complexification of $T^\ast M$, introduce the so-called I-Lagrangians, and the corresponding I-Lagrangian (functional) spaces. We also study the relation between the regularity of distributions and the decay of their FBI transforms.

In the next section \S \ref{sec:FBI-et-pseudo}, we study the action of pseudors on I-Lagrangian spaces. The most technical and crucial arguments of the paper are contained in \S \ref{sec:TPS}.  Then, \S \ref{sec:Bergman} is devoted to the precise description of the Bergman projector, and its applications. In particular \S \ref{sec:Toeplitz} contains the proof of the representation formula \eqref{eq:flat-space-scalar-product}. It is closely related to a Toeplitz representation of pseudors on $M$.

For the convenience of the reader, we start with a summary of the technical results.

\subsection*{Results: I-Lagrangian deformations and Gevrey pseudors}

We recall here some definitions and notations that are exposed in greater detail in \S \ref{sec:Grauert}. We will denote the Riemannian volume on $M$ by $\mathrm{d}\vol_g$, or just $\mathrm{d}x$ as we will take charts with Jacobian identically equal to $1$. The cotangent space $T^\ast M$ will be endowed with the corresponding (analytic) Kohn--Nirenberg metric $g_{KN}$. In local charts, it is equivalent to
\[
g_{KN}^{\mathrm{flat}}= \mathrm{d}x^2 + \frac{\mathrm{d}\xi^2}{\langle\xi\rangle^2}.
\]
Given an analytic manifold $X$, we will denote by $(X)_\epsilon$ its $\epsilon$-Grauert tube (see \S \ref{sec:Grauert} for its definition). It is a geometric version of a complex neighbourhood of $X$. In the case of $T^\ast M$, the tube $(T^\ast M)_\epsilon$ is an asymptotically conical complex neighbourhood. It will be convenient to introduce for $\alpha\in (T^\ast M)_\epsilon$ the Japanese brackets $\jap{\alpha}$ and $\jap{\va{\alpha}}$ defined by \eqref{eq:def_jap_brack} in \S \ref{sec:Grauert}. Beware that the first is holomorphic, and not the second one. However, $|\langle\alpha\rangle|$, $\Re \langle\alpha\rangle$ and $\langle|\alpha|\rangle$ are comparable on $(T^\ast M)_\epsilon$ for $\epsilon>0$ small enough. In Definition \ref{def:analytic-FBI} and below, we consider a small implicit parameter $h > 0$. Unless specified, all the estimates are supposed to be uniform in $h$.

We recall that an admissible phase in the sense of Definition \ref{def:nonstandardphase}, is a holomorphic symbol $\Phi(\alpha,x)$ of order $1$ on $ T^\ast M \times M$, defined for $x$ close to $\alpha_x$, so that
\[
\Phi(\alpha,x) = \langle \alpha_x - x ,\alpha_\xi\rangle + \mathcal{O}(\langle|\alpha|\rangle
|\alpha_x - x|^2),\qquad \Im\Phi \geq \frac{1}{C}\langle |\alpha| \rangle |\alpha_x - x|^2.
\]
Here, we recall that we write $\alpha = (\alpha_x,\alpha_\xi)$ for $\alpha \in T^* M$.

In general, an FBI transform $T$ (on $M$) is a linear map between $\mathcal{D}'(M)$ and $C^\infty(T^\ast M)$. Away from the ``diagonal'' $\{x=\alpha_x\}$, its kernel is assumed to decay at a sufficient rate (which depends on the context), and near the diagonal, (possibly modulo a remainder) it takes the form
\[
e^{i\frac{\Phi_T(\alpha,x)}{h}} a(\alpha,x),
\]
where $a$ is elliptic in some symbol class, and $\Phi_T$ is an admissible phase.
\begin{definition}\label{def:analytic-FBI}
An \emph{analytic FBI transform} is an FBI transform $T$ such that for some $C,\epsilon_0,\epsilon_1,\eta>0$, the kernel $K_T$ of $T$ is holomorphic in $(M\times T^\ast M)_{\epsilon_0}$ and for $(x,\alpha)$ therein satisfies
\begin{enumerate}[label=(\roman*)]
	\item for $d(x,\alpha_x)>\epsilon_1$, we have $|K_T(\alpha,x)| \leq C e^{-\eta\frac{ \langle | \alpha| \rangle}{h}}$;
	\item for $d(x,\alpha_x)\leq\epsilon_1$, 
	\begin{equation}\label{eq:def-KT}
	\left|K_T(\alpha,x)- e^{i\frac{\Phi_T(\alpha,x)}{h}} a(\alpha,x)\right| \leq C e^{-\eta\frac{\langle|\alpha|\rangle}{h}};
	\end{equation}
\end{enumerate}
$\Phi_T$ being an admissible phase (as defined in Definition \ref{def:nonstandardphase}), and $a$ being a semi-classical analytic symbol, elliptic in the symbol class $h^{-3n/4}S^{1,n/4}$ (with the subtlety that $a(\alpha,x)$ is maybe only defined for $\alpha_x$ and $x$ close to each other).

An \emph{adjoint analytic FBI transform} is an operator $S:C^\infty_c(T^\ast M)\to C^\infty(M)$ whose kernel $K_S(x,\beta)$ satisfies that $(\alpha,x) \mapsto \overline{K_S(x,\alpha)}$ is the kernel of an analytic FBI transform.
\end{definition}

\begin{remark}
While it does not appear in the notation, the symbol $a$ is allowed to depend on the small implicit parameter $h > 0$ (but it has to satisfy estimates uniformly in $h$). We say that $\Phi_T$ is the phase of $T$ and that $a$ is its symbol. The class of analytic symbol $S^{1, n/4}$ is defined in Definition \ref{def:symbole_manifold}.

If $S$ is an adjoint analytic FBI transform, we say that $\Phi_S(x,\alpha)$ and $b(x,\alpha)$ are respectively the phase and the symbol of $S$ if $(\alpha,x) \mapsto - \overline{\Phi_T(x,\alpha)}$ and $\p{\alpha,x} \mapsto \overline{b(x,\alpha)}$ are the phase and the symbol of the analytic FBI transform with kernel $(\alpha,x) \mapsto \overline{K_S(x,\alpha)}$.

Recall that the fact that $K_T$ is the kernel of $T$ means that if $u$ is a smooth function on $M$ then $Tu$ is defined by the formula
\begin{equation}\label{eq:def_concrete_T}
\begin{split}
Tu(\alpha) = \int_M K_T(\alpha,x) u(x) \mathrm{d}x.
\end{split}
\end{equation}

Notice that if $T$ is an analytic FBI transform, its adjoint $T^\ast$ with respect to the $L^2$ spaces on $M$ and $T^\ast M$ is an adjoint analytic FBI transform, since the kernel of $T^*$ is given by $\p{x,\alpha} \mapsto \overline{K_{T}(x,\alpha)}$ (the cotangent bundle $T^* M$ is endowed with its canonical volume form).
\end{remark}

\begin{remark}\label{remark:ce_quest_PS}
If $S$ is an adjoint analytic FBI transform and $P$ is a $\G^1$ order $0$ elliptic pseudor, it follows from Proposition \ref{prop:pseudor_lagrangien_alt} in \S \ref{sec:toolbox_gevrey_pseudor} that $PS$ is still an adjoint analytic FBI transform. In fact, if $P$ is a general $\G^1$ pseudor (not necessarily elliptic of order $0$), then $PS$ still satisfies all the points in the definition of an adjoint analytic FBI transform but one: the symbol may not be elliptic in the good symbol class. 

Similarly, if $T$ is an analytic FBI transform and $P$ a $\G^1$ pseudor, then we may write $TP = \p{P^* T^*}^*$. We know that $T^*$ is an adjoint analytic FBI transform and that $P^*$ is a $\G^1$ pseudor (according to Proposition \ref{prop:adjoint_pseudor}). Hence, the discussion above implies that $TP$ satisfies the conditions of Definition \ref{def:analytic-FBI}, except that the symbol $a$ may not be in the good symbol class, and is not necessarily elliptic.
\end{remark}

\begin{remark}\label{remark:pour_localiser}
Notice that if $T$ is an analytic FBI transform, then it satisfies Definition \ref{def:adapted-Lagrangian} with arbitrarily small $\epsilon_1$, up to taking smaller $\eta$ and $\epsilon_0$ (with $\epsilon_0 \ll \epsilon_1$) and larger $C$. This elementary fact will be useful later. It implies that, while $T$ is a global object, its most relevant properties are local.
\end{remark}

Our first result, which is an extension of claims (1.10)-(1.12) in \cite{Sjostrand-96-convex-obstacle}, is the following (its proof may be found in \S \ref{subsec:inversion}). It is not clear \emph{a priori} that the composition of an analytic FBI transform and an adjoint analytic FBI transform makes sense, we will see in Proposition \ref{prop:regularite-decay-s=1} how to define this composition.
\begin{theorem}\label{thm:existence-good-transform}
There exists an analytic FBI transform $T$, and $h_0,R_0>0$ such that for $0<h<h_0$, the composition $T^\ast T$ makes sense and is the identity operator on $(E^{1,R_0})'$.
\end{theorem}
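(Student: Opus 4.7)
The plan is to first build a candidate analytic FBI transform $T_0$ with a carefully chosen phase and symbol, show that $T_0^{\ast}T_0$ is a self-adjoint, positive, classically elliptic $\G^1$ $h$-pseudor of order $0$, and then correct $T_0$ by a functional-calculus factor so that the composition becomes the identity.

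For the construction of $T_0$, I would choose an admissible phase $\Phi_T$ in the sense of Definition \ref{def:nonstandardphase}: in local analytic coordinates near the diagonal a natural model is
\[
\Phi_T(\alpha,x) = \langle \alpha_x - x, \alpha_\xi\rangle + \tfrac{i}{2}\langle\alpha_\xi\rangle (\alpha_x - x)^2,
\]
glued into a global holomorphic phase on a neighbourhood of the diagonal in $(T^\ast M\times M)_{\epsilon_0}$. Pick an elliptic analytic symbol $a_0 \in h^{-3n/4} S^{1,n/4}$, form the cut-off ansatz $\chi(d(x,\alpha_x))\, e^{i\Phi_T/h} a_0$, and apply the $\overline{\partial}$-trick (Lemma \ref{lemma:approximation_analytique}, used as in Remark \ref{remark:approximation_analytique}) to produce a globally holomorphic kernel $K_{T_0}$ differing from this ansatz by an $\O(\exp(-\langle|\alpha|\rangle/Ch))$ remainder, and exponentially small when $d(\alpha_x,x) > \epsilon_1$. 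This gives $T_0$ satisfying (i), (ii) of Definition \ref{def:analytic-FBI}.

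Next I would compute $T_0^{\ast} T_0$. Its kernel is the integral over $T^\ast M$ of $\overline{K_{T_0}(\alpha,x)} K_{T_0}(\alpha,y)$, with phase $\Psi(\alpha,x,y) = \Phi_T(\alpha,y) - \overline{\Phi_T(\alpha,x)}$. Away from the diagonal $x=y$ the integrand is negligible (non-stationary phase, Proposition \ref{prop:non_stationary_analytic_symbols}); near the diagonal, $\Psi$ has, for each fixed $(x,y)$ close, a unique non-degenerate critical point in $\alpha$ with critical value vanishing on $x=y$, and $\Im\Psi \geq 0$ at reals by the admissibility of $\Phi_T$. A symbolic application of the holomorphic stationary phase method (Proposition \ref{prop:HSP_symbol}, together with the rescaling argument of \S\ref{subsec:further_tricks}) then writes $K_{T_0^\ast T_0}$ as an oscillatory integral with a non-standard phase of the type treated in the Kuranishi trick (Lemma \ref{lmkurtrick}), exhibiting $T_0^\ast T_0$ as a $\G^1$ $h$-pseudor of order $0$. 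A straightforward first-order computation (Remark \ref{remark:coefficients_symboles}) shows that the principal symbol of $T_0^\ast T_0$ is the pointwise Gaussian integral $|a_0(\alpha,\alpha_x)|^2$ times a non-vanishing factor coming from $\det(-i\,\mathrm{d}^2_\alpha \Psi)$; in particular it is elliptic of order $0$. Choosing $a_0$ to set this leading symbol identically equal to $1$ (possible because $\Im\partial^2_x\Phi_T$ is elliptic of order $1$) makes $T_0^\ast T_0$ a classically elliptic self-adjoint positive $\G^1$ $h$-pseudor of order $0$.

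Finally I would invoke Theorem \ref{theorem:functional_calculus} with $f(z) = z^{-1/2}$ (justified because the spectrum of $T_0^\ast T_0$ lies in a half-line $[c,\infty)$ with $c>0$ for $h$ small, by the $\mathcal{C}^\infty$ sharp G\aa{}rding inequality and classical ellipticity) to obtain $R := (T_0^\ast T_0)^{-1/2}$, an elliptic, self-adjoint, order-$0$ $\G^1$ $h$-pseudor. Setting $T := T_0 R$ gives $T^\ast T = R\, T_0^\ast T_0\, R = I$, and by Remark \ref{remark:ce_quest_PS} (applied in the version for analytic FBI transforms composed on the right with an elliptic order-$0$ $\G^1$ pseudor), $T$ remains an analytic FBI transform in the sense of Definition \ref{def:analytic-FBI} with symbol still elliptic in $h^{-3n/4}S^{1,n/4}$. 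For the choice of $R_0$: the exponential decay in $\langle|\alpha|\rangle$ of $K_T$ together with its holomorphic extension to a definite Grauert tube ensure that for some $R_0$ large, $x\mapsto K_T(\alpha,\cdot)$ lies in $E^{1,R_0}$ uniformly in $\alpha$, so $Tu$ makes sense for $u\in (E^{1,R_0})'$, takes values in a space on which $T^\ast$ acts, and the identity $T^\ast T = I$, established first on $C^\infty(M)$, extends by duality. The main obstacle is the rigorous identification in the middle step: pushing the holomorphic stationary phase method through to show that $T_0^\ast T_0$ is a bona-fide $\G^1$ pseudor (rather than merely a $\mathcal{C}^\infty$ one with the right principal symbol) and controlling all remainders down to $\O(\exp(-1/(Ch)))$ in the analytic category, so that Theorem \ref{theorem:functional_calculus} applies to produce a genuine $\G^1$ square root.
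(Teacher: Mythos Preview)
Your overall strategy matches the paper's exactly: construct a preliminary transform $T_0$ via the $\bar\partial$-trick (this is the paper's Lemma \ref{lemma:existence-transform}), show $T_0^\ast T_0$ is an elliptic self-adjoint $\G^1$ pseudor of order $0$ (the paper's Lemma \ref{lemma:ST-pseudo}), take its inverse square root, and set $T = T_0(T_0^\ast T_0)^{-1/2}$. However, two steps in your execution do not go through as written.

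First, your stationary-phase claim is wrong. You assert that for fixed $(x,y)$ near the diagonal the phase $\Psi(\alpha,x,y)=\Phi_T(\alpha,y)-\overline{\Phi_T(\alpha,x)}$ has a unique non-degenerate critical point in $\alpha$. But when $x=y$ the phase is stationary in $\alpha_\xi$ along the entire fiber $\{\alpha_x=x\}$: for your model phase one computes $\partial_{\alpha_\xi}\Psi = (x-y) + \tfrac{i}{2}\tfrac{\alpha_\xi}{\langle\alpha_\xi\rangle}\bigl[(\alpha_x-y)^2+(\alpha_x-x)^2\bigr]$, which vanishes identically at $\alpha_x=x=y$. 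So the Hessian $\mathrm{d}^2_\alpha\Psi$ is degenerate on the diagonal and Proposition \ref{prop:HSP_symbol} in all $2n$ variables does not apply. The paper's Lemma \ref{lemma:ST-pseudo} instead performs stationary phase only in $\alpha_x$ (called $z$ there), which \emph{is} non-degenerate thanks to the coercivity of $\Im\Phi_T$; the remaining $\alpha_\xi$-integral is precisely the oscillatory integral that exhibits $T_0^\ast T_0$ as a pseudor with non-standard phase $\Phi_{ST}(x,\xi,y)$, to which the Kuranishi trick applies.

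Second, you invoke Theorem \ref{theorem:functional_calculus} to define $(T_0^\ast T_0)^{-1/2}$, but that theorem requires the operator to be classically elliptic of order $m>0$, whereas $T_0^\ast T_0$ has order $0$. The paper bypasses this by writing $P^{-1/2}$ directly as a Cauchy integral $\tfrac{1}{2i\pi}\int_\gamma (w-P)^{-1}\,w^{-1/2}\,\mathrm{d}w$ over a contour in $\{\Re w>0\}$ enclosing the (compact, positive) spectrum of $P=T_0^\ast T_0$, and applying the parametrix Theorem \ref{thm:parametrix} to each $(w-P)^{-1}$ (these are semi-classically elliptic of order $0$, uniformly for $w\in\gamma$). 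This produces $P^{-1/2}$ as a $\G^1$ pseudor without the $m>0$ hypothesis.
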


The space $(E^{1,R_0})'$ that appears in Theorem \ref{thm:existence-good-transform} is defined precisely in \S \ref{sec:appendix-Gevrey-regularity}. This is a space of hyperfunctions that may be understood loosely as ``the dual of the space of real-analytic functions with radius of convergence larger than $\frac{1}{R_0}$''. The difference with the transform in \cite{Sjostrand-96-convex-obstacle} is that our transform is globally analytic instead of only near the diagonal, so that it acts on $(E^{1,R_0})'$ instead of just usual distributions. Notice that we will in fact prove that $T^* T$ is well-defined as an operator from $(E^{1,R_0})'$ to $(E^{1,R_1})'$ for some $R_1 < R_0$, and it would then be more precise to say that $T^* T$ is the injection between these spaces.

We will from now on take $T$ to be an analytic FBI transform given by Theorem \ref{thm:existence-good-transform}, and denote $S=T^\ast$. Since its kernel has a holomorphic extension to a complex neighbourhood of $M\times T^\ast M$, for some hyperfunction $u$, it makes sense to consider holomorphic extensions of $Tu$ to $\p{T^* M}_{\epsilon_0}$. We will prove (Proposition \ref{prop:M-vers-Lambda}) that if $u$ is $\G^s$ for some $s\geq 1$, when $\alpha$ is at distance $\lesssim C^{-1} h^{1-1/s}\langle|\alpha|\rangle^{1/s}$ from $T^\ast M$, with $C>0$ depending on $u$,
\[
|T u(\alpha)| \leq C \exp\left(- \left(\frac{\langle|\alpha|\rangle}{C h}\right)^{\frac{1}{s}} \right) \text{ when } \langle|\alpha|\rangle \text{ is large enough.}
\]

The relation between the FBI transform and the symplectic geometry on $T^\ast M$ is quite deep. The canonical symplectic form $\omega = \mathrm{d}\xi\wedge\mathrm{d}x$ extends to $(T^\ast M)_{\epsilon_0}$ as a complex symplectic form. Its imaginary part $\omega_I= \Im \omega$ is a real symplectic form. Following Schapira, we say that a submanifold $\Lambda \subset (T^\ast M)_{\epsilon_0}$ is I-Lagrangian if it is Lagrangian for $\omega_I$. Given a real-valued function $G$, we will denote by $H_G^{\omega_I}$ its Hamiltonian vector field with respect to $\omega_I$, in the sense that $\mathrm{d}G(\cdot) = \omega_I(\cdot,H_G^{\omega_I})$. 

To be able to use almost analytic extensions of Gevrey symbols defined on $T^\ast M$, we will have to restrict our attention to families of Lagrangian submanifolds $(\Lambda_h)_{h>0}$ such that as $h\to 0$,
\begin{itemize}
	\item the $\Lambda_h$'s are uniformly close to $T^* M$ in $C^\infty$ (for the Kohn--Nirenberg metric),
	\item in $C^2$ topology, $\Lambda_h$ is at distance $\leq \tau_0 h^{1-1/s}\langle|\alpha|\rangle^{1/s}$ from $T^\ast M$ (with $\tau_0$ small).
\end{itemize}
A particular case of such families are given by $\Lambda := e^{H^{\omega_I}_G}(T^\ast M)$, where $G$ is a symbol of order $\leq 1/s$, so that $H^{\omega_I}_G$ is $\O(\tau_0 h^{1 - 1/s} \jap{\va{\alpha}}^{1/s})$ (for the Kohn--Nirenberg metric). For such examples, we can find a global function $H$ on $\Lambda$ such that if $\theta=\xi\cdot \mathrm{d}x$ is the holomorphic Liouville one form, $\mathrm{d}H = -\Im \theta_{|\Lambda}$. We will call such examples \emph{$(\tau_0,s)$-Gevrey adapted Lagrangians} (see Definition \ref{def:adapted-Lagrangian} for a precise statement).

In \S \ref{sec:toolbox_gevrey_pseudor}, we have gathered the main elements of a microlocal theory of $\mathcal{G}^s$ pseudors, in loose terms those pseudors whose symbol satisfies estimates of the form
\[
|\partial_x^\alpha\partial_\xi^\beta p| \leq C R^{|\alpha|+|\beta|} (\alpha ! \beta !)^s \langle\xi\rangle^{m-|\beta|}.
\] 
The main result of the present chapter is the following (see Proposition \ref{propmultform} for a slightly more general statement):
\begin{theorem}\label{thm:deforming-Gs-pseudors}
Let $P$ be a $\mathcal{G}^s$ pseudor of order $m$. Denote by $p$ its $\G^s$ principal symbol, and also one of its Gevrey almost analytic extensions. Then, there exists $R,\tau_0>0$ such that, if $\Lambda$ is a $(\tau_0,s)$-Gevrey adapted Lagrangians, then there is $h_0 > 0$ such that for $0 < h <h_0$ and for $u\in E^{1,R}(M)$
\[
\int_{\Lambda} T (P u)(\alpha)\overline{Tu(\alpha)} e^{-2H(\alpha)/h} \mathrm{d}\alpha = \int_{\Lambda} ( p(\alpha) + \O(h \langle|\alpha|\rangle^{m-1})) |T u(\alpha)|^2 e^{-2H(\alpha)/h}\mathrm{d}\alpha.
\]
Additionally, $E^{1,R}(M)$ is dense for all $h>0$ sufficiently small in 
\[
\mathcal{H}_\Lambda^0 := \left\{ u \in (E^{1,R})'\ \middle| \ \int_\Lambda |T u|^2 e^{-2H/h} \mathrm{d}\alpha < \infty\right\}.
\]
\end{theorem}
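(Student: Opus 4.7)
The reproducing identity $T^\ast T = I$ from Theorem \ref{thm:existence-good-transform} lets us write $TPu = Q(Tu)$ with $Q := TPS$ and $S = T^\ast$. The left-hand side of the multiplication formula becomes
\[
\int_\Lambda (Q\,Tu)(\alpha)\,\overline{Tu(\alpha)}\,e^{-2H/h}\,\mathrm{d}\alpha,
\]
so it is enough to show that, when applied to $(Tu)|_\Lambda$, the operator $Q$ coincides, modulo an error which is exponentially small in the weighted $L^2$-sense, with multiplication by $p + \O(h\langle|\alpha|\rangle^{m-1})$ on $\Lambda$.

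\textbf{Kernel of $Q$ and stationary phase on $\Lambda$.} In local coordinates the Schwartz kernel of $Q$ is a triple oscillatory integral with combined phase $\Phi_T(\alpha,x) + \langle x-y,\xi\rangle - \overline{\Phi_T(\beta,y)}$ and amplitude built from the symbols of $T$, $P$, $S$. The admissibility of $\Phi_T$ (Definition \ref{def:nonstandardphase}) produces a unique non-degenerate critical point at $x=y=\alpha_x$, $\xi=\alpha_\xi$, whose critical value vanishes when $\alpha=\beta$ and whose imaginary part is coercive in $|\alpha-\beta|$ by the fundamental lemma \ref{lemma:fundamental-lemma}. To evaluate $(Q\,Tu)(\alpha)$ at $\alpha\in\Lambda$, we write $(Q\,Tu)(\alpha) = \int_{T^\ast M} K_Q(\alpha,\beta)\,Tu(\beta)\,\mathrm{d}\beta$ and shift the $\beta$-contour from $T^\ast M$ to $\Lambda$. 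This deformation is legitimate because $Tu$ extends holomorphically to a full neighbourhood of $T^\ast M$ containing $\Lambda$ (Proposition \ref{prop:M-vers-Lambda}, for $R$ large enough depending on $\Lambda$), and because the $(\tau_0,s)$-adapted scale $\tau_0 h^{1-1/s}\langle|\alpha|\rangle^{1/s}$ of $\Lambda$ is precisely the one on which the Gevrey almost analytic extension of $p$ (Remark \ref{remark:aae_for_symbol}) produces a $\bar\partial$-error of size $\exp(-c(\langle|\alpha|\rangle/h)^{1/s})$, dominated by $e^{-2H/h}$ once $\tau_0$ is chosen small. Holomorphic/Gevrey stationary phase on the deformed contour (Proposition \ref{prop:HSP_symbol} in the analytic case, Lemma \ref{lemma:stationary_gevrey_real_symbol} combined with almost analytic extensions in the Gevrey case) then yields
\[
(Q\,Tu)(\alpha) \;=\; \bigl(p(\alpha) + \O(h\langle|\alpha|\rangle^{m-1})\bigr)\,Tu(\alpha) + r(\alpha), \qquad \alpha\in\Lambda,
\]
with $r$ exponentially small together with the weight, which gives the multiplication formula.

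\textbf{Density.} Given $u\in\mathcal{H}_\Lambda^0$, pick $\chi_n\in C_c^\infty(T^\ast M)$ with $0\le\chi_n\le 1$ and $\chi_n\nearrow 1$, and set $u_n := S(\chi_n\,Tu)$. Since the kernel of $S$ is analytic and holomorphic in its first argument to a uniform complex neighbourhood of $M$, and $\chi_n\,Tu$ has compact support, each $u_n$ extends holomorphically to that neighbourhood and therefore belongs to $E^{1,R}(M)$ for some $R$ independent of $n$. Writing $\Pi := TS$, the identity $T^\ast T = I$ gives $\Pi\,Tu = Tu$, and hence $Tu - Tu_n = \Pi((1-\chi_n)Tu)$. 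Convergence $u_n\to u$ in $\mathcal{H}_\Lambda^0$ now follows from boundedness of the Bergman projector $\Pi$ on the weighted $L^2$ space on $\Lambda$ (established in \S\ref{sec:Bergman}) together with dominated convergence applied to $(1-\chi_n)Tu \to 0$ in $L^2(T^\ast M, e^{-2H/h}\,\mathrm{d}\alpha)$.

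\textbf{Main obstacle.} The crux is the joint contour deformation and stationary phase on $\Lambda$: one must control the $\bar\partial$-errors of the Gevrey almost analytic extensions of $p$ (which are only almost holomorphic for $s>1$) uniformly in $h$ and in $|\alpha|$, and show that the scale $\tau_0 h^{1-1/s}\langle|\alpha|\rangle^{1/s}$ is simultaneously large enough to accommodate stationary phase on $\Lambda$ and small enough that these remainders beat the weight $e^{-2H/h}$. This calibration, announced as Lemma \ref{lemma:TPS-close-diagonal} and carried out in \S\ref{sec:TPS}, is the technical heart of the argument.
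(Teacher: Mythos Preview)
Your overall plan is right and you correctly identify Lemma~\ref{lemma:TPS-close-diagonal} as the technical heart, but there is a genuine gap in the multiplication-formula step.

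The pointwise identity you claim,
\[
(QTu)(\alpha)\;=\;\bigl(p(\alpha)+\O(h\langle|\alpha|\rangle^{m-1})\bigr)\,Tu(\alpha)+r(\alpha),
\]
does not follow from stationary phase on $\int_\Lambda K_{TPS}(\alpha,\beta)Tu(\beta)\,\mathrm{d}\beta$. The kernel $K_{TPS}$ has width $\sim(h/\langle|\alpha|\rangle)^{1/2}$ near the diagonal, so the subprincipal terms in the expansion involve \emph{derivatives} of $Tu$, not multiplication by a symbol. Concretely, the symbol $e(\alpha,\beta)$ of Lemma~\ref{lemma:TPS-close-diagonal} differs from $p(\alpha)\tilde e(\alpha,\beta)$ by a term $\O(h^{-n}\langle|\alpha|\rangle^{m}\,d_{KN}(\alpha,\beta))$ off the diagonal; integrating this against the Gaussian phase produces an $\O(h^{1/2})$ error, and this is exactly what the paper obtains in the weak multiplication formula Proposition~\ref{lemma:weak-mult}. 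To upgrade $h^{1/2}$ to $h$ one must exploit that $Tu$ lies in the range of $T$: the paper does this through the full Toeplitz calculus of \S\ref{sec:Bergman}. One constructs annihilating pseudors $Z_j$ (Proposition~\ref{prop:existence-Z_j}), shows that FIOs invariant under $\Pi_\Lambda$ and $\Pi_\Lambda^*$ are determined by their symbol on the diagonal (Lemma~\ref{lemma:de_la_diagonale_le_symbole}), proves the orthogonal projector $B_\Lambda$ is itself such an FIO (Lemma~\ref{lemma:approximate_projector}), and only then obtains Proposition~\ref{proptoeplitz}: there is a symbol $\sigma=p+hS^{m-1}$ with $B_\Lambda T_\Lambda PS_\Lambda B_\Lambda=B_\Lambda\sigma B_\Lambda$ up to negligible error. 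Since $B_\Lambda T_\Lambda u=T_\Lambda u$ and $B_\Lambda$ is self-adjoint, the scalar product becomes $\langle\sigma T_\Lambda u,T_\Lambda u\rangle$, which is the formula of the theorem with $r=\sigma-p$ independent of $u$. This Bergman step is not a technical convenience: it is what converts the kernel-level $\O(h^{1/2})$ into the quadratic-form $\O(h)$.

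Your density argument is morally the paper's (Corollary~\ref{corollary:densite}), but the choice of a $C_c^\infty$ cutoff $\chi_n$ breaks it: you define $u_n=S(\chi_n Tu)$ with $\chi_n$ living on $T^*M$, yet you need convergence on $\Lambda$, and a $C^\infty$ cutoff prevents the contour shift. The paper uses the holomorphic damping $e^{-\epsilon\langle\alpha\rangle^2}$, so that $u_\epsilon=S_{T^*M}e^{-\epsilon\langle\alpha\rangle^2}T_{T^*M}u$ equals $S_\Lambda e^{-\epsilon\langle\alpha\rangle^2}T_\Lambda u$ after a legitimate Stokes deformation; then boundedness of $\Pi_\Lambda$ on $L^2_k(\Lambda)$ (Proposition~\ref{lemma:boundedness}) and dominated convergence finish the job.
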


The space $E^{1,R}(M)$ is a space of analytic functions with radius of convergence $\sim 1/R$ -- see \S \ref{sec:appendix-Gevrey-regularity} for the definition and a reminder on the notion of almost analytic extension. The reason for which we need to introduce the function $H$ in Theorem \ref{thm:deforming-Gs-pseudors} is the following. Certain projectors on the image of $T$ have a structure of FIO with complex phase. However, to apply the techniques of \cite{Melin-Sjostrand-75}, we need  the imaginary part of some phase function to be positive. It is only after the conjugation by $e^{H/h}$ that this property is satisfied -- see Lemma \ref{lemma:PhiTS-Lambda} and more explanations in \S \ref{sec:action-pseudo-FBI}.

\section{Basic properties of the FBI transform}\label{sec:FBI-basics}

\subsection{I-Symplectic geometry and I-Lagrangian functional spaces}
\label{sec:symplectic-geometry}

Let us start by recalling a few facts on the symplectic geometry of $(T^\ast M)_{\epsilon_0}$ for $\epsilon_0 > 0$. For a reference, we suggest \cite{CannasdaSilva-01}. The usual symplectic form $\omega = \mathrm{d}\xi\wedge \mathrm{d}x$ of $T^\ast M$ can be extended to a complex-linear symplectic form on $(T^\ast M)_{\epsilon_0}$, still denoted by $\omega$. The Liouville 1-form $\theta = \xi \cdot \mathrm{d}x$ can also be extended, so that $\omega = \mathrm{d}\theta$ still is an exact form. 

We let $\omega_R = \Re \omega$ and $\omega_I = \Im \omega$. Notice that $\omega_R$ and $\omega_I$ are \emph{real} symplectic forms on $(T^\ast M)_{\epsilon_0}$. In local charts with $\tilde x= x+ i y$, $\tilde\xi = \xi+ i\eta$, the expression for $\omega$ is given by
\[
\omega = \underset{=\omega_R}{\underbrace{\mathrm{d}\xi\wedge \mathrm{d}x  - \mathrm{d}\eta \wedge \mathrm{d}y}} + i \underset{=\omega_I}{\underbrace{( \mathrm{d}\eta \wedge \mathrm{d}x + \mathrm{d}\xi \wedge \mathrm{d}y )}}.
\]
We can also express the Liouville 1-form:
\[
\theta = \xi\cdot \mathrm{d}x - \eta\cdot \mathrm{d}y + i(\xi\cdot \mathrm{d}y + \eta\cdot \mathrm{d}x).
\]
Following Schapira, we will denote with an I objects of symplectic geometric defined through the use of $\omega_I$. For example, the I-Hamiltonian (i.e. w.r.t. $\omega_I$) vector field of a $\mathcal{C}^1$ function $f$ is given in the coordinates above by
\begin{equation}\label{eq:I-hamilton-field}
\begin{split}
H_f^{\omega_I} & = \nabla_\eta f \cdot \frac{\partial }{\partial x} - \nabla_x f \cdot \frac{\partial }{\partial \eta} + \nabla_\xi f \cdot \frac{\partial }{\partial y} - \nabla_y f \cdot \frac{\partial }{\partial \xi} \\
               & = \sum_{j = 1}^n \frac{\partial f}{\partial \eta_j} \frac{\partial}{\partial x_j}  - \frac{\partial f}{\partial x_j} \frac{\partial}{\partial \eta_j} + \frac{\partial f}{\partial \xi_j} \frac{\partial}{\partial y_j} - \frac{\partial f}{\partial y_j} \frac{\partial}{\partial \xi_j},
\end{split}
\end{equation}
so that $\mathrm{d}f = \omega_I(\cdot, H^{\omega_I}_f)$.

One finds directly that $T^\ast M$ is a I-Lagrangian submanifold of $(T^\ast M)_{\epsilon_0}$. The idea of \cite{Helffer-Sjostrand-86} is to replace it by another I-Lagrangian submanifold. However, we will not work with \emph{any} I-Lagrangian subspace of $\p{T^* M}_{\epsilon_0}$, we will only consider adapted I-Lagrangians as we define now (we explain in Remark \ref{remark:lagrangienne_pas_exacte} how one could deal with slightly more general I-Lagrangians).

\begin{definition}\label{def:adapted-Lagrangian}
Let $s \geq 1$ and $\tau_0 \geq 0$. Let $\Lambda$ be an I-Lagrangian in $\p{T^* M}_{\epsilon_0}$. We say that $\Lambda$ is a $\p{\tau_0,s}$-\emph{adapted Lagrangian} if it takes the form
\begin{equation}\label{eq:adapted_Lagrangian}
\begin{split}
\Lambda = e^{H_G^{\omega_I}}\p{T^* M}.
\end{split}
\end{equation}
Here, we assume that $G$ is a real-valued function so that $G_0 := h^{-1+1/s}G$ is a symbol (in the usual Kohn--Nirenberg class of symbol) on $\p{T^*M}_{\epsilon_0}$ of order $1/s$, supported in some $(T^\ast M)_{\epsilon_1}$ with $\epsilon_1<\epsilon_0$. Additionally, we require that (we use the covariant derivatives associated to the Kohn--Nirenberg metric to measure the derivatives of $G_0$)
\begin{equation}\label{eq:premieres_derivees_de_G}
\begin{split}
\sup_{\substack{\alpha \in \p{T^* M}_{\epsilon_0} \\ k \leq 3}} \frac{\n{\nabla^k G_0(\alpha)}_{KN}}{\jap{\va{\alpha}}^{\frac{1}{s}}} \leq \tau_0.
\end{split}
\end{equation}
\end{definition}

\begin{remark}
If $G$ is as in Definition \ref{def:adapted-Lagrangian}, then one easily sees that the vector field $H_G^{\omega_I}$ is complete, so that we can define a $\p{\tau_0,s}$-adapted Lagrangian $\Lambda$ by the formula \eqref{eq:adapted_Lagrangian} (the manifold $\Lambda$ is then I-Lagrangian since $\exp(H_G^{\omega_I})$ is an $I$-symplectomorphism). Notice also that the assumptions on the symbol $G$ impose that it depends on $h$, but in a uniform fashion as $h\to 0$ (in particular, $\Lambda$ is uniformly smooth with respect to the Kohn--Nirenberg metric when $h$ tends to $0$).

In the applications, the dependence of $G$ on $h$ and $\tau_0$ will be fairly explicit since $G$ will be of the form $\tau_0 h^{1 - 1/s} G_0$ with $G_0$ of order $1/s$ satisfying the assumptions of Definition \ref{def:adapted-Lagrangian} with $\tau_0 = h = 1$.

One may notice that if $\tau_1 \geq \tau_0$ and $s \geq \tilde{s}$ then any $\p{\tau_0,s}$-adapted Lagrangian is also $\p{\tau_1,\tilde{s}}$-adapted. 
\end{remark}

\begin{remark}
Let $m \in \R$ and $\Omega$ be a manifold on which there is a notion of Kohn--Nirenberg metric and of Japanese bracket (the main examples are the manifolds $T^* M, (T^* \widetilde{M})_{\epsilon_0}$ and an adapted Lagrangian $\Lambda$), then we define as usual the Kohn--Nirenberg class of symbol $S_{KN}^m\p{\Omega}$ as the space of $\mathcal{C}^\infty$ functions $a : \Omega \to \C$ such that for every $k \in \N$ we have (using the covariant derivative associated to the Kohn--Nirenberg metric):
\begin{equation*}
\begin{split}
\sup_{\alpha \in \Omega} \frac{\n{\nabla^k a(\alpha)}_{KN}}{\jap{\va{\alpha}}^m} < + \infty.
\end{split}
\end{equation*}
For instance, in Definition \ref{def:adapted-Lagrangian} we ask for $G_0 \in S_{KN}^{1/s}\p{\p{T^* M}_{\epsilon_0}}$.

Since the adapted Lagrangians are uniformly smooth submanifolds with respect to the Kohn--Nirenberg metric, and image of $T^\ast M$ under a uniformly smooth flow, the symbol class $S_{KN}^m\p{\Lambda}$ is well defined, and to check that $a\in S_{KN}^m\p{\Lambda}$, we can compute the derivatives either directly on $\Lambda$, or through the pullback by $\exp(H_G^{\omega_I})$, with covariant derivatives or with partial derivatives in coordinates.
\end{remark}

The notion of $\p{\tau_0,s}$-adapted Lagrangian is tailored so that it makes sense to restrict the almost analytic extension of a $\G^s$ symbol (as defined in Remark \ref{remark:aae_for_symbol}) to a $\p{\tau_0,s}$-adapted Lagrangians when $\tau_0$ is small. More precisely, if $G$ is as in Definition \ref{def:adapted-Lagrangian}, it follows from the local expression \ref{eq:I-hamilton-field} for $H_G^{\omega_I}$ that the norm of $H_G^{\omega_I}$ for the Kohn--Nirenberg metric is $\O(\tau_0 h^{1 - 1/s} \jap{\va{\alpha}}^{1/s - 1})$. This essentially proves:
 
\begin{lemma}\label{lmdist}
Let $s \geq 1$ and $T \geq 0$. There is a constant $C > 0$ such that, for every $\tau_0 \in \left[0,T\right]$, if $\Lambda$ is a $\p{\tau_0,s}$-adapted Lagrangian and $\alpha \in \Lambda$ then
\begin{equation*}
|\Im \alpha| \leq C \tau_0 h^{1 - \frac{1}{s}} \jap{\va{\alpha}}^{\frac{1}{s}-1}
\end{equation*}
(On a Grauert tube, we can define $|\Im \alpha|$ in a coordinate-free way, see \S \ref{sec:Grauert}). In particular, for every $\epsilon > 0$, there is a $\tau_1 > 0$ such that, for every $\tau_0 \in \left[0,\tau_1\right]$, any $\p{\tau_0,s}$-adapted Lagrangian is contained in $\p{T^* M}_\epsilon$.
\end{lemma}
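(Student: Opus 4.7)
The plan is to parametrise $\Lambda$ via the Hamiltonian flow $\phi_t = \exp(tH_G^{\omega_I})$ at time $t=1$, bound the Kohn--Nirenberg size of $H_G^{\omega_I}$ in the complex tube, and then get $|\Im\alpha|$ by integrating along the flow starting from a real point. Since by definition every $\alpha\in\Lambda$ can be written as $\alpha=\phi_1(\alpha_0)$ for some $\alpha_0\in T^\ast M$ with $\Im \alpha_0 = 0$, we have
\[
\alpha-\alpha_0 = \int_0^1 H_G^{\omega_I}(\phi_t(\alpha_0))\,\mathrm{d}t,
\]
and the recalled equivalence $|\Im\alpha|\asymp |y|+|\eta|/\langle\xi\rangle$ from \S\ref{sec:Grauert} is precisely the Kohn--Nirenberg distance from $\alpha$ to $T^\ast M$. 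Thus it suffices to estimate the KN length of the flow curve.

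First, I would work in a local chart $\tilde x = x+iy$, $\tilde\xi = \xi+i\eta$ and use the explicit expression \eqref{eq:I-hamilton-field} to identify the components of $H_G^{\omega_I}$. Writing $G = \tau_0\, h^{1-1/s}\,\widetilde G_0$ where $\widetilde G_0 := G_0/\tau_0$ has KN seminorms of order $1/s$ bounded independently of $\tau_0\in[0,T]$, the standard symbolic estimates give $|\partial_\eta G|,|\partial_\xi G| \leq C\tau_0 h^{1-1/s}\langle|\alpha|\rangle^{1/s-1}$ and $|\partial_x G|,|\partial_y G|\leq C\tau_0 h^{1-1/s}\langle|\alpha|\rangle^{1/s}$. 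Since in the flat KN metric $\|\partial_x\|_{KN}=\|\partial_y\|_{KN}=1$ and $\|\partial_\xi\|_{KN}=\|\partial_\eta\|_{KN}=\langle\xi\rangle^{-1}$, combining the two types of components produces
\[
\|H_G^{\omega_I}\|_{KN} \leq C\,\tau_0\, h^{1-1/s}\,\langle|\alpha|\rangle^{1/s-1}.
\]

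Second, I would verify that $\langle|\phi_t(\alpha_0)|\rangle \asymp \langle|\alpha_0|\rangle$ uniformly for $t\in[0,1]$ and $\tau_0\in[0,T]$. The derivative of $\langle|\cdot|\rangle$ is of order $1$ in the KN sense, so along the flow
\[
\left|\tfrac{\mathrm{d}}{\mathrm{d}t}\langle|\phi_t(\alpha_0)|\rangle\right| \leq C\,\|H_G^{\omega_I}\|_{KN}\cdot\langle|\phi_t(\alpha_0)|\rangle \leq C\tau_0\,\langle|\phi_t(\alpha_0)|\rangle^{1/s},
\]
and Gr\"onwall's lemma (or a direct comparison with the ODE $\dot u = C\tau_0 u^{1/s}$) gives two-sided comparability of $\langle|\phi_t(\alpha_0)|\rangle$ with $\langle|\alpha_0|\rangle$, and thus with $\langle|\alpha|\rangle$, uniformly. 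Plugging this back into the integral representation of $\alpha-\alpha_0$ and using that $|\Im\alpha|$ is controlled by the KN distance from $T^\ast M$ yields the desired bound
\[
|\Im\alpha| \leq C\,\tau_0\, h^{1-1/s}\langle|\alpha|\rangle^{1/s-1}.
\]

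For the second assertion, observe that $s\geq 1$ gives $1/s-1\leq 0$ so $\langle|\alpha|\rangle^{1/s-1}\leq 1$, and $h\leq 1$ gives $h^{1-1/s}\leq 1$; hence $|\Im\alpha|\leq C\tau_0$ uniformly. Choosing $\tau_1 = \epsilon/C$ immediately forces $\Lambda\subset (T^\ast M)_\epsilon$. The only mildly delicate point in the whole argument is justifying that KN seminorms of $\widetilde G_0$ control the Hamiltonian vector field \emph{throughout} the tube rather than only on $T^\ast M$; but this is automatic because $G_0$ is by hypothesis a genuine KN symbol on $(T^\ast M)_{\epsilon_0}$ and the KN metric extends to the tube with the local equivalence described in \S\ref{sec:Grauert}.
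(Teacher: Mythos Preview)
Your proposal is correct and follows exactly the approach the paper sketches in the paragraph immediately preceding the lemma: estimate $\|H_G^{\omega_I}\|_{KN}$ from \eqref{eq:I-hamilton-field} and the symbol bounds on $G$, then integrate the flow from a real initial point. The paper is in fact much terser than you (it just says ``This essentially proves''), so your added Gr\"onwall step checking that $\langle|\phi_t(\alpha_0)|\rangle$ stays comparable to $\langle|\alpha_0|\rangle$ along the trajectory is a welcome elaboration rather than a deviation.
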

In view of this lemma, it will be convenient to consider for $\epsilon > 0$ and $\delta\in [0,1]$,
\begin{equation}\label{eq:def-Omega-tau-delta}
(T^\ast M)_{\epsilon,\delta}:= \left\{ \alpha \in (T^\ast M)_{\epsilon_0}\ |\ |\Im \alpha| \leq \epsilon h^{1-\delta}\langle|\alpha|\rangle^{\delta-1}\right\}.
\end{equation}
Observe that $(T^\ast M)_{\epsilon,1}=(T^\ast M)_{\epsilon}$ for $\epsilon$ small enough.

In order to define weighted $L^2$ spaces on adapted Lagrangians, we need to define a volume form on such Lagrangians. Since we have the Kohn--Nirenberg metric on Lagrangians, we could take the associated volume form; however it is not intrinsically related to the geometry of the problem, so it is more relevant to use another one. To do so, we use the following lemma.
\begin{lemma}\label{lemma:uniformity-lagrangians}
Let $s \geq 1$. There exists $T > 0$ and a constant $C > 0$ such that, for every $\tau_0 \in \left[0,T\right]$, if $\Lambda$ is a $\p{\tau_0,s}$-adapted Lagrangian then the restriction of $\omega_R$ to $\Lambda$ (that we will also denotes by $\omega_R$) is symplectic. 

Moreover, if $G$ is as in Definition \ref{def:adapted-Lagrangian}, and $J_G$ denotes the Jacobian of $\exp(H_G^{\omega_I})$ from $T^* M$ to $\Lambda$ (both Lagrangians are endowed with the volume form $\omega_R^n/n!$), then for every $\alpha \in T^* M$ we have
\begin{equation*}
\begin{split}
\va{J_G(\alpha) - 1} \leq C \tau_0 h^{1 - \frac{1}{s}} \jap{\va{\alpha}}^{\frac{1}{s} - 1}.
\end{split}
\end{equation*}
\end{lemma}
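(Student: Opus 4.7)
The plan is to set $\Lambda = \phi^1(T^\ast M)$ with $\phi^t = \exp(tH_G^{\omega_I})$ and to deduce both assertions from a careful comparison of $(\phi^1)^*\omega_R$ with $\omega_R$ on $T^\ast M$. Since $\omega_R$ is closed, Cartan's formula gives
\[
(\phi^1)^*\omega_R - \omega_R = \int_0^1 (\phi^t)^* \mathrm{d}\bigl(\iota_{H_G^{\omega_I}}\omega_R\bigr)\, \mathrm{d}t,
\]
so a first step will be to write out $\iota_{H_G^{\omega_I}}\omega_R$ in local coordinates using \eqref{eq:I-hamilton-field} and $\omega_R = \mathrm{d}\xi\wedge\mathrm{d}x - \mathrm{d}\eta\wedge\mathrm{d}y$; this expresses $\iota_{H_G^{\omega_I}}\omega_R$ as a combination of first derivatives of $G$ wedged with the canonical coframe, and its differential involves second derivatives of $G$. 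A direct book-keeping in the Kohn--Nirenberg frame, using the assumption \eqref{eq:premieres_derivees_de_G} on $G_0 = h^{1/s-1}G$, will give a uniform bound $\|\mathrm{d}\iota_{H_G^{\omega_I}}\omega_R\|_{KN} \leq C\tau_0 h^{1-1/s}\jap{\va{\alpha}}^{1/s}$.

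Next, choosing $T>0$ small enough, I will use that the KN-norm of $H_G^{\omega_I}$ is $\O(\tau_0 h^{1-1/s}\jap{\va{\alpha}}^{1/s-1})$ and invoke Gronwall to show that the flow $\phi^t$ is a uniformly small $C^2$-perturbation of the identity for $t\in[0,1]$, with $\jap{\va{\phi^t(\alpha)}}$ uniformly comparable to $\jap{\va{\alpha}}$. Pulling back then preserves the previous bound up to a constant, yielding
\[
A := (\phi^1)^*\omega_R - \omega_R \quad\text{on }T^\ast M \qquad\text{with}\qquad \|A(\alpha)\|_{KN} \leq C\tau_0 h^{1-1/s}\jap{\va{\alpha}}^{1/s}.
\]
Since $\omega_R|_{T^\ast M}$ has KN-norm comparable to $\jap{\va{\alpha}}$ and $1/s\leq 1$, for $\tau_0$ small enough the 2-form $\omega_R+A$ remains non-degenerate on $T^\ast M$; as this 2-form is $(\phi^1)^*(\omega_R|_\Lambda)$, the symplecticity of $\omega_R|_\Lambda$ follows at once, giving the first assertion.

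For the Jacobian, I use that $J_G = \bigl((\phi^1)^*\omega_R\bigr)^n/\omega_R^n$ as a ratio of top forms on $T^\ast M$. The final step is to evaluate this in the Kohn--Nirenberg orthonormal frame $\{\partial_{x_j},\jap{\va{\alpha}}\partial_{\xi_j}\}$, in which $\omega_R$ becomes the standard symplectic form scaled by $\jap{\va{\alpha}}$, while all entries of $A$ end up uniformly bounded by $\tau_0 h^{1-1/s}\jap{\va{\alpha}}^{1/s}$ (the different scalings of the $(\partial_x,\partial_x)$, $(\partial_x,\partial_\xi)$ and $(\partial_\xi,\partial_\xi)$ blocks of $A$ each combining with the frame rescaling to give the same order). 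Factoring out $\jap{\va{\alpha}}$, $(\omega_R+A)/\jap{\va{\alpha}} = \omega_{\mathrm{std}} + \epsilon M$ with $\epsilon = \tau_0 h^{1-1/s}\jap{\va{\alpha}}^{1/s-1}$ and $M$ bounded; using that $\epsilon$ is itself uniformly bounded for $h\in(0,1]$ and $\jap{\va{\alpha}}\geq 1$, the binomial expansion of the Pfaffian yields $|J_G(\alpha)-1| \leq C\epsilon$, as required.

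The main delicate point is the dimensional book-keeping: the gain of $\jap{\va{\alpha}}^{1/s-1}$ (over the naive bound $\jap{\va{\alpha}}^{1/s}$) reflects the mismatch between the symbol order of $\omega_R$ (equal to $1$) and that of $A$ (equal to $1/s$), and emerges only once the Kohn--Nirenberg frame is explicitly rescaled. Everything else is either standard ODE theory (Gronwall) or a formal multilinear-algebra computation.
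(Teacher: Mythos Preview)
Your proof is correct and follows essentially the same approach as the paper's: both compute the Lie derivative $\mathcal{L}_{H_G^{\omega_I}}\omega_R = \mathrm{d}\bigl(\iota_{H_G^{\omega_I}}\omega_R\bigr)$ (the paper rewrites this more compactly as $-2i\partial\bar{\partial}G$), and both recognize that the crucial estimate comes from measuring $2$-forms in the Kohn--Nirenberg frame, where $\omega_R$ has size $\jap{\va{\alpha}}$ while the perturbation has size $\tau_0 h^{1-1/s}\jap{\va{\alpha}}^{1/s}$. Your version is slightly more explicit (integrating along the flow, spelling out the Pfaffian expansion) whereas the paper's is more terse, but the substance is the same.
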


\begin{proof}
Let us use the notations from Definition \ref{def:adapted-Lagrangian}. The first idea here would be to try and prove that $\exp(H^{\omega_I}_G)^\ast \omega_R$ is close to $\omega_R$. For this, one would compute in local coordinates
\begin{align*}
\mathcal{L}_{H^{\omega_I}_G} \omega_R 	&= \mathrm{d}(\imath_{H^{\omega_I}_G} \omega_R), \\
										&= \mathrm{d}\left[ \nabla_\xi G \cdot \mathrm{d}\eta - \nabla_\eta G \cdot \mathrm{d}\xi + \nabla_x G \cdot \mathrm{d}y - \nabla_y G \cdot \mathrm{d}x \right], \\
										&= i \mathrm{d}(\partial - \overline{\partial}) G = i (-\partial\overline{\partial} +\overline{\partial}\partial)G,\\
										&= - 2 i \partial\overline{\partial}G.
\end{align*}
If $G_{|T^\ast M}$ is real-analytic, and $G = \Re \tilde{G}$, where $\tilde{G}$ is a holomorphic extension of the restriction of $G$ to $T^\ast M$, then this vanishes. However, in general, the coefficients of $\partial\overline{\partial} G$ are symbols of order $1/s$, and thus not bounded: it seems we cannot deduce anything directly from this identity.

However, this problems stems from the fact that we are not measuring the size of $\mathcal{L}_{H^{\omega_I}_G} \omega_R$ in a sensible way. Indeed, the Kohn--Nirenberg metric gives us a metric on the bundle of forms, so that
\[
\mathrm{d}x_j, \mathrm{d}y_j, \frac{\mathrm{d}\xi_j}{\langle|\alpha|\rangle}, \text{ and }  \frac{\mathrm{d}\eta_j}{\langle|\alpha|\rangle},\ j=1\dots n,
\]
form a basis in which the matrix (and its inverse) of $g_{KN}$ is bounded. We can thus replace it by the metric that makes them an orthogonal basis. Computing norms with this new metric, we find
\[
\| \partial\overline{\partial}G \| = \O\p{\tau_0 h^{1 - \frac{1}{s}} \langle|\alpha|\rangle^{\frac{1}{s}}},
\]
and
\[
\| \omega_R \| = \O( \langle|\alpha|\rangle),\ \| \omega_R^{-1}\| = \O( \langle|\alpha|\rangle^{-1}).
\]
According to Taylor's formula, $(\omega_R)_{|\Lambda}$ is thus symplectic, and the Jacobian $J_G$ close to $1$.
\end{proof}

From now on, if $\Lambda$ is an adapted Lagrangian, we will just denote by $\mathrm{d}\alpha$ the $2n$-form $\omega_R^n/n!$, which induces a volume form on $\Lambda$. We will denote the corresponding duality pairing
\begin{equation}\label{eqpairing}
\jap{f,g}_{\Lambda} = \int_{\Lambda} f g\ \mathrm{d}\alpha.
\end{equation}
The natural space in our setting will not be $L^2(\Lambda, \mathrm{d}\alpha)$ but rather $L^2(\Lambda, e^{-2H/h}\mathrm{d}\alpha)$, where $H$ is an action associated with $\Lambda$, solving
\begin{equation}\label{eq:propH}
\mathrm{d} H = - \Im \theta_{|\Lambda}.
\end{equation}
Since $\Lambda$ is I-Lagrangian, we deduce that there are local solutions to this equation. However, since $\Lambda$ is assumed to be of the form \eqref{eq:adapted_Lagrangian}, we can find an explicit \emph{global} solution, given by 
\begin{equation}\label{eq:explicit_solution_H}
H := \int_0^1 \p{e^{(\tau-1)H_G^{\omega_I}}}^*(G - \Im \theta(H_G^{\omega_I})) \mathrm{d} \tau.
\end{equation}
For a proof, we follow the arguments after equation 1.17 in \cite{Sjostrand-96-convex-obstacle}. Observe that
\[
\mathrm{d}( \Im \theta(H_G^{\omega_I}) ) = \Im \mathrm{d}( \imath_{H^{\omega_I}_G} \theta)  = \Im( \mathcal{L}_{H^{\omega_I}_G} \theta - \imath_{H^{\omega_I}_G} \mathrm{d}\theta) = \mathcal{L}_{H^{\omega_I}_G} \Im\theta + \mathrm{d}G.
\]
In particular, we get
\begin{align*}
\mathrm{d}H = \int_0^\tau \p{e^{(\tau-1)H_G^{\omega_I}}}^* \mathcal{L}_{H^{\omega_I}_G}(- \Im\theta ) \mathrm{d} \tau =  -\Im \theta + \p{e^{- H_G^{\omega_I}}}^*  \Im\theta.
\end{align*}
The second term vanishes when restricting to $\Lambda$ because $\Im \theta$ vanishes on $T^\ast M$.  Computing in local coordinates, we get
\[
\Im \theta (H_G^{\omega_I}) = \Im[ (\xi + i \eta)(\nabla_\eta G + i\nabla_\xi G)] = \eta \cdot \nabla_\eta G + \xi \cdot \nabla_\xi G.
\]
Hence, $\Im \theta (H_G^{\omega_I})$ is a symbol of order $1/s$ and, from \eqref{eq:I-hamilton-field}, we get that the derivative $H_G^{\omega_I}\p{\Im \theta\p{H_G^{\omega_I}}}$ is a symbol of order $- 1+2/s$.  In particular, we find that
\begin{equation}\label{eq:approximation_of_H}
H = G - \Im \theta(H_G^{\omega_I}) + \O_{\mathcal{C}^{1}}\p{\p{\tau_0 h^{1 - \frac{1}{s}}}^2\langle |\alpha|\rangle^{\frac{2}{s} - 1}}.
\end{equation}
The explicit formula \eqref{eq:explicit_solution_H} for $H$ has another consequence. For every $s \geq 1$ and $T \geq 0$, there is a constant $C > 0$ such that if $\Lambda$ is $\p{\tau_0,s}$-adapted Lagrangian with $\tau_0 \leq T$ then, for every $\alpha \in \Lambda$, we have
\begin{equation}\label{eq:taille_de_H}
\begin{split}
\va{H(\alpha)} \leq C \tau_0 h^{1- \frac{1}{s}} \jap{\va{\alpha}}^{\frac{1}{s}}.
\end{split}
\end{equation} 

Let us give a word on the simplest way the symbol $G$ in Definition \ref{def:adapted-Lagrangian} can be chosen. Let us consider a real valued symbol $G_0$ of order $1/s$ on $T^\ast M$. Denoting by $\tilde{G}_0$ one of its almost analytic extensions, we put $G=\tau_0 h^{1 - 1/s}\Re \tilde{G}_0$. In that case, if $p\in S^m$ is another real valued symbol with almost analytic extension $\widetilde{p}$, we find that for $\alpha\in T^\ast M$ and $\tau$ near $0$ we have
\begin{equation}\label{eq:shifted-symbol}
\widetilde{p}(e^{\tau H_G^{\omega_I}}(\alpha)) = p(\alpha) + i \tau_0 h^{1 - 1/s} \{G_0, p\}(\alpha) + \O(\tau_0^2 h^{2 - \frac{2}{s}} \langle |\alpha|\rangle^{m + \frac{2}{s} - 2}),
\end{equation}
where
\[
\{ G_0, p\} = \nabla_\xi G_0 \cdot \nabla_x p - \nabla_x G_0 \cdot \nabla_\xi p.
\]
More generally, to obtain Formula \eqref{eq:shifted-symbol}, instead of taking $G=\Re \tilde{G}_0$, one can take, as Sj\"ostrand did in \cite{Sjostrand-96-convex-obstacle}, any symbol $G$ that extends $G_0$, and such that $\partial_{y,\eta} G =0$ on $T^\ast M$. 

When $m=1$, in the case that $s \geq 2$, the remainder in \eqref{eq:shifted-symbol} is the symbol of a bounded operator. However, we will be interested in the case that $s$ can be any number in $\left[1,+ \infty\right[$. Then, the remainder is not bounded as $\alpha\to \infty$. To circumvent this, in Section \ref{sec:escape_function}, we will build $G$ directly on $(T^\ast M)_{\epsilon_0}$. This construction will not yield a $G$ such that $\nabla_{y,\eta} G =0$ on $T^\ast M$. Since Sj\"ostrand only considered compactly supported deformations, he could ignore this subtlety. In \cite{Sjostrand-96-convex-obstacle}, the fact that $\nabla_{y,\eta} G$ vanishes on $T^\ast M$ served two purposes. The first is Formula \eqref{eq:shifted-symbol}, and the second is a quick proof that the Jacobian of $\exp( H^{\omega_I}_G)$ is close to $1$. For the rest, that assumption is not necessary, and we can remove it.

Equipped with this viaticum of symplectic geometry, we can define the functional spaces associated with adapted Lagrangians. Beforehand, we have to check that there actually exists an analytic FBI transform in the sense of Definition \ref{def:analytic-FBI}. First, we recall that
\[
-\langle \alpha_\xi, \exp_{\alpha_x}^{-1}(x)\rangle + i\frac{\langle\alpha\rangle}{2}d(x,\alpha_x)^2
\]
defines an admissible phase in the sense of Definition \ref{def:nonstandardphase}. From now on, we fix an admissible phase $\Phi_T$ (not necessarily equal to the phase above).
\begin{lemma}\label{lemma:existence-transform}
Let $a$ be an elliptic symbol in $h^{- \frac{3n}{4}} S^{1,\frac{n}{4}}$. There exist an analytic FBI transform with symbol $a$ and phase $\Phi_T$.
\end{lemma}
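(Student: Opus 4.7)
The plan is to build the kernel by the standard $\overline\partial$--correction procedure, starting from a local ansatz near the diagonal and globalizing via Hörmander's estimates (Lemma \ref{lemma:approximation_analytique} in its product version). The admissibility of $\Phi_T$ (positivity of $\Im\Phi_T$ away from the real diagonal, as recalled in Definition \ref{def:nonstandardphase}) will do all the work in producing the exponential decays demanded by Definition \ref{def:analytic-FBI}.

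\emph{Step 1: local ansatz.} Since $\Phi_T$ is an admissible phase, by definition it is holomorphic on $(T^\ast M\times M)_{\epsilon_0}\cap\{d(\alpha_x,x)<\delta\}$ for some $\epsilon_0,\delta>0$; and $a\in h^{-3n/4}S^{1,n/4}$ is an analytic symbol, hence admits a holomorphic extension (Lemma \ref{lemma:symb_analytique_manifold}) to a Grauert tube around $T^\ast M\times M$, still denoted $a$, on which it satisfies symbolic estimates of order $n/4$ uniformly in $h$. Fix $\epsilon_1\in(0,\delta/4)$ small, and choose a smooth cutoff $\chi(\alpha,x)$ on $(T^\ast M\times M)_{\epsilon_0}$, equal to $1$ when $d(\alpha_x,x)\leq\epsilon_1$ and vanishing when $d(\alpha_x,x)\geq 2\epsilon_1$ (this is easily arranged, for instance, by taking $\chi$ to be a smooth function of the square of a fixed extension to the Grauert tube of $(\alpha,x)\mapsto d(\alpha_x,x)^2$). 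Define
\[
F(\alpha,x) \;:=\; \chi(\alpha,x)\, e^{i\Phi_T(\alpha,x)/h}\, a(\alpha,x)
\]
on $(T^\ast M\times M)_{\epsilon_0}$, extended by $0$ outside the support of $\chi$. By construction, near the real diagonal $F$ equals $e^{i\Phi_T/h}a$ exactly, and $F$ is smooth on the whole Grauert tube.

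\emph{Step 2: estimating $\overline\partial F$.} Since $\Phi_T$ and $a$ are holomorphic on the support of $\chi$, we have $\overline\partial F=(\overline\partial\chi)\,e^{i\Phi_T/h}a$, and $\overline\partial\chi$ is supported where $\epsilon_1\leq d(\alpha_x,x)\leq 2\epsilon_1$. By admissibility, up to taking $\epsilon_0$ small enough compared to $\epsilon_1$ (so that the negative real contribution from $|\Im\alpha|$ is dominated), one has on this support
\[
\Im\Phi_T(\alpha,x)\;\geq\;\tfrac{1}{C}\langle|\alpha|\rangle\,\epsilon_1^{2}.
\]
Combined with the symbolic bound $|a|=O(h^{-3n/4}\langle|\alpha|\rangle^{n/4})$, this yields, possibly for a different constant $C$,
\[
|\overline\partial F(\alpha,x)|\;\leq\; C\exp\!\left(-\frac{\langle|\alpha|\rangle}{Ch}\right)
\qquad\text{on }(T^\ast M\times M)_{\epsilon_0}.
\]

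\emph{Step 3: correction.} The $(0,1)$-form $\overline\partial F$ is $\overline\partial$-closed and satisfies exactly the hypothesis of the product version of Lemma \ref{lemma:approximation_analytique}. It therefore produces $\epsilon_0'\in(0,\epsilon_0)$ and a smooth function $r$ on $(T^\ast M\times M)_{\epsilon_0'}$ with $\overline\partial r=\overline\partial F$ and
\[
|r(\alpha,x)|\;\leq\; C'\exp\!\left(-\frac{\langle|\alpha|\rangle}{C'h}\right).
\]
Set $K_T:=F-r$. Then $K_T$ is holomorphic on $(T^\ast M\times M)_{\epsilon_0'}$.

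\emph{Step 4: checking Definition \ref{def:analytic-FBI}.} For $d(\alpha_x,x)\leq\epsilon_1$, we have $\chi=1$ and thus $K_T-e^{i\Phi_T/h}a=-r$, giving (ii) with $\eta=1/C'$. For $d(\alpha_x,x)>2\epsilon_1$, $F=0$ and $K_T=-r$ already has the required decay; for $\epsilon_1<d(\alpha_x,x)\leq 2\epsilon_1$, the admissibility of $\Phi_T$ gives $|e^{i\Phi_T/h}a|=O(\exp(-\langle|\alpha|\rangle/(Ch)))$ directly, so $F$ itself decays and so does $K_T$. In every case the decay (i) holds for a sufficiently small $\eta>0$. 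The symbol in (ii) is $a$ itself, which was assumed elliptic in $h^{-3n/4}S^{1,n/4}$, and the phase is $\Phi_T$; this gives an analytic FBI transform in the sense of Definition \ref{def:analytic-FBI}.

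The only technical obstacle is verifying the hypotheses of Lemma \ref{lemma:approximation_analytique} on the product manifold $T^\ast M\times M$, and in particular the Kähler/pseudo-convexity setup for the relevant Grauert tube; but this is explicitly covered by the final clause of that lemma. All other ingredients are elementary estimates on oscillating exponentials of admissible phases.
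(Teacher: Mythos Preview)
Your proof is correct and follows essentially the same route as the paper: define a local ansatz $\chi\,e^{i\Phi_T/h}a$ with a cutoff near the diagonal, use the coercivity of $\Im\Phi_T$ to show that $\overline\partial$ of this ansatz is $\O(\exp(-\langle|\alpha|\rangle/(Ch)))$, and then correct via Lemma~\ref{lemma:approximation_analytique}. The only cosmetic difference is that the paper takes its cutoff as a function of $(\Re\alpha_x,\Re x)$ rather than of the full complex variables, but this does not affect the argument.
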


\begin{proof}
We consider $e^{i\Phi_T(\alpha,x)/h} a(\alpha,x)$. It is holomorphic and well-defined when $\p{\alpha,x} \in \p{T^* M}_\epsilon \times \p{M}_\epsilon$ satisfies $d(x,\alpha_x) < \delta$, for some $\epsilon,\delta > 0$. Moreover, for some $C,\epsilon_1>0$, it is smaller than $C \exp(- C^{-1} \langle|\alpha|\rangle/h)$ when $d(\Re x, \Re \alpha_x) > \epsilon_1$. Hence, if we choose a bump function $\chi : M \times M \to \left[0,1\right]$ such that $\chi(x,y) = 1$ if $d(x,y) \leq \epsilon_1$ and $\chi(x,y) = 0$ if $d(x,y) \geq \delta/2$ (we may assume that $\epsilon_1$ is arbitrarily small), then the Cauchy--Riemann operator applied to
\begin{equation}\label{eq:approx_K_T}
\begin{split}
\p{\alpha,x} \mapsto \chi\p{\Re \alpha_x, \Re x} e^{i \frac{\Phi_T\p{\alpha,x}}{h}} a(\alpha,x)
\end{split}
\end{equation}
gives an $\O(\exp(- C^{-1} \jap{\va{\alpha}}/h))$ for $\p{\alpha,x} \in \p{T^* M}_\epsilon \times \p{M}_\epsilon$. In particular, we can apply Lemma \ref{lemma:approximation_analytique} to \eqref{eq:approx_K_T}, and we find a globally analytic kernel $K_T(\alpha,x)$ satisfying the desired properties.
\end{proof}

From now on, in this section, we will assume that $T$ is some fixed analytic FBI transform, and $S$ a fixed adjoint analytic FBI transform (see Definition \ref{def:analytic-FBI}), with symbols and phases respectively $a$ and $\Phi_T$ and $b$ and $\Phi_S$. Since $K_T$ is analytic, there is a $R>0$ such that $T u(\alpha)$ is well-defined by Formula \eqref{eq:def_concrete_T} for $u \in (E^{1,R})'$, and $\alpha\in (T^\ast M)_{\epsilon_0}$ (where $\epsilon_0$ is from Definition \ref{def:analytic-FBI}). 

We will also, for $\tau_0 > 0$ small, fix $\Lambda$ a $(\tau_0,1)$-Gevrey adapted Lagrangian (with associated symbol $G\in S^{1}_{KN}(\p{T^* M}_{\epsilon_0})$, and action $H$). Notice that, since $\Lambda \subset \p{T^* M}_{\epsilon_0}$, for $u \in (E^{1,R})'$ the FBI transform $Tu$ is well-defined on $\Lambda$ (provided that $\tau_0$ is small enough).

We define the FBI transform $T_\Lambda$ associated with $\Lambda$ by the formula $T_\Lambda u = \p{Tu}_{|\Lambda}$. We also define $S_\Lambda$ on $C^\infty_c(\Lambda)$ by
\[
S_\Lambda v(x)= \int_\Lambda K_S(x,\alpha) v(\alpha)\mathrm{d}\alpha.
\]
We can now define the scale of spaces that we are going to use in the following. First of all, let us denote for $k \in \R$ the weighted $L^2$ space on $\Lambda$
\begin{equation}\label{eq:def_espace_L2}
\begin{split}
L^2_k\p{\Lambda} = L^2\p{\Lambda, \jap{\va{\alpha}}^{2k} e^{- \frac{2H}{h}} \mathrm{d}\alpha},
\end{split}
\end{equation}
where we recall that $\mathrm{d}\alpha$ denotes the volume form associated with the symplectic form $\omega_R$ on $\Lambda$ and that $H$ is the action defined by \eqref{eq:explicit_solution_H} and that satisfies \eqref{eq:propH}. For $k \in \R$ we define then the space (for $R > 0$ large enough) 
\begin{equation}\label{eq:def-HMk}
\mathcal{H}_\Lambda^k = \set{u \in (E^{1,R})' : T_\Lambda u \in L^2_k\p{\Lambda}}
\end{equation}
endowed with the norm (we will see later that this is indeed a norm)
\begin{equation}\label{eq:def-norme-HMk}
\n{u}_{\mathcal{H}_{\Lambda}^k} = \n{T_\Lambda u}_{L^2_k\p{\Lambda}}
\end{equation}
and its analogue on the FBI side
\begin{equation}\label{eq:def-HLambdak}
\mathcal{H}_{\Lambda,\FBI}^k = \set{T_\Lambda u : u \in \mathcal{H}_{\Lambda}^k}\subset L^2_k(\Lambda).
\end{equation}
We will also need the spaces
\begin{equation}\label{eq:def_H_infini}
\begin{split}
\mathcal{H}_\Lambda^\infty = \bigcap_{k \in \R} \mathcal{H}_\Lambda^k \textup{ and } \mathcal{H}_{\Lambda,\FBI}^\infty = \bigcap_{k \in \R} \mathcal{H}_{\Lambda,\FBI}^k.
\end{split}
\end{equation}
We cannot say much about these spaces yet, but their basic properties will follow from the study of the FBI transform $T_\Lambda$ in \S \ref{sec:continuity-T-S} and \S \ref{subsec:inversion} below. Before that, let us end this section with two remarks concerning the classes of manifolds $\Lambda$ allowed in Definition \ref{def:adapted-Lagrangian}.

\begin{remark}\label{remark:lagrangienne_exacte}
Definition \ref{def:adapted-Lagrangian} of adapted Lagrangians may seem restrictive. There are two fundamental items that we will need in order to work with a manifold $\Lambda$. First, we need to control the distance between $\Lambda$ and $T^* M$ (this is done using Lemma \ref{lmdist}) and ensure that some transversality conditions satisfied by $T^* M$ remain true for $\Lambda$. Hence, it is natural to ask for $\Lambda$ to be close to $T^* M$ in some $\mathcal{C}^k$ sense.

The second required item is a global solution to the equation \eqref{eq:propH} on $\Lambda$ (we discuss in Remark \ref{remark:lagrangienne_pas_exacte} how one could relax this condition). This is ensured by the fact that the Lagrangian $\Lambda$ is of the form \eqref{eq:adapted_Lagrangian} for a real-valued function $G$. Let us now explain briefly why it is reasonable to ask for $\Lambda$ to be of the form \eqref{eq:adapted_Lagrangian} if we want \eqref{eq:propH} to have a global solution $H$ on $\Lambda$.

By the Weinstein Tubular Neighbourhood Theorem, 
$T^* M$ has a neighbourhood in $\p{T^* M}_{\epsilon_0}$ which is symplectomorphic (when endowed with $\omega_I$) to a neighbourhood of the zero section in $T^*\p{T^* M}$ (endowed with its canonical symplectic form). Then, if $\Lambda$ is a I-Lagrangian $\mathcal{C}^\infty$ close to $T^* M$, it corresponds through this symplectomorphism to a Lagrangian submanifold $\widetilde{\Lambda}$ of $T^* \p{T^* M}$ close to the zero section. Hence $\widetilde{\Lambda}$ is the graph of a $1$-form $\gamma$ on $T^* M$ (we ignore non-compactness issues in this informal discussion). The fact that $\widetilde{\Lambda}$ is Lagrangian implies that $\gamma$ is closed. In these new coordinates, the existence of a global solution to \eqref{eq:propH} is equivalent to the exactness of $\gamma$. Hence, there is a function $f$ defined on $T^* M$ such that $\mathrm{d}f = - \gamma$. Now, if we define $G$ on $T^* \p{T^* M}$ by $G(x,\xi) = f(x)$, we see that $\widetilde{\Lambda}$ is the image of $T^* M$ by the time $1$ map of the Hamiltonian flow defined by $G$.
 
Consequently, having $\Lambda$ of the form \eqref{eq:adapted_Lagrangian} is the exact geometric condition that we need if we want \eqref{eq:propH} to have a global solution $H$ on $\Lambda$.
\end{remark}

\begin{remark}\label{remark:lagrangienne_pas_exacte}
As we explained in Remark \ref{remark:lagrangienne_exacte}, we make the assumption that $\Lambda$ is of the form \eqref{eq:adapted_Lagrangian} in order to ensure that \eqref{eq:propH} as a global solution $H$ on $\Lambda$. Let us describe a possible way to work without the existence of such a $H$. Since $ - \Im \theta$ is closed, it defines an element of the homology group $H^1\p{\Lambda}$. Thus, we can use it to define a line bundle over $\Lambda$ in the following way: if $\widehat{\Lambda}$ denotes the universal cover of $\Lambda$ then we define an action of the fundamental group $\pi_1\p{\Lambda}$ of $\Lambda$ on $\widehat{\Lambda} \times \C$ by
\begin{equation*}
\begin{split}
c \cdot \p{x,u}  = \p{c \cdot x, e^{\frac{\int_{c} \rho }{h}}u},
\end{split}
\end{equation*}
where $c \cdot x$ denotes the action of $\pi_1\p{\Lambda}$ on $\widehat{\Lambda}$ and $\rho$ is the lift of $- \Im \theta $ to $\widehat{\Lambda}$. Then, the quotient of $\widehat{\Lambda} \times \C$ by this action defines a complex line bundle $L$ over $\Lambda$ (that may depend on $h$). Then, let $H : \widehat{\Lambda} \to \R$ be such that $\mathrm{d}H = \rho$ and defines the map $A$ from $\widehat{\Lambda} \times \C$ to itself by
\begin{equation*}
\begin{split}
A\p{x,u} = \p{x,e^{- \frac{H(x)}{h}} u},
\end{split}
\end{equation*}
and notice that for $\p{x,u} \in \widehat{\Lambda} \times \C$ we have
\begin{equation*}
\begin{split}
A\p{ c \cdot \p{x,u}} & = \p{c \cdot x, e^{\frac{- H(c \cdot x) + \int_c \rho}{h}} u } \\
      & = \p{c \cdot x, e^{- \frac{H(x)}{h}} u} = c \ast A\p{(x,u)},
\end{split}
\end{equation*}
where $\ast$ denotes the action of $\pi_1\p{\Lambda}$ on the first coordinate only (that is $c \ast \p{x,u} = \p{c \cdot x,u}$). Consequently, $A$ defines in the quotient a vector bundle morphism $\mathcal{A} : L \to \Lambda \times \C$ that can be used to replace the multiplication by $e^{- \frac{H}{h}}$. For instance, instead of working with the space $L^2(\Lambda, e^{-2H/h} \mathrm{d}\alpha)$ (as we will do below), we would have to work in this case with the space of sections $u$ of $L$ such that $\mathcal{A}u$ belongs to $L^2\p{\Lambda,\mathrm{d}\alpha}$. 

Of course, to deal with such a Lagrangian, we should use a FBI transform that sends functions on $M$ to sections of $L$. 
It is likely that most of the analysis may be adapted to this more general case. Notice however that some additional difficulties arise (for instance, one has to say something about the dependence of $L$ on $h$ when applying H\"ormander's solution to the $\bar{\partial}$ equation in order to construct the kernel of the FBI transform). 
Since we were not aware of any problem that would require to work with Lagrangians that are not of the form \eqref{eq:adapted_Lagrangian}, we did not pursue this line of work.
\end{remark}

\subsection{The FBI transform acting on Gevrey functions and ultradistributions}
\label{sec:continuity-T-S}

\subsubsection{Gevrey regularity on the FBI side}

To simplify manipulations later on, let us here study the continuity of analytic FBI transforms on some functional spaces. We will develop the idea that the regularity of an ultradistribution $u$ on $M$ translates into decay for its FBI transform $Tu$.

The notations are the same as in the previous paragraph \S \ref{sec:symplectic-geometry}: $T$ is some fixed analytic FBI transform, and $S$ a fixed adjoint analytic FBI transform (see Definition \ref{def:analytic-FBI}), with symbols and phases respectively $a$ and $\Phi_T$ and $b$ and $\Phi_S$. For some small $\tau_0 > 0$, we denote by $\Lambda$ a $(\tau_0,1)$-Gevrey adapted Lagrangian (with associated symbol $G\in S^{1}_{KN}\p{\p{T^* M}_{\epsilon_0}}$, and action $H$).

In order to highlight the idea that the regularity of an ultradistribution on $M$ may be understood through the decay of its FBI transform, let us introduce the following norms.  Let $s \geq 1$, $r \in \R$. If $f$ is a function from $\Omega\subset(T^\ast M)_{\epsilon_0}$ to $\C$, let
\begin{equation*}
\n{f}_{\Omega,s,r} := \sup_{\alpha \in \Omega} \va{f(\alpha)} \exp\p{- r \jap{ \va{\alpha}}^{\frac{1}{s}}} \in \R_+ \cup \set{+ \infty}.
\end{equation*}
Then we define $F^{s,r}\p{\Omega}$ to be the Banach space of continuous function $f : \Omega \to \C$ such that $\n{f}_{\Omega,s,r} < + \infty$.

We define then the following spaces of functions on the Lagrangian $\Lambda$. For $s \geq 1$ introduce the space of functions decaying at least exponentially 
\begin{equation*}
\GG^s\p{\Lambda} \coloneqq \bigcup_{r < 0} F^{s,r} \p{\Lambda},
\end{equation*}
and the space of functions diverging slower than any exponential
\begin{equation*}
\UU^s\p{\Lambda} \coloneqq \bigcap_{r>0} F^{s,r} \p{\Lambda}.
\end{equation*}
These spaces are endowed respectively with the inductive and projective limit structure (in the category of locally convex topological vector spaces). Notice that $\GG^s\p{\Lambda}$ is dense in $\UU^s\p{\Lambda}$ (just multiply by a bump function) and that the $L^2$ pairing \eqref{eqpairing} in $\Lambda$ gives a natural duality bracket between $\UU^s\p{\Lambda}$ and $\GG^s\p{\Lambda}$. The spaces $\GG^s\p{\Lambda}$ and $\UU^s\p{\Lambda}$ are natural analogues of $\G^s\p{M}$ and $\U^s\p{M}$ on the FBI side. Indeed, we have the following results. 
\begin{prop}\label{prop:M-vers-Lambda}
Let $\tilde{s} > s \geq 1$ and assume that $\Lambda$ is a $(\tau_0,\tilde{s})$-Gevrey adapted Lagrangian with $\tau_0$ small enough. Then, the transform $T_\Lambda$ is continuous from $\G^s\p{M}$ to $\GG^s\p{\Lambda}$ and from $\U^s\p{M}$ to $\UU^s\p{\Lambda}$.
\end{prop}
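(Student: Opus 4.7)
The proof has two independent parts. For continuity $T_\Lambda \colon \G^s(M) \to \GG^s(\Lambda)$, fix $u \in E^{s,R}(M)$ and $\alpha \in \Lambda$ with $\jap{\va{\alpha}}$ large (the bounded regime being trivial). Using a smooth cutoff, decompose $Tu(\alpha) = \int_M K_T(\alpha,x)\, u(x)\,\mathrm{d}x$ into a piece supported in $\set{d(x,\Re\alpha_x) > \epsilon_1/2}$ and its complement. The first piece inherits the decay $\O(\exp(-\eta\jap{\va{\alpha}}/h))$ from Definition \ref{def:analytic-FBI}, which is much stronger than any $\exp(r\jap{\va{\alpha}}^{1/s})$ with $r < 0$. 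Near the diagonal, we may replace $K_T$ by its WKB form $e^{i\Phi_T(\alpha,x)/h} a(\alpha,x)$ modulo a negligible error, and work in local analytic coordinates on $M$ against a $\G^s$ almost analytic extension $\tilde u$ supplied by Lemma \ref{lemma:almost-analytic-extension-Gs}.

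Since $\Phi_T$ and $a$ are holomorphic in $x$, we perform a contour deformation to $\Gamma_\alpha = \set{x + iy(x;\alpha) : x \in \R^n}$ with $\va{y(x;\alpha)} \lesssim (h/\jap{\va{\alpha}})^{1-1/s}$, chosen so that along $\Gamma_\alpha$ the imaginary part $\Im\Phi_T(\alpha,\cdot)$ gains a term of size $c\,\va{y}\,\va{\alpha_\xi} \sim h^{1-1/s}\jap{\va{\alpha}}^{1/s}$ over its value on $\R^n$. This is possible by Taylor expansion around $x = \alpha_x$ using $\partial_x\Phi_T(\alpha,\alpha_x) = -\alpha_\xi + \O(\va{\Im\alpha})$; the direction is aligned near the center with $-\alpha_\xi/\va{\alpha_\xi}$ and smoothly cut off within the support of our bump function. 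Stokes' formula yields
\begin{equation*}
\int_{\R^n} \cdots \mathrm{d}x = \int_{\Gamma_\alpha} \cdots \mathrm{d}x + \int_{\left[0,1\right]\times\R^n} H^\ast\p{ e^{i\Phi_T/h} a\, \bar\partial \tilde u \wedge \mathrm{d}x},
\end{equation*}
where $H$ is the linear homotopy. The boundary term decays like $\exp(-c(\jap{\va{\alpha}}/h)^{1/s})$ thanks to the gain in $\Im\Phi_T$, while the Cauchy--Riemann remainder is controlled via Remark \ref{remark:size_d_bar}: along $\Gamma_\alpha$, $\va{\Im x} \sim (h/\jap{\va{\alpha}})^{1-1/s}$, giving $\va{\bar\partial\tilde u} \leq C_R \n{u}_{s,R,K}\exp(-(CR\va{\Im x})^{-1/(s-1)})$, which also has order $\exp(-c(\jap{\va{\alpha}}/h)^{1/s})$. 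Two subtleties: the discrepancy $\va{\Im\Phi_T(\alpha,x) - \Im\Phi_T(\Re\alpha,x)} = \O(\tau_0 h^{1-1/\tilde s}\jap{\va{\alpha}}^{1/\tilde s})$ arising from $\alpha \in \Lambda \setminus T^\ast M$ is dominated by the gain precisely because $\tilde s > s$ and $\tau_0$ is small; and the case $s=1$ is easier since $u$ itself is holomorphic and the remainder term vanishes.

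For $T_\Lambda \colon \U^s(M) \to \UU^s(\Lambda)$, we work via the pairing $Tu(\alpha) = \jap{u, K_T(\alpha,\cdot)}$. By definition of $\U^s(M)$, there is $R_u > 0$ with $\va{\jap{u,f}} \leq C_u \n{f}_{s,R_u,M}$, so it suffices to estimate $\n{K_T(\alpha,\cdot)}_{s,R_u,M}$ by $C_r\exp(r\jap{\va{\alpha}}^{1/s})$ for any prescribed $r > 0$. Away from the diagonal in $x$ this norm is $\O(\exp(-\eta\jap{\va{\alpha}}/h))$ by routine estimates. Near the diagonal, $K_T(\alpha,\cdot) = e^{i\Phi_T(\alpha,\cdot)/h} a(\alpha,\cdot)$ (mod a negligible error) and $a(\alpha,\cdot)$ is a bounded family of $\G^1 \subset \G^s$ symbols, so the decisive factor is the oscillating exponential. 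Lemma \ref{lemma:norm-exponentials} gives
\begin{equation*}
\n{e^{i\Phi_T(\alpha,\cdot)/h}}_{s,R_u,K} \leq C_{R_u}\exp\p{-\frac{1}{h}\inf_K \Im\Phi_T(\alpha,\cdot) + C(hR_u)^{-1/s}},
\end{equation*}
and for real $x$ near $\Re\alpha_x$, combining admissibility of $\Phi_T$ with the bound $\va{\Im\alpha} \leq C\tau_0 h^{1-1/\tilde s}\jap{\va{\alpha}}^{1/\tilde s - 1}$ from Lemma \ref{lmdist} yields $\Im\Phi_T(\alpha,x) \geq -C\tau_0 h^{1-1/\tilde s}\jap{\va{\alpha}}^{1/\tilde s}$. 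Hence $\va{Tu(\alpha)} \leq C\exp(C\tau_0 h^{-1/\tilde s}\jap{\va{\alpha}}^{1/\tilde s})$, and since $\jap{\va{\alpha}}^{1/\tilde s} = o(\jap{\va{\alpha}}^{1/s})$ the right-hand side is $\leq C_r\exp(r\jap{\va{\alpha}}^{1/s})$ for any $r > 0$, proving $Tu \in F^{s,r}(\Lambda)$ with continuous dependence on $u$.

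The main obstacle throughout is the interplay between the two Gevrey indices: the Lagrangian $\Lambda$ only accommodates deformations consistent with $\tilde s$-class almost analytic extensions (tolerating noise of order $\tau_0 h^{1-1/\tilde s}\jap{\va{\alpha}}^{1/\tilde s}$), while the weaker $\G^s$ hypothesis on $u$ dictates the target rate $\jap{\va{\alpha}}^{1/s}$. The strict inequality $\tilde s > s$ is exactly what guarantees that this geometric noise is subdominant to the analytic gain in every estimate.
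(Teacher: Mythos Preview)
Your first part ($\G^s \to \GG^s$) is correct and is essentially an inlining of the paper's Lemma \ref{lemma:T_non_stationary}: same near/far decomposition, same non-stationary contour shift of size $(h/\jap{\va{\alpha}})^{1-1/s}$, same control of the $\bar\partial$-remainder via Remark \ref{remark:size_d_bar}, and the same observation that the $\tilde s$-noise from $\Im\alpha$ is subdominant.

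Your second part ($\U^s \to \UU^s$) has a genuine gap in the application of Lemma \ref{lemma:norm-exponentials}. That lemma is stated for a \emph{fixed} phase $\Phi$, and the constants $C,R_0,C_R$ in its conclusion depend on the $\G^s$ semi-norms of $\Phi$. But $\Phi_T(\alpha,\cdot)$ is a symbol of order $1$, so its semi-norms grow like $\jap{\va{\alpha}}$: the family is not bounded, and the bound
\[
\n{e^{i\Phi_T(\alpha,\cdot)/h}}_{s,R_u,K} \leq C_{R_u}\exp\Bigl(-\tfrac{1}{h}\inf_K \Im\Phi_T(\alpha,\cdot) + C(hR_u)^{-1/s}\Bigr)
\]
that you quote is not uniform in $\alpha$. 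The correct move is to rescale, applying the lemma to $\Phi_T(\alpha,\cdot)/\jap{\va{\alpha}}$ (which \emph{is} a bounded family) with effective parameter $\tilde h = h/\jap{\va{\alpha}}$. This produces an additional factor $\exp\bigl(C(hR)^{-1/s}\jap{\va{\alpha}}^{1/s}\bigr)$ in the exponent, growing at the critical rate $\jap{\va{\alpha}}^{1/s}$ rather than being $\alpha$-independent. Consequently your intermediate claim $\va{Tu(\alpha)} \leq C\exp(C\tau_0 h^{-1/\tilde s}\jap{\va{\alpha}}^{1/\tilde s})$ is too strong and does not follow.

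The overall strategy is salvageable: with the rescaled bound you get, for each target $r>0$, a choice of $R$ large enough (using that $u\in\U^s(M)$ is continuous on \emph{every} $E^{s,R}$) so that $(hR)^{-1/s}C < r$, and hence $Tu \in F^{s,r}(\Lambda)$. This is exactly how the paper proceeds, via Lemma \ref{lemma:T_BMT}, which packages the rescaled Bochner--Martinelli argument with the correct large parameter $\lambda = \jap{\va{\alpha}}/h$ and arrives at $\n{K_T(\alpha,\cdot)}_{E^{s,R}} \leq C\exp(\epsilon(\jap{\va{\alpha}}/h)^{1/s})$ for arbitrary $\epsilon>0$ --- not the sharper $\exp(C\jap{\va{\alpha}}^{1/\tilde s})$ bound you assert.
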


\begin{prop}\label{prop:Lambda-vers-M}
Let $\tilde{s} > s \geq 1$ and assume again that $\Lambda$ is a $(\tau_0,\tilde{s})$-Gevrey adapted Lagrangian with $\tau_0$ small enough. Then, the transform $S_\Lambda$ is continuous from $\GG^s\p{\Lambda}$ to $\G^s\p{M}$ and admits a continuous extension from $\UU^s\p{\Lambda}$ to $\U^{s}\p{M}$.
\end{prop}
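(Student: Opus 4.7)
The strategy is dual to that of Proposition~\ref{prop:M-vers-Lambda}: there, Gevrey regularity on $M$ produced exponential decay on $\Lambda$; here, exponential decay on $\Lambda$ will produce Gevrey regularity on $M$. The plan is to establish the first statement by direct estimation of derivatives under the integral defining $S_\Lambda v$, then to deduce the second by duality.

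For the continuity $\GG^s(\Lambda) \to \G^s(M)$, fix $v \in F^{s,r}(\Lambda)$ with $r<0$ and work in a local chart of $M$. Splitting the kernel using Definition~\ref{def:analytic-FBI}, the off-diagonal part is $\mathcal{O}(e^{-\eta \langle|\alpha|\rangle/h})$ uniformly on $\Lambda$ and contributes only an exponentially small (in particular Gevrey) function. For the near-diagonal part $e^{i\Phi_S(x,\alpha)/h}b(x,\alpha)$, I would differentiate under the integral and use the symbolic nature of $\Phi_S$ (of order one) and of $b$ together with Faà di Bruno's formula to obtain a bound of the form
\[
|\partial_x^\beta K_S(x,\alpha)| \leq C^{|\beta|+1}\bigl(\langle|\alpha|\rangle/h\bigr)^{|\beta|} e^{-\Im\Phi_S(x,\alpha)/h}.
\]
Since $\Lambda$ is $(\tau_0,\tilde s)$-adapted with $\tilde s>s$, Lemma~\ref{lmdist} gives $|\Im\alpha|\lesssim \tau_0 h^{1-1/\tilde s}\langle|\alpha|\rangle^{1/\tilde s - 1}$ on $\Lambda$, and the admissible-phase lower bound $\Im\Phi_S \geq C^{-1}\langle|\alpha|\rangle d(\alpha_x,x)^2$ (valid on the reals) is therefore preserved on $\Lambda$ modulo a perturbation of size $\tau_0 h^{1-1/\tilde s}\langle|\alpha|\rangle^{1/\tilde s}$ which, crucially because $\tilde s>s$, is absorbed into the decay $r\langle|\alpha|\rangle^{1/s}$ provided by $v$.

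Combining these estimates, $|\partial^\beta S_\Lambda v(x)|$ is bounded by $\|v\|_{\Lambda,s,r}$ times
\[
\int_\Lambda \bigl(\langle|\alpha|\rangle/h\bigr)^{|\beta|} \exp\!\Bigl( -\tfrac{1}{C}\langle|\alpha|\rangle d(\alpha_x,x)^2/h + \tfrac{r}{2}\langle|\alpha|\rangle^{1/s}\Bigr)\mathrm{d}\alpha.
\]
Parametrizing $\Lambda$ via $\exp(H_G^{\omega_I})$, whose Jacobian is close to one by Lemma~\ref{lemma:uniformity-lagrangians}, we integrate first in $\alpha_x$, gaining a factor $(h/\langle|\alpha|\rangle)^{n/2}$ from the Gaussian concentration at scale $\sqrt{h/\langle|\alpha|\rangle}$. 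The remaining radial integral $\int_0^\infty t^{|\beta|+n/2-1} e^{rt^{1/s}/2}\,\mathrm{d}t$ evaluates, after the substitution $u=(-r/2)t^{1/s}$, to $s(-r/2)^{-s(|\beta|+n/2)}\Gamma(s(|\beta|+n/2))$, and Stirling gives $\Gamma(s(|\beta|+n/2))\leq \tilde C^{|\beta|+1}(|\beta|!)^s$. Altogether
\[
|\partial^\beta S_\Lambda v(x)| \leq C_{r,h}\, R^{|\beta|}(|\beta|!)^s \,\|v\|_{\Lambda,s,r}
\]
for some $R = R(r,h)$, so $S_\Lambda v\in E^{s,R}(M)$, which yields the continuity of $S_\Lambda:\GG^s(\Lambda)\to\G^s(M)$.

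For the continuous extension to $\UU^s(\Lambda)\to\U^s(M)$, I would argue by duality. With respect to the natural pairings $\langle\cdot,\cdot\rangle_M$ on $M$ and \eqref{eqpairing} on $\Lambda$, $\GG^s(\Lambda)$ is dense in the dual of $\UU^s(\Lambda)$ and $\G^s(M)$ is dense in the dual of $\U^s(M)$. The transpose $S_\Lambda^{\top}$ of $S_\Lambda$ is, up to routine conjugations, the FBI transform $T'_\Lambda$ associated to the analytic FBI transform $T'$ whose kernel is $(\alpha,x)\mapsto K_S(x,\alpha)$ (which indeed satisfies Definition~\ref{def:analytic-FBI} by construction of $S$). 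Proposition~\ref{prop:M-vers-Lambda} applied to $T'$ yields continuity $\G^s(M)\to\GG^s(\Lambda)$, and transposing gives the desired extension. The principal technical obstacle is obtaining the above bound on $\partial_x^\beta K_S$ without a spurious factorial factor that would degrade the Gevrey index to $s+1$; this forces one to exploit the oscillatory cancellations in $e^{i\Phi_S/h}$ (via Faà di Bruno together with the symbolic growth of derivatives of $\Phi_S$) rather than relying on crude Cauchy estimates in $x$.
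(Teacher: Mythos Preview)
Your duality argument for the extension $\UU^s(\Lambda)\to\U^s(M)$ is exactly what the paper does: the formal adjoint of $S_\Lambda$ is an analytic FBI transform, Proposition~\ref{prop:M-vers-Lambda} applies to it, and transposition gives the result.

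For the first part, however, there is a genuine gap. Your claimed bound
\[
|\partial_x^\beta K_S(x,\alpha)| \leq C^{|\beta|+1}(\langle|\alpha|\rangle/h)^{|\beta|}\,e^{-\Im\Phi_S(x,\alpha)/h}
\]
is false when $|\beta|$ exceeds $\langle|\alpha|\rangle/h$. Fa\`a di Bruno does not save you here: since $\Phi_S$ is only $\G^1$ in $x$, its high derivatives obey $|\partial_x^\gamma\Phi_S|\leq C^{|\gamma|}\gamma!\,\langle|\alpha|\rangle$, and the Bell-polynomial expansion of $\partial_x^\beta e^{i\Phi_S/h}$ contains, among others, the single-block term $(i/h)\partial_x^\beta\Phi_S$, which already produces a factor $\beta!\,\langle|\alpha|\rangle/h$. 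For bounded $\langle|\alpha|\rangle$ your bound would force $|\partial_x^\beta K_S|\leq C^{|\beta|}$, contradicting the fact that $K_S$ is merely analytic (hence governed by $\beta!$) in $x$. What actually happens is that the optimized Cauchy/Fa\`a di Bruno bound gives your estimate only in the regime $|\beta|\lesssim\langle|\alpha|\rangle/h$; outside that regime one only has $|\partial_x^\beta K_S|\lesssim C^{|\beta|}\beta!\,e^{C\langle|\alpha|\rangle/h}$. Your argument can be rescued by splitting the $\alpha$-integral at $\langle|\alpha|\rangle\sim|\beta|h$ and checking that the small-$\alpha$ piece contributes at most $C^{|\beta|}\beta!\leq C^{|\beta|}(|\beta|!)^s$, but this step is missing and your explanation attributes the absence of the factorial to the wrong mechanism.

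The paper's route bypasses this entirely. Rather than differentiating pointwise, it applies the Bochner--Martinelli trick (Lemma~\ref{lemma:Bochner-Martinelli-trick}) to obtain Lemma~\ref{lemma:S_BMT}: for any $\epsilon>0$ there is $R>0$ with
\[
\|y\mapsto K_S(y,\alpha)\|_{E^{s,R}}\leq C\exp\Bigl(\epsilon\,(\langle|\alpha|\rangle/h)^{1/s}\Bigr),
\]
valid for $\alpha$ in the relevant subconical neighbourhood of $T^*M$. This is a direct $\G^s$-norm bound on the kernel, obtained from a single $L^\infty$ estimate on its holomorphic extension in a strip of width $(\langle|\alpha|\rangle/h)^{1/s-1}$; no derivative counting is needed. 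Since $\tilde s>s$, Lemma~\ref{lmdist} places all large $\alpha\in\Lambda$ in that neighbourhood, and integrating against $|v(\alpha)|\leq\|v\|_{\Lambda,s,r}e^{r\langle|\alpha|\rangle^{1/s}}$ with $\epsilon<-r$ gives $S_\Lambda v\in E^{s,R}(M)$ in one line.
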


If Propositions \ref{prop:M-vers-Lambda} and \ref{prop:Lambda-vers-M} give a good idea of the link between regularity on $M$ and decay on the FBI side, they are not precise enough for what we intend to do. In particular, when dealing with $\G^s$ Anosov flows, we want to consider $\p{\tau_0,s}$-adapted Lagrangians, in order to get the best results possible. Thus, we need more precise estimates that explain how $\p{\tau_0,s}$-adapted Lagrangians relate with $\G^s$ functions and associated ultradistributions. We will then deduce Propositions \ref{prop:M-vers-Lambda} and \ref{prop:Lambda-vers-M} from these estimates.

\subsubsection{Explicit estimates.}

The main goal of this section set, we go into details. We want to understand first the growth of $T_\Lambda u$ when  $u$ is either a Gevrey function or an ultradistribution. Since $T_\Lambda u$ is the restriction of $T u$ to $\Lambda$, and in view of Lemma \ref{lmdist}, we will study the growth of $Tu$ inside a sub-conical neighbourhood of $T^* M$, as defined in \eqref{eq:def-Omega-tau-delta}.

We start by studying $Tu$ when $u$ is smooth. To do so, we use the non-stationary phase method -- Proposition \ref{prop:non_stationary_analytic_symbols}. Since the phase of $T$ is non-stationary only for large $\alpha$, we need another bound for small $\alpha$'s. We  use the following elementary bound: there is $C > 0$ such that, for every $\alpha \in (T^* M)_{\epsilon_0}$ and every bounded function $u$ on $M$, we have
\begin{equation}\label{eq:trivial_bound_T}
\begin{split}
\va{Tu (\alpha)} \leq C \n{u}_{L^\infty\p{M}} h^{- \frac{3n}{4}} \jap{\va{\alpha}}^{\frac{n}{4}} \exp\p{C \frac{\jap{\va{\alpha}} \va{\Im \alpha}}{h}}.
\end{split}
\end{equation}
The bound \eqref{eq:trivial_bound_T} follows from an immediate majoration of the kernel of $T$, using the fact that the imaginary part of $\Phi_T$ is positive on $T^* M \times M$. Notice also that, since $Tu$ is holomorphic, the bound \eqref{eq:trivial_bound_T} is in fact a symbolic estimate.

To understand $Tu(\alpha)$ for large $\alpha$ when $u$ is Gevrey, we will rely on the following estimate.

\begin{lemma}\label{lemma:T_non_stationary}
Let $s \geq 1$. For every $R > 0$, there are constants $C,\tau_1 > 0$ such that, for every $\alpha \in \p{T^* M}_{\tau_1,1/s}$ such that $\jap{\va{\alpha}} \geq C$, and every $u \in \widetilde{E}^{s,R}\p{M}$, we have
\begin{equation}\label{eq:T_non_stationary}
\begin{split}
\va{Tu (\alpha)} \leq C \n{u}_{s,R,M} \exp\p{- \frac{1}{C} \p{\frac{\jap{\va{\alpha}}}{h}}^{\frac{1}{s}}}.
\end{split}
\end{equation}
In particular, if $r \geq - 1 / C h ^{1/s}$ and if $\Lambda$ is a $\p{\tau_0,s}$-adapted Lagrangian with $\tau_0$ small enough, then $T_\Lambda$ is bounded from $\widetilde{E}^{s,R}\p{M}$ to $F^{s,r}\p{\Lambda}$.
\end{lemma}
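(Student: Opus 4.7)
The strategy is to decompose $Tu(\alpha)$ into a near-diagonal piece and an exponentially small remainder, then apply the analytic non-stationary phase estimate of Proposition \ref{prop:non_stationary_analytic_symbols} to the main piece. First I would write $Tu(\alpha) = T_1 u(\alpha) + T_2 u(\alpha)$, where
\begin{equation*}
T_1 u(\alpha) = \int_{d(x,\alpha_x) \leq \epsilon_1} e^{i\Phi_T(\alpha,x)/h} a(\alpha, x) u(x)\, \mathrm{d}x
\end{equation*}
corresponds to the explicit oscillatory part of the kernel from \eqref{eq:def-KT}, and $T_2 u$ absorbs both the off-diagonal region $d(x,\alpha_x) > \epsilon_1$ and the $\O(e^{-\eta \jap{\va{\alpha}}/h})$ error in \eqref{eq:def-KT}. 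Directly from Definition \ref{def:analytic-FBI}, $\va{T_2 u(\alpha)} \leq C \n{u}_{L^\infty} e^{-\eta \jap{\va{\alpha}}/h}$, which is already stronger than \eqref{eq:T_non_stationary}.

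Next, I would estimate $T_1 u(\alpha)$ by covering $M$ with finitely many analytic coordinate patches and using a smooth partition of unity. In each chart, $T_1 u(\alpha)$ is precisely of the form $\int e^{i\Phi/h} a \cdot u\, \mathrm{d}y$ considered in Proposition \ref{prop:non_stationary_analytic_symbols}, with the FBI phase-space variables $(\alpha_x, \alpha_\xi)$ playing the role of the parameters $(x, \xi)$, the spatial variable $x$ playing the role of the integration variable $y$, and $u$ the $\G^s$ function. The three hypotheses of that proposition need to be checked: (i) $\Im \Phi_T \geq 0$ on the reals is part of the definition of an admissible phase; (ii) $\Im \Phi_T \geq C^{-1} \jap{\va{\alpha}}$ on $\set{d(x,\alpha_x) = \epsilon_1}$ follows from the coercivity $\Im \Phi_T \geq C^{-1}\jap{\va{\alpha}} d(\alpha_x, x)^2$ in Definition \ref{def:nonstandardphase}(iv); and (iii) $\va{d_x \Phi_T} \geq \jap{\va{\alpha}}/C$ follows from $d_x \Phi_T(\alpha, \alpha_x) = -\alpha_\xi$ together with the Taylor expansion $d_x \Phi_T(\alpha, x) = -\alpha_\xi + \O(\jap{\va{\alpha}} d(x, \alpha_x))$, provided $\jap{\va{\alpha}}$ is large (which forces $\va{\alpha_\xi}$ to be of order $\jap{\va{\alpha}}$, since $\alpha_x$ ranges over the compact manifold $M$) and $\epsilon_1$ is chosen sufficiently small.

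With the hypotheses verified, Proposition \ref{prop:non_stationary_analytic_symbols} gives a constant $C > 0$ and, for each $R \geq 1$, a constant $C_R > 0$ such that
\begin{equation*}
\va{T_1 u(\alpha)} \leq C_R \n{u}_{s,R,M} \exp\p{-\p{\frac{\jap{\Re \alpha_\xi}}{C R h}}^{1/s}}
\end{equation*}
whenever $\va{\Im \alpha_x} + \va{\Im \alpha_\xi}/\jap{\Re \alpha_\xi} \leq (CR)^{-1/s}(h/\jap{\Re \alpha_\xi})^{1-1/s}$. Up to enlarging $C$ and choosing $\tau_1 = \tau_1(R, C)$ small, this domain contains $\p{T^* M}_{\tau_1, 1/s}$, and $\jap{\Re \alpha_\xi}$, $\jap{\va{\alpha}}$ are comparable there. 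Combining the estimates for $T_1 u$ and $T_2 u$ then yields \eqref{eq:T_non_stationary}.

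For the ``in particular'' statement, Lemma \ref{lmdist} ensures that for $\tau_0$ small enough any $\p{\tau_0, s}$-adapted Lagrangian $\Lambda$ is contained in $\p{T^* M}_{\tau_1, 1/s}$. On the subset of $\Lambda$ where $\jap{\va{\alpha}} \geq C$, estimate \eqref{eq:T_non_stationary} applies; on the compact remainder $\jap{\va{\alpha}} < C$, the rough bound \eqref{eq:trivial_bound_T} together with $\va{\Im \alpha} = \O(h)$ on $\Lambda$ shows that $\va{T_\Lambda u(\alpha)} \leq C \n{u}_{L^\infty}$, which is certainly $\O\p{\n{u}_{E^{s,R}} \exp\p{r\jap{\va{\alpha}}^{1/s}}}$ for any $r \geq -1/(Ch^{1/s})$. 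Hence $T_\Lambda : E^{s,R}(M) \to F^{s,r}(\Lambda)$ is continuous. The delicate point of the proof is the uniformity of the non-stationarity (iii) over all $\alpha_x \in M$, which relies on the compactness of $M$ and on the uniformity of the admissibility constants for $\Phi_T$; once that is granted, everything else reduces to citing Proposition \ref{prop:non_stationary_analytic_symbols} and Lemma \ref{lmdist}.
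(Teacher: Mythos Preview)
Your proposal follows the same strategy as the paper: split off the exponentially small far-from-diagonal contribution, then apply Proposition \ref{prop:non_stationary_analytic_symbols} to the near-diagonal oscillatory integral after checking hypotheses (i)--(iii). The verification of (i)--(iii) and the ``in particular'' part are handled correctly.

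There is one step that is both unnecessary and, for $s=1$, actually problematic: the introduction of a \emph{smooth partition of unity in the integration variable} $x$. Proposition \ref{prop:non_stationary_analytic_symbols} requires the amplitude $a$ to be analytic and the function $u$ to lie in $\G^s$; multiplying $u$ by a cutoff $\chi_j$ that is only $\mathcal{C}^\infty$ destroys the $E^{s,R}$ control on $u$ (and for $s=1$ there are no nontrivial analytic cutoffs at all), so the proposition would not apply to $\chi_j u$. The step is also unnecessary: since $T_1 u(\alpha)$ is already integrated over the small ball $\{d(x,\alpha_x)\leq\epsilon_1\}$, it suffices to take $\epsilon_1$ smaller than the Lebesgue number of an analytic chart cover, so that for each $\alpha$ this ball lies entirely in a single chart. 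The paper does exactly this: it localizes in the \emph{parameter} $\alpha_x$ (finitely many balls $D$ suffice by compactness of $M$) and splits the integral sharply as $\int_D+\int_{M\setminus D}$; the boundary of $D$ is handled by the coercivity of $\Im\Phi_T$, as in your check of (ii). Once you drop the partition of unity and localize in $\alpha_x$ instead, your argument coincides with the paper's. (A minor point: on the bounded set $\jap{\va{\alpha}}<C$, one has $\va{\Im\alpha}=\O(\tau_0 h^{1-1/s})$ rather than $\O(h)$, but this still yields a finite, possibly $h$-dependent, operator bound, which is all that is claimed.)
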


Then, we want to understand the growth of $Tu(\alpha)$ when $u$ is an ultradistribution. To do so, we only need to understand the size of the kernel of $T$ in Gevrey norms. We will prove the following estimate, using the Bochner--Martinelli Trick, Lemma \ref{lemma:Bochner-Martinelli-trick}.

\begin{lemma}\label{lemma:T_BMT}
Let $s \geq 1$. For every $\epsilon > 0$, there are constants $C,\tau_1, R > 0$ such that for every $\alpha \in (T^* M)_{\tau_1,1/s}$ we have
\begin{equation}\label{eq:bound_gevrey_noyau_T}
\begin{split}
\n{y \mapsto K_T(\alpha,y)}_{s,R,M} \leq C \exp\p{\epsilon\p{\frac{\jap{\va{\alpha}}}{h}}^{\frac{1}{s}}}.
\end{split}
\end{equation}
In particular, for $u \in (E^{s,R}\p{M})'$ and $\alpha \in (T^* M)_{\tau_1,1/s }$, we have
\begin{equation*}
\begin{split}
\va{Tu(\alpha)} \leq C \n{u}_{\p{E^{s,R}}'} \exp\p{\epsilon\p{\frac{\jap{\va{\alpha}}}{h}}^{\frac{1}{s}}}.
\end{split}
\end{equation*}
Consequently, if $r \geq \epsilon h^{- 1/s}$ and $\Lambda$ is a $\p{\tau_0,s}$-adapted Lagrangian for $\tau_0$ small enough, then $T_\Lambda$ is bounded from $(E^{s,R}\p{M})'$ to $F^{s,r}\p{\Lambda}$.
\end{lemma}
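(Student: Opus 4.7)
The plan is to apply the Bochner--Martinelli Trick (Lemma \ref{lemma:Bochner-Martinelli-trick}) chart by chart to $y \mapsto K_T(\alpha,y)$, exploiting crucially that this function is holomorphic in $y$ so that the $\bar{\partial}$-hypothesis of BMT is vacuous (the $s=1$ clause in (ii)). I will write $\lambda := \langle|\alpha|\rangle/h$ throughout.

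First I will obtain a pointwise $L^\infty$ bound on $K_T(\alpha,y)$ over an appropriately sized complex neighborhood of $M$. Fix a chart and a small $\delta>0$ and consider $y$ at distance at most $\delta\, h^{1-1/s}\langle|\alpha|\rangle^{1/s-1}$ from it. Away from $\{y=\alpha_x\}$ Definition \ref{def:analytic-FBI} yields an $O(e^{-\eta\langle|\alpha|\rangle/h})$, which is harmless. Near the diagonal I will use the representation $K_T(\alpha,y) = e^{i\Phi_T(\alpha,y)/h}\,a(\alpha,y)$ modulo the same small error, with $a\in h^{-3n/4}S^{1,n/4}$. Since $\Phi_T$ is an admissible phase of order $1$, Taylor expansion around the real points will give, for $\alpha\in(T^*M)_{\tau_1,1/s}$,
\[
\Im\Phi_T(\alpha,y)\geq -C(|\Im\alpha|+|\Im y|)\langle|\alpha|\rangle \geq -C(\tau_1+\delta)\,h^{1-1/s}\langle|\alpha|\rangle^{1/s}.
\]
Combined with the symbolic growth of $a$ (which is absorbed into the stretched exponential for $h$ small), this yields $|K_T(\alpha,y)|\leq C e^{\kappa \lambda^{1/s}}$ on the complex neighborhood, with $\kappa=C'(\tau_1+\delta)$.

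The second step is to apply BMT with parameter $\lambda$ and BMT-constant $R_0 := \delta^{-s}$, chosen so that the required complex neighborhood of radius $R_0^{-1/s}\lambda^{1/s-1}$ matches exactly the one constructed above. Writing $f_\lambda(y) := e^{-(\kappa + R_0^{-1/s})\lambda^{1/s}}K_T(\alpha,y)$, hypothesis (i) will hold by construction, while hypothesis (ii) is trivial since $\bar{\partial}_y K_T = 0$. Lemma \ref{lemma:Bochner-Martinelli-trick} then furnishes a universal $\widetilde{C}>0$ such that, in each chart,
\[
\|K_T(\alpha,\cdot)\|_{s,\widetilde{C}R_0,K} \leq C_{R_0}\exp\bigl((\kappa + R_0^{-1/s}(1-\widetilde{C}^{-1/s}))\lambda^{1/s}\bigr).
\]
A finite partition-of-unity argument transports this to a bound on the global Gevrey norm on $M$. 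Given $\epsilon>0$ I will calibrate by first choosing $R_0$ so large that $R_0^{-1/s}(1-\widetilde{C}^{-1/s})<\epsilon/2$, then $\tau_1$ and $\delta=R_0^{-1/s}$ so small that $\kappa<\epsilon/2$; setting $R:=\widetilde{C}R_0$ then delivers the announced estimate.

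The remaining consequences will be immediate. Duality gives $|Tu(\alpha)|\leq \|u\|_{(E^{s,R})'}\|K_T(\alpha,\cdot)\|_{E^{s,R}}$, which yields the pointwise bound on $Tu$, and Lemma \ref{lmdist} shows that a $(\tau_0,s)$-adapted Lagrangian with $\tau_0$ small enough lies inside $(T^*M)_{\tau_1,1/s}$, so the pointwise estimate promotes $T_\Lambda$ to a bounded operator $(E^{s,R}(M))'\to F^{s,r}(\Lambda)$ as soon as $r\geq \epsilon h^{-1/s}$. The only delicate point is the simultaneous calibration of the three small parameters $\tau_1$, $\delta$, and $R_0^{-1/s}$ against $\epsilon$, but since we have freedom in all three the bookkeeping is routine; the key conceptual input is simply that BMT applies in its sharpest form because $K_T$ is genuinely holomorphic in $y$.
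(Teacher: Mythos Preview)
Your proof is correct and follows essentially the same approach as the paper: both apply the Bochner--Martinelli trick (Lemma~\ref{lemma:Bochner-Martinelli-trick}) to $y\mapsto K_T(\alpha,y)$ after multiplying by a suitable factor $e^{-c\lambda^{1/s}}$, with the key observation that holomorphy of $K_T$ in $y$ makes hypothesis~(ii) of BMT vacuous. The paper's version is more terse (it uses the single parameter $\tau_1$ for both the $\alpha$- and $y$-neighborhoods rather than your separate $\tau_1,\delta$, and multiplies directly by $e^{-\epsilon\lambda^{1/s}}$), but your more explicit calibration of $R_0,\delta,\tau_1$ against $\epsilon$ is a faithful unpacking of the same argument.
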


The same argument gives an estimate on the kernel of $S$.

\begin{lemma}\label{lemma:S_BMT}
Let $s \geq 1$. For every $\epsilon > 0$, there are constants $C,\tau_1, R > 0$ such that for every $\alpha \in (T^* M)_{\tau_1,1/s}$ we have
\begin{equation}\label{eq:bound_gevrey_noyau_S}
\begin{split}
\n{y \mapsto K_S(\alpha,y)}_{s,R,M} \leq C \exp\p{\epsilon\p{\frac{\jap{\va{\alpha}}}{h}}^{\frac{1}{s}}}.
\end{split}
\end{equation}
In particular, if $r < - \epsilon h^{- 1/s}$ and $\Lambda$ is a $\p{\tau_0,s}$-adapted Lagrangian with $\tau_0$ small enough then $S_\Lambda$ is bounded from $F^{s,r}\p{\Lambda}$ to $\G^s\p{M}$.
\end{lemma}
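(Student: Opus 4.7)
The plan is to obtain the kernel estimate \eqref{eq:bound_gevrey_noyau_S} by reducing it to the already-proved Lemma \ref{lemma:T_BMT} via the conjugation symmetry in the definition of an adjoint analytic FBI transform, and then to derive the continuity of $S_\Lambda$ by commuting an $E^{s,R}$-norm with the defining integral of $S_\Lambda$. Concretely, for the first step, Definition \ref{def:analytic-FBI} tells us that $(\alpha,y) \mapsto \overline{K_S(y,\alpha)}$ is the kernel of an analytic FBI transform $\widetilde{T}$. Applying Lemma \ref{lemma:T_BMT} to $\widetilde{T}$ produces, for every $\epsilon > 0$, constants $C,\tau_1,R>0$ such that, for $\alpha \in (T^\ast M)_{\tau_1,1/s}$,
\[
\left\|y\mapsto \overline{K_S(y,\alpha)}\right\|_{E^{s,R}(M)} \leq C \exp\p{\epsilon\p{\frac{\jap{\va{\alpha}}}{h}}^{\frac{1}{s}}}.
\]
Because all the Gevrey semi-norms of \S \ref{sec:appendix-Gevrey-regularity} are invariant under pointwise complex conjugation (conjugation commutes with $\partial^\alpha$ and preserves moduli), the estimate \eqref{eq:bound_gevrey_noyau_S} follows at once.

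For the second step, given $\epsilon>0$ and $v \in F^{s,r}(\Lambda)$ with $r < -\epsilon h^{-1/s}$, apply the estimate just obtained with parameter $\epsilon/2$, so as to reserve half the budget for integrability. Then one writes
\[
S_\Lambda v(y) = \int_\Lambda K_S(y,\alpha)\, v(\alpha)\, \mathrm{d}\alpha
\]
and moves the $E^{s,R}(M)$-norm under the integral — which is legitimate since $E^{s,R}(M)$ is Banach and the dominated integrand below is finite — to get
\[
\|S_\Lambda v\|_{E^{s,R}} \;\leq\; C\,\|v\|_{F^{s,r}} \int_\Lambda \exp\p{\p{\frac{\epsilon}{2\,h^{1/s}} + r}\jap{\va{\alpha}}^{\frac{1}{s}}} \mathrm{d}\alpha.
\]
By the choice of $r$, the coefficient in the exponent is bounded above by $-(\epsilon/2)h^{-1/s}$, and since $\Lambda$ carries a volume form of only polynomial growth (Lemma \ref{lemma:uniformity-lagrangians}), the integral converges. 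This shows that $S_\Lambda \colon F^{s,r}(\Lambda) \to E^{s,R}(M) \hookrightarrow \G^s(M)$ is bounded, which is the second assertion of the lemma. No genuine obstacle arises here: the main technical input — controlling an FBI kernel in Gevrey norm via the Bochner--Martinelli trick — was already absorbed in the proof of Lemma \ref{lemma:T_BMT}, and the conjugation symmetry built into Definition \ref{def:analytic-FBI} lets us quote it verbatim.
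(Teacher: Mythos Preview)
Your proposal is correct and matches the paper's intended argument: the paper simply says the proof ``is similar [to Lemma~\ref{lemma:T_BMT}] and we omit it,'' and your reduction via the conjugation symmetry in Definition~\ref{def:analytic-FBI} is exactly the clean way to carry this out. One small point worth making explicit: for complex $\alpha$ the kernel $K_S(y,\alpha)$ is the \emph{holomorphic} extension in $\alpha$ (so that the contour shift in Lemma~\ref{lemma:shift-ST} works), which means $\overline{K_S(y,\alpha)} = K_{\widetilde T}(\bar\alpha,y)$ rather than $K_{\widetilde T}(\alpha,y)$; since the tube $(T^\ast M)_{\tau_1,1/s}$ and the quantity $\langle|\alpha|\rangle$ are invariant under $\alpha\mapsto\bar\alpha$, Lemma~\ref{lemma:T_BMT} applied at $\bar\alpha$ gives the same bound, so nothing changes.
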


It is understood that, since $Tu$ is holomorphic, the estimates from Lemmas \ref{lemma:T_non_stationary} and \ref{lemma:T_BMT} are in fact $\mathcal{C}^\infty$ estimates, due to Cauchy's formula. 

\begin{proof}[Proof of Lemma \ref{lemma:T_non_stationary}]
Choose $\tau_1 \ll 1$ and, for $\alpha \in \p{T^* M}_{\tau_1,1/s}$, write
\begin{equation}\label{eq:rappel-def-T}
\begin{split}
T u (\alpha) = \int_{M} K_T(\alpha,y) u (y) \mathrm{d}y.
\end{split}
\end{equation}
Then, we choose a small ball $D$ in $M$ and assume that $\Re \alpha_x$ remains in $D$, uniformly away from the boundary of $D$ (at a distance much larger than $\tau_1$). The results for all $\alpha$ follows then immediately from a compactness argument. We split then the integral \eqref{eq:rappel-def-T} into the integral over $D$ and the integral over $M \setminus D$, that we denote respectively by $T^D u(\alpha)$ and $T^{M \setminus D}u(\alpha)$. The quantity $T^{M \setminus D} u(\alpha)$ is defined by integration over $y$'s that remain uniformly away from $\Re \alpha_x$ and hence the definition of $K_T$ implies that (recall Remark \ref{remark:pour_localiser}, we bound the $L^\infty$ norm by the $\n{\cdot}_{s,R,M}$ norm)
\begin{equation}\label{eq:borne-TMD}
\begin{split}
\va{T^{M \setminus D} u \p{\alpha}} \leq C \n{u}_{s,R,M} \exp\p{ - \frac{\jap{\va{\alpha}}}{Ch}}.
\end{split}
\end{equation}
Consequently, we may focus on $T^D u (\alpha)$. Up to an error term which satisfies the same kind of bound as $T^{M \setminus D}u(\alpha)$, and that we will consequently ignore, we have
\begin{equation}\label{eq:presque-TD}
\begin{split}
T^D u (\alpha) = \int_{D} e^{i \frac{\Phi_T(\alpha,y)}{h}} a(\alpha,y) u(y) \mathrm{d}y.
\end{split}
\end{equation}
By taking $D$ small enough, we may work in local coordinates. We want now to apply Proposition \ref{prop:non_stationary_analytic_symbols}. To do so, we need to check the requirements (i)-(iii). Item (i) is an immediate consequence of the point (i) in Definition \ref{def:nonstandardphase} of an admissible phase that is satisfied by $\Phi_T$. Item (ii) follows from  the point (iv) in Definition \ref{def:nonstandardphase} and the fact that $\alpha_x$ remains uniformly away from the boundary of $D$. Finally, item (iii) is satisfied because $\mathrm{d}_y\Phi_T(\alpha,\alpha_x) = - \alpha_\xi$.

We can consequently apply Proposition \ref{prop:non_stationary_analytic_symbols}, ending the proof of the lemma.
\end{proof}

\begin{proof}[Proof of Lemma \ref{lemma:T_BMT}]
We will apply the Bochner--Martinelli trick Lemma \ref{lemma:Bochner-Martinelli-trick} to the function
\begin{equation*}
\begin{split}
f_{\alpha,h} : y \mapsto K_T(\alpha,y) \exp\p{- \epsilon \p{\frac{\jap{\va{\alpha}}}{h}}^{\frac{1}{s}}}
\end{split}
\end{equation*}
with ``$\lambda = \jap{\va{\alpha}}/ h$''. Since the imaginary part of $\Phi_T(\alpha,y)$ is non-negative when $\alpha$ and $y$ are real, we see that if $\alpha \in (T^* M)_{\tau_1,1/s}$ and $y$ is at distance at most $\tau_1 (\jap{\va{\alpha}}/h)^{1/s - 1}$ of $M$ we have
\begin{equation*}
\begin{split}
\va{f_{\alpha,h}(y)} \leq C \exp\p{- \frac{\epsilon}{2} \p{\frac{\jap{\va{\alpha}}}{h}}^{\frac{1}{s}}},
\end{split}
\end{equation*}
for some $C > 0$. Point (ii) in Lemma \ref{lemma:Bochner-Martinelli-trick} is trivially satisfied since $f_{\alpha,h}$ is holomorphic. We can consequently apply the Bochner--Martinelli Trick with ``$\lambda = \jap{\va{\alpha}}/ h$'' to find that $f_{\alpha,h}$ is uniformly bounded in $E^{s,R}\p{M}$ for some $R > 0$. The bound \eqref{eq:bound_gevrey_noyau_T} follows.
\end{proof}

The proof of Lemma \ref{lemma:S_BMT} is similar and we omit it. We are now in position to prove Propositions \ref{prop:M-vers-Lambda} and \ref{prop:Lambda-vers-M}.

\begin{proof}[Proof of Proposition \ref{prop:M-vers-Lambda}]
We assume that $\tilde{s} > s \geq 1$ and that $\Lambda$ is a $\p{\tau_0,\tilde{s}}$-adapted Lagrangian with $\tau_0 > 0$ small. Let $R > 0$ and apply Lemma \ref{lemma:T_non_stationary}. Using the notations from Lemma \ref{lemma:T_non_stationary}, we see that if $\alpha \in \Lambda$ is large enough then $\alpha \in (T^* M)_{\tau_1,1/s}$ (this is a consequence of Lemma \ref{lmdist} because $\tilde{s} > s$). Consequently, if $u \in \widetilde{E}^{s,R}\p{M}$, we can use the estimate \eqref{eq:T_non_stationary} to bound $T_\Lambda u(\alpha)$ for large $\alpha$. For small $\alpha$, we can always use the bound \eqref{eq:trivial_bound_T}. Hence, we see that $T_\Lambda$ is bounded from $\widetilde{E}^{s,R}\p{M}$ to $F^{s,r}\p{\Lambda}$ for any $r > - C^{-1} h^{-1/s}$. The operator norm of $T_\Lambda$ may depend on $h$, but we do not care about it here. We just proved that $T_\Lambda$ is bounded from $\G^s\p{M}$ to $\GG^s\p{\Lambda}$.

We turn to the continuity from $\U^s\p{M}$ to $\UU^s\p{\Lambda}$. To do so choose $r > 0$ and apply Lemma \ref{lemma:T_BMT} for $\epsilon = r h^{1/s}/2$ (as above, we do not care about the dependence on $h$ of the bound, and can consequently work with $h$ fixed). Using the notations from Lemma \ref{lemma:T_BMT}, we see as above that if $\alpha \in \Lambda$ is large enough then $\alpha \in (T^* M)_{\tau_1,1/s}$. Consequently, if $u \in \U^s\p{M} \subseteq (E^{s,R}(M))'$ (where $R > 0$ is given by Lemma \ref{lemma:T_BMT}), we can use the estimate \eqref{eq:T_non_stationary} to bound $T_\Lambda u(\alpha)$ for large $\alpha$ in term of the norm of $u$ in $(E^{s,R}(M))'$. For small $\alpha$, we just apply an elementary bound, using that the kernel of $T$ is analytic and a compactness argument. Hence, we see that $T_\Lambda$ is bounded from $\U^s\p{M}$ to $F^{s,r}\p{\Lambda}$. Since $r > 0$ is arbitrary, $T_\Lambda$ is bounded from $\U^s\p{M}$ to $\UU^s\p{\Lambda}$.
\end{proof}

\begin{proof}[Proof of Proposition \ref{prop:Lambda-vers-M}]
The continuity of $S_\Lambda$ from $\GG^s\p{\Lambda}$ to $\G^s\p{M}$ follows from Lemma \ref{lemma:S_BMT}, using Lemma \ref{lmdist} as in the proof of Proposition \ref{prop:M-vers-Lambda}.

To extend $S_\Lambda$ from $\UU^s\p{\Lambda}$ to $\U^s\p{M}$, just notice that the formal adjoint of $S$ is (by definition) an analytic FBI transform. Consequently, the formal adjoint of $S_\Lambda$ is bounded from $\G^s\p{M}$ to $\GG^s\p{\Lambda}$ by Proposition \ref{prop:M-vers-Lambda}. Since the duality pairing \eqref{eqpairing} embeds $\GG^s\p{\Lambda}$ into the strong dual of $\UU^s\p{\Lambda}$, it follows that $S_\Lambda$ has a continuous extension from $\UU^s\p{\Lambda}$ to $\U^s\p{M}$.
\end{proof}

The next section is dedicated to the study of the composition $S_\Lambda T_\Lambda$. However, due to the sometimes intricate boundedness properties of $T_\Lambda$ and $S_\Lambda$, it is not always clear that this composition makes sense. We end this section by a result that explains why this composition is well-defined.

\begin{prop}\label{prop:regularite-decay-s=1}
Let $R_1 > 0$ be large enough. Then for $R_0 > 0$ large enough and $\tau_0$ small enough, there is a $r > 0$ such that, if $\Lambda$ is a $\p{\tau_0,1}$-adapted Lagrangian, then  $T_\Lambda$ is continuous from $(E^{1,R_0})'$ to $F^{1, r}(\Lambda)$, and $S_\Lambda$ has a continuous extension from $F^{1,r}(\Lambda)$ to $(E^{1,R_1})'$, so that $S_\Lambda T_\Lambda$ has a continuous extension from $(E^{1,R_0})'$ to $(E^{1,R_1})'$ (here $r$ may depend on $h$ but the quantification on $R_0$ and $\tau_0$ does not).

In the same fashion, for every $R_1 > 0$ large enough, there is a $r > 0$, such that, provided that $\tau_0$ is small enough, for every $k \in \R$, the operator $S_\Lambda$ is bounded from $L^2_k\p{\Lambda}$ to $(E^{1,R_1})'$ and $T_\Lambda$ is bounded from $(E^{1,R_1})'$ to $F^{1,r}\p{\Lambda}$, so that $T_\Lambda S_\Lambda$ is bounded from $L^2_k\p{\Lambda}$ to $F^{1,r}\p{\Lambda}$.
\end{prop}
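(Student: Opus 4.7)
The plan is to use duality combined with the non-stationary and Bochner--Martinelli bounds from \S \ref{sec:continuity-T-S}. The key observation is that, since $S$ is an adjoint analytic FBI transform, its $L^2$-adjoint $S^{\ast}$ is an analytic FBI transform, so Lemma \ref{lemma:T_non_stationary} applied to $S^{\ast}$ provides, for each fixed $R_1 > 0$, constants $C', \tau_1' > 0$ such that
\[
|S^{\ast} \bar\phi(\alpha)| \leq C' \|\phi\|_{E^{1,R_1}} \exp\bigl( -\langle|\alpha|\rangle/(C'h) \bigr)
\]
for every $\phi \in E^{1,R_1}(M)$ and every $\alpha \in (T^{\ast} M)_{\tau_1', 1}$ (the bounded-$\alpha$ range being absorbed into the same estimate after enlarging $C'$, using that $K_S$ is analytic and $M$ is compact). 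This decay will compensate the exponential growth of elements of $F^{1,r}(\Lambda)$ or $L^2_k(\Lambda)$ in the duality pairings below.

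For the first assertion, I choose $\epsilon_T = 1/(2 C')$ and apply Lemma \ref{lemma:T_BMT} with $s=1$ and this $\epsilon$, producing $R > 0$ and $\tau_1 > 0$. Taking $R_0 \geq R$ and $\tau_0$ small enough that any $(\tau_0,1)$-adapted Lagrangian $\Lambda$ lies in $(T^{\ast} M)_{\min(\tau_1,\tau_1'),1}$ (via Lemma \ref{lmdist}), the estimate of Lemma \ref{lemma:T_BMT} gives $T_\Lambda : (E^{1,R_0})' \to F^{1, r}(\Lambda)$ for any $r \geq \epsilon_T/h$. I set $r = 3/(4 C' h)$. For $S_\Lambda$, I unfold $\langle S_\Lambda v, \phi \rangle_{L^2(M)}$, interchange the order of integration, and use the identity $\int_M K_S(x,\alpha)\phi(x)\,\mathrm{d}x = \overline{S^{\ast}\bar\phi(\alpha)}$ to obtain
\[
|\langle S_\Lambda v, \phi\rangle| \leq C' \|v\|_{F^{1,r}} \|\phi\|_{E^{1,R_1}} \int_\Lambda \exp\bigl( (r - 1/(C'h))\langle|\alpha|\rangle \bigr) \, \mathrm{d}\alpha.
\]
Since $r - 1/(C'h) = -1/(4 C' h) < 0$ and $\Lambda$ is a uniform perturbation of $T^{\ast} M$, the integral over $\Lambda$ converges, giving the desired continuous extension $S_\Lambda : F^{1,r}(\Lambda) \to (E^{1,R_1})'$ and hence the well-definedness of $S_\Lambda T_\Lambda$ on $(E^{1,R_0})'$.

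For the second assertion, the boundedness of $T_\Lambda : (E^{1,R_1})' \to F^{1,r}(\Lambda)$ for some $r > 0$ follows exactly as above, applying Lemma \ref{lemma:T_BMT} with $R_0$ replaced by $R_1$ (this is what forces ``$R_1$ large enough''). For $S_\Lambda : L^2_k(\Lambda) \to (E^{1,R_1})'$, the pointwise estimate is replaced by Cauchy--Schwarz against the weighted measure defining $L^2_k(\Lambda)$:
\[
|\langle S_\Lambda v, \phi\rangle| \leq \|v\|_{L^2_k(\Lambda)} \left( \int_\Lambda |S^{\ast}\bar\phi(\alpha)|^2 \langle|\alpha|\rangle^{-2k} e^{2H/h} \, \mathrm{d}\alpha \right)^{1/2}.
\]
Using \eqref{eq:taille_de_H} with $s = 1$, we have $|e^{H/h}| \leq \exp(C_H \tau_0 \langle|\alpha|\rangle/h)$, so after inserting the decay of $S^{\ast}\bar\phi$, the integrand is bounded by a constant multiple of $\|\phi\|_{E^{1,R_1}}^2 \langle|\alpha|\rangle^{-2k} \exp(2(C_H \tau_0 - 1/C')\langle|\alpha|\rangle/h)$. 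Choosing $\tau_0$ small enough that $C_H \tau_0 < 1/(2 C')$, the exponent becomes negative and the integral converges for every $k \in \R$, giving boundedness uniform in $k$.

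The proof involves no genuine difficulty; the only point requiring care is tracking how the constants depend on $R_0$, $R_1$ and $\tau_0$. Lemma \ref{lemma:T_BMT} yields an arbitrarily slow growth rate $\epsilon_T/h$ at the cost of taking $R_0$ (resp.\ $R_1$) large, while the decay rate $1/(C'h)$ produced by Lemma \ref{lemma:T_non_stationary} applied to $S^{\ast}$ is determined by $R_1$; arranging $\epsilon_T/h < r < 1/(C'h)$ in the first part, and $C_H \tau_0 < 1/(2C')$ in the second, is precisely what produces the quantifiers ``$R_0, R_1$ large enough'' and ``$\tau_0$ small enough'' in the statement.
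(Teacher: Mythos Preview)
Your proposal is correct and follows essentially the same approach as the paper: Lemma \ref{lemma:T_non_stationary} applied to the analytic FBI transform $S^{\ast}$ gives the decay that, via the explicit duality pairing you write out, yields the $S_\Lambda$ bound, while Lemma \ref{lemma:T_BMT} handles $T_\Lambda$; for the second part both you and the paper use \eqref{eq:taille_de_H} to control $e^{H/h}$ and make the pairing with $L^2_k(\Lambda)$ converge. The only difference is that the paper phrases the $S_\Lambda$ step abstractly as ``a duality argument as in the proof of Proposition \ref{prop:Lambda-vers-M}'' whereas you unfold the pairing integral and track the constants explicitly---this is cosmetic.
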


\begin{proof}
Choose $R_1 > 0$. Let $\Lambda$ be a $\p{\tau_0,1}$-adapted Lagrangian for $\tau_0 > 0$ small. From Lemma \ref{lemma:T_non_stationary} and a duality argument as in the proof of Proposition \ref{prop:Lambda-vers-M}, we see that there is $C > 0$ such that, provided that $\tau_0$ is small enough, $S_\Lambda$ is bounded from $(E^{1,R_1}(M))'$ to $F^{s,r}(\Lambda)$ with $r = (Ch)^{-1}$. Now, applying Lemma \ref{lemma:T_BMT} with $\epsilon = 1/C$, we see that, provided that $\tau_0$ is small enough, there is $R_0 > 0$ such that $T_\Lambda$ is bounded from $(E^{1,R_0}(M))'$ to $F^{1,r}\p{\Lambda}$. Consequently, the composition $S_\Lambda T_\Lambda$ is well-defined and continuous from $(E^{1,R_0})'$ to $(E^{1,R_1})'$. Of course, we can always take $R_0$ larger, since it only makes $(E^{1,R_0})'$ smaller.

The proof of the second statement is similar. One just need in addition to notice that it follows from \eqref{eq:taille_de_H} that, for any $c > 0$, if $\Lambda$ is a $\p{\tau_0,1}$-adapted Lagrangian with $\tau_0$ small enough then we may use \eqref{eqpairing} to pair elements of $L^2_k\p{\Lambda}$ with elements of $F^{1,r}\p{\Lambda}$ where $r = - ch$. Thus $S_\Lambda$ has a continuous extension from $L^2_k\p{\Lambda}$ to $\p{E^{1,R_1}}'$ if $\tau_0$ is small enough.
\end{proof}

\begin{remark}\label{remark:C_infini}
Integrating by parts, one can show that $T$ maps continuously $C^\infty(M)$ to the space of maps on $T^\ast M$ that decay faster than any power of $\langle|\alpha|\rangle^{-1}$ at infinity ($h$ being fixed). Similarly, basic estimates on the kernel of $S$ shows that it maps the space of maps on $T^\ast M$ that decay faster than any power of $\langle|\alpha|\rangle^{-1}$ at infinity continuously into $\mathcal{C}^\infty\p{M}$.
\end{remark}

\subsection{Finding a good FBI transform}\label{subsec:inversion}

\subsubsection{Consequences of Theorem \ref{thm:existence-good-transform} and reduction of its proof}
This section is devoted to proving the existence of an FBI transform $T$ such that $T^\ast T = 1$ (Theorem \ref{thm:existence-good-transform}), and some consequences. The first corollary of Theorem \ref{thm:existence-good-transform} is
\begin{corollary}\label{cor:density-E1R}
There is an $R_0>0$ such that $E^{1,R_0}(M)$ is dense in $C^\infty(M)$, and, for every $R_1 > 0$ large enough, there is an $R_2 > 0$ such that, $E^{1,R_0}$ is dense in $(E^{1,R_2})'$ for the topology induced by $(E^{1,R_1})'$.
\end{corollary}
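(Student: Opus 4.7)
The plan is to combine the inversion formula $S T = I$ from Theorem \ref{thm:existence-good-transform} (where $S = T^*$) with a cutoff procedure on the FBI side. The key observation is that applying $S$ to a compactly supported function yields an element of some fixed space $E^{1,R_0}(M)$, since the kernel $K_S(x,\alpha)$ is holomorphic in $x$ on a tube $(M)_{1/R_0}$ (with $R_0$ determined by $\epsilon_0$ in Definition \ref{def:analytic-FBI}, independent of the support of the input).

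Fix a sequence $(\chi_n)_{n \in \N}$ of smooth, compactly supported cutoffs on $T^* M$ with $0 \leq \chi_n \leq 1$ and $\chi_n \equiv 1$ on $\{\langle|\alpha|\rangle \leq n\}$. Given $u$ in the relevant space, set $u_n \coloneqq S(\chi_n T u)$. For each $n$, the function $\chi_n T u$ has compact support and, by the analyticity of $K_S$ in its first argument, $u_n$ extends holomorphically and boundedly to $(M)_{1/R_0}$. Hence $u_n \in E^{1,R_0}(M)$ uniformly in $n$. It remains to show $u_n \to u$ in the appropriate topology; since $STu = u$ by Theorem \ref{thm:existence-good-transform}, this reduces to $S((1-\chi_n)Tu) \to 0$.

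For the first statement, take $u \in C^\infty(M)$. By Remark \ref{remark:C_infini}, $Tu$ decays faster than any power of $\langle|\alpha|\rangle^{-1}$, hence so does $(1-\chi_n)Tu$; moreover the family $\{(1-\chi_n)Tu\}$ tends to $0$ in that space of rapidly decaying functions. Again by Remark \ref{remark:C_infini} (or directly by differentiating under the integral in the definition of $S$), $S$ maps rapidly decaying functions continuously into $\mathcal{C}^\infty(M)$, so $u_n \to u$ in $\mathcal{C}^\infty(M)$.

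For the second statement, fix $R_1 > 0$ large. By Proposition \ref{prop:regularite-decay-s=1}, for $R_2$ large enough there exists $r > 0$ such that $T \colon (E^{1,R_2})' \to F^{1,r}(T^* M)$ and $S \colon F^{1,r'}(T^* M) \to (E^{1,R_1})'$ are continuous for any $r' \geq r$ (e.g. choose $R_2$ so that the bound from Lemma \ref{lemma:T_BMT} applied on $T^*M$ is compatible with the growth tolerated by the dual of $E^{1,R_1}$). Given $u \in (E^{1,R_2})'$, pick any $r' > r$: then $(1-\chi_n)Tu \to 0$ in $F^{1,r'}(T^*M)$ by dominated convergence (the weight $e^{-r'\langle|\alpha|\rangle}$ absorbs the $e^{r\langle|\alpha|\rangle}$ growth outside ever-larger compacts). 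Applying $S$ gives $u_n \to u$ in $(E^{1,R_1})'$, and each $u_n$ lies in $E^{1,R_0}(M)$ as above.

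The only delicate point is keeping the analyticity radius $R_0$ of the $u_n$ uniform in $n$; this is guaranteed because the holomorphic extension of $x \mapsto K_S(x,\alpha)$ has a fixed tube size built into the definition of an adjoint analytic FBI transform, so compact integration in $\alpha$ of a bounded factor preserves this uniform bound. The rest is a routine approximation argument in the Banach spaces $F^{1,r'}(T^*M)$ and $(E^{1,R_1})'$, with no further obstacle.
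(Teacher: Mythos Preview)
Your approach is the same as the paper's (set $u_n = T^*(\chi_n Tu)$, observe $u_n \in E^{1,R_0}$ from the fixed tube of holomorphy of the kernel, and show $u_n \to u$), and the $C^\infty$ case is handled correctly. However, in the second statement you have the inequality on $r'$ backwards. You claim $S \colon F^{1,r'} \to (E^{1,R_1})'$ is continuous for all $r' \geq r$, but the spaces $F^{1,r'}$ \emph{grow} with $r'$, so continuity of $S$ on $F^{1,r}$ only yields continuity on $F^{1,r'}$ for $r' \leq r$; for such $r'$, though, $(1-\chi_n)Tu$ need not tend to $0$ in $F^{1,r'}$, since $Tu$ may saturate the $F^{1,r}$ bound. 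As written, the argument does not close up.

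The fix is to separate the two growth rates rather than use a single $r$. Going back to the proof of Proposition~\ref{prop:regularite-decay-s=1}: Lemma~\ref{lemma:T_non_stationary} (via duality) gives $S \colon F^{1,r_S} \to (E^{1,R_1})'$ with $r_S \sim (Ch)^{-1}$ determined by $R_1$, while Lemma~\ref{lemma:T_BMT} lets you force $T \colon (E^{1,R_2})' \to F^{1,r_T}$ with $r_T$ arbitrarily small at the cost of enlarging $R_2$. Choosing $R_2 \gg R_1$ so that $r_T < r_S$, you get $(1-\chi_n)Tu \to 0$ in $F^{1,r_S}$ by dominated convergence, and then apply $S$. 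This is precisely what the paper's condition ``$R_2 \gg R_1$'' encodes.
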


\begin{proof}
If $u$ belongs to $\mathcal{C}^\infty\p{M}$ or $(E^{1,R_2})'$, we want to approximate $u$ by
\[
u_n:= T^\ast \Big(\chi\big(\frac{1}{n} \langle|\alpha|\rangle\big) Tu\Big),
\]
where $\chi \in \mathcal{C}^\infty_c\p{\R}$ is equal to $1$ around $0$. It follows from Lemma \ref{lemma:T_BMT} that the $u_n$'s belongs to $E^{1,R_0}$ for $R_0$ large enough (that does not depend on $u$). If $u \in \mathcal{C}^\infty\p{M}$, then it follows from Remark \ref{remark:C_infini} that $u_n$ tends, when $n$ tends to $+ \infty$, to $T^* T u = u$ in $\mathcal{C}^\infty\p{M}$. Now, if $u \in (E^{1,R_2})'$, we see as in the proof of Proposition \ref{prop:regularite-decay-s=1} that $u_n$ tends to $T^* Tu = u$ in $(E^{1,R_1})'$ provided that $R_2 \gg R_1$.
\end{proof}

Now that we have this density statement, we can consider the composition $S_\Lambda T_\Lambda$ for $\Lambda$ an adapted Lagrangian. 
\begin{lemma}\label{lemma:shift-ST}
Assume that $T$ is given by Theorem \ref{thm:existence-good-transform}, and that $S=T^\ast$ is the corresponding adjoint transform. Then, for every $R > 0$ large enough, there is $R' \leq R$ such that if $\Lambda$ is a $\p{\tau_0,1}$-Gevrey adapted Lagrangian with $\tau_0$ small enough, then, for $h$ small enough, $S_\Lambda T_\Lambda$ when acting on $(E^{1,R})'$ is the injection of $(E^{1,R})'$ inside $(E^{1,R'})'$.
\end{lemma}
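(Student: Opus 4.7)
The plan is to reduce to the case of analytic $u$ by density, and then to prove $S_\Lambda T_\Lambda u = STu$ by a contour deformation argument, after which Theorem \ref{thm:existence-good-transform} directly gives the conclusion.

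First I would invoke Proposition \ref{prop:regularite-decay-s=1} to fix $R_1$ large enough, then $R \geq R_0$ and $\tau_0$ small enough, so that both $S_\Lambda T_\Lambda$ and the identity map continuously $(E^{1,R})' \to (E^{1,R_1})'$, and so that the composition $STu = S_{T^*M} T_{T^*M} u$ is likewise continuous on $(E^{1,R})'$. By Corollary \ref{cor:density-E1R}, $E^{1,R_0}$ is dense in $(E^{1,R})'$ in the topology of $(E^{1,R_1})'$ (provided $R$ is taken sufficiently large relative to $R_1$), so it suffices to verify the identity $S_\Lambda T_\Lambda u = u$ on the dense subset $u \in E^{1,R_0}$. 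Since $STu = u$ on that subset by Theorem \ref{thm:existence-good-transform}, the problem reduces to showing $S_\Lambda T_\Lambda u = STu$ for $u \in E^{1,R_0}$.

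Next I would interpolate between $T^*M$ and $\Lambda$ by the family of I-Lagrangians $\Lambda_s = e^{sH_G^{\omega_I}}(T^*M)$, $s \in [0,1]$. Fix $x \in M$. For $u \in E^{1,R_0}$, both $K_S(x,\alpha)$ and $Tu(\alpha)$ are holomorphic in $\alpha$ on $(T^*M)_{\epsilon_0}$ (up to errors that are $\O(\exp(-\langle|\alpha|\rangle/(Ch)))$, which will be harmless), so that $\Psi := K_S(x,\alpha)\,Tu(\alpha)\,\omega^n/n!$ is a holomorphic $(2n,0)$-form on $(T^*M)_{\epsilon_0}$; in particular $d\Psi = 0$. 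Moreover, since each $\Lambda_s$ is I-Lagrangian, $\omega_I|_{\Lambda_s} = 0$, which gives $\omega|_{\Lambda_s} = \omega_R|_{\Lambda_s}$ and therefore $\Psi|_{\Lambda_s} = K_S(x,\alpha)\,Tu(\alpha)\,d\alpha$, matching the definition \eqref{eqpairing} of the $L^2$ pairing on each $\Lambda_s$. Consequently, for any $R_\ast > 0$, Stokes' theorem applied to the $(2n+1)$-chain
\[
\mathcal{C}_{R_\ast} = \bigcup_{s \in [0,1]} \bigl(\Lambda_s \cap \{\langle|\alpha|\rangle < R_\ast\}\bigr)
\]
yields
\[
\int_{\Lambda \cap \{\langle|\alpha|\rangle < R_\ast\}}\Psi - \int_{T^*M \cap \{\langle|\alpha|\rangle < R_\ast\}} \Psi = \int_{\partial \mathcal{C}_{R_\ast}^{\mathrm{wall}}} \Psi,
\]
where the ``wall'' is the union over $s$ of the slices $\Lambda_s \cap \{\langle|\alpha|\rangle = R_\ast\}$.

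The core step is then to show that the wall integral tends to $0$ as $R_\ast \to +\infty$. Here I would use Lemma \ref{lemma:T_non_stationary} with $s=1$ to get that $|Tu(\alpha)| \leq C_u \exp(-\langle|\alpha|\rangle/(C_u h))$ uniformly on $\bigcup_{s \in [0,1]} \Lambda_s$ (each $\Lambda_s$ is itself a $(\tau_0,1)$-adapted Lagrangian, hence contained in $(T^*M)_{\tau_1,1}$ for $\tau_0$ small by Lemma \ref{lmdist}), while Lemma \ref{lemma:S_BMT} (applied with $\epsilon < (2C_u)^{-1}$, which only forces a larger choice of $R$ and smaller $\tau_0$) gives $|K_S(x,\alpha)| \leq C\exp(\epsilon \langle|\alpha|\rangle/h)$ uniformly on the same region. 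The product decays exponentially in $\langle|\alpha|\rangle$, and since the $(2n-1)$-volume of the wall is polynomial in $R_\ast$, the wall integral is an $\O(\exp(-R_\ast/(2C_u h)))$, uniformly in $x$. Letting $R_\ast \to +\infty$ gives $S_\Lambda T_\Lambda u(x) = STu(x) = u(x)$, first pointwise and then (by uniformity in $x$) as elements of the relevant function space. The main obstacle is exactly this balance of exponentials on the wall, and the verification that the holomorphic extensions of the kernels and of $Tu$ are well-behaved on the full interpolating homotopy; once Lemmas \ref{lemma:T_non_stationary} and \ref{lemma:S_BMT} are invoked with compatible choices of constants, the argument closes.
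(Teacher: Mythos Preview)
Your proposal is correct and follows essentially the same approach as the paper: reduce to $u\in E^{1,R_0}$ by the density argument (Proposition \ref{prop:regularite-decay-s=1} and Corollary \ref{cor:density-E1R}), then use a contour shift from $\Lambda$ to $T^*M$ justified by holomorphy of the integrand and the exponential decay coming from Lemmas \ref{lemma:T_non_stationary} and \ref{lemma:S_BMT}. The paper states the contour shift in one sentence, while you spell out the homotopy $\Lambda_s$, the truncated chain $\mathcal{C}_{R_\ast}$, and the wall estimate; your observation that $\omega|_{\Lambda_s}=\omega_R|_{\Lambda_s}$ (so that the restriction of the holomorphic $2n$-form matches $\mathrm{d}\alpha$) is exactly what makes this shift legitimate, and the parenthetical about ``errors'' in $K_S$, $Tu$ is unnecessary since both are genuinely holomorphic on $(T^*M)_{\epsilon_0}$.
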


\begin{proof} 
First of all, recall that, thanks to Proposition \ref{prop:regularite-decay-s=1}, the operator $S_\Lambda T_\Lambda$ is well-defined on $(E^{1,R})'$ when $R$ is large enough (taking value in a space which is \emph{a priori} larger).

From Proposition \ref{prop:regularite-decay-s=1}, we see that, provided that $\tau_0$ is small enough, there are $R_1,R_2 > 0$ such that $S_\Lambda T_\Lambda$ is continuous from $(E^{1,R_1})'$ to $(E^{1,R_2})'$. Moreover, we may assume that $R_1$ is arbitrarily large (since it makes $(E^{1,R_1})'$ smaller) and hence apply Corollary \ref{cor:density-E1R} to see that, provided that $R >0$ is large enough, $E^{1,R_0}$ is dense in $(E^{1,R})'$ for the topology of $(E^{1,R_1})'$. Consequently, we only need to prove that $S_\Lambda T_\Lambda u = u$ for $u \in E^{1,R_0}$.

However, for $u \in E^{1,R_0}$, it follows from Lemmas \ref{lmdist} and \ref{lemma:T_non_stationary} that, provided that $\tau_0$ is small enough (depending on $R_0$), the integral in the following right-hand side is convergent
\begin{equation}\label{eq:notons-le}
S_\Lambda T_\Lambda u (x) = \int_{\Lambda} K_S(x,\alpha) T u(\alpha) \mathrm{d}\alpha.
\end{equation}
Moreover, the integrand in \eqref{eq:notons-le} is holomorphic and rapidly decreasing (due to Lemmas \ref{lemma:T_non_stationary} and \ref{lemma:S_BMT}). Hence, we may apply Stokes' Formula to shift contours in \eqref{eq:notons-le} and replace the integral over $\Lambda$ by an integral over $T^* M$, proving that $S_\Lambda T_\Lambda u = T^* T u = u$, and hence the lemma.
\end{proof}

\begin{remark}
Mind that $S_\Lambda$ is not a right inverse for $T_\Lambda$. Indeed, $T_\Lambda$ is not surjective, since its image only contains smooth function. In fact, we will form the projector $\Pi_\Lambda = T_\Lambda S_\Lambda$ on the image of $T_\Lambda$. The study of the operator $\Pi_\Lambda$, and more generally of operators of the form $T_\Lambda P S_\Lambda$, will be carried out in \S \ref{sec:TPS}.
\end{remark}

From Lemma \ref{lemma:shift-ST}, the basic properties of the functional spaces introduced in \S \ref{sec:symplectic-geometry} follow.

\begin{corollary}\label{cor:completude}
Let $s \geq 1$ and $\Lambda$ be a $\p{\tau_0,s}$-adapted Lagrangian with $\tau_0$ small enough. Under the assumptions of the previous lemma, for every $k \in \R$, the space $\mathcal{H}_\Lambda^k$ (defined by \eqref{eq:def-HMk}) is a Hilbert space and the space $\mathcal{H}_{\Lambda,\FBI}^k$ is a closed subspace of $L^2_k\p{\Lambda}$.

Moreover, for $R_0$ large enough, $\tau_0$ small enough (depending on $R_0$), and all $k \in \R$,
\begin{equation}\label{eq:des_inclusions}
E^{s,R_0} \subseteq \mathcal{H}_{\Lambda}^k\subseteq \p{E^{s,R_0}\p{M}}'.
\end{equation}
If $1 \leq \tilde{s} < s$ then, provided that $\tau_0$ is small enough, for every $k \in \R$, we have
\begin{equation*}
\begin{split}
\G^{\tilde{s}}\p{M} \subseteq \mathcal{H}_\Lambda^k \subseteq \U^{\tilde{s}}\p{M}.
\end{split}
\end{equation*}
All these inclusions are continuous.
\end{corollary}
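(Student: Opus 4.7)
The whole argument hinges on the identity $S_\Lambda T_\Lambda = I$ furnished by Lemma \ref{lemma:shift-ST}, which promotes $T_\Lambda$ to a continuous left inverse and pins down enough rigidity to turn the seminorm $\|T_\Lambda u\|_{L^2_k(\Lambda)}$ into a genuine Hilbertian structure. The plan is to choose $R_0$ so large and $\tau_0$ so small that Lemmas \ref{lmdist}, \ref{lemma:T_non_stationary}, \ref{lemma:T_BMT}, \ref{lemma:shift-ST} and Proposition \ref{prop:regularite-decay-s=1} all apply simultaneously, and then to string them together.

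For the Hilbert structure: pick $R$ large enough that Lemma \ref{lemma:shift-ST} gives $S_\Lambda T_\Lambda = I$ on $(E^{1,R})'$; then $T_\Lambda u = 0$ forces $u = S_\Lambda T_\Lambda u = 0$, so $\|\cdot\|_{\mathcal{H}_\Lambda^k}$ defined by \eqref{eq:def-norme-HMk} is a genuine norm and $T_\Lambda : \mathcal{H}_\Lambda^k \to \mathcal{H}_{\Lambda,\FBI}^k$ is an isometric bijection. Completeness of $\mathcal{H}_\Lambda^k$ thus reduces to closedness of $\mathcal{H}_{\Lambda,\FBI}^k$ in $L^2_k(\Lambda)$. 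If $v_n = T_\Lambda u_n$ converges to some $v$ in $L^2_k(\Lambda)$, Proposition \ref{prop:regularite-decay-s=1} ensures that $S_\Lambda : L^2_k(\Lambda) \to (E^{1,R_1})'$ is continuous for some $R_1$, so $u_n = S_\Lambda v_n \to u := S_\Lambda v$ in $(E^{1,R_1})'$; the same proposition also gives continuity of $T_\Lambda : (E^{1,R_1})' \to F^{1,r}(\Lambda)$, so $v_n = T_\Lambda u_n \to T_\Lambda u$ pointwise on $\Lambda$. Combined with $L^2_k$-convergence along a subsequence, this identifies $v = T_\Lambda u$ almost everywhere, placing $v$ in $\mathcal{H}_{\Lambda,\FBI}^k$ and $u$ in $\mathcal{H}_\Lambda^k$.

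For the inclusions in \eqref{eq:des_inclusions}, fix $R_0$ and let $\tau_0$ be small. Lemma \ref{lmdist} locates $\Lambda$ inside $(T^*M)_{C\tau_0,1/s}$, so that Lemma \ref{lemma:T_non_stationary} applies and gives
\begin{equation*}
|T_\Lambda u(\alpha)| \leq C \|u\|_{E^{s,R_0}} \exp\bigl(-(\jap{\va{\alpha}}/(Ch))^{1/s}\bigr) \quad \textup{for } \jap{\va{\alpha}} \gg 1,
\end{equation*}
uniformly in $\alpha \in \Lambda$. The bound \eqref{eq:taille_de_H} controls the weight by $e^{2H/h} \leq \exp(C'\tau_0 h^{-1/s}\jap{\va{\alpha}}^{1/s})$, so for $\tau_0 < (CC')^{-1}$ the product $|T_\Lambda u|^2 \jap{\va{\alpha}}^{2k} e^{-2H/h}$ has stretched exponential decay, yielding the continuous embedding $E^{s,R_0} \hookrightarrow \mathcal{H}_\Lambda^k$. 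The reverse inclusion $\mathcal{H}_\Lambda^k \hookrightarrow (E^{s,R_0})'$ follows by duality: for $u \in \mathcal{H}_\Lambda^k$ and $\phi \in E^{s,R_0}$ the identity $u = S_\Lambda T_\Lambda u$ together with $S_\Lambda^* = T_\Lambda$ (which is a direct consequence of the definition $S = T^*$) gives
\begin{equation*}
\langle u, \phi\rangle_M = \langle T_\Lambda u, T_\Lambda \phi\rangle_\Lambda,
\end{equation*}
and Cauchy--Schwarz bounds this by $\|T_\Lambda u\|_{L^2_k(\Lambda)} \|T_\Lambda \phi\|_{L^2_{-k}(\Lambda)}$. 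The rerun of the previous estimate with the sign of the weight flipped (the same stretched exponential decay still dominates provided $\tau_0$ is further diminished) yields $\|T_\Lambda \phi\|_{L^2_{-k}(\Lambda)} \lesssim \|\phi\|_{E^{s,R_0}}$, so $u$ extends as a continuous linear form on $E^{s,R_0}$.

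Finally, for $1 \leq \tilde{s} < s$: since any symbol of order $1/s$ is also of order $1/\tilde{s}$, a $(\tau_0,s)$-adapted Lagrangian is automatically $(\tau_0,\tilde{s}')$-adapted for any $\tilde{s}'$ between $\tilde{s}$ and $s$; applying Proposition \ref{prop:M-vers-Lambda} with its parameters set to $(\tilde{s}_{\textup{prop}}, s_{\textup{prop}}) = (s, \tilde{s})$ produces continuity of $T_\Lambda : \G^{\tilde{s}}(M) \to \GG^{\tilde{s}}(\Lambda)$. Elements of $\GG^{\tilde{s}}(\Lambda)$ decay like $\exp(r\jap{\va{\alpha}}^{1/\tilde{s}})$ for some $r < 0$, which crushes the stretched exponential bound $\exp(C\tau_0 h^{-1/s}\jap{\va{\alpha}}^{1/s})$ on $e^{-H/h}$ since $1/\tilde{s} > 1/s$; hence $T_\Lambda \G^{\tilde{s}}(M) \subseteq L^2_k(\Lambda)$ continuously, giving $\G^{\tilde{s}}(M) \hookrightarrow \mathcal{H}_\Lambda^k$. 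The embedding $\mathcal{H}_\Lambda^k \hookrightarrow \U^{\tilde{s}}(M)$ is again obtained by the duality argument above, this time controlling $\|T_\Lambda \phi\|_{L^2_{-k}(\Lambda)}$ for $\phi \in \G^{\tilde{s}}(M)$ by Proposition \ref{prop:M-vers-Lambda}.

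The main obstacle is essentially bookkeeping: keeping track of how large $R_0$ and how small $\tau_0$ must be so that the various continuity constants in Lemmas \ref{lemma:T_non_stationary} and \ref{lemma:T_BMT} and in \eqref{eq:taille_de_H} line up compatibly for every $k$ considered, and making sure the weights in $L^2_k$ and $L^2_{-k}$ remain balanced against the stretched exponential growth of $e^{\pm H/h}$.
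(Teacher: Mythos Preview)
Your closedness argument and the direct inclusion $E^{s,R_0} \subseteq \mathcal{H}_\Lambda^k$ are correct and match the paper's approach. The duality step for the reverse inclusions, however, contains a genuine error.

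You assert that $S_\Lambda^* = T_\Lambda$ ``as a direct consequence of the definition $S = T^*$,'' and deduce $\langle u, \phi\rangle_M = \langle T_\Lambda u, T_\Lambda \phi\rangle_\Lambda$. This holds only for $\Lambda = T^*M$. The relation $K_S(x,\alpha) = \overline{K_T(\alpha,x)}$ is valid for \emph{real} $\alpha$; its holomorphic extension in $\alpha$ reads $K_S(x,\alpha) = \overline{K_T(\overline{\alpha},x)}$. Hence the Hermitian adjoint of $S_\Lambda$ sends $\phi$ to $\alpha \mapsto T\phi(\overline{\alpha})$, i.e.\ to the restriction of $T\phi$ to the \emph{conjugate} Lagrangian $\overline{\Lambda}$, not to $\Lambda$ itself. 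Your identity is therefore false as stated. A secondary issue: pairing the unweighted $\langle\cdot,\cdot\rangle_\Lambda$ against the weighted norms $\|\cdot\|_{L^2_{\pm k}(\Lambda)}$ via Cauchy--Schwarz produces a dual norm carrying $e^{+2H/h}$, not $e^{-2H/h}$, so it is not literally $\|\cdot\|_{L^2_{-k}(\Lambda)}$; your parenthetical ``sign of the weight flipped'' suggests you see this, but the notation hides it.

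The repair is that the transpose of $S_\Lambda$, while not $T_\Lambda$, still has a kernel of FBI type (it is $K_S(x,\alpha)$ read with the variables swapped), so the non-stationary phase estimate of Lemma~\ref{lemma:T_non_stationary} applies to it verbatim and yields the same stretched exponential decay; equivalently, $\overline{\Lambda}$ sits in the same tube as $\Lambda$ and the bounds transfer. The paper sidesteps the issue by citing Propositions~\ref{prop:M-vers-Lambda} and~\ref{prop:Lambda-vers-M} for the $\G^{\tilde{s}}$ inclusions (the latter already packages the correct duality for $S_\Lambda$) and Lemma~\ref{lemma:T_non_stationary} with~\eqref{eq:taille_de_H} for~\eqref{eq:des_inclusions}, leaving the duality step implicit.
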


\begin{proof}
Since $T_\Lambda$ has a left inverse, the equation \eqref{eq:def-norme-HMk} indeed defines a norm (provided that $R$ is chosen large enough in \eqref{eq:def-HMk} so that Lemma \ref{lemma:shift-ST} applies). Notice then that the spaces $\mathcal{H}_{\Lambda}^k$ and $\mathcal{H}_{\Lambda,\FBI}^k$ are isometric, so that the completeness of $\mathcal{H}_\Lambda^k$ is equivalent to the closedness of $\mathcal{H}_{\Lambda,\FBI}^k$.

In order to prove the closedness of $\mathcal{H}_{\Lambda,\FBI}^k$ in $L^2_k\p{\Lambda}$, we will explain how $R$ is chosen in \eqref{eq:def-HLambdak}. First of all, choose $R$ large enough so that Lemma \ref{lemma:shift-ST} applies. We want also that $R$ is large enough so that, by Proposition \ref{prop:regularite-decay-s=1} and for $\tau_0$ small enough, $S_\Lambda$ is bounded from $L^2_k\p{\Lambda}$ to $(E^{1,R})'$, and $\Pi_\Lambda$ is bounded from $L^2_k\p{\Lambda}$ to some $F^{1,r}$.

With these conditions in mind, let $\p{u_n}_{n \in \N}$ be a sequence in $\mathcal{H}_{\Lambda,\FBI}^k$ that converges to $u$ in the space $L^2_k\p{\Lambda}$. Then $\p{\Pi_\Lambda u_n}_{n \in \N}$ converges to $\Pi_\Lambda u$ in $F^{1,r}$, locally in $L^2$. But, since $\Pi_\Lambda u_n = u_n$ for $n \in \N$ (because our choice of $R$ ensures that Lemma \ref{lemma:shift-ST} applies), we see that $u = \Pi_\Lambda u = T_\Lambda S_\Lambda u \in \mathcal{H}_{\Lambda,\FBI}^0$ (here we use the fact the $S_\Lambda u \in (E^{1,R})'$ thanks to our choice of $R$). It follows that $\mathcal{H}_{\Lambda,\FBI}^k$ is closed in $L^2_k\p{\Lambda}$.

The last statement follows from Propositions \ref{prop:M-vers-Lambda}, \ref{prop:Lambda-vers-M}. To get the more precise estimates \eqref{eq:des_inclusions}, one just need to apply the more precise estimate Lemma \ref{lemma:T_non_stationary}, recalling \eqref{eq:taille_de_H} to control the size of $H$. This gives immediately the first inclusion, the second one is obtained by a duality argument.
\end{proof}

The proof of Theorem \ref{thm:existence-good-transform} relies on the following lemma: 
\begin{lemma}\label{lemma:ST-pseudo}
Let $T$ and $S$ be respectively an analytic and an adjoint analytic FBI transform, whose phases satisfy $\Phi_S(x,\alpha)= - \overline{\Phi_T(\alpha,x)}$. Then the operator $ST$ is a $\G^1$ elliptic pseudor of order $0$ on $M$. Moreover, its principal symbol is
\begin{equation*}
\begin{split}
\alpha \mapsto 2^n (h\pi)^{ \frac{3n}{2}} \frac{a(\alpha,\alpha_x) b(\alpha_x,\alpha)}{\sqrt{\det \mathrm{d}^2_x \Im \Phi_T(\alpha,\alpha_x)}},
\end{split}
\end{equation*}
where $a$ and $b$ are the symbols of $T$ and $S$ respectively.
\end{lemma}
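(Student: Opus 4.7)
The plan is to write $K_{ST}(x,y) = \int_{T^*M} K_S(x,\alpha) K_T(\alpha,y)\,d\alpha$ as an oscillatory integral, prove pseudo-locality by a non-stationary phase argument, and near the diagonal extract the pseudor structure via the Holomorphic Stationary Phase method in the $\alpha_x$ variable, followed by the Kuranishi trick.

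\textbf{Step 1: reduction and pseudo-locality.} I would first substitute for $K_T$ and $K_S$ their explicit admissible-phase expressions from Definition \ref{def:analytic-FBI}; the $\O(e^{-\eta\langle|\alpha|\rangle/h})$ remainders produce globally negligible contributions, because integrating them against the other kernel costs only a tame factor controlled by \eqref{eq:trivial_bound_T} and its analogue for $S$. Away from the diagonal $\{x=y\}$, the composed phase $\Phi(\alpha,x,y) := \Phi_T(\alpha,y) - \overline{\Phi_T(\alpha,x)}$ satisfies, on the real $T^*M$ and near $\alpha_x\sim x\sim y$, $\partial_{\alpha_\xi}\Phi = (x-y) + \O(\langle|\alpha|\rangle(|\alpha_x-x|+|\alpha_x-y|))$; since the coercivity of $\Im\Phi_T$ forces the relevant $\alpha_x$'s to be close to $x$ (and to $y$), this derivative is bounded below by $|x-y|/2$ and the phase is non-stationary in $\alpha_\xi$. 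A contour shift in $\alpha_\xi$, exactly as in the proof of Lemma \ref{lemma:loin_de_la_diagonale}, then yields the $\O_{\G^1}(e^{-1/Ch})$ bound off the diagonal, establishing point (i) of Definition \ref{def:gevrey_pseudor}.

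\textbf{Step 2: stationary phase in $\alpha_x$ near the diagonal.} Near $\{x=y\}$, in local coordinates, I would rescale $\alpha_\xi = \langle\xi_0\rangle\eta$ as in \S\ref{subsec:further_tricks} and apply Proposition \ref{prop:HSP_symbol} to the $\alpha_x$-integration, with $\eta$ and $(x,y)$ as parameters. Using $\Phi_S=-\overline{\Phi_T}$, at $x=y$ the phase reduces on the real $T^*M$ to $2i\,\Im\Phi_T(\alpha,x)$, which by the admissibility of $\Phi_T$ has a unique non-degenerate critical point $\alpha_x=x$ with critical value $0$ and Hessian $2i\,\mathrm{d}^2_x\Im\Phi_T(\alpha,\alpha_x)|_{\alpha_x=x}$. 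By the holomorphic Implicit Function Theorem this critical point persists for $(x,y)$ close to the diagonal and every $\alpha_\xi$, giving a critical value $\widetilde\Phi(x,y,\alpha_\xi)$; the fundamental Lemma \ref{lemma:fundamental-lemma} guarantees that $\widetilde\Phi$ is an admissible phase in the sense of Definition \ref{def:nonstandardphase}, while direct computation gives $\widetilde\Phi(x,x,\alpha_\xi)=0$ and $\mathrm{d}_y\widetilde\Phi(x,x,\alpha_\xi)=-\alpha_\xi$. Verifying the hypotheses of Proposition \ref{prop:HSP_symbol} may require a small deformation of the $\alpha_x$-contour into the complex, but this is routine given the coercivity of $\Im\Phi_T$.

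\textbf{Step 3: principal symbol and conclusion.} The stationary phase output is
\[
K_{ST}(x,y) = \frac{1}{(2\pi h)^n}\int_{\R^n} e^{i\widetilde\Phi(x,y,\alpha_\xi)/h}\, c(x,y,\alpha_\xi)\,d\alpha_\xi + \O_{\G^1}\!\p{e^{-1/Ch}},
\]
with $c$ an elliptic analytic symbol of order $0$. Using Remark \ref{remark:coefficients_symboles}, the leading coefficient of $c$ is obtained from the stationary phase formula $(2\pi h/\langle\alpha_\xi\rangle)^{n/2}\,ab/\sqrt{\det(-i\cdot 2i\,\mathrm{d}^2_x\Im\Phi_T/\langle\alpha_\xi\rangle)}$ multiplied by $(2\pi h)^n$; the determinant simplifies to $2^n\langle\alpha_\xi\rangle^{-n}\det \mathrm{d}^2_x\Im\Phi_T$, whence collecting factors yields $2^n(h\pi)^{3n/2}\,a(\alpha,\alpha_x)b(\alpha_x,\alpha)/\sqrt{\det \mathrm{d}^2_x\Im\Phi_T(\alpha,\alpha_x)}$ at $\alpha=(x,\alpha_\xi)$. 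A final application of the Kuranishi trick (Lemma \ref{lmkurtrick}) converts $\widetilde\Phi$ into the standard phase $\langle x-y,\xi\rangle$; the diagonal value of the principal symbol is invariant under this transformation, giving the stated formula. Ellipticity of $ST$ follows from ellipticity of $a,b$ in $h^{-3n/4}S^{1,n/4}$ together with positive-definiteness of $\mathrm{d}^2_x\Im\Phi_T$.

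The main obstacle will be the careful bookkeeping in Step 2: one must rescale in $\alpha_\xi$ so that Proposition \ref{prop:HSP_symbol} applies uniformly, track powers of $h$ and $\langle\alpha_\xi\rangle$ to match the stated normalization, and verify positivity of $\Im\widetilde\Phi$ after the contour deformation; once this setup is in place, both pseudo-locality and the symbol computation follow by standard oscillatory integral manipulations of this chapter.
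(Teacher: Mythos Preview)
Your approach is essentially the paper's: stationary phase in the $\alpha_x$ variable to produce a non-standard phase $\Phi_{ST}(x,\xi,y)$, verification via Lemma~\ref{lemma:fundamental-lemma} that this is a phase in the sense of Definition~\ref{def:nonstandardphase}, then the Kuranishi trick. Steps~2 and~3 match the paper closely, including the Hessian computation and the symbol normalization.

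One correction to Step~1: your composed phase $\Phi(\alpha,x,y)=\Phi_T(\alpha,y)-\overline{\Phi_T(\alpha,x)}$ is only available when $\alpha_x$ is close to \emph{both} $x$ and $y$, which is impossible away from the diagonal. The paper's argument here is simpler and avoids any non-stationary phase: when $d(x,y)>\eta$, for every $\alpha$ at least one of $d(\alpha_x,x),\,d(\alpha_x,y)$ exceeds $\eta/2$, so by Definition~\ref{def:analytic-FBI} the product $K_S(x,\alpha)K_T(\alpha,y)$ is $\O(e^{-\langle|\alpha|\rangle/Ch})$ pointwise, and integrating gives pseudo-locality directly. Your contour-shift idea in $\alpha_\xi$ is not needed (and not well-posed as stated, since one factor has no phase form).
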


\begin{proof}[Proof of Theorem \ref{thm:existence-good-transform}]
We start by taking an analytic transform $T^0$ with symbol $a(\alpha,x) = 2^{- \frac{n}{2}} \p{h \pi}^{- \frac{3n}{4}} (\det \mathrm{d}^2_x\Im \Phi_T(\alpha,\alpha_x))^{1/4}$, and applying Lemma \ref{lemma:ST-pseudo}, we deduce that $P= (T^0)^\ast T^0$ is an elliptic $\G^1$ pseudor with principal symbol $1$. Additionally, it is self-adjoint on $L^2$. Since the principal symbol of $P$ is $1$, it follows from G{\aa}rding's inequality that there is $C > 0$ such that, for $h$ small enough, the spectrum of $P$ is contained in $[C^{-1},C]$. In particular, the inverse square root of $P$ is well-defined (by the functional calculus for self-adjoint operators) and given by the formula
\begin{equation}\label{eq:racine_par_Cauchy}
\begin{split}
P^{- \frac{1}{2}} = \frac{1}{2 i \pi} \int_\gamma (w-P)^{-1} \frac{\mathrm{d}w}{\sqrt{w}},
\end{split}
\end{equation}
where $\gamma$ is a curve in $\set{z \in \C : \Re z > 0}$ that turns around the segment $[C^{-1},C]$. Since the principal symbol of $P$ is $1$, we see that all the $w-P$ for $w \in \gamma$ are semi-classically elliptic and hence it follows from Theorem \ref{thm:parametrix} that $(w-P)^{-1}$ is a semi-classical $\G^1$ pseudor. Moreover, this is true with uniform estimates when $w \in \gamma$, and thus it follows from \eqref{eq:racine_par_Cauchy} that $P^{- \frac{1}{2}}$ is a semi-classical $\G^1$ pseudor (we apply Definition \ref{def:gevrey_pseudor} of a $\G^1$ pseudor and Fubini Theorem as in the proof of Theorem \ref{theorem:functional_calculus}). It follows from Proposition \ref{prop:pseudor_lagrangien_alt} (see also Remark \ref{remark:ce_quest_PS}) that $T = T_0 P^{- \frac{1}{2}}$ is an analytic FBI transform, and moreover it satisfies that
\begin{equation*}
\begin{split}
T^* T = P^{- \frac{1}{2}} P P^{- \frac{1}{2}} = I.
\end{split}
\end{equation*}
\end{proof}

\subsubsection{Product of a transform and an adjoint transform}

This subsection is devoted to the proof of Lemma \ref{lemma:ST-pseudo}.
 
In order to manipulate converging integrals, we start by introducing a regularization procedure. From \eqref{eq:notons-le} with $\Lambda = T^* M$, we notice that if $u \in \G^1\p{M}$ then (by dominated convergence and Fubini's Theorem)
\begin{equation*}
\begin{split}
ST u(x) & = \lim_{\epsilon \to 0}  \int_{T^\ast M} e^{- \epsilon \jap{\alpha}^2 } K_S(x,\alpha) T u(\alpha) \mathrm{d}\alpha \\
       & = \lim_{\epsilon \to 0} \int_{M} K_\epsilon(x,y) u(y) \mathrm{d}y,
\end{split}
\end{equation*}
where the kernel $K_\epsilon$ is defined by
\begin{equation}\label{eq:def-Kepsilon}
\begin{split}
K_\epsilon (x,y) = \int_{T^* M} e^{- \epsilon \jap{\alpha}^2} K_S(x,\alpha) K_T(\alpha,y) \mathrm{d}\alpha.
\end{split}
\end{equation}

We start by showing that, outside of a small neighbourhood of the diagonal, the kernel of $ST$ is a rapidly decaying (when $h$ tends to $0$) analytic function. To do so, choose some small $\eta > 0$ (whose precise value will be fixed later). Then, there is a constant $C > 0$ such that, if $x,y \in M$ are such that $d(x,y) > \eta$, then
\begin{equation}\label{eq:produit-noyau-loin-de-la-diagonale}
\begin{split}
\va{K_S(x,\alpha) K_T(\alpha,y)} \leq C \exp\p{ -\frac{\jap{\va{\alpha}}}{Ch}}.
\end{split}
\end{equation}
This follows from the definition of $K_T$ and $K_S$ since either $x$ or $y$ is at distance at least $\eta/2$ from $\alpha_x$. Estimate \eqref{eq:produit-noyau-loin-de-la-diagonale} implies that for such $x$ and $y$ the kernel $K_\epsilon(x,y)$ converges when $\epsilon$ tends to $0$ to a kernel $K(x,y)$ which is a $\O(\exp(- 1/Ch))$. Since Estimate \eqref{eq:produit-noyau-loin-de-la-diagonale} remains true when $x$ and $y$ are in a small tube $(M)_{\tilde{\epsilon}}$, this is in fact an estimate in $\G^1$.

Hence, we only need to understand the kernel of $ST$ in an arbitrarily small neighbourhood of the diagonal in $M \times M$. By a standard compactness argument, we only need to understand the kernel of $ST$ for $x$ and $y$ in a fixed ball $D$ of radius $10 \eta$. To do so, let $D'$ and $D''$ be respectively a ball of radius $100 \eta$ and $1000 \eta$ with the same center as $D$. When $x$ and $y$ are in $D$, we can split the integral \eqref{eq:def-Kepsilon} into the integral over $T^* D'$ and the integral over $T^* \p{M \setminus D'}$. The integral over $T^* \p{M \setminus D'}$ is dealt with as in the case $d(x,y) > \eta$. By taking $\eta$ small enough, we may assume that when $x,y$ and $\alpha_x$ belongs to $D'$, the kernels $K_T$ and $K_S$ both satisfy an estimate as equation \eqref{eq:def-KT}, denoting by $a$ the symbol of $T$, and by $b$ that of $S$. The $\O(\exp(-C^{-1} \langle\alpha\rangle/h))$ remainders are tackled as above, so that we focus now on the kernel
\begin{equation*}
\begin{split}
\widetilde{K}_\epsilon(x,y) = \int_{T^* D'} e^{- \epsilon \jap{\alpha}^2} e^{i \frac{\Phi_S(x,\alpha) + \Phi_T(\alpha,y)}{h}} a(\alpha,y) b(x,\alpha) \mathrm{d}\alpha,
\end{split}
\end{equation*}
for $x,y \in D$. We want to recognize, when $\epsilon$ tends to $0$, the kernel of an analytic pseudor (with non-standard phase) and to compute its symbol. By taking $\eta$ small enough, we may assume that we work in local coordinates $\alpha = (z,\xi)$. We will assume (as we may) that these coordinates satisfy $\det g_z = 1$. We can also replace the regularization factor $\exp(- \epsilon\jap{\alpha}^2)$ by the simpler $\exp( - \jap{\xi}^2)$: testing against an analytic function, we see that the limiting distribution is independent on the regularization. Hence, we have 
\begin{equation}\label{eq:dans_un_coin}
\begin{split}
& (2 \pi h)^{-n} \widetilde{K}_\epsilon(x,y) \\
    & \qquad = \int_{\R^n} e^{- \epsilon \jap{\xi}^2} \p{\p{ \frac{\jap{\xi}}{ 2 \pi h}}^{\frac{n}{2}} \int_{D'} e^{i \frac{\jap{\xi}}{h}\Psi_{x,\xi,y}(z)} \frac{ (2 \pi h)^{\frac{3n}{2}} a(z,\xi,y) b(x,z,\xi)}{\jap{\xi}^{\frac{n}{2}}} \mathrm{d}z} \mathrm{d}\xi,
\end{split}
\end{equation}
where the phase $\Psi_{x,\xi,y}$ is defined by
\begin{equation*}
\begin{split}
\Psi_{x,\xi,y}(z) 	&= \jap{\xi}^{-1}\p{\Phi_S(x,(z,\xi)) + \Phi_T((z,\xi),y)} \\
					&= \jap{\xi}^{-1}\p{\Phi_T((z,\xi),y)  - \overline{\Phi_T((\bar{z},\bar{\xi}),\bar{x})}}.
\end{split}
\end{equation*}
To put the kernel into pseudo-differential form, it suffices formally to eliminate the variable $z$, and this is done by Holomorphic Stationary Phase, Proposition \ref{prop:HSP_symbol} (see also Remark \ref{remark:symboles_generaux}). Let us check that the hypotheses of Proposition \ref{prop:HSP_symbol} are satisfied. Hypothesis (i) follows from Condition (i) in Definition \ref{def:nonstandardphase} of an admissible phase. Hypothesis (ii) follows from Condition (iv) in the Definition \ref{def:nonstandardphase} and the fact that $x$ and $y$ remains at uniform distance from the boundary of $D'$. Finally, from (iii) in Definition \ref{def:nonstandardphase}, we see that $\Psi_{x,\xi,x}$ has a critical point at $z = x$, which is uniformly non-degenerate thanks to (iv) in Definition 1.7 (the imaginary part of the Hessian is uniformly definite positive). Moreover, the associated critical value is $0$. Thus, provided that $\eta$ is small enough, we find that the inner integral in \eqref{eq:dans_un_coin} is given for $(x,\xi,y) \in \p{T^* D}_{\epsilon_1} \times \p{D}_{\epsilon_1}$ (for some $\epsilon_1 > 0$) and arbitrarily large $C_0$ by
\begin{equation}\label{eq:expansion_inner_integral}
\begin{split}
e^{i \frac{\Phi_{ST}(x,\xi,y)}{h}} \p{\sum_{0  \leq k \leq \jap{\va{\xi}}/(C_0 h)} h^k c_k(x,\xi,y) + \mathcal{O}\p{\exp\p{- \frac{\jap{\va{\xi}}}{C h}}}}.
\end{split}
\end{equation}
Here, $\sum_{k \geq 0} h^k c_k$ is a formal analytic symbol of order $1$ and $\Phi_{ST}(x,\xi,y)$ denotes the critical value of $z \mapsto \jap{\xi} \Psi_{x,\xi,y}(z)$.

Let us check that $\Phi_{ST}$ is a phase in the sense of Definition \ref{def:nonstandardphase}. The holomorphy of $\Phi_{ST}$ follows from the Implicit Function Theorem, and the symbolic estimates from the fact that $\Psi_{x,\xi,y}$ is uniformly bounded. Point (ii) in Definition \ref{def:nonstandardphase} is satisfied because when $x=y$ the critical point of $\Psi_{x,\xi,x}$ is $x$ so that $\Phi_{ST}(x,\xi,x) = \jap{\xi} \Psi_{x,\xi,x}(x) = 0$. From the Implicit Function Theorem, we also see that
\begin{equation*}
\begin{split}
\mathrm{d}_y \Phi_{ST}(x,\xi,x) = \mathrm{d}_y \Phi_T(x,\xi,x) = - \xi,
\end{split}
\end{equation*}
since $\Phi_T$ is itself a phase. This proves that $\Phi_{ST}$ satisfies point (iii) of Definition \ref{def:nonstandardphase}. It remains to check (i), i.e. that for $\alpha,x$ real, $\Im \Phi_{ST}\geq 0$. This follows from the ``Fundamental'' Lemma \ref{lemma:fundamental-lemma}

Now, let $c$ be a realization of the formal symbol $\sum_{k \geq 0} h^k c_k$. Since the imaginary part of $\Phi_{ST}$ is non-negative for real $(x,\xi,y)$, we see that for $(x,\xi,y) \in \p{T^* D}_{\epsilon_1} \times \p{D_{\epsilon_1}}$, the factor $e^{i \Phi_{ST}(x,\xi,y)/h}$ in \eqref{eq:expansion_inner_integral} is an $\O(\exp(C \epsilon_1 \jap{\va{\xi}}/h))$. Consequently, by taking $\epsilon_1$ small enough, we may move the error term in \eqref{eq:expansion_inner_integral} out of the parenthesis. Thus, the inner integral in \eqref{eq:dans_un_coin} is given by
\begin{equation*}
\begin{split}
e^{i\frac{\Phi_{ST}(x,\xi,y)}{h}} c(x,\xi,y)
\end{split}
\end{equation*}
up to an $\mathcal{O}\p{\exp\p{- C^{-1} \jap{\va{\xi}}/ h}}$ error. Plugging this expression into \eqref{eq:dans_un_coin} and using Lemma \ref{lemma:norm-exponentials}, we see that $\widetilde{K}_{\epsilon}(x,y)$ converges when $\epsilon$ tends to $0$ to the kernel $K_{\Phi_{TS},c}$ defined by \eqref{eq:noyau_non_standard}, up to an $\O_{\G^1}(\exp(- 1/(Ch)))$. By Lemma \ref{lmkurtrick}, we know that $K_{\Phi_{TS},c}$ is in fact the kernel of a $\G^1$ pseudor. Thus, $ST$ is itself a $\G^1$ pseudor. According to Remark \ref{remark:coefficients_symboles} and the proof of Lemma \ref{lmkurtrick}, the principal symbol of $ST$ is given in our coordinates by
\begin{equation*}
\begin{split}
c_0(x,\xi,x) =  2^n (h\pi)^{ \frac{3n}{2}} \frac{a(x,\xi,x) b(x,\xi,x)}{\sqrt{\det \mathrm{d}^2_x \Im \Phi_T(x,\xi,x)}},
\end{split}
\end{equation*}
for $x,\xi$ real.
\qed

\section{Lifting Pseudo-differential operators}\label{sec:FBI-et-pseudo}

In this section, the purpose of defining the $\mathcal{H}_\Lambda^k$'s will be explained. We fix an analytic FBI transform $T$ given by Theorem \ref{thm:existence-good-transform} and denote by $S = T^*$ its adjoint. For an adapted Lagrangian $\Lambda$, we will study the operator $\Pi_\Lambda = T_\Lambda S_\Lambda$ defined before. We will find that it acts naturally on the space $L^2_{0}(\Lambda)$. More generally, given $P$ a continuous operator on some suitable spaces of ultradistributions, it make sense to consider the operator $T_\Lambda P S_\Lambda$ acting on $L^2_0\p{\Lambda}$, or equivalently the action of $P$ on $\mathcal{H}_\Lambda^0$. In this section, we will consider the case that $P$ is a $\mathcal{G}^s$ pseudor. In \S \ref{sec:TPS}, we will study asymptotics of the kernel of $T_\Lambda P S_\Lambda$. In \S \ref{sec:action-pseudo-FBI}, we will deduce some basic boundedness results, and in \S \ref{sec:wfs}, we will obtain results on the relation between Lagrangian spaces and Gevrey wave front sets. 

\subsection{The kernel of operators on the FBI side}\label{sec:TPS}

To study the continuity properties of pseudors on the spaces $\mathcal{H}_\Lambda^k$ associated to I-Lagrangians, we will need to understand precisely the asymptotics of the kernel of $T_\Lambda P S_\Lambda$. In this section we obtain such technical estimates. 

As a precaution, we start by observing that, according to the results of \S \ref{sec:continuity-T-S}, the operator $T_\Lambda P S_\Lambda$ maps $C^\infty_c(\Lambda)$ to elements of $\GG^s(\Lambda)\subset \mathcal{D}'(\Lambda)$, so it has a well-defined Schwartz kernel $K_{TPS}$ with respect to $\mathrm{d}\alpha$, which we can study. The analyticity of the FBI transform implies that $K_{TPS}$, \emph{a priori} defined only on $\Lambda\times \Lambda$, is actually the restriction of a holomorphic function on $(T^\ast M)_{\epsilon_0}\times (T^\ast M)_{\epsilon_0}$ that does not depend on $\Lambda$, as long as $\Lambda \subset (T^\ast M)_{\epsilon_0}$. We still denote it by $K_{TPS}$. 

The kernel $K_{TPS}(\alpha,\beta)$ behaves as an oscillatory integral, with large parameter
\[
\lambda := \frac{ \langle|\alpha|\rangle  + \langle|\beta|\rangle }{ h }. 
\]
For convenience, we recall that $\gamma\in (T^\ast M)_{\tau_0, 1/s}$ means that $\gamma \in T^* \widetilde{M}$ and
\[
|\Im \gamma| \leq \tau_0 (h/\langle|\gamma|\rangle)^{1-1/s}.
\]
When estimating $K_{TPS}$, we have to distinguish between two regimes: far and close from the diagonal. For the first regime, we obtain
\begin{lemma}\label{lemma:TPS-far-diagonal}
Let $s \geq 1$. Let $P$ be a $\G^s$ semi-classical pseudo-differential operator of order $m$ on $M$. Let $\eta > 0$ be small enough. Then there are constants $C_0,\tau_1 > 0$ such that for $\alpha,\beta$ in $(T^\ast M)_{\tau_1,1/s}$, and $d_{KN}(\alpha,\beta) > \eta/2$, the following holds:
\begin{equation*}
K_{TPS}(\alpha,\beta) = \O_{\mathcal{C}^\infty} \p{\exp\p{- \p{\frac{\jap{\va{\alpha}} + \jap{\va{\beta}}}{C_0 h}}^{\frac{1}{s}}}}
\end{equation*}
\end{lemma}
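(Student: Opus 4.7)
The starting point is
\[
K_{TPS}(\alpha,\beta) = \int_{M\times M} K_T(\alpha,x)\,K_P(x,y)\,K_S(y,\beta)\, dx\, dy.
\]
Since each of the three factors has a holomorphic extension in its outer variable, the composite kernel $K_{TPS}$ does too, and it is therefore enough to prove a pointwise bound on a slightly enlarged tube $(T^*M)_{\tau_1',1/s}$; the $\mathcal{C}^\infty$ estimate stated in the lemma will follow by Cauchy's formula, or equivalently by the Bochner--Martinelli trick (Lemma \ref{lemma:Bochner-Martinelli-trick}). Fix $\eta_0 \ll \eta$ and split the $(x,y)$-integral with a smooth partition according to whether $d(x,\alpha_x) \lessgtr \eta_0$ and $d(y,\beta_x) \lessgtr \eta_0$.

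On the outer piece $\{d(x,\alpha_x)>\eta_0\}$, the bound (i) of Definition \ref{def:analytic-FBI} makes $K_T(\alpha,x)$ an $O(\exp(-\langle|\alpha|\rangle/(Ch)))$, and paired with the Gevrey bound on $K_S(\cdot,\beta)$ from Lemma \ref{lemma:S_BMT} and the $\G^s$-continuity of $P$ (Proposition \ref{prop:mapping_pseudor}), this contributes $O(\exp(-\langle|\alpha|\rangle/(2Ch)))$, which is much stronger than needed. The piece $\{d(y,\beta_x)>\eta_0\}$ is symmetric. On the inner region $\{d(x,\alpha_x)<\eta_0\}\cap\{d(y,\beta_x)<\eta_0\}$, we use Definition \ref{def:analytic-FBI} to replace $K_T, K_S$ by $e^{i\Phi_T/h}a$, $e^{i\Phi_S/h}b$, and Lemma \ref{lemma:calcul_pseudo} to replace $K_P$ by its left-quantised oscillating form $(2\pi h)^{-n}\!\int e^{i\langle x-y,\xi\rangle/h}\,p(x,\xi)\,d\xi$. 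The remainders are either genuine FBI errors, already handled above, or the $\G^s$-small remainder of $K_P$ of size $\exp(-(Ch)^{-1/s})$, which when paired with the Gevrey norm bounds \eqref{eq:bound_gevrey_noyau_T}, \eqref{eq:bound_gevrey_noyau_S} on $K_T, K_S$ yields the required decay for $\alpha,\beta\in (T^*M)_{\tau_1,1/s}$.

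The heart of the proof is then to analyse the triple oscillatory integral
\[
I(\alpha,\beta) = \frac{1}{(2\pi h)^n}\iiint e^{i\Psi/h}\, a(\alpha,x)\,p(x,\xi)\,b(y,\beta)\,\chi(x,y)\,dx\,dy\,d\xi,\qquad \Psi := \Phi_T(\alpha,x)+\langle x-y,\xi\rangle + \Phi_S(y,\beta),
\]
with $\chi$ a smooth cutoff for the inner region. The gradients $\partial_x\Psi = \partial_x\Phi_T(\alpha,x)+\xi$, $\partial_y\Psi = -\xi+\partial_y\Phi_S(y,\beta)$, $\partial_\xi\Psi = x-y$ force any real critical point to satisfy $x=y$ and $\xi\approx \alpha_\xi \approx \beta_\xi$ with $\alpha_x\approx\beta_x$, i.e.\ $\alpha\approx\beta$. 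Setting $\mu := \tfrac12(\langle|\alpha|\rangle+\langle|\beta|\rangle)$ and rescaling $\xi=\mu\tilde\xi$, the integral has effective large parameter $\mu/h$ and rescaled phase $\tilde\Psi$ which is a uniformly bounded symbol in $(x,y,\tilde\xi)$. A case split---using $\partial_{\tilde\xi}\tilde\Psi = x-y$ when $d(\alpha_x,\beta_x)>\eta/4$, and $\partial_x\tilde\Psi$ or $\partial_y\tilde\Psi$ when $|\alpha_\xi-\beta_\xi|/\mu$ is bounded below---yields $|\nabla\tilde\Psi|\geq c(\eta)$ on the support of the cutoff, uniformly in $\tilde\xi$ in any bounded set. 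After truncating at $|\tilde\xi|\leq A$ (the tail being controlled by repeated integrations by parts in $x$ using $|\partial_x\Psi|\geq |\xi|/2$ once $|\xi|\gg\mu$, against the polynomial growth of $p$), an application of the rescaled non-stationary phase Proposition \ref{prop:non_stationary_analytic_symbols} with large parameter $\mu/h$ and Gevrey amplitudes produces the desired bound $\exp(-(\mu/(C_0h))^{1/s})$.

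The main obstacle is the simultaneous treatment of the unbounded $\xi$-direction and the $\G^s$-only amplitudes $a,b,p$: for $s>1$ one only has almost-analytic extensions in a complex neighbourhood of size $h^{1-1/s}\langle|\cdot|\rangle^{1/s-1}$, so the contour shifts behind the non-stationary phase bound must be executed at exactly that scale, which is precisely the reason we assume $\alpha,\beta\in(T^*M)_{\tau_1,1/s}$. A secondary obstacle is that the non-stationarity direction is not uniform in $\tilde\xi$: near $\tilde\xi\approx \alpha_\xi/\mu$ it comes from $\partial_y\tilde\Psi$, near $\tilde\xi\approx\beta_\xi/\mu$ from $\partial_x\tilde\Psi$, and elsewhere from either of these or from $\partial_{\tilde\xi}\tilde\Psi$; handling this cleanly requires a smooth partition of unity in $\tilde\xi$ adapted to $\alpha_\xi/\mu$ and $\beta_\xi/\mu$ before applying Proposition \ref{prop:non_stationary_analytic_symbols}.
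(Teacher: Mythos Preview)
Your overall architecture is the same as the paper's, but the treatment of the outer pieces has a genuine gap. On $\{d(x,\alpha_x)>\eta_0\}$ you invoke the pointwise smallness of $K_T(\alpha,x)$ together with Lemma~\ref{lemma:S_BMT} and Proposition~\ref{prop:mapping_pseudor}, and claim this yields $O(\exp(-\langle|\alpha|\rangle/(2Ch)))$, ``much stronger than needed''. Two problems: first, Lemma~\ref{lemma:S_BMT} only gives $\|K_S(\cdot,\beta)\|_{E^{s,R}}\le C\exp(\epsilon(\langle|\beta|\rangle/h)^{1/s})$, which is \emph{growth} in $\beta$, so pairing with the $\alpha$-decay of $K_T$ produces at best $\exp(-\langle|\alpha|\rangle/(Ch)+\epsilon(\langle|\beta|\rangle/h)^{1/s})$, not the bound you state. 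Second, even the bound you claim is not sufficient: the target is $\exp(-((\langle|\alpha|\rangle+\langle|\beta|\rangle)/(C_0h))^{1/s})$, and nothing forces $\langle|\alpha|\rangle$ and $\langle|\beta|\rangle$ to be comparable under $d_{KN}(\alpha,\beta)>\eta/2$ (take $\alpha_\xi$ bounded and $\beta_\xi$ huge with $d(\alpha_x,\beta_x)=\eta$). Your mixed piece $(\text{far},\text{near})$ genuinely needs decay in \emph{both} variables, and your argument only supplies one.

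The paper resolves this (for $d(\alpha_x,\beta_x)>\eta/2$) with an asymmetric three-term split
\[
K_{TPS}=T\bigl[P\bigl((1-\chi_\beta)K_S(\cdot,\beta)\bigr)\bigr](\alpha)+\text{(symmetric)}+\int K_T\,\chi_\alpha\,K_P\,\chi_\beta\,K_S.
\]
In the first term $(1-\chi_\beta)K_S(\cdot,\beta)$ is $O_{\G^s}(\exp(-\langle|\beta|\rangle/(Ch)))$, then $P$ preserves this (Lemmas~\ref{lemma:loin_de_la_diagonale}, \ref{lemma:basic-Gs-boundedness-pseudo}), and finally applying $T$ and Lemma~\ref{lemma:T_non_stationary} adds the decay in $\alpha$; the product of the two stretched exponentials dominates the target since $a^{1/s}+b^{1/s}\ge(a+b)^{1/s}$. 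The third term has $\chi_\alpha,\chi_\beta$ with disjoint supports, so $K_P$ is $\G^s$-small there, and one finishes by non-stationary phase in $x$ and $y$ separately. Your four-piece split can be repaired along the same lines (handle $(\text{near},\text{far})$ as the paper's first term, and $(\text{far},\text{near})$ by the adjoint), but as written it is incomplete.

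For the inner region, your approach works but is more elaborate than necessary. The paper avoids any partition in $\tilde\xi$ and any non-stationarity in the $\xi$-direction: once $\alpha_x$ is close to $\beta_x$, the condition $d_{KN}(\alpha,\beta)>\eta/2$ forces $|\alpha_\xi-\beta_\xi|\ge A\eta\max(\langle|\alpha|\rangle,\langle|\beta|\rangle)$, and then $|\nabla_x\Psi|+|\nabla_y\Psi|\ge|\alpha_\xi-\xi|+|\beta_\xi-\xi|-C\eta\max\ge|\alpha_\xi-\beta_\xi|-C\eta\max$ is bounded below \emph{uniformly} in $\xi$, so Proposition~\ref{prop:non-stationary} applies in $(x,y)$ at fixed $\xi$ followed by integration over $\xi$. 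This sidesteps both your tail truncation at $|\tilde\xi|\le A$ and the $\tilde\xi$-partition.
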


The proof of this Lemma relies mostly on non-stationary phase estimates. For the second regime, we have to use a stationary phase method. For this reason, the proof is more subtle. It is the most technical part of this article. 
\begin{lemma}\label{lemma:TPS-close-diagonal}
Let $s \geq 1$. Let $P$ be a $\G^s$ semi-classical pseudo-differential operator of order $m$ on $M$. Let $\eta > 0$ be small enough. Let $\Lambda$ be a $(\tau_0,s)$-adapted Lagrangian with $\tau_0$ small enough. Then there exists a constant $C_0 > 0$ such that for $\alpha,\beta$ in $\Lambda$ with $d_{KN}(\alpha,\beta)\leq \eta$, with $\lambda = \langle|\alpha|\rangle/h$,
\begin{equation}\label{eq:noyau_reduit_TPS}
\begin{split}
K_{TPS}(\alpha,\beta) 	 = e^{\frac{i}{h}\Phi_{TS}(\alpha,\beta)} e(\alpha,\beta) + \O_{\mathcal{C}^\infty}\left( \exp\left( - \frac{ \lambda^{ \frac{1}{2s-1} }  }{C_0  } - \frac{\lambda^{\frac{1}{s}} d_{KN}(\alpha,\beta)}{C_0  } \right) \right),
\end{split}
\end{equation}
where $\Phi_{TS}(\alpha,\beta)$ is the critical value of $y \mapsto \Phi_T(\alpha,y) + \Phi_{S}(\beta,y)$ and $e$ is a symbol of order $m$ in the Kohn--Nirenberg class $h^{-n} S_{KN}^m\p{\Lambda \times \Lambda}$, given at first order on the diagonal by
\begin{equation}\label{eq:premier_ordre_e}
e(\alpha,\alpha) = \frac{1}{(2 \pi h)^n} p(\alpha) \textup{ mod } h^{-n +1} S_{KN}^{m-1}\p{\Lambda},
\end{equation}
where $p$ is an almost analytic extension of a representative of the principal symbol of $P$ (see Remark \ref{remark:aae_for_symbol}).
\end{lemma}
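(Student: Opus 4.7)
My plan is to realize $K_{TPS}(\alpha,\beta)$ as a triple oscillatory integral in $(x,y,\xi)$ and to apply a stationary phase argument adapted to the mixed analytic/Gevrey regularity of the integrand. First, I would work in a local chart around $\alpha_x$ (using the assumption $d_{KN}(\alpha,\beta)\leq \eta$ to ensure $\beta_x$ is close too), use Definition~\ref{def:analytic-FBI} to replace $K_T,K_S$ by their leading analytic expressions plus exponentially small errors, use Definition~\ref{def:gevrey_pseudor} to replace $K_P$ by its local pseudo-differential form (the integration in $(x,y)$ can be localized to a small neighbourhood of $\alpha_x$ at the cost of a term bounded by $\exp(-(\lambda/C)^{1/s})$, as in Lemmas~\ref{lemma:loin_de_la_diagonale} and \ref{lemma:TPS-far-diagonal}), and write
\begin{equation*}
K_{TPS}(\alpha,\beta) = (2\pi h)^{-n}\iiint e^{i\Psi(\alpha,\beta;x,y,\xi)/h}\, a(\alpha,x)\, p(x,\xi)\, b(\beta,y)\, dx\,dy\,d\xi + \mathcal O_{\mathcal C^\infty}\!\left(e^{-\lambda^{1/s}/C}\right),
\end{equation*}
with $\Psi = \Phi_T(\alpha,x) + \langle x-y,\xi\rangle + \Phi_S(\beta,y)$. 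The critical equations $\partial_\xi\Psi=\partial_x\Psi=\partial_y\Psi=0$ read $x=y$, $\xi = -d_x\Phi_T(\alpha,x) = d_y\Phi_S(\beta,y)$, so the critical point is $x=y=y_c(\alpha,\beta)$, where $y_c$ is the critical point of $y\mapsto \Phi_T(\alpha,y)+\Phi_S(\beta,y)$, and the critical value is precisely $\Phi_{TS}(\alpha,\beta)$. Non-degeneracy at the critical point reduces (by Fourier inversion in $\xi$) to the invertibility of $d_y^2\Phi_T(\alpha,y_c)+d_y^2\Phi_S(\beta,y_c)$, whose imaginary part is positive definite by property (iv) of Definition~\ref{def:nonstandardphase}.

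Next, since $p$ is only $\G^s$, I would pass to its almost-analytic extension $\tilde p$ (Remark~\ref{remark:aae_for_symbol}) and deform the contour of integration. For the $(x,y)$ variables the amplitudes $a,b$ and the phases $\Phi_T,\Phi_S$ are honestly analytic, so the Holomorphic Stationary Phase (Proposition~\ref{prop:HSP_symbol}) can be applied, uniformly in $\xi$, producing an analytic symbol on $\Lambda\times\Lambda$ times $e^{i\Phi_{TS}/h}$, modulo an error $e^{-\langle |\alpha|\rangle/Ch}$. The remaining $\xi$-integration is where the Gevrey regularity enters: I would apply a parameter-dependent version of the Gevrey Stationary Phase (Lemma~\ref{lemma:stationary_gevrey_real_symbol} together with the analysis in \S\ref{sec:stationary}), which yields a symbol $e\in h^{-n}S^m_{KN}(\Lambda\times\Lambda)$ plus a remainder of size $\exp(-\lambda^{1/(2s-1)}/C)$, the $2s{-}1$ exponent being the familiar loss of regularity of Gevrey stationary phase. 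This provides the first of the two exponents in \eqref{eq:noyau_reduit_TPS}.

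To obtain the off-diagonal factor $\exp(-\lambda^{1/s}d_{KN}(\alpha,\beta)/C)$, I would deform the $\xi$-contour to $\Gamma_\xi = \{\xi + i\tau_1 \langle\xi\rangle^{1/s}h^{1-1/s} v(\alpha,\beta)\}$, where $v$ is a unit vector chosen so that $\langle v,\Re(y_c(\alpha,\beta)-\alpha_x)\rangle \gtrsim d_{KN}(\alpha,\beta)$; along $\Gamma_\xi$ the phase gains an imaginary part $\gtrsim \lambda^{1/s}d_{KN}(\alpha,\beta)$, while the $\bar\partial\tilde p$ boundary contribution is controlled by Remark~\ref{remark:aae_for_symbol}, namely $|\bar\partial\tilde p|\lesssim \langle\xi\rangle^m\exp(-|\Im\xi|/\langle\xi\rangle)^{-1/(s-1)})$, which is absorbed by choosing $|\Im\xi|$ of order $h^{1-1/s}\langle\xi\rangle^{1/s}$. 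The Lagrangian $\Lambda$ being $(\tau_0,s)$-adapted keeps the size of the coordinates $\Im\alpha,\Im\beta$ consistent with these deformations.

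Finally, the identification \eqref{eq:premier_ordre_e} of the principal symbol on the diagonal follows from Remark~\ref{remark:coefficients_symboles}: the leading stationary phase contribution is $(2\pi h)^{3n/2}|\det H|^{-1/2}\,a(\alpha,\alpha_x)\,p(\alpha_x,\alpha_\xi)\,b(\alpha,\alpha_x)$, and the normalization $T^*T=I$ together with the identification of the principal symbol of $ST$ in Lemma~\ref{lemma:ST-pseudo} precisely forces this combination of $a$, $b$ and the Hessian determinant to collapse to $(2\pi h)^{-n}p(\alpha)$. The main technical obstacle will be to carry out steps two and three with a single contour deformation that simultaneously yields both exponents in the remainder of \eqref{eq:noyau_reduit_TPS}: the two regimes ($\lambda^{1/(2s-1)}$ near the diagonal, $\lambda^{1/s}d_{KN}(\alpha,\beta)$ away from it) are governed by different contour shifts, and their uniform combination requires carefully tracking $\bar\partial\tilde p$ terms through a two-parameter contour depending on both $\Im\xi$ and the direction of $\alpha-\beta$.
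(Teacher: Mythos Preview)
Your overall plan is correct in spirit---reduce to a triple oscillatory integral, do stationary phase with a mixed analytic/Gevrey amplitude, and identify the leading symbol via $T^*T=I$---and your final paragraph on the identification of $e(\alpha,\alpha)$ is exactly right. But there is a genuine gap in step~2 that blocks the iterated-stationary-phase strategy you outline.

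The problem is the claim that ``for the $(x,y)$ variables the amplitudes $a,b$ and the phases $\Phi_T,\Phi_S$ are honestly analytic, so the Holomorphic Stationary Phase can be applied, uniformly in $\xi$''. The amplitude in your $(x,y)$-integral is $a(\alpha,x)\,p(x,\xi)\,b(\beta,y)$, and $p(x,\xi)$ is only $\G^s$ in $x$, not analytic. So you cannot apply Proposition~\ref{prop:HSP_symbol} in the $x$-variable: the Gevrey regularity of $p$ contaminates \emph{both} $x$ and $\xi$, and there is no clean factorization into ``analytic variables $(x,y)$ first, Gevrey variable $\xi$ second''. This is not a technicality---it is the reason the argument has to treat all three variables $(x,\xi,y)$ simultaneously.

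The paper's approach does exactly that: it rescales and applies the Holomorphic Morse Lemma to the full phase $\widetilde\Psi_{\alpha,\beta}(x,\xi,y)$, obtaining Morse coordinates $\rho_{\alpha,\beta}$, then performs a \emph{single} contour shift from the image $V_{\alpha,\beta}=\rho_{\alpha,\beta}(D\times\widetilde D\times D)$ to its real projection $\Re V_{\alpha,\beta}$. The Stokes error $\overline D(\alpha,\beta)$ from this shift is where both exponents in \eqref{eq:noyau_reduit_TPS} arise together: the $\bar\partial\tilde g_{\alpha,\beta}$ factor contributes $\exp(-1/C|\Im z^t|^{1/(s-1)})$, and the Gaussian phase along the homotopy contributes $\exp(-\lambda(1-t)^2|F_{\alpha,\beta}(w)|^2)$. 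Optimizing the sum over $t\in[0,1]$ yields simultaneously the $\lambda^{1/(2s-1)}$ term (Gevrey loss from the critical regime) and a $(\lambda|F_{\alpha,\beta}(w)|)^{1/s}$ term. A separate positivity lemma then converts $(\lambda|F_{\alpha,\beta}|)^{1/s}$ into $\lambda^{1/s}d_{KN}(\alpha,\beta)$, essentially by showing that when $\alpha\neq\beta$ the gradient $\nabla_z\widetilde\Psi_{\alpha,\beta}$ (and hence $F_{\alpha,\beta}$) is bounded below by $d_{KN}(\alpha,\beta)$ near the critical point. This is a different mechanism from your proposed $\xi$-contour shift in step~3: the off-diagonal decay does not come from pushing $\xi$ into the complex in a direction determined by $y_c-\alpha_x$, but from the non-stationary-phase effect (large $|\nabla\Psi|$) feeding directly into the Stokes error bound for the single global shift.
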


For the proof of Lemma \ref{lemma:TPS-close-diagonal}, it will not be useful that $\Lambda$ is Lagrangian, only that it is $\mathcal{C}^1$ close to the reals. Actually, in the proof of Lemma \ref{lemma:TPS-close-diagonal}, the main point will be that $(\alpha,\beta)$ is in the region described in Figure \ref{fig:region-TPS}, which contains a neighbourhood of the diagonal of $\Lambda\times\Lambda$, if $\Lambda$ is $(\tau_0,s)$-adapted.
\begin{figure}[h]
\centering
\def\svgwidth{\linewidth}
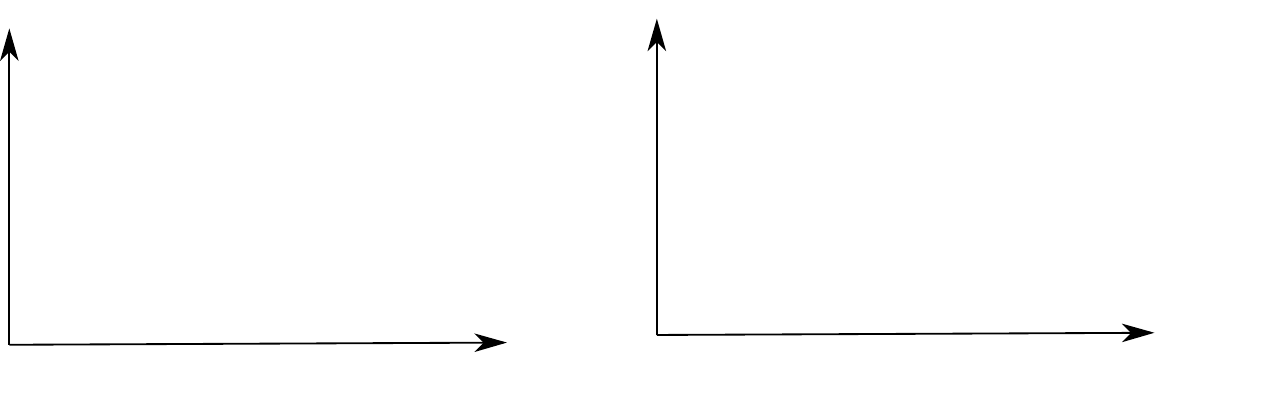
\caption{\label{fig:region-TPS} Region near the diagonal in the complex where $K_{TPS}$ can be controlled. Here, $\tilde{h} = h/\langle|\alpha|\rangle$.}
\end{figure}

\begin{remark}
There is a similar statement in \cite{lascarFBITransformsGevrey1997}. However, we make three improvements of this result. First, we get rid of the condition $s = 3$, which was made in \cite{lascarFBITransformsGevrey1997} in view of the application they had in mind. Then, we do not assume that $G$ is compactly supported and hence deal with both the limit $h \to 0$ and $\jap{\va{\alpha}} \to + \infty$. Finally, we deal here with a pseudo-differential operator $P$ while \cite{lascarFBITransformsGevrey1997} only considered differential operators. On a technical level, there is another notable difference: we implement the non-stationary phase method in Gevrey classes using almost analytic extensions and shifting of contours rather than integrations by parts and formal norms. We hope that this makes the proof easier to understand. 
\end{remark}

\begin{remark}
Since it is not necessary for our purposes, we did not investigate ultradifferentiability of the kernel. However, the proof we give should imply a $\G^{2s-1}$ regularity without much effort if $\Lambda$ were itself a $\G^{2s-1}$ manifold. We suspect that there is a way to obtain a $\Gs$ estimate in the case that $\Lambda$ is a $\Gs$ manifold, but for want of application we did not investigate further.
\end{remark}

The proof of Lemmas \ref{lemma:TPS-far-diagonal} and \ref{lemma:TPS-close-diagonal} are slightly different in the cases $s > 1$ and $s = 1$. To apply a combination of stationary and non-stationary phase method, we split the domain of integration in several regions. Several times, the imaginary part of the phase possibly vanishes near the boundary of those domains. Since it is non-stationary when this happens, one solution is to use contour deformations near those boundaries. Another solution, only available when $s>1$, is to introduce $\Gs$ cutoff functions. Since this lightens the proof, we will cover in full detail the case $s> 1$ (the most difficult case). Then we sketch the proof of both lemmas in the case $s=1$ at the end of this section.

\begin{proof}[Proof of Lemma \ref{lemma:TPS-far-diagonal} in the case $s>1$]
In the proof, $\eta>0$ and $\tau_0>0$ will have to be small enough \emph{independently} of $h>0$, $\alpha$ and $\beta$; it will be the case that $\tau_0 \ll \eta$. The cutoffs are assumed to be $\Gs$, throughout the proof. We will denote $K_{TPS}$ just as $K$.

Throughout this proof, we assume that that $\alpha,\beta$ are in $(T^\ast M)_{\tau_0,1/s}$, and that $d(\alpha_x,\beta_x) > \eta/2$. Since $\tau_0 \ll \eta$, we have that $d(\alpha_x,\beta_x) \sim d(\Re \alpha_x, \Re \beta_x)$ in this region. We may find $\G^s$ bump functions $\chi_\alpha$ and $\chi_\beta$ that take value $1$ on a neighbourhood respectively of $\Re \alpha_x$ and $\Re \beta_x$ and whose support do not intersect (we only need a finite number of such pairs of bump functions to deal with the whole manifold by compactness). Then we split the integral
\begin{equation}\label{eq:noyau_TPS_integrale}
\begin{split}
K(\alpha,\beta) = \int_{M \times M} K_T(\alpha,x) K_P(x,y) K_S\p{y,\beta} \mathrm{d}x \mathrm{d}y
\end{split}
\end{equation}
into
\begin{equation}\label{eq:on_decoupe_une_integrale_pour_changer}
\begin{split}
K(\alpha,\beta) 	& = \int_{M \times M} K_T(\alpha,x) K_P(x,y)\p{1 - \chi_{\beta}(y)} K_S\p{y,\beta} \mathrm{d}x \mathrm{d}y \\ 
						& \quad + \int_{M \times M} K_T(\alpha,x) \p{1 - \chi_{\alpha}(x)} K_P(x,y) \chi_\beta(y) K_S\p{y,\beta} \mathrm{d}x \mathrm{d}y \\ 
						& \quad + \int_{M \times M} K_T(\alpha,x) \chi_\alpha(x) K_P(x,y) \chi_\beta(y) K_S\p{y,\beta} \mathrm{d}x \mathrm{d}y.
\end{split}
\end{equation}
We notice that the first two integrals in the right hand side have an almost symmetric behaviour, so that it suffices to deal with the first one. For this, we also notice that it is nothing else than
\begin{equation*}
\begin{split}
T \Bigg( P\Big( y \mapsto \p{1 - \chi_{\beta}(y)} K_S\p{y,\beta} \Big) \Bigg) \p{\alpha}.
\end{split}
\end{equation*}
Since $\chi_\beta$ is equal to $1$ near $\Re \beta_x$, it follows from the assumptions on $K_S$ (see Definition \ref{def:analytic-FBI}) that the function $y \mapsto \p{1 - \chi_{\beta}(y)} K_S\p{y,\beta}$ is an $\O(\exp( -\jap{\va{\beta}}/ Ch ))$ in some $E^{s,R}\p{M}$. Then it follows from Lemmas \ref{lemma:loin_de_la_diagonale} and \ref{lemma:basic-Gs-boundedness-pseudo} that
\begin{equation*}
\begin{split}
P\Big( y \mapsto \p{1 - \chi_{\beta}(y)} K_S\p{y,\beta} \Big).
\end{split}
\end{equation*}
is an $\O(\exp(- C^{-1} (\jap{\va{\beta}} /h)^{1/s}))$ in some $E^{s,R}\p{M}$. Finally, it follows from Lemma \ref{lemma:T_non_stationary} and the bound \eqref{eq:trivial_bound_T} that the first term (and hence also the second one) in \eqref{eq:on_decoupe_une_integrale_pour_changer} is 
\[
\O_{C^\infty}\p{\exp\p{-  \p{\frac{\jap{\va{\alpha}} + \jap{\va{\beta}}}{C h}}^{\frac{1}{s}}}},
\]
provided that $\tau_0$ is small enough. To deal with the last integral in \eqref{eq:on_decoupe_une_integrale_pour_changer}, we start by noticing that, since $P$ is $\Gs$ pseudo-local (according to Definition \ref{def:gevrey_pseudor}), the function
\begin{equation*}
\begin{split}
\p{x,y} \mapsto \chi_\alpha(x) K_P(x,y) \chi_\beta(y)
\end{split}
\end{equation*}
is an $\O(\exp(- C^{-1} h^{-1/s}))$ in some space $E^{s,R}\p{M \times M}$. Then we may apply the same argument as in the proof of Lemma \ref{lemma:T_non_stationary} to see that this last integral is also of the expected size. More precisely, we perform a non-stationary phase (Proposition \ref{prop:non_stationary_analytic_symbols}) separately in both variables $x$ and $y$. 

Now, we move to the study of the kernel $K_{TPS}(\alpha,\beta)$ when the distance between $\Re \alpha_x$ and $\Re \beta_x$ is less than $10 \eta$. To do so, the compactness of $M$ allows us to assume that both $\Re \alpha_x$ and $\Re \beta_x$ belongs to a same ball $D$ of radius $100 \eta$, and that they remain uniformly away from the boundary of $D$ (recall that we assume $\tau_0 \ll \eta$). We introduce then a $\G^s$ bump function $\chi$ supported in $D$ that takes value $1$ on a neighbourhood of $\Re \alpha_x$ and $\Re \beta_x$. Reasoning as in the previous case, we see that the kernel $K_{TPS}(\alpha,\beta)$ is given, up to a negligible term, by the integral
\begin{equation*}
\begin{split}
\int_{D \times D} K_T(\alpha,x) \chi(x) K_P(x,y) \chi(y) K_S(y,\beta) \mathrm{d}x  \mathrm{d}y.
\end{split}
\end{equation*}
We may rewrite this, up to negligible terms, as the oscillating integral
\begin{equation}\label{eq:integrale_interessante}
\begin{split}
\frac{1}{\p{2 \pi h}^n} \int_{D \times \R^n \times D} e^{i \frac{\Phi_T(\alpha,x) + \jap{x-y,\xi} + \Phi_S(y,\beta)}{h}} \underbrace{a(\alpha,x) \chi(x) p(x,\xi) \chi(y) b(y,\beta)}_{\coloneqq f_{\alpha,\beta}(x,\xi,y)} \mathrm{d}x \mathrm{d}\xi \mathrm{d}y,
\end{split}
\end{equation}
where $p$ is the symbol of $P$ as in \eqref{eq:local_pseudor} from Definition \ref{def:gevrey_pseudor}, $\Phi_T$ and $\Phi_S$ are the phases of $T$ and $S$ respectively and $a$ and $b$ their symbols. The phase
\begin{equation*}
\begin{split}
\Psi_{\alpha,\beta}(x,\xi,y) = \Phi_T(\alpha,x) + \jap{x-y,\xi} + \Phi_S(y,\beta)
\end{split}
\end{equation*}
will play a crucial role in the following. From now on, we may and will assume that we are working in coordinates (by choosing $\eta$ very small). 

Let $A > 0$ be large. In order to end the proof of the lemma, we want to show that the integral \eqref{eq:integrale_interessante} is negligible when $\va{\alpha_\xi - \beta_\xi} \geq A \eta \max\p{\jap{\va{\alpha}},\jap{\va{\beta}}}$ (of course, we need that $A$ does not depend on $\eta$, provided that $\eta$ is small enough). To do so, we compute the gradient of the phase $\Psi_{\alpha,\beta}$, which is given by
\begin{equation*}
\begin{split}
\nabla \Psi_{\alpha,\beta} (x,\xi,y) = \begin{pmatrix} \nabla_x \Phi_T(\alpha,x) + \xi\\  x-y \\ \nabla_y \Phi_S(y,\beta) - \xi\end{pmatrix}.
\end{split}
\end{equation*}
Notice that if $x$ and $y$ belong to the support of $\chi$ then we have
\begin{equation}\label{eq:approximation_gradient_phase}
\begin{split}
\nabla_{x} \Phi_T\p{\alpha,x} + \xi &= \xi - \alpha_\xi + \O\p{\eta \jap{\va{\alpha}}}  \\
 \nabla_y \Phi_S(y,\beta) - \xi 	&= \beta_\xi - \xi + \O\p{\eta \jap{\va{\beta}}}.
\end{split}
\end{equation}
Hence, when $\va{\alpha_\xi - \beta_\xi} \geq A \eta \max\p{\jap{\va{\alpha}},\jap{\va{\beta}}}$, we have, for some constant $C > 0$,
\begin{equation*}
\begin{split}
\va{\nabla_{x} \Phi_T\p{\alpha,x} + \xi} + \va{\nabla_y \Phi_T(y,\beta) - \xi} & \geq \va{\alpha_\xi - \xi} + \va{\beta_\xi - \xi} - C \eta \max \p{\jap{\va{\alpha}},\jap{\va{\beta}}} \\
& \geq \va{\alpha_\xi - \beta_\xi} -C \eta \max \p{\jap{\va{\alpha}},\jap{\va{\beta}}} \\ & \geq \p{A - C} \eta \max \p{\jap{\va{\alpha}},\jap{\va{\beta}}}.
\end{split}
\end{equation*}
Thus, if $A \geq 2C$, then when $\va{\alpha_\xi - \beta_\xi} \geq A \eta \max\p{\jap{\va{\alpha}},\jap{\va{\beta}}}$, the phase $\Psi_{\alpha,\beta}$ is non-stationary. We can then apply Proposition \ref{prop:non-stationary} to find that \eqref{eq:integrale_interessante} is negligible (using the large parameter $\lambda = \max\p{\jap{\va{\alpha}},\jap{\va{\beta}}} / h$ and a suitable rescaling as in \S \ref{subsec:further_tricks}). There are several issues to fix before applying this lemma. First of all, the integral in \eqref{eq:integrale_interessante} is oscillating: this is fixed by a finite number of integration by parts. Then, we do not integrate over a compact set, but the phase is non-stationary in $x$ and $y$, so that we can apply Proposition \ref{prop:non-stationary} at fixed $\xi$ and then integrate over $\xi$. For this we notice that as $|\xi|$ grows, 
\begin{equation*}
|\nabla_{x,y} \Psi_{\alpha,\beta}(\bullet, \xi)|\geq (1/C)\max( \langle|\alpha|\rangle,\langle|\beta|\rangle,\ |\xi|).
\end{equation*}
In particular, as $\xi\to\infty$, the effective large parameter is $|\xi|/h$, and we gain sufficient decay to justify the integration.

Finally, the imaginary part of the phase is not necessarily positive on the boundary of the domain of integration. However, the amplitude vanishes near the boundary, so this is not a problem either. This closes the proof of Lemma \ref{lemma:TPS-far-diagonal}
\end{proof}

We now turn to the proof of Lemma \ref{lemma:TPS-close-diagonal}. We will isolate the technical difficulties in two lemmas -- \ref{lemma:estimee_reine} and \ref{lemma:estimee_positivite_phase}.
\begin{proof}[Proof of Lemma \ref{lemma:TPS-close-diagonal} in the case $s> 1$]
For this proof, we may work locally in the cotangent space (provided that we get uniform estimates) and assume that we are considering $\alpha = (\alpha_x,\alpha_\xi), \beta = (\beta_x,\beta_\xi) \in \Lambda$ which lie in a neighbourhood of size $\eta>0$ of a point $\alpha_0 = (\alpha_{0,x},\alpha_{0,\xi}) \in T^\ast M$ for the Kohn--Nirenberg metric (as in the proof of Lemma \ref{lemma:TPS-far-diagonal}, we assume that $\tau_0 \ll \eta$). Then we have $\langle|\alpha|\rangle \sim \langle|\beta|\rangle \sim \langle \alpha_0 \rangle$, and we will be looking for estimates suitably uniform as $\langle \alpha_0 \rangle \to \infty$. The large parameter in the oscillating integrals will be $\lambda = \langle \alpha_0 \rangle/h$. As in the proof of Lemma \ref{lemma:TPS-far-diagonal}, we will denote $K= K_{TPS}$. 

Since $\alpha_x$ and $\beta_x$ are close, we can work in local coordinates and rewrite the kernel $K_{TPS}$ as in \eqref{eq:integrale_interessante}. In order to get rid of the oscillating integral, we introduce a cutoff function $\theta(\xi)$ that takes value $1$ when $\xi$ is at distance less than $100 \eta$ of $\alpha_{0,\xi}$ and supported in a ball of radius proportional to $\eta$ (all these distances have to be understood using the Kohn--Nirenberg metric). The cutoff $\theta$ depend on $\alpha_0$, but it belongs uniformly to the symbol class $S^{s,0}\p{T^* \R^n}$ (once $\eta$ has been fixed), so that this dependence will not matter. Then, instead of considering the integral \eqref{eq:integrale_interessante}, we may study
\begin{equation}\label{eq:adieu_oscillation}
\begin{split}
\widehat{K}(\alpha,\beta) :=\frac{1}{\p{2 \pi h}^n} \int_{D \times \R^n \times D} e^{i \frac{\Psi_{\alpha,\beta}(x,\xi,y)}{h}} \theta(\xi) f_{\alpha,\beta}(x,\xi,y) \mathrm{d}x \mathrm{d}\xi \mathrm{d}y.
\end{split}
\end{equation}
Indeed, it follows from \eqref{eq:approximation_gradient_phase} and the same non-stationary argument as in the proof of Lemma \ref{lemma:TPS-far-diagonal} that the difference between \eqref{eq:integrale_interessante} and \eqref{eq:adieu_oscillation} is negligible (controlled by $\exp(- \lambda^{1/s} )$). It is then useful to change variable and write
\begin{equation}\label{eq:noyau_avant_Morse}
\begin{split}
\widehat{K}(\alpha,\beta) = \p{\frac{\lambda}{2 \pi}}^{3n/2} \int_{D \times \R^n \times D}  e^{i \lambda \widetilde{\Psi}_{\alpha,\beta}(x,\xi,y)} g_{\alpha,\beta}(x,\xi,y) \mathrm{d}x \mathrm{d}\xi \mathrm{d}y
\end{split}
\end{equation}
where
\begin{equation}\label{eq:la_phase_psi_tilde}
\begin{split}
\widetilde{\Psi}_{\alpha,\beta}(x,\xi,y) = \frac{\Phi_T(\alpha,x) + \Phi_{S}(y,\beta)}{\jap{ \alpha_0 }} + \jap{x-y,\xi}
\end{split}
\end{equation}
and
\begin{equation}\label{eq:amplitude}
g_{\alpha,\beta}(x,\xi,y) = \left( \frac{\lambda}{2\pi} \right)^{- n/2}\theta\p{\jap{ \alpha_0 } \xi} f_{\alpha,\beta}\p{x,\jap{ \alpha_0 } \xi, y} = \O\left( \frac{\langle \alpha_0 \rangle^m }{h^n} \right).
\end{equation}
This estimate on the amplitude follows from the fact that the symbol of $T$ is a $\G^1$ symbol of order $h^{-3n/4}\langle|\alpha|\rangle^{n/4}$ -- recall $m$ is the order of $P$. Here, we assume that $\eta$ is small enough so that the size of any point in the support of $\theta$ is comparable to  $\jap{\alpha_0}$ when $\jap{\alpha_0}$ goes to $+ \infty$. The estimate \eqref{eq:amplitude} holds as a $\Gs$ estimate in all variables. We will denote by $\widetilde{D}$ a disc of radius proportional to $\eta$ (for the Euclidean metric) such that $\theta\p{\jap{\alpha_0} \cdot}$ is supported in $\widetilde{D}$. Notice that we may assume that $D$ and $\widetilde{D}$ only depend on $\alpha_0$.

In order to apply the steepest descent method to study $\widehat{K}\p{\alpha,\beta}$, we need to understand the critical points of the phase $\widetilde{\Psi}_{\alpha,\beta}$. Notice first that when $\alpha = \beta$, the phase $\widetilde{\Psi}_{\alpha,\alpha}$ has a critical point at $ x= y = \alpha_x$ and $\xi = \alpha_\xi / \jap{\alpha_0}$. Moreover, this critical point is non-degenerate since the Hessian of $\widetilde{\Psi}_{\alpha,\alpha}$ is given there by
\begin{equation}\label{eq:la_Hessienne}
\begin{pmatrix}
\jap{ \alpha_0 }^{-1} D_{x,x}^2 \Phi_T(\alpha,\alpha_x) & I & 0 \\
I & 0 & - I \\
0 & - I & \jap{ \alpha_0 }^{-1} D_{x,x}^2 \Phi_S\p{\alpha_x,\alpha}
\end{pmatrix}
\end{equation}
and the determinant of this matrix is
\begin{equation*}
\begin{split}
\p{-1}^n \det\p{ \jap{ \alpha_0 }^{-1} \p{D_{x,x}^2 \Phi_T(\alpha,\alpha_x) + D_{x,x}^2 \Phi_S\p{\alpha_x,\alpha}}} \neq 0.
\end{split}
\end{equation*}
This determinant is in fact bounded away from zero uniformly in $\alpha$ and $\alpha_0$ since the imaginary part of the matrix $\jap{ \alpha_0 }^{-1} \p{D_{x,x}^2 \Phi_T(\alpha,\alpha_x) + D_{x,x}^2 \Phi_S\p{\alpha_x,\alpha}}$ is uniformly definite positive (this is part of Definition \ref{def:nonstandardphase} of an admissible phase). Hence, the Hessian \eqref{eq:la_Hessienne} of the phase $\widetilde{\Psi}_{\alpha,\alpha}$ is uniformly invertible.

We return now to our situation where $d_{KN}(\alpha,\alpha_0), d_{KN}(\beta,\alpha_0) \leq \eta$. To obtain the expected bounds, we need to study precisely the phase, and for this we start as in the proof of the ``Fundamental lemma'' \ref{lemma:fundamental-lemma}. It follows from the Implicit Function Theorem that, provided $\eta$ is small enough, $\widetilde{\Psi}_{\alpha,\beta}$ has a unique critical point $z(\alpha,\beta) = \p{x(\alpha,\beta), \xi(\alpha,\beta), y(\alpha,\beta)}$ near $(\alpha_{0,x},\alpha_{0,\xi}/\jap{\alpha_0},\alpha_{0,x})$, and this critical point is non-degenerate. Notice that if $y_c(\alpha,\beta)$ denotes the critical point of the phase $y \mapsto \Phi_T(\alpha,y) + \Phi_S(y,\beta)$ (which is defined by similar considerations) then we have
\begin{equation*}
x(\alpha,\beta) = y (\alpha,\beta) = y_c(\alpha,\beta)
\end{equation*} 
and
\begin{equation*}
\xi(\alpha,\beta) = - \frac{\nabla_x \Phi_T(\alpha,y_c(\alpha,\beta))}{\jap{ \alpha_0 }} = \frac{\nabla_y \Phi_S(y_c(\alpha,\beta), \beta)}{\jap{ \alpha_0 }}.
\end{equation*}
The critical value of $\widetilde{\Psi}_{\alpha,\beta}$ is given by 
\begin{equation*}
\begin{split}
 \widetilde{\Psi}_{\alpha,\beta}\p{x(\alpha,\beta), \xi(\alpha,\beta), y(\alpha,\beta)} =  \langle\alpha_0\rangle^{-1} \Phi_{TS}(\alpha,\beta),
\end{split}
\end{equation*}
where the phase $\Phi_{TS}$ is the critical value of $y \mapsto \Phi_T(\alpha,y) + \Phi_S(y,\beta)$. We can apply the Holomorphic Morse Lemma \ref{lemma:holomorphic_morse} to $\widetilde{\Psi}$. Provided $\eta$ is small enough, there exist holomorphic Morse coordinates $\rho_{\alpha,\beta}$ defined on a fixed complex neighbourhood $W$ of $D \times \widetilde{D} \times D$ and whose image contains a ball of fixed radius in $\C^n$ such that for every $z = (x,\xi,y) \in W$ we have
\begin{equation}\label{eq:Morse_encore_et_toujours}
\begin{split}
\widetilde{\Psi}_{\alpha,\beta}(z) = \frac{\Phi_{TS}(\alpha,\beta)}{\langle \alpha_0 \rangle} + \frac{i}{2} \rho_{\alpha,\beta}(z)^2.
\end{split}
\end{equation}
Since the proof of Lemma \ref{lemma:holomorphic_morse} is constructive -- see \cite[Lemma 2.7]{Sjostrand-82-singularite-analytique-microlocale} -- we obtain bounds on the derivatives of $\rho_{\alpha,\beta}$ and $\rho_{\alpha,\beta}^{-1}$ in function of the derivatives of $\widetilde{\Psi}_{\alpha,\beta}$. Via symbolic estimates, we deduce that these estimates are uniform as $\langle \alpha_0 \rangle \to \infty$. 

Differentiating \eqref{eq:Morse_encore_et_toujours}, we find that for $z = z(\alpha,\beta)$,
\begin{equation}\label{eq:Morse_differentiel}
i {}^t\p{ D \rho_{\alpha,\beta} (z(\alpha,\beta))} D \rho_{\alpha,\beta} (z(\alpha,\beta)) = D_{z,z}^2 \widetilde{\Psi}_{\alpha,\beta}(z\p{\alpha,\beta}).
\end{equation}
We denote $D \rho_{\alpha,\alpha} = A + i B$ the decomposition into real and imaginary part at $z=z(\alpha,\alpha)$, and deduce
\begin{equation*}
{}^t A A - {}^t B B = \Im D_{z,z}^2 \widetilde{\Psi}_{\alpha,\alpha}(z\p{\alpha,\alpha}).
\end{equation*}
When $\alpha = \beta$, we know that $\Im D_{z,z}^2 \widetilde{\Psi}_{\alpha,\alpha}(z\p{\alpha,\alpha})\geq 0$. Hence ${}^t A A \geq {}^t B B$. It follows that the kernel of $A$ is contained in that of $B$. However, since $D\rho_{\alpha,\alpha}(z(\alpha,\alpha))$ is invertible, these two kernels cannot intersect non-trivially. Hence $A$ is invertible, and this comes with uniform estimates, so it remains true when $\alpha \neq \beta$, provided $\eta$ is small enough.

Then, it follows by the Implicit Function Theorem that $V_{\alpha,\beta}:=\rho_{\alpha,\beta}(D \times \widetilde{D} \times D)$ is a graph over the reals, i.e.
\begin{equation*}
V_{\alpha,\beta} = \set{ w + i F_{\alpha,\beta}(w) : w \in \Re V_{\alpha,\beta} },
\end{equation*}
where the function $F_{\alpha,\beta}$ from a neighbourhood of $0$ in $\R^{3n}$ to $\R^{3n}$ is real-analytic (with symbolic estimates in $\alpha$ and $\beta$). Then, we use the coordinates $\rho_{\alpha,\beta}$ to change variables and write $\widehat{K}(\alpha,\beta)$ as the contour integral
\begin{equation}\label{eq:noyau_post_Morse}
\begin{split}
\widehat{K}(\alpha,\beta)  = e^{\frac{i}{h}\Phi_{TS}(\alpha,\beta)} \p{ \frac{\lambda}{2\pi} }^{\frac{3n}{2}} \int_{ V_{\alpha,\beta} }   e^{- \lambda \frac{w^2}{2}} g_{\alpha,\beta} \circ \rho_{\alpha,\beta}^{-1}(w) J \rho_{\alpha,\beta}^{-1}(w) \mathrm{d}z,
\end{split}
\end{equation}
where $J \rho_{\alpha,\beta}^{-1} = \det\p{D \rho_{\alpha,\beta}^{-1}}$ denotes the Jacobian of $\rho_{\alpha,\beta}^{-1}$. Then, we use the homotopy $ (t,w) \mapsto w + i t F_{\alpha,\beta}(w)$ to shift contour in \eqref{eq:noyau_post_Morse} and replace the integral over $V_{\alpha,\beta}$ by an integral over $\Re V_{\alpha,\beta}$. We denote by $\sigma_{\alpha,\beta}(w)$ the amplitude in the previous integral, to find, using Stokes' Formula that
\begin{equation}\label{eq:steepest_descendu}
\widehat{K}(\alpha,\beta) = e^{\frac{i}{h}\Phi_{TS}(\alpha,\beta) } \p{ \frac{\lambda}{2\pi} }^{\frac{3n}{2}}  \int_{\Re V_{\alpha,\beta} }\hspace{-10pt} e^{- \lambda \frac{w^2}{2}}   \tilde{\sigma}_{\alpha,\beta}(w) \mathrm{d}w + \overline{D}(\alpha,\beta),
\end{equation}
where $\tilde{\sigma}_{\alpha,\beta}$ denotes a $\G^s$ almost analytic extension for $\sigma_{\alpha,\beta}$ (obtained by replacing $\chi, \theta$ and $p$ by their almost analytic extensions given either by Lemma \ref{lemma:almost-analytic-extension-Gs} or Remark \ref{remark:aae_for_symbol}) and $\overline{D}(\alpha,\beta)$ is the main part in Stokes' Formula. The main technical point in the proof of Lemma \ref{lemma:TPS-close-diagonal} will be to get the following bound on $\overline{D}(\alpha,\beta)$.
\begin{lemma}\label{lemma:estimee_reine}
Assume that $\eta$ and $\tau_0$ are small enough. Then there exists $C>0$ such that
\begin{equation*}
\begin{split}
\overline{D}(\alpha,\beta) = \O_{C^\infty}\left(\exp\left( - \frac{\lambda^{\frac{1}{2s-1}}}{C} - \frac{\lambda^{\frac{1}{s}} d_{KN}(\alpha,\beta) }{ C}  \right) \right).
\end{split}
\end{equation*}
\end{lemma}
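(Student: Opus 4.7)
The plan is to extract the leftover term $\overline{D}(\alpha,\beta)$ in closed form via Stokes' Formula, and then to derive its bound by a pointwise estimate on the integrand followed by a one-parameter optimization. With $H(t,w) = w + it F_{\alpha,\beta}(w)$ denoting the homotopy between $\Re V_{\alpha,\beta}$ and $V_{\alpha,\beta}$, applying Stokes' Formula to $H^*(e^{-\lambda w^2/2}\tilde{\sigma}_{\alpha,\beta}(w)\,\mathrm{d}w)$ identifies
\[
\overline{D}(\alpha,\beta) = e^{\frac{i}{h}\Phi_{TS}(\alpha,\beta)}\Bigl(\tfrac{\lambda}{2\pi}\Bigr)^{3n/2}\int_{[0,1]\times\Re V_{\alpha,\beta}} H^*\bigl(e^{-\lambda w^2/2}\,\bar{\partial}\tilde{\sigma}_{\alpha,\beta}(w)\wedge\mathrm{d}w\bigr),
\]
the boundary contributions at $\partial V_{\alpha,\beta}$ being harmless thanks to the cutoffs $\chi,\theta$ baked into $\tilde{\sigma}_{\alpha,\beta}$. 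Here $\tilde{\sigma}_{\alpha,\beta}$ is the $\G^s$ almost analytic extension of $\sigma_{\alpha,\beta}$ obtained by pulling the almost analytic extensions of $\chi,\theta$ and $p$ (from Lemma~\ref{lemma:almost-analytic-extension-Gs} and Remark~\ref{remark:aae_for_symbol}) through the real-analytic change of coordinates $\rho_{\alpha,\beta}^{-1}$.

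For the pointwise estimate, write $w = u + it F_{\alpha,\beta}(u)$ so that $|e^{-\lambda w^2/2}| = \exp\bigl(-\tfrac{\lambda}{2}(|u|^2 - t^2|F_{\alpha,\beta}(u)|^2)\bigr)$. Using Remark~\ref{remark:size_d_bar} (combined with the Faà di Bruno control of derivatives of $\rho_{\alpha,\beta}^{-1}$, which is uniform in $\alpha,\beta$ and $\alpha_0$ by the uniform bounds on the Morse change of variables) yields
\[
\bigl|\bar{\partial}\tilde{\sigma}_{\alpha,\beta}(w)\bigr|\ \lesssim\ h^{-n}\langle\alpha_0\rangle^{m}\exp\!\Bigl(-\bigl(CR\,t\,|F_{\alpha,\beta}(u)|\bigr)^{-1/(s-1)}\Bigr).
\]
The key elementary fact $|F_{\alpha,\beta}(u)|\leq |u|$, extracted from the non-negativity of $\Im\widetilde{\Psi}_{\alpha,\beta}$ on the reals as in Lemma~\ref{lemma:fundamental-lemma}, turns the Gaussian factor into a true decay $\exp(-\tfrac{\lambda}{2}(1-t^2)|u|^2)$. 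Writing $\tau = t|F_{\alpha,\beta}(u)|$ and minimizing $\lambda\tau^2 + (CR\tau)^{-1/(s-1)}$ at $\tau \asymp \lambda^{-(s-1)/(2s-1)}$ produces the model inequality
\[
-\tfrac{\lambda}{2}|u|^2 - (CR\,t|F|)^{-1/(s-1)}\ \leq\ -\tfrac{\lambda^{1/(2s-1)}}{C'}\ - \ c\lambda|u|^2,
\]
which, combined with the bounded $\lambda^{3n/2}$ prefactor and integrated in $u,t$, yields the first half $\exp(-\lambda^{1/(2s-1)}/C)$ of the claimed estimate, uniformly in $\alpha,\beta$. Differentiating under the integral in $\alpha,\beta$ reproduces the same bound for derivatives since everything above is stable under symbolic differentiation.

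To recover the additional $\exp(-\lambda^{1/s}d_{KN}(\alpha,\beta)/C)$ decay, the plan is to perform, \emph{before} applying the Morse change of variables, a further contour shift in the original $(x,\xi,y)$ variables by a complex amount of size $\tau_0 h^{1-1/s}\langle\alpha_0\rangle^{1/s-1}$ directed along $\alpha_\xi - \beta_\xi$ (and $\alpha_x - \beta_x$), which is within the domain of almost-analytic extension of $p,\chi,\theta$ and of the analytic phases $\Phi_T,\Phi_S$. By the formulas~\eqref{eq:approximation_gradient_phase}, this shift produces a gain of $\exp\!\bigl(-c\lambda^{1/s}(|\alpha_x-\beta_x|+|\alpha_\xi-\beta_\xi|/\langle\alpha_0\rangle)\bigr)$ in the integrand while only modifying $F_{\alpha,\beta}$ and $\tilde{\sigma}_{\alpha,\beta}$ at an admissible cost (by construction, the size of the shift matches the radius of almost-holomorphy allowed by Remark~\ref{remark:size_d_bar}). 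Packaging the two decays together yields the stated bound. The main obstacle is precisely this last step: one must verify that the shift can be performed simultaneously with the contour deformation through $V_{\alpha,\beta}$ without destroying the non-negativity of $\Re(w^2)$ (i.e.\ the inequality $|F_{\alpha,\beta}(u)| \leq |u|$) used in the balancing, and that the resulting modified almost analytic extensions still satisfy the Gevrey $\bar\partial$ estimate uniformly in $\alpha,\beta,\alpha_0$. This is the content of the auxiliary Lemma~\ref{lemma:estimee_positivite_phase}, whose proof hinges on a quantitative version of the Fundamental Lemma~\ref{lemma:fundamental-lemma} for the shifted phase, and which provides the coercivity estimate $|u|^2 - t^2|F_{\alpha,\beta}(u)|^2 \geq c(|u|^2 + t\,d_{KN}(\alpha,\beta)^2/\langle\alpha_0\rangle)$ that powers the whole scheme.
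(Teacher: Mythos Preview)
Your setup has the right shape, but there are two concrete gaps that break the argument as written.

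First, the $\bar\partial$--estimate is oriented the wrong way. Since you define $\tilde\sigma_{\alpha,\beta}$ by pulling $\tilde g_{\alpha,\beta}$ through the \emph{holomorphic} map $\rho_{\alpha,\beta}^{-1}$, one has $|\bar\partial_w\tilde\sigma_{\alpha,\beta}(w)|\lesssim\exp\bigl(-(C|\Im\rho_{\alpha,\beta}^{-1}(w)|)^{-1/(s-1)}\bigr)$. Along the homotopy $w=u+itF_{\alpha,\beta}(u)$ it is the endpoint $t=1$ that lands back on $V_{\alpha,\beta}=\rho_{\alpha,\beta}(D\times\widetilde D\times D)$, i.e.\ on the real locus in the $z$--coordinates, so $|\Im\rho^{-1}(u+itF(u))|\lesssim (1-t)|F(u)|$, not $t|F(u)|$. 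With your parametrization the $\bar\partial$--term is largest precisely where the Gaussian $(1-t^2)|u|^2$ vanishes, and no balancing is possible.

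Second, and more seriously, the inequality $|F_{\alpha,\beta}(u)|\le|u|$ is not available when $\alpha,\beta\in\Lambda$. What the Morse identity gives is
\[
\tfrac12\bigl(|u|^2-|F_{\alpha,\beta}(u)|^2\bigr)=\Im\widetilde\Psi_{\alpha,\beta}(z^1(u))-\tfrac{1}{\langle\alpha_0\rangle}\Im\Phi_{TS}(\alpha,\beta),
\]
and neither term on the right has a sign for $\alpha,\beta$ off the real. The paper does not discard $\Im\widetilde\Psi_{\alpha,\beta}(z^1)$; it keeps it and combines it with the prefactor $e^{i\Phi_{TS}/h}$ to obtain the exact identity
\[
\lambda\,\Im\!\Bigl(\tfrac{\Phi_{TS}}{\langle\alpha_0\rangle}+\tfrac{i}{2}B(t,w)^2\Bigr)=\lambda\,\Im\widetilde\Psi_{\alpha,\beta}(z^1(w))+\tfrac{\lambda(1-t^2)}{2}|F(w)|^2,
\]
then balances the second term against the $\bar\partial$--decay to extract $\lambda^{1/(2s-1)}+(\lambda|F|)^{1/s}$. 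The term $\lambda\,\Im\widetilde\Psi_{\alpha,\beta}(z^1)$ is exactly what carries the $d_{KN}(\alpha,\beta)$ information: Lemma~\ref{lemma:estimee_positivite_phase} shows that $\lambda\,\Im\widetilde\Psi(z^1)+(\lambda|F|)^{1/s}+\lambda^{1/(2s-1)}\gtrsim\lambda^{1/s}d_{KN}(\alpha,\beta)$, by arguing that when $|F|$ is small the real gradient of the phase (hence $\Im\widetilde\Psi$ itself) must be large. Your proposed extra contour shift in the $(x,\xi,y)$ variables is not needed, and the ``coercivity'' you attribute to Lemma~\ref{lemma:estimee_positivite_phase} is not what that lemma states.
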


Let us postpone the proof of Lemma \ref{lemma:estimee_reine} to the end of this section and explain how it allows us to end the proof of Lemma \ref{lemma:TPS-close-diagonal}. From Lemma \ref{lemma:estimee_reine}, the term $\overline{D}(\alpha,\beta)$ may be considered as part of the error term in \eqref{eq:noyau_reduit_TPS}. Let us study the integral in the right hand side of \eqref{eq:steepest_descendu}. We know that $\Re V_{\alpha,\beta}$ contains a fixed ball $D_0$. From coercivity of the phase $w \mapsto w^2$, we may split the integral in \eqref{eq:steepest_descendu} between the integral over $D_0$ and the integral over $\Re V_{\alpha,\beta} \setminus D_0$ and consider the latter as part of the error term in \eqref{eq:steepest_descendu}. Indeed, it follows from an $L^1$ estimate that the integral over $\Re V_{\alpha,\beta} \setminus D_0$ is a
\begin{equation*}
\begin{split}
\O_{C^\infty}\p{\exp\p{-  \frac{ \lambda }{C}}}.
\end{split}
\end{equation*}
Notice indeed that the dependence on $\alpha$ and $\beta$ of the domain of integration is superficial since the integrand is supported away from the boundary of the domain of integration $\Re V_{\alpha,\beta}$. The integral over $D_0$
\begin{equation*}
\begin{split}
e(\alpha,\beta):=\p{\frac{\lambda}{2 \pi }}^{\frac{3n}{2}}  \int_{D_0} e^{- \lambda \frac{w^2}{2}}  \tilde{\sigma}_{\alpha,\beta}(w) \mathrm{d}w
\end{split}
\end{equation*}
is a Gaussian integral with a $\Gs$ amplitude, so that it is $\Gs$ symbol. It will be sufficient to study it via the $\mathcal{C}^\infty$ stationary phase method. In particular, it is given at first order by
\begin{equation*}
\begin{split}
e(\alpha,\beta) = \frac{1}{(2 \pi h)^n} e_0(\alpha,\beta) \textup{ mod } h^{-n+1} S_{KN}^{m-1},
\end{split}
\end{equation*}
where the symbol $e_0(\alpha,\beta)$ is given on the diagonal by
\begin{equation*}
\begin{split}
e_0(\alpha,\alpha) 	& = (2\pi h)^n \tilde{\sigma}_{\alpha,\alpha}(0) = (2\pi h)^n \tilde{g}_{\alpha,\alpha} \circ \rho_{\alpha,\alpha}^{-1}(0)  J \rho_{\alpha,\beta}^{-1}(0))\\
    				& = \tilde{p}\p{\alpha} (2\pi h)^{\frac{3n}{2}} \jap{\alpha_0}^{- \frac{n}{2}} J \rho_{\alpha,\alpha}^{-1}\p{0} a\p{\alpha,\alpha_x} b(\alpha_x,\alpha). 
\end{split}
\end{equation*}
Here, the bump function $\chi$ and $\theta$ do not play any role since they take the value $1$ respectively at $\alpha_x$ and $\alpha_\xi$. Recalling \eqref{eq:Morse_differentiel} and the expression \eqref{eq:la_Hessienne} for the Hessian of $\widetilde{\Psi}_{\alpha,\alpha}$ we find that
\begin{equation*}
\begin{split}
\jap{\alpha_0}^{- \frac{n}{2}} J \rho_{\alpha,\alpha}^{-1}\p{0} = \p{\det\p{-i \p{ D_{x,x}^2 \Phi_T(\alpha,\alpha_x) + D_{x,x}^2 \Phi_S\p{\alpha,\alpha_x}}}}^{- \frac{1}{2}}, 
\end{split}
\end{equation*}
for a certain determination of the square root. Then, we have
\begin{equation}\label{eq:une_expression_e0}
\begin{split}
e_0\p{\alpha,\alpha} = \tilde{p}\p{\alpha} \p{2 \pi h }^{\frac{3n}{2}} \frac{a\p{\alpha,\alpha_x} b(\alpha_x,\alpha)}{\p{\det\p{-i \p{ D_{x,x}^2 \Phi_T(\alpha,\alpha_x) + D_{x,x}^2 \Phi_S\p{\alpha,\alpha_x}}}}^{\frac{1}{2}}}.
\end{split}
\end{equation}
In this expression, we may replace $a$ and $b$ by their principal part. It follows then from the fact that $ST = I$ and from the expression given in Lemma \ref{lemma:ST-pseudo} for the principal symbol of $ST$ that
\begin{equation}\label{eq:blabla}
\begin{split}
\p{2 \pi h }^{\frac{3n}{2}} \frac{a\p{\alpha,\alpha_x} b(\alpha_x,\alpha)}{\p{\det\p{-i \p{ D_{x,x}^2 \Phi_T(\alpha,\alpha_x) + D_{x,x}^2 \Phi_S\p{\alpha,\alpha_x}}}}^{\frac{1}{2}}} = 1
\end{split}
\end{equation}
when $\alpha$ is real. Then, by analytic continuation principle, this formula remains true for complex $\alpha$. We see that the good determination of the square root in \eqref{eq:une_expression_e0} gives a right hand side equals to $1$ in \eqref{eq:blabla} by a homotopy argument (we have the good sign on the reals).

In fact, if we have the wrong determination of the square root in \eqref{eq:une_expression_e0}, it means that we misoriented $V_{\alpha,\beta}$ in \eqref{eq:noyau_post_Morse}, and these two sign error cancel. Finally, with \eqref{eq:une_expression_e0} and \eqref{eq:blabla}, we find that $e_0(\alpha,\alpha) = \tilde{p}\p{\alpha}$, and Lemma \ref{lemma:TPS-close-diagonal} is proved in the case $s > 1$ (up to the proof of the key estimate Lemma \ref{lemma:estimee_reine}).
\end{proof}

In order to complete the proof of Lemma \ref{lemma:TPS-close-diagonal} in the case $s> 1$, we need now to establish Lemma \ref{lemma:estimee_reine}.
\begin{proof}[Proof of Lemma \ref{lemma:estimee_reine}]
We recall that we are studying points $\alpha = (\alpha_x,\alpha_\xi)$  and $\beta = (\beta_x,\beta_\xi) \in \Lambda$ that lie in a neighbourhood of $\alpha_0 = (\alpha_{0,x},\alpha_{0,\xi}) \in T^\ast M$. First of all, notice that we have
\begin{equation*}
\overline{D}\p{\alpha,\beta} =  e^{\frac{i}{h}\Phi_{TS}(\alpha,\beta)} \left(\frac{ \lambda}{2 \pi } \right)^{\frac{3n}{2}} \int B^*\p{ e^{ - \lambda \frac{\tilde{w}^2}{2}  } (\rho_{\alpha,\beta})_\ast (\bar{\partial} \tilde{g}_{\alpha,\beta} \wedge \mathrm{d}z)(\tilde{w})},
\end{equation*}
where $B$ denotes the homotopy $\p{t,w} \mapsto w + i t F_{\alpha,\beta}\p{w}$ and the integral is over $\left[0,1\right] \times \Re V_{\alpha,\beta}$ . We need to understand how the imaginary part of the phase and the decay of $\bar{\partial} \tilde{g}_{\alpha,\beta}$ collaborate to make the integrand small. The idea will be to reduce this to some positivity estimate on the phase. To do so write $z^t_{\alpha,\beta}(w) = \rho_{\alpha,\beta}^{-1}\p{B(t,w)}$. In view of the estimate given in Remark \ref{remark:size_d_bar}, we observe that 
\[
 (\rho_{\alpha,\beta})_\ast (\bar{\partial} \tilde{g}_{\alpha,\beta} \wedge \mathrm{d}z)(B(t,w)) = \O_{ C^\infty} \left( \exp\left( - \frac{1}{ C |\Im z^t_{\alpha,\beta} |^{\frac{1}{s-1}}} \right) \right).
\]
Next, we observe that
\[
\Im\left( \frac{\Phi_{TS}(\alpha,\beta)}{\langle \alpha_0 \rangle} + \frac{i}{2}B(t,w)^2 \right) = \Im\left( \widetilde{\Psi}_{\alpha,\beta}(z^1_{\alpha,\beta}(w)) \right) + \frac{1-t^2}{2}|F_{\alpha,\beta}(w)|^2.
\]
From this, we deduce that we have pointwise bounds
\[
\begin{split}
e^{\frac{i}{h}\Phi_{TS}(\alpha,\beta)} &\left(\frac{\lambda}{2\pi} \right)^{\frac{3n}{2}}  B^*\p{ e^{ - \lambda \frac{w^2}{2}  } (\rho_{\alpha,\beta})_\ast (\bar{\partial} \tilde{g}_{\alpha,\beta} \wedge \mathrm{d}z)(w)} \\
		= \O_{L^\infty}&\left(\lambda^{\frac{3n}{2}}\exp\left( - \lambda \Im\left( \widetilde{\Psi}_{}(z^1(w)) \right) - \lambda \frac{1-t^2}{2}|F(w)|^2 - \frac{1}{ C |\Im z^t |^{\frac{1}{s-1}}} \right) \right).
\end{split}
\]
(we omitted the $\alpha,\beta$ dependence in the second line). Each time we differentiate with respect to $\alpha,\beta$, we lose at most a power of $\lambda$ in this estimate, so that it will suffice for our purpose to prove that, for $w \in \Re V_{\alpha,\beta}$ and $t \in \left[0,1\right]$, the quantity
\begin{equation}\label{eq:quantity-to-be-estimated-1}
\lambda \Im\left( \widetilde{\Psi}_{\alpha,\beta}(z^1_{\alpha,\beta}(w)) \right) + \lambda \frac{1-t^2}{2}|F_{\alpha,\beta}(w)|^2 + \frac{1}{ C|\Im z^t_{\alpha,\beta}(w) |^{\frac{1}{s-1}}} 
\end{equation}
is larger than 
\begin{equation*}
\begin{split}
C_1^{-1} \lambda^{\frac{1}{2s - 1}} + C_1^{-1} \lambda^{\frac{1}{s}} d_{KN}(\alpha,\beta)
\end{split}
\end{equation*}
for some $C_1 > 0$. To obtain such an estimate, we start by observing that $z^1_{\alpha,\beta}(w)$ is always a real point. Since $\rho_{\alpha,\beta}$ is uniformly bi-Lipschitz, we deduce that, for some $C > 0$,
\[
|\Im z^t_{\alpha,\beta}(w)| \leq  |\Im z^1_{\alpha,\beta}(w)| + C( |t-1| |F_{\alpha,\beta}(w)| ) \leq C (1-t) |F_{\alpha,\beta}(w)|.
\]
This suggests to consider for $u = \va{F_{\alpha,\beta}(w)}$ the infimum (we perform the change of variable ``$r = 1 - t$'')
\begin{equation}\label{eq:un_infimum_2}
\inf_{ r \in[0,1]} \lambda r u^2 + \frac{1}{(r u)^{\frac{1}{s-1}}}.
\end{equation}
The function of $r$ above has a global minimum over $\R_+^*$, attained at 
\[
r_* = \left( \frac{1}{s-1}\frac{1}{\lambda u^{\frac{2s-1}{s-1}}} \right)^{1- \frac{1}{s}}.
\]
Thus, the infimum we look for is always greater than the global minimum which happens to be
\begin{equation}\label{eq:global_infimum_2}
s (s-1)^{\frac{1}{s} - 1} (\lambda u)^{1/s}.
\end{equation}
However, if the argument $r_*$ of the minimum is greater than $1$, then the infimum \eqref{eq:un_infimum_2} is attained at $r= 1$ (we compute the infimum of a decreasing function of $r$ over $\left[0,1\right]$). Hence, if $r_* \geq 1$, the infimum \eqref{eq:un_infimum_2} is greater than 
\begin{equation*}
\begin{split}
\lambda u^2 + u^{- \frac{1}{s-1}} & \geq u^{- \frac{1}{s-1}} = \p{r_*^{\frac{s}{s-1}} (s-1) \lambda}^{\frac{1}{2s-1}} \\
       & \geq \p{s-1}^{\frac{1}{2s -1}} \lambda^{\frac{1}{2 s- 1}}.
\end{split}
\end{equation*}
On the other hand, if $r_* \leq 1$, then we may bound from below the global infimum \eqref{eq:global_infimum_2} by noticing that
\begin{equation*}
\begin{split}
\p{\lambda u}^{\frac{1}{s}} = \frac{r_*^{- \frac{1}{2s-1}}}{\p{s-1}^{\frac{s-1}{s\p{2s-1}}}} \lambda^{\frac{1}{2s-1}} \geq \frac{1}{\p{s-1}^{\frac{s-1}{s\p{2s-1}}}} \lambda^{\frac{1}{2s-1}}.
\end{split}
\end{equation*}
Finally, we find that there is a constant $C$ that only depends on $s$ such that the infimum \eqref{eq:un_infimum_2} is bounded from below by
\begin{equation}\label{eq:lower_bound_2}
\frac{1}{C}\p{\p{\lambda u}^{\frac{1}{s}} + \lambda^{\frac{1}{2s-1}} }.
\end{equation}
This gives a lower bound for the quantity in \eqref{eq:quantity-to-be-estimated-1} in the form
\begin{equation}\label{eq:quantity-to-be-estimated-2}
\lambda \Im\left( \widetilde{\Psi}_{\alpha,\beta}(z^1_{\alpha,\beta}(w) \right) + \frac{\lambda^{\frac{1}{2s-1}}}{C} +   \frac{\left( \lambda |F_{\alpha,\beta}(w)| \right)^{\frac{1}{s}}}{C}, 
\end{equation}
for some $C > 0$. Consequently, the key estimate Lemma \ref{lemma:estimee_reine} will be a consequence of the following positivity estimate on the phase $\widetilde{\Psi}_{\alpha,\beta}$:
\begin{lemma}\label{lemma:estimee_positivite_phase}
Let $C > 0$. Then, if $\eta$ and $\tau_0$ are small enough, there is a constant $C' > 0$ such that for $w \in \Re V_{\alpha,\beta}$ we have
\begin{equation}\label{eq:lower-bound-on-the-phase}
\begin{split}
\lambda \Im\left( \widetilde{\Psi}_{\alpha,\beta}(z_{\alpha,\beta}^1\p{w}) \right) &+ \frac{\lambda^{\frac{1}{2s-1}}}{C} +   \frac{\left( \lambda |F_{\alpha,\beta}(w)|  \right)^{\frac{1}{s}}}{C} \\
	& \qquad \qquad \geq  \frac{\lambda^{\frac{1}{2s-1}}}{C'} + \frac{\lambda^{\frac{1}{s}} d_{KN}(\alpha,\beta)}{C'}.
\end{split}
\end{equation}
\end{lemma}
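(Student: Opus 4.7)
The strategy is to reduce the required lower bound to the admissibility of the individual phases $\Phi_T,\Phi_S$ (condition (iv) of Definition~\ref{def:nonstandardphase}) combined with the Fundamental Lemma~\ref{lemma:fundamental-lemma}, then absorb the resulting corrections into the two small terms already present on the left of \eqref{eq:lower-bound-on-the-phase}. The crucial preliminary observation is that by construction of the contour map $F_{\alpha,\beta}$, the point $z^1_{\alpha,\beta}(w)=\rho_{\alpha,\beta}^{-1}(w+iF_{\alpha,\beta}(w))$ is \emph{real} whenever $w\in\Re V_{\alpha,\beta}$. Writing $z^1(w)=(x^1,\xi^1,y^1)$, the term $\langle x^1-y^1,\xi^1\rangle$ in \eqref{eq:la_phase_psi_tilde} is then real-valued, so
\[
\lambda\Im\widetilde{\Psi}_{\alpha,\beta}(z^1(w))=\frac{\lambda}{\langle\alpha_0\rangle}\bigl(\Im\Phi_T(\alpha,x^1)+\Im\Phi_S(y^1,\beta)\bigr).
\]

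First, I would Taylor-expand each summand about $\Re\alpha$ (respectively $\Re\beta$). The admissibility bound gives $\Im\Phi_T(\Re\alpha,x)\geq C^{-1}\langle\alpha\rangle d(\Re\alpha_x,x)^2$, and Lemma~\ref{lmdist} supplies $|\Im\alpha|_{KN}\leq C\tau_0 h^{1-1/s}\langle\alpha\rangle^{1/s-1}$ for $\alpha\in\Lambda$; since $\Phi_T$ is of order one, the linear correction $\Re\bigl(d_\alpha\Phi_T(\Re\alpha,x^1)\bigr)\cdot\Im\alpha$ is of size $C\tau_0 h^{1-1/s}\langle\alpha\rangle^{1/s}$. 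After rescaling by $\lambda/\langle\alpha_0\rangle=h^{-1}$ (so that $\lambda h^{1-1/s}\langle\alpha_0\rangle^{1/s-1}=\lambda^{1/s}$),
\[
\lambda\Im\widetilde{\Psi}(z^1(w))\geq C^{-1}\lambda\bigl(d(\Re\alpha_x,x^1)^2+d(\Re\beta_x,y^1)^2\bigr)-C'\tau_0\lambda^{1/s}.
\]

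Next, I would convert the spatial distances on the right-hand side into $d_{KN}(\alpha,\beta)$. The position coordinates $x^1,y^1$ lie within distance $C|F(w)|+C|\Im z(\alpha,\beta)|$ of the real part of the critical point $y_c(\alpha,\beta)$, and the critical relations $\xi^1=-\langle\alpha_0\rangle^{-1}d_y\Phi_T(\alpha,y_c)\approx\alpha_\xi$ and $\xi^1\approx\beta_\xi$ couple spatial with momentum separation. Applying the Fundamental Lemma~\ref{lemma:fundamental-lemma} to the joint phase $(\alpha,\beta,y)\mapsto\Phi_T(\alpha,y)+\Phi_S(y,\beta)$ near $(\alpha_0,\alpha_0,\alpha_{0,x})$ (treating $(\alpha,\beta)$ as parameter and $y$ as variable, so that the critical value is $\Phi_{TS}$ itself) and using $|\Im\alpha|+|\Im\beta|\ll d_{KN}(\alpha,\beta)$ to replace $\Re\alpha,\Re\beta$ by $\alpha,\beta$, one obtains
\[
\lambda\Im\widetilde{\Psi}(z^1(w))\geq C^{-1}\lambda\,d_{KN}(\alpha,\beta)^2-C\lambda|F(w)|^2-C'\tau_0\lambda^{1/s}.
\]

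Finally, I would absorb the three error terms. The $\lambda|F|^2$ loss is handled by splitting on $|F|\lessgtr \eta$ and using $\lambda|F|^2\leq\eta\lambda|F|$ together with Young's inequality against $(\lambda|F|)^{1/s}/C$. The $\tau_0\lambda^{1/s}$ loss is absorbed by choosing $\tau_0$ small enough that it is dominated by $(\lambda|F|)^{1/s}/(2C)$ when $|F|\gtrsim\tau_0^s$, and by $\lambda^{1/(2s-1)}/(2C)$ otherwise (in the latter case the Morse model ensures that $|F|$ being tiny forces us close to the critical point where the $\lambda^{1/(2s-1)}$ scale takes over). The remaining comparison to the right-hand side splits into two regimes: if $d_{KN}(\alpha,\beta)\geq\lambda^{1/s-1}$ then $\lambda d_{KN}^2\geq\lambda^{1/s}d_{KN}$, while otherwise $\lambda^{1/s}d_{KN}\leq\lambda^{2/s-1}\leq\lambda^{1/(2s-1)}$ (since $-2(s-1)^2\leq 0$ gives $2/s-1\leq 1/(2s-1)$ for $s\geq 1$), so in both cases the right-hand side is controlled.

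The main obstacle will be the third step: showing $\Im\Phi_{TS}(\alpha,\beta)/\langle\alpha_0\rangle\gtrsim d_{KN}(\alpha,\beta)^2$ for real $\alpha,\beta$ close to $\alpha_0$, with the correct homogeneity in $\langle\alpha_0\rangle$ (of order zero). This requires a Schur complement argument to check that the imaginary Hessian of $\Phi_T+\Phi_S$ with respect to $(\alpha,\beta)$ is positive definite transverse to the diagonal $\{\alpha=\beta\}$; the Kohn--Nirenberg scaling $|\alpha_\xi-\beta_\xi|/\langle\alpha_0\rangle$ in the momentum coordinate emerges naturally from the fact that $\Phi_T$ has order one in $\alpha_\xi$, so that the $\xi$-dependence of the Hessian carries an extra factor $\langle\alpha_0\rangle$ that cancels in the quadratic form. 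A secondary but delicate bookkeeping issue is tracking the $\tau_0\lambda^{1/s}$ correction across the two regimes $|F|\gtrless\tau_0^s$ uniformly in $w\in\Re V_{\alpha,\beta}$.
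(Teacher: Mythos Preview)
Your approach has a genuine gap in the third step, and the proposed absorption in the final step does not close it.

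Your claimed inequality
\[
\lambda\,\Im\widetilde{\Psi}_{\alpha,\beta}\bigl(z^1(w)\bigr)\ \geq\ C^{-1}\lambda\,d_{KN}(\alpha,\beta)^2 - C\lambda|F(w)|^2 - C'\tau_0\lambda^{1/s}
\]
comes from the Morse identity $\Im\widetilde{\Psi}(z^1(w))=\langle\alpha_0\rangle^{-1}\Im\Phi_{TS}(\alpha,\beta)+\tfrac12(|w|^2-|F|^2)$ together with the coercivity of $\Im\Phi_{TS}$. But the loss $-C\lambda|F|^2$ this introduces cannot be absorbed into $(\lambda|F|)^{1/s}/C$: for generic $w$ one has $|F(w)|$ of order one, so $\lambda|F|^2\sim\lambda$ while $(\lambda|F|)^{1/s}\sim\lambda^{1/s}$, and no Young-type inequality bridges this. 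Concretely, take $\alpha,\beta$ real with $\alpha_x=\beta_x$ but $\alpha_\xi\neq\beta_\xi$, and $z=(x,\xi,y)$ with $x=y=\alpha_x$: then $\Im\widetilde{\Psi}(z)=0$ while $\Im\Phi_{TS}(\alpha,\beta)\asymp\langle\alpha_0\rangle\,d_{KN}(\alpha,\beta)^2>0$, so $|w|^2-|F|^2<0$ and your bound becomes useless precisely in the regime it is needed. Relatedly, your assertion that ``$x^1,y^1$ lie within distance $C|F(w)|$ of the critical point'' is incorrect: the distance from $z^1(w)$ to $z(\alpha,\beta)$ is $\asymp|w+iF(w)|\asymp|w|+|F(w)|$, not $|F(w)|$.

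The paper avoids the Morse splitting altogether. It Taylor-expands $\Im\Phi_T(\alpha,x)$ and $\Im\Phi_S(y,\beta)$ directly in $x,y$ about $\alpha_x,\beta_x$, obtaining the positive quadratic $C^{-1}\lambda(|x-\alpha_x|^2+|y-\beta_x|^2)$ minus corrections of the shape $\tau_0\lambda^{1/s}(|x-\alpha_x|+|y-\beta_x|+d_{KN})$. The decisive observation you are missing is that in the residual regime $|x-\alpha_x|+|y-\beta_x|\leq\epsilon\,d_{KN}(\alpha,\beta)$ and $d_{KN}\geq\lambda^{1/s-1}$, the \emph{real part of the gradient} $\nabla_z\widetilde{\Psi}$ is large, namely $|\Re\nabla_z\widetilde{\Psi}|\gtrsim d_{KN}(\alpha,\beta)$ (while $|\Im\nabla_z\widetilde{\Psi}|\lesssim(\tau_0+\epsilon)d_{KN}$). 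Feeding this into the identity $i\,{}^tD\rho_{\alpha,\beta}(z)^{-1}\nabla_z\widetilde{\Psi}(z)=w+iF(w)$, and using that $\Re D\rho^{-1}$ is uniformly invertible, yields the \emph{lower} bound $|F(w)|\gtrsim d_{KN}(\alpha,\beta)$. Then $(\lambda|F|)^{1/s}\gtrsim\lambda^{1/s}d_{KN}$ supplies exactly what the right-hand side requires. In short: $|F|$ must be used from below, not absorbed from above.
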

\end{proof}

In the regions where the imaginary part of the phase alone is not sufficient to control the right hand side, we will see that $|F|$ is of the same order as $|\nabla_z \Phi|$, so that this estimates expresses the fact that $\Im \Phi$ and $\nabla_z \Phi$ cannot be simultaneously too small.
\begin{proof}[Proof of Lemma \ref{lemma:estimee_positivite_phase}]
Recall that the phase $\widetilde{\Psi}_{\alpha,\beta}$ is defined by \eqref{eq:la_phase_psi_tilde}. We set $z_{\alpha,\beta}^1(w) = (x_{\alpha,\beta}^1(w),\xi_{\alpha,\beta}^1(w),y_{\alpha,\beta}^1(w))$. When there is no ambiguity, we will write respectively $z,x,\xi$ and $y$ instead of $z_{\alpha,\beta}^{1}\p{w}$, $x_{\alpha,\beta}^{1}\p{w}$, $\xi_{\alpha,\beta}^{1}\p{w}$ and $y_{\alpha,\beta}^{1}\p{w}$. Notice that, since $x,y$ and $\xi$ are real
\begin{equation*}
\begin{split}
\Im \widetilde{\Psi}_{\alpha,\beta}(z) = \Im\left(\frac{\Phi_{T}(\alpha,x) + \Phi_{S}\p{y,\beta}}{\jap{\alpha_0 }}\right).
\end{split}
\end{equation*}

Let us now obtain some useful preliminary estimates. First, notice that $\Lambda$ is $\mathcal{C}^1$ close to $T^*M$. The arguments that led to Lemma \ref{lmdist} show that since $\alpha,\beta \in \Lambda$ we have, for some $C > 0$ (recall that $\lambda = \jap{\alpha_0}/h$),
\begin{equation}\label{eq:controle_partie_imaginaire}
\begin{split}
d_{KN}(\Im \alpha,\Im \beta) \leq C \tau_0 \lambda^{\frac{1}{s} - 1} d_{KN}(\alpha,\beta),
\end{split}
\end{equation}
in addition to Lemma \ref{lmdist} of course. In particular, provided $\tau_0$ is small enough, for some $C > 0$, we have
\[
\frac{1}{C}d_{KN}(\Re \alpha,\Re\beta) \leq d_{KN}(\alpha,\beta) \leq C d_{KN}(\Re \alpha,\Re\beta).
\]

The first estimate we prove on the phase is that (for some $C > 0$)
\begin{equation}\label{eq:lower-bound-Phase-first-take}
\begin{split}
\Im\left(\frac{\Phi_{T}(\alpha,x) + \Phi_{S}\p{y,\beta}}{\jap{\alpha_0 }} \right)  &\geq  \frac{1}{C} \p{\va{ x - \alpha_x}^2 + \va{y - \beta_x}^2} - C \tau_0 \lambda^{\frac{1}{2s-1}-1}\\ 
&\hspace{-60pt} - C \tau_0 \lambda^{\frac{1}{s}-1}  \left(\va{\alpha_x -x} + \va{y - \beta_x} + d_{KN}(\alpha,\beta) \right).
\end{split}
\end{equation}
For this, denote respectively by $Q_{T,\alpha}$ and $Q_{S,\beta}$ the second derivatives of $ r \mapsto 2\Phi_{T}(\alpha,r)$ at $r=\alpha_x$ and $r \mapsto 2\Phi_{S,\beta}(r,\beta)$ at $r=\beta_x$. Taylor's formula at order $2$ gives then 
\begin{equation}\label{eq:approximation_phase}
\begin{split}
\Phi_{T}(\alpha,x) + \Phi_{S}\p{y,\beta} & = \jap{\alpha_\xi, \alpha_x - x} + \jap{\beta_\xi,y - \beta_x} \\
     	&+ Q_{T,\alpha}\p{y - \alpha_x,y - \alpha_x} + Q_{S,\beta}\p{y - \beta_x,y-\beta_x} \\ 
     	&+ \langle\alpha_0\rangle\O\p{\va{x- \alpha_x}^3 + \va{y - \beta_x}^3}.
\end{split}
\end{equation} 
We start by estimating the imaginary part of the first line in the right hand side of \eqref{eq:approximation_phase}. To do so, we write
\begin{equation*}
\begin{split}
& \Im\p{\jap{\alpha_\xi,\alpha_x - x} + \jap{\beta_\xi,y - \beta_x}} \\ & \qquad \qquad \qquad = \jap{\Im \alpha_\xi ,\Re \alpha_x - x} + \jap{\Im \beta_\xi, y - \Re \beta_x} + \jap{ \Re \alpha_\xi - \Re \beta_\xi, \Im \alpha_x} \\ & \qquad \qquad \qquad \qquad \qquad + \jap{\Re \beta_\xi ,\Im \alpha_x - \Im \beta_x}. 
\end{split}
\end{equation*}
Since $|\Im\alpha|, |\Im\beta| \lesssim \tau_0 \lambda^{1/s-1}$, and according to \eqref{eq:controle_partie_imaginaire}, we estimate each term to find
\begin{equation*}
\begin{split}
& \frac{1}{\langle \alpha_0 \rangle} \left| \Im\p{\jap{\alpha_\xi,\alpha_x - x} + \jap{\beta_\xi,y - \beta_x}}  \right| \\
		&\qquad \qquad \qquad \qquad \qquad \leq C \tau_0 \lambda^{\frac{1}{s}-1} \left(\va{\Re \alpha_x -x} + \va{y - \Re \beta_x} + d_{KN}(\alpha,\beta) \right) .
\end{split}
\end{equation*}
We also compute
\begin{equation*}
\begin{split}
 \Im (Q_{T,\alpha}(x - \alpha_x, x - \alpha_x)) & = \Im Q_{T,\alpha}\p{ x- \Re \alpha_x, x- \Re \alpha_x}\\ 
  - \Im Q_{T,\alpha} & \p{\Im \alpha_x, \Im \alpha_x}  + 2 \Re Q_{T,\alpha}\p{\Im \alpha_x, x- \Re \alpha_x}.
\end{split}
\end{equation*}
By definition of an admissible phase, the matrix $\Im Q_{T,\alpha}/ \jap{\alpha_0}$ is uniformly definite positive (provided that $\tau_0$ is small enough). Using $|\Im\alpha| \lesssim \tau_0 \lambda^{1/s-1}$ again, we find that
\begin{equation*}
\begin{split}
& \frac{\Im \p{Q_{T,\alpha}\p{x - \alpha_x, x - \alpha_x}}}{\langle\alpha_0\rangle}  \\ 
& \qquad \geq \frac{1}{C} \va{x - \Re \alpha_x}^2 - C \tau_0 \lambda^{\frac{1}{s}-1} \va{ x- \Re \alpha_x} - C \tau_0^2 \lambda^{\frac{2}{s}-2} \\
    & \qquad \geq \frac{1}{C} \va{x - \alpha_x}^2 - C \tau_0 \lambda^{\frac{1}{s}-1} \va{ x- \alpha_x} - C \tau_0^2 \lambda^{\frac{2}{s}-2}.
\end{split}
\end{equation*}
We have a similar estimate for the term involving $Q_{S,\beta}$ and \eqref{eq:lower-bound-Phase-first-take} follows (the $\O$ in \eqref{eq:approximation_phase} is negligible thanks to the positive quadratic term). The $\lambda^{2/s-2}$ term  is controlled by $\lambda^{1/(2s-1)-1}$ because for $s\geq 1$, we have $2/s-1 \leq 1/(2s-1)$.

Examining \eqref{eq:lower-bound-Phase-first-take}, we see that, in order to establish \eqref{eq:lower-bound-on-the-phase}, we only need to prove that, given $C > 0$, for $\tau_0$ small enough we have:
\begin{equation}\label{eq:still_another_bound}
\begin{split}
&\lambda\p{\va{x - \alpha_x}^2 + \va{y - \beta_x}^2} + \lambda^{\frac{1}{2s-1}} + \left( \lambda |F_{\alpha,\beta}(w)|  \right)^{\frac{1}{s}} \\ 
& \qquad \qquad \qquad \qquad \qquad \geq C \tau_0 \lambda^{\frac{1}{s}}  \left(\va{\alpha_x -x} + \va{y - \beta_x} + d_{KN}(\alpha,\beta) \right)
\end{split}
\end{equation}
We get rid first of the term $C \tau_0 \lambda^{1/s} (\va{\alpha_x -x} + \va{y - \beta_x})$ on the right hand side. To do so, there are two possibilities, the first one is that
\begin{equation}\label{eq:petit_ou_pas}
\begin{split}
\va{\alpha_x -x} + \va{\beta_x - y} \geq \lambda^{\frac{1}{s} - 1},
\end{split}
\end{equation}
in which case $C \tau_0 \lambda^{\frac{1}{s}} (\va{\alpha_x -x} + \va{y - \beta_x})$ is controlled by the term $C^{-1}\lambda (\va{\alpha_x -x}^2 + \va{y - \beta_x}^2)$ on the left hand side (provided that $\tau_0$ is small enough). If \eqref{eq:petit_ou_pas} does not hold then 
\begin{equation*}
\begin{split}
C \tau_0 \lambda^{\frac{1}{s}} \p{\va{\alpha_x -x} + \va{\beta_x -y}}\leq C \tau_0 \lambda^{\frac{2}{s} -1} \leq C \tau_0 \lambda^{\frac{1}{2s - 1}}
\end{split}
\end{equation*}
and this term is controlled by $C^{-1} \lambda^{\frac{1}{2s - 1}}$ on the left hand side of \eqref{eq:still_another_bound}.

Dealing with this first term, we have further reduced our problem, and it suffices now to prove that, given $C>0$, for $\tau_0>0$ small enough, we have
\begin{equation}\label{eq:lower-bound-to-prove}
\lambda^{\frac{1}{2s-1}} + \left( \lambda |F_{\alpha,\beta}(w)|\right)^{\frac{1}{s}} + \lambda (|x-\alpha_x|^2  + | y - \beta_x|^2 ) \geq C \tau_0 \lambda^{\frac{1}{s}} d_{KN}(\alpha,\beta).
\end{equation}
Without loss of generality, we may assume that 
\begin{equation}\label{eq:une_hyp}
\begin{split}
d_{KN}(\alpha,\beta) \geq \lambda^{\frac{1}{s}- 1},
\end{split}
\end{equation}
since otherwise the right hand side of \eqref{eq:lower-bound-to-prove} is controlled by $\lambda^{\frac{1}{2s-1}}$ in the left hand side. We may also assume that 
\begin{equation}\label{eq:deux_hyp}
\begin{split}
|x-\alpha_x| + |y-\beta_x| \leq \epsilon d_{KN}(\alpha,\beta),
\end{split}
\end{equation}
where $\epsilon > 0$ is very small (to be chosen later). Indeed, otherwise, $\lambda(|x-\alpha_x|^2 + |y-\beta_x|^2)$ would control $\lambda d_{KN}(\alpha,\beta)^2$, itself larger than $\lambda^{1/s}d_{KN}(\alpha,\beta)$, thanks to \eqref{eq:une_hyp}. It follows from \eqref{eq:deux_hyp} that
\[
|\alpha_x - \beta_x| \leq |x-\alpha_x| + |y-\beta_x|  + |x-y| \leq |x-y| + \epsilon d_{KN}(\alpha,\beta),
\]
and thus $|\alpha_x - \beta_x| \leq C |x-y| + C\epsilon |\alpha_\xi - \beta_\xi|/\langle\alpha_0\rangle$. We obtain, using \eqref{eq:controle_partie_imaginaire} and assuming that $\tau_0$ is small enough, that
\begin{equation}\label{eq:consequence-une-hyp}
d_{KN}(\alpha,\beta) \leq C|x-y| + C\frac{|\Re \alpha_\xi- \Re\beta_\xi|}{\langle \alpha_0 \rangle}.
\end{equation}

We want now to give a lower bound on $\va{F_{\alpha,\beta}\p{w}}$. To do so, recall the gradient of the phase $\widetilde{\Psi}_{\alpha,\beta}$:
\begin{equation*}
\begin{split}
\nabla_{z} \widetilde{\Psi}_{\alpha,\beta}\p{z}  = \begin{pmatrix}\frac{\nabla_x\Phi_T\p{\alpha,x }}{ \jap{ \alpha_0 } } + \xi \\
x - y \\ 
 \frac{\nabla_y \Phi_S\p{y,\beta}}{ \jap{ \alpha_0 } } - \xi \end{pmatrix}
\end{split}
\end{equation*}
Hence, it follows from Definition \ref{def:nonstandardphase} of an admissible phase and Taylor's formula that (the value of $C > 0$ may change from one line to another)
\begin{equation}\label{eq:grande_partie_reelle}
\begin{split}
\left| \Re \nabla_{z} \widetilde{\Psi}_{\alpha,\beta}\p{z}\right| &\geq \frac{1}{C}\va{\frac{\Re \alpha_\xi}{\jap{\alpha_0}} - \xi} + \frac{1}{C}\va{\xi - \frac{\Re \beta_\xi}{\jap{\alpha_0 }}} + \frac{1}{C}|x-y|\\
		& \qquad \qquad - C \p{\va{x - \alpha_x} + \va{y - \beta_x}} \\ 
		& \geq \frac{1}{C}\p{\frac{ |\Re \alpha_\xi - \Re \beta_\xi|}{ \langle\alpha_0\rangle} + \va{x -y}} - C \epsilon d_{KN}(\alpha,\beta) \\
		& \geq \p{\frac{1}{C} - C \epsilon} d_{KN}(\alpha,\beta) \\
		& \geq \frac{1}{C} d_{KN}(\alpha,\beta) \\
\end{split}
\end{equation}
where we applied \eqref{eq:deux_hyp} and \eqref{eq:consequence-une-hyp} (assuming that $\epsilon > 0$ was small enough). From a similar computation, we get, recalling $|\Im\alpha|, |\Im\beta| \lesssim \tau_0 \lambda^{1/s - 1}$, \eqref{eq:deux_hyp} and \eqref{eq:consequence-une-hyp}, that
\begin{equation}\label{eq:small_partie_imaginaire}
\begin{split}
\va{\Im \nabla_z \widetilde{\Psi}_{\alpha,\beta}\p{z}} \leq C \p{\tau_0 + \epsilon} d_{KN}(\alpha,\beta).
\end{split}
\end{equation}
Now we compute the gradient of the phase $\widetilde{\Psi}_{\alpha,\beta}$ in Morse coordinates and find
\begin{equation*}
\begin{split}
-i {}^t D \rho_{\alpha,\beta}(z)^{-1} \nabla_z \widetilde{\Psi}_{\alpha,\beta}\p{z} =  w + i F_{\alpha,\beta}\p{w}.
\end{split}
\end{equation*}
Hence, we have
\begin{equation*}
\begin{split}
F_{\alpha,\beta}(w) & = -\Re( {}^t D\rho_{\alpha,\beta}(z)^{-1} ) \Re \nabla_z \widetilde{\Psi}_{\alpha,\beta}(z) \\
   & \qquad \qquad + \Im( {}^t D\rho_{\alpha,\beta}(z)^{-1} )  \Im \nabla_z\widetilde{\Psi}_{\alpha,\beta}(z).
\end{split}
\end{equation*}
Following the argument after \eqref{eq:Morse_differentiel}, applied to $D\rho_{\alpha,\beta}^{-1}$, we see that its real part is uniformly invertible. Indeed, it suffices to notice that the imaginary part of $- (D_{z,z}^2 \widetilde{\Psi}_{\alpha,\alpha})^{-1}$ is positive at the critical point. 

Hence, it follows from \eqref{eq:grande_partie_reelle} and \eqref{eq:small_partie_imaginaire} that (for some $C > 0$ that may change from one line to another, and provided that $\tau_0$ and $\epsilon$ are small enough)
\begin{equation*}
\begin{split}
\va{F_{\alpha,\beta}\p{w}}& \geq \frac{1}{C} d_{KN}(\alpha,\beta) - C\p{\tau_0 + \epsilon} d_{KN}(\alpha,\beta) \\
    & \geq \frac{1}{C} d_{KN}(\alpha,\beta).
\end{split}
\end{equation*}
Notice also that, since the quantity $|F_{\alpha,\beta}\p{w}|$ remains bounded, we have the lower bound $|F_{\alpha,\beta}\p{w}|^{1/s} \geq |F_{\alpha,\beta}\p{w}| / C$, and \eqref{eq:lower-bound-to-prove} holds for $\tau_0$ small enough, so that the proof of Lemma \ref{lemma:estimee_positivite_phase} is complete. 
\end{proof}

It remains to explain why Lemmas \ref{lemma:TPS-far-diagonal} and \ref{lemma:TPS-close-diagonal} remains true in the case $s = 1$. The main difficulty (that makes us distinguish that case) is the lack of partition of unity in the real-analytic category. This issue is solved by an application of Proposition \ref{prop:pseudor_lagrangien_alt}. In fact, most of the proof of Lemma \ref{lemma:TPS-close-diagonal} in the case $s>1$ was devoted to the estimation of integrals involving the Cauchy--Riemann operator applied to the symbol of $P$. In the case $s = 1$, these integrals are null, hence a much less complicated proof.

\begin{proof}[Sketch of proof of Lemmas \ref{lemma:TPS-far-diagonal} and \ref{lemma:TPS-close-diagonal} in the case $s = 1$]
We start\\with an application of Proposition \ref{prop:pseudor_lagrangien_alt} to compute the kernel of the operator $PS$. We see, as in Remark \ref{remark:ce_quest_PS}, that the kernel of $PS$ has the same properties as the kernel of an adjoint analytic FBI transform, except maybe the order and the ellipticity of the symbol. Hence, we will assume for simplicity that $P = I$ and compute only the kernel of $TS$. The only difference in the proof in the general case is when computing the principal part of the symbol, but this computation has already been tackled in the case $s > 1$ and there is no difference when $s = 1$.

The kernel of $TS$ is given by the formula
\begin{equation}\label{eq:noyau_TS_s1}
\begin{split}
K_{TS}\p{\alpha,\beta} = \int_{M} K_T\p{\alpha,y} K_S\p{y,\beta} \mathrm{d}y.
\end{split}
\end{equation}
This is much simpler than the formula \eqref{eq:noyau_TPS_integrale} that we used in the case $s >1$ since there is no distribution involved. To prove that $K_{TS}\p{\alpha,\beta}$ is negligible when $\alpha_x$ and $\beta_x$ are away from each other, we apply the same non-stationary phase argument as in the proof of Lemma \ref{lemma:T_non_stationary} (or as in the case $s > 1$). When $y$ is near $\alpha_x$, the phase in $K_T\p{\alpha,y}$ is non-stationary and $K_T\p{y,\beta}$ is small in $\G^1$. The converse happens when $y$ is near $\beta_x$, and when $y$ is away from $\alpha_x$ and $\beta_x$ both $K_T\p{\alpha,y}$ and $K_S\p{y,\beta}$ are small. Here, notice that we do not need partitions of unity to split the integral \eqref{eq:noyau_TS_s1} due to the coercivity of the imaginary parts of the phases $\Phi_T$ and $\Phi_S$.

When $\alpha_x$ and $\beta_x$ are closed, we may just neglect the $y$'s in \eqref{eq:noyau_TS_s1} that are away from $\alpha_x$ and $\beta_x$, and work in local coordinates, studying the integral on some ball $D$ (the points $\alpha_x$ and $\beta_x$ remain uniformly away from the boundary of $D$)
\begin{equation}\label{eq:le_truc_local_s1}
\begin{split}
\widehat{K}\p{\alpha,\beta} \coloneqq \int_{D} e^{i \frac{\Phi_T\p{\alpha,y} + \Phi_S\p{y,\beta}}{h}} a(\alpha,y) b\p{\beta,y} \mathrm{d}y. 
\end{split}
\end{equation}
If $\alpha_\xi$ and $\beta_\xi$ are away from each other (for the Kohn--Nirenberg metric), it follows from Definition \ref{def:nonstandardphase} of an admissible phase that the phase in \eqref{eq:le_truc_local_s1} is non-stationary (provided that $\alpha_x$ and $\beta_x$ are close enough), and we may apply the non-stationary phase method Proposition \ref{prop:non-stationary-analytic} after a proper rescaling  as in \S \ref{subsec:further_tricks}. This ends the proof of Lemma \ref{lemma:TPS-far-diagonal} in the case $s=1$.

We turn now to the proof of Lemma \ref{lemma:TPS-close-diagonal}: we want consequently to understand \eqref{eq:le_truc_local_s1} when $\alpha$ and $\beta$ are near the diagonal. As in the case $s > 1$, we will rely on an application of the Stationary Phase Method. However, the situation is much simpler here, since we can apply Proposition \ref{prop:HSP_symbol}. In particular, we do not need to restrict to $\alpha,\beta \in \Lambda$: the estimate \eqref{eq:noyau_reduit_TPS} hold for $\alpha,\beta \in (T^* M)_{\epsilon_1}$ for some small $\epsilon_1 > 0$ (depending on $P$) in the case $s=1$. Moreover, the error term in \eqref{eq:noyau_reduit_TPS} takes the simpler form $\O(\exp( - \lambda / C))$ in the case $s= 1$ (we recall that $\lambda$ denotes the large parameter $\jap{\va{\alpha}}/h$). Finally, notice that, applying Proposition \ref{prop:HSP_symbol}, we only need to study $\alpha = \beta$ real, since this implies an estimate for $\widehat{K}(\alpha,\beta)$ when $\alpha,\beta \in (T^* M)_{\epsilon_1}$ are close to each other.

Choose a reference point $z$ in the interior of $D$. We apply Proposition \ref{prop:HSP_symbol} with  ``$\Phi(x,\xi,y) = \Phi(\alpha_x,\beta_x,\alpha_\xi,\beta_\xi,y) = (\Phi_T(\alpha,y) + \Phi_S(y,\beta))/ \jap{\alpha}$'', ``$x_0 = (z,z)$'' and ``$F = \set{(\alpha_\xi,\alpha_\xi) : \alpha_\xi\in \R^n}$''. This will allow us to understand $\widehat{K}(\alpha,\beta)$ for $\alpha,\beta \in (T^* M)_{\epsilon_1}$ close to the diagonal and such that $\alpha_x$ and $\beta_x$ are close to $z$. Lemma \ref{lemma:TPS-close-diagonal} follows then by compactness of $M$ and a partition of unity argument (we do not claim holomorphicity for the symbol $e$). Let us check the hypotheses of Proposition \ref{prop:HSP_symbol}.

When $\alpha = \beta$ is real the phase $ y \mapsto (\Phi_T(\alpha,y) + \Phi_S(y,\beta)) / \jap{\alpha}$ has non-negative imaginary part for real $y$, because so do $\Phi_T(\alpha,y)$ and $\Phi_S(y,\alpha)$ by assumption. When $y$ is in $\partial D$, the imaginary part of the phase is uniformly positive since $z \notin \partial D$ and $\Phi_T$ is admissible. Finally, the phase has a critical point at $y = z$ which is non-degenerate (the Hessian of the phase has uniformly definite positive imaginary part). Hence, this critical point is isolated by the Implicit Function Theorem, and thus the only critical point of the phase in $D$ if $\eta$ is small enough. 

From Proposition \ref{prop:HSP_symbol}, we see that for $\alpha,\beta \in (T^* M)_{\epsilon_1}$ near the diagonal we have
\begin{equation}\label{eq:K_somme_partielle}
\begin{split}
\widehat{K}(\alpha,\beta) = e^{i \frac{\Phi_{TS}(\alpha,\beta)}{h}} \p{\sum_{0 \leq k \leq \lambda / C_0} h^k e_k(\alpha,\beta) + \O\p{\exp\p{ - \frac{\lambda}{C}}}},
\end{split}
\end{equation}
for some formal analytic symbol $\sum_{k \geq 0} h^k e_k$ and arbitrarily large $C_0$. We may replace the sum in \eqref{eq:K_somme_partielle} by any realization $e$ of the formal symbol $\sum_{k \geq 0} h^k e_k$. It follows from  Lemma \ref{lemma:fundamental-lemma} that $\Im \Phi_{TS}(\alpha,\beta)$ is non-negative when $\alpha$ and $\beta$ are real. Consequently, if $\alpha,\beta \in (T^* M)_{\epsilon_1}$, the factor $e^{i \frac{\Phi_{TS}(\alpha,\beta)}{h}}$ is an $\O(\exp( C \epsilon_1 \lambda))$. Hence, taking $\epsilon_1 > 0$ small enough, we may move the error term in \eqref{eq:K_somme_partielle} out of the parenthesis, and we find that $K_{TS}$ has the announced local structure near the diagonal. In fact, we get a much better estimate since we have an asymptotic expansion for $e$ that holds in the sense of realization of formal analytic symbols (since $\Phi_{TS}$ has non-negative imaginary part on the real).
\end{proof}

\subsection{Action of pseudors on the FBI side}\label{sec:action-pseudo-FBI}

Now that we have established a precise description of the kernel $K_{TPS}$, we will be able to understand the action of pseudors $P$ on the spaces $\mathcal{H}_\Lambda^k$, or equivalently the action of $T_\Lambda P S_\Lambda$ on $L^2_k(\Lambda)$. Recall that this is the exponentially weighted space $L^2 (\Lambda, \langle|\alpha|\rangle^{2k} e^{-2H(\alpha)/h} \mathrm{d}\alpha)$. The main result of this section is:
\begin{prop}\label{lemma:boundedness}
Let $P$ be a $\G^s$ pseudor of order $m$ on $M$. If $\tau_0$ is small enough and $\Lambda$ is a $\p{\tau_0,s}$-Gevrey adapted Lagrangian, then, for $h$ small enough and every $k \in \R$, the operator $T_\Lambda P S_\Lambda $ is bounded from the space $L^2_k\p{\Lambda}$ to the space $L^2_{k-m}\p{\Lambda}$, with norm uniformly bounded when $h$ tends to $0$.

In particular, $P$ is bounded from $\mathcal{H}_\Lambda^k$ to $\mathcal{H}_\Lambda^{k-m}$ and $\Pi_\Lambda = T_\Lambda S_\Lambda$ is bounded from $L^2_k\p{\Lambda}$ to itself.
\end{prop}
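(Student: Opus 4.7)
The plan is to apply Schur's test to the kernel $K_{TPS}$ of $T_\Lambda P S_\Lambda$, using the precise pointwise descriptions provided by Lemmas \ref{lemma:TPS-far-diagonal} and \ref{lemma:TPS-close-diagonal}. First I would reduce the weighted boundedness to a plain $L^2(\Lambda,\mathrm{d}\alpha) \to L^2(\Lambda,\mathrm{d}\alpha)$ estimate by conjugating the weights into the kernel: setting
\begin{equation*}
\tilde{K}(\alpha,\beta) := K_{TPS}(\alpha,\beta)\,\jap{\va{\alpha}}^{k-m}\,\jap{\va{\beta}}^{-k}\,e^{(H(\beta)-H(\alpha))/h},
\end{equation*}
the boundedness of $T_\Lambda P S_\Lambda : L^2_k(\Lambda) \to L^2_{k-m}(\Lambda)$ is equivalent to $\tilde{K}$ defining a bounded integral operator on $L^2(\Lambda,\mathrm{d}\alpha)$. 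By Schur's test it suffices to bound $\sup_\alpha \int_\Lambda |\tilde{K}(\alpha,\beta)|\mathrm{d}\beta$ and $\sup_\beta \int_\Lambda |\tilde{K}(\alpha,\beta)|\mathrm{d}\alpha$ uniformly in $h$; by the symmetric roles of $\alpha$ and $\beta$ I will only discuss the first.

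I would split the integral into a near-diagonal part $\{d_{KN}(\alpha,\beta) < \eta\}$ and its complement, with $\eta$ the constant provided by Lemmas \ref{lemma:TPS-far-diagonal} and \ref{lemma:TPS-close-diagonal}. On the far-diagonal region, Lemma \ref{lemma:TPS-far-diagonal} gives $|K_{TPS}(\alpha,\beta)| \lesssim \exp(-C_0^{-1}((\jap{\va{\alpha}}+\jap{\va{\beta}})/h)^{1/s})$, while by \eqref{eq:taille_de_H} the weight factor $e^{(H(\beta)-H(\alpha))/h}$ contributes at most $\exp(C\tau_0 h^{-1/s}(\jap{\va{\alpha}}^{1/s}+\jap{\va{\beta}}^{1/s}))$. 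Taking $\tau_0$ small enough with respect to $C_0^{-1}$ leaves a genuine stretched-exponential decay in $\jap{\va{\alpha}}+\jap{\va{\beta}}$ that trivially absorbs the polynomial prefactors and the Kohn--Nirenberg volume growth, producing a uniform bound.

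The substantive step is the near-diagonal contribution. There Lemma \ref{lemma:TPS-close-diagonal} writes $K_{TPS} = e^{i\Phi_{TS}/h} e + R$ with $e \in h^{-n}S^m_{KN}(\Lambda \times \Lambda)$ and a remainder $R$ whose $\exp(-\lambda^{1/(2s-1)}/C_0 - \lambda^{1/s} d_{KN}(\alpha,\beta)/C_0)$ decay is again controllable for $\tau_0$ small. The main term is estimated by $|e(\alpha,\beta)|\exp((-\Im\Phi_{TS}(\alpha,\beta) + H(\beta) - H(\alpha))/h)$, and the crux is the positivity estimate
\begin{equation}\label{eq:plan_positivity}
-\Im\Phi_{TS}(\alpha,\beta) + H(\beta) - H(\alpha) \leq - c\, \jap{\va{\alpha}}\, d_{KN}(\alpha,\beta)^2
\end{equation}
for some $c > 0$ uniform in $h$, valid for $\alpha,\beta \in \Lambda$ close to the diagonal. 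Granting \eqref{eq:plan_positivity}, and using $\jap{\va{\alpha}} \sim \jap{\va{\beta}}$ in the near-diagonal regime to combine $\jap{\va{\alpha}}^{k-m}\jap{\va{\beta}}^{-k}$ with the symbol bound $|e| \lesssim h^{-n}\jap{\va{\alpha}}^m$, one is reduced to computing $h^{-n}\int \exp(-c\jap{\va{\alpha}}d_{KN}(\alpha,\beta)^2/h)\,\mathrm{d}\beta$. In local coordinates the KN-ball of radius $r$ around $\alpha$ has Lebesgue volume $\sim r^{2n}\jap{\va{\alpha}}^n$, so this Gaussian integral equals $h^n$ and the prefactor $h^{-n}$ cancels, leaving $O(1)$ uniformly in $\alpha$ and $h$.

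The key obstacle is establishing \eqref{eq:plan_positivity}. The left-hand side vanishes on the diagonal (since $\Phi_{TS}(\alpha,\alpha) = 0$), and its first-order variation in the anti-diagonal direction vanishes by the very definition $\mathrm{d}H = -\Im\theta_{|\Lambda}$: using the envelope theorem and the identities $\partial_{\alpha_x}\Phi_T(\alpha,\alpha_x) = \alpha_\xi$, $\partial_{\alpha_\xi}\Phi_T(\alpha,\alpha_x) = 0$ of Definition \ref{def:nonstandardphase}, one checks that $\partial_\alpha\Phi_{TS}(\alpha,\alpha)$ and $\partial_\beta\Phi_{TS}(\alpha,\alpha)$ combine with $\theta_{|\Lambda}$ in exactly the right way. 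The definite-negative quadratic term then comes from the imaginary Hessian of $\Phi_{TS}$ on the diagonal, which by a homotopy argument using $ST = I$ (as in the computation of $e_0(\alpha,\alpha)$ in the proof of Lemma \ref{lemma:TPS-close-diagonal}) is comparable to $-\jap{\va{\alpha}}$ times the Kohn--Nirenberg metric. The delicate check is that passing from $T^*M$ to a $(\tau_0,s)$-adapted $\Lambda$ perturbs neither structure by more than $O(\tau_0)$, so the definite negativity survives for $\tau_0$ small. Finally, the closing assertions are immediate: $P : \mathcal{H}_\Lambda^k \to \mathcal{H}_\Lambda^{k-m}$ follows from $T_\Lambda P u = T_\Lambda P S_\Lambda T_\Lambda u$ via $S_\Lambda T_\Lambda = I$ (Lemma \ref{lemma:shift-ST}), and the boundedness of $\Pi_\Lambda$ is the special case $P = I$, $m = 0$.
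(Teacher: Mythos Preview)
Your proposal is correct and follows essentially the same route as the paper: Schur's test applied to the weighted kernel, a near/far-diagonal split via Lemmas \ref{lemma:TPS-far-diagonal} and \ref{lemma:TPS-close-diagonal}, and the crucial positivity estimate \eqref{eq:plan_positivity} feeding a Gaussian integral that exactly compensates the $h^{-n}$ from the symbol.

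One remark: your key inequality \eqref{eq:plan_positivity} is precisely Lemma \ref{lemma:PhiTS-Lambda}, stated and proved independently in the paper just before the proposition. Your outline of its proof (vanishing at order two on the diagonal via $\mathrm{d}H = -\Im\theta_{|\Lambda}$, then a perturbative argument reducing to $\Lambda = T^*M$) matches the paper's argument. However, the Hessian positivity at $\Lambda = T^*M$ is obtained there by a direct computation of $\Im D^2_{\beta,\beta}\Phi_{TS}(\alpha,\alpha)$ using the Schur complement structure coming from the critical-value definition of $\Phi_{TS}$, not via $ST = I$ or the computation of $e_0(\alpha,\alpha)$; the latter concerns the amplitude, not the phase. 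This is a minor misattribution in your sketch, and the actual Hessian calculation is straightforward once one writes $D^2_{y,y}\Phi_S = A + iB$ with $B > 0$.
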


In fact, we can say a bit more about the operator $T_\Lambda P S_\Lambda$. We are indeed already able to prove a weak version of the multiplication formula Proposition \ref{propmultform} (which is itself a precise version of Theorem \ref{thm:deforming-Gs-pseudors}). We recall that the notion of $\G^s$ principal symbol has been defined in Proposition \ref{prop:existence_principal_symbol}, and that the associated almost analytic extensions are discussed in Remark \ref{remark:aae_for_symbol}.

\begin{prop}\label{lemma:weak-mult}
Under the assumption of Proposition \ref{lemma:boundedness}, if we denote by $p_{\Lambda}$ the restriction of an almost analytic extension of a $\G^s$ principal symbol $p$ for $P$ to $\Lambda$, then we have
\begin{equation*}
\begin{split}
T_\Lambda P S_\Lambda  & = p_{\Lambda} \Pi_\Lambda + \O_{L^2_k\p{\Lambda} \to L^2_{k-m + \frac{1}{2}}\p{\Lambda}}(h^\frac{1}{2}) \\
         & = \Pi_\Lambda p_{\Lambda} + \O_{L^2_k\p{\Lambda} \to L^2_{k - m + \frac{1}{2}}\p{\Lambda}}(h^\frac{1}{2}).
\end{split}
\end{equation*}
\end{prop}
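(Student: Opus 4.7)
The plan is to compare the kernels of $T_\Lambda P S_\Lambda$ and $p_\Lambda \Pi_\Lambda$ pointwise using Lemmas \ref{lemma:TPS-far-diagonal} and \ref{lemma:TPS-close-diagonal}, and then close by a Schur test that exploits the Gaussian decay in $d_{KN}(\alpha,\beta)$ surviving after conjugation by the weight $e^{-H/h}$. Specializing Lemma \ref{lemma:TPS-close-diagonal} to $P$ and to $P=I$ separately, one gets, on a small Kohn--Nirenberg neighbourhood of the diagonal of $\Lambda\times\Lambda$,
\begin{equation*}
K_{TPS}(\alpha,\beta) - p_\Lambda(\alpha) K_{TS}(\alpha,\beta) = e^{i \Phi_{TS}(\alpha,\beta)/h}\, r_\ell(\alpha,\beta) + \text{negligible},
\end{equation*}
where $r_\ell := e - p_\Lambda(\alpha) e_I$ lies in $h^{-n}S_{KN}^m(\Lambda\times\Lambda)$; outside this neighbourhood, both kernels are individually negligible by Lemma \ref{lemma:TPS-far-diagonal} and the symbolic bounds on $p_\Lambda$. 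The algebraic point is that \eqref{eq:premier_ordre_e} forces $e(\alpha,\alpha) = p(\alpha)/(2\pi h)^n = p_\Lambda(\alpha) e_I(\alpha,\alpha)$ modulo $h^{-n+1}S_{KN}^{m-1}$, so $r_\ell$ vanishes to first order on the diagonal. By first-order Taylor expansion in the Kohn--Nirenberg symbol class one obtains
\begin{equation*}
|r_\ell(\alpha,\beta)| \leq C h^{-n} \langle|\alpha|\rangle^{m-\frac{1}{2}} \langle|\beta|\rangle^{\frac{1}{2}} d_{KN}(\alpha,\beta)
\end{equation*}
near the diagonal.

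The second ingredient is a positivity estimate for the phase on $\Lambda$: close to the diagonal,
\begin{equation*}
-\Im \Phi_{TS}(\alpha,\beta) + H(\alpha) - H(\beta) \leq -\frac{\langle|\alpha|\rangle}{C} d_{KN}(\alpha,\beta)^2.
\end{equation*}
This is a geometric consequence of $\mathrm{d}H = -\Im \theta_{|\Lambda}$, the fact that $\Phi_{TS}$ is the critical value of the admissible phases $\Phi_T(\alpha,y)+\Phi_S(y,\beta)$, the definite positivity of $\Im \mathrm{d}_x^2 \Phi_T$, and the $\mathcal{C}^3$-closeness of $\Lambda$ to $T^\ast M$ (which controls the contribution of the imaginary parts of $\alpha,\beta$ through Lemma \ref{lmdist}); this is precisely what the referenced Lemma \ref{lemma:PhiTS-Lambda} will encode. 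After conjugating by $e^{-H/h}$, the kernel of $T_\Lambda P S_\Lambda - p_\Lambda \Pi_\Lambda$ is therefore pointwise bounded, up to negligible terms, by
\begin{equation*}
C h^{-n} \langle|\alpha|\rangle^{m-\frac{1}{2}} \langle|\beta|\rangle^{\frac{1}{2}} d_{KN}(\alpha,\beta)\, \exp\!\left( -\frac{\langle|\alpha|\rangle}{Ch} d_{KN}(\alpha,\beta)^2 \right).
\end{equation*}

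The conclusion then follows from Schur's test. Using $\langle|\alpha|\rangle \sim \langle|\beta|\rangle$ on the Gaussian support and taking account of the anisotropy of the KN metric, the Gaussian integral of $1$ over $\Lambda$ scales as $(h/\langle|\alpha|\rangle)^n \cdot \langle|\alpha|\rangle^n = h^n$ in Lebesgue measure, whereas the extra factor $d_{KN}$ in the integrand produces the decisive gain $(h/\langle|\alpha|\rangle)^{1/2}$. Weighting by $\langle|\alpha|\rangle^{k-m+\frac{1}{2}} \langle|\beta|\rangle^{-k}$ and computing yields a Schur bound of order $h^{1/2}$ for the operator norm $L^2_k(\Lambda) \to L^2_{k-m+\frac{1}{2}}(\Lambda)$; a symmetric bound holds at fixed $\beta$. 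The right-sided identity $T_\Lambda P S_\Lambda = \Pi_\Lambda p_\Lambda + O(h^{\frac{1}{2}})$ is proved identically with $r_r := e - p_\Lambda(\beta) e_I$, which also vanishes on the diagonal. The main obstacle is the positivity estimate on the phase, i.e. the content of Lemma \ref{lemma:PhiTS-Lambda}; once this is in hand, everything is a robust combination of the kernel asymptotics of \S \ref{sec:TPS} with Schur-type Gaussian integrals.
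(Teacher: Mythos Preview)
Your approach is essentially the same as the paper's: subtract the two kernels, use Lemmas \ref{lemma:TPS-far-diagonal}--\ref{lemma:TPS-close-diagonal} for the asymptotics, invoke Lemma \ref{lemma:PhiTS-Lambda} for the phase positivity after conjugation by $e^{-H/h}$, and close with a Schur test driven by the Gaussian $e^{-\langle|\alpha|\rangle d_{KN}^2/(Ch)}$. One small correction: $r_\ell = e - p_\Lambda(\alpha) e_I$ does not vanish exactly on the diagonal; by \eqref{eq:premier_ordre_e} one only has $r_\ell(\alpha,\alpha) \in h^{-n+1}S_{KN}^{m-1}$, so the correct pointwise bound has two pieces,
\[
|r_\ell(\alpha,\beta)| \le C h^{-n+1}\langle|\alpha|\rangle^{m-1} + C h^{-n}\langle|\alpha|\rangle^{m}\, d_{KN}(\alpha,\beta),
\]
which is exactly what the paper writes. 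The first term contributes $O(h)$ to the Schur bound (better than the claimed $O(h^{1/2})$), so your conclusion is unaffected.
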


We will also need the following corollary of Proposition \ref{lemma:boundedness}, which implies that the spaces $\mathcal{H}^k_\Lambda$ are legitimate functional spaces.
\begin{corollary}\label{corollary:densite}
Assume that $\tau_0$ is small enough and that $\Lambda$ is a $(\tau_0,1)$-adapted Lagrangian. Then, for $h$ small enough, there is $R > 0$ such that, for all $k \in \R$, the space $E^{1,R}(M)$ is dense in $\mathcal{H}_\Lambda^k$. Moreover, if $u \in L^2\p{M} \cap \mathcal{H}_\Lambda^k$, then there is a sequence $\p{u_n}_{n \in \N}$ in $E^{1,R}\p{M}$ such that $\p{u_n}_{n \in \N}$ converges to $u$ both in $L^2\p{M}$ and in $\mathcal{H}_\Lambda^k$.
\end{corollary}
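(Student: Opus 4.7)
The plan is to exhibit a single sequence $(u_n)$ that approximates $u$ simultaneously in both norms. The natural cutoff $\chi(n^{-1}\langle|\alpha|\rangle)$ used in Corollary \ref{cor:density-E1R} is unsuitable here, because it is not holomorphic and so cannot be transported between $T^\ast M$ and $\Lambda$ by contour shift. I would instead use the holomorphic Gaussian
\[
\chi_n(\alpha) := e^{-\langle\alpha\rangle^2/n^2},
\]
where $\langle\alpha\rangle^2 = 1+\tilde g_{\alpha_x}(\alpha_\xi,\alpha_\xi)$ is the holomorphic extension from \eqref{eq:def_jap_brack}. On a $(\tau_0,1)$-adapted Lagrangian with $\tau_0$ small, one checks using Lemma \ref{lmdist} that $\Re\langle\alpha\rangle^2 \gtrsim \langle|\alpha|\rangle^2$, so that $|\chi_n(\alpha)|\leq e^{-c\langle|\alpha|\rangle^2/n^2}\leq 1$ on $\Lambda$ (and similarly on $T^\ast M$), while $\chi_n\to 1$ pointwise as $n\to\infty$.

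I would then set $u_n := S_\Lambda(\chi_n T_\Lambda u)$ and first verify that $u_n\in E^{1,R}(M)$ for some $R$ independent of $n$: the super-exponential decay of $\chi_n$ dominates the symbolic growth of $K_S(x,\alpha)$ in $\alpha$, and since $K_S$ extends holomorphically in $x$ to a fixed Grauert tube $(M)_{\epsilon_0}$, a Cauchy--Schwarz pairing against $T_\Lambda u\in L^2_k(\Lambda)$ with the dual weight gives a bounded holomorphic extension of $u_n$ to this tube. For the convergence in $\mathcal{H}_\Lambda^k$, I would use $S_\Lambda T_\Lambda = I$ from Lemma \ref{lemma:shift-ST} to get $T_\Lambda u_n = \Pi_\Lambda(\chi_n T_\Lambda u)$ and $T_\Lambda u = \Pi_\Lambda T_\Lambda u$. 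Dominated convergence on $\Lambda$ (using $|\chi_n|\leq 1$ and $|T_\Lambda u|^2\langle|\alpha|\rangle^{2k}e^{-2H/h}\in L^1$) yields $\chi_n T_\Lambda u \to T_\Lambda u$ in $L^2_k(\Lambda)$, and applying the bounded projector $\Pi_\Lambda$ on $L^2_k(\Lambda)$ (Proposition \ref{lemma:boundedness} with $P=I$) gives $T_\Lambda u_n\to T_\Lambda u$ in $L^2_k(\Lambda)$, i.e.\ $u_n\to u$ in $\mathcal{H}_\Lambda^k$. This already establishes the first part of the corollary, which does not require $u\in L^2(M)$.

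For the $L^2(M)$ convergence when $u\in L^2(M)$, the key identity is $u_n = S(\chi_n Tu)$, obtained by shifting the integration contour from $\Lambda$ back to $T^\ast M$ exactly as in the proof of Lemma \ref{lemma:shift-ST}: the integrand $K_S(x,\alpha)\chi_n(\alpha)Tu(\alpha)$ is holomorphic in $\alpha$ in the homotopy region between the two I-Lagrangians, and on any I-Lagrangian the volume form $\mathrm{d}\alpha=\omega_R^n/n!$ agrees with the restriction of the holomorphic top form $\omega^n/n!$ (since $\omega_I$ vanishes on an I-Lagrangian), so the shift reduces to integrating a holomorphic top form over cobordant chains, with the Gaussian factor ensuring rapid decay at infinity. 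Since $u\in L^2(M)$ gives $Tu\in L^2(T^\ast M)$ by the isometry from Theorem \ref{thm:existence-good-transform}, and $\chi_n Tu\to Tu$ in $L^2(T^\ast M)$ by dominated convergence, applying the bounded operator $S=T^\ast$ yields $u_n = S(\chi_n Tu) \to STu = u$ in $L^2(M)$.

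The hard part will be reconciling the two requirements on the cutoff: it must dominate the weighted $L^2_k(\Lambda)$ norm on one side and preserve holomorphy to enable the contour shift on the other. Real-variable cutoffs in $\langle|\alpha|\rangle$ handle the first requirement cleanly but block the contour shift, whereas genuinely holomorphic cutoffs are scarce. The observation that $e^{-\langle\alpha\rangle^2/n^2}$ is available, with modulus controlled by $1$ on $\Lambda$ once $\Re\langle\alpha\rangle^2\gtrsim\langle|\alpha|\rangle^2$ is established there, is what makes the argument work; carrying out this modulus estimate carefully for a merely $C^3$-close Lagrangian is the principal remaining technical check.
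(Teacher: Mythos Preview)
Your approach is essentially the same as the paper's: both use the holomorphic Gaussian cutoff $e^{-\epsilon\langle\alpha\rangle^2}$ (with $\epsilon=n^{-2}$ in your notation), exploit its holomorphy to shift contours between $T^\ast M$ and $\Lambda$, and conclude via dominated convergence plus the $L^2_k(\Lambda)$-boundedness of $\Pi_\Lambda$. The only cosmetic difference is the direction of the shift: the paper defines $u_\epsilon := S_{T^\ast M}(e^{-\epsilon\langle\alpha\rangle^2}T_{T^\ast M}u)$ on the real cotangent first (which makes the $E^{1,R}$ membership immediate) and then shifts to $\Lambda$ to run the $\mathcal{H}_\Lambda^k$ argument, whereas you define $u_n$ on $\Lambda$ first and shift back to $T^\ast M$ for the $L^2(M)$ part; both are valid, and the modulus estimate $|\chi_n|\leq 1$ on $\Lambda$ that you flag as the ``principal remaining technical check'' is indeed routine for $\tau_0$ small.
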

Finally, we will prove that in the absence of deformation, the $\mathcal{H}_\Lambda^k$'s are just the usual Sobolev spaces.
\begin{corollary}\label{corollary:sobolev}
Let $h > 0$ be small enough. Then, for every $k \in \R$, the space $\mathcal{H}_{T^* M}^k$ is the usual semi-classical Sobolev space of order $k$ on $M$, with uniformly equivalent norms as $h$ tends to $0$.
\end{corollary}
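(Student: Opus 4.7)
The strategy is to use that the chosen transform $T$ from Theorem \ref{thm:existence-good-transform} satisfies $T^*T = I$, so $T : L^2(M) \to L^2(T^* M)$ is an isometry onto its image, and then apply the weak multiplication formula Proposition \ref{lemma:weak-mult} to a well-chosen elliptic $\G^1$ pseudor in order to trade the spatial weight $\jap{\va{\alpha}}^k$ for the action of an elliptic operator of order $k$.

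Concretely, I first construct via Proposition \ref{prop:quantization_Gs} an elliptic $\G^1$ $h$-pseudor $\Lambda_k$ of order $k$ having principal symbol $\jap{\alpha}^k$. By Theorem \ref{thm:parametrix} (applied to $\Lambda_k \Lambda_k^* + 1$ or to a standard elliptic regulariser, whichever is convenient), the norm $u \mapsto \n{\Lambda_k u}_{L^2(M)}$ is, for $h$ small, uniformly equivalent to the standard semi-classical Sobolev norm of order $k$. Now I specialise Proposition \ref{lemma:weak-mult} to $\Lambda = T^* M$ (which is a $(0,s)$-Gevrey adapted Lagrangian, so $H\equiv 0$ and $L^2_j\p{T^*M} = L^2(T^* M, \jap{\va{\alpha}}^{2j} \mathrm{d}\alpha)$) and to $P = \Lambda_k$. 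I obtain
\[
T \Lambda_k S = \jap{\va{\alpha}}^k \, \Pi + R_h,
\]
with $R_h = \O(h^{1/2})$ from $L^2_j\p{T^* M}$ to $L^2_{j-k+1/2}\p{T^* M}$.

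To exploit this, I apply the identity to $Tu$ for those $u \in (E^{1,R})'$ such that $Tu \in L^2_k\p{T^* M}$, using that $S T = I$ (Lemma \ref{lemma:shift-ST}) and hence $S T u = u$ and $\Pi T u = T S T u = T u$. This yields
\[
T \Lambda_k u \;=\; \jap{\va{\alpha}}^k\, T u \;+\; R_h (T u),
\]
with $\n{R_h(Tu)}_{L^2_0} \leq \n{R_h(Tu)}_{L^2_{1/2}} \leq C h^{1/2} \n{Tu}_{L^2_k}$ (using $\jap{\va{\alpha}} \geq 1$). The triangle inequality then gives $\n{T \Lambda_k u}_{L^2(T^* M)} \leq (1 + C h^{1/2}) \n{Tu}_{L^2_k}$ and, conversely, $\n{Tu}_{L^2_k} \leq \n{T\Lambda_k u}_{L^2(T^* M)} + Ch^{1/2} \n{Tu}_{L^2_k}$, so for $h$ small enough the last term is absorbed and I obtain the two-sided bound
\[
\n{T \Lambda_k u}_{L^2(T^* M)} \asymp \n{Tu}_{L^2_k(T^* M)} = \n{u}_{\mathcal{H}^k_{T^*M}},
\]
with constants uniform in $h$. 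Finally, the isometry property $T^* T = I$ gives $\n{T \Lambda_k u}_{L^2(T^* M)} = \n{\Lambda_k u}_{L^2(M)}$, which is in turn equivalent to the standard semi-classical $H^k$ norm.

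Equality as subspaces (and not only equivalence of norms) is obtained by running the two comparisons in opposite directions: if $u \in \mathcal{H}^k_{T^* M}$ then $\Lambda_k u = S T \Lambda_k u \in L^2(M)$ via the bound above, so $u$ lies in the semi-classical Sobolev space; conversely if $\Lambda_k u \in L^2(M)$, then $T \Lambda_k u \in L^2(T^* M)$, and the same identity forces $Tu \in L^2_k(T^* M)$. The only mildly delicate point is to make sense of all objects in a common ambient space: one takes $R$ large enough so that $\mathcal{D}'(M) \subset (E^{1,R})'$ (which holds since $E^{1,R} \hookrightarrow \mathcal{C}^\infty(M)$ continuously) and so that $T$ and $S$ are continuous on the corresponding spaces by Proposition \ref{prop:regularite-decay-s=1}; this is not a serious obstacle but must be tracked carefully. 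No single step of the proof is hard: the real content is Proposition \ref{lemma:weak-mult}, which has already been established.
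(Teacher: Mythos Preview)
Your approach via the weak multiplication formula (Proposition~\ref{lemma:weak-mult}) is sound in one direction but has a gap in the converse. When you write ``if $\Lambda_k u \in L^2(M)$, then $T\Lambda_k u \in L^2(T^* M)$, and the same identity forces $Tu \in L^2_k(T^* M)$'', this is not immediate: from $Tu \in L^2_j$ the remainder $R_h(Tu)$ lies only in $L^2_{j-k+1/2}$, so the identity $\jap{\va{\alpha}}^k Tu = T\Lambda_k u - R_h(Tu)$ gains at most half an order at a time. For $k > 1/2$ you must bootstrap; for $k<0$ you additionally need a starting point $Tu \in L^2_j$ for some finite $j$, which you have not established (you only know $u\in\mathcal{D}'(M)$). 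Both points can be repaired, but the fix essentially amounts to invoking Proposition~\ref{lemma:boundedness} anyway.

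The paper takes a simpler route that avoids the weak multiplication formula entirely. After reducing to $k>0$ by duality, it uses only Proposition~\ref{lemma:boundedness} (a $\G^1$ pseudor of order $m$ maps $\mathcal{H}^k_{T^*M}\to\mathcal{H}^{k-m}_{T^*M}$) in both directions: forward, an elliptic $A$ of order $k$ sends $\mathcal{H}^k_{T^*M}$ into $\mathcal{H}^0_{T^*M}=L^2(M)$, so $Au\in L^2$ and ordinary $\mathcal{C}^\infty$ elliptic regularity gives $u\in H^k$; backward, an elliptic $B$ of order $-k$ with exact inverse $C$ (Theorem~\ref{thm:parametrix}) sends $Cu\in L^2=\mathcal{H}^0_{T^*M}$ to $u=BCu\in\mathcal{H}^k_{T^*M}$. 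This is cleaner because Proposition~\ref{lemma:boundedness} already encodes the full mapping property you need, without any $h^{1/2}$-remainder absorption or iteration.
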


For the proof of these results, it is convenient to introduce the ``reduced'' kernel
\begin{equation}\label{eq:def_reduced_kernel}
K_{TPS}^\Lambda(\alpha,\beta) := e^{\frac{H(\beta)-H(\alpha)}{h}}K_{TPS}(\alpha,\beta).
\end{equation}
This is only defined on $\Lambda\times \Lambda$, contrary to $K_{TPS}$. Now we have all the pieces to explain the reason why we have to work with I-Lagrangians, and why the weight $H$ was introduced in \S \ref{sec:symplectic-geometry}. 

According to Lemma \ref{lemma:TPS-far-diagonal}, the kernel $K_{TPS}$ is exponentially small far from the diagonal of $\Lambda\times \Lambda$, sufficiently so that it suffices to study its behaviour near the diagonal. There, we have an expansion in the form
\begin{equation}\label{eq:expression_KTPS}
K_{TPS}(\alpha,\beta) = e^{\frac{i}{h}\Phi_{TS}(\alpha,\beta)}e(\alpha,\beta) + \O\left(\exp\left( - \frac{\lambda^{\frac{1}{2s-1}} + \lambda^{\frac{1}{s}}d_{KN}(\alpha,\beta)}{C} \right) \right).
\end{equation}
This expansion suggests to deal with $TPS$ as an FIO with complex phase on $\Lambda$, in the spirit of \cite{Melin-Sjostrand-75}. For this machinery to work, we need to have a lower bound on $\Im \Phi_{TS}$. When $\alpha,\beta$ are real, $\Im\Phi_{TS}\geq 0$, and only vanishes when $\alpha = \beta$. This implies that $TPS$ is microsupported near the diagonal. However, the restriction of $ \Im \Phi_{TS}$ to $\Lambda \times \Lambda$ is not necessarily non-negative (a necessary assumption to apply the machinery from \cite{Melin-Sjostrand-75}). Thankfully, we can get around this using the action $H$ introduced in \S \ref{sec:symplectic-geometry}.
\begin{lemma}\label{lemma:PhiTS-Lambda}
There is $\eta_0>0 $ such that, if $\tau_0$ is small enough and $\Lambda$ is a $\p{\tau_0,1}$-adapted Lagrangian, then there is $C > 0$ such that, for every $\alpha,\beta \in \Lambda$ with $d_{KN}(\alpha,\beta) \leq \eta_0$, we have
\begin{equation}\label{eq:ineg-phase}
\begin{split}
\frac{1}{C}\jap{\va{\alpha}} d_{KN}(\alpha,\beta)^2 \leq \Im \Phi_{TS}(\alpha,\beta) + H(\alpha) - H(\beta) \leq C \jap{\va{\alpha}} d_{KN}(\alpha,\beta)^2.
\end{split}
\end{equation}
\end{lemma}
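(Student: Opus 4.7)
The plan is to view $F(\alpha,\beta) := \Im\Phi_{TS}(\alpha,\beta) + H(\alpha) - H(\beta)$ as a smooth real-valued function on $\Lambda\times \Lambda$ and to prove that it vanishes to order $2$ on the diagonal, with Hessian bounded below and above by constants times $\jap{\va\alpha}$ in the off-diagonal direction. The two-sided bound \eqref{eq:ineg-phase} will then follow from Taylor's formula on the $C^3$-smooth manifold $\Lambda\times\Lambda$ once $\eta_0$ is chosen small enough.

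First I will check the second-order vanishing. The identity $\Phi_{TS}(\alpha,\alpha) = 0$ holds by holomorphic continuation from real points, where it follows from $\Phi_T(\alpha,\alpha_x)=\Phi_S(\alpha_x,\alpha)=0$ and $y_c(\alpha,\alpha)=\alpha_x$; combined with $H(\alpha)-H(\alpha)=0$, this gives $F\big|_\mathrm{diag}=0$. Next, the envelope theorem together with the identities $\mathrm{d}_\alpha\Phi_T(\alpha,y)\big|_{y=\alpha_x}=\alpha_\xi\cdot \mathrm{d}\alpha_x$ and $\mathrm{d}_\beta\Phi_S(y,\beta)\big|_{y=\beta_x}=-\beta_\xi\cdot \mathrm{d}\beta_x$ (the latter obtained by holomorphically continuing the relation $\Phi_S(x,\alpha) = -\overline{\Phi_T(\bar\alpha,\bar x)}$) yields
\[
\mathrm{d}_\alpha\Phi_{TS}(\alpha,\alpha) = \theta(\alpha),\qquad \mathrm{d}_\beta\Phi_{TS}(\alpha,\alpha) = -\theta(\alpha),
\]
where $\theta$ is the holomorphic Liouville 1-form. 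Taking imaginary parts and using $\mathrm{d}H=-\Im\theta\big|_\Lambda$, the differential $\mathrm{d}F$ restricted to $T_\alpha\Lambda\oplus T_\alpha\Lambda$ at diagonal points is zero.

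The core of the argument is the analysis of the Hessian of $F$ at $(\alpha,\alpha)$. In the unperturbed case $\tau_0=0$ (so $\Lambda=T^*M$ and $H\equiv 0$), differentiating $\Phi_{TS}(\alpha,\alpha)=0$ twice along the diagonal shows that $\mathrm{d}^2_\alpha\Phi_{TS}+2\mathrm{d}_\alpha \mathrm{d}_\beta\Phi_{TS}+\mathrm{d}^2_\beta\Phi_{TS}=0$ on the diagonal, so the Hessian of $F=\Im\Phi_{TS}$ in the off-diagonal direction $(v,-v)$ reduces to $-4\,\Im\, \mathrm{d}_\alpha \mathrm{d}_\beta \Phi_{TS}(\alpha,\alpha)[v,v]$. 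The implicit function theorem applied to the critical-point equation $\mathrm{d}_y(\Phi_T+\Phi_S)=0$ then rewrites this as
\[
4\,\Im\bigl(\mathrm{d}_y \mathrm{d}_\beta\Phi_S\cdot(\mathrm{d}^2_y(\Phi_T+\Phi_S))^{-1}\cdot \mathrm{d}_y \mathrm{d}_\alpha\Phi_T\bigr)[v,v]
\]
evaluated at $y=\alpha_x$. Using the symmetry $\Phi_S(y,\alpha)=-\overline{\Phi_T(\bar\alpha,\bar y)}$, the matrix $\mathrm{d}^2_y(\Phi_T+\Phi_S)$ equals $2i\,\Im \mathrm{d}^2_y\Phi_T$, which is invertible and of size $\jap{\va\alpha}$ by Definition \ref{def:nonstandardphase}(iv); combined with the ellipticity $\mathrm{d}_y \mathrm{d}_{\alpha_\xi}\Phi_T=-\mathrm{Id}$ at the critical point (from $\mathrm{d}_y\Phi_T(\alpha,\alpha_x)=-\alpha_\xi$), this yields the required uniform positivity of order $\jap{\va\alpha}$ on $v\in \C^{2n}$ measured in the Kohn--Nirenberg metric. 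For general small $\tau_0>0$, the deformation of $\Lambda$ and the action $H$ depend on the symbol $G=\tau_0 h^{1-1/s}G_0$ with $G_0\in S_{KN}^{1/s}$ of small $C^3$ seminorm, and the expansion \eqref{eq:approximation_of_H} shows that $H=\O(\tau_0 h^{1-1/s}\jap{\va\alpha}^{1/s})$ with analogous bounds on its second derivative in the Kohn--Nirenberg metric; consequently, pulling back via $\exp(H_G^{\omega_I})$ perturbs the Hessian of $F$ only by a symbol of order strictly less than $\jap{\va\alpha}$, and positive-definiteness is preserved.

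The main obstacle is this last uniformity as $\jap{\va\alpha}\to\infty$: one must check that the correction coming from the deformation of $\Lambda$ away from $T^*M$ and from $H\not\equiv 0$ gives a perturbation of the Hessian which is small \emph{relative to the main term of size $\jap{\va\alpha}$} in the Kohn--Nirenberg scaling, which requires systematic use of the symbolic bounds from Definition \ref{def:adapted-Lagrangian} rather than plain $C^3$-closeness to the reals. Once this is done, the lower bound in \eqref{eq:ineg-phase} follows from the positive-definite Hessian via Taylor's formula at order $3$, and the upper bound follows because the Hessian of $F$, being given by second derivatives of symbols of order $1$, is uniformly bounded by $\jap{\va\alpha}$ in the same metric.
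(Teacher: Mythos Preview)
Your strategy coincides with the paper's: define $F(\alpha,\beta)=\Im\Phi_{TS}(\alpha,\beta)+H(\alpha)-H(\beta)$, show it vanishes to second order on the diagonal using $\mathrm{d}_\alpha\Phi_{TS}(\alpha,\alpha)=\theta$, $\mathrm{d}_\beta\Phi_{TS}(\alpha,\alpha)=-\theta$ and $\mathrm{d}H=-\Im\theta|_\Lambda$, compute the Hessian for $\Lambda=T^*M$, then perturb for small $\tau_0$. Your choice to compute the mixed derivative $-\Im\,\mathrm{d}_\alpha\mathrm{d}_\beta\Phi_{TS}$ rather than $\Im\,\mathrm{d}^2_{\beta,\beta}\Phi_{TS}$ is equivalent, since the second-order vanishing forces these to coincide on the diagonal.

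The genuine gap is the positivity step. Knowing only $\mathrm{d}_y\mathrm{d}_{\alpha_\xi}\Phi_T=-\mathrm{Id}$ and $\mathrm{d}^2_y(\Phi_T+\Phi_S)=2i\,\Im\mathrm{d}^2_y\Phi_T$ does \emph{not} yield positive-definiteness of the $2n\times 2n$ Hessian: if, say, the $\alpha_x$-block of $\mathrm{d}_y\mathrm{d}_\alpha\Phi_T$ were zero, your quadratic form would degenerate on vectors $(v_x,0)$. You must also identify $\mathrm{d}_y\mathrm{d}_{\alpha_x}\Phi_T=-\mathrm{d}^2_y\Phi_T$ (from differentiating $\mathrm{d}_y\Phi_T(\alpha,\alpha_x)=-\alpha_\xi$ in $\alpha_x$) and then actually verify positivity of the resulting matrix. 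The paper does this explicitly: writing $\mathrm{d}^2_y\Phi_S=A+iB$ with $B>0$, one finds
\[
\Im\,\mathrm{d}^2_{\beta,\beta}\Phi_{TS}(\alpha,\alpha)=\tfrac12\begin{pmatrix} AB^{-1}A+B & -AB^{-1}\\ -B^{-1}A & B^{-1}\end{pmatrix},
\]
and checks $\geq C^{-1}\jap{\va\alpha}g_{KN}$ by completing the square, obtaining the form $u^2+(B^{-1/2}AB^{-1/2}u-v)^2$ after rescaling. In your mixed-derivative formulation, the equivalent step is to observe $\mathrm{d}_y\mathrm{d}_\beta\Phi_S=-\overline{\mathrm{d}_y\mathrm{d}_\alpha\Phi_T}$ (for real $\alpha$) and rewrite the form as $2\langle B^{-1}\overline{L_\alpha v},L_\alpha v\rangle=2|B^{-1/2}L_\alpha v|^2$, which is positive because $L_\alpha=(-(A'+iB'),\,-I)$ is injective on real vectors (the imaginary part $-B'v_x=0$ forces $v_x=0$). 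Either way, this computation is the heart of the lemma and cannot be skipped.
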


The idea behind this statement is that for $\alpha,\beta$ close enough, 
\[
\Phi_{TS} = \langle \alpha_x - \beta_x, \beta_\xi\rangle + \mathcal{O}(\langle|\alpha|\rangle d_{KN}(\alpha,\beta)^2),
\]
and
\[
\Im( \Phi_{TS} - \langle \alpha_x - \beta_x, \beta_\xi\rangle ) \geq \frac{\langle|\alpha|\rangle}{C} d_{KN}(\alpha,\beta)^2.
\]
In particular, the methods of \cite{Melin-Sjostrand-75} would prove that $\Pi_\Lambda$ is bounded on an exponentially weighted space $L^2(\Lambda, e^{-2H(\alpha)/h}d\alpha)$, if we had for $\epsilon>0$ small enough
\[
| H(\beta) - H(\alpha) - \Im( \langle \alpha_x - \beta_x, \beta_\xi\rangle )| \leq \epsilon \langle|\alpha|\rangle d_{KN}(\alpha,\beta)^2 .
\]
This implies exactly that $\mathrm{d}H = - \Im \theta_{|\Lambda}$, where $\theta$ is the canonical Liouville $1$-form. The existence of a solution implies that $\Lambda$ is an exact I-Lagrangian, and the desired bound holds if $\Lambda$ is sufficiently $\mathcal{C}^2$ close to the reals. This is the reason why we have to work with adapted Lagrangians, and introduce the weight $H$. 

\begin{proof}[Proof of Lemma \ref{lemma:PhiTS-Lambda}]
Define the function $A(\alpha,\beta) = \Im \Phi_{TS}(\alpha,\beta) + H(\alpha) - H(\beta)$ near the diagonal on $\Lambda \times \Lambda$. By definition, $A$ satisfies symbolic estimates (of order $1$). First, since $\Phi_{TS}(\alpha,\alpha)=0$, we have that $A(\alpha,\alpha)=0$. Then using \eqref{eq:propH} and the fact that
\begin{equation}\label{eq:diffphase1}
\begin{split}
\mathrm{d}_\alpha \Phi_{TS}(\alpha,\alpha) = \mathrm{d}_\alpha \Phi_T(\alpha,\alpha_x) = \theta_{|\Lambda}(\alpha)
\end{split}
\end{equation}
and
\begin{equation}\label{eq:diffphase2}
\begin{split}
\mathrm{d}_\beta \Phi_{TS}(\alpha,\alpha) = \mathrm{d}_\alpha \Phi_S(\alpha,\alpha_x) = -\theta_{|\Lambda}(\alpha),
\end{split}
\end{equation}
we find that the differential of $A$ vanishes on the diagonal. The expressions \eqref{eq:diffphase1} and \eqref{eq:diffphase2} follow from the definition of the phase $\Phi_{TS}$. Since $A$ vanishes at order $2$ on the diagonal of $\Lambda \times \Lambda$ and is a symbol of order $1$, the inequality of the right in \eqref{eq:ineg-phase} holds. The left inequality is equivalent to the existence of a constant $C > 0$ such that for every $\alpha \in \Lambda$ the Hessian $\nabla^2_{\beta,\beta} A_{|\beta=\alpha}$ is larger than $C^{-1} \jap{\va{\alpha}} g_{KN}$. Hence, the general case follows from the case $\Lambda = T^* M$ by a perturbation argument. Consequently, we only need to make the proof in the case $\Lambda = T^* M$.

Notice that in the case $\Lambda = T^* M$, we have $H = 0$, and hence $A = \Im \Phi_{TS}$. Recall that $\Phi_{TS}(\alpha,\beta)$ is the critical value of $y\mapsto \Psi(\beta,y) \coloneqq \Phi_T(\alpha,y) + \Phi_{S}(y,\beta)$. We fix the variable $\alpha$, and denote by $y(\beta)$ the critical point of this function. From the Implicit Function Theorem, we have 
\[
D_\beta y(\beta) = - (D_{y,y}^2 \Psi(\beta,y(\beta)))^{-1}D_{y,\beta}^2 \Psi(\beta,y(\beta)).
\]
Here, we work in coordinates and our convention regarding the notation $D$ is that $D_\beta y$ is the matrix of the differential of $\beta \mapsto y(\beta)$ (in the canonical basis) and that $D_{y,\beta}^2 \Psi$ is such that its columns have the size of $y$ and its lines the size of $\beta$.

Differentiating the expression $\Phi_{TS}(\alpha,\beta) = \Psi(\beta,y(\beta))$, it appears that we have $D_\beta \Phi_{TS}(\alpha,\beta) = D_\beta \Psi(\beta,y(\beta))$, and then that
\begin{equation*}
\begin{split}
D^2_{\beta,\beta}\Phi_{TS}&(\alpha,\beta) = D_{\beta,\beta}^2 \Psi (\beta, y(\beta)) + D_{\beta,y}^2 \Psi(\beta,y(\beta)) D_\beta y (\beta) \\
  &= D_{\beta,\beta}^2 \Psi (\beta, y(\beta)) -  D_{\beta,y}^2 \Psi(\beta,y(\beta)) (D_{y,y}^2 \Psi(\beta,y(\beta)))^{-1}D_{y,\beta}^2 \Psi(\beta,y(\beta)).
\end{split}
\end{equation*}
Then, differentiating $\Phi_S(\beta_x,\beta) = 0$ and $\mathrm{d}_y \Phi_S(\beta_x,\beta) = \beta_\xi$, we find that for $\beta=\alpha$, and $y=y(\alpha)=\alpha_x$, we have (using the decomposition $\beta = (\beta_x,\beta_\xi)$)
\[
D_{\beta,\beta}^2 \Psi(\alpha,\alpha_x) = \begin{pmatrix} D_{y,y}^2 \Phi_{S}(\alpha_x,\alpha) & - I \\ - I & 0 \end{pmatrix},\  D_{\beta,y}^2 \Psi(\alpha,\alpha_x) = \begin{pmatrix} - D_{y,y}^2 \Phi_S(\alpha_x,\alpha) \\ I\end{pmatrix},
\]
and $D_{y,y}^2\Psi(\alpha,\alpha_x) = 2 i \Im D_{y,y}^2 \Phi_S(\alpha_x, \alpha)$ (recall that $\Phi_T = - \overline{\Phi}_S$). Let us write the decomposition into real and imaginary part: $D_{y,y}^2 \Phi_S(\alpha_x,\alpha) = A+ iB$. Recall that $B$ is definite positive by assumption. Summing up, we find that
\[
\Im D_{\beta,\beta}^2 \Phi_{TS}(\alpha,\alpha) = \frac{1}{2} \begin{pmatrix}
AB^{-1} A + B & -A B^{-1}  \\ -B^{-1}A & B^{-1}
\end{pmatrix}
\]
We compute
\[
\begin{split}
\begin{pmatrix}
 B^{-1/2} u & B^{1/2} v 
\end{pmatrix} & \begin{pmatrix}
AB^{-1} A + B & -A B^{-1}  \\ -B^{-1}A & B^{-1}
\end{pmatrix}  \begin{pmatrix}
B^{-1/2} u \\ B^{1/2} v 
\end{pmatrix} \\
&=v^2 + u^2 + |B^{-1/2} A B^{-1/2} u|^2 - 2\langle B^{-1/2} A B^{-1/2} u, v\rangle\\
&= u^2 + ( B^{-1/2} A B^{-1/2} u - v)^2.
\end{split}
\]
This certainly is a positive definite quadratic form. Since $A+ iB$ is a symbol of order $1$, with $B$ being elliptic, this is uniform. We deduce
\[
\Im D_{\beta,\beta}^2 \Phi(\alpha,\alpha) \geq \frac{1}{C}\langle|\alpha|\rangle g_{KN}.
\]
\end{proof}

\begin{remark}
We proved Lemma \ref{lemma:PhiTS-Lambda} by computing directly the Hessian of the phase $\Phi_{TS}$. However, there is also a geometric interpretation of this lemma, in the spirit of the ``fundamental'' Lemma \ref{lemma:fundamental-lemma} and of the estimate Lemma \ref{lemma:estimee_positivite_phase} that was crucial in the proof of Lemma \ref{lemma:TPS-close-diagonal}. 

Let us explain a geometric view of Lemma \ref{lemma:PhiTS-Lambda} goes. We only need to consider the case $\Lambda = T^* M$. When $\alpha,\beta \in T^* M$, then the phase $y \mapsto \Phi_T(\alpha,y) + \Phi_S(y,\beta)$ has non-negative imaginary part for real $y$'s. The imaginary part of this phase is pluriharmonic and consequently the critical point of the phase is a saddle point for its imaginary part.  The critical value $\Phi_{TS}(\alpha,\beta)$ of the phase $y \mapsto \Phi_T(\alpha,y) + \Phi_S(y,\beta)$ is attained on the steepest descent contour. Since the imaginary part of the phase is non-negative for real $y$'s, we see that the reals are roughly aligned with the steepest phase descent. It implies that $\Im \Phi_{TS}(\alpha,\beta)$ is larger than the minimum of the imaginary part of $\Phi_T(\alpha,y) + \Phi_S(y,\beta)$ for real $y$. This proves that $\Im \Phi_{TS}(\alpha,\beta)$ is essentially larger than $\jap{\va{\alpha}}\va{\alpha_x - \beta_x}^2$. 

When $\va{\alpha_\xi - \beta_\xi} / \jap{\va{\alpha}}$ is much larger than $\va{\alpha_x - \beta_x}$, then the real part of $\nabla_{y}(\Phi_T(\alpha,y) + \Phi_S(y,\beta))$ must be large when $y$ is real. It implies that the critical point of the phase is away from the real (at a distance essentially larger than $\va{\alpha_\xi - \beta_\xi} / \jap{\va{\alpha}}$) and consequently, we can improve the previous estimate by the square of this distance multiplied by $\jap{\va{\alpha}}$ (the size of the Hessian).
\end{remark}

We now deduce Propositions \ref{lemma:boundedness} and \ref{lemma:weak-mult} from Lemma \ref{lemma:PhiTS-Lambda}.

\begin{proof}[Proof of Proposition \ref{lemma:boundedness}]
The proof is an application of the celebrated Schur's test \cite[Lemma 2.8.4]{Martinez-02-book-IntroductionSemiclassicalMicrolocal}. We start by observing that near the diagonal, using \eqref{eq:approximation_of_H},
\[
\frac{|H(\beta)-H(\alpha)|}{h}\leq C \tau_0 \left(\frac{\langle|\alpha|\rangle}{h}\right)^{1/s}d_{KN}(\alpha,\beta).
\]
Together with \eqref{eq:taille_de_H}, this proves that the factor $\exp((H(\beta)-H(\alpha)/h)$ in \eqref{eq:def_reduced_kernel} can be absorbed in the remainders from \eqref{eq:expression_KTPS}. It follows that, for $\tau_0$ small enough, the reduced kernel of $T_\Lambda P S_\Lambda$ satisfies (here $\alpha$ and $\beta$ are on $\Lambda$ and $e$ is a symbol supported near the diagonal of $\Lambda \times \Lambda$)
\begin{equation*}
\begin{split}
K_{TPS}^\Lambda\p{\alpha,\beta} = e^{i \frac{\Phi_{TS}(\alpha,\beta)+ i H(\alpha) - i H(\beta)}{h}} e(\alpha,\beta) + \O\p{\p{\frac{h}{\jap{\va{\alpha}}+ \jap{\va{\beta}}}}^{\infty}}.
\end{split}
\end{equation*}
In order to understand the action of $T_\Lambda P S_\Lambda$ from $L^2_k\p{\Lambda}$ to $L^2_{k-m}\p{\Lambda}$ we study the kernel
\begin{equation}\label{eq:reducedkernel}
\begin{split}
K(\alpha,\beta) & \coloneqq K_{TPS}^\Lambda\p{\alpha,\beta} \frac{\jap{\va{\alpha}}^{k-m}}{\jap{\va{\beta}}^k} \\ 
				& =  e^{ \frac{i \Phi_{TS}(\alpha,\beta) + H(\beta) - H(\alpha)}{h}} e(\alpha,\beta) \frac{\jap{\va{\alpha}}^{k-m}}{\jap{\va{\beta}}^k} + \O\p{\p{\frac{h}{\jap{\va{\alpha}} + \jap{\va{\beta}}}}^{\infty}}.
\end{split}
\end{equation}
In order to apply Schur's test, we need to find uniform bounds on
\begin{equation*}
\begin{split}
\int_{\Lambda} \va{K(\alpha,\beta)} \mathrm{d}\beta \textrm{ and } \int_{\Lambda} \va{K(\alpha,\beta)} \mathrm{d}\alpha.
\end{split}
\end{equation*}
We will only deal with the first integral, hence we fix $\alpha \in \Lambda$. We will also ignore the contribution of the error term in \eqref{eq:reducedkernel}, that is easily dealt with. Hence, we want a bound, uniform in $\alpha$ and $h$, on
\begin{equation}\label{eq:Schurtest}
\begin{split}
\int_{\Lambda} e^{-\frac{\Im \Phi_{TS}(\alpha,\beta) - H(\beta) + H(\alpha)}{h}} \va{e(\alpha,\beta)} \frac{\jap{\va{\alpha}}^{k-m}}{\jap{\va{\beta}}^k} \mathrm{d}\beta.
\end{split}
\end{equation}
Recalling that $e$ belongs to the symbol class $h^{-n} S_{KN}^m\p{\Lambda \times \Lambda}$, since $e$ is supported near the diagonal we have that, for some $C > 0$,
\begin{equation*}
\begin{split}
\va{e(\alpha,\beta)} \frac{\jap{\va{\alpha}}^{k-m}}{\jap{\va{\beta}}^k} \leq C h^{-n}.
\end{split}
\end{equation*}
Hence, we must estimate (the integral is over $\beta\in \Lambda$)
\begin{equation*}
\begin{split}
h^{-n} \int_{d_{KN}(\alpha,\beta) \leq \eta} e^{-\frac{\Im \Phi_{TS}(\alpha,\beta) - H(\beta) + H(\alpha)}{h}} \mathrm{d}\beta. 
\end{split}
\end{equation*}
Here, provided $\eta$ has been chosen small enough when applying Lemmas \ref{lemma:TPS-far-diagonal} and \ref{lemma:TPS-close-diagonal}, so that Lemma \ref{lemma:PhiTS-Lambda} applies, it follows from the estimate on the Jacobian in Lemma \ref{lemma:uniformity-lagrangians} that the integral \eqref{eq:Schurtest} is controlled by
\begin{equation*}
\begin{split}
h^{-n} \int_{\R^{2n}} \exp\p{- \frac{\jap{\va{\alpha}}x^2 + \jap{\va{\alpha}}^{-1} \xi^2}{h}} \mathrm{d}x \mathrm{d}\xi
\end{split}
\end{equation*}
and the result follows from the changes of variable $x' = h^{-\frac{1}{2}}\jap{\va{\alpha}}^{\frac{1}{2}} x$ and $ \xi' = h^{-\frac{1}{2}}\jap{\va{\alpha}}^{-\frac{1}{2}} \xi$.
\end{proof}

\begin{proof}[Proof of Proposition \ref{lemma:weak-mult}]
We will only prove the first estimate, the other one is obtained similarly (replacing an $\alpha$ by a $\beta$ in the computation below). We use the same notations as in the previous proof, and denote by $\tilde{e}$ the symbol $e$ that appears in the particular case $P = I$. Hence, we see that the reduced kernel of $T_\Lambda P S_\Lambda - p_{\Lambda} \Pi_\Lambda : L^2_k\p{\Lambda} \to L^2_{k-m+ \frac{1}{2}}\p{\Lambda}$ writes
\begin{equation*}
\begin{split}
\widetilde{K}(\alpha,\beta) & \coloneqq e^{\frac{H(\beta) - H(\alpha)}{h}}\p{K_{TPS}\p{\alpha,\beta} - p_{\Lambda}(\alpha)K_{TS}\p{\alpha,\beta}} \frac{\jap{\va{\alpha}}^{k-m+ \frac{1}{2}}}{\jap{\va{\beta}}^k} \\
     & =  e^{ \frac{i \Phi_{TS}(\alpha,\beta) + H(\beta) - H(\alpha)}{h}} \p{e(\alpha,\beta) - p_{\Lambda}(\alpha) \tilde{e}(\alpha,\beta)} \frac{\jap{\va{\alpha}}^{k-m+ \frac{1}{2}}}{\jap{\va{\beta}}^k} \\ & \qquad \qquad \qquad \qquad \qquad \qquad \qquad \qquad \qquad + \O_{\mathcal{C}^\infty}\p{\p{\frac{h}{\jap{\va{\alpha}} + \jap{\va{\beta}}}}^{\infty}}.
\end{split}
\end{equation*}
As in the previous proof, we want to apply Schur's test \cite[Lemma 2.8.4]{Martinez-02-book-IntroductionSemiclassicalMicrolocal}, and we leave as an exercise to the reader to deal with the error term. In order to apply Schur's test, we use the expansion \eqref{eq:premier_ordre_e} to find that
\begin{equation*}
\begin{split}
\va{e(\alpha,\beta) - p_{\Lambda}(\alpha) \tilde{e}(\alpha,\beta)} & \leq \va{e(\alpha,\alpha) - p_{\Lambda}(\alpha) \tilde{e}(\alpha,\alpha)} + C h^{-n} \jap{\va{\alpha}}^m d_{KN}(\alpha,\beta) \\
   & \leq C h^{-n +1} \jap{\va{\alpha}}^{m - 1} + C h^{-n} \jap{\va{\alpha}}^m d_{KN}(\alpha,\beta),
\end{split}
\end{equation*}
and then
\begin{equation*}
\begin{split}
\va{e(\alpha,\beta) - p_{\Lambda}(\alpha) \tilde{e}(\alpha,\beta)} \frac{\jap{\va{\alpha}}^{k-m + \frac{1}{2}}}{\jap{\va{\beta}}^k} \leq C h^{-n +1} \jap{\va{\alpha}}^{- \frac{1}{2}} + C h^{-n} \jap{\va{\alpha}}^{\frac{1}{2}} d_{KN}(\alpha,\beta).
\end{split}
\end{equation*}
We must consequently estimate
\begin{equation*}
\begin{split}
h^{-n+1} \int_{d_{KN}(\alpha,\beta) \leq \eta} \jap{\va{\alpha}}^{- \frac{1}{2}} e^{-\frac{\Im \Phi_{TS}(\alpha,\beta) - H(\beta) + H(\alpha)}{h}} \mathrm{d}\beta
\end{split}
\end{equation*}
and
\begin{equation*}
\begin{split}
h^{-n} \int_{d_{KN}(\alpha,\beta) \leq \eta} \jap{\va{\alpha}}^{\frac{1}{2}} d_{KN}(\alpha,\beta) e^{-\frac{\Im \Phi_{TS}(\alpha,\beta) - H(\beta) + H(\alpha)}{h}} \mathrm{d}\beta.
\end{split}
\end{equation*}
By the same argument as in the proof of Proposition \ref{lemma:boundedness}, we see that these integrals are controlled respectively by
\begin{equation*}
\begin{split}
h^{-n +1} \jap{\va{\alpha}}^{-\frac{1}{2}}\int_{\R^{2n}}\exp\p{ - \frac{\jap{\va{\alpha}}x^2 + \jap{\va{\alpha}}^{-1} \xi^2}{h}} \mathrm{d}x \mathrm{d}\xi
\end{split}
\end{equation*}
and
\begin{equation*}
\begin{split}
h^{-n} \jap{\va{\alpha}}^{\frac{1}{2}} \int_{\R^{2n}}\p{\va{x} + \frac{\va{\xi}}{\jap{\va{\alpha}}}} \exp\p{- \frac{\jap{\va{\alpha}}x^2 + \jap{\va{\alpha}}^{-1} \xi^2}{h}} \mathrm{d}x \mathrm{d}\xi,
\end{split}
\end{equation*}
and the result follows from the same change of variable as in the proof of Proposition \ref{lemma:boundedness}.
\end{proof}

As promised, we deduce Corollaries \ref{corollary:densite} and \ref{corollary:sobolev} from Proposition \ref{lemma:boundedness}.

\begin{proof}[Proof of Corollary \ref{corollary:densite}]
We have a regularizing procedure that is independent of the choice of $\Lambda$. We start by considering $u\in (E^{1,R})'$ with $R>1$ large enough so that Lemmas \ref{lemma:T_BMT} and \ref{lemma:S_BMT} apply. Then, we obtain that for $\epsilon>0$, 
\begin{equation}\label{eq:approximation_jointe}
\begin{split}
u_\epsilon = S_{T^* M} e^{- \epsilon \jap{\alpha}^2} T_{T^*M} u,
\end{split}
\end{equation}
is an element of $E^{1,R}$. Given $\Lambda$ a $(\tau_0,1)$-adapted Lagrangian with $\tau_0>0$ small enough, we can shift contours and obtain that 
\[
u_\epsilon = S_{\Lambda} e^{- \epsilon \jap{\alpha}^2} T_{\Lambda} u.
\]
Let us now assume that $u \in \mathcal{H}_\Lambda^k$ for some such Lagrangian $\Lambda$. Then, by dominated convergence, as $\epsilon$ tends to $0$, we see that $\exp(- \epsilon \jap{\alpha}^2) T_\Lambda u$ converges to $T_\Lambda u$ in $L^2_k\p{\Lambda}$. Then, since $\Pi_\Lambda$ is bounded on that space by Proposition \ref{lemma:boundedness}, we see that $T_\Lambda u_\epsilon = \Pi_\Lambda \exp(- \epsilon \jap{\alpha}^2) T_\Lambda u$ converges to $\Pi_\Lambda T_\Lambda u = T_\Lambda u$ also in that space, and thus in $\mathcal{H}^k_{\Lambda,\FBI}$ as $\epsilon\to 0$.

Recall that $T_{T^* M}$ is an isometry between $L^2\p{M}$ and $L^2\p{T^* M}$ (because $S_{T^* M}$ is the adjoint of $T_{T^* M}$ and $S_{T^* M} T_{T^* M} = I$). In particular, $L^2(M) = \mathcal{H}^0_{T^\ast M}$.
\end{proof}

\begin{proof}[Proof of Corollary \ref{corollary:sobolev}]
We denote by $H^k\p{M}$ the usual Sobolev space of order $k$ on $M$. Since $S = T^*$ is a left inverse for $T$, we see that $T$ is an isometry and consequently $L^2\p{M} = \mathcal{H}_{T^* M}^0$. From the fact that $S = T^*$, it follows that $\mathcal{H}_{T^* M}^{-k}$ is the dual of $\mathcal{H}_{T^*M}^k$ (this is in fact a general fact that is valid for any $\Lambda$, as we will see it in Lemma \ref{lemma:dual}). We can consequently assume that $k > 0$.

Let $u \in \mathcal{H}_\Lambda^k$. It follows from Remark \ref{remark:C_infini} that $u$ is a distribution. According to Proposition \ref{prop:quantization_Gs}, there is a $\G^1$ pseudor $A$ with principal symbol $\jap{\alpha}^k$. By Proposition \ref{lemma:boundedness}, we see that $A u \in \mathcal{H}_{T^* M}^0 = L^2\p{M}$ (since $T^* M$ is $(0,1)$-adapted). Since $k > 0$, we have also $u \in L^2\p{M}$. By elliptic regularity (in the $\mathcal{C}^\infty$ category), it follows that $u \in H^k\p{M}$ with norm controlled by
\begin{equation*}
\begin{split}
\n{u}_{L^2\p{M}} + \n{A u}_{L^2\p{M}} \leq C \n{u}_{\mathcal{H}_\Lambda^k}.
\end{split}
\end{equation*}

Reciprocally, assume that $u \in H^k\p{M}$. By Proposition \ref{prop:quantization_Gs}, we find a $\G^1$ pseudor $B$ whose principal symbol is $\jap{\alpha}^{-k}$. We can use Theorem \ref{thm:parametrix} to construct a parametrix $C$ for $B$, this is a $\G^1$ pseudor of order $k$ such that $B C = C B = I$ for $h$ small enough. Since $C$ has order $k$, the distribution $C u$ is an element of $L^2\p{M}$ and using Proposition \ref{lemma:boundedness}, we see that $u = BCu$ is an element of $\mathcal{H}_\Lambda^k$, with norm controlled by the $H^k\p{M}$ norm of $u$.
\end{proof}

\subsection{FBI transform and wave front set of ultradistributions}\label{sec:wfs}

From the results of \S \ref{sec:continuity-T-S} and \S \ref{subsec:inversion}, we see that the regularity of an ultradistribution may be characterized by the decay of its FBI transform. Indeed,it follows from Lemma \ref{lemma:T_non_stationary} and \ref{lemma:S_BMT} that

\begin{prop}\label{prop:regularite_via_T}
Let $s \geq 1$ and $u \in \U^s\p{M}$. Then $u$ belongs to $\G^s\p{M}$ if and only if $T_{T^* M} u$ belongs to $\GG^s\p{M}$.
\end{prop}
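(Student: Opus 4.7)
The plan is to prove the two implications separately, using respectively the boundedness of $T_\Lambda$ on spaces of Gevrey functions and the inversion formula $S_{T^\ast M}T_{T^\ast M}=I$ combined with the boundedness of $S_\Lambda$ on exponentially decaying functions, both applied with the trivial Lagrangian $\Lambda=T^\ast M$. Notice that $T^\ast M$ is trivially a $(\tau_0,\tilde s)$-adapted Lagrangian for every $\tau_0\geq 0$ and every $\tilde s \geq 1$ (take $G=0$), so the restrictions on $\tau_0$ that appear in the statements of \S\ref{sec:continuity-T-S} impose no real condition here.

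For the direct implication, assume $u\in\G^s(M)$. Then $u\in E^{s,R}(M)$ for some $R>0$, and Lemma \ref{lemma:T_non_stationary} provides $C,\tau_1>0$ such that the bound \eqref{eq:T_non_stationary} holds for every $\alpha\in (T^\ast M)_{\tau_1,1/s}$ with $\langle|\alpha|\rangle$ large enough. Since $T^\ast M$ itself is contained in $(T^\ast M)_{\tau_1,1/s}$, this yields the decay
\[
|Tu(\alpha)| \leq C'\exp\Bigl(-C''\langle|\alpha|\rangle^{1/s}\Bigr)
\]
on $T^\ast M$ (with $h$ fixed, constants may depend on $h$). This means precisely that $T_{T^\ast M}u\in F^{s,r}(T^\ast M)$ for some $r<0$, i.e. $T_{T^\ast M}u\in \GG^s(T^\ast M)$. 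Alternatively, one may simply invoke Proposition \ref{prop:M-vers-Lambda} with $\Lambda=T^\ast M$ and $\tilde s$ any number strictly larger than $s$.

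For the converse, assume $u\in\U^s(M)$ and $T_{T^\ast M}u\in\GG^s(T^\ast M)$. The first observation is that $\U^s(M)\subset (E^{1,R})'$ for any $R>0$: indeed, since real-analytic functions are $\G^s$, we have $E^{1,R}\subset E^{s,R}$ continuously, hence $(E^{s,R})'\subset (E^{1,R})'$, and $\U^s(M)$ is contained in $(E^{s,R})'$ by definition. Consequently, taking $R$ large enough so that Lemma \ref{lemma:shift-ST} applies to $\Lambda=T^\ast M$, we have the inversion identity $u = S_{T^\ast M}T_{T^\ast M}u$. Applying Proposition \ref{prop:Lambda-vers-M} with $\Lambda=T^\ast M$ (or equivalently using Lemma \ref{lemma:S_BMT} together with the decay of $T_{T^\ast M}u$), we conclude that $u=S_{T^\ast M}T_{T^\ast M}u\in\G^s(M)$.

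The only mildly non-trivial point is justifying the use of the inversion formula on $\U^s(M)$, which as explained above reduces to the inclusion $\U^s(M)\subset(E^{1,R})'$; once this is noted, the proof is a direct concatenation of two continuity statements already established in \S\ref{sec:continuity-T-S} and \S\ref{subsec:inversion}. No real obstacle remains.
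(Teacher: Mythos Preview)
Your proof is correct and follows essentially the same approach as the paper: the forward implication via Lemma~\ref{lemma:T_non_stationary} and the converse via the inversion formula $S_{T^\ast M}T_{T^\ast M}=I$ combined with Lemma~\ref{lemma:S_BMT} (the paper simply points to these two lemmas without spelling out the details you provide). One tiny imprecision: the inclusion you write is $E^{1,R}\subset E^{s,CR}$ for some constant $C$ rather than with the same $R$, but this is harmless since $\U^s(M)\subset (E^{s,R'})'$ for every $R'$.
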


We leave as an exercise to the reader to give a local version of Proposition \ref{prop:regularite_via_T}. These considerations suggest to give the following definition of wave front set.

\begin{definition}\label{def:wavefrontset}
Let $s \geq 1$ and $u \in (E^{1,R_0})'$ for $R_0$ large enough. We define the $\G^s$ wave front set $\WF_{\G^s}\p{u} \subseteq T^* M \setminus \set{0}$ of $u$ in the following way: a point $\alpha \in T^* M \setminus \set{0}$ does not belong to $\WF_{\G^s}\p{u}$ if and only if there is a conical neighbourhood $\Gamma$ of $\alpha$ in $T^*M$ and $C > 0$ such that for some $h>0$,
\begin{equation}\label{eq:not_wave_front_set}
\begin{split}
T_{T^* M} u (\beta) \underset{\substack{\va{\beta} \to + \infty \\ \beta \in \Gamma}}{=} \O\p{\exp\p{- \frac{1}{C} \jap{\beta}^{\frac{1}{s}}}}.
\end{split}
\end{equation}
\end{definition}

One can define similarly the semi-classical wave front set $\WF_{\G^s,h}(u)$ of $u=u(h)$. 
\begin{remark}
In $\R^n$, the usual definition \cite{hormanderUniquenessTheoremsWave1971} of the wave front set $\WF_{\G^s}(u)$ may be rephrased using the flat transform $T_{\R^n}$, defined in \eqref{eq:def-flat-FBI-transform}. The condition \eqref{eq:not_wave_front_set} is then replaced by
\[
\left|T_{\R^n} u(\alpha_x ,\alpha_\xi;h)  \right| \leq C \exp\left( - \frac{1}{Ch^{\frac{1}{s}} }\right), \text{ as }h\to 0,
\]
where $\alpha$ remains in a compact set. With this definition, one has to take $h\to 0$ because the coercive term in the phase is just $|x-\alpha_x |^2$, instead of $\langle|\xi|\rangle |x-\alpha_x |^2$. However the two notions are the same.
\end{remark}

Lemmas \ref{lemma:TPS-far-diagonal} and \ref{lemma:TPS-close-diagonal} allow us to control the wave front set of the elements of $\mathcal{H}_\Lambda^0$ (Lemma \ref{lemma:wavefrontset_HLambda} below will be the main tool in the proof of Proposition \ref{prop:wfs_resonant_state}). Notice that Lemma \ref{lemma:wavefrontset_HLambda} has to be understood with $h$ fixed, so that $G_0$ is assumed to be elliptic as a classical symbol.

\begin{lemma}\label{lemma:wavefrontset_HLambda}
Let $s \geq 1$ and $G_0$ be a symbol on $(T^* M)_{\epsilon_0}$ (for some $\epsilon_0 > 0$) in the class $S_{KN}^{1/s}$. For $\tau>0$ define the symbol $G = h^{1 - 1/s}\tau G_0$ and then $\Lambda$ by \eqref{eq:adapted_Lagrangian}. Let $\alpha \in T^* M \setminus \set{0}$ be such that $G_0$ is negative and classically elliptic of order $1/s$ on a conical neighbourhood of $\alpha$. Then, if $\tau$ and $h$ are small enough, for any $u \in \mathcal{H}_\Lambda^0$ we have that $\alpha \notin \WF_{\G^{s}}\p{u}$.
\end{lemma}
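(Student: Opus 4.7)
The plan is to exploit the identity $u = S_\Lambda T_\Lambda u$ (which holds for $u \in \mathcal{H}_\Lambda^0 \subset (E^{1,R})'$ with $R$ large, by Corollary \ref{cor:completude} and Lemma \ref{lemma:shift-ST}) to represent $T_{T^* M} u(\beta)$, for real $\beta$, as
\[
T_{T^* M} u(\beta) = \int_\Lambda K_{TS}(\beta,\gamma)\, T_\Lambda u(\gamma)\, \mathrm{d}\gamma,
\]
where $K_{TS}$ denotes the globally holomorphic kernel of $T \circ S$ on $(T^\ast M)_{\epsilon_0} \times (T^\ast M)_{\epsilon_0}$. By Cauchy--Schwarz,
\[
|T_{T^*M} u(\beta)|^2 \;\le\; \|u\|_{\mathcal H_\Lambda^0}^2 \cdot I(\beta), \qquad I(\beta) := \int_\Lambda |K_{TS}(\beta,\gamma)|^2 e^{2H(\gamma)/h}\, \mathrm{d}\gamma,
\]
and the wavefront criterion in Definition \ref{def:wavefrontset} reduces to proving, for $h$ and $\tau$ small enough, a bound of the form $I(\beta) \leq C \exp(-\jap{\beta}^{1/s}/C)$ on a conical neighbourhood $\Gamma$ of $\alpha$, which is exactly the neighbourhood on which $G_0$ is classically elliptic of order $1/s$ and negative.

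I would split $I(\beta)$ along the threshold $d_{KN}(\beta,\gamma) = \eta$. In the \emph{far} region $\{d_{KN}(\beta,\gamma) \ge \eta\}$, Lemma \ref{lemma:TPS-far-diagonal} (which applies since $\beta \in T^*M$ and $\gamma \in \Lambda$ both lie in $(T^*M)_{\tau_1,1/s}$ for $\tau$ small by Lemma \ref{lmdist}, applied with $P = I$) gives $|K_{TS}(\beta,\gamma)| \leq C\exp(-(\jap{|\beta|}+\jap{|\gamma|})^{1/s}/(C_0 h^{1/s}))$, while \eqref{eq:taille_de_H} yields $|H(\gamma)/h| \leq C_1\tau h^{-1/s}\jap{|\gamma|}^{1/s}$. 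Choosing $\tau < 1/(C_0 C_1)$ makes the product integrable with the required decay. In the \emph{near} region $\{d_{KN}(\beta,\gamma) < \eta\}$, Lemma \ref{lemma:TPS-close-diagonal} (whose proof, as noted after its statement, only uses that $\alpha,\beta$ lie in the region of Figure \ref{fig:region-TPS}, hence applies with $\beta \in T^*M$ and $\gamma \in \Lambda$) yields
\[
K_{TS}(\beta,\gamma) = e^{i\Phi_{TS}(\beta,\gamma)/h} e(\beta,\gamma) + (\text{negligible remainder}),
\]
with $e \in h^{-n}S^0_{KN}$, so the near contribution is controlled by
\[
h^{-2n} \int_{d_{KN}(\beta,\gamma)<\eta} \exp\!\left(-\tfrac{2}{h} B(\beta,\gamma)\right) \mathrm{d}\gamma, \qquad B(\beta,\gamma) := \Im \Phi_{TS}(\beta,\gamma) - H(\gamma).
\]

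The hard part will be the positivity estimate on $B$ for $\beta \in T^*M$ and $\gamma \in \Lambda$ close to $\beta$. I would parametrize $\Lambda$ by $\phi = \exp(H_G^{\omega_I}) : T^*M \to \Lambda$, write $\gamma = \phi(\delta)$, and adapt the proof of Lemma \ref{lemma:PhiTS-Lambda} to this mixed setting. At $\delta = \beta$, the identities $\mathrm{d}_\gamma \Phi_{TS}(\beta,\beta) = -\theta(\beta)$ and $\phi(\beta) = \beta + H_G^{\omega_I}(\beta) + \mathcal O(\tau^2)$, combined with the expression \eqref{eq:approximation_of_H} for $H$ and the explicit computation of $\Im\theta(H_G^{\omega_I})|_{T^*M} = \xi \cdot \nabla_\xi G$, cause the first-order-in-$\tau$ contributions to cancel, leaving
\[
B(\beta, \phi(\beta)) = -G(\beta) + \mathcal O(\tau^2 h^{2-2/s}\jap{|\beta|}^{2/s-1}).
\]
The Hessian of $B$ in $\delta$ at $\delta = \beta$ is positive definite of size $\jap{|\beta|}$ (by an argument verbatim from the proof of Lemma \ref{lemma:PhiTS-Lambda}, which only required $\mathcal C^1$ closeness to $T^*M$), so
\[
B(\beta,\phi(\delta)) \;\geq\; -G(\beta) + \tfrac{1}{C}\jap{|\beta|}\, d_{KN}(\beta,\delta)^2 + \mathcal O(\tau^2 h^{2-2/s}\jap{|\beta|}^{2/s-1}).
\]

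To conclude, the ellipticity of $G_0$ on $\Gamma$ gives $-G(\beta) \geq c_0\tau h^{1-1/s}\jap{|\beta|}^{1/s}$ for $\beta \in \Gamma$ with $\jap{|\beta|}$ large; taking $\tau$ small absorbs the $\mathcal O(\tau^2)$ error into half of this main term. Performing the Gaussian integration in $\delta$ (using Lemma \ref{lemma:uniformity-lagrangians} to control the Jacobian) yields
\[
\int_{d_{KN}(\beta,\gamma)<\eta} h^{-2n} e^{-2B(\beta,\gamma)/h}\, \mathrm{d}\gamma \;\leq\; C' h^{-n} \exp\!\left(-\tfrac{c_0\tau}{h^{1/s}}\jap{|\beta|}^{1/s}\right),
\]
and for $h$ fixed and small this is $\leq C \exp(-\jap{|\beta|}^{1/s}/C)$. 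Combining the near and far contributions gives the required bound on $I(\beta)$, and hence $\alpha \notin \WF_{\G^s}(u)$.
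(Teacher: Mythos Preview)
Your approach is correct and follows the same overall strategy as the paper: write $T_{T^*M}u = (T_{T^*M}S_\Lambda)(T_\Lambda u)$, split the kernel integral into near and far parts using Lemmas \ref{lemma:TPS-far-diagonal} and \ref{lemma:TPS-close-diagonal} (applied in the mixed setting $\beta\in T^*M$, $\gamma\in\Lambda$, which the paper also does), and reduce everything to a lower bound on $B(\beta,\gamma)=\Im\Phi_{TS}(\beta,\gamma)-H(\gamma)$ for $\beta\in\Gamma$ real and $\gamma\in\Lambda$ nearby.

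The only substantive difference is in how this lower bound is obtained. The paper Taylor-expands $\Im\Phi_{TS}(\beta,\cdot)$ from the \emph{real} point $e^{-H_G^{\omega_I}}(\gamma)$ to $\gamma$: since $\Im\Phi_{TS}(\beta,e^{-H_G^{\omega_I}}(\gamma))\ge 0$ by Lemma \ref{lemma:PhiTS-Lambda} on $T^*M$, one directly gets $\Im\Phi_{TS}(\beta,\gamma)\ge -\Im\theta(H_G^{\omega_I})(\gamma)+\O((\epsilon+\tau h^{1-1/s})\tau h^{1-1/s}\jap{\beta}^{1/s})$, and then \eqref{eq:approximation_of_H} yields $B(\beta,\gamma)\ge -\tau h^{1-1/s}G_0(\beta)+\O(\cdots)$ uniformly on the near region. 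This is more economical than your Hessian route: a constant lower bound suffices (the polynomial factors from $h^{-2n}$ and the $\O(\jap{|\beta|}^n)$ near-region volume are absorbed by the exponential), so the Gaussian integration is not needed.

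One small gap in your write-up: passing from the value $B(\beta,\phi(\beta))=-G(\beta)+\O(\tau^2)$ and the Hessian to the quadratic lower bound ignores the gradient $\nabla_\delta B|_{\delta=\beta}$, which is $\O(\tau h^{1-1/s}\jap{|\beta|}^{1/s})$, not $\O(\tau^2)$ (the first-order cancellation you describe holds for the value, but $\mathrm{d}_\gamma\Im\Phi_{TS}(\beta,\phi(\beta))$ differs from $\mathrm{d}_\gamma\Im\Phi_{TS}(\phi(\beta),\phi(\beta))=-\Im\theta|_\Lambda=\mathrm{d}H$ by $\O(\jap{|\beta|}\,d_{KN}(\beta,\phi(\beta)))=\O(\tau h^{1-1/s}\jap{|\beta|}^{1/s})$). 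This is harmless: completing the square absorbs the linear term into half the Hessian at the cost of an additional $\O(\tau^2 h^{2-2/s}\jap{|\beta|}^{2/s-1})$, so your final bound still holds.
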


Observe how this is coherent with the heuristics that the space $\mathcal{H}^0_\Lambda$ behaves formally as the space ``$\exp( \Op(G)/h) L^2(M)$''.

\begin{proof}
Notice first that for some $C > 0$, the Lagrangian $\Lambda$ is $\p{C \tau,1}$-adapted, so that the analysis above applies provided $\tau$ is small enough.

Let $\Gamma$ be a conical neighbourhood of $\alpha$ on which $G_0$ is negative and elliptic of order $\delta$. By assumption, there is a constant $C > 0$ such that for $\beta \in \Gamma$ large enough we have
\begin{equation*}
\begin{split}
G_0(\beta) \leq - \frac{\jap{\beta}^{\frac{1}{s}}}{C}.
\end{split}
\end{equation*} 
Choose some small $\epsilon > 0$ and consider $\beta \in \Gamma$ and $\gamma \in \Lambda$ such that the distance between $\beta$ and $\gamma$ for the Kohn--Nirenberg metric is less than $\epsilon$. Then, we have
\begin{align*}
\Im \Phi_{TS}(\beta,\gamma) &  = \Im \Phi_{TS}\p{\beta,e^{- H_{G}^{\omega_I}}(\gamma)} + \Im\p{\Phi_{TS}(\beta,\gamma) - \Phi_{TS}(\beta, e^{- H_{G}^{\omega_I}}(\gamma))}. \\
\intertext{By Lemma \ref{lemma:PhiTS-Lambda} (with $\Lambda=T^\ast M$), the first term in the right-hand side is non-negative, so this is}
      & \geq \mathrm{d}_\beta \Im \Phi_{TS}(\beta,\gamma) \cdot \p{ H_{G}^{\omega_I}(\gamma)} + \O\p{\tau^2 h^{2\p{1 - \frac{1}{s}}} \jap{\beta}^{1 + 2 (1/s - 1)}}, \\
      & \geq \mathrm{d}_\beta \Im \Phi_{TS}(\gamma,\gamma) \cdot\p{ H_{G}^{\omega_I}\p{\gamma}} + \O\p{(\epsilon + \tau h^{1 - \frac{1}{s}}) \tau h^{1 - \frac{1}{s}} \jap{\beta}^{\frac{1}{s}}}, \\
      & \geq - \Im \theta \p{H_{G}^{\omega_I}(\gamma)} +\O\p{(\epsilon + \tau h^{1 - \frac{1}{s}}) \tau h^{1 - \frac{1}{s}} \jap{\beta}^{\frac{1}{s}}}, \\
\intertext{Using \eqref{eq:approximation_of_H}, this is}
      & \geq H(\gamma) - \tau h^{1 - \frac{1}{s}} G_0(\gamma) + \O\p{(\epsilon + \tau h^{1 - \frac{1}{s}}) \tau h^{1 - \frac{1}{s}} \jap{\beta}^{\frac{1}{s}}}, \\
      & \geq H(\gamma) - \tau h^{1 - \frac{1}{s}} G_0(\beta) + \O\p{(\epsilon + \tau h^{1 - \frac{1}{s}}) \tau h^{1 - \frac{1}{s}} \jap{\beta}^{\frac{1}{s}}}, \\
      & \geq H(\gamma) + \frac{\tau h^{1 - \frac{1}{s}}}{C} \jap{\beta}^{\frac{1}{s}},
\end{align*}
provided $\tau,h$ and $\epsilon$ were small enough. Notice here that the quantification on $\epsilon$ does not depend on $\tau$ nor $h$. Now, if $u \in \mathcal{H}_\Lambda^0$ we write
\begin{equation*}
\begin{split}
T_{T^* M} u = T_{T^* M} S_\Lambda T_\Lambda u = T_{T^*M} S_\Lambda e^{\frac{H}{h}} \p{e^{- \frac{H}{h}} T_\Lambda u}.
\end{split}
\end{equation*}
Then, from Lemma \ref{lemma:TPS-far-diagonal}, we see that the kernel of $T_{T^*M} S_\Lambda e^{\frac{H}{h}}$ is exponentially decaying when $d_{KN}\p{\beta,\gamma} \geq \epsilon$ (provided that $\tau$ is small enough, that is why it was important that $\epsilon$ does not depend on $\tau$), and for $\beta \in \Gamma$ and $\gamma \in \Lambda$ such that $d_{KN}(\beta,\gamma) \leq \epsilon$ we have
\begin{equation*}
\begin{split}
T_{T^*M} S_\Lambda(\beta,\gamma) e^{\frac{H(\gamma)}{h}} = e^{\frac{i \Phi_{TS}(\beta,\gamma) + H(\gamma)}{h}} e(\beta,\gamma) + \O\p{\exp\p{- \frac{\jap{\va{\beta}} + \jap{\va{\gamma}} }{ C h} }},
\end{split}
\end{equation*}
for some symbol $e$. Then, the result follows from the fact that $e^{- \frac{H}{h}} T_\Lambda u$ is in $L^2$ and that, if $\beta$ is large enough and with the assumption above,
\begin{equation*}
\begin{split}
\va{e^{\frac{i \Phi_{TS}(\beta,\gamma) + H(\gamma)}{h}}} \leq \exp\p{ - \frac{\tau}{C h^{\frac{1}{s}}} \jap{\beta}^{\frac{1}{s}}}.
\end{split}
\end{equation*}
\end{proof}

The $\Gs$ wave front set interacts nicely with $\Gs$ pseudors. Indeed, we have
\begin{prop}\label{prop:Pseudor-are-pseudo-microlocal}
Let $A$ be a $\Gs$ pseudor, and $u \in \mathcal{U}^s(M)$. Then $\WF_{\Gs}(Au) \subset \WF_{\Gs}(u)$.
\end{prop}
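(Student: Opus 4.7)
The plan is to reduce everything to kernel estimates for $T_{T^*M} A S_{T^*M}$ via the reconstruction formula. Picking an FBI transform $T$ given by Theorem \ref{thm:existence-good-transform} and setting $S=T^\ast$, we note that $T^\ast M$ is trivially a $(0,1)$-adapted Lagrangian, so Lemma \ref{lemma:shift-ST} gives $S_{T^*M} T_{T^*M} = I$ on $(E^{1,R_0})'$, and we may take $R_0$ large enough that $\mathcal{U}^s(M) \subset (E^{1,R_0})'$ (since $E^{1,R_0}(M) \subset \mathcal{G}^1(M) \subset \mathcal{G}^s(M)$). Then for $u \in \mathcal{U}^s(M)$ we have
\[
T_{T^*M}(Au)(\alpha) \;=\; \int_{T^*M} K_{TAS}(\alpha,\beta)\, T_{T^*M}u(\beta)\, \mathrm{d}\beta,
\]
where $K_{TAS}$ is controlled by Lemmas \ref{lemma:TPS-far-diagonal} and \ref{lemma:TPS-close-diagonal} applied with $P=A$ and $\Lambda=T^*M$ (and where $H\equiv 0$, so the imaginary part of $\Phi_{TS}$ is directly non-negative near the diagonal by Lemma \ref{lemma:PhiTS-Lambda}). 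In particular, near the diagonal one has a bound $|K_{TAS}(\alpha,\beta)|\lesssim \langle|\alpha|\rangle^{N}\exp(-\langle|\alpha|\rangle d_{KN}(\alpha,\beta)^2/C)$ plus a rapidly decaying error, and away from the diagonal $|K_{TAS}(\alpha,\beta)|\lesssim \exp(-((\langle|\alpha|\rangle+\langle|\beta|\rangle)/(Ch))^{1/s})$.

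Now fix $\alpha_0 \notin \WF_{\mathcal{G}^s}(u)$ and pick a conical neighbourhood $\Gamma$ of $\alpha_0$ on which $|T_{T^*M}u(\beta)| \leq C\exp(-\langle\beta\rangle^{1/s}/C)$ for $|\beta|$ large (by Definition \ref{def:wavefrontset}); pick a strictly smaller conical neighbourhood $\Gamma' \Subset \Gamma$. We must show $|T_{T^*M}(Au)(\alpha)| \leq C'\exp(-\langle\alpha\rangle^{1/s}/C')$ for $\alpha \in \Gamma'$ with $|\alpha|$ large. Split the integral as $\int_{\Gamma} + \int_{T^*M\setminus \Gamma}$.

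For the integral over $\Gamma$, split further into the region $d_{KN}(\alpha,\beta)\leq \eta$ and its complement. On the small region, $\langle|\beta|\rangle\sim \langle|\alpha|\rangle$, so the WF hypothesis gives a factor $\exp(-\langle\alpha\rangle^{1/s}/C')$ that pulls out of the integral; the remaining Gaussian $\exp(-\langle|\alpha|\rangle d_{KN}^2/C)$ integrates to a polynomial factor in $\langle\alpha\rangle$, which is absorbed into a slightly worse stretched exponential. On the complement within $\Gamma$, Lemma \ref{lemma:TPS-far-diagonal} gives $|K_{TAS}|\lesssim \exp(-(\langle|\alpha|\rangle+\langle|\beta|\rangle)^{1/s}/C)$, and the $L^1$ estimate combining this with the bound on $Tu$ (or even just with the polynomial bound $|Tu(\beta)|\lesssim 1$) is straightforward.

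The main obstacle is the integral over $\beta \in T^*M\setminus \Gamma$, where we cannot use the WF decay of $Tu$ and must rely only on $|T_{T^*M}u(\beta)| \leq C_\varepsilon \exp(\varepsilon \langle\beta\rangle^{1/s})$ for every $\varepsilon>0$ (by Proposition \ref{prop:M-vers-Lambda}). The crucial geometric observation is that for $\alpha\in\Gamma'$ with $|\alpha|$ large and $\beta\notin\Gamma$, one has $d_{KN}(\alpha,\beta)\geq c>0$ uniformly: at high frequency the Kohn--Nirenberg metric on fibres is asymptotically the cylindrical metric $\mathrm{d}(\log r)^2 + \mathrm{d}\omega^2$, so the angular separation $\omega(\Gamma'),\omega(\Gamma^c)$ bounds $d_{KN}$ from below; if $\beta$ has small $|\beta_\xi|$ or different base point, the logarithmic radial term or the base-point separation $\delta-\delta'$ supplies the bound. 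With this in hand, Lemma \ref{lemma:TPS-far-diagonal} gives $|K_{TAS}(\alpha,\beta)| \leq C\exp(-\langle|\alpha|\rangle^{1/s}/C)\exp(-\langle|\beta|\rangle^{1/s}/C)$ (using $(a+b)^{1/s}\geq \tfrac12(a^{1/s}+b^{1/s})$ for $s\geq 1$), and choosing $\varepsilon$ much smaller than $1/C$ lets us absorb $|Tu(\beta)|$ and integrate in $\beta$, producing the required bound $\exp(-\langle\alpha\rangle^{1/s}/C')$. Combining all contributions shows $\alpha_0\notin\WF_{\mathcal{G}^s}(Au)$.
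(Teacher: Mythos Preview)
Your proof is correct and follows essentially the same approach as the paper: both arguments rewrite $T_{T^*M}(Au)$ via the kernel $K_{TAS}$, use Lemma~\ref{lemma:TPS-far-diagonal} together with the ultradistribution bound on $Tu$ to control the contribution away from the diagonal, and then use Lemma~\ref{lemma:PhiTS-Lambda} (non-negativity of $\Im\Phi_{TS}$ on the real) plus the wavefront hypothesis for the near-diagonal part. The only organizational difference is that the paper first reduces to the integral over $\{d_{KN}(\beta,\gamma)\leq\eta\}$ (absorbing everything else into a single $\O(\exp(-(\langle\beta\rangle/Ch)^{1/s}))$ error via Lemma~\ref{lemma:T_BMT}), whereas you split by the cone $\Gamma$ first and invoke the geometric separation $d_{KN}(\Gamma',\Gamma^c)\geq c$; both routes are valid and lead to the same estimates.
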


\begin{proof}
We consider $\alpha \notin \WF_{\Gs}(u)$, and seek to prove that $\alpha\notin \WF_{\Gs}(Au)$. Using Lemma \ref{lemma:TPS-far-diagonal} and \ref{lemma:T_BMT}, we find for $\eta>0$ a constant $C>0$ such that for $\beta\in T^\ast M$,
\[
T (Au)(\beta) = \int_{d_{KN}(\gamma,\beta)\leq \eta} K_{TAS}(\beta,\gamma) Tu(\gamma)d\gamma + \O \left(\exp\left(  - \left( \frac{\langle\beta\rangle }{Ch}\right)^{\frac{1}{s}} \right) \right).
\]
From the assumption that $\alpha\notin \WF_{\Gs}(u)$, we deduce that for some $\eta>0$, the quantity $|Tu(\gamma)|$ is controlled by $\exp(- (\langle\gamma\rangle/ C h)^{1/s})$ in the conical neighbourhood of size $2\eta$ of $\alpha$. In particular for $\beta$ in the conical neighbourhood of size $\eta$ of $\alpha$, we find, using Lemma \ref{lemma:PhiTS-Lambda} to see that $\Im \Phi_{TS}(\beta,\gamma)$ is non-negative when $\beta$ and $\gamma$ are real, that 
\[
T (Au)(\beta) =  \O \left(\exp\left(  - \left( \frac{\langle\beta\rangle }{Ch}\right)^{\frac{1}{s}} \right) \right).
\]
\end{proof}

We can also use $I$-Lagrangian spaces to prove elliptic regularity for $\G^s$ pseudors.

\begin{prop}\label{prop:regularite_elliptique}
Let $m \in \R$. Let $A$ be a $\G^s$ pseudor on $M$, semi-classically elliptic of order $m$. Assume that $h$ is small enough. Then, if $u \in \U^s\p{M}$ is such that $Au \in \G^s\p{M}$, we have that $u \in \G^s\p{M}$.
\end{prop}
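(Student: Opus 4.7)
The plan is to show that the $\G^s$ wavefront set of $u$ is empty by exhibiting, for each $\alpha_0 \in T^*M \setminus \set{0}$, an I-Lagrangian $\Lambda$ (depending on $\alpha_0$) such that $u \in \mathcal{H}_\Lambda^0$ and the associated symbol $G_0$ is negative elliptic of order $1/s$ near $\alpha_0$; Lemma \ref{lemma:wavefrontset_HLambda} will then give $\alpha_0 \notin \WF_{\G^s}(u)$. Concretely, I would choose a real symbol $G_0 \in S_{KN}^{1/s}\p{\p{T^*M}_{\epsilon_0}}$ with $G_0 \leq 0$ everywhere and $G_0 \leq -c\jap{\va{\alpha}}^{1/s}$ on a conical neighbourhood of $\alpha_0$ (for example $G_0 = -\chi\jap{\va{\alpha}}^{1/s}$ with a suitable cutoff), set $G = \tau h^{1-\frac{1}{s}} G_0$ for small $\tau > 0$, and form the adapted Lagrangian $\Lambda = e^{H_G^{\omega_I}}(T^*M)$.

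The main step is to invert $A$ between Lagrangian spaces. By Proposition \ref{lemma:weak-mult},
\[
T_\Lambda A S_\Lambda = a_\Lambda \Pi_\Lambda + E_1, \qquad T_\Lambda A S_\Lambda = \Pi_\Lambda a_\Lambda + E_2,
\]
with $E_j = \O(h^{1/2})$ as operator $L^2_k(\Lambda) \to L^2_{k-m+1/2}(\Lambda)$. Semi-classical ellipticity of $A$ transfers to the restriction $a_\Lambda$: for $\tau$ small, $\va{a_\Lambda(\alpha)} \geq \frac{c}{2}\jap{\va{\alpha}}^m$ uniformly on $\Lambda$. Setting $B = S_\Lambda a_\Lambda^{-1} T_\Lambda$ and using $S_\Lambda T_\Lambda = I$ (Lemma \ref{lemma:shift-ST}) together with $\Pi_\Lambda = T_\Lambda S_\Lambda$, one computes $BA = I + R$ and $AB = I + R'$ with $R,R' = \O(h^{1/2})$ as operators on $\mathcal{H}_\Lambda^k$ (resp.\ $\mathcal{H}_\Lambda^{k-m}$). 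For $h$ small, Neumann series produces a bounded inverse $A^{-1}_\Lambda : \mathcal{H}_\Lambda^{k-m} \to \mathcal{H}_\Lambda^k$ for every $k \in \R$.

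Next, I would verify $Au \in \mathcal{H}_\Lambda^k$ for every $k$. By Lemma \ref{lemma:T_non_stationary} (applied to $Au \in \G^s$ on $\p{T^*M}_{C\tau,1/s} \supset \Lambda$), the FBI transform $T(Au)$ decays on $\Lambda$ like $\exp\p{-h^{-1/s}\jap{\va{\alpha}}^{1/s}/C_1}$, while the weight $e^{-2H/h}$ grows at most like $\exp\p{2\tau C_2 h^{-1/s}\jap{\va{\alpha}}^{1/s}}$ thanks to \eqref{eq:approximation_of_H} and $\va{G_0} \lesssim \jap{\va{\alpha}}^{1/s}$. Choosing $\tau$ small enough (depending only on the Gevrey seminorm of $Au$, not on $h$) makes the product integrable against any polynomial weight, giving $Au \in \mathcal{H}_\Lambda^k$ for all $k$. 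Applying the inverse constructed above yields $v \coloneqq A^{-1}_\Lambda(Au) \in \mathcal{H}_\Lambda^0$.

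The main obstacle is then to identify $v$ with $u$: \emph{a priori} $u$ lies in the much larger space $\U^s\p{M}$, so applying $A^{-1}_\Lambda$ to $Au$ does not tautologically return $u$. To bridge this, I would first appeal to classical ($\mathcal{C}^\infty$) semi-classical elliptic regularity: $A$ is in particular a $\mathcal{C}^\infty$ pseudor and is invertible between usual Sobolev spaces for $h$ small, from which one deduces by duality (and the inclusion $\mathcal{D}' \subset \U^s$ reversed on dense subsets) that the hypothesis $Au \in \G^s \subset \mathcal{C}^\infty$ already forces $u \in \mathcal{C}^\infty(M) \subset \mathcal{D}'\p{M}$. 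Since both the classical inverse and $A^{-1}_\Lambda$ are defined by inversion of the same operator $A$, they must coincide on $\mathcal{H}_\Lambda^0 \cap \mathcal{D}'$; hence $u = v \in \mathcal{H}_\Lambda^0$. Lemma \ref{lemma:wavefrontset_HLambda} then gives $\alpha_0 \notin \WF_{\G^s}(u)$, and as $\alpha_0$ was arbitrary we conclude $u \in \G^s(M)$ via Proposition \ref{prop:regularite_via_T}.
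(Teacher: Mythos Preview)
Your approach is close in spirit to the paper's, and the inversion of $A$ on the I-Lagrangian spaces via Proposition~\ref{lemma:weak-mult} and a Neumann series is exactly what the paper does. However, there is a genuine gap in the identification step $v=u$, and your proposed fix does not work.

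You correctly isolate the obstacle: after constructing $v=A_\Lambda^{-1}(Au)\in\mathcal{H}_\Lambda^0$, one must show $v=u$. Your resolution appeals to classical $\mathcal{C}^\infty$ semi-classical elliptic regularity to first conclude $u\in\mathcal{C}^\infty(M)$. But that result requires $u\in\mathcal{D}'(M)$ as a hypothesis, whereas here $u$ is only known to lie in $\U^s(M)$, which for $s>1$ is \emph{strictly larger} than $\mathcal{D}'(M)$. The parenthetical ``the inclusion $\mathcal{D}'\subset\U^s$ reversed on dense subsets'' does not yield $u\in\mathcal{D}'$; density of $\G^s$ in $\mathcal{C}^\infty$ gives the injection $\mathcal{D}'\hookrightarrow\U^s$, not a way to pull $u$ back into $\mathcal{D}'$. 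Equivalently, the identification $v=u$ would require injectivity of $A$ on $\U^s$, which is essentially the statement you are trying to prove.

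The paper avoids this by a small but decisive change: instead of a cutoff $G_0$ that is negative elliptic only near a chosen $\alpha_0$, it takes $G_0(\alpha)=-c\jap{\va{\alpha}}^{1/s}$, \emph{globally} negative elliptic. With this choice, an inspection of the proofs of Proposition~\ref{prop:regularite_via_T} and Lemma~\ref{lemma:wavefrontset_HLambda} gives $\mathcal{H}_\Lambda^0\subseteq E^{s,R_1}(M)\subseteq\mathcal{C}^\infty(M)$ for some $R_1>0$. The identification is then trivial: on a space of smooth functions, the inverse $A_\Lambda^{-1}$ is nothing but the restriction of the $\mathcal{C}^\infty$ inverse. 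Since moreover (as you note) the threshold $h_0$ for invertibility is uniform in $\tau$, one may shrink $\tau$ so that $E^{s,R}(M)\subseteq\mathcal{H}_\Lambda^0$ for any given $R$, yielding $A^{-1}:E^{s,R}(M)\to E^{s,R_1}(M)$ for all $R$, hence $A^{-1}:\G^s(M)\to\G^s(M)$. Applying the same to $A^*$ and dualizing extends $A^{-1}$ continuously to $\U^s(M)\to\U^s(M)$, after which the proposition is immediate: $u=A^{-1}(Au)\in\G^s(M)$.

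In short, your microlocal strategy (vary $\alpha_0$, use wavefront sets) is unnecessary and is precisely what creates the identification problem; taking $G_0$ globally elliptic makes $\mathcal{H}_\Lambda^0$ a space of smooth functions and dissolves the obstacle.
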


Proposition \ref{prop:regularite_elliptique} is a result of elliptic regularity that extends previous results \cite{Boutet-Kree-67,zanghiratiPseudodifferentialOperatorsInfinite1985,Rodino-93-book}. The main improvement here is that we use our class of $\G^s \G^s$ pseudors rather than the more common class of $\G^1 \G^s$ pseudors. 

\begin{remark}
As will be clear from the proof, this tool is quite flexible. For example, instead of ``semi-classically elliptic'', one could assume ``elliptic and $\Re A \geq 0$''. This statement can also be microlocalized. 
\end{remark}

The usual way to prove such a result is to construct a parametrix for $A$, i.e. to prove that the inverse for $A$ is itself a $\G^s$ pseudor (we know that this inverse exists when $h$ is small enough by the usual $\mathcal{C}^\infty$ pseudo-differential calculus). However, one can see that Proposition \ref{prop:regularite_elliptique} follows from the following result.

\begin{lemma}
Under the assumptions of Proposition \ref{prop:regularite_elliptique}, if $h$ is small enough then the inverse of $A$ is continuous from $\G^s\p{M}$ to itself and extends as a continuous operator from $\U^s\p{M}$ to itself.
\end{lemma}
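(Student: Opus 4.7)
The plan is to work entirely on the FBI transform side, using the I-Lagrangian spaces $\mathcal{H}^k_\Lambda$ constructed earlier. First I would show that $A$ admits a two-sided inverse on $\mathcal{H}^k_\Lambda$ for every $(\tau,s)$-adapted Lagrangian $\Lambda$ (with $\tau$ small enough depending only on $A$), and then I would choose $\Lambda$ strategically to read off continuity on $\G^s(M)$. The continuity on $\U^s(M)$ will follow from the $\G^s$ case by duality, since $A^*$ is itself a $\G^s$ pseudor (Proposition \ref{prop:adjoint_pseudor}) whose principal symbol $\overline{p}$ is still semi-classically elliptic of order $m$.

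To invert $A$ on $\mathcal{H}^k_\Lambda$, I would use the semi-classical ellipticity of $A$ together with Proposition \ref{prop:quantization_Gs} to define $B \coloneqq \Op(1/p)$, a $\G^s$ pseudor of order $-m$. By Proposition \ref{prop:composition_pseudors} and Remark \ref{remark:developpement_asymptotique}, $BA = I + h R$ with $R \in \G^s\Psi^{-1}$. Proposition \ref{lemma:boundedness} then shows that $R$ is bounded on $\mathcal{H}^k_\Lambda$ uniformly in $h$, so the Neumann series inverts $I + hR$ when $h$ is small, giving a left inverse $(I+hR)^{-1} B$; the symmetric argument based on a right parametrix yields a right inverse, and the two must coincide. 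This produces a bounded operator $A^{-1}\colon \mathcal{H}^{k-m}_\Lambda \to \mathcal{H}^k_\Lambda$, independent of the choice of $\Lambda$ and $k$ by uniqueness of the inverse on the ambient ultradistribution space.

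To transfer this to $\G^s(M)$, I would fix a real-valued symbol $G_0^-$, negative and classically elliptic of order $1/s$ everywhere on $T^* M$, extend it almost analytically, and let $\Lambda^-_\tau$ denote the adapted Lagrangian associated with $G^- = \tau h^{1 - 1/s} G_0^-$. For $u \in E^{s,R}(M)$, Lemma \ref{lemma:T_non_stationary} yields $|Tu(\alpha)| \lesssim \n{u}_{s,R}\exp\p{-(\jap{\va{\alpha}}/C_R h)^{1/s}}$, while the weight $\exp(-2H/h)$ on $\mathcal{H}^0_{\Lambda^-_\tau}$ grows like $\exp\p{c\tau (\jap{\va{\alpha}}/h)^{1/s}}$ for a constant $c>0$ controlled by the ellipticity of $G_0^-$. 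For $\tau$ small enough depending on $R$, the product is square-integrable, producing a bounded inclusion $E^{s,R}(M) \hookrightarrow \mathcal{H}^0_{\Lambda^-_\tau}$. Applying the inverse constructed above lands $A^{-1} u$ in $\mathcal{H}^m_{\Lambda^-_\tau}$. The reverse inclusion $\mathcal{H}^m_{\Lambda^-_\tau} \hookrightarrow E^{s,R'}(M)$ for some $R' = R'(\tau)$ will then come from running a quantitative version of Lemma \ref{lemma:wavefrontset_HLambda}: since $G_0^-$ is globally negative elliptic, one recovers uniform stretched-exponential decay of $T_{T^* M}(A^{-1} u)$ on all of $T^* M$, and this decay translates back, through Proposition \ref{prop:regularite_via_T}, into an $E^{s,R'}(M)$ norm bound linear in $\n{u}_{s,R}$.

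The main obstacle will be this last quantitative step: extracting from the proof of Lemma \ref{lemma:wavefrontset_HLambda} a uniform stretched-exponential decay for $T_{T^* M}$ on the \emph{whole} of $T^* M$, rather than on a single conical neighbourhood, and showing that this decay corresponds to membership in $E^{s,R'}(M)$ with $R'$ depending only on $\tau$ and the ellipticity constants of $G_0^-$. Concretely this amounts to a Schur-type pairing of the weight $\exp(-2H/h)$ against the decay of the FBI kernel, in the same spirit as the estimates used in the proof of Proposition \ref{lemma:boundedness}; everything else is essentially a bookkeeping exercise chaining the two embeddings with the Lagrangian-side inverse.
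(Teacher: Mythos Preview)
Your plan is correct and follows essentially the same route as the paper: reduce to $\G^s(M)$ by duality, invert $A$ on $\mathcal{H}^0_\Lambda$ for an adapted Lagrangian $\Lambda$ built from a globally negative elliptic escape function $G_0^-$ of order $1/s$, and then sandwich $A^{-1}$ between the two embeddings $E^{s,R}(M) \hookrightarrow \mathcal{H}^0_\Lambda$ (from Lemma \ref{lemma:T_non_stationary}, hence Corollary \ref{cor:completude}) and $\mathcal{H}^0_\Lambda \hookrightarrow E^{s,R'}(M)$ (from the proof of Lemma \ref{lemma:wavefrontset_HLambda}). The only notable difference is how you build the approximate inverse: you take $B=\Op(1/p)$ and use the $\G^s$ composition calculus (Propositions \ref{prop:quantization_Gs}, \ref{prop:composition_pseudors}, Remark \ref{remark:developpement_asymptotique}) to get $BA=I+hR$ with $R\in\G^s\Psi^{-1}$, whereas the paper stays on the FBI side and uses the weak multiplication formula (Proposition \ref{lemma:weak-mult}) to write $T_\Lambda A S_\Lambda = a\,\Pi_\Lambda + \O(h^{1/2})$ and then sets $B=S_\Lambda a^{-1} T_\Lambda$. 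Both yield a Neumann-series inverse with the crucial uniformity of the $h$-threshold in $\tau_0$; the paper's route avoids appealing to the symbolic composition expansion, but your route is equally legitimate. Your identification of the ``main obstacle'' (the quantitative global version of Lemma \ref{lemma:wavefrontset_HLambda}) matches what the paper also leaves implicit, deferring it to ``a short inspection of their proofs''.
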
 

\begin{proof}
Since the formal adjoint of $A$ is also a semi-classically elliptic $\G^s$ pseudor of order $m$, according to Proposition \ref{prop:adjoint_pseudor}, then we only need to prove that $A^{-1}$ is bounded from $\G^s\p{M}$ to itself. To do so, we consider the Lagrangian $\Lambda$ defined by \eqref{eq:adapted_Lagrangian} with $G\p{\alpha} = - c \tau_0 h^{1 - \frac{1}{s}} \jap{\va{\alpha}}^{\frac{1}{s}}$, where $c > 0$ is small (in order to ensure that $\Lambda$ is a $\p{\tau_0,s}$-adapted Lagrangian). Provided that $\tau_0$ and $h$ are small enough, then we know by Proposition \ref{lemma:boundedness} that $A$ is bounded from $\mathcal{H}_\Lambda^k$ to $\mathcal{H}_\Lambda^{k-m}$ for every $k \in \R$. We will start by proving that for $\tau_0$ and $h$ small enough, then $A$ has a bounded inverse on $\mathcal{H}_\Lambda^0$. To do so, apply Proposition \ref{lemma:weak-mult} to write
\begin{equation}\label{eq:mw}
\begin{split}
T_\Lambda A S_\Lambda = a \Pi_\Lambda + \O_{L^2_{m - \frac{1}{2}}\p{\Lambda} \to L^2_{0}\p{\Lambda}}\p{h^{\frac{1}{2}}},
\end{split}
\end{equation}
where $a$ denotes an almost analytic extension for the symbol of $a$. It follows from the ellipticity of $A$ that there is $C > 0$ such that for every $\alpha \in \Lambda$ we have
\begin{equation*}
\begin{split}
\va{a(\alpha)} \geq \frac{\jap{\va{\alpha}}^{m}}{C}.
\end{split}
\end{equation*}
This is true by assumption for $\alpha \in T^* M$ and remains true after a small perturbation (we assume $c \ll 1$). Hence, the multiplication by $a^{-1}$ defines a bounded operator from $L^2_0\p{\Lambda}$ to $L^2_m\p{\Lambda}$. We define the operator
\begin{equation*}
\begin{split}
B = S_\Lambda a^{-1} T_\Lambda,
\end{split}
\end{equation*}
which is consequently bounded from $\mathcal{H}_\Lambda^0$ to $\mathcal{H}_\Lambda^m$. It follows from \eqref{eq:mw} that (the size of the remainder is measured both as an endomorphism of $\mathcal{H}_\Lambda^0$ and of $\mathcal{H}_\Lambda^m$)
\begin{equation*}
\begin{split}
B A = I + \O\p{h^{\frac{1}{2}}}.
\end{split}
\end{equation*}
Hence, we find by Von Neumann's argument that, for $h$ small enough, $A$ has an inverse $A^{-1}$ bounded from $\mathcal{H}_\Lambda^0$ to $\mathcal{H}_\Lambda^m$. Moreover, we see that this result is uniform in the parameter $\tau_0$ that appears in the definition of $G$, so that the inverse $A^{-1}$ exists for $h \leq h_0$ where $h_0$ does not depend on $\tau_0$. 

Now let $R > 0$. If $\tau_0$ is small enough, it follows by Corollary \ref{cor:completude} that $E^{s,R}\p{M} \subseteq \mathcal{H}_\Lambda^0$ with a continuous inclusion. However, the ellipticity of $G$ with Proposition \ref{prop:regularite_via_T} and Lemma \ref{lemma:wavefrontset_HLambda} (or rather a short inspection of their proofs) gives that there is $R_1 > 0$ such that $\mathcal{H}_\Lambda^0 \subseteq  E^{s,R_1}\p{M}$. Hence, if $h < h_0$ the inverse $A^{-1}$ of $A$ from $\mathcal{H}_\Lambda^0$ to $\mathcal{H}_\Lambda^m$ induces an inverse for $A$ from $E^{s,R}\p{M}$ to $E^{s,R_1}\p{M}$. Since both these spaces are included in $\mathcal{C}^\infty\p{M}$, this is nothing else that the inverse of $A$ on $\mathcal{C}^\infty\p{M}$ restricted to $E^{s,R}\p{M}$. Thus, the inverse of $A$ is bounded from $E^{s,R}\p{M}$ to $E^{s,R_1}\p{M}$. Since $R> 0$ is arbitrary, it follows that $A^{-1}$ is bounded from $\G^s\p{M}$ to itself.
\end{proof}

\section{Bergman projector and symbolic calculus}\label{sec:Bergman}

We use the same notations as in the previous section: $T$ is an analytic FBI transform given by Theorem \ref{thm:existence-good-transform}, and $S= T^*$ denotes its adjoint, $\Lambda$ is a $(\tau_0,1)$-adapted Lagrangian manifold with $\tau_0$ small enough. The associated operators $T_\Lambda$ and $S_\Lambda$ have been defined in \S \ref{sec:symplectic-geometry}, and we assume whenever we need that the implicit parameter $h > 0$ is small enough.

In order to understand the action of a $\G^s$ pseudor $P$ on the space $\mathcal{H}_\Lambda^0$ defined in \S \ref{sec:symplectic-geometry}, we only need to study the operator $T_\Lambda P S_\Lambda$ acting on $\mathcal{H}_{\Lambda,\FBI}^0$. The kernel of this operator was already described in Lemmas \ref{lemma:TPS-far-diagonal} and \ref{lemma:TPS-close-diagonal}. With Toeplitz calculus in mind, we would like to compare the operator $T_\Lambda P S_\Lambda$ with a multiplication operator. We gave a first result in this direction, Lemma \ref{lemma:weak-mult}. However, the error term in this result is quite big, and we intend to improve it, giving a multiplication formula valid at any order. 

It would be possible to find a symbol $p$ on $\Lambda$ such that $T_\Lambda P S_\Lambda \simeq \Pi_\Lambda p \Pi_\Lambda$, with a very small error. Here, we recall that $\Pi_\Lambda = T_\Lambda S_\Lambda$ is a bounded projector from $L^2_0\p{\Lambda}$ to $\mathcal{H}_{\Lambda,\FBI}^0$. This is not the most useful form for $T_\Lambda P S_\Lambda$ though. It is in fact much more interesting to replace $\Pi_\Lambda$ by the orthogonal projector $B_\Lambda$ on $\mathcal{H}_{\Lambda,\FBI}^0$ -- it is a legitimate operator since $\mathcal{H}_{\Lambda,\FBI}^0$ is closed. Indeed, if we can find a symbol $\sigma$ on $\Lambda$ such that

\begin{equation}\label{eq:Toeplitz}
\begin{split}
B_\Lambda T_\Lambda P S_\Lambda B_\Lambda \simeq B_\Lambda \sigma B_\Lambda
\end{split}
\end{equation}
up to a very small error, then we can use the symbol $\sigma$ to compute scalar products in $\mathcal{H}_\Lambda^0$:
\begin{equation*}
\begin{split}
\jap{P u ,u}_{\mathcal{H}_\Lambda^0} \simeq \jap{\sigma T_\Lambda u, T_\Lambda u}_{L_0^2\p{\Lambda}}.
\end{split}
\end{equation*}
We can then use the Hilbert structure of $\mathcal{H}_\Lambda^0$ to try to invert $P$ for instance. One may think of the difference between representations in the form $B_\Lambda \sigma B_\Lambda$ and $\Pi_\Lambda p \Pi_\Lambda$ as the difference between the Weyl quantization and the left quantization. Both of them are legitimate quantizations, however one of them is much more practical for the handling of adjoint operators. 

For this whole idea to work, we need to understand the structure of $B_\Lambda$. Guided by the fact that $B_{T^\ast M} = \Pi_{T^\ast M}$, it is reasonable to conjecture that the kernel of $B_\Lambda$ takes in general a form similar to that of $\Pi_\Lambda$. Recall that the latter, up to negligible remainders, takes the form
\begin{equation}\label{eq:forme_operateur}
\begin{split}
e^{i \frac{\Phi_{TS}(\alpha,\beta)}{h}} \sigma_{\Pi_\Lambda}(\alpha,\beta).
\end{split}
\end{equation}
where $\sigma_{\Pi_\Lambda}$ is a symbol supported near the diagonal of $\Lambda \times \Lambda$ and the phase $\Phi_{TS}$ satisfies the estimate Lemma \ref{lemma:PhiTS-Lambda}.

The method to prove it is indeed the case was first elaborated by Boutet de Monvel and Guillemin \cite{boutetdemonvelSpectralTheoryToeplitz1981}, and improved upon by \cite{Helffer-Sjostrand-86} and \cite{Sjostrand-96-convex-obstacle}; we will explain its details.

\begin{itemize}
	\item The first step is to observe that there exists local pseudo-differential operators $Z_j$, $j=1\dots n$, such that $Z_j \Pi_\Lambda = \O(h^\infty)$ as operators from $L^2_k$ to $L^2_{-k}$ for every $k$. The fact that the image of $\Pi_\Lambda$ is (almost) in the kernel of such operators stems from the decomposition $\Pi_\Lambda = T_\Lambda S_\Lambda$. There is no reason for this to be true for a general operator whose kernel is in the form $e^{i \Phi(\alpha,\beta)/h}a(\alpha,\beta)$.
	\item The second step is to consider operators $A$ whose kernel essentially take the form $e^{i \Phi(\alpha,\beta)/h}a(\alpha,\beta)$, and such that $\Pi_\Lambda A = A \Pi_\Lambda^\ast = A$ -- with some technical conditions. One can then use the $Z_j$'s to prove that the phase $\Phi$ is completely determined, and that the amplitude is completely determined by its values on the diagonal. 
	\item Finally, the observation that, given a symbol $\sigma$, the composition $\Pi_\Lambda \sigma \Pi_\Lambda^\ast$ is such an operator with oscillating kernel, leads to a parametrix construction showing that there exists $f$ a symbol of order $0$ such that $B_\Lambda \simeq \Pi_\Lambda f \Pi_\Lambda^\ast$ modulo a very small error.
\end{itemize}

A Toeplitz representation \eqref{eq:Toeplitz} of $\G^s$ pseudor will follow, that can be rephrased as a multiplication formula -- Proposition \ref{propmultform} -- that will be essential in the applications. Finally, we will also state basic results about a Toeplitz calculus that we will use in \S \ref{subsec:singuar_values} to deduce estimates on singular values of inclusions between the spaces $\mathcal{H}_\Lambda^k$ (it will also be very important in the applications).

Let us point out that, now that Lemmas \ref{lemma:TPS-far-diagonal} and \ref{lemma:TPS-close-diagonal} allowed us to replace $T^* M$ by $\Lambda$, we are working in the $\mathcal{C}^\infty$ category on $\Lambda$. We will use tools from \cite{Melin-Sjostrand-75} that require to choose an almost analytic neighbourhood $\widetilde{\Lambda}$, that is an embedding $\Lambda \subseteq \widetilde{\Lambda}$ of $\Lambda$ in a complex analytic manifold of dimension $2n$ that makes $\Lambda$ totally real in $\widetilde{\Lambda}$. We can take for instance $\widetilde{\Lambda} = T^* \widetilde{M}$, but any other choice is legitimate. We can also take $\mathcal{C}^\infty$ coordinates on $\Lambda$ and use $\C^n$ as an almost analytic neighbourhood. Notice also that when considering complex conjugates, we will always mean it in these coordinates. For instance if $f$ is an element of $T^*_\alpha \Lambda \otimes \C \simeq T^*_\alpha \widetilde{\Lambda}$ for some $\alpha \in \Lambda$, then $\bar{f}$ is defined using the tensor product structure. This amounts to say that $\bar{f}$ is the $\C$-linear form on $T_\alpha \Lambda \otimes \C \simeq T_\alpha \widetilde{\Lambda}$ whose restriction to $T_\alpha \Lambda$ is obtained from $f$ by composition with the usual complex conjugacy.

Finally, let us mention that in this section we will identify an operator with its kernel, writing for instance $\Pi_\Lambda(\alpha,\beta)$ for the kernel of $\Pi_\Lambda$.

\subsection{Operators on the image of the transform}\label{sec:FIO}

\subsubsection{Annihilating pseudors}\label{sub:annihilating}

In this section, we will build the $Z_j$'s evoked a few lines above. The existence of such operators vanishing on the image of $T$ should be seen as a generalization of the following relations satisfied by the flat transform defined in \eqref{eq:def-flat-FBI-transform}:
\[
\underset{=Z_{j,\R^n}}{\underbrace{\left( \frac{\partial}{\partial x_j} - i \frac{\partial}{\partial \xi_j}  - \frac{i}{h}\xi_j\right)}} T_{\R^n} = 0.
\]
The collection of these equations forms a twisted $\overline{\partial}$ equation. In our non-flat case, the $Z_j$'s will have to be pseudo-differential, but the fundamental idea is similar. To be prudent, we define beforehand what we mean by a pseudor on $\Lambda$:
\begin{definition}
Let $\Lambda$ be a $(\tau_0,1)$-adapted Lagrangian. Consider $Q$ an operator on $\Lambda$ (maybe depending on $h$ as usual) whose kernel is uniformly properly supported. Assume additionally that in local coordinates near a point $\alpha \in \Lambda$ where the Kohn--Nirenberg metric is uniformly close to the standard metric, the kernel of $Q$ is the kernel of a semi-classical pseudo-differential operator with Planck constant $\hbar = h/\langle|\alpha|\rangle$, whose symbol is uniformly compactly supported in the impulsion variable. Then we say that $Q$ is an pseudor on $\Lambda$.
\end{definition}

Using the map $T^\ast M \to \Lambda$ given by $\exp H^{\omega_I}_G$, and local coordinates $(u,\eta)$ on $T^\ast M$, we have local coordinates $\alpha = \exp H^{\omega_I}_G(u,\eta)$ on $\Lambda$. Taking into account the relevant scaling, $Q$ is a pseudor on $\Lambda$ if its kernel takes in these coordinates the form (near the diagonal)
\[
\frac{1}{(2\pi h)^{2n}}\int e^{\frac{i}{h}(\langle u-u',u^\ast\rangle + \langle \eta -\eta', \eta^\ast \rangle)} q(u,u',\eta,\eta',u^\ast, \eta^\ast) \mathrm{d}u^\ast \mathrm{d}\eta^\ast,
\]
where the symbol $q$ is supported for $|u^\ast| + |\eta-\eta'| \leq C \langle \eta\rangle$, $|\eta^\ast| + |u-u'|\leq C$ and satisfies for $k,\ell \in \N^{2n}$ and $k',\ell' \in \N^n$
\begin{equation}\label{eq:symbole_bizarre}
\left|(\partial_{u,u'})^k (\partial_{\eta,\eta'})^{\ell} (\partial_{u^\ast})^{k'}  (\partial_{\eta^\ast})^{\ell'} q(u,u',\eta,\eta',u^*,\eta^*)\right| \leq C_{k,\ell,k',\ell'} \langle\eta\rangle^{ m -|k'| - |\ell|}.
\end{equation}
If a symbol $q$ satisfies these conditions, we write $q \in S^{m}_{c}(T^\ast \Lambda)$, and say that $Q$ is an pseudor of order $m$.

We pick a cutoff $\chi$ supported near the diagonal of $\Lambda\times\Lambda$. In a chart domain, given a symbol $p\in S^m_{c}(T^\ast \Lambda)$ depending only on $u,\eta,u^\ast,\eta^\ast$, we let
\[
\begin{split}
\Op(p)f(u,\eta) = \frac{1}{(2\pi h)^{2n}} \int e^{\frac{i}{h}(\langle u-u',u^\ast\rangle + \langle \eta -\eta', \eta^\ast \rangle)}& p(u,\eta,u^\ast, \eta^\ast)f(u',\eta') \\
		& \times \chi(u,u',\eta,\eta') \mathrm{d}u^\ast \mathrm{d}\eta^\ast \mathrm{d}u' \mathrm{d}\eta'.
\end{split}
\]
This defines an pseudor of order $m$ (at least in a smaller chart domain). It follows from the results of \cite{Melin-Sjostrand-75} that if a function $f$ on $\Lambda$ has an expansion in the large chart domain in the form $f = e^{i\psi/h}a + \O(h/\langle\eta\rangle)^\infty$, with $\psi$ and $a$ symbols of respective order $1$ and $\tilde{m}$ in the Kohn--Nirenberg class on $\Lambda$ such that $\Im \psi\geq 0$, and $\Im\psi(\alpha_0)=0$, then near $\alpha_0$, in the small chart domain, modulo $\O(h/\langle\eta\rangle)^\infty$ (all these error terms are in $\mathcal{C}^\infty$),
\begin{equation}\label{eq:expansion-action-lagrangian}
\Op(p)f \sim e^{\frac{i}{h}\psi} \sum_{k \geq 0} \frac{h^k}{k!}b_k.
\end{equation}
By the expansion \eqref{eq:expansion-action-lagrangian}, we mean that for every $N \in \N$ we have
\begin{equation*}
\begin{split}
\Op(p)f = e^{\frac{i}{h}\psi} \sum_{k = 0}^N \frac{h^k}{k!}b_k \textup{ mod } h^{N+1} S^{m + \tilde{m} - N - 1}_{KN}\p{\Lambda}.
\end{split}
\end{equation*}
The coefficients in this expansion are given by the expression 
\begin{equation}\label{eq:explicit-formula}
b_k =\left(\frac{\nabla_{u',\eta'}\cdot \nabla_{u^\ast,\eta^\ast}}{i}\right)^k\left[ p(u,\eta ; (u^\ast,\eta^\ast + \rho(u,u';\eta,\eta') ) a(u',\eta') \right]_{| u'=u, \eta'=\eta, u^\ast=\eta^\ast =0},
\end{equation}
where $\rho$ is such that $\psi(u',\eta')-\psi(u,\eta) = \langle (u',\eta') - (u,\eta) , \rho(u,u';\eta,\eta')\rangle$. Here, we use the coordinates on $\Lambda$ described above and we have identified $p$ with one of its almost analytic extensions in the $u^\ast,\eta^\ast$ variable. In particular, the leading term in the expansion \eqref{eq:explicit-formula} is given by
\[
b_0 = p( u,\eta, \mathrm{d}_{u,\eta} \psi(u,\eta)) a(u,\eta).
\]

We will be chiefly interested in the composition of a pseudor $Q = \Op(p)$ with $\Pi_\Lambda$. However, since we are interested in the action of $\Pi_\Lambda$ on $L^2(e^{-2H/h}\mathrm{d}\alpha)$, it is natural to consider the kernel of the composition $Q e^{-H/h}\Pi_\Lambda e^{H/h}$ rather than $Q \Pi_\Lambda$. According to our discussion, its kernel will have a semi-classical expansion, with a principal symbol in the form
\[
p(u,\eta, \mathrm{d}_{u,\eta} \Phi_{TS}^\circ),
\]
where $\Phi_{TS}^\circ$ is the phase of the reduced kernel of $\Pi_\Lambda$:
\begin{equation*}
\begin{split}
\Phi_{TS}^{\circ}\p{\alpha,\beta} = i H(\alpha) + \Phi_{TS}\p{\alpha,\beta} - i H(\beta).
\end{split}
\end{equation*}
This suggest considering the set (for some small $\epsilon > 0$)
\begin{equation}\label{eqdefjlambda}
\J_\Lambda = \set{\p{\alpha, \mathrm{d}_\alpha \Phi_T(\alpha,y) - i \Im \theta(\alpha)} : \alpha \in \Lambda, y \in \widetilde{M}, d_{\widetilde{M}}(\alpha_x,y) < \epsilon}.
\end{equation}
This is a priori a submanifold of $T^\ast \Lambda \otimes \C$ (the complexification of the cotangent space). Indeed, Definition \ref{def:nonstandardphase} of an admissible phase implies that if $\tau_0$ is small enough then $y \mapsto \mathrm{d}_\alpha \Phi_T(\alpha,y)$ is a holomorphic immersion near $\alpha_x$. Recall that for $\beta$ near $\alpha$ we have $\mathrm{d}_\alpha \Phi_{TS}(\alpha,\beta) = \mathrm{d}_\alpha \Phi_T(\alpha,y_c(\alpha,\beta))$ where $y_c(\alpha,\beta)$ denotes the critical point of $y \mapsto \Phi_T(\alpha,y) + \Phi_S(y,\beta)$.  It follows that for $\beta$ near $\alpha$ in $\Lambda$, the``reduced'' phase of $\Pi_\Lambda$ satisfies
\begin{equation*}
\begin{split}
\mathrm{d}_\alpha \Phi_{TS}^\circ \p{\alpha,\beta} \in \J_\Lambda.
\end{split}
\end{equation*}

We will show that
\begin{prop}\label{prop:existence-Z_j}
Let $\Lambda$ be a $(\tau_0,1)$-adapted Lagrangian with $\tau_0$ small enough. Around any fiber $T_x^\ast M$, there exist pseudors $Z_j$, $j=1\dots n$, of order $0$, such that 
\begin{enumerate}[label=(\roman*)]
	\item $Z_j = \Op(\zeta_j)$, with $\zeta_j \sim \sum h^k \zeta_{j,k}$ a symbol of order $0$;
	\item the kernel of $Z_j e^{-H/h} \Pi_\Lambda e^{H/h}$ is $\O(h/\langle\eta\rangle)^\infty$ in $\mathcal{C}^\infty$;
	\item each $\zeta_{j,k}$ is holomorphic in $u^\ast,\eta^\ast$ near $\J_\Lambda\cap T^\ast \Lambda$ and $\zeta_{j,0}$ vanishes on $\J_\Lambda$;
	\item in a uniform neighbourhood of $\J_\Lambda \cap T^\ast \Lambda$, the $\mathrm{d} \zeta_{j,0}$ form a uniformly free family. 
\end{enumerate}

\end{prop}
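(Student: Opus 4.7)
The plan is to construct the $\zeta_{j,0}$ as local defining functions of the submanifold $\J_\Lambda$ (viewed with holomorphic extension in the impulsion variable), and then determine the higher-order terms $\zeta_{j,k}$ inductively so that the full symbol of the composition $Z_j\, e^{-H/h}\Pi_\Lambda e^{H/h}$ vanishes at every order in $h$. The action formula \eqref{eq:expansion-action-lagrangian}, applied with $\psi = \Phi_{TS}^\circ$ (whose imaginary part is nonnegative by Lemma \ref{lemma:PhiTS-Lambda}, so that \eqref{eq:expansion-action-lagrangian} genuinely applies to the reduced kernel of $\Pi_\Lambda$), will translate the algebraic condition ``vanishing on $\J_\Lambda$'' into the analytic condition (ii).

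For the construction of the principal symbols, I would first study the geometry of $\J_\Lambda$. Working locally around a point $\alpha_0 \in \Lambda$ above some fixed fiber $T_x^\ast M$, and writing admissibility together with Lemma \ref{lmdist} (so $\alpha$ is $\mathcal{O}(\tau_0)$-close to the reals), the map $y \mapsto \mathrm{d}_\alpha \Phi_T(\alpha,y)$ is a holomorphic immersion of a neighborhood of $\alpha_{0,x}$ in $\widetilde{M}$ into $T^\ast_\alpha \Lambda \otimes \C$, depending smoothly on $\alpha$. Consequently the fiber of $\J_\Lambda$ above $\alpha$ is a complex submanifold of complex codimension $n$ inside the $2n$-complex-dimensional fiber $T^\ast_\alpha \Lambda \otimes \C$. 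By a holomorphic implicit function theorem applied fiberwise (with smooth dependence on $\alpha$), I produce $n$ functions $\zeta_{j,0}(\alpha,\alpha^\ast)$, defined near $\J_\Lambda \cap T^\ast \Lambda$, holomorphic in $\alpha^\ast$, bounded, which vanish precisely on $\J_\Lambda$ and whose differentials in $\alpha^\ast$ are uniformly linearly independent; this directly yields (iii) and (iv).

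Next, to obtain (i) and (ii) I would iterate. By \eqref{eq:expansion-action-lagrangian}, the kernel of $\Op(\zeta_{j,0})\, e^{-H/h}\Pi_\Lambda e^{H/h}$ admits an expansion of the form $e^{i\Phi_{TS}^\circ/h}\sum_k h^k b_k^{(0)}$, where by \eqref{eq:explicit-formula} the leading symbol $b_0^{(0)}$ equals the almost analytic extension of $\zeta_{j,0}$ evaluated at $\mathrm{d}_\alpha \Phi_{TS}^\circ \in \J_\Lambda$, hence vanishes. The remaining $b_k^{(0)}$ are smooth symbols. Assuming $\zeta_{j,0},\dots,\zeta_{j,N-1}$ have been constructed so that the asymptotic expansion of $\Op\bigl(\sum_{k<N} h^k \zeta_{j,k}\bigr)\, e^{-H/h}\Pi_\Lambda e^{H/h}$ starts at order $h^N$, I would choose $\zeta_{j,N}$ so that its restriction to $\J_\Lambda$ cancels the $h^N$ symbol of the composition so far. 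This restriction equation is solvable: $\J_\Lambda$ is a complex submanifold in the fibers and $\zeta_{j,N}$ is only required to be holomorphic in $\alpha^\ast$ near $\J_\Lambda$, so one extends the prescribed values off $\J_\Lambda$ holomorphically in a neighborhood. Borel summation in $h$ (supported in a fixed compact set of impulsions, as required by \eqref{eq:symbole_bizarre}) produces a genuine symbol $\zeta_j \sim \sum_k h^k \zeta_{j,k}$ of order $0$ realizing the asymptotic sum.

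The main obstacle I anticipate is a bookkeeping one rather than a conceptual one: making the fiberwise holomorphic extensions and the Borel summation compatible with the compact support in $(u^\ast,\eta^\ast)$ and the symbolic estimates \eqref{eq:symbole_bizarre}, uniformly in a neighborhood of a fiber $T_x^\ast M$. This forces me to multiply the solutions of the division problem by a cutoff supported in a neighborhood of $\J_\Lambda \cap T^\ast \Lambda$, and to check that the cutoff does not introduce terms that spoil (ii); this is handled by observing that the action formula \eqref{eq:expansion-action-lagrangian} is insensitive to the symbol away from the critical set $\{\alpha^\ast = \mathrm{d}_\alpha \Phi_{TS}^\circ\}$, modulo $\mathcal{O}(h/\langle\eta\rangle)^\infty$ errors, precisely because $\Im \Phi_{TS}^\circ$ controls the distance to $\J_\Lambda$ by Lemma \ref{lemma:PhiTS-Lambda}.
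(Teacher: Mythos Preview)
Your construction of the principal symbols $\zeta_{j,0}$ as local defining functions of $\J_\Lambda$ is correct and matches the paper. The difference lies in the inductive step, and there is a genuine gap there.

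You propose to determine $\zeta_{j,N}$ from the expansion \eqref{eq:expansion-action-lagrangian} applied directly to $\Pi_\Lambda$ with phase $\Phi_{TS}^\circ$. The $h^N$ coefficient then equals $\zeta_{j,N}\bigl(\alpha,\mathrm{d}_\alpha\Phi_{TS}^\circ(\alpha,\beta)\bigr)\,\sigma_{\Pi_\Lambda}(\alpha,\beta)$ plus lower-order data, and you want to set $\zeta_{j,N}$ on $\J_\Lambda$ to cancel this. But the prescribed values on $\J_\Lambda$ are obtained through the parametrization $\beta\mapsto \mathrm{d}_\alpha\Phi_{TS}^\circ(\alpha,\beta)$ from the \emph{smooth} symbol $\sigma_{\Pi_\Lambda}$ restricted to $\Lambda\times\Lambda$. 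Since $\J_\Lambda$ is a complex submanifold of positive complex dimension (not totally real), a function on $\J_\Lambda$ extends holomorphically to a neighbourhood in $\alpha^\ast$ \emph{only} if it is already holomorphic along $\J_\Lambda$. Nothing in your construction guarantees that $-[b_N^{\mathrm{prev}}]/\sigma_{\Pi_\Lambda}$, pulled back to $\J_\Lambda$, is holomorphic in the complex parameter $y$; so your claim ``one extends the prescribed values off $\J_\Lambda$ holomorphically'' is unjustified, and condition (iii) for $k\geq 1$ is not secured.

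The paper circumvents this by exploiting the factorization $\Pi_\Lambda = T_\Lambda S_\Lambda$: it writes down the \emph{formal} expansion of $\Op(\zeta_j)e^{-H/h}T_\Lambda$ (formal because $\Im(\Phi_T+iH)$ is not positive) and solves the transport equations \eqref{eq:principal-symbol-condition}--\eqref{eq:subprincipal-symbol-condition} on $\J_\Lambda$ parametrized by the \emph{holomorphic} variable $y\in\widetilde{M}$. Since $\Phi_T$ and the amplitude $a_T$ are genuinely holomorphic in $y$, the right-hand sides are holomorphic along $\J_\Lambda$ and the $\zeta_{j,k}$ can be taken holomorphic in $\alpha^\ast$. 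The price is that the expansion with $T$ is only formal; the paper then justifies (ii) rigorously by a truncation trick, splitting $\zeta_j=\zeta_j^{\leq N}+\zeta_j^{>N}$: the polynomial part $\zeta_j^{\leq N}$ gives a differential operator whose action on $T_\Lambda$ is exact, while $\zeta_j^{>N}$ vanishes to high order at $\J_\Lambda$ so its action on $\Pi_\Lambda$ (via the legitimate expansion with $\Phi_{TS}^\circ$) is $\mathcal{O}\bigl((h/\langle|\alpha|\rangle)^N\bigr)$. This two-step strategy is the missing idea in your proposal.
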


The asymptotic expansion in (i) is thought in the following sense: for every $N \in \N$, we have
\begin{equation*}
\begin{split}
\zeta_j = \sum_{k = 0}^N h^k \zeta_{j,k} \textup{ mod } h^{N+1} S_c^{- N - 1}\p{T^* \Lambda}.
\end{split}
\end{equation*}
When we write ``around any fiber $T_x^\ast M$'', we mean that the points (i)-(iv) only hold near points $\alpha \in \Lambda$ such that in the coordinates $(u,\eta)$ described above, $u$ is close to $x$. We can then cover $\Lambda$ by a finite number of domains on which the pseudors from Proposition \ref{prop:existence-Z_j} are available.

The difficulty in the proof of Proposition \ref{prop:existence-Z_j} is that we have to use the decomposition $\Pi_\Lambda = T_\Lambda S_\Lambda$, but we are not able to obtain an expansion for $\Op(p)e^{-H/h}T$, because the imaginary part of the phase $\Phi_T + iH(\alpha)$ is not positive. We propose here a solution inspired by the proof of \cite[Proposition 6.7]{Helffer-Sjostrand-86}. We will approximate the $Z_j$'s by differential operators of increasingly large order. It is morally very close to the argument based on formal applications of the stationary phase method from \cite{Helffer-Sjostrand-86}, but we privileged this method because it does not rely on the results from \cite{Sjostrand-82-singularite-analytique-microlocale}, with which the reader may not be familiar.

\begin{proof}[Proof of Proposition \ref{prop:existence-Z_j}]
We will first build the $\zeta_{j,k}$'s as ``formal'' solutions, and then check that they indeed solve the problem. As we said before, we cannot prove an expansion for $\Op(p)e^{-H/h} T$. However, we can still compute the terms that appear in the right hand side of \eqref{eq:expansion-action-lagrangian}. As a formal series in powers of $h$ whose coefficients are functions, it is a well defined object, so that for $p\in S^m_{c}(T^\ast \Lambda)$, we define
\begin{equation}\label{eq:def-formal-pseudor}
\left[e^{\frac{ -i \Phi_T(\cdot,y) + H  }{h} }\Op(p)\right]^{\mathrm{formal}} ( e^{-H/h}T) :=  \sum_{k\geq 0} \frac{h^k}{k!} b_k, 
\end{equation}
with, as in \eqref{eq:explicit-formula}, $b_k$ defined by the expression 
\[
\left(\frac{\nabla_{u',\eta'}\cdot \nabla_{u^\ast,\eta^\ast}}{i}\right)^k\left[ p(u ,\eta ; (u^\ast,\eta^\ast) + \rho(u,u',\eta,\eta') ) a_T(u',\eta',y) \right]_{| u'=u,\eta=\eta', u^\ast = \eta^\ast = 0},
\]
and $\rho$ defined by
\[
\Phi_T(u',\eta',y)-\Phi_T(u,\eta,y) + i( H(u',\eta')-H(u,\eta)) = \langle (u',\eta')-(u,\eta), \rho(u,u',\eta,\eta')\rangle.
\]
Here as usual, we identified $p$ with one of its almost analytic extension \emph{in the coordinates $(u,\eta)$}. Taking $p=\zeta_j$ itself a formal sum of symbols $\zeta_j= \sum h^k \zeta_{j,k}$, we try to solve 
\[
\left[e^{\frac{ -i \Phi_T(\cdot,y) + H  }{h} }\Op(\zeta_j)\right]^{\mathrm{formal}} ( e^{-H/h}T)(\alpha,y) = 0.
\]
If this were an actual composition, since $\Lambda$ and the symbol $a$ of $T$ themselves may depend on $h$, there would be many ways to expand this sum in powers of $h$. However, we are here using non-ambiguously the expansion given by \eqref{eq:expansion-action-lagrangian} and expand $\zeta_j= \sum h^k \zeta_{j,k}$ in the resulting formula, leading to equations of the form
\begin{equation}\label{eq:principal-symbol-condition}
\zeta_{j,0} (u,\eta, \mathrm{d}_{u,\eta} (\Phi_T + i  H) ) = 0,
\end{equation}
and 
\begin{equation}\label{eq:subprincipal-symbol-condition}
\zeta_{j,k} (u,\eta, \mathrm{d}_{u,\eta}(\Phi_T + i  H) ) =  Q_k( \zeta_{j,0}, \zeta_{j,1}, \dots, \zeta_{j,k-1})(u,\eta,y),
\end{equation}
$Q_k$ being some differential operator whose coefficients depend on $a$, $\Phi_T$ and $\Lambda$. However this dependence is symbolic, so its comes with uniform estimates. Since the map
\[
(\alpha,y) \mapsto (\alpha, \mathrm{d}_\alpha (\Phi_T(\alpha,y) + i  H(\alpha)) )
\]
is a local diffeomorphism onto $\J_\Lambda$, there is no obstruction to the equations above having a solution. Quite on the contrary, there are many solutions, because the symbols $\zeta_{j,k}$ are recursively prescribed only \emph{on} $\J_\Lambda$.

We start with $\zeta_{j,0}$. The proof of the fact that $\J_\Lambda$ is an analytic manifold of $T^\ast \Lambda \otimes \C$, of codimension $n$ uses the local inversion theorem, and thus comes with uniform estimates. In particular, we can choose the $\zeta_{j,0}$ so that
\begin{itemize}
	\item each $\zeta_{j,0}$ is a symbol of order $0$, holomorphic in the $u^\ast,\eta^\ast$ variable, near $\J_\Lambda\cap T^\ast \Lambda$ -- the real points of $\J_\Lambda$;
	\item each $\zeta_{j,0}$ vanishes on $\J_\Lambda$;
	\item near $\J_\Lambda\cap T^\ast \Lambda$, the $\mathrm{d} \zeta_{j,0}$ form a uniformly free family;
	\item the $\zeta_{j,0}$'s are supported for $|u^\ast|\leq C\langle\eta\rangle$, $|\eta^\ast|\leq C$.
\end{itemize}
Using the symplectic structure of $\J_\Lambda$, we could additionally assume that $\{ \zeta_{j,0}, \zeta_{\ell,0} \} =0$, but this will not be necessary for us.

Let us now explain how to construct the higher order symbols. Since the differential of
\[
y \mapsto \mathrm{d}_{\alpha_\xi} \Phi_T(\alpha,y)
\]
at $y = \alpha_x$ is the identity (under natural identifications) by assumption, and since $\exp H^{\omega_I}_G$ is close to the identity, $\J_\Lambda$ is uniformly transverse to the foliation tangent to $\partial/\partial u^\ast$. It follows that if a function has its values determined on $\J_\Lambda$ and that function does not depend on $u^\ast$, that function is completely determined in a neighbourhood of $\J_\Lambda$. In particular, we can solve the equations \eqref{eq:subprincipal-symbol-condition} near the real points of $\J_\Lambda$, with symbols $\zeta_{j,k}$, $k\geq 1$, holomorphic in $\eta^\ast$ in a neighbourhood of $\J_\Lambda\cap T^\ast \Lambda$, not depending on $u^\ast$.

Now, we can build $\zeta_j$ some $C^\infty$ symbols of order $0$ so that
\[
\zeta_j \sim \sum h^k \zeta_{j,k}.
\]
We have to prove that for $\beta\in \Lambda$ in the chosen chart, and $\alpha$ close to $\beta$,
\begin{equation}\label{eq:zjpi}
\Op(\zeta_j)e^{-\frac{H}{h}} \Pi_\Lambda e^{\frac{H}{h}} (\alpha,\beta) = \mathcal{O}_{\mathcal{C}^\infty}( (h/\langle|\beta|\rangle)^\infty).
\end{equation}
(the proposition also requires an estimate far from the diagonal, but that estimate is trivial at this point). In order to prove \eqref{eq:zjpi}, we introduce the following truncations of $\zeta_j$:
\begin{equation*}
\begin{split}
\zeta_j^{\leq N} = \sum_{k = 0}^{N} \p{\zeta_{j,k}}^{(2N)},
\end{split}
\end{equation*}
where, $(\zeta_{j,k})^{(2N)}$ is obtained from $\zeta_{j,k}$ by taking its $2N$ Taylor expansion in $u^*$ and $\eta^*$ at $\Re \theta(u,\eta)$. We also write $\zeta_j^{> N} = \zeta_j - \zeta_j^{\leq N}$, and split $\Op(\zeta_j)$ into $\Op(\zeta_j^{\leq N})$ and $\Op(\zeta_j^{> N})$. 

We deal first with $\Op(\zeta_j^{> N})$, to do so let us consider a symbol $p$ on $T^* \Lambda$ (as defined above) such that
\begin{equation}\label{eq:condition_taylor_nul}
\begin{split}
p(u,\eta,u',\eta',u^*,\eta^*) = \O\p{\jap{\eta}^{-2N} \va{(u^*,\eta^*) - \Re \theta(u,\eta)}^{2N}} + \O\p{h^N \jap{\eta}^{-N}}.
\end{split}
\end{equation}
Here, the $\O$'s must be understood in a space of functions satisfying \eqref{eq:symbole_bizarre} and in the norm $\va{(u^*,\eta^*) - \Re \theta(u,\eta)}$ the contribution of the variables corresponding to $u^*$ is rescaled by a factor $\jap{\eta}^{-1}$. Applying the method of \cite{Melin-Sjostrand-75} again, we find that
\begin{equation*}
\begin{split}
\Op(p)e^{-\frac{H}{h}} \Pi_\Lambda e^{\frac{H}{h}}(\alpha,\beta) = e^{\frac{i}{h}\Phi_{TS}^\circ} q(\alpha,\beta) + \O( h/\langle|\beta|\rangle )^\infty,
\end{split}
\end{equation*}
and we have an asymptotic expansion for the symbol $q$
\begin{equation*}
\begin{split}
q \sim \sum_{k \geq 0} h^k q_k.
\end{split}
\end{equation*}
The $q_k$'s write in the coordinates $(u,\eta)$:
\begin{equation}\label{eq:def_qk}
\begin{split}
q_k(u,\eta,u',\eta') =  \left(\frac{\nabla_{u'',\eta''}\cdot \nabla_{u^\ast,\eta^\ast}}{i}\right)^k &\Big[ p\big(u,\eta ; (u^\ast,\eta^\ast)  + \rho(u,u',u'',\eta,\eta',\eta'') \big)\\
				&  \qquad \qquad \times \sigma_{\Pi_\Lambda}(u'',\eta'',u',\eta') \Big]|_{\substack{u''=u,\eta''=\eta \\ u^\ast = \eta^\ast = 0}},
\end{split}
\end{equation}
where $\rho$ satisfies
\[
\Phi_{TS}^\circ( u'',\eta'',u',\eta')-\Phi_{TS}^\circ(u,\eta,u',\eta') = \langle  (u'',\eta'')- (u,\eta), \rho(u,u',u'',\eta,\eta',\eta'') \rangle,
\]
and $\sigma_{\Pi_\Lambda}$ is from \eqref{eq:forme_operateur}. Here, we recall that the phase $\Phi_{TS}$ satisfies the estimate Lemma \ref{lemma:PhiTS-Lambda}, so that the application of the stationary phase method is legitimate. Consequently, we must have
\begin{equation*}
\begin{split}
\rho(u,u',u'' &,\eta,\eta',\eta'') = \mathrm{d}_{u,\eta}\Phi_{TS}^\circ(u,\eta,u',\eta') + \O( \va{u'' - u} + \jap{\eta}^{-1}\va{\eta'' - \eta}) \\
    & = \Re \theta(u,\eta) + \O(\va{u'' - u} + \va{ u - u'} + \jap{\eta}^{-1}\va{\eta'' - \eta} + \jap{\eta}^{-1} \va{\eta - \eta'}).
\end{split}
\end{equation*}
It follows then from our assumption \eqref{eq:condition_taylor_nul} that in \eqref{eq:def_qk} we differentiate at most $2k$ times a term of the form
\begin{equation*}
\begin{split}
& \O\Big(\langle\eta\rangle^{-2N}\big(\va{u'' - u} + \va{ u - u'} + \jap{\eta}^{-1}\va{\eta'' - \eta} \\ & \qquad \qquad \qquad \qquad + \jap{\eta}^{-1} \va{\eta - \eta'} + \jap{\eta}^{-1} \va{u^*} + \va{\eta^*} \big)^{2N}\Big) + \O\Big( \langle\eta\rangle^{-N}h^N\Big).
\end{split}
\end{equation*}
Hence, for $k \leq N$, $q_k(u,n,u',\eta')$ is a 
\begin{equation*}
\begin{split}
\O\Big(\langle\eta\rangle^{-2N}\big(\va{u - u'}^{2(N - k)} + \jap{\va{\eta}}^{2(k - N)} \va{\eta - \eta'}^{2(N - k)} \big) + \langle\eta\rangle^{-N} h^N\Big).
\end{split}
\end{equation*}
Gathering these estimates, we deduce that 
\begin{equation*}
\begin{split}
q(\alpha,\beta) =  \O\left( \left( \langle|\beta|\rangle^{-2} d_{KN}(\alpha,\beta)^2 + \langle|\beta|\rangle^{-1} h \right)^N \right).
\end{split}
\end{equation*}
and it follows from the fact that $\Im \Phi_{TS}^\circ \geq C^{-1} \langle|\beta|\rangle d(\alpha,\beta)^2$ that
\begin{equation*}
\begin{split}
\Op(p) e^{-\frac{H}{h}} \Pi_\Lambda e^{\frac{H}{h}}(\alpha,\beta) = \O(h^{N} \jap{\va{\beta}}^{-N}).
\end{split}
\end{equation*}
Now, we would like to take $p = \zeta_{j}^{>N}$ and apply this estimate. We cannot work directly like that because $\zeta_j^{> N}$ does not satisfy the support condition that we assumed for $p$. However, introducing a cutoff function in impulsion, we can write $\zeta_j^{> N} = p + r$ where $p$ satisfies \eqref{eq:condition_taylor_nul} and satisfies the support condition that we required for symbol on $T^* \Lambda$. The symbol $r$ is supported away from the graph of $\Re \theta$, so that non-stationary phase method proves that
\begin{equation*}
\begin{split}
\Op(r) e^{-\frac{H}{h}} \Pi_\Lambda e^{\frac{H}{h}}(\alpha,\beta) = \O(h^{\infty} \jap{\va{\beta}}^{- \infty}).
\end{split}
\end{equation*}
We have thus proved that
\begin{equation*}
\begin{split}
\Op(\zeta_j^{> N}) e^{-\frac{H}{h}} \Pi_\Lambda e^{\frac{H}{h}}(\alpha,\beta) = \O(h^{N} \jap{\va{\beta}}^{-N}).
\end{split}
\end{equation*}

We consider now the other parts $\zeta_j^{\leq N}$ of $\zeta_j$. To do so, we will use the fact that $\Pi_\Lambda = T_\Lambda S_\Lambda$. It is crucial here that $\Op(\zeta_j^{\leq N})$ is a differential operator with a finite expansion in powers of $h$. Actually, we are slightly abusing notations because $\zeta_j^{\leq N}$ is not compactly supported in $u^\ast,\eta^\ast$, but this is not a problem because it is polynomial. We can certainly compute the action of a differential operator on $T$ and we have 
\[
\Op(\zeta_j^N)e^{-H/h} \Pi_\Lambda e^{H/h} = (\Op(\zeta_j^N) e^{-H/h} T_\Lambda) S_\Lambda e^{H/h}.
\]
Since the series (defined by the procedure above) in 
\[
\left[e^{\frac{ -i \Phi_T(\cdot,y) + H  }{h} }\Op(\zeta_j^N)\right]^{\mathrm{formal}} ( e^{-H/h}T)(\alpha,y)
\]
is finite, it coincides with 
\[
e^{\frac{- i \Phi_T + H}{h} } \Op(\zeta_j^N) e^{-H/h} T_\Lambda
\]
However, using an argument similar to the one above, it also coincides with 
\[
\left[e^{\frac{ -i \Phi_T(\cdot,y) + H  }{h} }\Op(\zeta_j)\right]^{\mathrm{formal}} ( e^{-H/h}T)(\alpha,y) \equiv 0.
\]
up to the order $(\langle | \alpha | \rangle^{- 2} d(\alpha_x,y)^2+ \langle | \alpha | \rangle^{-1} h)^N $ (because $\zeta_j$ and $\zeta_j^{\leq N}$ coincides up to the order $N$ in $h$ and $2N$ in $\va{(u^*,\eta^*) - \Re \theta(u,\eta)}$). Notice that the holomorphy in $y$ is preserved by this procedure, so that we can use the Holomorphic Stationary Phase Method Proposition \ref{prop:HSP_symbol} as in the proof of Lemma \ref{lemma:TPS-close-diagonal} to compute the kernel of $\Op(\zeta_j^{\leq N}) e^{-H/h} \Pi_\Lambda e^{H/h}$. Making explicit the terms in the Stationary Phase Method, we find that
\begin{equation*}
\begin{split}
\Op(\zeta_j^{\leq N}) e^{-H/h} \Pi_\Lambda e^{H/h}(\alpha,\beta) & = e^{i \frac{\Phi_{TS}^\circ(\alpha,\beta)}{h}} \O\left(\left( \jap{\va{\alpha}}^{-1} h + \jap{\va{\alpha}}^{-2} d_{KN}(\alpha,\beta)^2\right)^{N}\right) \\
    & = \O\p{h^{N} \langle|\alpha|\rangle^{-N}}.
\end{split}
\end{equation*}
Here, we used the coercivity of $\Im \Phi_{TS}^\circ$ again. Summing back $\zeta_{j}^{\leq N}$ and $\zeta_j^{> N}$, we find that
\begin{equation*}
\begin{split}
\Op(\zeta_j) e^{-H/h} \Pi_\Lambda(\alpha,\beta) e^{H/h} = \O\p{\p{\frac{h}{\jap{\va{\alpha}}}}^{N}}.
\end{split}
\end{equation*}
Since here $N$ can be taken arbitrarily large, the proof is complete (derivatives are dealt with similarly).
\end{proof}

\subsubsection{A class of FIOs}

Let us consider an operator $A$ whose kernel is essentially supported near the diagonal on $\Lambda$, and whose kernel has an expansion as in \eqref{eq:forme_operateur}. Borrowing the terminology of \cite{Melin-Sjostrand-75}, this means that $A$ is a Fourier Integral Operator -- FIO -- with complex phase, associated with the Lagrangian manifold
\[
L_\Phi:=\left\{ (\alpha, \mathrm{d}_\alpha\Phi(\alpha,\beta); \beta, \mathrm{d}_\beta \Phi(\alpha,\beta))\ |\ \alpha,\beta \in \Lambda\right\} \subset T^\ast \left(\widetilde{\Lambda} \times \widetilde{\Lambda}\right).
\]
From the condition that the imaginary part of $\Phi$ is coercive away from the diagonal, we deduce that the real points of $L_\Phi$ are exactly
\[
(L_\Phi)_\R = \left\{ (\alpha, \mathrm{d}_\alpha\Phi(\alpha,\alpha); \alpha, \mathrm{d}_\beta \Phi(\alpha,\alpha))\ |\ \alpha \in \Lambda\right\}.
\]
In the case of $\Phi_{TS}^\circ(\alpha,\beta) \coloneqq i H(\alpha) + \Phi_{TS}(\alpha,\beta) - i H(\beta)$, from Lemma \ref{lemma:PhiTS-Lambda} and its proof, we deduce that
\[
(L_{\Phi_{TS}^\circ})_\R = \Delta_\Lambda:=\left\{ (\alpha, \Re \theta(\alpha) ; \alpha, -\Re \theta(\alpha) )\ |\ \alpha \in \Lambda\right\}.
\]
The condition of coercivity also implies that $L_\Phi$ is a strictly positive Lagrangian manifold -- see Definition 3.3 in \cite{Melin-Sjostrand-75}. It will turn out that every relevant operator in our setting shares these properties with $\Phi_{TS}^\circ$.

We will study systematically such operators that are left invariant by $\Pi_\Lambda$ and $\Pi_\Lambda^\ast$. Let us start with a definition. During all this section, we fix a $\p{\tau_0,1}$-adapted Lagrangian $\Lambda$ with $\tau_0$ and $h$ small enough. Since we will be working on $\Lambda$, the results and estimates will depend on $\Lambda$. However, the constants appearing will only depend on $\mathcal{C}^k$ estimates on $\Lambda$, and in this sense, we will say that is valid \emph{uniformly in} $\Lambda$. When a $\Gs$ pseudor $P$ appears, implicitly, we assume that $\Lambda$ is $(\tau_0,s)$-adapted.
\begin{definition}\label{def:oscillating_kernel}
Let $m \in \R$. A \emph{complex FIO associated with $\Delta_\Lambda$} -- of order $m$ -- $A$ on $\Lambda$ is an operator whose reduced kernel is of the form
\begin{equation}\label{eq:oscillating_kernel}
\begin{split}
A(\alpha,\beta)e^{\frac{H(\beta) - H(\alpha)}{h}} = e^{\frac{i \Phi_A(\alpha,\beta)}{h}} \sigma_A(\alpha,\beta) + \O_{\mathcal{C}^\infty}\p{\p{\frac{h}{\jap{\va{\alpha}} + \jap{\va{\beta}}}}^\infty}.
\end{split}
\end{equation}
Here, $\Phi_A \in S_{KN}^1\p{\Lambda \times \Lambda}$ and $\sigma_A \in h^{-n} S_{KN}^m\p{\Lambda \times \Lambda}$ are symbols on $\Lambda \times \Lambda$ in Kohn--Nirenberg classes. Moreover, we assume that for $\alpha \in \Lambda$ we have
\begin{equation}\label{eq:first-order-condition}
\Phi(\alpha,\alpha) = 0 \textup{ and } \mathrm{d}_\alpha \Phi(\alpha,\alpha) = \Re \theta(\alpha),
\end{equation}
and that there are $C,\eta > 0$ such that $\sigma_A$ is supported in $$\set{\p{\alpha,\beta} \in \Lambda \times \Lambda : d_{KN}(\alpha,\beta) \leq \eta},$$ and for every $\alpha,\beta \in \Lambda$ such that $d_{KN}(\alpha,\beta) \leq 2 \eta$ we have
\begin{equation}
\frac{1}{C} \jap{\va{\alpha}} d_{KN}(\alpha,\beta)^2 \leq \Im \Phi_A(\alpha,\beta) \leq C \jap{\va{\alpha}} d_{KN}(\alpha,\beta)^2.\label{eq:coercivity_condition}
\end{equation}
We say that $\Phi_A$ is the (reduced) \emph{phase} of $A$ and that $\sigma_A$ is its symbol.
\end{definition}

When studying FIO, we will often need to consider remainders that are negligible in the following sense.

\begin{definition}\label{def:negligible}
An operator $A$ on $\Lambda$ is said to be negligible, or to have negligible kernel, if it is a FIO with symbol $0$, that is if its reduced kernel satisfies
\begin{equation*}
\begin{split}
A(\alpha,\beta)e^{\frac{H(\beta) - H(\alpha)}{h}} = \O_{\mathcal{C}^\infty}\p{\p{\frac{h}{\jap{\va{\alpha}} + \jap{\va{\beta}}}}^\infty}.
\end{split}
\end{equation*}
\end{definition}

The main goal of this section is to build the necessary tools to prove that the orthogonal projector $B_\Lambda$ is a complex FIOs associated with $\Delta_\Lambda$. Insofar as we will not be making use of any other type of FIOs in this section, and will be working with just one Lagrangian $\Lambda$, we will just write ``FIO'' for ``complex FIO associated with $\Delta_\Lambda$''.

\begin{remark}\label{remark:fourre_tout}
Let us mention that the arguments in the proof of Proposition \ref{lemma:boundedness} imply that an FIO of order $m$ is bounded from $L^2_k\p{\Lambda}$ to $L_{k - m}^2\p{\Lambda}$ for every $k \in \R$. In particular, it makes sense to compose these operators. Notice also that, due to the coercivity condition \eqref{eq:coercivity_condition}, the constant $\eta$ in Definition \ref{def:oscillating_kernel} can be taken arbitrarily small (just multiply $\sigma_A$ by a cutoff function supported near the diagonal and put the remainder in the error term in \eqref{eq:oscillating_kernel}).

It follows from Lemmas \ref{lemma:TPS-far-diagonal}, \ref{lemma:TPS-close-diagonal} and \ref{lemma:PhiTS-Lambda} that $\Pi_\Lambda$ is an FIO of order $0$. More generally, if $P$ is a $\G^s$ pseudor of order $m$, then the same lemmas imply that if $\tau_0$ and $h$ are small enough then $T_\Lambda P S_\Lambda$ is an FIO of order $m$ (this fact has been established in the proof of Proposition \ref{lemma:boundedness}). The phase associated with both of these operators is $\Phi_{TS}^\circ$, and it satisfies the coercivity condition \eqref{eq:coercivity_condition} due to Lemma \ref{lemma:PhiTS-Lambda}.

Notice also that a negligible operator is a $\O(h^N)$ as on operator from $L_{-N}^2\p{\Lambda}$ to $L^2_N\p{\Lambda}$ for every $N \in \N$.
\end{remark}

\begin{lemma}\label{lemma:composition_oscillating_kernel}
Let $A$ and $B$ be FIOs, whose phases are respectively $\Phi_A$ and $\Phi_B$. Let $f$ be a symbol in Kohn--Nirenberg class on $\Lambda$. Then $A^\ast$, $A B$, $f A$ and $Af$ also are FIOs. The phase of $AfB$ is given by
\begin{equation}\label{eq:phase_composition}
\begin{split}
\Phi_{AB} (\alpha,\beta) = \textup{v.c.}_{\gamma} \p{\Phi_A\p{\alpha,\gamma} + \Phi_B\p{\gamma,\beta}}.
\end{split}
\end{equation}
Here, v.c. stands for critical value, and this critical value is defined in the sense of almost analytic extension (see \cite{Melin-Sjostrand-75} in particular Lemma 2.1). Moreover, from an expansion for $f$ and the symbols of $A$ and $B$, we deduce an asymptotic expansion for the symbol of $AfB$ given by the stationary phase method. 
\end{lemma}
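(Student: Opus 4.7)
The plan is to treat the four claimed operations separately, with $A^*$, $fA$, $Af$ essentially immediate and the composition $AB$ being the substantive point. For $A^*$: since the $L^2$-adjoint has kernel $\overline{A(\beta,\alpha)}$, a direct computation shows $A^*$ is an FIO with phase $\Phi_{A^*}(\alpha,\beta) = -\overline{\Phi_A(\beta,\alpha)}$ (interpreted via almost-analytic extensions) and symbol $\overline{\sigma_A(\beta,\alpha)}$; the conditions \eqref{eq:first-order-condition} and \eqref{eq:coercivity_condition} transfer directly from those for $\Phi_A$. For $fA$ and $Af$, multiplying by $f(\alpha)$ or $f(\beta)$ only affects the symbol, leaving the phase (and hence all geometric properties) untouched; the order of the symbol shifts by the order of $f$.

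For $AB$, using the reduced kernels I would write
\[
(AB)(\alpha,\beta)e^{\frac{H(\beta)-H(\alpha)}{h}} = \int_\Lambda \bigl(A(\alpha,\gamma)e^{\frac{H(\gamma)-H(\alpha)}{h}}\bigr)\bigl(B(\gamma,\beta)e^{\frac{H(\beta)-H(\gamma)}{h}}\bigr)\,\mathrm{d}\gamma,
\]
substitute the decomposition \eqref{eq:oscillating_kernel} for each factor, and expand into four terms. The three cross terms involving at least one negligible factor are absorbed into the $O((h/(\langle|\alpha|\rangle+\langle|\beta|\rangle))^\infty)$ remainder, using the fact recalled in Remark \ref{remark:fourre_tout} that a negligible operator is $O(h^N)$ between any pair $L^2_{-N}\to L^2_N$, combined with the $L^2_k\to L^2_{k-m}$ boundedness of FIOs.

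The main term is the oscillatory integral $I(\alpha,\beta) = \int_\Lambda e^{i\Psi(\alpha,\beta,\gamma)/h}\,\sigma_A(\alpha,\gamma)\sigma_B(\gamma,\beta)\,\mathrm{d}\gamma$ with $\Psi = \Phi_A + \Phi_B$. Its integrand is supported in $\{d_{KN}(\alpha,\gamma),d_{KN}(\gamma,\beta)\leq \eta\}$, so only $\beta$ close to $\alpha$ contributes non-trivially. In this region $\Im\Psi \geq 0$, and the critical-point equation in $\gamma$, namely $\mathrm{d}_\gamma\Phi_A(\alpha,\gamma)+\mathrm{d}_\gamma\Phi_B(\gamma,\beta)=0$, has on the diagonal $\alpha=\beta$ the obvious solution $\gamma=\alpha$, since differentiating $\Phi_A(\alpha,\alpha)=0$ gives $\mathrm{d}_2\Phi_A(\alpha,\alpha)=-\Re\theta(\alpha)$, and similarly $\mathrm{d}_1\Phi_B(\alpha,\alpha)=\Re\theta(\alpha)$. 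The Hessian $\mathrm{d}_\gamma^2\Phi_A+\mathrm{d}_\gamma^2\Phi_B$ has strictly positive imaginary part (sum of two coercive terms via \eqref{eq:coercivity_condition}), so the almost-analytic implicit function theorem of \cite{Melin-Sjostrand-75} yields a unique critical point $\gamma_c(\alpha,\beta)$ near the diagonal. The Melin--Sjöstrand complex-phase stationary phase then produces the expansion
\[
I(\alpha,\beta) = e^{i\Phi_{AB}(\alpha,\beta)/h}\sigma_{AB}(\alpha,\beta) + O_{\mathcal{C}^\infty}\bigl((h/(\langle|\alpha|\rangle+\langle|\beta|\rangle))^\infty\bigr),
\]
with $\Phi_{AB}(\alpha,\beta) = \Psi(\alpha,\beta,\gamma_c(\alpha,\beta))$ and $\sigma_{AB}$ a symbol in $h^{-n}S^{m_A+m_B}_{KN}(\Lambda\times\Lambda)$ whose asymptotic expansion in powers of $h$ is the standard one; the order works out since $\sigma_A\sigma_B$ is $O(h^{-2n})$ and the Jacobian factor of stationary phase contributes $h^n$.

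It then remains to verify Definition \ref{def:oscillating_kernel} for $\Phi_{AB}$. The first-order conditions \eqref{eq:first-order-condition} are immediate: $\Phi_{AB}(\alpha,\alpha) = \Phi_A(\alpha,\alpha)+\Phi_B(\alpha,\alpha)=0$, and by the envelope theorem $\mathrm{d}_\alpha\Phi_{AB}(\alpha,\alpha) = \mathrm{d}_1\Phi_A(\alpha,\alpha) = \Re\theta(\alpha)$. The main obstacle — the only genuinely delicate step — is the coercivity \eqref{eq:coercivity_condition}. I would establish it in the spirit of Lemma \ref{lemma:PhiTS-Lambda}: the upper bound is automatic since $\Phi_{AB}$ is a Kohn--Nirenberg symbol of order $1$ that vanishes to order $2$ on the diagonal. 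For the lower bound, either one (i) invokes the fundamental lemma (Lemma \ref{lemma:fundamental-lemma}), applied in local coordinates on $\Lambda\times\Lambda$ in which $\Lambda\subset\widetilde{\Lambda}$ plays the role of the reals, which forces $\Im\Phi_{AB}\geq 0$ on $\Lambda\times\Lambda$ and then the strict quadratic bound from non-degeneracy of $\Im\mathrm{d}^2\Phi_{AB}$; or one (ii) computes $\mathrm{d}_\beta^2\Phi_{AB}(\alpha,\alpha)$ explicitly as a Schur complement of $\mathrm{d}_\gamma^2\Phi_A+\mathrm{d}_\gamma^2\Phi_B$ in the full Hessian of $\Psi$ at $(\alpha,\alpha,\alpha)$, and checks that its imaginary part inherits the ellipticity already proved for $\Phi_A$ and $\Phi_B$. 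Either route concludes the proof; everything else is bookkeeping inside the Melin--Sjöstrand framework.
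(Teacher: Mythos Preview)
Your proposal is correct and follows essentially the same approach as the paper: trivial cases for $A^*$, $fA$, $Af$; Melin--Sjöstrand complex stationary phase for $AB$; first-order conditions by direct evaluation at the diagonal critical point; coercivity via the fundamental lemma (your option (i) is exactly what the paper does, noting that although $\Psi_{\alpha,\beta}$ is only $\mathcal{C}^\infty$ the argument reduces to a Hessian computation at a real critical point where almost-analytic errors vanish). One bookkeeping point the paper makes explicit that you gloss over: the integration measure $\mathrm{d}\gamma$ is the symplectic volume $\omega_R^n/n!$, not the Kohn--Nirenberg volume, so after rescaling there is a Jacobian of order $n$ in $\langle|\alpha|\rangle$, which is why the Hessian determinant in the stationary-phase prefactor is a symbol of order $0$ (rather than $2n$) and the composed symbol lands in $h^{-n}S_{KN}^{m_A+m_B}$ as claimed.
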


It will be convenient to give a name to the reduced phase of $\Pi_\Lambda^*$, which is $\Phi_{TS}^*(\alpha,\beta) \coloneqq - \overline{\Phi_{TS}^\circ\p{\beta,\alpha}} = - i H(\alpha) - \overline{\Phi_{TS}\p{\beta,\alpha}} + i H(\beta)$. Recall that, in order to study the orthogonal projector $B_\Lambda$, we want to study operators of the form $\Pi_\Lambda f \Pi_\Lambda^*$ where $f$ is a symbol on $\Lambda$. From Lemma \ref{lemma:composition_oscillating_kernel}, we already know that $\Pi_\Lambda f \Pi_\Lambda^*$ is a complex FIO adapted with $\Delta_\Lambda$.

\begin{remark}\label{remark:expansion_symbole_FIO}
If $A$ is an FIO of order $m$, we say that its symbol satisfies an asymptotic expansion
\begin{equation*}
\begin{split}
\sigma_A \simeq \frac{1}{(2 \pi h)^n} \sum_{k \geq 0} h^k a_k,
\end{split}
\end{equation*}
if, for every $k \in \N$, we have $a_k \in S_{KN}^{m - k}\p{\Lambda \times \Lambda}$, and, for every $N \in \N$, we have that
\begin{equation*}
\begin{split}
\sigma_A - \frac{1}{(2 \pi h)^n} \sum_{k = 0}^{N-1} h^k a_k \in h^{N - n} S_{KN}^{m - N}\p{\Lambda \times \Lambda}.
\end{split}
\end{equation*}
\end{remark}

\begin{proof}[Proof of Lemma \ref{lemma:composition_oscillating_kernel}]
The statements concerning $Af$ and $fA$ are elementary. For the adjoint, it suffices to observe that if $\Phi$ satisfies \eqref{eq:first-order-condition} and \eqref{eq:coercivity_condition}, then so does
\[
\Phi^\ast(\alpha,\beta) := - \overline{\Phi(\beta,\alpha)}.
\]
Thus, we only need to consider the composition $AB$. Since the kernels of $A$ and $B$ are rapidly decaying away from the diagonal and grow at most polynomially near the diagonal, the kernel of $AB$ is rapidly decaying (in $\mathcal{C}^\infty$) at any fixed distance of the diagonal in $\Lambda \times \Lambda$. Let us turn to the reduced kernel of $AB$ near the diagonal. The error term in \eqref{eq:oscillating_kernel} is proved to be negligible as in the non-diagonal case. Consequently, we may assume that the reduced kernel of $AB$ is given by the integral
\begin{equation}\label{eq:noyau_compose}
\begin{split}
\int_{\Lambda} e^{i \frac{\jap{\va{\alpha}}}{h} \Psi_{\alpha,\beta}\p{\gamma}} \sigma_A\p{\alpha,\gamma} \sigma_B\p{\gamma,\beta} \mathrm{d}\gamma,
\end{split}
\end{equation}
where the phase $\Psi_{\alpha,\beta}$ is defined by
\begin{equation*}
\begin{split}
\Psi_{\alpha,\beta}\p{\gamma} & = \frac{ \Phi_A\p{\alpha,\gamma} + \Phi_\beta\p{\gamma,\beta}}{\jap{\va{\alpha}}}.
\end{split}
\end{equation*}
Thanks to the condition \eqref{eq:coercivity_condition} that we imposed on the phases $\Phi_A$ and $\Phi_B$, we see that this phase has non-negative imaginary part. From \eqref{eq:first-order-condition}, we see that when $\alpha = \beta$ the phase $\Psi_{\alpha,\alpha}$ has a critical point at $\gamma = \alpha$, which is non-degenerate since \eqref{eq:coercivity_condition} imposes that the Hessian of the phase has a definite positive imaginary part. Melin--Sjöstrand's $\mathcal{C}^\infty$ version of the stationary phase method with complex phases \cite[Theorem 2.3]{Melin-Sjostrand-75} thus applies. We apply these results after a proper rescaling (in $\alpha_\xi,\beta_\xi,\gamma_\xi$) that replaces the estimates using the Kohn--Nirenberg metric by uniform $\mathcal{C}^\infty$ estimates. Notice that when doing so there is a Jacobian that appears in \eqref{eq:noyau_compose} which is a symbol of order $n$. This is because the volume form $\mathrm{d}\gamma$ in \eqref{eq:noyau_compose} is not the volume form associated with the Kohn--Nirenberg metric but to the symplectic form $\omega_R$. In particular, one can check that the determinant of the Hessian of $\gamma \mapsto \Phi_A(\alpha,\gamma) + \Phi_\beta(\gamma,\beta)$ associated with the volume form $\mathrm{d}\gamma$ is a symbol of order $0$, and not $2n$ as we could expect from the coercivity condition \eqref{eq:coercivity_condition}. Thus, we find that for $\alpha,\beta$ near the diagonal, the kernel of $AB$ is given by
\begin{equation*}
\begin{split}
e^{ \frac{i \Phi_{AB}(\alpha,\beta)}{h}} \sigma_{AB}(\alpha,\beta),
\end{split}
\end{equation*}
where $\sigma_{AB}$ is a symbol in the class $h^{-n} S_{KN}^{m}\p{\Lambda \times \Lambda}$, where $m$ is the sum of the orders of $A$ and $B$ (recall that the dimension of $\Lambda$ is $2n$). The phase $\Phi_{AB}$ is given by \eqref{eq:phase_composition} in the sense of almost analytic extension. Since we already understand the reduced kernel of $AB$ off diagonal, it only remains to prove that the phase $\Phi_{AB}$ satisfies the conditions \eqref{eq:first-order-condition} and \eqref{eq:coercivity_condition} near the diagonal. For \eqref{eq:first-order-condition}, it is not too hard, since we already identified the critical point when $\alpha=\beta$.

For \eqref{eq:coercivity_condition}, we pick $\alpha \in \Lambda$ and consider $\Psi_{\alpha,\beta}(\gamma)$ as a function of $\beta$ and $\gamma$ near $\alpha$. We want to apply the Fundamental Lemma \ref{lemma:fundamental-lemma}. We deduce from \eqref{eq:coercivity_condition}, satisfied by $\Phi_A$ and $\Phi_B$, that $\Im \Psi_{\alpha,\alpha}(\alpha) = 0$ and that, for $\beta$ and $\gamma$ near $\alpha$, we have
\begin{equation*}
\begin{split}
\Im \Psi_{\alpha,\beta}\p{\gamma} \geq d_{KN}\p{\alpha,\beta}^2 + d_{KN}\p{\alpha,\gamma}^2.
\end{split}
\end{equation*}
Consequently, $\p{\beta,\gamma} \mapsto \Im \Psi_{\alpha,\beta}\p{\gamma}$ has a critical point at $\beta = \gamma = \alpha$, and its Hessian is (uniformly) definite positive. Recalling that
\begin{equation*}
\begin{split}
\Phi_{AB}(\alpha,\beta) = \jap{\va{\alpha}} \textup{v.c}_{\gamma} \Psi_{\alpha,\beta}\p{\gamma},
\end{split}
\end{equation*}
we find using Lemma \ref{lemma:fundamental-lemma} (with a proper rescaling) that $\beta \mapsto \Im\Phi_{AB}(\alpha,\beta)$ has a critical point at $\beta = \alpha$ whose Hessian is greater than $C^{-1} \jap{\va{\alpha}} g_{KN}$. The condition \eqref{eq:coercivity_condition} follows then by Taylor's formula using the symbolic estimates on $\Phi_{AB}$.

Notice that we applied here Lemma \ref{lemma:fundamental-lemma} to the phase $\Psi_{\alpha,\beta}$ which is not analytic. However, there is no problem here since the proof of Lemma \ref{lemma:fundamental-lemma} is based on a direct computation on the Hessian of the imaginary part of the phase at $\beta = \alpha$. The result of this computation is still valid here since when $\beta = \alpha$ the critical point of the phase is real and consequently all the almost analytic errors vanish.
\end{proof}

\begin{remark}
As in the more regular cases described in \S \ref{sec:stationary}, the stationary phase method \cite[Theorem 2.3]{Melin-Sjostrand-75} comes with an asymptotic expansion with semi-explicit coefficients. Actually, the expression for these coefficients is the same as in Remarks \ref{remark:expression_coefficients} and \ref{remark:coefficients_symboles}. However, when defining the differential operator $P_m$, all the objects must be replaced by their almost analytic extensions, in particular the Morse coordinates are now defined in the sense of \cite[(2.13)]{Melin-Sjostrand-75}
\end{remark}

\begin{remark}\label{remark:a_propos_des_jets}
From now on, the notion of jets and equations holding at infinite order will be essential. Let $N$ be a $\mathcal{C}^\infty$ manifold. Let $x_0$ be a point of $N$ and $f$ be a $\mathcal{C}^\infty$ function defined on a neighbourhood of $x_0$ in $N$. In general, we cannot define the Taylor expansion of $f$ at $x_0$: the first term in the expansion, given by the derivative $\mathrm{d}f(x_0)$ of $f$, is the only one that is always intrinsically defined. However, when $\mathrm{d}f(x_0) = 0$, then the second derivative $\mathrm{d}^2f(x_0)$ is well-defined. More generally, if the $k$ first derivatives $\mathrm{d}f(x_0),\dots,\mathrm{d}^k f(x_0)$ vanish, then one may define intrinsically the $k+1$th derivative $\mathrm{d}^{k+1}f(x_0)$ of $f$ at $x_0$ (this is a $k+1$th symmetric linear form on $T_{x_0} M$). Hence, while we cannot define the Taylor expansion of $f$ at $x_0$, it makes sense to say that $f$ vanishes to order $k \in \N \cup \set{\infty}$ at $x_0$. Notice that if $d$ is a distance on $M$ induced by a smooth Riemannian metric then it is equivalent to say that $f$ vanishes to order $k$ at $x_0$ and that $f(x) \underset{x \to x_0}{=} \O\p{d(x,x_0)^k}$.

Now, if $P$ is a submanifold of $N$ and $f$ and $g$ are two $\mathcal{C}^\infty$ functions defined on a neighbourhood of $P$, we say that the equation $f = g$ holds at infinite order on $P$, or in the sense of jets on $P$, if the function $f-g$ vanishes to infinite order at every point $x_0 \in P$. The space of jets may then be defined as the quotient $\mathcal{C}^\infty\p{N} / \sim$, where the relation $\sim$ is defined by: $f \sim g$ if and only if $f=g$ to infinite order on $P$.

In the analysis, below, we will often have $N = \Lambda \times \Lambda$ and $P$ the diagonal of $\Lambda \times \Lambda$. Then, if $f : N \to \C$ is a symbol of order $m$ that vanishes to all orders on $P$, for every $M > 0$ and $k \in \N$ there is a constant $C_{k,M}$ such that for every $\p{\alpha,\beta} \in N$ such that $d_{KN}\p{\alpha,\beta} \leq M$ we have
\begin{equation*}
\begin{split}
\va{f(\alpha,\beta)} \leq C_{k,M} \jap{\va{\alpha}}^m d_{KN}\p{\alpha,\beta}^k,
\end{split}
\end{equation*}
where $d_{KN}$ denotes the distance induced by the Kohn--Nirenberg metric. Here, the constant $C_{k,M}$ depends on the symbolic estimates on $f$. In particular, even if the vanishing to infinite order of $f$ on the diagonal looks like a merely algebraic condition, it can be used to derive effective estimates on $f$ since we already have a control on the derivatives of $f$ (in the Kohn--Nirenberg metric).

Finally, if $N_1, N_2$ are two $\mathcal{C}^\infty$ manifolds, $P_1,P_2$ are submanifolds respectively of $N_1$ and $N_2$ and $f$ is a $\mathcal{C}^\infty$ map from $N_1$ to $N_2$, we say that $f$ is valued in $P_2$ -- or loosely $f \in P_2$ -- at infinite order on $P_1$ if, for every $\mathcal{C}^\infty$ function $g : N_2 \to \C$ identically equal to zero on $P_2$, the function $g \circ f$ vanishes at infinite order on $P_1$.
\end{remark}

\subsubsection{``Eikonal'' condition on the phase}

From this section, we will consider $A$, an FIO in the sense of the previous section, and assume that $\Pi_\Lambda A = A \Pi_\Lambda^\ast = A$. We will start by finding some constraints on the phase of $A$. In this direction, the first step is to observe that
\begin{lemma}\label{lemma:phase_left_multiplication}
Let $A$ and $B$ be FIOs. Denote respectively by $\Phi_A$ and $\Phi_B$ the phases of $A$ and $B$. Assume that $\Phi_A$ satisfies
\begin{equation}\label{eq:dans_JLambda}
\begin{split}
\mathrm{d}_\alpha \Phi_{A} \in \J_\Lambda
\end{split}
\end{equation}
at infinite order on the diagonal of $\Lambda \times \Lambda$. Then, if $\Phi_{AB}$ denotes the phase of $AB$ provided by Lemma \ref{lemma:composition_oscillating_kernel}, the phase $\Phi_{AB}$ also satisfies \eqref{eq:dans_JLambda} at infinite order on the diagonal.
\end{lemma}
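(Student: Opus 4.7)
The plan is to trace $\mathrm{d}_\alpha \Phi_{AB}$ through the composition formula of Lemma~\ref{lemma:composition_oscillating_kernel} and reduce back to the hypothesis on $\Phi_A$. I first recall from that lemma and its proof that, for $(\alpha,\beta)$ near the diagonal of $\Lambda\times\Lambda$, there is an almost analytic critical point map $\gamma_c : \Lambda\times\Lambda \to \widetilde{\Lambda}$ (in the sense of \cite[Lemma 2.1]{Melin-Sjostrand-75}), smooth near the diagonal, such that
\[
\Phi_{AB}(\alpha,\beta) = \Phi_A\bigl(\alpha, \gamma_c(\alpha,\beta)\bigr) + \Phi_B\bigl(\gamma_c(\alpha,\beta),\beta\bigr).
\]
The condition \eqref{eq:first-order-condition} satisfied by both $\Phi_A$ and $\Phi_B$, together with the coercivity \eqref{eq:coercivity_condition}, ensures (via the almost analytic Implicit Function Theorem) that $\gamma_c(\alpha,\alpha) = \alpha$ for all $\alpha\in\Lambda$, so in particular $(\alpha,\gamma_c(\alpha,\beta))$ lies on the diagonal of $\Lambda\times\Lambda$ whenever $(\alpha,\beta)$ does.

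Next, I differentiate in $\alpha$. By the chain rule,
\[
\mathrm{d}_\alpha \Phi_{AB}(\alpha,\beta) = \mathrm{d}_\alpha \Phi_A\bigl(\alpha, \gamma_c(\alpha,\beta)\bigr) + \mathrm{d}_\gamma\bigl[\Phi_A(\alpha,\gamma)+\Phi_B(\gamma,\beta)\bigr]_{\gamma=\gamma_c(\alpha,\beta)}\cdot \partial_\alpha \gamma_c(\alpha,\beta).
\]
The almost analytic critical point condition means precisely that the middle bracket vanishes to infinite order on the locus where $\gamma_c$ is real, that is, on the diagonal of $\Lambda\times\Lambda$. Hence
\begin{equation}\label{eq:red-to-A}
\mathrm{d}_\alpha \Phi_{AB}(\alpha,\beta) = \mathrm{d}_\alpha \Phi_A\bigl(\alpha,\gamma_c(\alpha,\beta)\bigr) \quad \text{to infinite order on the diagonal.}
\end{equation}

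Finally, I translate the assumption and substitute. Being in $\J_\Lambda$ to infinite order on the diagonal means that, for any smooth defining function $\rho$ of $\J_\Lambda$ in $T^\ast \Lambda\otimes \C$, the function $(\alpha,\beta')\mapsto \rho(\alpha, \mathrm{d}_\alpha \Phi_A(\alpha,\beta'))$ vanishes to infinite order on $\{\alpha=\beta'\}\subset \Lambda\times\Lambda$. Setting $\beta'=\gamma_c(\alpha,\beta)$ and using that $\gamma_c$ is smooth with $\gamma_c(\alpha,\alpha)=\alpha$ (hence $d_{KN}(\alpha,\gamma_c(\alpha,\beta))\lesssim d_{KN}(\alpha,\beta)$ near the diagonal), the composition $(\alpha,\beta)\mapsto \rho(\alpha,\mathrm{d}_\alpha \Phi_A(\alpha,\gamma_c(\alpha,\beta)))$ also vanishes to infinite order on the diagonal. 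Combining with \eqref{eq:red-to-A} yields the claim.

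The only subtle point is step two, namely confirming that the chain-rule remainder from the almost analytic critical value can be absorbed into the ``infinite order on the diagonal'' remainder. This is, however, exactly the content of the almost analytic critical value calculus of Melin--Sj\"ostrand, since the discrepancy vanishes precisely where $\gamma_c$ becomes real, which happens on the diagonal. Once this technical point is in place, the rest is a direct substitution.
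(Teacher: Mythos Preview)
Your proof is correct and follows essentially the same route as the paper's. The only cosmetic difference is that the paper splits the chain-rule remainder as $\partial_\gamma + \bar\partial_\gamma$: the $\partial_\gamma$ part vanishes \emph{identically} by the definition of the almost analytic critical point $\gamma_c$, while the $\bar\partial_\gamma$ part vanishes to infinite order on the diagonal because $\gamma_c(\alpha,\alpha)=\alpha$ is real and the extensions are almost analytic. You collapse these into the single statement that $\mathrm{d}_\gamma[\Phi_A+\Phi_B]|_{\gamma_c}$ vanishes to infinite order on the diagonal, which is the same thing but slightly obscures \emph{why} (it is not the ``critical point condition'' alone, but that condition together with almost analyticity). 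Your step three, pulling back the infinite-order vanishing through $\beta'=\gamma_c(\alpha,\beta)$, is spelled out more fully than in the paper, which simply asserts the conclusion.
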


\begin{proof}
We will use the same stratagem that we used to deduce the differential of $\Phi_{TS}$ from the differential of $\Phi_T$, taking into account the use of almost analytic extensions. Recall that $\Phi_{AB}$ is defined by \eqref{eq:phase_composition} using almost analytic extensions. More precisely, if $\widetilde{\Phi}_A$ and $\widetilde{\Phi}_B$ denote almost analytic extensions respectively for $\Phi_A$ and $\Phi_B$, then, when $\alpha$ is near $\beta$, there is a unique point $\gamma_c\p{\alpha,\beta}$ near $\alpha$ and $\beta$ in an almost analytic neighbourhood of $\Lambda$ such that 
\begin{equation*}
\begin{split}
\partial_\beta \widetilde{\Phi}_A(\alpha, \gamma_c(\alpha,\beta)) + \partial_\alpha \widetilde{\Phi}_B(\gamma_c(\alpha,\beta),\beta) = 0, 
\end{split}
\end{equation*}
where $\partial$ denotes the holomorphic part of the exterior derivative. Then we have to infinite order at the diagonal
\begin{equation*}
\begin{split}
\Phi_{AB}\p{\alpha,\beta} = \widetilde{\Phi}_A\p{\alpha,\gamma_c(\alpha,\beta)} + \widetilde{\Phi}_B\p{\gamma_c(\alpha,\beta),\beta}.
\end{split}
\end{equation*}
Differentiating this expression with respect to $\alpha$, we find that, for $\alpha$ near $\beta$,
\begin{equation}\label{eq:differentielle_composition_phase}
\begin{split}
\mathrm{d}_\alpha \Phi_{AB}\p{\alpha,\beta} & = \mathrm{d}_\alpha \widetilde{\Phi}_A\p{\alpha,\gamma_c(\alpha,\beta)}  \\ & \qquad \qquad + \p{\bar{\partial}_\beta \widetilde{\Phi}_A(\alpha, \gamma_c(\alpha,\beta)) + \bar{\partial}_\alpha \widetilde{\Phi}_B(\gamma_c(\alpha,\beta),\beta)} \circ \mathrm{d}_\alpha \gamma_c (\alpha,\beta).
\end{split}
\end{equation}
Since $\widetilde{\Phi}_A$ and $\widetilde{\Phi}_B$ are almost analytic and $\gamma_c(\alpha,\alpha) = \alpha$ is real, we see that the second term in the right-hand side of \eqref{eq:differentielle_composition_phase} vanish at infinite order on the diagonal of $\Lambda \times \Lambda$. Thus, we see that the equation \eqref{eq:dans_JLambda} satisfied at infinite order on the diagonal by $\mathrm{d}_\alpha \Phi_A$ is also satisfied by $\mathrm{d}_\alpha \Phi_{AB}$.
\end{proof}

Since $\Pi_\Lambda A = A$, we can replace (if necessary) $\Phi_A$ and $\sigma_A$ by the phase and symbol obtained by the application of the methods of \cite{Melin-Sjostrand-75} as in the proof of Lemma \ref{lemma:composition_oscillating_kernel}, so as to assume that $\Phi_A$ satisfies the assumptions of Lemma \ref{lemma:phase_left_multiplication}.

We can consider adjoints, and for this introduce the new submanifolds of $T^* \Lambda \otimes \C$
\begin{equation}\label{eq:def-Jbar-Jstar}
\begin{split}
\overline{\J_\Lambda} := \set{\p{\alpha, \overline{\alpha^*}}: \p{\alpha,\alpha^*} \in \J_\Lambda} \text{ and } \J_\Lambda^* := \set{\p{\alpha, - \overline{\alpha^*}}: \p{\alpha,\alpha^*} \in \J_\Lambda}.
\end{split}
\end{equation}
Here, the complex conjugacy is the one induced by the structure of tensor product on $T^* \Lambda \otimes \C$. The proof that led to Lemma \ref{lemma:phase_left_multiplication} also gives that: 

\begin{lemma}\label{lemma:phase_right_multiplication}
Let $A$ and $B$ be FIOs. Denote respectively by $\Phi_A$ and $\Phi_B$ the phases of $A$ and $B$. Assume that $\Phi_B$ satisfies
\begin{equation}\label{eq:dans_JLambda_star}
\begin{split}
\mathrm{d}_\beta \Phi_{B} \in \J_\Lambda^*
\end{split}
\end{equation}
at infinite order on the diagonal of $\Lambda \times \Lambda$. Then, if $\Phi_{AB}$ denotes the phase of $AB$ provided by Lemma \ref{lemma:composition_oscillating_kernel}, then $\Phi_{AB}$ also satisfies \eqref{eq:dans_JLambda_star} at infinite order on the diagonal.
\end{lemma}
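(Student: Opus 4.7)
The plan is to mirror the argument just used for Lemma \ref{lemma:phase_left_multiplication}, but differentiating the critical-value representation of $\Phi_{AB}$ with respect to $\beta$ instead of $\alpha$. As before, let $\widetilde{\Phi}_A$ and $\widetilde{\Phi}_B$ denote almost analytic extensions of $\Phi_A$ and $\Phi_B$, and let $\gamma_c(\alpha,\beta)$ be the (almost analytic) critical point defined by
\begin{equation*}
\partial_\beta \widetilde{\Phi}_A(\alpha,\gamma_c(\alpha,\beta)) + \partial_\alpha \widetilde{\Phi}_B(\gamma_c(\alpha,\beta),\beta) = 0,
\end{equation*}
so that $\Phi_{AB}(\alpha,\beta) = \widetilde{\Phi}_A(\alpha,\gamma_c(\alpha,\beta)) + \widetilde{\Phi}_B(\gamma_c(\alpha,\beta),\beta)$ and $\gamma_c(\alpha,\alpha) = \alpha$.

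Differentiating this identity with respect to $\beta$ yields, in perfect analogy with \eqref{eq:differentielle_composition_phase},
\begin{equation*}
\mathrm{d}_\beta \Phi_{AB}(\alpha,\beta) = \mathrm{d}_\beta \widetilde{\Phi}_B(\gamma_c(\alpha,\beta),\beta) + \bigl(\bar{\partial}_\beta \widetilde{\Phi}_A(\alpha,\gamma_c(\alpha,\beta)) + \bar{\partial}_\alpha \widetilde{\Phi}_B(\gamma_c(\alpha,\beta),\beta)\bigr) \circ \mathrm{d}_\beta \gamma_c(\alpha,\beta).
\end{equation*}
Since $\widetilde{\Phi}_A$ and $\widetilde{\Phi}_B$ are almost analytic and the diagonal value $\gamma_c(\alpha,\alpha) = \alpha$ lies in $\Lambda$ (so that both arguments become real when $\alpha = \beta$), the bracketed $\bar{\partial}$ term vanishes to all orders on the diagonal of $\Lambda\times\Lambda$. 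Therefore, in the sense of jets along the diagonal,
\begin{equation*}
\mathrm{d}_\beta \Phi_{AB}(\alpha,\beta) \equiv \mathrm{d}_\beta \widetilde{\Phi}_B(\gamma_c(\alpha,\beta),\beta).
\end{equation*}

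It remains to transfer the hypothesis on $\mathrm{d}_\beta \Phi_B$ through this substitution. By assumption, the map $(\gamma,\beta) \mapsto \mathrm{d}_\beta \widetilde{\Phi}_B(\gamma,\beta)$ lands in $\J_\Lambda^*$ at infinite order on the diagonal $\gamma = \beta$, which in the language of Remark \ref{remark:a_propos_des_jets} means that for any smooth defining function $h$ of $\J_\Lambda^*$, the quantity $h(\mathrm{d}_\beta \widetilde{\Phi}_B(\gamma,\beta))$ is $\O(d_{KN}(\gamma,\beta)^N)$ for every $N$. Substituting $\gamma = \gamma_c(\alpha,\beta)$ and using $\gamma_c(\beta,\beta) = \beta$ together with the smoothness (with uniform symbolic estimates) of $\gamma_c$ in a Kohn--Nirenberg-rescaled neighbourhood of the diagonal gives $d_{KN}(\gamma_c(\alpha,\beta),\beta) = \O(d_{KN}(\alpha,\beta))$, so that the composed quantity is $\O(d_{KN}(\alpha,\beta)^N)$ for every $N$. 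This is exactly the assertion that $\mathrm{d}_\beta \Phi_{AB} \in \J_\Lambda^*$ at infinite order on the diagonal of $\Lambda\times\Lambda$.

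The proof is essentially symmetric to that of Lemma \ref{lemma:phase_left_multiplication}, so I do not anticipate any real obstacle; the only point requiring minor care is the final substitution step, where one must check that Jacobian estimates on $\gamma_c$ are preserved under the Kohn--Nirenberg rescaling used to measure jets, but this is automatic from the symbolic smoothness of $\gamma_c$ and the fact that $\Lambda$ is uniformly $\mathcal{C}^\infty$ close to $T^\ast M$.
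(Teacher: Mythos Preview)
Your proof is correct and follows precisely the approach the paper intends: the paper simply states that ``The proof that led to Lemma \ref{lemma:phase_left_multiplication} also gives'' this result, and you have carried out that symmetric argument in full, differentiating with respect to $\beta$ rather than $\alpha$. Your additional paragraph on transferring the infinite-order vanishing through the substitution $\gamma=\gamma_c(\alpha,\beta)$ makes explicit a step the paper leaves implicit (it suffices that $(\alpha,\beta)\mapsto(\gamma_c(\alpha,\beta),\beta)$ maps the diagonal to itself with symbolic estimates), but this is entirely in the spirit of the original.
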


Here, it is important to notice that the manifold $\J_\Lambda^*$ has been defined so that the phase $\Phi_{TS}^*$ of $\Pi_\Lambda^*$ satisfies \eqref{eq:dans_JLambda_star}. Consequently, possibly changing again (and for the last time) the phase $\Phi_A$ as before, we can assume that $\Phi_A$ also satisfies both \eqref{eq:dans_JLambda} and \eqref{eq:dans_JLambda_star}. We will see that this is actually sufficient to determine $\Phi_A$. To understand this, we start by observing that $\J_\Lambda$ is \emph{really} not a real manifold, as
\begin{lemma}\label{lemma:transversality}
Assume that $\Lambda$ is a $(\tau_0,1)$-adapted Lagrangian with $\tau_0$ small enough. The intersection of $\J_\Lambda\cap \overline{\J_\Lambda} = \Sigma_\Lambda$ is transverse in each fiber $T_\alpha \Lambda \otimes \C$, where $\Sigma_\Lambda$ denotes the graph of $\Re \theta$ in $T^\ast \Lambda$.
\end{lemma}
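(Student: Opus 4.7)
My plan is to first verify the inclusion $\Sigma_\Lambda \subseteq \J_\Lambda \cap \overline{\J_\Lambda}$, then reduce the transversality claim to a computation of tangent spaces in each fiber. Taking $y = \alpha_x$ in the defining formula \eqref{eqdefjlambda}, I need to identify $\mathrm{d}_\alpha \Phi_T(\alpha,\alpha_x)$. Using the admissibility conditions (ii)--(iii) of Definition \ref{def:nonstandardphase}, differentiating $\Phi_T(\alpha,\alpha_x) = 0$ totally in $\alpha_x$ and combining with $\mathrm{d}_y \Phi_T(\alpha,\alpha_x) = -\alpha_\xi$, I obtain $\mathrm{d}_\alpha \Phi_T(\alpha,\alpha_x) = \theta(\alpha)$, so that the point in $\J_\Lambda$ corresponding to $y=\alpha_x$ is $\theta(\alpha) - i\Im\theta(\alpha) = \Re\theta(\alpha)$. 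Since $\Re\theta(\alpha)$ is a real covector, it is fixed by the conjugation defining $\overline{\J_\Lambda}$, hence belongs to $\overline{\J_\Lambda}$ as well.

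For the transversality at $(\alpha,\Re\theta(\alpha))$, I would identify $T^*_\alpha\Lambda \otimes \C$ with $T^*_\alpha(T^*\widetilde{M})$ using total reality of $\Lambda$, and parametrize the fiber-direction of $\J_\Lambda$ near this point by $y \in \widetilde{M}$. The tangent space $V_\alpha$ is then the image of the $\C$-linear map $\delta y \mapsto \partial_y[\mathrm{d}_\alpha \Phi_T(\alpha,y)]|_{y=\alpha_x}\cdot \delta y$; in coordinates $(\alpha_x,\alpha_\xi)$ on the cotangent base, this image is spanned by vectors of the form $(\tilde A \delta,-\delta)$ with $\delta \in \C^n$, where $\tilde A = \partial^2_{y,\alpha_x}\Phi_T(\alpha,\alpha_x)$. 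The $-\delta$ factor in the second slot comes from totally differentiating $\partial_{\alpha_\xi}\Phi_T(\alpha,\alpha_x) = 0$, which yields $\partial^2_{y,\alpha_\xi}\Phi_T(\alpha,\alpha_x) = -I$; similarly totally differentiating $\partial_y\Phi_T(\alpha,\alpha_x) = -\alpha_\xi$ gives the crucial identity $\tilde A = -\partial^2_{yy}\Phi_T(\alpha,\alpha_x)$. The tangent space $\overline{V_\alpha}$ to $\overline{\J_\Lambda}$ is the image of $V_\alpha$ under the conjugation, which in adapted coordinates takes the form $\{(\bar{\tilde A}\eta, -\eta) : \eta \in \C^n\}$.

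Now $V_\alpha$ and $\overline{V_\alpha}$ each have real dimension $2n$ inside the $4n$-real-dimensional space $\C^{2n}$, so transversality is equivalent to $V_\alpha \cap \overline{V_\alpha} = \{0\}$. An element of the intersection satisfies $(\tilde A\delta,-\delta) = (\bar{\tilde A}\eta,-\eta)$, forcing $\delta=\eta$ and $(\tilde A - \bar{\tilde A})\delta = 0$, i.e.\ $\Im \tilde A \cdot \delta = 0$. Thus transversality reduces to showing that $\Im \tilde A = -\Im \partial^2_{yy}\Phi_T(\alpha,\alpha_x)$ is invertible on $\C^n$. For $\alpha \in T^*M$ real, condition (iv) of Definition \ref{def:nonstandardphase} states that $\Im\Phi_T(\alpha,y) \geq C^{-1}\langle \alpha\rangle d(\alpha_x,y)^2$, and since $y = \alpha_x$ is a non-degenerate minimum of this non-negative function, the Hessian $\Im\partial^2_{yy}\Phi_T(\alpha,\alpha_x)$ is positive definite of size $\langle\alpha\rangle$, hence invertible. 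For $\alpha \in \Lambda$ a $(\tau_0,1)$-adapted Lagrangian, Lemma \ref{lmdist} gives $|\Im\alpha| = O(\tau_0)$, so by continuity of $\partial^2_{yy}\Phi_T$ and by continuity of the conjugation on $T^*_\alpha\Lambda\otimes \C$ as $\Lambda$ varies, invertibility persists for $\tau_0$ small enough.

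The main obstacle will be the careful bookkeeping of the conjugation on $T^*_\alpha\Lambda \otimes \C$ when $\Lambda \neq T^*M$, since the natural complex structure on $T^*_\alpha(T^*\widetilde{M})$ does not a priori interact nicely with this conjugation. I expect to handle it either by working in coordinates adapted to $\Lambda$ (pulling everything back to $T^*M$ via $\exp H_G^{\omega_I}$ and treating the deformation perturbatively), or by establishing uniform bounds directly using that the symbol $G$ defining $\Lambda$ is of order $1$ with small norm. The identity $\tilde A = -\partial^2_{yy}\Phi_T(\alpha,\alpha_x)$ combined with the ellipticity from (iv) is what makes the linear algebra work, and everything else is a perturbative continuation from the model case $\Lambda = T^*M$.
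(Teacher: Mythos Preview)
Your proposal is correct and follows essentially the same route as the paper: reduce to the model case $\Lambda = T^*M$, compute the fiberwise tangent space $V_\alpha$ to $\J_\Lambda$ at $\Re\theta(\alpha)$ as the image of $\delta y \mapsto \partial_y\mathrm{d}_\alpha\Phi_T(\alpha,\alpha_x)\cdot\delta y$, and show $V_\alpha \cap \overline{V_\alpha} = \{0\}$ using the invertibility of $\Im\partial^2_{yy}\Phi_T(\alpha,\alpha_x)$ coming from admissibility condition (iv), then extend to general $\Lambda$ by perturbation. The paper's write-up is terser (it simply asserts the form $\{(Au+iBu,u)\}$ with $B$ invertible without spelling out the identity $\tilde A = -\partial^2_{yy}\Phi_T$ that you derive), but the underlying computation is identical.
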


\begin{proof}
If we prove that $\J_{T^\ast M}$ and $\overline{\J_{T^\ast M}}$ are uniformly transverse, then the result will follow by a perturbative argument. Additionally, it suffices to prove the transversality in each fiber of $T^* \p{T^* M} \otimes \C$. Recall that for $\alpha \in T^*M$ we have
\[
\J_{T^\ast M}\cap T_\alpha (T^\ast M) = \{ \mathrm{d}_\alpha \Phi_T(\alpha,y)\ |\ y \in \widetilde{M},\ d(y,\alpha_x)< \epsilon\}.
\]
Letting $L$ denote the tangent space to $\J_{T^\ast M}\cap T_\alpha (T^\ast M)$ at $\Re \theta(\alpha)$, we want to check that $L$ and $\overline{L}$ are uniformly transverse. The space $L$ is the image of $T_{\alpha_x} \widetilde{M}$ by the differential at $y = \alpha_x$ of the application
\begin{equation*}
\begin{split}
y \mapsto \mathrm{d}_\alpha \Phi_T(\alpha,y).
\end{split}
\end{equation*}
Since the phase $\Phi_T$ is an admissible phase (recall Definition \ref{def:nonstandardphase}), we can write $L$ in the form (with local coordinates)
\[
\left\{ \begin{pmatrix}
Au + i Bu \\ u
\end{pmatrix}\ \middle|\ u\in\C^n \right\},
\]
where $A$, $B$ are real and $B$ is invertible. An element of the intersection $L\cap \overline{L}$ corresponds to vectors $u$, $v$ such that $u= \overline{v}$ and $Au + i Bu=A \overline{v} - i B \overline{v}$, so that $u=v=0$. Since the invertibility of $B$ comes with uniform estimates, this comes with a uniform estimate on the transversality of $L$ and $\overline{L}$.
\end{proof} 

\begin{lemma}\label{lmcarphase}
Assume that $\Lambda$ is a $(\tau_0,1)$-adapted Lagrangian with $\tau_0$ small enough. There is a unique jet $\Phi$ on the diagonal of $\Lambda \times \Lambda$ that satisfies \eqref{eq:dans_JLambda}, \eqref{eq:dans_JLambda_star} and
\begin{equation}\label{eq:condtion_base_phase}
\begin{split}
\Phi(\alpha,\alpha) = 0,\ \mathrm{d}_\alpha \Phi(\alpha,\alpha) = - \mathrm{d}_\beta \Phi(\alpha,\alpha) = \Re \theta(\alpha)
\end{split}
\end{equation}
for every $\alpha \in \Lambda$.
\end{lemma}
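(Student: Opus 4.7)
The plan decomposes into existence and uniqueness. Existence is immediate: the reduced phase $\Phi_{TS}^\circ$ of $\Pi_\Lambda$ satisfies all three conditions. Indeed, $\Phi_{TS}^\circ(\alpha,\alpha) = 0$ and, using $\mathrm{d}H = - \Im \theta_{|\Lambda}$ together with $\mathrm{d}_\alpha \Phi_{TS}(\alpha,\alpha) = \theta_{|\Lambda}(\alpha)$ from the proof of Lemma \ref{lemma:PhiTS-Lambda}, one computes $\mathrm{d}_\alpha \Phi_{TS}^\circ(\alpha,\alpha) = \theta_{|\Lambda}(\alpha) + i\mathrm{d}H(\alpha) = \Re \theta(\alpha)$, giving \eqref{eq:condtion_base_phase}. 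Writing $\mathrm{d}_\alpha \Phi_{TS}^\circ(\alpha,\beta) = \mathrm{d}_\alpha \Phi_T(\alpha,y_c(\alpha,\beta)) - i\Im\theta(\alpha)$, with $y_c(\alpha,\alpha) = \alpha_x$ real and $y_c$ almost-analytic, the right-hand side lies in $\J_\Lambda$ to infinite order on the diagonal by the definition \eqref{eqdefjlambda}; a symmetric computation gives $\mathrm{d}_\beta \Phi_{TS}^\circ \in \J_\Lambda^*$.

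The real content is uniqueness. Let $\Phi_1,\Phi_2$ be two jets satisfying \eqref{eq:dans_JLambda}, \eqref{eq:dans_JLambda_star} and \eqref{eq:condtion_base_phase}, and set $\Psi = \Phi_1 - \Phi_2$. The initial conditions imply $\Psi$ and $\mathrm{d}\Psi$ vanish on the diagonal, so $\Psi$ vanishes to order two there. I will prove by induction on $K \geq 2$ that $\Psi$ vanishes to order $K+1$. Working in local coordinates, put $\widetilde{\Psi}(u,w) := \Psi(u,u+w) = \widetilde{\Psi}_K(u;w) + \O(|w|^{K+1})$ with $\widetilde{\Psi}_K$ homogeneous of degree $K$ in $w$; the goal is $\widetilde{\Psi}_K \equiv 0$. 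The chain rule gives
\[
\mathrm{d}_\alpha \Psi(\alpha,\beta) = (\partial_u - \partial_w)\widetilde{\Psi}(u,w), \qquad \mathrm{d}_\beta \Psi(\alpha,\beta) = \partial_w \widetilde{\Psi}(u,w),
\]
with $u=\alpha$, $w=\beta-\alpha$, so the leading (degree $K-1$) parts in $w$ of these differentials are $-\partial_w \widetilde{\Psi}_K$ and $\partial_w \widetilde{\Psi}_K$ respectively.

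Take defining functions $\zeta_{j,0}$ of $\J_\Lambda$ near $\Sigma_\Lambda$ (supplied by Proposition \ref{prop:existence-Z_j}), and analogous defining functions $\zeta_{j,0}^*$ for $\J_\Lambda^*$. The identities $\zeta_{j,0}(\alpha, \mathrm{d}_\alpha \Phi_i(\alpha,\beta)) = 0$ and $\zeta_{j,0}^*(\beta, \mathrm{d}_\beta \Phi_i(\alpha,\beta)) = 0$ hold to infinite order on the diagonal for $i=1,2$. Subtracting and Taylor-expanding $\zeta_{j,0}$ in its fiber argument about $\Re\theta(\alpha)$, the coefficient of the degree-$(K-1)$ homogeneous part in $w$ reads
\[
\mathrm{d}_{\alpha^*} \zeta_{j,0}(\alpha, \Re \theta(\alpha)) \cdot \partial_w \widetilde{\Psi}_K(u;w) = 0,
\]
and symmetrically for $\zeta_{j,0}^*$ at $-\Re\theta(\alpha)$. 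These linear equations force $\partial_w \widetilde{\Psi}_K(u;w) \in T_{\Re \theta(\alpha)}\J_\Lambda \cap T_{-\Re \theta(\alpha)} \J_\Lambda^*$ for every $w$. Unwinding \eqref{eq:def-Jbar-Jstar} identifies $T_{-\Re \theta(\alpha)} \J_\Lambda^*$ with $\overline{T_{\Re \theta(\alpha)}\J_\Lambda}$ as a real subspace of the complex fiber, so the transversality of $\J_\Lambda$ and $\overline{\J_\Lambda}$ along $\Sigma_\Lambda$ from Lemma \ref{lemma:transversality} forces $\partial_w \widetilde{\Psi}_K \equiv 0$. Euler's identity $K \widetilde{\Psi}_K = w \cdot \partial_w \widetilde{\Psi}_K$ (valid since $K \geq 2$) then gives $\widetilde{\Psi}_K = 0$, closing the induction.

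The main obstacle is the Taylor-order bookkeeping in the subtraction argument: one must verify that the quadratic remainders of $\zeta_{j,0}$ around $\Re\theta(\alpha)$ applied to $\mathrm{d}_\alpha \Phi_1 - \mathrm{d}_\alpha \Phi_2 = \O(|w|^{K-1})$ contribute only at order $\O(|w|^{2K-2})$, with $2K-2 \geq K$ for $K \geq 2$, and that on the $\J_\Lambda^*$ side the simultaneous variation of the base point $\beta$ (since $\zeta_{j,0}^*$ depends on $\beta$ through both slots and $\mathrm{d}_\beta \Phi_2(\alpha,\beta) + \Re\theta(\beta) = \O(|w|)$) likewise contributes only to subleading orders. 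Once this is in place, the geometric input of Lemma \ref{lemma:transversality} does all the work.
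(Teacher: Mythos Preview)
Your uniqueness argument is correct and takes a genuinely different route from the paper. The paper encodes $\Phi$ via the almost-analytic Lagrangian $\Gamma_\Phi \subset T^*(\widetilde{\Lambda}\times\widetilde{\Lambda})$ and argues that the Hamilton vector fields $H_{\zeta_{j,0}}, H_{\zeta_{j,0}^*}$ are tangent to $\Gamma_\Phi$ at infinite order on $\Delta_\Lambda$; since these $2n$ vector fields together with $T\Delta_\Lambda\otimes\C$ span the tangent space (this is where Lemma~\ref{lemma:transversality} enters), the Lagrangian---and hence the phase jet---is determined. Your approach bypasses the Lagrangian picture entirely: you subtract the defining-function conditions, extract the homogeneous degree-$(K-1)$ part, and read off directly that $\partial_w\widetilde{\Psi}_K$ lies in the fiber-tangents of both $\J_\Lambda$ and $\J_\Lambda^*$, i.e.\ in $L\cap\overline{L}=\{0\}$. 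This is more elementary and avoids almost-analytic machinery on $\Gamma_\Phi$; the paper's formulation, on the other hand, makes the underlying symplectic geometry transparent (the phase is pinned down by an involutive distribution plus initial data). Both feed on the same transversality input.

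One caveat on existence: your claim that $\Phi_{TS}^\circ$ itself satisfies \eqref{eq:dans_JLambda_star} via a ``symmetric computation'' is not immediate. What is clear from the construction is that the phase $\Phi_{TS}^*$ of $\Pi_\Lambda^*$ satisfies \eqref{eq:dans_JLambda_star}; showing that $\mathrm{d}_\beta\Phi_{TS}^\circ\in\J_\Lambda^*$ to infinite order on the diagonal would require verifying that $\{\mathrm{d}_\beta\Phi_S(y,\beta)+i\Im\theta(\beta):y\}$ coincides with the fiber of $\J_\Lambda^*$, which involves untangling how the relation $\Phi_S(y,\beta)=-\overline{\Phi_T(\bar\beta,\bar y)}$ interacts with the restriction to $\Lambda$ and the tensor-product conjugation on $T^*\Lambda\otimes\C$. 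The paper sidesteps this by taking as witness the phase $\Phi_\Lambda$ of $\Pi_\Lambda\Pi_\Lambda^*$, for which both conditions follow directly from Lemmas~\ref{lemma:phase_left_multiplication} and~\ref{lemma:phase_right_multiplication}. This is an easy fix and does not affect the substance of your proof.
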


\begin{remark}\label{remark:phi_lambda}
Notice that it follows from Lemmas \ref{lemma:phase_left_multiplication} and \ref{lemma:phase_right_multiplication} that $\Phi$ from Lemma \ref{lmcarphase} is the jet on the diagonal of the phase of $\Pi_\Lambda \Pi_\Lambda^*$. We will denote by $\Phi_\Lambda$ the phase of $\Pi_\Lambda \Pi_\Lambda^*$. Now, notice that if $A$ is a FIO such that the jet of its phase on the diagonal is $\Phi$ (which will happen by application of Lemma \ref{lmcarphase}), we may always assume that its phase is exactly $\Phi_\Lambda$. Indeed, it follows from the coercivity condition \eqref{eq:coercivity_condition} that the error that occurs when replacing the phase of $A$ by $\Phi_\Lambda$ may be considered as part of the remainder term in \eqref{eq:oscillating_kernel}.

Notice also that the phase $(\alpha,\beta) \mapsto -\overline{\Phi_\Lambda(\beta,\alpha)}$ satisfies the conditions from Lemma \ref{lmcarphase} too. Hence, we may replace $\Phi_\Lambda$ by $(\alpha,\beta) \mapsto (\Phi_\Lambda(\alpha,\beta) - \overline{\Phi_\Lambda(\beta,\alpha)})/2$ and assume that the phase $\Phi_\Lambda$ satisfies $\Phi_\Lambda(\alpha,\beta) = - \overline{\Phi_\Lambda(\beta,\alpha)}$.
\end{remark}

The proof of Lemma \ref{lmcarphase} is based on an almost analytic version of the following elementary observation. Let $\Gamma$ be a Lagrangian submanifold of $T^\ast \R^m$, and $f: T^\ast \R^m \to \R$ a smooth function. Assume that $f$ vanishes on $\Gamma$. Denoting $H_f$ the corresponding Hamiltonian vector field, we pick $u$ a vector tangent to $\Gamma$ and observe that
\[
\omega( u,H_f) = \mathrm{d}f(u) =0,
\]
so that $H_f$ has to be tangent to $\Gamma$. If we have many independent functions that vanish on $\Gamma$, its tangent space is thus determined. We already have the functions $\zeta_{j,0}$ (from Proposition \ref{prop:existence-Z_j}) that vanish on $\J_\Lambda$. It will be useful to introduce the functions
\[
\zeta_{j,0}^\ast(\alpha,\alpha^\ast)= \overline{\zeta_{j,0}(\alpha, - \overline{\alpha^\ast} )}.
\]
They vanish on $\J_\Lambda^\ast$.

\begin{proof}[Proof of Lemma \ref{lmcarphase}]
The existence is immediate since the phase $\Phi_\Lambda$ from Remark \ref{remark:phi_lambda} satisfies the conditions from the lemma. Let us then focus on the uniqueness.

We start by picking an almost analytic extension of $\Phi$ and consider the graph 
\[
\Gamma_{\Phi} = \left\{ (\alpha, \partial_\alpha \Phi ; \beta, \partial_\beta \Phi)\ \middle|\ \alpha,\beta \in \widetilde{\Lambda} \right\} \subset T^\ast (\widetilde{\Lambda} \times \widetilde{\Lambda}),
\]
where $\partial_\alpha$ denotes the $\C$-linear part of the exterior derivative, that is $\mathrm{d}_\alpha = \partial_\alpha + \bar{\partial}_\alpha$. This is an almost analytic Lagrangian manifold, in the sense that the complex analytic symplectic form vanishes at infinite order near the real points of $\Gamma_\Phi$, which are exactly the points of $\Delta_\Lambda$ (for more details on almost analytic Lagrangian manifolds, see \S 3 in \cite{Melin-Sjostrand-75}).

We can also pick almost analytic extensions of the $\zeta_{j,0}^{(\ast)}$'s. From the conditions \eqref{eq:dans_JLambda} and \eqref{eq:dans_JLambda_star}, we deduce that
\[
\zeta_{j,0}(\alpha, \partial_\alpha \Phi) \text{ and } \zeta_{j,0}^\ast(\beta, \partial_\beta \Phi ) \text{ are } \O( |\alpha - \beta| + \va{\Im \alpha} + \va{\Im \beta})^\infty. 
\]
We identify $\zeta_{j,0}$ (resp. $\zeta_{j,0}^\ast$) with $(\alpha,\alpha^\ast,\beta,\beta^\ast) \mapsto \zeta_{j,0}(\alpha,\alpha^\ast)$ (resp. $\zeta_{j,0}^\ast(\beta, \beta^\ast)$). Denoting by $\omega$ the complex symplectic form of $T^\ast ( \widetilde{\Lambda} \times\widetilde{\Lambda})$, we have for $u$ a tangent vector to $\Gamma_\Phi$, and $f$ one of the functions $\zeta_{j,0}$, $\zeta_{j,0}^\ast$, $j = 1 \dots n$
\[
\omega(u,H_f) = \partial f(u) = \mathrm{d}f (u) + \O( |\alpha-\beta|)^\infty = \O( |\alpha - \beta| + \va{\Im \alpha} + \va{\Im \beta})^\infty.
\]
Here, the vector field $H_f$ is defined by $\omega(\cdot,H_f) = \partial f$. We find that $H_f$ is tangent to $\Gamma_\Phi$ to all order on the diagonal. Since $\Gamma_\Phi$ has (almost analytic complex-)dimension $4n$, contains $\Delta_\Lambda$, which has dimension $2n$, and we have $2n$ vector fields tangent to $\Phi$, it suffices to check that these vector fields form a free family, generating a space transverse to $T\Delta_\Lambda\otimes \C$.

The fact that the $\mathrm{d}\zeta_{j,0}$ form a free family on $\J_\Lambda$ near its real points implies that the $H_{\zeta_{j,0}}$, $j= 1\dots n$ are free, and likewise for the $H_{\zeta_{j,0}^\ast}$, $j=1\dots n$. Since they act either on $\alpha,\alpha^\ast$ or on $\beta,\beta^\ast$, the $H_f$'s form a free family. We now have to consider the transversality with $\Delta_\Lambda$. We consider $a,b\in \C^n$ such that
\[
\sum_{j= 1}^n a_j H_{\zeta_{j,0}} + \sum_{j = 1}^n b_j H_{\zeta_{j,0}^\ast} \in T \Delta_\Lambda \otimes \C.
\]
From the structure of $\Delta_\Lambda$, this implies
\[
\sum a_j \mathrm{d}_{\alpha^*}\zeta_{j,0}(\alpha,\Re \theta(\alpha)) + \sum b_j \mathrm{d}_{\alpha^*} \zeta_{j,0}^\ast (\alpha, - \Re \theta(\alpha)) =0.
\]
From the definition of $\zeta_{j,0}^\ast$, this means (here the complex conjugacy makes sense due to the tensor product structure on $T^* \Lambda \otimes \C$)
\[
\sum a_j \mathrm{d}_{\alpha^*}\zeta_{j,0} (\alpha,\Re \theta(\alpha)) - \sum b_j \overline{\mathrm{d}_{\alpha^*}\zeta_{j,0} }(\alpha, \Re \theta(\alpha)) =0
\]
In the fiber $T_\alpha^\ast \Lambda$, recall that the $\zeta_{j,0}$, $j=1\dots n$, form a system of equations for $\J_\Lambda$, so that the linear forms $\mathrm{d}_{\alpha^*}\zeta_{j,0}$ generate the orthogonal dual of $T_{\Re \theta(\alpha)}\J_\Lambda \cap T_\alpha^\ast \Lambda$. Their complex conjugates thus generate the orthogonal dual of $T_{\Re \theta(\alpha)}\overline{\J_\Lambda}\cap T_\alpha^\ast\Lambda$. The transversality lemma \ref{lemma:transversality} thus implies that equation above only admits $0$ as solution, so $a=b=0$ and the proof is complete.
\end{proof}

\subsubsection{Transport equations on the symbol}

Let us now come back to our original datum of $A$, an FIO such that $\Pi_\Lambda A = A \Pi_\Lambda^\ast = A$. We assume, as we may, that $\Phi_A$ satisfies both \eqref{eq:dans_JLambda} and \eqref{eq:dans_JLambda_star}, so that $\Phi_A$ can be replaced by the phase $\Phi_\Lambda$ from Remark \ref{remark:phi_lambda}. Under these conditions, $A$ is determined up to a negligible operator by the value of its symbol on the diagonal of $\Lambda \times \Lambda$, as we prove now.
\begin{lemma}\label{lemma:de_la_diagonale_le_symbole}
Assume that $\Lambda$ is a $(\tau_0,1)$-adapted Lagrangian with $\tau_0$ small enough. Let $A$ be an FIO with phase $\Phi_\Lambda$. Assume that $\Pi_\Lambda A = A$ and $A \Pi_\Lambda^* = A$. Assume in addition that the symbol $\sigma_A$ of $A$ is an $\O(h/\langle\alpha\rangle)^\infty$ \emph{on} the diagonal, then $A$ is a negligible operator in the sense of Definition \ref{def:negligible}.
\end{lemma}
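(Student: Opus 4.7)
The plan is to use the annihilating pseudors constructed in Proposition \ref{prop:existence-Z_j} to derive transport equations for the symbol $\sigma_A$, and then exploit the assumed vanishing on the diagonal together with the transversality Lemma \ref{lemma:transversality} to propagate this vanishing to a full neighbourhood of the diagonal. Away from the diagonal, the coercivity estimate \eqref{eq:coercivity_condition} already makes the reduced kernel negligible, so the analysis is purely local near the diagonal, and using a partition of unity on $\Lambda$ we may work in a chart where a family $Z_1,\dots,Z_n$ as in Proposition \ref{prop:existence-Z_j} is available.

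Since $A = \Pi_\Lambda A$, Proposition \ref{prop:existence-Z_j}(ii) gives that the composition $Z_j e^{-H/h} A e^{H/h}$ has negligible kernel in the sense of Definition \ref{def:negligible}. On the other hand, this composition is itself a complex FIO with the same phase $\Phi_\Lambda$ as $A$, and applying the expansion \eqref{eq:expansion-action-lagrangian} on the $\alpha$ side (with $\psi = \Phi_\Lambda(\cdot,\beta)$ and $a = \sigma_A(\cdot,\beta)$) I obtain an asymptotic expansion for its symbol whose leading term is $\zeta_{j,0}(\alpha, \mathrm{d}_\alpha \Phi_\Lambda(\alpha,\beta))\, \sigma_A(\alpha,\beta)$ and whose subsequent terms involve $\zeta_{j,k}$'s together with derivatives of $\sigma_A$. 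Because $\Phi_\Lambda$ satisfies \eqref{eq:dans_JLambda} to infinite order on the diagonal by Lemma \ref{lemma:phase_left_multiplication} and $\zeta_{j,0}$ vanishes on $\J_\Lambda$, the functions $\alpha \mapsto \zeta_{j,0}(\alpha, \mathrm{d}_\alpha \Phi_\Lambda(\alpha,\beta))$ vanish to infinite order on $\alpha = \beta$, so the transport equation one reads off at leading order essentially says that certain \emph{first-order} derivatives of $\sigma_A$ transverse to $\J_\Lambda$ are controlled by $\sigma_A$ itself modulo $\O(h/\langle\alpha\rangle)$. Performing the symmetric argument from the right, using $A \Pi_\Lambda^\ast = A$ and the pseudors $Z_j^\ast$ obtained by conjugation (whose leading symbols $\zeta_{j,0}^\ast$ vanish on $\J_\Lambda^\ast$, and which annihilate $e^{-H/h}\Pi_\Lambda^\ast e^{H/h}$ on the right), yields a second family of transport relations involving derivatives in the $\beta$ direction conormal to $\J_\Lambda^\ast$.

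The key observation is then that, by Lemma \ref{lemma:transversality}, the tangent spaces of $\J_\Lambda$ and $\overline{\J_\Lambda}$ are uniformly transverse in each fiber of $T^\ast \Lambda \otimes \C$; combined with the description of the conormal direction to the diagonal of $\Lambda \times \Lambda$ that is built into the phase $\Phi_\Lambda$ (which satisfies $\mathrm{d}_\alpha \Phi_\Lambda = \Re\theta$ and $\mathrm{d}_\beta \Phi_\Lambda = -\Re\theta$ on the diagonal), the two families of transport equations together control \emph{all} derivatives of $\sigma_A$ transverse to the diagonal in terms of tangential derivatives and lower-order symbols in $h/\langle\alpha\rangle$. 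Proceeding by induction on the order $N$ in the asymptotic expansion in $h/\langle\alpha\rangle$, and using that the hypothesis ensures the restrictions of $\sigma_A$ and its derivatives to the diagonal are $\O(h/\langle\alpha\rangle)^\infty$, one recursively shows that $\sigma_A$ and all its derivatives are $\O(h/\langle\alpha\rangle)^N$ on a neighbourhood of the diagonal, for every $N$. Feeding this back into the expression \eqref{eq:oscillating_kernel} (and using the coercivity \eqref{eq:coercivity_condition}) gives the negligibility of the reduced kernel of $A$.

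The main obstacle is to convert the formal transport relations read off from \eqref{eq:expansion-action-lagrangian}--\eqref{eq:explicit-formula} into genuine pointwise bounds with the correct symbolic control in $\langle\alpha\rangle$. The functions $\zeta_{j,0}(\alpha,\mathrm{d}_\alpha \Phi_\Lambda(\alpha,\beta))$ do not truly vanish on the diagonal --- they vanish only to infinite order --- so the induction must separate a genuine transverse derivative (controlled by the uniformly non-degenerate matrix made from the differentials of $\zeta_{j,0}$ and $\zeta_{j,0}^\ast$, whose invertibility is exactly Lemma \ref{lemma:transversality}) from almost-analytic errors that are flat on $\Delta_\Lambda$ and can be absorbed using the quantitative interpretation of flatness recalled in Remark \ref{remark:a_propos_des_jets}. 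Keeping track of the fact that each step loses one power of $h/\langle\alpha\rangle$ while gaining one order of vanishing on the diagonal, and verifying uniformity as $\tau_0,h \to 0$ and $\langle\alpha\rangle \to \infty$, is the technical heart of the argument.
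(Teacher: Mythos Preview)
Your proposal is correct and follows essentially the same approach as the paper: use the annihilating pseudors $Z_j$ together with $\Pi_\Lambda A = A$ to derive transport equations for $\sigma_A$, obtain the complementary $n$ equations from $A\Pi_\Lambda^\ast = A$ (the paper does this by passing to $A^\ast$ and noting $\Pi_\Lambda A^\ast = A^\ast$, rather than building right-acting $Z_j^\ast$, but this is equivalent), invoke the transversality Lemma \ref{lemma:transversality} to show the resulting $2n$ vector fields span a complement of the tangent to the diagonal, and then induct on the power of $h/\langle\alpha\rangle$. One small precision: the hypothesis only gives that the \emph{tangential} ($\delta = \alpha+\beta$) derivatives of $\sigma_A$ restrict to $\O(h/\langle\alpha\rangle)^\infty$ on the diagonal; the transverse ($\gamma = \alpha-\beta$) derivatives are then controlled via the transport equation $\nabla_\gamma \sigma_A = L\nabla_\delta \sigma_A + N\sigma_A + h\langle\alpha\rangle^{-1}Q\sigma_A + \O(h^\infty)$, which is exactly the mechanism you describe.
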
 

The first idea may be to compute directly the following product and try to identify the terms:
\[
\Pi_\Lambda A (\alpha,\beta)e^{\frac{H(\beta)-H(\alpha)}{h}}\sim e^{\frac{i\Phi_{\Pi_\Lambda A}(\alpha,\beta)}{h}} \left[ \sum_{k\geq 0} h^k P_k\Big( \sigma_{\Pi_\Lambda}(\alpha,\gamma)\sigma_{A}(\gamma,\beta)\Big)_{|\gamma=\gamma_c(\alpha,\beta)} \right].
\]
Here, the operators $P_k$ are differential operators of order $2k$ in the variable $\gamma$. In particular, for the term of order $h^k$, the coefficient involves $2k$ derivatives of $\sigma_A$. At least formally, the equation $\Pi_\Lambda A = A$ thus gives a collection of PDEs on the symbol of $A$, that are a priori not easy to interpret. This is why we have to take a different approach, relying crucially on the $Z_j$'s constructed in \S~\ref{sub:annihilating}.

\begin{proof}
The proof of this lemma occupies the rest of this section. Far from the diagonal, the kernel of $A$ is very small by definition, so we can concentrate on a neighbourhood of fixed size of the diagonal. From the coercivity condition \eqref{eq:coercivity_condition} satisfied by $\Phi_A$, we see that we only need to prove that all the derivatives of $\sigma_A$ on the diagonal of $\Lambda \times \Lambda$ are $\mathcal{O}\p{(h/\langle|\alpha|\rangle)^\infty}$.

To do so, recall the operators $\Op(\zeta_j)$ from Proposition \ref{prop:existence-Z_j} (we can work ``around a fiber'' in the sense of Proposition \ref{prop:existence-Z_j}) and notice that
\begin{equation*}
\begin{split}
\Op(\zeta_j)e^{-\frac{H}{h}} Ae^{\frac{H}{h}}(\alpha,\beta) & = \p{\Op(\zeta_j) e^{- H/h} \Pi_\Lambda e^{H/h}} e^{-H/h} A e^{H/h} (\alpha,\beta) \\ & = \mathcal{O}\p{\p{\frac{h}{\jap{\va{\alpha}} + \jap{\va{\beta}}}}^\infty}.
\end{split}
\end{equation*}
We want to deduce some information about the symbol of $A$. Certainly, applying \cite{Melin-Sjostrand-75} once again,
\[
\Op(\zeta_j)e^{-\frac{H}{h}} A(\alpha,\beta)e^{\frac{H}{h}} = e^{\frac{i}{h}\Phi_A(\alpha,\beta)} c(\alpha,\beta) + \O( (h/\langle|\beta|\rangle)^\infty),
\]
where $c$ is supported near the diagonal, and has a semi-classical expansion involving derivatives of $\zeta_j$ and $\sigma_A$. Since $e^{i\Phi_A/h}$ is a Gaussian term centered at the diagonal, we deduce that for $d_{KN}(\alpha,\beta) \leq C (h/\jap{\va{\alpha}})^{1/2}$, 
\[
c(\alpha,\beta)=  \O( (h/\langle|\alpha|\rangle)^\infty).
\]
This is a priori a $L^\infty$ estimate. However, we know that $c$ is a symbol, so that using finite differences and Taylor's Formula, we deduce that this estimate actually holds in $\mathcal{C}^\infty$. We want to deduce that $\sigma_A= \O(h/\langle|\alpha|\rangle)^\infty$ at infinite order on the diagonal.

We consider now the semi-classical expansion for $c$, which starts as (we use the same coordinates $(u,\eta)$ as in \S \ref{sub:annihilating})
\[
\zeta_{j,0}(u,\eta, \mathrm{d}_{u,\eta}\Phi_A(u,\eta,u',\eta'))\sigma_{A}(u,\eta,u',\eta') + \O\p{\frac{h}{\jap{\eta}}}.
\]
From the choice of $\zeta_{j,0}$ and Lemma \ref{lemma:phase_left_multiplication}, the first term here vanishes at infinite order on the diagonal. Next, we consider the term of order $h$ in the expansion. For this we recall the usual expansion 
\begin{equation}\label{eq:last_label}
\begin{split}
\int & e^{\frac{i}{h}\langle x-y,\xi\rangle} p(x,\xi) e^{\frac{i}{h}S(y)}a(y)\mathrm{d}y\mathrm{d}\xi = \\
			&e^{\frac{i}{h}S(x)} \left[ p(x,\mathrm{d}_x S)a(x) + \frac{h}{i} \left( \nabla_\xi p\cdot \nabla_x a +\frac{a}{2}\Tr \partial^2_\xi p \partial^2_x S \right) +\O(h^2 a) \right] + \O(h^\infty).
\end{split}
\end{equation}
(here, $\O(h^2 a)$ is a bit of an abuse of notations since it involves derivatives of $a$). It follows that
\[
\begin{split}
c(u,\eta,u',\eta') &= \frac{h}{i}\nabla_{u^\ast,\eta^\ast}\zeta_{j,0}(u,\eta; \mathrm{d}_{u,\eta} \Phi_{TS}^\circ (u,\eta,u',\eta'))\cdot\nabla_{u,\eta}\sigma_{A}(u,\eta,u',\eta') \\ 
& \qquad + h F_j(u,\eta,u',\eta') \sigma_{A}(u,\eta,u',\eta') \\ 
& \qquad + h^2 \jap{\eta}^{-1} P_j \sigma_A(u,\eta,u',\eta') + \O_{C^\infty}\p{\p{\frac{h}{\jap{\eta}}}^\infty}.
\end{split}
\]
The function $F_j$ is the sum of $\zeta_{j,1}(u,\eta,\mathrm{d}_{u,\eta} \Phi_{TS}^{\circ}(u,\eta,u',\eta'))$ and a term involving second derivatives of $\zeta_{j,0}$ and of the phase, according to \eqref{eq:last_label}. The operator $P_j$ comes from the application of the stationary phase method, and in particular we know that if we can prove that the derivatives of the symbol $\sigma_A$ on the diagonal are $\mathcal{O}(\jap{\eta}^{-N} h^N)$ then the  derivatives of the symbol $P_j \sigma (u,\eta,u,\eta)$ satisfy the same estimates. Writing this in a more intrinsic fashion, we deduce that,
\begin{equation}\label{eq:transport_1}
\begin{split}
\mathrm{d}_{\alpha^*} \zeta_{j,0}\p{\alpha, \mathrm{d}_{\alpha} \Phi_{TS}^\circ(\alpha,\beta)} \cdot \mathrm{d}_{\alpha} \sigma_A(\alpha,\beta) + & i F_j(\alpha,\beta) \sigma_A(\alpha,\beta)\\
		 + h \jap{\va{\alpha}}^{-1}P_j & \sigma_A(\alpha,\beta) = \O_{C^\infty}\p{ \p{\frac{h}{\jap{\va{\alpha}}}}^\infty}
\end{split}
\end{equation}
Here, it makes sense to apply $\mathrm{d}_{\alpha^*} \zeta_{j,0}$ to $\mathrm{d}_\alpha \sigma_A$ since they are elements respectively of $(T^*_\alpha \Lambda) ^*$ and $T^*_\alpha \Lambda$.

Now we will find $n$ others equations. Indeed, since $A\Pi_\Lambda^\ast = A$, we have $\Pi_\Lambda A^\ast = A^\ast$, so that \eqref{eq:transport_1} is also satisfied by $\sigma_{A^\ast}(\alpha,\beta) = \overline{\sigma_A(\beta,\alpha)}$. We can rewrite our equations in the form ($j=1\dots 2n$)
\[
X_j \sigma_A + F_j \sigma_A + h \jap{\va{\alpha}}^{-1} Q_j \sigma_A = \O\p{\p{\frac{h}{\jap{\va{\alpha}}}}^\infty}.
\]
The $X_j$'s are complex vector fields on $\Lambda\times\Lambda$ (defined near the diagonal), i.e. sections of $T\p{\Lambda \times \Lambda} \otimes \C$. Using again Lemma \ref{lemma:transversality}, as in the proof of Lemma \ref{lmcarphase}, we deduce that they generate a space which is supplementary to the complexified tangent space to the diagonal. The $Q_j$'s have the same general properties than the $P_j$'s. Denoting $\gamma = \alpha -\beta$ and $\delta = \alpha+\beta$ in local coordinates, we deduce that
\begin{equation}\label{eq:transport_final}
\nabla_\gamma \sigma_A = L(\gamma,\delta) \nabla_\delta \sigma_A(\alpha,\beta) + \sigma_A(\gamma,\delta) N(\gamma,\delta) + h \jap{\va{\alpha}}^{-1} Q\sigma_A(\gamma,\delta)+ \O\p{\p{\frac{h}{\jap{\va{\alpha}}}}^\infty}.
\end{equation}
Here, $L$ and $N$ are respectively a matrix and a vector that satisfy symbolic estimates in $\gamma,\delta$. The operator $Q$ satisfies the same kind of estimates as the $P_j$'s or the $Q_j$'s.

Our assumption on $\sigma_A$ implies that all its derivatives (of any order) with respect to $\delta$ are $\O((h/\jap{\va{\alpha}})^\infty)$ on the diagonal of $\Lambda \times \Lambda$. Hence, differentiating \eqref{eq:transport_final} any number of times and then restricting to the diagonal, we find by induction that the derivatives to all order (with respect to both $\gamma$ and $\delta$) of $\sigma_A$ are $\O(h/\jap{\va{\alpha}})$ on the diagonal.

However, from this newly acquired knowledge, we find that the derivatives of $Q \sigma_A(\alpha,\beta)$ are in fact $\O( h / \jap{\va{\alpha}})$ on the diagonal. We can consequently play the same game to find that the derivatives to all order (with respect to both $\gamma$ and $\delta$) of $\sigma_A$ are in fact $\O((h/\jap{\va{\alpha}})^2)$ on the diagonal. Then, we keep iterating these procedures and we find by induction that all the derivatives of $\sigma_A$ are $\O((h/ \jap{\va{\alpha}})^{\infty})$ on the diagonal of $\Lambda \times \Lambda$. This ends the proof of the lemma.

\end{proof}

\subsection{Orthogonal projector and Toeplitz calculus}\label{sec:Toeplitz}

We are now ready to deduce useful consequences from the analysis of \S \ref{sec:FIO}. First of all, we want to study the orthogonal projector $B_\Lambda$ on $\mathcal{H}_{\Lambda,\FBI}^0$ in $L_0^2\p{\Lambda}$. We start by showing that:

\begin{lemma}\label{lemma:presque_projecteur}
There is a real-valued symbol $f \in S_{KN}^0\p{\Lambda}$ positive and elliptic of order $0$ such that $\Pi_\Lambda f \Pi_{\Lambda}^*$ and $\p{\Pi_\Lambda f \Pi_{\Lambda}^*}^2$ differ by a negligible operator.
\end{lemma}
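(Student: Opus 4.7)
The strategy is a standard parametrix construction, built on the FIO calculus developed in the previous subsection. The key point is that both $\Pi_\Lambda f \Pi_\Lambda^*$ and $(\Pi_\Lambda f \Pi_\Lambda^*)^2$ are FIOs in the sense of Definition \ref{def:oscillating_kernel}: the first one by Lemma \ref{lemma:composition_oscillating_kernel} applied to $\Pi_\Lambda$, multiplication by $f$, and $\Pi_\Lambda^*$; the second one for the same reason. Moreover both operators $A = \Pi_\Lambda f \Pi_\Lambda^*$ and $A^2$ satisfy $\Pi_\Lambda A = A \Pi_\Lambda^* = A$ and $\Pi_\Lambda A^2 = A^2 \Pi_\Lambda^* = A^2$. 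So, after adjusting the phases of their oscillating kernels (using Remark \ref{remark:phi_lambda}), we may assume both operators have phase $\Phi_\Lambda$ and then, by Lemma \ref{lemma:de_la_diagonale_le_symbole}, the difference $A^2 - A$ is negligible if and only if the symbols of $A$ and $A^2$ agree up to $\mathcal{O}\p{\p{h/\jap{\va{\alpha}}}^\infty}$ on the diagonal of $\Lambda \times \Lambda$.

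First, I would compute the leading order symbols on the diagonal. Applying the stationary phase expansion described in Lemma \ref{lemma:composition_oscillating_kernel}, the symbol of $\Pi_\Lambda f \Pi_\Lambda^*$ on the diagonal is of the form $\p{2\pi h}^{-n} c_0(\alpha) f(\alpha) + \mathcal{O}\p{h^{1-n} \jap{\va{\alpha}}^{-1}}$, where $c_0$ is a positive elliptic symbol of order $0$ determined by the leading symbols of $\Pi_\Lambda$ and $\Pi_\Lambda^*$ together with the Hessian determinant of $\Phi_\Lambda$ at the critical point (and is real positive because $\Pi_\Lambda^* = \p{\Pi_\Lambda}^*$). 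Similarly, iterating the composition formula one more time, the leading symbol of $A^2$ on the diagonal is $\p{2\pi h}^{-n} c_0(\alpha) d_0(\alpha) f(\alpha)^2 + \mathcal{O}\p{h^{1-n} \jap{\va{\alpha}}^{-1}}$, where $d_0$ is another positive elliptic symbol of order $0$ coming from the leading symbol of $\Pi_\Lambda^* \Pi_\Lambda$ between the two factors of $f$. This dictates the choice of principal symbol $f_0 := 1/d_0$, which is real, positive and elliptic in $S_{KN}^0\p{\Lambda}$.

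Next, I would iteratively correct $f_0$ by a Borel-type construction. Assume we have constructed $f^{(N)} = \sum_{k=0}^N h^k f_k$ such that the symbol of $\p{\Pi_\Lambda f^{(N)} \Pi_\Lambda^*}^2 - \Pi_\Lambda f^{(N)} \Pi_\Lambda^*$ on the diagonal is $\mathcal{O}\p{h^{N+1-n} \jap{\va{\alpha}}^{-N-1}}$. By the stationary phase expansion in powers of $h$, adding $h^{N+1} f_{N+1} \Pi_\Lambda^*$ on the appropriate side changes this symbol at order $h^{N+1-n}$ by an expression which is \emph{algebraic linear} in $f_{N+1}$ with coefficient $c_0 d_0 f_0 - c_0 = 0$ at the leading correction — wait, this requires more care: the linearization of $f \mapsto \sigma(\Pi_\Lambda f \Pi_\Lambda^* - (\Pi_\Lambda f \Pi_\Lambda^*)^2)$ at $f = f_0$ on the diagonal is the multiplication operator with coefficient $c_0 - 2 c_0 d_0 f_0 = c_0 - 2 c_0 = -c_0$, which is elliptic. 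So at each order in $h$, the equation for $f_{N+1}$ reduces to dividing a known symbol in $S_{KN}^{-N-1}$ by $-c_0 \in S_{KN}^0$, which has a solution $f_{N+1} \in S_{KN}^{-N-1}$. Real-valuedness is preserved throughout because the whole setup is symmetric under taking adjoints (the operator $A$ is self-adjoint whenever $f$ is real). We then choose $f \in S_{KN}^0\p{\Lambda}$, real and with $f \sim \sum h^k f_k$ in the standard symbolic sense (by the usual Borel summation for symbols on a Riemannian manifold).

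The resulting operator $A = \Pi_\Lambda f \Pi_\Lambda^*$ then satisfies that the symbol of $A^2 - A$ on the diagonal of $\Lambda \times \Lambda$ is $\mathcal{O}\p{\p{h/\jap{\va{\alpha}}}^\infty}$, and the conclusion follows from Lemma \ref{lemma:de_la_diagonale_le_symbole}. Positivity and ellipticity of $f$ are inherited from $f_0 = 1/d_0$ since the corrections are of lower order. The main obstacle in this plan is the bookkeeping of the stationary phase expansion: one needs to verify carefully that at each stage the coefficient of $f_{N+1}$ in the linearization is a nonzero elliptic symbol (hence invertible in the class $S_{KN}^0$), and that the correction $f_{N+1}$ inherits the right order of decay in $\jap{\va{\alpha}}$. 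These are exactly the same kind of computations that underlie the parametrix construction in Theorem \ref{thm:parametrix} and the Toeplitz-like symbol calculus, and follow from the explicit form of the coefficients in the stationary phase expansion recalled in Remark \ref{remark:coefficients_symboles}.
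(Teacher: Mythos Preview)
Your proposal is correct and follows essentially the same approach as the paper: reduce via Lemma~\ref{lemma:de_la_diagonale_le_symbole} to matching the diagonal symbols of $\Pi_\Lambda f \Pi_\Lambda^*$ and its square, solve the leading-order equation for $f_0$, then recursively correct by lower-order terms $h^k f_k$, with real-valuedness coming from self-adjointness. The only cosmetic difference is that the paper writes out the recursion by explicit formulas (introducing auxiliary symbols $g,\tilde{g}$ and operators $A_k,B_k$, and observing that $2\tilde{g}c_0=2$ on the diagonal so that $c_k=\tilde{c}_k$ becomes $c_k=-B_k$), whereas you phrase the same recursion as inverting the linearization $-c_0$ of the defect map; these are the same computation.
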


\begin{proof}
From Lemmas \ref{lemma:phase_left_multiplication}, \ref{lemma:phase_right_multiplication} and \ref{lmcarphase}, we know that the phases of $\Pi_\Lambda f \Pi_\Lambda^*$ and its square coincide with the phase $\Phi_\Lambda$ from Remark \ref{remark:phi_lambda} at infinite order on the diagonal. Hence, we may assume that the phases of $\Pi_\Lambda f \Pi_\Lambda^*$ and its square are exactly $\Phi_\Lambda$. We will be in position to apply Lemma \ref{lemma:de_la_diagonale_le_symbole}  if we can find $f$ such that the symbols of $\Pi_\Lambda f \Pi_\Lambda^*$ and its square coincides on the diagonal up to an $\O((h/\jap{\va{\alpha}})^\infty)$. We want to construct $f$ with an asymptotic expansion
\begin{equation*}
\begin{split}
f \sim \sum_{k \geq 0} h^k f_k.
\end{split}
\end{equation*}
Then, the symbols $c$ and $\tilde{c}$ of $\Pi_\Lambda f \Pi_{\Lambda}^*$ and $\p{\Pi_\Lambda f \Pi_{\Lambda}^*}^2$ respectively have asymptotic expansions (in the sense of Remark \ref{remark:expansion_symbole_FIO})
\begin{equation}\label{eq:asymptotic_B}
\begin{split}
c \sim \frac{1}{(2 \pi h)^n} \sum_{k \geq 0} h^k c_k, \quad \tilde{c} \sim \frac{1}{(2 \pi h)^n} \sum_{k \geq 0} h^k \tilde{c}_k
\end{split}
\end{equation}
that are deduced from the expansion for $f$ (recall Lemma \ref{lemma:composition_oscillating_kernel}). In order to apply Lemma \ref{lemma:de_la_diagonale_le_symbole}, we want to choose $f$ such that the $c_k$ and $\tilde{c}_k$ coincide on the diagonal of $\Lambda \times \Lambda$ for every $k \geq 0$.

From the application of the method of stationary phase in the proof of Lemma \ref{lemma:composition_oscillating_kernel}, we find that $c_0(\alpha,\alpha) = f_0(\alpha) g(\alpha,\alpha)$, where $g(\alpha,\beta)$ a symbol of order $0$, whose restriction to the diagonal is positive and elliptic (we recall that the order of $g$ is discussed in the proof of Lemma \ref{lemma:composition_oscillating_kernel}). In general,
\begin{equation}\label{eq70}
\begin{split}
c_k\p{\alpha,\alpha} = g(\alpha,\alpha) f_k(\alpha) + A_k(f_0,\dots,f_{k-1})(\alpha,\alpha),
\end{split}
\end{equation}
where $A_k(f_0,\dots,f_{k-1})$ is a symbol of order $-k$ that only depends on $f_0,\dots,f_{k-1}$. On the other hand, we also get ($\tilde{g}$ is a symbol of order $0$ with properties that are similar to those of $g$)
\begin{equation*}
\begin{split}
\widetilde{c}_0(\alpha,\alpha) = \tilde{g}(\alpha,\alpha)c_0(\alpha,\alpha)^2,
\end{split}
\end{equation*}
and for $k \geq 1$
\begin{equation*}
\begin{split}
\widetilde{c}_k(\alpha,\alpha) = 2 \tilde{g}(\alpha,\alpha) c_0(\alpha,\alpha) c_k(\alpha,\alpha) + B_k(c_0,\dots,c_{k-1})(\alpha,\alpha),
\end{split}
\end{equation*}
where $B_k(c_0,\dots,c_{k-1})(\alpha,\alpha)$ is a symbol of order $-k$ depending only on $c_0,\dots,c_{k-1}$.

Consequently by choosing $f_0(\alpha) = (\tilde{g}(\alpha,\alpha) g(\alpha,\alpha))^{-1}$, we ensure that $c_0$ and $\tilde{c}_0$ coincide on the diagonal (as well as the ellipticity and positivity of $f$, provided that $h$ is small enough). This imposes for $k \geq 0$
\[
c_k(\alpha,\alpha) = - B_k(c_0,\dots,c_{k-1})(\alpha,\alpha).
\]
In turn, we get
\begin{equation}\label{eq:deffk}
f_k(\alpha)= - g(\alpha,\alpha)^{-1}\p{A_k(f_0,\dots,f_{k-1})(\alpha,\alpha) + B_k(c_0,\dots,c_{k-1})(\alpha,\alpha)}.
\end{equation}
Now, by induction, we prove that the $f_k$'s may be chosen real valued. We gave an explicit expression for $f_0$ that ensures that it is real-valued (and positive). Assume that $f_0,\dots, f_k$ are all real-valued, then $\Pi_\Lambda^\ast(f_0 + \dots + h^k f_k) \Pi_\Lambda$ is self-adjoint. In particular, if $K$ denotes the reduced kernel of this operator we have $K(\alpha,\beta) = \overline{K(\beta,\alpha)}$. Recall that the phase of $\Pi_\Lambda^\ast(f_0 + \dots + h^k f_k) \Pi_\Lambda$ is $\Phi_\Lambda$ and denote by $\sigma$ its symbol. Then we have, up to negligible terms,
\begin{equation*}
\begin{split}
K(\alpha,\beta) & = \frac{K(\alpha,\beta) + \overline{K(\beta,\alpha)}}{2} = e^{i \Phi_\Lambda(\alpha,\beta)} \frac{\sigma(\alpha,\beta) + \overline{\sigma(\beta,\alpha)}}{2}.
\end{split}
\end{equation*}
Here, we recall that $\Phi_\Lambda(\alpha,\beta) = - \overline{\Phi_\Lambda(\beta,\alpha)}$. We may consequently assume that $\sigma(\alpha,\beta) = \overline{\sigma(\beta,\alpha)}$. This implies in particular that $A_{k+1}(f_0,\dots, f_k)(\alpha,\alpha)$ is real. Now, $(\Pi_\Lambda^\ast(f_0 + \dots + h^k f_k) \Pi_\Lambda)^2$ will also be self-adjoint, and we can use the same argument to ensure that $B_{k+1}(c_0, \dots, c_k)(\alpha,\alpha)$ is also real. Using then \eqref{eq:deffk} to define $f_{k+1}$, we find that it is real-valued.
\end{proof}

We can now make explicit the structure of the orthogonal projector $B_\Lambda$.

\begin{lemma}\label{lemma:approximate_projector}
The orthogonal projector $B_\Lambda$ is a FIO. More precisely, if $f$ is as in Lemma \ref{lemma:presque_projecteur}, then $B_\Lambda$ and $\Pi_\Lambda f \Pi_\Lambda^*$ differ by a negligible operator.
\end{lemma}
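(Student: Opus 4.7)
The plan is to recognize $Q := \Pi_\Lambda f \Pi_\Lambda^\ast$ as an approximate Bergman projector and then identify it with $B_\Lambda$ modulo a negligible operator via a spectral-theoretic argument. First, $Q$ is an FIO by Lemma \ref{lemma:composition_oscillating_kernel}, and since $f$ is real-valued, $Q$ is self-adjoint; by Lemma \ref{lemma:presque_projecteur}, the operator $E := Q^2 - Q$ is negligible. The crucial structural observation is that the identity $S_\Lambda T_\Lambda = I$ of Theorem \ref{thm:existence-good-transform} forces $\Pi_\Lambda^2 = \Pi_\Lambda$, so $\Pi_\Lambda$ is a genuine (oblique) projector onto $\mathcal{H}_{\Lambda,\FBI}^0$. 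Since $\mathrm{Im}(Q) \subset \mathrm{Im}(\Pi_\Lambda) = \mathcal{H}_{\Lambda,\FBI}^0$, one has $B_\Lambda Q = Q$, and, taking adjoints, $Q B_\Lambda = Q$. Hence $B_\Lambda$ and $Q$ commute, $Q$ annihilates $(\mathcal{H}_{\Lambda,\FBI}^0)^\perp$, and $B_\Lambda - Q = B_\Lambda(I - Q) B_\Lambda$, reducing the problem to controlling $I - Q$ on $\mathcal{H}_{\Lambda,\FBI}^0$.

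Next, I would establish that $Q|_{\mathcal{H}_{\Lambda,\FBI}^0}$ is close to the identity in operator norm, by a short positivity argument. For $v \in \mathcal{H}_{\Lambda,\FBI}^0$, the identity $\Pi_\Lambda v = v$ yields $\|v\|^2 = \langle v, \Pi_\Lambda^\ast v\rangle \leq \|v\|\cdot \|\Pi_\Lambda^\ast v\|$, so $\|v\| \leq \|\Pi_\Lambda^\ast v\|$. Combined with the positivity and ellipticity of $f$, this gives $\langle Qv, v\rangle = \langle f \Pi_\Lambda^\ast v, \Pi_\Lambda^\ast v\rangle \geq c \|v\|^2$ for some $c > 0$ (for $h$ small enough). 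Therefore the spectrum of $Q|_{\mathcal{H}_{\Lambda,\FBI}^0}$ lies in $[c, \|Q\|]$; combined with $\|Q^2 - Q\| = O(h^\infty)$, every spectral value $\lambda$ must satisfy $|\lambda^2 - \lambda| = O(h^\infty)$ and $\lambda \geq c$, forcing $\lambda \in [1 - O(h^\infty), 1 + O(h^\infty)]$. In particular $Q|_{\mathcal{H}_{\Lambda,\FBI}^0}$ is invertible, and the identity $Q(Q - I) = E$ gives
\[
(I - Q)|_{\mathcal{H}_{\Lambda,\FBI}^0} = -\bigl(Q|_{\mathcal{H}_{\Lambda,\FBI}^0}\bigr)^{-1} E.
\]

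Substituting back and using that $B_\Lambda E = E B_\Lambda = E$ (which follows from $B_\Lambda Q = Q B_\Lambda = Q$), I obtain the operator identity
\[
B_\Lambda - Q = -\, B_\Lambda\bigl(Q|_{\mathcal{H}_{\Lambda,\FBI}^0}\bigr)^{-1} B_\Lambda \cdot E,
\]
where the prefactor is defined by the convergent Neumann series $\sum_{k \geq 0} B_\Lambda (I - Q)^k B_\Lambda$ and is uniformly bounded on $L^2_0(\Lambda)$. The right-hand side is then a composition of an $L^2_0$-bounded prefactor with the negligible operator $E$.

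The main obstacle is the final implication, namely upgrading this expression from $L^2_0$-operator-norm smallness to negligibility in the stronger sense of Definition \ref{def:negligible}, which demands pointwise $\mathcal{C}^\infty$ decay of the reduced kernel. The plan for handling this is to unfold the Neumann series term by term: each term is a composition $B_\Lambda (I - Q)^k B_\Lambda \cdot E$ in which $E$ already has the required rapid pointwise decay in both variables and in $h/(\langle|\alpha|\rangle + \langle|\beta|\rangle)$, while $(I - Q)^k$ is a composition of FIOs of order $0$ whose kernels are at worst polynomially bounded in the Kohn--Nirenberg metric. The geometric decay of $\|(I-Q)^k\|$ combined with Schur-type kernel estimates (analogous to those used in the proof of Proposition \ref{lemma:boundedness}) then propagates the $\mathcal{C}^\infty$ decay of $E$ through each term and through the series sum, yielding negligibility of $B_\Lambda - Q$.
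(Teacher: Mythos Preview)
Your structural identities are correct: setting $Q=\Pi_\Lambda f\Pi_\Lambda^\ast$ and $E=Q^2-Q$, you rightly obtain $B_\Lambda Q=QB_\Lambda=Q$, the coercivity $\langle Qv,v\rangle\geq c\|v\|^2$ on $\mathcal H^0_{\Lambda,\FBI}$, and the operator identity $B_\Lambda-Q=-\bigl(Q|_{\textup{image}}\bigr)^{-1}E$. The paper uses exactly the same ingredients. The gap lies in your proposed resolution of the ``obstacle'': expanding $\bigl(Q|_{\textup{image}}\bigr)^{-1}$ as $\sum_k(I-Q)^k$ does not yield summable \emph{kernel} bounds. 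Each $(I-Q)^kE$ is indeed negligible, but the recursion $N_{k+1}=N_k-QN_k$ only gives $\|N_k^{\mathrm{red}}\|_{M}\lesssim(1+C_Q)^k\|E^{\mathrm{red}}\|_{M}$, since composing with the FIO $Q$ multiplies the pointwise kernel constant by a fixed factor $C_Q$ without contributing any power of $h$. The geometric decay $\|(I-Q)^k|_{\textup{image}}\|_{L^2_0}=O(h^\infty)^k$ lives only at the level of the $L^2_0$ operator norm and cannot be transferred to pointwise kernel decay, because $(I-Q)$ involves the identity (which is not an FIO in the sense of Definition~\ref{def:oscillating_kernel}) and because $B_\Lambda$ itself, whose kernel is the unknown, appears implicitly in the limit.

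The paper circumvents this by \emph{not} inverting $Q$ directly. It forms the spectral projector $\widetilde B_\Lambda=\frac{1}{2\pi i}\int_\gamma(z-Q)^{-1}\,\mathrm dz$ via a contour around the nonzero part of the spectrum, and uses the approximate resolvent identity
\[
(z-Q)\Bigl(\tfrac{1}{z}+\tfrac{1}{z(z-1)}Q\Bigr)=I+\tfrac{1}{z(z-1)}E,
\]
so that the correction $(I+\tfrac{E}{z(z-1)})^{-1}-I$ is a Neumann series in the \emph{negligible} operator $E$ itself. Each factor of $E$ now contributes a genuine power of $h$ at the kernel level, making the series sum to a negligible operator uniformly in $h$. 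One then checks, by abstract spectral theory, that $\widetilde B_\Lambda=B_\Lambda$. The upshot is that the contour-integral detour is not cosmetic: it is precisely what converts a Neumann series in $(I-Q)$ (kernel-wise uncontrolled) into a Neumann series in $E$ (kernel-wise summable). Your argument can be repaired by inserting this step; otherwise the passage from operator-norm smallness to Definition~\ref{def:negligible} remains incomplete.
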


\begin{proof}
Let $f$ be as in Lemma \ref{lemma:presque_projecteur} and set $B_\Lambda^\circ = \Pi_\lambda f\Pi_{\Lambda}^*$. Notice that, since $f$ is real valued, the operator $B_\Lambda^\circ$ is self-adjoint. Moreover, since $f_0$ is positive and elliptic, for $h$ small enough we have $f \geq C^{-1}$ for some $C > 0$, and hence, if $h$ is small enough, we have for $u \in \mathcal{H}_{\Lambda,\FBI}^0$ (the scalar product is in $L^2_0\p{\Lambda}$)
\begin{equation}\label{eq74}
\begin{split}
\langle B_\Lambda^\circ u ,u \rangle & = \langle f \Pi_\Lambda^* u , \Pi_\Lambda^* u \rangle \geq \frac{1}{C} \n{\Pi_\Lambda^*u}^2 \\
    & \geq \frac{1}{C} \p{\frac{\n{\Pi_\Lambda^*u} \n{u}}{\n{u}}}^2 \geq \frac{1}{C} \frac{\langle \Pi_\Lambda^* u,u\rangle^2}{\n{u}^2} \\
    & \geq \frac{1}{C}\p{\frac{\langle u, \Pi_\Lambda u\rangle}{\n{u}}}^2 = \frac{1}{C}\n{u}^2.
\end{split}
\end{equation}
Hence, the spectrum of $B_\Lambda^\circ$ consists of $0$ and a bounded subset $E$ of $\R_+^*$, bounded from below by $C^{-1}$. This is because the image of $B_\Lambda^\circ$ is contained in $\mathcal{H}_{\Lambda,\FBI}^0$ and $B_\Lambda^\circ$ vanishes on the orthogonal of $\mathcal{H}_{\Lambda,\FBI}^0$. Then, we may choose a loop $\gamma$ around $E$ but not around $0$, and define the spectral projector
\begin{equation*}
\begin{split}
\widetilde{B}_\Lambda = \frac{1}{2 i \pi} \int_{\gamma} \p{ z - B_\Lambda^\circ}^{-1} \mathrm{d}z.
\end{split}
\end{equation*}
We start by showing that $B_\Lambda^\circ$ approximates $\widetilde{B}_\Lambda$. To do so, notice that if $z \in \gamma$ then we have
\begin{equation*}
\begin{split}
\p{z - B_\Lambda^\circ} \p{ \frac{1}{z} + \frac{1}{z(z-1)} B_{\Lambda}^\circ} = 1 + \frac{1}{z(z-1)} \p{B_\Lambda^\circ -(B_\Lambda^\circ)^2}.
\end{split}
\end{equation*}
When $h$ is small enough, applying Lemma \ref{lemma:presque_projecteur}, we may invert the operator $1 + (z(z-1))^{-1} \p{B_\Lambda^\circ - (B_\Lambda^\circ)^2}$ by mean of Von Neumann series (uniformly for $z \in \gamma$). Thus, we have
\begin{equation*}
\begin{split}
\p{1 + \frac{1}{z(z-1)} \p{B_\Lambda^\circ - (B_\Lambda^\circ)^2}}^{-1} = 1 + R(z),
\end{split}
\end{equation*}
where $R(z)$ is a negligible operator (uniformly in $z \in \gamma$). Moreover, $R(z)$ is valued in $\mathcal{H}_{\Lambda,\FBI}^0$ and commute with $B_\Lambda^\circ$. Consequently, we may write
\begin{equation}\label{eq:premiere_approximation}
\begin{split}
\widetilde{B}_{\Lambda} & = \frac{1}{2 i \pi} \int_{\gamma} \p{\frac{1}{z} + \frac{1}{z(z-1)} B_\Lambda^\circ} \p{1 + R(z)} \mathrm{d}z \\
    & = B_\Lambda^\circ  + \frac{1}{2 i \pi} \int_{\gamma} R(z) \p{\frac{1}{z} + \frac{1}{z(z-1)} B_\Lambda^\circ} \mathrm{d}z.
\end{split}
\end{equation}
Since $R(z)$ is negligible, if we prove that $\widetilde{B}_\Lambda = B_\Lambda$, we will be done. However, $\widetilde{B}_\Lambda$ is an orthogonal projector by construction since $B_\Lambda^\circ$ is self-adjoint. Additionally, since it is the projection on the non-zero part of the spectrum of $B_\Lambda^\circ$, the spectral theorem ensures that $B_\Lambda^\circ u= 0$ if and only if $\widetilde{B}_\Lambda u= 0$. It follows that $\ker \widetilde{B}_\Lambda$ is the orthogonal of $\mathcal{H}_{\Lambda,\FBI}^0$ (by \eqref{eq74}), and thus that $\widetilde{B}_\Lambda = B_\Lambda$.
\end{proof}

\begin{remark}\label{remark:boundedness}
Recalling Remark \ref{remark:fourre_tout}, we see that for every $k \in \R$ the operator $B_\Lambda$ is bounded from $L^2_k\p{\Lambda}$ to $\mathcal{H}_{\Lambda,\FBI}^k$. Consequently, if $\sigma$ is a symbol of order $m$ on $\Lambda$ and $k \in \R$, the operator $B_\Lambda \sigma B_\Lambda$ is bounded from $L^2_k\p{\Lambda}$ to $\mathcal{H}_{\Lambda,\FBI}^{k-m}$.
\end{remark}

Before going further into the study of Toeplitz calculus, let us mention that Lemma \ref{lemma:approximate_projector} allows to identify the dual of the spaces $\mathcal{H}_\Lambda^k$.

\begin{lemma}\label{lemma:dual}
Assume that $\tau_0$ and $h$ are small enough and let $k \in \R$. Then, if $u \in \mathcal{H}_\Lambda^{-k}$ and $v \in \mathcal{H}_\Lambda^{k}$ the pairing
\begin{equation}\label{eq:un_autre_pairing}
\begin{split}
\jap{u,v} \coloneqq \jap{T_\Lambda u , T_\Lambda v}_{L^2_0\p{\Lambda}}
\end{split}
\end{equation}
is well-defined and induces an (anti-linear) identification between $\mathcal{H}_\Lambda^{-k}$ and the dual of $\mathcal{H}_\Lambda^k$ (inducing equivalent, but \emph{a priori} not equal norms).
\end{lemma}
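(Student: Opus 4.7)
The plan is to identify $\mathcal{H}_\Lambda^{-k}$ with $(\mathcal{H}_\Lambda^k)^\ast$ via the map $u \mapsto L_u$ where $L_u(v) := \jap{v,u} = \overline{\jap{u,v}}$, exploiting the orthogonal Bergman projector $B_\Lambda$ from Lemma \ref{lemma:approximate_projector}. Well-definedness and continuity of the pairing are immediate from Cauchy--Schwarz applied to the dual pair $(L^2_k,L^2_{-k})$ under $[f,g] := \int_\Lambda f\bar g\, e^{-2H/h}\,\mathrm{d}\alpha$: indeed $|\jap{u,v}| = |[T_\Lambda u, T_\Lambda v]| \leq \|T_\Lambda u\|_{L^2_{-k}}\|T_\Lambda v\|_{L^2_k} = \|u\|_{\mathcal{H}_\Lambda^{-k}}\|v\|_{\mathcal{H}_\Lambda^k}$, giving the upper bound $\|L_u\|_{(\mathcal{H}_\Lambda^k)^\ast} \leq \|u\|_{\mathcal{H}_\Lambda^{-k}}$.

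The main technical step, which I expect to be the most delicate, is to upgrade the properties of $B_\Lambda$ known on $L^2_0$ to every weighted space $L^2_m$, $m\in\R$. Writing $B_\Lambda \simeq \Pi_\Lambda \widetilde f \Pi_\Lambda^\ast$ modulo a negligible operator and applying Proposition \ref{lemma:boundedness}, one obtains that $B_\Lambda$ is bounded from $L^2_m$ into $\mathcal{H}_{\Lambda,\FBI}^m$. To see that $B_\Lambda$ acts as the identity on $\mathcal{H}_{\Lambda,\FBI}^m$, I will approximate a general $v \in \mathcal{H}_\Lambda^m$ by $v_n \in E^{1,R}(M)$ in the $\mathcal{H}_\Lambda^m$ norm (Corollary \ref{corollary:densite}), observe that each $T_\Lambda v_n$ lies in $\mathcal{H}_{\Lambda,\FBI}^0$ where $B_\Lambda$ is the identity by definition, and pass to the limit in $L^2_m$. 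Finally, the self-adjointness $[B_\Lambda f,g] = [f,B_\Lambda g]$ for $f,g \in L^2_0$ extends to the pairing $L^2_m \times L^2_{-m}$ by density of $\mathcal{C}^\infty_c(\Lambda)$ and the boundedness just established.

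Granted these properties of $B_\Lambda$, the injectivity of $u\mapsto L_u$ and the reverse norm estimate will be obtained simultaneously by testing with the specific element $v := S_\Lambda B_\Lambda(\jap{\va{\alpha}}^{-2k} T_\Lambda u)$. Since $\jap{\va{\alpha}}^{-2k} T_\Lambda u \in L^2_k$ with norm $\|u\|_{\mathcal{H}_\Lambda^{-k}}$, the boundedness of $B_\Lambda$ yields $v \in \mathcal{H}_\Lambda^k$ with $\|v\|_{\mathcal{H}_\Lambda^k} \leq C\|u\|_{\mathcal{H}_\Lambda^{-k}}$. Using $T_\Lambda u \in \mathcal{H}_{\Lambda,\FBI}^{-k}$ (hence fixed by $B_\Lambda$) together with self-adjointness,
\[
\jap{u,v} = [T_\Lambda u, B_\Lambda(\jap{\va{\alpha}}^{-2k} T_\Lambda u)] = [B_\Lambda T_\Lambda u, \jap{\va{\alpha}}^{-2k} T_\Lambda u] = \|T_\Lambda u\|_{L^2_{-k}}^2 = \|u\|_{\mathcal{H}_\Lambda^{-k}}^2,
\]
so that $\|u\|_{\mathcal{H}_\Lambda^{-k}}^2 \leq \|L_u\|_{(\mathcal{H}_\Lambda^k)^\ast}\|v\|_{\mathcal{H}_\Lambda^k} \leq C\|L_u\|_{(\mathcal{H}_\Lambda^k)^\ast}\|u\|_{\mathcal{H}_\Lambda^{-k}}$, giving both injectivity and $\|u\|_{\mathcal{H}_\Lambda^{-k}} \leq C\|L_u\|_{(\mathcal{H}_\Lambda^k)^\ast}$.

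For surjectivity, I will start from any $L \in (\mathcal{H}_\Lambda^k)^\ast$, transport it via $S_\Lambda$ to a continuous linear functional on the closed subspace $\mathcal{H}_{\Lambda,\FBI}^k \subset L^2_k$, extend by Hahn--Banach and represent by Riesz in $L^2_k$ as $f \mapsto [f,g]$ for some $g \in L^2_{-k}$. Setting $u := S_\Lambda B_\Lambda g$, which lies in $\mathcal{H}_\Lambda^{-k}$ with $T_\Lambda u = \Pi_\Lambda B_\Lambda g = B_\Lambda g$ (the last equality because $B_\Lambda g \in \mathcal{H}_{\Lambda,\FBI}^{-k}$ is fixed by $\Pi_\Lambda$), self-adjointness of $B_\Lambda$ and the identity $B_\Lambda T_\Lambda v = T_\Lambda v$ for $v \in \mathcal{H}_\Lambda^k$ combine to yield $L_u(v) = \jap{v,u} = [T_\Lambda v, B_\Lambda g] = [B_\Lambda T_\Lambda v, g] = [T_\Lambda v, g] = L(v)$, completing the (anti-linear) identification with equivalent norms.
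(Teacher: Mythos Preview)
Your proof is correct. The surjectivity argument is essentially identical to the paper's: both transport the functional to $L^2_k(\Lambda)$ via $S_\Lambda$, represent it by Riesz as a pairing against some $g\in L^2_{-k}(\Lambda)$, and then take $u=S_\Lambda B_\Lambda g$. One minor point: your Hahn--Banach step is superfluous, since $S_\Lambda$ is bounded from all of $L^2_k(\Lambda)$ to $\mathcal{H}_\Lambda^k$ (because $\Pi_\Lambda=T_\Lambda S_\Lambda$ is bounded on $L^2_k$), so the functional $f\mapsto L(S_\Lambda f)$ is already defined and continuous on the whole space.

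The genuine difference lies in injectivity and norm equivalence. You construct an explicit test vector $v=S_\Lambda B_\Lambda(\jap{\va{\alpha}}^{-2k}T_\Lambda u)$ and compute $\jap{u,v}=\|u\|_{\mathcal{H}_\Lambda^{-k}}^2$ directly, obtaining the reverse norm inequality constructively. The paper instead observes that the statement is symmetric under $k\leftrightarrow -k$, so surjectivity (already proven) applied with the roles exchanged gives surjectivity of the dual map, hence injectivity of the original; the norm equivalence then drops out of the Open Mapping Theorem. The paper's route is shorter but non-constructive; yours gives an explicit constant and makes transparent exactly which properties of $B_\Lambda$ are used. The facts about $B_\Lambda$ you isolate (boundedness on every $L^2_m$, fixing $\mathcal{H}_{\Lambda,\FBI}^m$, self-adjointness for the $L^2_m\times L^2_{-m}$ pairing) are indeed available in the paper, in Remark~\ref{remark:boundedness} and equation~\eqref{eq:B-self-adjoint-general}.
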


\begin{proof}
It is clear from the definition of the spaces $\mathcal{H}_\Lambda^k$ and $\mathcal{H}_\Lambda^{-k}$ that the pairing \eqref{eq:un_autre_pairing} is well-defined and induces a bounded anti-linear map from $\mathcal{H}_\Lambda^{-k}$ to the dual of $\mathcal{H}_\Lambda^k$. Let us denote this map by $A$. We start by proving that $A$ is surjective. Let $\ell$ be a continuous linear form on $\mathcal{H}_\Lambda^{k}$ and let $\tilde{\ell}$ be the linear form on $L^2_k(\Lambda)$ defined by
\begin{equation*}
\begin{split}
\tilde{\ell}(u) = \ell\p{S_\Lambda u}.
\end{split}
\end{equation*}
Recall that $\Pi_\Lambda = T_\Lambda S_\Lambda$ is bounded on $L^2_k(\Lambda)$ by Proposition \ref{lemma:boundedness}, and hence that $S_\Lambda$ is bounded from $L^2_k(\Lambda)$ to $\mathcal{H}_\Lambda^k$. Consequently, $\tilde{\ell}$ is a well-defined and continuous linear form on $L^2_k(\Lambda)$, and thus there exists $v_\ell \in L^2_{-k}(\Lambda)$ such that for every $u$ in the space $L^2_k(\Lambda)$ we have
\begin{equation*}
\begin{split}
\tilde{\ell}(u) = \jap{v_\ell,u}_{L^2_0\p{\Lambda}}.
\end{split}
\end{equation*}
Hence, if $u \in \mathcal{H}_\Lambda^k$ we have
\begin{equation*}
\begin{split}
\ell(u) & = \tilde{\ell}\p{T_\Lambda u} = \jap{v_\ell,T_\Lambda u}_{L^2_0\p{\Lambda}} \\
        & = \jap{T_\Lambda S_\Lambda B_\Lambda v_\ell,T_\Lambda u}_{L^2_0\p{\Lambda}}.
\end{split}
\end{equation*}
Thus, we have $A(S_\Lambda B_\Lambda v_\ell) = \ell$ and $A$ is surjective. Here, we used the fact that $B_\Lambda$ is bounded on $\mathcal{H}_{\Lambda,\FBI}^{-k}$ (see Remark \ref{remark:boundedness}).

The injectivity of $A$ follows from its surjectivity by duality (the result is symmetric in $k$ and $-k$), and thus $A$ is an isomorphism by the Open Mapping Theorem.
\end{proof}

We want now to prove the ``multiplication formula'' (Proposition \ref{propmultform}), that will be the main tool to study the spectral properties of an Anosov vector field acting on a space of anisotropic ultradistributions. To do so we first prove a ``Toeplitz property'' for $\G^s$ pseudors.

\begin{prop}[Toeplitz property]\label{proptoeplitz}
Let $s \geq 1$. Let $P$ be a $\G^s$ pseudor of order $m$. Assume that $\Lambda$ is a $(\tau_0, s)$-Gevrey adapted Lagrangian with $\tau_0$ small enough. Let $f \in S_{KN}^{\tilde{m}}\p{\Lambda}$ be a symbol of order $\tilde{m}$ on $\Lambda$ with uniform estimates when $h \to 0$. Then there is a symbol $\sigma \in S_{KN}^{m+\tilde{m}}\p{\Lambda}$ of order $m+m'$ on $\Lambda$ such that, for $h$ small enough, $B_\Lambda f T_\Lambda P S_\Lambda B_\Lambda$ and $B_\Lambda \sigma B_\Lambda$ differ by a negligible operator.

Moreover, $\sigma$ is given at first order by
\begin{equation*}
\begin{split}
\sigma(\alpha) = f(\alpha) p_\Lambda(\alpha) \textup{ mod } h S^{m+\tilde{m} - 1}_{KN}\p{\Lambda},
\end{split}
\end{equation*}
where $p_\Lambda$ denotes the restriction to $\Lambda$ of an almost analytic extension of the principal symbol of $p$ (see Remark \ref{remark:aae_for_symbol}).
\end{prop}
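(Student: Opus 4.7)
The plan is to realize both $A_1 := B_\Lambda f T_\Lambda P S_\Lambda B_\Lambda$ and $A_2 := B_\Lambda \sigma B_\Lambda$ as complex FIOs associated with $\Delta_\Lambda$ in the sense of Definition \ref{def:oscillating_kernel}, then show their difference satisfies the hypotheses of Lemma \ref{lemma:de_la_diagonale_le_symbole}, and finally construct $\sigma$ order by order so that their diagonal symbols match. That $T_\Lambda P S_\Lambda$ is an FIO of order $m$ with phase $\Phi_{TS}^\circ$ and diagonal principal symbol $p_\Lambda/(2\pi h)^n$ is exactly the content of Lemmas \ref{lemma:TPS-far-diagonal}, \ref{lemma:TPS-close-diagonal} and the proof of Proposition \ref{lemma:boundedness}. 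That $B_\Lambda$ is an FIO with phase $\Phi_\Lambda$ comes from Lemma \ref{lemma:approximate_projector}. So by the composition calculus of Lemma \ref{lemma:composition_oscillating_kernel}, both $A_1$ and $A_2$ are FIOs of order $m+\tilde m$. Moreover, since $B_\Lambda$ is the orthogonal projector onto $\mathcal{H}_{\Lambda,\FBI}^0$ and $\Pi_\Lambda$ acts as the identity there (because $S_\Lambda T_\Lambda = I$ by Lemma \ref{lemma:shift-ST}), one has $\Pi_\Lambda B_\Lambda = B_\Lambda$ exactly, and by taking adjoints (using $B_\Lambda^* = B_\Lambda$) also $B_\Lambda \Pi_\Lambda^* = B_\Lambda$. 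Hence $\Pi_\Lambda A_j = A_j = A_j \Pi_\Lambda^*$ for $j = 1,2$, and the same holds for $A_1 - A_2$. After adjusting phases using Lemmas \ref{lemma:phase_left_multiplication}, \ref{lemma:phase_right_multiplication} and Remark \ref{remark:phi_lambda}, we may assume both FIOs have phase exactly $\Phi_\Lambda$, so that Lemma \ref{lemma:de_la_diagonale_le_symbole} applies: it suffices to construct $\sigma$ so that the diagonal symbols of $A_1$ and $A_2$ agree modulo $\mathcal{O}((h/\langle|\alpha|\rangle)^\infty)$.

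The symbol $\sigma$ will be built as a classical Borel summation $\sigma \sim \sum_{k\geq 0} h^k \sigma_k$ in $S_{KN}^{m+\tilde m}(\Lambda)$ with $\sigma_k$ of order $m+\tilde m - k$. Applying the stationary phase expansion from Lemma \ref{lemma:composition_oscillating_kernel} iteratively to the five-fold composition defining $A_1$, one obtains an asymptotic expansion $\sum h^k c_k$ for the diagonal symbol of $A_1$, where $c_0(\alpha) = f(\alpha)\, p_\Lambda(\alpha)\, \beta(\alpha)$ for a positive elliptic symbol $\beta \in S_{KN}^0(\Lambda)$, and each $c_k$ is a smooth expression in $f, p_\Lambda$ and finitely many derivatives of the symbols of $B_\Lambda$, $T_\Lambda P S_\Lambda$. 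Similarly the diagonal symbol of $A_2$ admits an expansion $\sum h^k \tilde c_k$ with $\tilde c_k(\alpha) = \sigma_k(\alpha)\beta(\alpha) + M_k(\sigma_0,\dots,\sigma_{k-1})(\alpha)$, where $M_k$ is a differential operator of order $2k$. The key point is that the \emph{leading factor $\beta(\alpha)$ is the same} in both expansions: it is entirely determined by the outer $B_\Lambda \cdots B_\Lambda$ structure and by the principal symbol of $B_\Lambda$ on the diagonal. This universality is enforced by the exact identity $B_\Lambda^2 = B_\Lambda$, which when written through the stationary phase calculus forces the leading Bergman multiplier to satisfy a self-consistent normalization, so that $[B_\Lambda \cdot X \cdot B_\Lambda]_0(\alpha) = X_0(\alpha)\beta(\alpha)$ regardless of the inner operator $X$.

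With the expansions in hand, matching orders inductively gives at leading order $\sigma_0 = f p_\Lambda$, and at each subsequent order a transport equation $\sigma_k(\alpha)\beta(\alpha) = c_k(\alpha) - M_k(\sigma_0,\dots,\sigma_{k-1})(\alpha)$ which is solvable because $\beta$ is positive elliptic for $h$ small enough; this yields $\sigma_k \in S_{KN}^{m+\tilde m - k}(\Lambda)$. A standard Borel summation produces $\sigma \in S_{KN}^{m+\tilde m}(\Lambda)$ realizing this asymptotic expansion. By construction the diagonal symbols of $A_1$ and $A_2$ coincide modulo $\mathcal{O}((h/\langle|\alpha|\rangle)^\infty)$, so Lemma \ref{lemma:de_la_diagonale_le_symbole} gives that $A_1 - A_2$ is negligible, which is the claim. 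The main obstacle I expect is verifying rigorously that the leading multiplier $\beta$ is indeed universal and tracking the structure of the operators $M_k$ so that each transport equation is effectively solvable in the Kohn--Nirenberg symbol class; this amounts to checking that the explicit stationary phase coefficients in \cite{Melin-Sjostrand-75}, once pinned down by the normalization $B_\Lambda^2 = B_\Lambda$, cleanly decouple the action on the top order coefficient from the lower order data at each step.
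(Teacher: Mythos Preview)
Your proposal is correct and follows the paper's proof essentially verbatim: both use Lemmas \ref{lemma:phase_left_multiplication}--\ref{lmcarphase} to fix the phase as $\Phi_\Lambda$, invoke Lemma \ref{lemma:de_la_diagonale_le_symbole} to reduce to matching diagonal symbols, and then solve for the $\sigma_k$ inductively using ellipticity of the leading Bergman coefficient. Your worry about the ``universality of $\beta$'' is not an obstacle---the paper dispatches it in one line by observing that the stationary phase computation for $B_\Lambda \sigma B_\Lambda$ is literally the same as for $B_\Lambda^2 = B_\Lambda$ with an extra scalar factor $\sigma$ inserted, so $\beta$ is automatically $c_0(\alpha,\alpha)$, the diagonal principal symbol of $B_\Lambda$ itself.
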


\begin{proof}[Proof of Proposition \ref{proptoeplitz}]
It follows from Lemma \ref{lemma:composition_oscillating_kernel} that the operators $B_\Lambda f T_\Lambda P S_\Lambda B_\Lambda$ and $B_\Lambda \sigma B_\Lambda$ both are FIOs. Moreover, according to Lemmas \ref{lemma:phase_left_multiplication}, \ref{lemma:phase_right_multiplication} and \ref{lmcarphase}, they share the same phase and it is $\Phi_\Lambda$ (from Remark \ref{remark:phi_lambda}). Finally, $B_\Lambda f T_\Lambda P S_\Lambda B_\Lambda$ and $B_\Lambda \sigma B_\Lambda$ inherit from $B_\Lambda$ the left invariance by $\Pi_\Lambda$ and the right invariance by $\Pi_\Lambda^*$. Consequently, according from Lemma \ref{lemma:de_la_diagonale_le_symbole}, we only need to choose $\sigma$ so that the symbols of $B_\Lambda f T_\Lambda P S_\Lambda B_\Lambda$ and $B_\Lambda \sigma B_\Lambda$ agree up to $\O((h/\jap{\va{\alpha}})^\infty)$ on the diagonal of $\Lambda \times \Lambda$.

As in the proof of Lemma \ref{lemma:presque_projecteur}, we will construct $\sigma \in S_{KN}^{m + \tilde{m}}\p{\Lambda}$ with an asymptotic expansion
\begin{equation*}
\begin{split}
\sigma \sim \sum_{k \geq 0} h^k \sigma_k.
\end{split}
\end{equation*}
The $\sigma_k \in S_{KN}^{m + \tilde{m} - k}\p{\Lambda}$'s will be constructed by induction. It follows from Lemma \ref{lemma:composition_oscillating_kernel} that the symbol $\tilde{\sigma} \in h^{-n} S_{KN}^{m + \tilde{m}}\p{\Lambda \times \Lambda}$ of $B_\Lambda \sigma B_\Lambda$ has an asymptotic expansion (in the sense of Remark \ref{remark:expansion_symbole_FIO})
\begin{equation*}
\begin{split}
\tilde{\sigma} \sim \frac{1}{(2 \pi h)^n} \sum_{k \geq 0} h^k \tilde{\sigma}_k.
\end{split}
\end{equation*}
Here, we see that we are using the same stationary phase argument than when computing $B_\Lambda^2 = B_\Lambda$, the only difference being the multiplication by $\sigma$ in between. Consequently, we find that $\tilde{\sigma}_0$ is given on the diagonal by
\begin{equation*}
\begin{split}
\tilde{\sigma}_0(\alpha,\alpha) = c_0(\alpha,\alpha) \sigma_0(\alpha),
\end{split}
\end{equation*}
where $c_0$ is the first term in the expansion \eqref{eq:asymptotic_B} for the symbol $c$ of $B_\Lambda$. More generally, we have
\begin{equation*}
\begin{split}
\tilde{\sigma}_k(\alpha,\alpha) = c_0(\alpha,\alpha) \sigma_k(\alpha) + C_k(\sigma_0,\dots,\sigma_{k-1})(\alpha,\alpha)
\end{split}
\end{equation*}
with $C_k(\sigma_0,\dots,\sigma_{k-1})$ a symbol of order $m + \tilde{m} - k$. Hence, we only need to take
\begin{equation*}
\begin{split}
\sigma_0(\alpha) = \frac{(2 \pi h)^n e(\alpha,\alpha)}{c_0(\alpha,\alpha)},
\end{split}
\end{equation*}
where $e$ denotes the symbol of $B_\Lambda f T_\Lambda P S_\Lambda B_\Lambda$, and for $k \geq 1$,
\begin{equation*}
\begin{split}
\sigma_k(\alpha) = - \frac{C_k(\sigma_0,\dots,\sigma_{k-1})(\alpha,\alpha)}{c_0(\alpha,\alpha)}.
\end{split}
\end{equation*}
This is possible due to the ellipticity of $c_0(\alpha,\alpha)$.

It remains to check the first order asymptotic for $\sigma$. To do so, we only need to see that $e$ is given at first order on the diagonal by $c_0(\alpha,\alpha) f(\alpha) p_\Lambda(\alpha)$. Recall that $\Pi_\Lambda$ and $T_\Lambda P S_\Lambda$ share the same phase. Moreover,it follows from Lemma \ref{lemma:TPS-close-diagonal} that the symbol of $T_\Lambda P S_\Lambda$ differ from the symbol of $\Pi_\Lambda$ by a factor $p_\Lambda(\alpha)$, on the diagonal and in first order approximation. Hence, the symbols of $B_\Lambda = B_\Lambda \Pi_\Lambda B_\Lambda$ and $B_\Lambda f T_\Lambda P S_\Lambda B_\Lambda$ differ in first order approximation on the diagonal by a factor $f p_\Lambda$.
\end{proof}

We state now what will be the key tool in the spectral analysis of $P$ in the next chapter. Notice that the multiplication formula, Proposition \ref{propmultform}, is a stronger statement than Theorem \ref{thm:deforming-Gs-pseudors}.

\begin{prop}[Multiplication formula]\label{propmultform}
Let $s \geq 1$ and $P$ be a $\G^s$ pseudor of order $m$. Assume that $\Lambda$ is a $(\tau_0,s)$-Gevrey adapted Lagrangian with $\tau_0$ small enough. Let $p_\Lambda$ denotes the restriction to $\Lambda$ of an almost analytic extension of the principal symbol of $P$. Assume that $h$ is small enough. Let $f \in S_{KN}^{\tilde{m}}\p{\Lambda}$ be a symbol of order $\tilde{m}$ on $\Lambda$, uniformly in $h$. Then, if $m_1,m_2 \in \R$ are such that $m_1 + m_2 = m + \tilde{m}-1$, there is a constant $C> 0$ such that for any $u,v \in \mathcal{H}_\Lambda^\infty$, we have (the scalar product is in $L^2_0\p{\Lambda}$)
\begin{equation}\label{eqmultform}
\begin{split}
\va{\langle f T_\Lambda P u , T_\Lambda v \rangle - \langle f p_\Lambda T_\Lambda u ,T_\Lambda v \rangle} \leq C h \n{u}_{\mathcal{H}_{\Lambda}^{m_1}} \n{v}_{\mathcal{H}_{\Lambda}^{m_2}}.
\end{split}
\end{equation}
\end{prop}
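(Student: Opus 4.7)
The strategy is to rewrite both inner products appearing in \eqref{eqmultform} in terms of the quantity $B_\Lambda f T_\Lambda P S_\Lambda B_\Lambda$ controlled by the Toeplitz property (Proposition \ref{proptoeplitz}), and then to bound the resulting error contributions using Schur-type estimates on weighted $L^2$ spaces.

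The starting observation is that since $u, v \in \mathcal{H}_\Lambda^\infty$, their FBI transforms satisfy $T_\Lambda u, T_\Lambda v \in \mathcal{H}^\infty_{\Lambda,\FBI} \subseteq \mathcal{H}^0_{\Lambda,\FBI}$, so $B_\Lambda T_\Lambda u = T_\Lambda u$ and $B_\Lambda T_\Lambda v = T_\Lambda v$. Combined with the identity $S_\Lambda T_\Lambda = I$ coming from Lemma \ref{lemma:shift-ST} and the self-adjointness of $B_\Lambda$, one finds
\[
\langle f T_\Lambda P u, T_\Lambda v\rangle = \langle B_\Lambda f T_\Lambda P S_\Lambda B_\Lambda T_\Lambda u, T_\Lambda v\rangle,
\qquad
\langle f p_\Lambda T_\Lambda u, T_\Lambda v\rangle = \langle B_\Lambda (fp_\Lambda) B_\Lambda T_\Lambda u, T_\Lambda v\rangle.
\]
Next, I would invoke Proposition \ref{proptoeplitz} with the symbol $f$ to produce $\sigma \in S^{m+\tilde m}_{KN}(\Lambda)$ such that $B_\Lambda f T_\Lambda P S_\Lambda B_\Lambda = B_\Lambda \sigma B_\Lambda + R$, where $R$ is negligible in the sense of Definition \ref{def:negligible}, together with the first-order information $\sigma = f p_\Lambda + h \tau$ with $\tau \in S^{m+\tilde m-1}_{KN}(\Lambda)$. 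Subtracting yields
\[
\langle f T_\Lambda P u, T_\Lambda v\rangle - \langle f p_\Lambda T_\Lambda u, T_\Lambda v\rangle = h \langle B_\Lambda \tau B_\Lambda T_\Lambda u, T_\Lambda v\rangle + \langle R T_\Lambda u, T_\Lambda v\rangle.
\]

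It remains to control both pieces by $\|u\|_{\mathcal{H}^{m_1}_\Lambda}\|v\|_{\mathcal{H}^{m_2}_\Lambda}$. For the Toeplitz term, multiplication by $\tau$ sends $L^2_{m_1}(\Lambda)$ into $L^2_{m_1-(m+\tilde m-1)}(\Lambda) = L^2_{-m_2}(\Lambda)$ by the choice $m_1+m_2 = m+\tilde m -1$, and $B_\Lambda$ preserves every weighted space by Remark \ref{remark:boundedness}; the Cauchy--Schwarz inequality with the natural $L^2_0$ pairing between $L^2_{-m_2}$ and $L^2_{m_2}$ then gives the bound $C\|T_\Lambda u\|_{L^2_{m_1}}\|T_\Lambda v\|_{L^2_{m_2}} = C\|u\|_{\mathcal{H}^{m_1}_\Lambda}\|v\|_{\mathcal{H}^{m_2}_\Lambda}$. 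For the remainder term, negligibility of $R$ gives $\O(h^N)$ bounds as an operator $L^2_{-N}\to L^2_N$ for every $N$; choosing $N$ larger than $\max(|m_1|,|m_2|)+1$ and absorbing $h^{N-1}$ into $h$ finishes the estimate.

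The argument is essentially a reorganization that feeds the statement of Proposition \ref{proptoeplitz} through the identities $B_\Lambda T_\Lambda = T_\Lambda$ on $\mathcal{H}^\infty_{\Lambda,\FBI}$ and $S_\Lambda T_\Lambda = I$; there is no serious obstacle here, the real work having already been done in the constructions of \S \ref{sec:Bergman} culminating in the Toeplitz property. The only mild care needed is in the bookkeeping for the weighted $L^2_k(\Lambda)$ spaces, where the balance $m_1+m_2 = m+\tilde m -1$ is exactly what makes the pairing against the subprincipal symbol $\tau$ bounded.
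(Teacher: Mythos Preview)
Your proof is correct and follows essentially the same approach as the paper: rewrite the pairing using $B_\Lambda T_\Lambda = T_\Lambda$ and $S_\Lambda T_\Lambda = I$, apply the Toeplitz property (Proposition \ref{proptoeplitz}) to get $B_\Lambda f T_\Lambda P S_\Lambda B_\Lambda = B_\Lambda \sigma B_\Lambda + R$ with $\sigma = f p_\Lambda + h\tau$, and then bound the subprincipal Toeplitz term and the negligible remainder separately via the balance $m_1+m_2=m+\tilde m-1$. The paper's proof is slightly more terse but the logic is identical.
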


\begin{proof}
First, notice that from Proposition \ref{lemma:boundedness} we know that $Pu$ belongs to $\mathcal{H}_\Lambda^\infty$, so that it makes sense to write
\begin{equation*}
\begin{split}
\langle f T_\Lambda P u , T_\Lambda v \rangle = \langle B_\Lambda f T_\Lambda P S_\Lambda B_\Lambda T_\Lambda u , T_\Lambda v \rangle.
\end{split}
\end{equation*}
Then from Proposition \ref{proptoeplitz}, we can find $\sigma$ a symbol on $\Lambda$ such that
\begin{equation*}
B_\Lambda f T_\Lambda P S_\Lambda B_\Lambda = B_\Lambda \sigma B_\Lambda  + \O_{\mathcal{H}_{\Lambda,\FBI}^{-N} \to \mathcal{H}_{\Lambda,\FBI}^N}(h^N),
\end{equation*}
Let $\sigma_0 = f p_\Lambda$ denote the first order approximation of $\sigma$ given by Proposition \ref{proptoeplitz}. Since $\sigma - \sigma_0 = \O(h \jap{\va{\alpha}}^{m+\tilde{m}-1})$, it is clear that $B_\Lambda(\sigma - \sigma_0) B_\Lambda$ is a $\O(h)$ as a bounded operator from $\mathcal{H}_{\Lambda,\FBI}^{m_1}$ to $\mathcal{H}_{\Lambda,\FBI}^{-m_2}$, recalling that $m_1 - m - \tilde{m} +1 = -m_2$. Hence, we see that
\begin{equation*}
\begin{split}
B_\Lambda f T_\Lambda P S_\Lambda B_\Lambda = B_\Lambda f p_\Lambda B_\Lambda + \O_{\mathcal{H}_{\Lambda,\FBI}^{m_1} \to H_{\Lambda,\FBI}^{-m_2}}\p{h},
\end{split}
\end{equation*}
and the result readily follows.
\end{proof}

\begin{remark}
In the applications, we will not use the asymptotic $h \to 0$ that we stated in Proposition \ref{propmultform}, but only the asymptotic for $\alpha$ large, with $h>0$ fixed small enough. However, we will still need $h$ to be small enough to make the machinery work.
\end{remark}

Using the same kind of arguments as in the proof of Proposition \ref{proptoeplitz}, we may prove the following statements about composition of Toeplitz operators. This is left as an exercise to the reader.

\begin{prop}[Composition of Toeplitz operators]\label{propcomptoep}
Let $\sigma_1 \in S_{KN}^{m_1}\p{\Lambda}$ and $\sigma_2 \in S_{KN}^{m_2}\p{\Lambda}$. Then there is a symbol $\sigma_1 \# \sigma_2 \in S_{KN}^{m_1 + m_2}\p{\Lambda}$ such that $ \p{B_\Lambda \sigma_1 B_\Lambda} \circ \p{B_\Lambda \sigma_2 B_\Lambda}$ and $B_\Lambda \sigma_1 \# \sigma_2 B_\Lambda$ differ by a negligible operator. Moreover, $\sigma_1 \# \sigma_2$ coincides with the product $\sigma_1 \sigma_2$ up to $h S_{KN}^{m_1 + m_2-1}\p{\Lambda}$.
\end{prop}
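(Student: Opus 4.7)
The strategy mirrors the proofs of Lemma \ref{lemma:presque_projecteur} and Proposition \ref{proptoeplitz}, relying on the rigidity result Lemma \ref{lemma:de_la_diagonale_le_symbole}. Set $A := (B_\Lambda \sigma_1 B_\Lambda)\circ(B_\Lambda \sigma_2 B_\Lambda)$ and, for a symbol $\sigma \in S_{KN}^{m_1+m_2}(\Lambda)$ to be determined, $A' := B_\Lambda \sigma B_\Lambda$. First I would observe that by Lemma \ref{lemma:composition_oscillating_kernel}, both $A$ and $A'$ are FIOs associated with $\Delta_\Lambda$. Moreover $B_\Lambda$ inherits from its definition as the orthogonal projector onto $\mathcal{H}^0_{\Lambda,\FBI}$ the identities $\Pi_\Lambda B_\Lambda = B_\Lambda$ (since $\Pi_\Lambda$ acts as the identity on the image of $T_\Lambda$, using $S_\Lambda T_\Lambda = I$) and $B_\Lambda \Pi_\Lambda^* = B_\Lambda$ (by taking adjoints and self-adjointness of $B_\Lambda$). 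Both properties therefore pass to $A$ and $A'$.

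Next, I would appeal to Lemmas \ref{lemma:phase_left_multiplication}, \ref{lemma:phase_right_multiplication} and \ref{lmcarphase} to conclude that the phases of $A$ and $A'$ both coincide, at infinite order on the diagonal, with the distinguished phase $\Phi_\Lambda$ from Remark \ref{remark:phi_lambda}. Using the coercivity estimate \eqref{eq:coercivity_condition}, one may as in Remark \ref{remark:phi_lambda} replace both phases by $\Phi_\Lambda$ itself, absorbing the difference into the remainder of \eqref{eq:oscillating_kernel}. This places us in the exact setting of Lemma \ref{lemma:de_la_diagonale_le_symbole}: showing $A - A'$ is negligible reduces to matching the symbols of $A$ and $A'$ on the diagonal modulo $\O((h/\langle|\alpha|\rangle)^\infty)$.

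I would then construct $\sigma = \sigma_1 \# \sigma_2$ by the same inductive scheme used in the proof of Lemma \ref{lemma:presque_projecteur}. Writing $\sigma \sim \sum_{k \geq 0} h^k \sigma_k$ with $\sigma_k \in S_{KN}^{m_1+m_2-k}(\Lambda)$, the symbol of $A'$ has an expansion $\tilde\sigma \sim (2\pi h)^{-n}\sum h^k \tilde\sigma_k$ with $\tilde\sigma_k(\alpha,\alpha) = c_0(\alpha,\alpha)\sigma_k(\alpha) + C_k(\sigma_0,\ldots,\sigma_{k-1})(\alpha,\alpha)$, where $c_0$ is the leading coefficient of the symbol of $B_\Lambda$ (elliptic of order $0$ as in Lemma \ref{lemma:presche_projecteur}). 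On the other hand, an analogous application of the complex stationary phase method of \cite{Melin-Sjostrand-75} to the composition defining $A$ yields an expansion of its symbol, say $e \sim (2\pi h)^{-n}\sum h^k e_k$. Solving the recursion $c_0(\alpha,\alpha)\sigma_k(\alpha) = e_k(\alpha,\alpha) - C_k(\sigma_0,\ldots,\sigma_{k-1})(\alpha,\alpha)$ (possible by ellipticity of $c_0$) and realizing the formal sum as an actual symbol by standard resummation gives the desired $\sigma$; Lemma \ref{lemma:de_la_diagonale_le_symbole} then concludes.

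The main step, and the only slightly delicate one, is the first-order identification $e_0(\alpha,\alpha) = c_0(\alpha,\alpha)^2 \sigma_1(\alpha)\sigma_2(\alpha)$, which combined with the recursion at $k=0$ forces $\sigma_0 = c_0(\alpha,\alpha)\sigma_1\sigma_2$; unpacking the leading behavior of $B_\Lambda$'s symbol (already normalized in the proof of Lemma \ref{lemma:presque_projecteur} so that $B_\Lambda$ is projector-like) one gets $\sigma_0 = \sigma_1 \sigma_2$, hence $\sigma_1 \# \sigma_2 = \sigma_1\sigma_2 \bmod h S_{KN}^{m_1+m_2-1}(\Lambda)$. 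Concretely, I would compute the stationary phase at the unique real critical point $\gamma = \alpha = \beta$ of the composed phase, using the formulas of Remark \ref{remark:expression_coefficients}: at leading order one evaluates the amplitudes $\sigma_1, \sigma_2$ and the two factors of $B_\Lambda$ at coincident points, and the identity $B_\Lambda^2 = B_\Lambda$ (modulo negligible, from Lemmas \ref{lemma:presque_projecteur} and \ref{lemma:approximate_projector}) exactly supplies the normalization that turns the product of leading symbols into $\sigma_1 \sigma_2$ times the symbol of a single $B_\Lambda$. The main obstacle is thus purely bookkeeping in this stationary phase computation; no new analytic input is required beyond what was already developed for Proposition \ref{proptoeplitz}.
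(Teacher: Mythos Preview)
Your approach is exactly the one the paper intends: it explicitly says the proposition follows ``using the same kind of arguments as in the proof of Proposition \ref{proptoeplitz}'' and leaves it as an exercise. The skeleton --- recognize both operators as FIOs with phase $\Phi_\Lambda$, check the invariances $\Pi_\Lambda A = A = A\Pi_\Lambda^*$, then invoke Lemma \ref{lemma:de_la_diagonale_le_symbole} and solve an inductive recursion for $\sigma_k$ --- is right.

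There is however a bookkeeping slip in your leading-order computation. You claim $e_0(\alpha,\alpha) = c_0(\alpha,\alpha)^2\,\sigma_1(\alpha)\sigma_2(\alpha)$, but this omits the Hessian factor $\tilde g$ coming from the stationary phase in the composition (the same $\tilde g$ as in the proof of Lemma \ref{lemma:presque_projecteur}, where $\tilde c_0 = \tilde g\, c_0^2$). The projector identity $B_\Lambda^2 = B_\Lambda$ applied to $\sigma_1=\sigma_2=1$ gives $\tilde g\, c_0^2 = c_0$, i.e.\ $\tilde g\, c_0 = 1$. Hence the correct leading symbol of $A$ on the diagonal is
\[
e_0(\alpha,\alpha) = \tilde g(\alpha,\alpha)\,\bigl(c_0(\alpha,\alpha)\sigma_1(\alpha)\bigr)\bigl(c_0(\alpha,\alpha)\sigma_2(\alpha)\bigr) = c_0(\alpha,\alpha)\,\sigma_1(\alpha)\sigma_2(\alpha),
\]
and the recursion $c_0\,\sigma_0 = e_0$ yields $\sigma_0 = \sigma_1\sigma_2$ directly, without the extra handwave about ``normalization'' of $c_0$. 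Your conclusion is right; the route to it just needs this correction.
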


In prevision of the next section, we state and prove a last result about this Toeplitz calculus.

\begin{lemma}\label{lemma:trace_pseudo}
Let $\sigma \in S_{KN}^{m}\p{\Lambda}$ be a symbol of order $m< -n$ on $\Lambda$. Then the operator $B_\Lambda \sigma B_\Lambda$ from $\mathcal{H}_{\Lambda,\FBI}^0$ to itself is trace class, with trace class norm controlled by a semi-norm of $\sigma$ (in the class of symbols of order $m$). Moreover, the trace of $B_\Lambda \sigma B_\Lambda$ is the integral of its kernel on the diagonal of $\Lambda \times \Lambda$.
\end{lemma}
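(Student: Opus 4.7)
The plan is to factor $\sigma$ as a product of two symbols of order $m/2$, show that each resulting piece composed on one side with $B_\Lambda$ is Hilbert--Schmidt on $L^2_0\p{\Lambda}$, and conclude that their product is trace class with trace given by the integral of the kernel on the diagonal. Concretely, set $w\p{\alpha} := \jap{\va{\alpha}}^{m/2}$ and write $\sigma = \sigma_1 \sigma_2$ with $\sigma_1 := w \in S_{KN}^{m/2}\p{\Lambda}$ and $\sigma_2 := \sigma/w \in S_{KN}^{m/2}\p{\Lambda}$. Since $m < -n < 0$, both $\sigma_j$ are bounded, and viewing them as multiplication operators on $L^2_0\p{\Lambda}$ one has the factorization $B_\Lambda \sigma B_\Lambda = \p{B_\Lambda \sigma_1}\p{\sigma_2 B_\Lambda}$, so it suffices to estimate each factor in Hilbert--Schmidt norm.

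For the Hilbert--Schmidt estimate, I would first note that multiplication by $e^{H/h}$ is a unitary equivalence between $L^2_0\p{\Lambda}$ and $L^2\p{\Lambda,\mathrm{d}\alpha}$, so the Hilbert--Schmidt norm of an operator $A$ on $L^2_0\p{\Lambda}$ equals the $L^2$-norm of its reduced kernel $e^{\p{H(\beta)-H(\alpha)}/h} K_A(\alpha,\beta)$. By Lemma~\ref{lemma:approximate_projector}, the reduced kernel of $B_\Lambda$ is of the form $e^{i \Phi_\Lambda(\alpha,\beta)/h}\tau(\alpha,\beta)$ modulo a negligible remainder, with $\tau \in h^{-n} S_{KN}^0\p{\Lambda \times \Lambda}$ supported in $\set{d_{KN}(\alpha,\beta) \leq \eta}$, and with $\Phi_\Lambda$ satisfying the coercivity bound \eqref{eq:coercivity_condition} thanks to Lemma~\ref{lemma:PhiTS-Lambda}. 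This yields
\begin{equation*}
\n{\sigma_2 B_\Lambda}_{HS}^2 \leq C h^{-2n} \int_\Lambda \va{\sigma_2\p{\alpha}}^2 \p{ \int_{d_{KN}(\alpha,\beta)\leq \eta} \exp\p{- \frac{2 \jap{\va{\alpha}}}{C h} d_{KN}(\alpha,\beta)^2 } \mathrm{d}\beta } \mathrm{d}\alpha.
\end{equation*}
A rescaling in local Kohn--Nirenberg coordinates (in which a Gaussian of width $\sqrt{h/\jap{\va{\alpha}}}$ for the KN metric has Euclidean volume of order $h^n$) shows that the inner integral is $\O\p{h^n}$, reducing the bound to $C h^{-n} \int_\Lambda \jap{\va{\alpha}}^m \mathrm{d}\alpha < + \infty$, the convergence holding precisely because $m < -n$. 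The same computation (or taking adjoints) bounds $\n{B_\Lambda \sigma_1}_{HS}$, with both bounds depending only on finitely many symbolic semi-norms of $\sigma$. Since products of Hilbert--Schmidt operators are trace class with $\n{AB}_{\mathrm{tr}} \leq \n{A}_{HS} \n{B}_{HS}$, this gives the desired control of the trace-class norm of $B_\Lambda \sigma B_\Lambda$ by a semi-norm of $\sigma$.

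For the trace formula, I would invoke the standard identity $\mathrm{tr}(AB) = \iint K_A(\alpha,\beta) K_B(\beta,\alpha) \mathrm{d}\alpha \mathrm{d}\beta = \int K_{AB}(\alpha,\alpha) \mathrm{d}\alpha$ for Hilbert--Schmidt operators with sufficiently regular composition kernel, applied to $A = B_\Lambda \sigma_1$ and $B = \sigma_2 B_\Lambda$. The regularity required on the diagonal is guaranteed by Proposition~\ref{propcomptoep}: $B_\Lambda \sigma B_\Lambda$ is itself an FIO of order $m$ with reduced kernel $e^{i\Phi_\Lambda/h} \sigma_{\mathrm{tot}}$ modulo negligible, where $\sigma_{\mathrm{tot}} \in h^{-n} S_{KN}^m\p{\Lambda \times \Lambda}$. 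Since $\Phi_\Lambda$ vanishes on the diagonal and the weight factor $e^{\p{H(\beta)-H(\alpha)}/h}$ equals $1$ there, the reduced and original kernels coincide at $\alpha=\beta$, which gives exactly the stated formula, with $K_{B_\Lambda \sigma B_\Lambda}(\alpha,\alpha) = \sigma_{\mathrm{tot}}(\alpha,\alpha)$ a symbol of order $m < -n$ and hence integrable on $\Lambda$.

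The only point requiring real bookkeeping will be the negligible remainders from Lemma~\ref{lemma:approximate_projector} and Proposition~\ref{propcomptoep}: these contribute operators whose reduced kernels are $\O\p{\p{h/\jap{\va{\alpha}}}^\infty}$ in $\mathcal{C}^\infty$, hence whose trace-class norms are $\O\p{h^N}$ for every $N$, so they do not affect either the trace-class bound or the trace formula. I expect no substantive obstacle beyond this verification.
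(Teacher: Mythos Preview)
Your approach is correct and in fact slightly more direct than the paper's. The paper does not factor $\sigma$ algebraically; instead it fixes $\sigma_0(\alpha)=\jap{\va{\alpha}}^{m/2}$ and uses the Toeplitz calculus (Proposition~\ref{propcomptoep}) to run a parametrix argument, obtaining $B_\Lambda\sigma B_\Lambda=(B_\Lambda\sigma_0 B_\Lambda)\circ(B_\Lambda\sigma_1 B_\Lambda)+R$ with $\sigma_1\in S_{KN}^{m/2}$ and $R$ satisfying an estimate of the form $R(\alpha,\beta)e^{(H(\beta)-H(\alpha))/h}=\O_{\mathcal C^N}(L(\jap{\va{\alpha}}+\jap{\va{\beta}})^{-N})$ for $N$ large, which makes $R$ trace class by standard arguments. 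The two Toeplitz factors are then shown to be Hilbert--Schmidt by exactly the Gaussian-in-$\beta$ computation you wrote. For the trace formula, the paper takes the shortest route: approximate $\sigma$ by compactly supported symbols, for which the identity is immediate, and pass to the limit using the trace-norm continuity just established.

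Your factorization $B_\Lambda\sigma B_\Lambda=(B_\Lambda\sigma_1)(\sigma_2 B_\Lambda)$ with $\sigma_1\sigma_2=\sigma$ bypasses the parametrix step entirely: since the middle multiplication is exact, there is no remainder to control and no appeal to Proposition~\ref{propcomptoep} is needed for the trace-class part. The price is that your Hilbert--Schmidt factors are not Toeplitz operators but one-sided objects $B_\Lambda\sigma_1$ and $\sigma_2 B_\Lambda$ on all of $L^2_0(\Lambda)$; this is harmless here, and the kernel estimate is identical. Your trace-formula argument via $\mathrm{tr}(AB)=\iint K_A K_B$ is also fine, though the paper's density argument is marginally shorter. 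Both routes rely on the same two inputs: the coercivity of $\Im\Phi_\Lambda$ from Lemma~\ref{lemma:PhiTS-Lambda} and the fact that negligible kernels (in the sense of Definition~\ref{def:negligible}) are automatically trace class.
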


\begin{proof}
The first observation is that if an operator $R$ has a kernel satisfying
\begin{equation}\label{eq:trace-remainder-estimate}
R(\alpha,\beta)e^{\frac{H(\beta)-H(\alpha)}{h}} = \O_{\mathcal{C}^N}( L (\langle|\alpha|\rangle+\langle|\beta|\rangle)^{-N}),
\end{equation}
for $N$ large enough, then it is trace class, with trace norm $\O(L)$. This follows from usual techniques. For example, see \cite[Section 9]{Dimassi-Sjostrand-book}.

Let $\sigma_0$ be the symbol $\sigma_{0}\p{\alpha} = \jap{\va{\alpha}}^{\frac{m}{2}}$ on $\Lambda$. By a parametrix construction ($\sigma_0$ is elliptic), we find a symbol $\sigma_1$ of order $m/2$ such that
\begin{equation*}
\begin{split}
B_\Lambda \sigma B_\Lambda = \p{B_\Lambda \sigma_0 B_\Lambda} \circ \p{B_\Lambda \sigma_1 B_\Lambda} + R,
\end{split}
\end{equation*}
where $R$ is an operator with a smooth kernel that satisfies \eqref{eq:trace-remainder-estimate}. We only proceed at a finite number of steps in the parametrix construction in order to be able to control the derivatives of the kernel of $R$ by some semi-norm of $\sigma$. However, by proceeding at a large enough number of steps in the parametrix construction, we may obtain an estimate of the form \eqref{eq:trace-remainder-estimate} for an arbitrary large $N$. In particular, we control the trace class norm of $R$ by some semi-norm of $\sigma$.

In order to prove that $B_\Lambda \sigma B_\Lambda$ is trace class, we want now to prove that $B_\Lambda \sigma_0 B_\Lambda$ and $B_\Lambda \sigma_1 B_\Lambda$ are Hilbert--Schmidt. To do so, we only need to prove that their reduced kernels are square integrable. When investigating the kernel of $B_\Lambda \sigma_1 B_\Lambda$ by mean of the non-stationary phase method (as in the proof of Lemma \ref{lemma:composition_oscillating_kernel}), one only needs estimates on a finite number of derivatives of $\sigma_1$ to obtain an estimate of the form
\begin{equation}\label{eq:HilbSchmid}
\begin{split}
B_\Lambda \sigma_1 B_\Lambda(\alpha,\beta)e^{\frac{H(\beta) - H(\alpha)}{h}} = e^{i \frac{\Phi_\Lambda(\alpha,\beta)}{h}} c(\alpha,\beta) + \O_{\mathcal{C}^N}\big(\p{\jap{\va{\alpha}} + \jap{\va{\beta}}}^{-N}\big),
\end{split}
\end{equation}
where $N$ may be taken as large as we want. In particular, the remainder in \eqref{eq:HilbSchmid} is square integrable. Here, $c(\alpha,\beta)$ is a symbol of order $m/2$ supported near the diagonal. It follows that the kernel of $B_\Lambda \sigma_1 B_\Lambda$ is square integrable (using that $\Phi_{\Lambda}$ satisfies the coercivity condition \eqref{eq:coercivity_condition} and working as in the proof of Proposition \ref{lemma:boundedness}). Hence, the kernel of $B_\Lambda \sigma_1 B_\Lambda$ is square integrable (with norm controlled by a semi-norm of $\sigma_1$ and hence of $\sigma$). The operator $B_\Lambda \sigma_0 B_\Lambda$ is Hilbert--Schmidt too for the same reason.

We just proved that $B_\Lambda \sigma B_\Lambda$ is trace class. To prove that its trace is the integral of its kernel on the diagonal of $\Lambda \times \Lambda$, just approximate $\sigma$ by compactly supported kernel.
\end{proof}

\subsection{Estimate on singular values}\label{subsec:singuar_values}

The aim of this section is to prove the following proposition. It will be used in the applications to deduce the Schatten property of the resolvent from the ellipticity of the generator of the flow (see Lemma \ref{lmhypoell} and Theorem \ref{thm:spectral-theory}).

\begin{prop}\label{propvalsing}
Assume that $\Lambda$ is a $\p{\tau_0,1}$-adapted Lagrangian with $\tau_0$ small enough. Assume that $h$ is small enough. Let $m > 0$ and $q \in \R$, then the inclusion of $\mathcal{H}_{\Lambda}^{m+q}$ into $\mathcal{H}_{\Lambda}^q$ is compact. In addition, if $\p{\mu_k}_{k \in \N}$ denotes the singular values of this inclusion then we have
\begin{equation*}
\begin{split}
\mu_k \underset{k \to + \infty}{=} \O\p{\frac{1}{k^{\frac{m}{n}}}}.
\end{split}
\end{equation*}
In particular, it belongs to the Schatten class $\mathcal{S}_p$ for any $p > n/m$.
\end{prop}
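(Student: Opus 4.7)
The plan is to reduce the claim to a singular-value estimate for a Toeplitz-type operator on the Bergman space $\mathcal{H}_{\Lambda,\FBI}^0$, and to treat that estimate via spectral projectors.

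\emph{Reduction steps.} First, I would reduce to the case $q = 0$. Let $B$ be a classically elliptic, semi-classically invertible $\G^1$-pseudor of order $q$, produced by Proposition \ref{prop:quantization_Gs} combined with Theorem \ref{thm:parametrix}. By Proposition \ref{lemma:boundedness}, $B$ and $B^{-1}$ induce bounded isomorphisms $\mathcal{H}_\Lambda^{k+q} \to \mathcal{H}_\Lambda^k$ for every $k \in \R$, intertwining the inclusion $\mathcal{H}_\Lambda^{m+q} \hookrightarrow \mathcal{H}_\Lambda^q$ with $\mathcal{H}_\Lambda^m \hookrightarrow \mathcal{H}_\Lambda^0$; hence the singular values are comparable up to multiplicative constants, and we may take $q = 0$. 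Next, let $A$ be a classically elliptic, $L^2$-self-adjoint, positive $\G^1$-pseudor of order $m$, semi-classically invertible; by Theorem \ref{thm:parametrix} its inverse $A^{-1}$ is a $\G^1$-pseudor of order $-m$, and by Proposition \ref{lemma:boundedness} induces an isomorphism $A^{-1}: \mathcal{H}_\Lambda^0 \to \mathcal{H}_\Lambda^m$. Factoring the inclusion $\iota$ as $(\iota \circ A^{-1}) \circ A$, and using that $A: \mathcal{H}_\Lambda^m \to \mathcal{H}_\Lambda^0$ is an isomorphism, Ky~Fan's inequality gives $\mu_k(\iota) \asymp \mu_k(A^{-1}: \mathcal{H}_\Lambda^0 \to \mathcal{H}_\Lambda^0)$. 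Finally, since $T_\Lambda: \mathcal{H}_\Lambda^0 \to \mathcal{H}_{\Lambda,\FBI}^0$ is an isometric isomorphism and, by Proposition \ref{proptoeplitz} applied with $P = A^{-1}$ and $f = 1$, one has $T_\Lambda A^{-1} S_\Lambda = B_\Lambda \sigma B_\Lambda + R$ with $\sigma \in S_{KN}^{-m}(\Lambda)$ and $R$ negligible (contributing $O(h^\infty)$ to every singular value), the problem reduces to estimating the singular values of the Toeplitz operator $T_\sigma := B_\Lambda \sigma B_\Lambda$ on $\mathcal{H}_{\Lambda,\FBI}^0$.

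\emph{Schatten-class estimate and compactness.} I would iterate the Toeplitz calculus: by Proposition \ref{propcomptoep}, for every positive integer $N$, $(T_\sigma^\ast T_\sigma)^N = B_\Lambda \tau_N B_\Lambda$ modulo a negligible operator, with $\tau_N \in S_{KN}^{-2mN}(\Lambda)$. As soon as $2mN > n$, Lemma \ref{lemma:trace_pseudo} gives that $B_\Lambda \tau_N B_\Lambda$ is trace class on $\mathcal{H}_{\Lambda,\FBI}^0$; therefore $T_\sigma \in \mathcal{S}_{2N}$ for every integer $N > n/(2m)$. In particular $T_\sigma$ is compact (so the inclusion is compact), and $T_\sigma \in \mathcal{S}_p$ for every real $p > n/m$ will follow at once from the sharp pointwise bound below.

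\emph{Sharp singular-value bound — main obstacle.} To obtain $\mu_k(T_\sigma) = \O(k^{-m/n})$, my plan is to exhibit, for each $k$, an explicit rank-$k$ approximation of $A^{-1}$ on $\mathcal{H}_\Lambda^0$ with error of order $\lambda_k^{-1}$, where $(\lambda_k)$ are the $L^2(M)$-eigenvalues of $A$. By the classical Weyl law for $\mathcal{C}^\infty$-pseudors, $\lambda_k \sim c k^{m/n}$; let $E_k$ denote the $L^2$-spectral projector onto the span of the first $k$ eigenfunctions. Then $A^{-1} E_k$ has rank $k$ and $\|A^{-1}(I - E_k)\|_{L^2 \to L^2} \leq \lambda_{k+1}^{-1}$. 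The main obstacle is to transfer this $L^2$-estimate to $\mathcal{H}_\Lambda^0$: since the inner product on $\mathcal{H}_\Lambda^0$ differs from that of $L^2(M)$, the projector $E_k$ is not orthogonal on $\mathcal{H}_\Lambda^0$, and one must prove both that $E_k$ extends to a bounded operator on $\mathcal{H}_\Lambda^0$ uniformly in $k$, and that $A^{-1}(I - E_k)$ has $\mathcal{H}_\Lambda^0$-operator-norm $\O(\lambda_k^{-1})$. These estimates should follow from a Helffer--Sj\"ostrand contour-integral representation: the resolvents $(z-A)^{-1}$ are $\G^1$-pseudors by Theorem \ref{thm:parametrix} and bounded on $\mathcal{H}_\Lambda^0$ by Proposition \ref{lemma:boundedness} with controlled dependence on $|\Im z|^{-1}$, while the Schatten bounds from the previous paragraph control the contribution of the part of the spectrum near $\lambda_k$; choosing contours at the appropriate scale around $[0,\lambda_k]$ then yields the desired norm bounds. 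The standard inequality $\mu_{k+1}(A^{-1}) \leq \|A^{-1} - F\|$ for any rank-$k$ operator $F$ then produces $\mu_{k+1}(A^{-1}) \leq C\lambda_k^{-1} \leq C k^{-m/n}$, completing the proof.
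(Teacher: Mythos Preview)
Your reductions are sound but your final step has a real gap. Transferring the $L^2(M)$ spectral projectors $E_k$ of $A$ to $\mathcal{H}_\Lambda^0$ with bounds uniform in $k$ is not achievable by the Helffer--Sj\"ostrand route you sketch. The projector $E_k$ is $f_k(A)$ for the characteristic function $f_k = \mathbf{1}_{[0,\lambda_k]}$, which is neither smooth nor holomorphic; the functional calculus available here (Theorem \ref{theorem:functional_calculus}) requires $f$ holomorphic in a cone around the spectrum. If instead you write $E_k$ as a contour integral of $(z-A)^{-1}$, you need resolvent bounds on $\mathcal{H}_\Lambda^0$ for $z$ at distance $O(1)$ from the spectrum. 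Theorem \ref{thm:parametrix} and Remark \ref{remark:uniform-bounds-resolvent} give such bounds only for $z$ in a sector at positive angular distance from $a(T^*M) \subset \R_+$; near the real axis the symbol $(a-z)^{-1}$ is not uniformly elliptic, so the parametrix degenerates and Proposition \ref{lemma:boundedness} gives no uniform control. The contour length grows like $\lambda_k$, so even $O(1)$ resolvent bounds would be insufficient without further cancellation.

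The paper avoids this obstacle entirely by never leaving $\mathcal{H}_{\Lambda,\FBI}^0$. Instead of importing spectral theory from $L^2(M)$, it uses the Toeplitz operator $A = B_\Lambda \langle|\alpha|\rangle^m B_\Lambda$, which is \emph{self-adjoint on $\mathcal{H}_{\Lambda,\FBI}^0$ by construction} (Lemma \ref{lmreduc}). Its eigenvectors are then automatically orthogonal in the right inner product, and the rank-$k$ approximation $\sum_{j<k}\langle\cdot,u_j\rangle u_j$ immediately gives $\mu_k \leq C\lambda_k^{-1}$. The remaining work is an upper Weyl law $N(r) = O(r^{n/m})$ for this $A$, proved by a resolvent--trace argument using Proposition \ref{propcomptoep} and Lemma \ref{lemma:trace_pseudo}: one compares $\Tr((A^N+r)^{-1})$ with $\int_\Lambda (r+\langle|\alpha|\rangle^{Nm})^{-1}\,d\alpha \asymp r^{n/(Nm)-1}$ and concludes $N(\lambda) = O(\lambda^{n/m})$. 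Your reduction to a Toeplitz operator $T_\sigma$ was already pointing in this direction; the missing idea is simply to take the self-adjoint square root on the FBI side rather than trying to lift $L^2(M)$ projectors.
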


Notice that we get an equivalent statement if we replace $\mathcal{H}_\Lambda^{q}$ and $\mathcal{H}_\Lambda^{m+q}$ in Proposition \ref{propvalsing} respectively by $\mathcal{H}_{\Lambda,\FBI}^{q}$ and $\mathcal{H}_{\Lambda,\FBI}^{m+q}$. We will rather consider this case in the following. That the inclusion is compact is an easy consequence of the fact that it may be written as the restriction of the operator with smooth kernel $\Pi_\Lambda$ to $\mathcal{H}_{\Lambda,\FBI}^{m+q}$, the proof is left as an exercise to the reader. The following result will be useful in the proof of Proposition \ref{propvalsing}.

\begin{lemma}\label{lmreduc}
Assume that $\Lambda$ is a $\p{\tau_0,1}$-adapted Lagrangian with $\tau_0$ small enough. Assume that $h$ is small enough. Let $m > 0$. Define the unbounded operator $A = B_\Lambda \jap{\va{\alpha}}^m B_\Lambda$ on $\mathcal{H}_{\Lambda,\FBI}^0$ with domain $\mathcal{H}_{\Lambda,\FBI}^m$. The operator $A$ is closed, self-adjoint, positive, and has compact resolvent. Moreover, $0$ does not belong to the spectrum of $A$, and, if $\p{\lambda_k}_{k \in \N}$ denotes the sequence of eigenvalues of $A$ (ordered increasingly), then there is a constant $C$ such that for all $k \in \N$ we have $\mu_k \leq C \lambda_k^{-1}$ -- where the $\mu_k$'s are from Proposition \ref{propvalsing}.
\end{lemma}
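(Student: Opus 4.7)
The plan is to reduce the lemma to standard spectral theory of positive self-adjoint operators with compact resolvent, the essential work being the construction of a bounded inverse $A^{-1} : \mathcal{H}^0_{\Lambda,\FBI} \to \mathcal{H}^m_{\Lambda,\FBI}$ via the Toeplitz calculus of Proposition \ref{propcomptoep}. The singular value estimate will then fall out of Courant--Fischer together with a Cauchy--Schwarz interpolation between the $\mathcal{H}^0_\Lambda$ and $\mathcal{H}^m_\Lambda$ norms.

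First I would check that $A$ is bounded from $\mathcal{H}^m_{\Lambda,\FBI}$ to $\mathcal{H}^0_{\Lambda,\FBI}$, symmetric, and satisfies $A \geq I$. For $u \in \mathcal{H}^m_{\Lambda,\FBI}$, the identity $B_\Lambda u = u$ yields
\[
\jap{A u, u}_{\mathcal{H}^0_\Lambda} = \int_\Lambda \jap{\va{\alpha}}^m \va{u(\alpha)}^2 e^{- 2H(\alpha)/h} \mathrm{d}\alpha \geq \n{u}_{\mathcal{H}^0_\Lambda}^2,
\]
which also gives symmetry by polarization. Boundedness follows from $\n{A u}_{\mathcal{H}^0_\Lambda} \leq \n{\jap{\va{\alpha}}^m u}_{L^2_0(\Lambda)} = \n{u}_{\mathcal{H}^m_\Lambda}$, using that $B_\Lambda$ is the orthogonal projection on $L^2_0(\Lambda)$.

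Second, I would build a two-sided bounded inverse of $A$ by iterating the composition formula of Proposition \ref{propcomptoep}. Starting from the elliptic symbol $\sigma^{(0)} = \jap{\va{\alpha}}^{-m}$, the relation $\jap{\va{\alpha}}^m \# \jap{\va{\alpha}}^{-m} = 1 \textup{ mod } h S_{KN}^{-1}\p{\Lambda}$ allows one to construct recursively, for every $N \in \N$, a symbol $\sigma_N \in S^{-m}_{KN}(\Lambda)$ with
\[
A \circ B_\Lambda \sigma_N B_\Lambda = B_\Lambda + h^N B_\Lambda r_N B_\Lambda + \textrm{negligible operator},
\]
where $r_N \in S_{KN}^{-N}\p{\Lambda}$. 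For $h$ small enough the error term is a small perturbation of $B_\Lambda = \mathrm{id}_{\mathcal{H}^0_{\Lambda,\FBI}}$ on $\mathcal{H}^0_{\Lambda,\FBI}$, and a Neumann series produces a right inverse for $A$. An analogous construction yields a left inverse, and the injectivity of $A$ forces them to coincide. From the boundedness of $A^{-1} : \mathcal{H}^0_{\Lambda,\FBI} \to \mathcal{H}^m_{\Lambda,\FBI}$ the graph argument gives closedness of $A$; symmetry of $A$ transfers to $A^{-1}$, and combined with boundedness it makes $A^{-1}$ self-adjoint, hence so is $A$; the condition $0 \notin \sigma(A)$ is built in; and $A^{-1}$ factors through the compact inclusion $\mathcal{H}^m_{\Lambda,\FBI} \hookrightarrow \mathcal{H}^0_{\Lambda,\FBI}$ (whose compactness is the easy part of Proposition \ref{propvalsing}), so $A$ has compact resolvent. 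It follows that $\sigma(A) = \set{\lambda_k}_{k \in \N}$ with $\lambda_k \to + \infty$.

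Finally, for the singular value bound, let $(e_k)_{k \in \N}$ be an $\mathcal{H}^0_{\Lambda,\FBI}$-orthonormal basis of eigenvectors with $A e_k = \lambda_k e_k$; each $e_k$ lies in $\mathcal{H}^m_{\Lambda,\FBI}$. The subspace
\[
W_k = \set{u \in \mathcal{H}^m_{\Lambda,\FBI} : \jap{u,e_j}_{\mathcal{H}^0_\Lambda} = 0 \textrm{ for } j < k}
\]
has codimension at most $k-1$ in $\mathcal{H}^m_{\Lambda,\FBI}$. For $u \in W_k$, expanding in the basis gives $\jap{A u, u}_{\mathcal{H}^0_\Lambda} \geq \lambda_k \n{u}_{\mathcal{H}^0_\Lambda}^2$, while Cauchy--Schwarz applied to the integral $\int_\Lambda \va{u} \cdot \jap{\va{\alpha}}^m \va{u} \, e^{-2H/h} \mathrm{d}\alpha$ yields $\jap{A u, u}_{\mathcal{H}^0_\Lambda} \leq \n{u}_{\mathcal{H}^0_\Lambda} \n{u}_{\mathcal{H}^m_\Lambda}$. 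Combining these two estimates,
\[
\frac{\n{u}_{\mathcal{H}^0_\Lambda}}{\n{u}_{\mathcal{H}^m_\Lambda}} \leq \frac{1}{\lambda_k}
\]
uniformly for $u \in W_k$, and the min-max characterization of singular values of the inclusion $\mathcal{H}^m_{\Lambda,\FBI} \hookrightarrow \mathcal{H}^0_{\Lambda,\FBI}$ gives $\mu_k \leq \lambda_k^{-1}$. The main technical point is the parametrix in the second step, which hinges entirely on Proposition \ref{propcomptoep}; the rest is bookkeeping.
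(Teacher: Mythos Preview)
Your proof is correct and follows essentially the same approach as the paper: both use the Toeplitz parametrix from Proposition \ref{propcomptoep} to produce a bounded inverse $A^{-1}:\mathcal{H}^0_{\Lambda,\FBI}\to\mathcal{H}^m_{\Lambda,\FBI}$, both extract positivity and $0\notin\sigma(A)$ from $\jap{Au,u}=\int_\Lambda\jap{\va{\alpha}}^m|u|^2e^{-2H/h}\mathrm{d}\alpha\geq\n{u}^2$, and both pass through the eigenbasis of $A$ for the singular value bound. The organization differs slightly: the paper proves closedness and self-adjointness directly (closedness via a sequential argument using that $A$ is bounded $\mathcal{H}^0\to\mathcal{H}^{-m}$, self-adjointness via an explicit pairing identity for $B_\Lambda$), whereas you deduce both from the existence of the bounded inverse; either route is fine.

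For the singular value estimate the paper uses the finite-rank approximation
\[
\n{u-\sum_{j<N}\jap{u,e_j}e_j}_{\mathcal{H}^0}\leq C\lambda_N^{-1}\n{u}_{\mathcal{H}^m}
\]
and invokes \cite[Theorem IV.2.5]{Gohb}, while you run the equivalent min--max argument. One small slip: your $W_k$, defined by the $k$ conditions $\jap{u,e_j}=0$ for $j=0,\dots,k-1$, has codimension at most $k$ in $\mathcal{H}^m_{\Lambda,\FBI}$, not $k-1$; this is harmless since the min--max then still gives $\mu_k\leq\lambda_k^{-1}$ (with the standard indexing from $0$), which is exactly what is needed.
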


\begin{proof}
Let $\p{u_\ell}_{\ell \in \N}$ be a sequence in $\mathcal{H}_{\Lambda,\FBI}^{m}$ such that $\p{u_\ell}_{\ell \in \N}$ converges to some $u$ in $\mathcal{H}_{\Lambda,\FBI}^{0}$ and $(A u_\ell)_{\ell \in \N}$ converges to some $v$ in $\mathcal{H}_{\Lambda,\FBI}^{0}$. Then, since $A$ is bounded from $\mathcal{H}_{\Lambda,\FBI}^{0}$ to $\mathcal{H}_{\Lambda,\FBI}^{-m}$, the sequence $(A u_\ell)_{\ell \in \N}$ converges to $Au$ in $\mathcal{H}_{\Lambda,\FBI}^{-m}$. Since the convergence in $\mathcal{H}_{\Lambda,\FBI}^{-m}$ or in $\mathcal{H}_{\Lambda,\FBI}^{0}$ implies pointwise convergence (it is a consequence of the structure of $\Pi_\Lambda$), then $Au = v$. Using Proposition \ref{propcomptoep} to construct a parametrix for $A$, we see that since $Au = v$ belongs to $\mathcal{H}_{\Lambda,\FBI}^0$, we have that $u \in \mathcal{H}_{\Lambda,\FBI}^m$, and hence $A$ is closed.

Let us prove that $A$ is self-adjoint. For this, we observe that for $u\in L^2_{-m}\p{\Lambda}$ and $v\in L^2_m\p{\Lambda}$, we have
\begin{equation}\label{eq:B-self-adjoint-general}
\langle B_\Lambda u, v\rangle = \langle u , B_\Lambda v \rangle.
\end{equation}
This follows from the fact that $B_\Lambda$ is bounded on every space $L^2_k\p{\Lambda}$, and the density of these spaces in one another. Now, let $u\in \mathcal{H}_{\Lambda,\FBI}^0$ and notice that $A u \in \mathcal{H}_{\Lambda,\FBI}^{-m}$. In particular, if $v \in \mathcal{H}_{\Lambda,\FBI}^{m}$ then $\langle A u , v \rangle$ makes sense, and
\begin{align*}
\langle u, A v\rangle 	&= \langle u, B_\Lambda \langle|\alpha|\rangle^m B_\Lambda v \rangle \\
						&= \langle \langle|\alpha|\rangle^m u, v \rangle\\
						&= \langle B_\Lambda\langle|\alpha|\rangle^m B_\Lambda u, v \rangle= \langle A u, v\rangle.
\end{align*}
From this, we see that $\Dom(A)\subseteq \Dom(A^\ast)$, and in general, that if $u\in \Dom(A^\ast)$, $Au - A^\ast u$ is orthogonal to all $v\in \mathcal{H}^m_{\Lambda,\FBI}$. But since $\mathcal{H}^m_{\Lambda,\FBI}$ is dense in $\mathcal{H}^{-m}_{\Lambda,\FBI}$, we deduce that in that case, $A u= A^\ast u$, so that $u\in \Dom(A)$.

To see that $A$ is positive, just notice that for $u \in \mathcal{H}_{\Lambda,\FBI}^m\p{\Lambda}$ we have
\begin{equation}\label{eq:inverser_A}
\jap{A u, u} \geq \n{u}_{\mathcal{H}_{\Lambda,\FBI}^{0}}^2
\end{equation}
since $\jap{\va{\alpha}}^m \geq 1$. It also follows that $0$ does not belong to the spectrum of $A$. Using a parametrix construction, we see that the resolvent of $A$ sends $\mathcal{H}_{\Lambda,\FBI}^0$ continuously into $\mathcal{H}_{\Lambda,\FBI}^m$, and is hence compact as an operator from $\mathcal{H}_{\Lambda,\FBI}^0$ to itself.

Let $\p{u_k}_{k \in \N}$ be an orthonormal basis of eigenvectors of $A$, that is $A u_k = \lambda_k u_k$. Then if $u \in \mathcal{H}_{\Lambda,\FBI}^{m}$, we have using Plancherel's formula
\begin{equation*}
\begin{split}
\n{ u - \sum_{k=0}^{N-1} \langle u , u_k \rangle u_k}_{\mathcal{H}_{\Lambda,\FBI}^{0}}^2 & = \sum_{k=N}^{+ \infty} \va{\langle u, u_k\rangle}^2 \\
   & = \sum_{k=N}^{+ \infty} \frac{\va{\jap{u, A u_k}}^2}{\lambda_k^2} \\
   & \leq \lambda_N^{-2} \sum_{k=N}^{+ \infty} \va{\jap{A u ,u_k}}^2 \\
   & \leq \lambda_N^{-2} \n{A u}_{\mathcal{H}_{\Lambda,\FBI}^{0}}^2 \\
   & \leq C^2 \lambda_N^{-2} \n{u}_{\mathcal{H}_{\Lambda,\FBI}^{m}}^2.
\end{split}
\end{equation*}
Then the lemma follows from \cite[Theorem IV.2.5]{Gohb}.
\end{proof}

Before starting the proof of Proposition \ref{propvalsing}, we need to do two reductions.

\begin{reduc}\label{reduc1}
Let $m > 0$ and assume that, for this value of $m$, Proposition \ref{propvalsing} holds for $q=0$. Then, for this particular value of $m$, Proposition \ref{propvalsing} holds for any $q \in \R$.
\end{reduc}

\begin{reduc}\label{reduc2}
Assume that Proposition \ref{propvalsing} holds for all $m \in \left]0,1\right[$ and for $q=0$, then Proposition \ref{propvalsing} holds.
\end{reduc}

\begin{proof}[Proof of Reduction \ref{reduc1}]
Let $q \in \R$. Let us deal first with the case $q > 0$. Define the operator $A = B_\Lambda \jap{\va{\alpha}}^q B_\Lambda$ on $\mathcal{H}_{\Lambda,\FBI}^0$ with domain $\mathcal{H}_{\Lambda,\FBI}^q$. We know from Lemma \ref{lmreduc} that $A$ is a closed self-adjoint operator and that $0$ does not belong to its spectrum. Using Proposition \ref{propcomptoep} to construct a parametrix for $A$, we see that the operator $A^{-1}$ is bounded from $\mathcal{H}_{\Lambda,\FBI}^0$ to $\mathcal{H}_{\Lambda,\FBI}^q$. Hence, if $j$ denotes the inclusion of $\mathcal{H}_{\Lambda,\FBI}^{m+q}$ into $\mathcal{H}_{\Lambda,\FBI}^{q}$ and $j'$ denotes the inclusion of $\mathcal{H}_{\Lambda,\FBI}^{m}$ into $\mathcal{H}_{\Lambda,\FBI}^{0}$, then we have $j = A \circ j' \circ A^{-1}$, and the estimate on the singular values of $j'$ that we assumed carries on to the singular values of $j$.

We deal now with the case $q < 0$. We set $A = B_\Lambda \jap{\va{\alpha}}^{-q} B_\Lambda$. As above, we construct an inverse $A^{-1}$ for $A$ (the operator $A^{-1}$ is \emph{a priori} defined on $\mathcal{H}_{\Lambda,\FBI}^0$) and a parametrix $E$ for $A$ (the parametrix $E$ is defined in particular on $\mathcal{H}_{\Lambda,\FBI}^{m+q}$). Then we have $AE = I - R$ where $R$ is a negligible operator and hence $A^{-1} = E + A^{-1} R$ may be extended to an operator from $\mathcal{H}_{\Lambda,\FBI}^{m+q}$ to $\mathcal{H}_{\Lambda,\FBI}^{m}$, which is still an inverse for $A$ by a density argument. From this point, the proof in this case $q < 0$ is similar to in the case $q > 0$, interchanging $A$ and $A^{-1}$.
\end{proof}

\begin{proof}[Proof of Reduction \ref{reduc2}]
Let $m > 0$. By Reduction \ref{reduc1}, we only need to deal with the case $q=0$. Choose $N$ large enough so that $\frac{m}{N} < 1$, and for $\ell = 0,\dots, N-1$ denotes by $j_{\ell}$ the inclusion of $\mathcal{H}_{\Lambda,\FBI}^{\frac{\p{\ell +1} m}{N}}$ into $\mathcal{H}_{\Lambda,\FBI}^{\frac{\ell m}{N}}$. Then if $j$ denotes the inclusion of $\mathcal{H}_{\Lambda,\FBI}^{m}$ into $\mathcal{H}_{\Lambda,\FBI}^{0}$ we have
\begin{equation*}
\begin{split}
j = j_{N-1} \circ \dots \circ j_0.
\end{split}
\end{equation*}
But our assumption, Reduction \ref{reduc1} and \cite[Theorem IV.2.5]{Gohb} imply that for any $k \in \N$ and $\ell \in \set{0,\dots,N-1}$ there is an operator $L_{k,\ell}$ from $\mathcal{H}_{\Lambda,\FBI}^{\frac{\p{\ell +1} m}{N}}$ to $\mathcal{H}_{\Lambda,\FBI}^{\frac{\ell m}{N}}$ of rank at most $k$ such that the operator norm $j_\ell - L_{k,\ell}$ is less than $C\p{k+1}^{-n/Nm}$ (for some $C > 0$). Hence, the operator norm of
\begin{equation}\label{eqapproxfinrank}
\begin{split}
\p{j_{N-1} - L_{k,N-1}} \circ \dots \circ \p{j_0 - L_{k,0}}
\end{split}
\end{equation}
is less than $C^N (k+1)^{-n/m}$. However, expanding we see that the operator \eqref{eqapproxfinrank} differs from $j$ by an operator of rank at most $\p{2^N - 1}k$. Hence, using \cite[Theorem IV.2.5]{Gohb} again, we have
\begin{equation*}
\begin{split}
\mu_{\p{2^N -1}k} \leq \frac{C^N}{(k+1)^{\frac{n}{m}}}.
\end{split}
\end{equation*}
And the result follows since the sequence $\p{\mu_k}_{k \in \N}$ is decreasing.
\end{proof}

We can now start the proof of Proposition \ref{propvalsing}. According to the reductions above, we may assume $m \in \left]0,1\right[$ and $q=0$. Introduce then the unbounded operator $A = B_\Lambda \jap{\va{\alpha}}^m B_\Lambda$ on $\mathcal{H}_{\Lambda,\FBI}^{0}$ with domain $\mathcal{H}_{\Lambda,\FBI}^m$. By Lemma \ref{lmreduc}, the proof of Proposition \ref{propvalsing} reduces to the following lemma.

\begin{lemma}[Upper Weyl's law for $A$]
Recall that $m \in \left]0,1\right[$ and let $N(r)$ denote the number of eigenvalues less than $r$ of $A$. Then $N(r) \underset{r \to + \infty}{=} \O(r^{n/m})$.
\end{lemma}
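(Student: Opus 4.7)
The plan is to exploit a variational comparison between $A$ and a Toeplitz operator with cutoff symbol, following the spirit of the Berezin--Toeplitz Weyl law. Fix once and for all a smooth function $\chi : \R_+ \to [0,1]$ with $\chi \equiv 1$ on $[0,2]$ and $\chi \equiv 0$ on $[4,+\infty)$, and, for each $r \geq 1$, set $\chi_r(\alpha) := \chi(\jap{\va{\alpha}}^m/r)$. A rescaling argument based on the Kohn--Nirenberg metric shows that $\chi_r$ lies in a bounded subset of $S_{KN}^{0}\p{\Lambda}$ (uniformly in $r$); moreover it is compactly supported in $\{\jap{\va{\alpha}}^m \leq 4r\}$, hence lies in every $S_{KN}^{-N}(\Lambda)$ (with $r$-dependent semi-norms that we will \emph{not} use directly).

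First, I establish a minmax bound for $N(r)$ in terms of the trace of $B_\Lambda \chi_r B_\Lambda$. Let $E_r \subseteq \mathcal{H}_{\Lambda,\FBI}^0$ denote the spectral subspace of $A$ associated to $[0,r]$, so that $\dim E_r = N(r)$. For $u \in E_r$ we have $u = B_\Lambda u$ and
\begin{equation*}
\int_{\Lambda} \jap{\va{\alpha}}^m \va{u(\alpha)}^2\, e^{-2H/h}\mathrm{d}\alpha = \langle A u, u\rangle \leq r\n{u}^2.
\end{equation*}
Markov's inequality therefore gives $\int_{\jap{\va{\alpha}}^m > 2r} |u|^2 e^{-2H/h} \mathrm{d}\alpha \leq \tfrac{1}{2}\n{u}^2$, and since $\chi_r \equiv 1$ on $\{\jap{\va{\alpha}}^m \leq 2r\}$ we deduce
\begin{equation*}
\langle B_\Lambda \chi_r B_\Lambda u, u\rangle = \int_\Lambda \chi_r |u|^2 e^{-2H/h}\mathrm{d}\alpha \geq \frac{1}{2}\n{u}^2, \qquad u \in E_r.
\end{equation*}
The operator $B_\Lambda \chi_r B_\Lambda$ is positive and self-adjoint; by Courant--Fischer, it has at least $N(r)$ eigenvalues that are $\geq 1/2$, whence $N(r) \leq 2\,\Tr\p{B_\Lambda \chi_r B_\Lambda}$.

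Second, I estimate this trace through Lemma \ref{lemma:trace_pseudo}. Since $\chi_r$ has compact support in $\alpha$ it belongs in particular to $S_{KN}^{-n-1}(\Lambda)$, so the lemma ensures that $B_\Lambda \chi_r B_\Lambda$ is trace-class and identifies its trace with the integral of its kernel on the diagonal. By the cyclicity of the trace and idempotency of $B_\Lambda$,
\begin{equation*}
\Tr\p{B_\Lambda \chi_r B_\Lambda} = \Tr\p{\chi_r B_\Lambda} = \int_{\Lambda} \chi_r(\alpha)\, K_{B_\Lambda}(\alpha,\alpha)\, \mathrm{d}\alpha,
\end{equation*}
where $K_{B_\Lambda}$ is the Schwartz kernel of $B_\Lambda$ with respect to the reference measure $e^{-2H/h}\mathrm{d}\alpha$. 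Combining Lemma \ref{lemma:approximate_projector} with Lemma \ref{lemma:TPS-close-diagonal} (applied to the representation $B_\Lambda = \Pi_\Lambda f \Pi_\Lambda^\ast +$ negligible), the reduced kernel of $B_\Lambda$ on the diagonal is $\O(h^{-n})$. Finally, the integral $\int_\Lambda \chi_r\,\mathrm{d}\alpha$ is bounded by the volume of $\{\jap{\va{\alpha}}^m \leq 4 r\}$ in $\Lambda$; using that $\Lambda$ is a uniform perturbation of $T^\ast M$ and that in the fiber direction $\{|\xi| \leq R\}$ has volume $\O(R^n)$, this volume is $\O(r^{n/m})$. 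Putting everything together yields $N(r) \leq C h^{-n} r^{n/m} = \O(r^{n/m})$ for fixed $h$.

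The main technical point to watch is that we absolutely need the ``integral-of-the-diagonal-kernel'' identification in Lemma \ref{lemma:trace_pseudo}: a direct semi-norm bound for $\chi_r$ as a symbol of order $< -n$ would yield only $r^{(n+1)/m}$ (or worse), which is too weak. It is precisely the fact that only the support of $\chi_r$ enters the trace estimate, multiplied by the $h^{-n}$ density of the Bergman kernel, that produces the sharp $r^{n/m}$ bound. No use is made of the hypothesis $m < 1$ beyond what has already been arranged by Reduction \ref{reduc2}; the argument in fact works for all $m > 0$, which is consistent with Proposition \ref{propvalsing}.
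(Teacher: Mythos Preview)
Your proof is correct and takes a genuinely different route from the paper. The paper proceeds via a resolvent-trace argument: it chooses an integer $N$ with $n < Nm < n+1$ (this is where $m\in\,]0,1[$ enters), writes $A^N = B_\Lambda\sigma B_\Lambda + R$ via Proposition~\ref{propcomptoep}, builds an approximate inverse $B_\Lambda a_r B_\Lambda$ for $A^N + r$ with $a_r = (r+\jap{\va{\alpha}}^{Nm})^{-1}$, and shows $\Tr((A^N+r)^{-1}) = \O(r^{n/(Nm)-1})$ using Lemma~\ref{lemma:trace_pseudo}. The counting bound then follows from $N(\lambda)/(\lambda^N+r)\leq \Tr((A^N+r)^{-1})$ with $r=\lambda^N$.

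Your approach is the Berezin--Toeplitz variational method: bound $N(r)$ by $2\,\Tr(B_\Lambda\chi_r B_\Lambda)$ via Markov's inequality and Courant--Fischer, then read off the trace as $\int_\Lambda \chi_r\,K_{B_\Lambda}(\alpha,\alpha)\,\mathrm{d}\alpha$. This is shorter, avoids the parametrix construction, and as you note does not use $m<1$. The one place to be slightly more careful is the cyclicity step $\Tr(B_\Lambda\chi_r B_\Lambda)=\Tr(\chi_r B_\Lambda)$: you need $\chi_r B_\Lambda$ itself to be trace class, which is true because its kernel is (up to a negligible term) compactly supported and smooth, but this deserves a sentence. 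Alternatively, you can bypass cyclicity entirely by invoking Lemma~\ref{lemma:trace_pseudo} for $B_\Lambda\chi_r B_\Lambda$ directly, then using $B_\Lambda^2=B_\Lambda$ and Fubini to identify $\int K_{B_\Lambda\chi_r B_\Lambda}(\alpha,\alpha)\,\mathrm{d}\alpha = \int \chi_r(\gamma)K_{B_\Lambda}(\gamma,\gamma)\,\mathrm{d}\gamma$. The paper's longer route buys nothing extra here; it is presumably chosen for consistency with the Toeplitz calculus developed in \S\ref{sec:Bergman}.
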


\begin{proof}
First, take an integer $N > 0$ such that $n+1 > N m > n$ (this is possible thanks to our assumption on $m$). By Proposition \ref{propcomptoep}, we may write 
\begin{equation*}
\begin{split}
A^N = B_\Lambda \sigma B_\Lambda + R,
\end{split}
\end{equation*}
where $\sigma$ is a symbol of order $Nm$ with leading term $\jap{\va{\alpha}}^{Nm}$ and $R$ is a negligible operator. For $r \geq 0$, define the symbol $a_r$ by $a_r(\alpha) = (r + \jap{\va{\alpha}}^{N m})^{-1}$. Then, using a finite number of terms only in Proposition \ref{propcomptoep}, we find that
\begin{equation*}
\begin{split}
A^N B_\Lambda a_r B_\Lambda = B_\Lambda \jap{\va{\alpha}}^{Nm} a_r B_\Lambda + B_\Lambda \sigma_r B_\Lambda + R_r.
\end{split}
\end{equation*}
We will detail later the properties of $\sigma_r$ and $R_r$. Then we can write
\begin{equation*}
\begin{split}
\p{A^N + r}^{-1} = B_\Lambda a_r B_\Lambda - \p{A^N + r}^{-1} B_\Lambda \sigma_r B_\Lambda - \p{A^N + r}^{-1} R_r.
\end{split}
\end{equation*}
Notice that the $a_r$'s form a family of symbols of order $-Nm$ with uniform estimates on any semi-norm for this symbol class. But if we consider now the $a_r$'s like symbols of order $- Nm +1$ then any semi-norm of $a_r$ in this symbol class is dominated by $r^{-1/Nm }$. Since the symbol $\sigma$ of $A^N$ does not depend on $r$, by taking an expansion with a large enough number of terms, we may ensure that the trace class norm of $R_r$ acting on $\mathcal{H}_{\Lambda,\FBI}^{0}$ is dominated by $r^{-1/Nm}$ ($R_r$ is a continuous function of $a_r$ and $\sigma$). Now, since the quadratic form associated with $A^N$ is positive, a standard argument ensures that the norm of $(A^N + r)^{-1}$  acting on $\mathcal{H}_{\Lambda,\FBI}^{0}$ is less than $1/r$. Hence, the trace class operator norm of $(A^N + r)^{-1} R_r$ is dominated by $r^{-1 - 1/Nm}$. 

In order to control the term $(A^N + r)^{-1} B_\Lambda \sigma_r B_\Lambda$, we notice that $\sigma_r$ is a continuous function of $\sigma$ and $a_r$. As above, if we see $\sigma$ like a symbol of order $Nm$ and $a_r$ like a symbol of order $-Nm +1$ then $\sigma_r$, as a symbol of order $0$, does not grow faster than some $r^{- 1/Nm}$. Hence, as a bounded operator on $\mathcal{H}_{\Lambda,\FBI}^{0}$, the Toeplitz $B_\Lambda \sigma_r B_\Lambda$ is dominated by $r^{-1/Nm}$. From the resolvent identity, one sees that the trace class norm of $(A^N + r)^{-1}$ is uniformly bounded when $r$ tends to $+ \infty$.
Hence, we have
\begin{equation*}
\begin{split}
\Tr\p{ \p{A^N + r}^{-1} } \underset{r \to + \infty}{=} \Tr\p{B_\Lambda a_r B_\Lambda} + \O\p{r^{- \frac{1}{Nm}}}.
\end{split}
\end{equation*}
By Lemma \ref{lemma:trace_pseudo}, since the symbol $a_r$ is of order $ - Nm < -n$, we retrieve that $B_\Lambda a_r B_\Lambda$ is trace class and moreover, its trace is the integral of its kernel on the diagonal. By the usual stationary phase argument, we find that the kernel of $B_\Lambda a_r B_\Lambda$ is of the form 
\begin{equation*}
\begin{split}
e^{i \frac{\Phi_\Lambda(\alpha,\beta)}{h}} \p{c_r(\alpha,\beta) + d_r(\alpha,\beta)} + R_r(\alpha,\beta). 
\end{split}
\end{equation*}
Here, $c_r(\alpha,\beta)$ is a symbol of order $-Nm$ such that $c_r(\alpha,\alpha) = (r + \jap{\va{\alpha}}^{Nm})^{-1}$. The symbol $d_r$ is of order $-Nm -1$, but if we see $a_r$ as a symbol of order  $-Nm +1$, then we see that, as a symbol of order $-Nm$, this $d_r$ is dominated by $r^{- 1/Nm}$. We bound the remainder term $R_r$ in the same way. Hence, we find that
\begin{equation}\label{eqpremierpas}
\begin{split}
\Tr\p{ \p{A^N + r}^{-1} } \underset{r \to + \infty}{=} \int_{\Lambda} \frac{1}{r + \jap{\va{\alpha}}^{Nm}} \mathrm{d}\alpha + \O\p{r^{- \frac{1}{Nm}}}.
\end{split}
\end{equation} 
And since $Nm < n+1$ we have $ - \frac{1}{Nm} < \frac{n}{Nm} -1$ and thus the remainder term in \eqref{eqpremierpas} is a small $o$ of $r^{\frac{n}{Nm} - 1}$. In order to estimate the integral in the right hand side of \eqref{eqpremierpas}, we may split it in the integral over $\{\jap{\va{\alpha}} \leq r^{1/Nm}\}$ and the integral over $\{\jap{\va{\alpha}} > r^{1/Nm}\}$. To bound the integral over $\{\jap{\va{\alpha}} \leq r^{1/Nm}\}$, just bound the integrand by $r^{-1}$ and notice that the volume of this set is dominated by $r^{n/Nm}$ (this can be done using the fact that the Jacobian of $\exp(- H_G^{\omega_I})$ is very close to be $1$, see Lemma \ref{lemma:uniformity-lagrangians}, and that $\exp(-H_G^{\omega_I})$ does not change scale, see Lemma \ref{lmdist}, where $G$ is as in Definition \ref{def:adapted-Lagrangian}). Hence, this first integral is dominated by $r^{\frac{n}{Nm} - 1}$. To bound the integral over $\{\jap{\va{\alpha}} > r^{1/Nm}\}$, split it into integrals over annulus where $\jap{\va{\alpha}}$ is roughly equal to a power of $2$ times $r^{1/Nm}$ and compare it to a geometric series to find that it is also dominated by $r^{n/Nm - 1}$. Finally, we find 
\begin{equation*}
\begin{split}
\Tr\p{ \p{A^N + r}^{-1} } \underset{r \to + \infty}{=} \O\p{r^{\frac{n}{Nm} - 1}}.
\end{split}
\end{equation*}
Then using Lidskii's Trace Theorem, we have for large $\lambda$ and $r$, and some constant $C > 0$:
\begin{equation*}
\begin{split}
\frac{N(\lambda)}{\lambda^N + r} & \leq \sum_{k \geq 0} \frac{1}{\lambda_k^N + r} = \Tr\p{A^N + r}^{-1} \leq C r^{\frac{n}{Nm} - 1}.
\end{split}
\end{equation*}
Hence
\begin{equation*}
\begin{split}
N(\lambda) \leq C(\lambda^N + r) r^{\frac{n}{Nm} - 1},
\end{split}
\end{equation*}
and the result follows by taking $r = \lambda^N$.
\end{proof}

\setcounter{equation}{0}

\chapter{Ruelle--Pollicott resonances and Gevrey Anosov flows}
\label{part:Anosov-flow}

This chapter is devoted to the application of the tools that we developed in Chapter \ref{part:FBI} to the study of the spectral theory of Gevrey Anosov flows. After giving an overview of our results, we will in \S~\ref{sec:Flow-adapted-spaces}, given a $\Gs$ Anosov vector field $X$, construct an I-Lagrangian space adapted to the study of its Ruelle--Pollicott spectrum. In \S~\ref{sec:order}, we will prove Theorem \ref{thm:main-strong} which generalizes Theorem \ref{thm:main}. In the last section \S~\ref{sec:perturbative-results}, we will consider perturbation of resonances.

\subsection*[Results on the dynamical determinant]{Results: Ruelle--Pollicott resonances and dynamical determinant for Gevrey Anosov flows}

We start by recalling some well-known facts in the $\mathcal{C}^\infty$ case. Let $M$ be a compact $\mathcal{C}^\infty$ manifold and $X$ be a $\mathcal{C}^\infty$ vector field on $M$ that does not vanish. We denote by $\p{\phi_t}_{t \in \R}$ the flow generated by $X$ and assume that $\phi_t$ is Anosov. Recall that it means that there is a decomposition 
\[
TM = E_0 \oplus E_u \oplus E_s
\]
of the tangent bundle with the following properties:
\begin{enumerate}[label=(\roman*)]
\item the decomposition $TM = E_0 \oplus E_u \oplus E_s$ is invariant by $\mathrm{d} \phi_t$ for every $t \in \R$;
\item $E_0$ is the span of the vector field $X$;
\item there are constants $C ,\theta > 0$ such that for all $v \in E_s$ and $t > 0$ we have $\va{\mathrm{d}\phi_t v} \leq C e^{- \theta t} \va{v}$;
\item there are constants $C ,\theta > 0$ such that for all $v \in E_u$ and $t > 0$ we have $\va{\mathrm{d}\phi_{-t} v} \leq C e^{- \theta t} \va{v}$.
\end{enumerate}
Here $\va{\cdot}$ denotes any Riemannian metric on $TM$. We also choose a $\mathcal{C}^\infty$ weight $V : M \to \C$ (which is real-valued in most of the applications) and define, for $t \geq 0$, the \emph{Koopman operator}
\begin{equation}\label{eq:transfer}
\begin{split}
\mathcal{L}_t u = \exp\p{\int_0^t V \circ \phi_\tau \mathrm{d}\tau} u \circ \phi_t.
\end{split}
\end{equation}
Notice that this definition makes sense when $u$ is a smooth function, but also when $u$ is a distribution $\mathcal{D}'\p{M}$. In fact, when $M$, $\phi_t$ and $V$ are $\G^s$ for some $s\geq1$, the operator $\mathcal{L}_t$ may be defined as an operator on $\U^s\p{M}$. Let us introduce the differential operator $P \coloneqq X+V$, so that
\begin{equation*}
\begin{split}
\frac{\mathrm{d}}{\mathrm{d}t'}\p{\mathcal{L}_{t'} u}_{|_{t' =t}} = P \mathcal{L}_t u.
\end{split}
\end{equation*}

In the case of $C^\infty$ flows, the anisotropic Banach spaces of distributions -- see \cite{Butterley-Liverani-07,bulicor,FauSjo} -- that we mentioned in the introduction always form a scale $(\mathcal{H}^r)_{r > 0}$ of Banach spaces with the following properties:
\begin{enumerate}[label=(\roman*)]
\item $\mathcal{C}^\infty\p{M} \subseteq \mathcal{H}^r \subseteq \mathcal{D}'\p{M}$, both inclusions are continuous and the first one has dense image;
\item $\p{\mathcal{L}_t}_{t \geq 0}$ is a strongly continuous semi-group on $\mathcal{H}^r$ and its generator is $P$;
\item the intersection of the spectrum of $P$ acting on $\mathcal{H}^r$ with the half-plane $\set{z \in \C : \Re z > -r}$ contains only isolated eigenvalues of finite multiplicities.
\end{enumerate}
The property (i) is just a non-triviality assumption: it implies that the elements of $\mathcal{H}^r$ are objects that live on the manifold $M$. This property can easily be softened without any harm in the theory. For instance, working with a $\G^s$ flow, one could replace $\mathcal{C}^\infty\p{M}$ by $\G^s\p{M}$ and $\mathcal{D}'\p{M}$ by $\U^s\p{M}$. The point (ii) is of utmost importance: it is the property that ensures that the spectrum of $P$ on $\mathcal{H}^r$ has a dynamical interpretation and allows to describe the asymptotic of $\mathcal{L}_t$ when $t$ tends to $+ \infty$.

If the spaces $\mathcal{H}^r$ are highly non-canonical, the spectrum of $P$ acting on $\mathcal{H}^r$ is intrinsically defined by $V$ and the vector field $X$. This may be shown using the following argument from \cite{FauSjo} (we will refer to this argument in the proof of Theorem \ref{thm:spectral-theory}): when $\Re z \gg 1$, the resolvent $(z-P)^{-1}$ of $P$, as an operator on $L^2(M)$ for example, is the Laplace transform of $\mathcal{L}_t$, that is
\begin{equation}\label{eq:a_quoi_doit_ressembler_une_resolvante}
\begin{split}
R(z) : u \mapsto \int_{0}^{+ \infty} e^{- z t} \mathcal{L}_t u \mathrm{d}t.
\end{split}
\end{equation}
Now, the point (ii) above implies that, when $\Re z \gg 1$, the resolvent of $P$ acting on $\mathcal{H}^r$ is also given by \eqref{eq:a_quoi_doit_ressembler_une_resolvante}. Since $\mathcal{H}^r$ is intermediate between $\mathcal{C}^\infty\p{M}$ and $\mathcal{D}'\p{M}$, the point (iii) implies that the family of operator $R(z) : \mathcal{C}^\infty\p{M} \to \mathcal{D}'\p{M}$ admits a meromorphic continuation to $\set{z \in \C : \Re z > -r}$ with residues of finite rank. By the analytic continuation principle, this meromorphic continuation does not depend on the choice of space $\mathcal{H}^r$. Moreover, letting $r$ tend to $+ \infty$, we see that $R(z)$ has a meromorphic continuation to the whole complex plane.

As in the case without potential, the poles of the complex continuation of $R(z)$ are called the \emph{Ruelle--Pollicott resonances} of $P$. Moreover, if $\lambda \in \C$ is a Ruelle--Pollicott resonance, then the residue of $R(z)$ is (up to some injections) a spectral projector $\pi_\lambda$ of finite rank for $P$. The rank of $\pi_\lambda$ is the multiplicity of $\lambda$ as a Ruelle resonance and the elements of the image of $\pi_\lambda$ are called \emph{resonant states} associated with $\lambda$ for $P$. Notice that the resonant states are not necessarily eigenvectors for $P$ (there may be Jordan blocks as shown in \cite{cekicResonantSpacesVolume2019}).

We can introduce a generalization of $\zeta_X(z)$ associated to $P$: for $\Re z \gg 1$, we let
\begin{equation}\label{eq:zetaXV}
\begin{split}
\zeta_{X,V}(z) = \exp\p{ - \sum_{\gamma} \frac{T_\gamma^{\#}}{T_\gamma}\frac{e^{\int_\gamma V} e^{-z T_\gamma} }{\va{\det(I- \mathcal{P}_\gamma)}} },
\end{split}
\end{equation}
where the sum $\gamma$ runs over the periodic orbits of $\phi_t$. If $\gamma$ is a periodic orbit then $T_\gamma$ denotes its length, $T_\gamma^{\#}$ its primitive length (i.e. the length of the smallest periodic orbits with the same image), the integral $\int_\gamma V$ is defined by $\int_\gamma V = \int_0^{T_\gamma} V\p{\phi_t(x)} \mathrm{d}t$ for any $x$ in the image of $\gamma$, and $\mathcal{P}_\gamma$ is the linearized Poincaré map associated with $\gamma$, i.e. $\mathcal{P}_\gamma = \mathrm{d}\phi_{T_\gamma}(x)_{|_{E_u(x) \oplus E_s(x)}}$. The map $\mathcal{P}_\gamma$ depends on the point $x$ in the image of $\gamma$, but its conjugacy class is well-defined. It follows from elementary estimates on the number of periodic orbits for $\phi_t$ (see \cite[Lemma 2.2]{DZdet} for instance) that $\zeta_{X,V}(z)$ is well-defined for $\Re z \gg 1$. It is proven in \cite{GLP} (see also \cite{DZdet} for an alternative proof) that $\zeta_{X,V}$ extends to a holomorphic function on $\C$ whose zeros are the Ruelle resonances (counted with multiplicity). 

Let us now state our main result. It extends Theorem \ref{thm:main} to the case with potentials.
\begin{theorem}\label{thm:main-strong}
Let $s \in \left[1,+\infty\right[$. Let $M$ be a $n$-dimensional $\G^s$ compact manifold. Let $X$ be a $\G^s$ vector field that generates an Anosov flow $\p{\phi_t}_{t \in \R}$. Let $V : M \to \C$ be a $\G^s$ potential. Then there is a constant $C > 0$ such that for every $z \in \C$ we have
\begin{equation*}
\begin{split}
\va{\zeta_{X,V}(z)} \leq C \exp\p{C \va{z}^{n s}}.
\end{split}
\end{equation*}
In particular, the order of $\zeta_{X,V}$ is less than $n s$.
\end{theorem}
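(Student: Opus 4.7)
The plan is to apply the I-Lagrangian machinery of Chapter \ref{part:FBI} to realize $P = X + V$ as a closed unbounded operator on a Hilbert space $\mathcal{H}_\Lambda^0$ with discrete spectrum and Schatten-class resolvent, and then convert the Schatten bound into a growth bound on $\zeta_{X,V}$ via an identification of the dynamical determinant with a Fredholm-type determinant.

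First, I would construct a $\G^s$ escape function $G_0 \in S^{1/s}_{KN}(T^\ast M)$, negative and elliptic of order $1/s$ outside an arbitrarily small conical neighbourhood of the dual flow direction $E_0^\ast = (E_u \oplus E_s)^\perp$, and satisfying the Anosov monotonicity $H_{p_0} G_0 \leq 0$ with strict decay $H_{p_0} G_0 \leq -c \jap{\xi}^{1/s}$ outside that neighbourhood. Here $p_0(\alpha) = i \xi(X(x))$ is the principal symbol of $X$, so $H_{p_0}$ is the symplectic lift of the flow, under which dual covectors are expelled from $E_0^\ast$ toward $E_u^\ast \cup E_s^\ast$. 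The $\mathcal{C}^\infty$ construction is classical; lifting it to the $\G^s$ category is the first item in \S\ref{sec:Flow-adapted-spaces}. Setting $G = \tau_0 h^{1 - 1/s} G_0$ and $\Lambda = \exp(H_G^{\omega_I})(T^\ast M)$ then produces a $(\tau_0, s)$-adapted Lagrangian.

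Second, I would study the closed unbounded operator $P = X + V$ on $\mathcal{H}_\Lambda^0$, with domain $\mathcal{H}_\Lambda^1$ (well-defined since $P \in \G^s\Psi^1(M)$). The multiplication formula (Proposition \ref{propmultform}), applied to the almost analytic extension of $p_0$ restricted to $\Lambda$, gives
\[
\Re p_\Lambda(\alpha) = -\tau_0 h^{1-1/s} H_{p_0} G_0(\alpha) + \O\!\left( \tau_0^2 h^{2-2/s} \jap{\xi}^{2/s - 1} \right),
\]
so that $\Re p_\Lambda \leq 0$, with $\Re p_\Lambda \leq -c\tau_0 h^{1-1/s} \jap{\xi}^{1/s}$ outside a conical neighbourhood of $E_0^\ast$. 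This yields the dissipative estimate that, for some constant $A$ depending on $V$, the operator $A - P$ is maximally dissipative on $\mathcal{H}_\Lambda^0$. Consequently $(z - P)^{-1}$ exists as a bounded operator on $\mathcal{H}_\Lambda^0$ for $\Re z > A$, is given by \eqref{eq:a_quoi_doit_ressembler_une_resolvante}, and by the Faure--Sjöstrand argument it admits a meromorphic continuation to all of $\C$ whose poles are exactly the Ruelle--Pollicott resonances of $P$.

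The key quantitative step is the Schatten bound. Combining the Toeplitz calculus of Proposition \ref{proptoeplitz} with the singular value estimate of Proposition \ref{propvalsing} — which says that the inclusion $\mathcal{H}_\Lambda^{m+q} \hookrightarrow \mathcal{H}_\Lambda^q$ has singular values of size $\O(k^{-m/n})$ — I would show that, modulo a Toeplitz remainder of lower order, $(z - P)^{-1}$ factors through the inclusion $\mathcal{H}_\Lambda^{1/s} \hookrightarrow \mathcal{H}_\Lambda^0$. This would place the resolvent in the Schatten class $\mathcal{S}_p$ for every $p > ns$, with Schatten norm controlled polynomially in $|z|$ on vertical lines $\Re z = A + 1$. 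By the Weyl inequality, the resonances $(\lambda_j)$ of $P$ then satisfy $\sum_j |\lambda_j|^{-p} < \infty$ for every $p > ns$.

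Finally, to relate this to $\zeta_{X,V}$, I would invoke the identification, in the spirit of Giulietti--Liverani--Pollicott \cite{GLP} and Dyatlov--Zworski \cite{DZdet}, of $\zeta_{X,V}$ with a regularized Fredholm determinant $D(z)$ of the operator $P$ on $\mathcal{H}_\Lambda^0$, obtained by matching the flat trace (sum over periodic orbits) with the operator trace of suitable powers of the resolvent. The Schatten-$p$ bound on $(z-P)^{-1}$ for every $p > ns$ translates, via the standard Weierstrass product representation of $D(z)$ using factors of genus $\lfloor ns \rfloor$, into $|D(z)| \leq C \exp(C|z|^{ns})$. Since $\zeta_{X,V}$ and $D$ have the same zeros with the same multiplicities and both are entire, Hadamard's factorization theorem gives $\zeta_{X,V}(z) = e^{Q(z)} D(z)$ with $Q$ a polynomial of degree $\leq ns$, and the conclusion follows.

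The main obstacle is twofold. The first is the construction of the Gevrey escape function $G_0$ of positive order $1/s$: the available Anosov escape functions are of logarithmic order in the $\mathcal{C}^\infty$ category, and passing to order $1/s$ requires both the Anosov transversality and global $\G^s$ control — this is precisely why we stop at $1/s$, which is the maximal order compatible with almost analytic extensions in the $\G^s$ setting (see the discussion surrounding Theorem \ref{thm:deforming-Gs-pseudors}). The second difficulty is that $p_\Lambda$ is not elliptic of order $1$ on $\Lambda$ — it vanishes on the flow direction and at the zero section — so the gain of $\jap{\xi}^{1/s}$ must be extracted from the subprincipal contribution of $G$ away from $E_0^\ast$, which requires a careful decomposition of phase space combining the multiplication formula with a separate treatment of a conical neighbourhood of $E_0^\ast$, where the flow direction is handled by the potential and the semigroup estimate rather than by ellipticity.
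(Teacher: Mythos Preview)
Your overall strategy mirrors the paper's closely — the construction of the escape function $G_0$ of order $1/s$, the hypoellipticity $D(P) \subset \mathcal{H}_\Lambda^{1/s}$, the Schatten bound via Proposition~\ref{propvalsing}, and the identification with a regularized determinant are all the right moves. Two inaccuracies along the way: the domain of $P$ is not $\mathcal{H}_\Lambda^1$ but $\{u \in \mathcal{H}_\Lambda^0: Pu \in \mathcal{H}_\Lambda^0\}$, which hypoellipticity only places inside $\mathcal{H}_\Lambda^{1/s}$; and in the conical neighbourhood of $E_0^\ast$ the required ellipticity comes not from the potential or the semigroup but from the \emph{imaginary} part $\Im p = \xi(X)$, which is elliptic of order $1$ there — this is how Lemma~\ref{lmhypoell} recovers the full $\jap{\xi}^{1/s}$ gain in every direction.

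The genuine gap is in your final step. You say that since $\zeta_{X,V}$ and the regularized determinant $D$ share the same zeros, Hadamard gives $\zeta_{X,V} = e^{Q} D$ with $\deg Q \leq ns$. But Hadamard's factorization requires knowing \emph{a priori} that $\zeta_{X,V}$ has finite order, and that order bound is exactly what you are trying to prove — so the argument is circular. The paper avoids this entirely: Lemma~\ref{lmfactorisation} establishes the \emph{exact} identity
\[
\zeta_{X,V}(\lambda) \;=\; \det_m\!\bigl(I + (\lambda - z)(z - P)^{-1}\bigr)\, e^{Q_z(\lambda)}
\]
for any fixed $z$ with $\Re z \gg 1$, where $m$ is the smallest integer $> ns$ and $Q_z$ is an \emph{explicit} polynomial of degree $m-1$ whose coefficients are finite tails of the periodic-orbit sum. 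This identity is obtained by matching, term by term via the trace formula of Lemma~\ref{lmtraceresolvante}, the Taylor expansion of $\log\det_m$ at $\lambda = z$ against the orbit sum defining $\log\zeta_{X,V}$. One then lets $z = |\lambda|$ \emph{vary with} $\lambda$, so that $Q_{|\lambda|}(\lambda)\to 0$ as $|\lambda|\to\infty$, and bounds $\det_m$ via its Weierstrass product over the resonances (Lidskii) together with the counting bound $N(R) = \O(R^{ns})$ from Theorem~\ref{thm:spectral-theory}. The exact identity with the explicit polynomial correction is what carries the argument; merely knowing equality of zero sets is not enough.
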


As a by-product of the proof of Theorem \ref{thm:main-strong}, we deduce the following bound on the number of Ruelle resonances.

\begin{prop}\label{prop:borne-resonances}
Under the assumptions of Theorem \ref{thm:main-strong} and if $N(r)$ denotes the number of Ruelle resonances of $P$ of modulus less than $r$, then we have
\begin{equation*}
\begin{split}
N(r) \underset{r \to + \infty }{=} \O(r^{n s}).
\end{split}
\end{equation*}
\end{prop}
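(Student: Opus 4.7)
The plan is to derive the counting estimate directly from the growth bound on the dynamical determinant provided by Theorem \ref{thm:main-strong}, via a standard application of Jensen's formula. Recall that $\zeta_{X,V}$ is entire, that its zeros counted with multiplicity are exactly the Ruelle--Pollicott resonances of $P$, and that Theorem \ref{thm:main-strong} yields a constant $C > 0$ such that $|\zeta_{X,V}(z)| \leq C \exp(C|z|^{ns})$ for every $z \in \C$.

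First, I would reduce to the case $\zeta_{X,V}(0) \neq 0$. If the origin happens to be a resonance of multiplicity $m$, then $f(z) = z^{-m} \zeta_{X,V}(z)$ defines an entire function with $f(0) \neq 0$; since $|f(z)| \leq C |z|^{-m} \exp(C |z|^{ns})$ for $|z| \geq 1$ and $f$ is bounded on the unit disk, one gets a global bound of the form $|f(z)| \leq C' \exp(C' |z|^{ns})$ on $\C$. The non-zero resonances coincide (with multiplicity) with the non-zero zeros of $f$, and the finite shift by $m$ does not affect the $\O(r^{ns})$ estimate, so it is harmless to assume from now on that $\zeta_{X,V}(0) \neq 0$.

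Next, I would apply Jensen's formula to $\zeta_{X,V}$ on the disk of radius $2r$. Denoting by $(z_k)$ the zeros of $\zeta_{X,V}$ (repeated with multiplicity), this gives
\begin{equation*}
\sum_{|z_k| < 2r} \log \frac{2r}{|z_k|} = \frac{1}{2\pi} \int_0^{2\pi} \log |\zeta_{X,V}(2r e^{i\theta})| \, \mathrm{d}\theta - \log |\zeta_{X,V}(0)|.
\end{equation*}
Using the growth estimate from Theorem \ref{thm:main-strong}, the right-hand side is bounded above by $C (2r)^{ns} + C'$ for some constants $C, C' > 0$. On the other hand, dropping all terms in the left-hand sum with $|z_k| \geq r$ and using that each remaining term is at least $\log 2$, we obtain
\begin{equation*}
N(r) \log 2 \leq \sum_{|z_k| < r} \log \frac{2r}{|z_k|} \leq \sum_{|z_k| < 2r} \log \frac{2r}{|z_k|} \leq C (2r)^{ns} + C'.
\end{equation*}
This gives $N(r) = \O(r^{ns})$, which is the claimed estimate. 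There is no real obstacle here beyond being careful with the possible zero at the origin; the proposition is essentially a direct corollary of Theorem \ref{thm:main-strong} together with the classical fact that the order of an entire function controls the counting function of its zeros.
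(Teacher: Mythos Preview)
Your argument via Jensen's formula is correct and is in fact explicitly acknowledged by the paper: the remark immediately following the proposition states that it ``is an immediate consequence of Theorem \ref{thm:main-strong} and Jensen's formula.'' So nothing is wrong with the mathematics.

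However, the paper does \emph{not} prove the proposition this way, and for a structural reason. In the paper's logical order, the counting bound is established first (inside Theorem \ref{thm:spectral-theory}) and is then \emph{used} in the proof of Theorem \ref{thm:main-strong}: the estimates \eqref{eq:chtok} and \eqref{eq:chtak} that close the argument for the order of $\zeta_{X,V}$ rely precisely on $N(R) = \O(R^{ns})$. So within the paper, your derivation would be circular. The paper instead obtains the counting bound directly from the Schatten estimate on the resolvent: from Lemma \ref{lmhypoell} and Proposition \ref{propvalsing} one has singular values $\sigma_k\big((z-P)^{-1}\big) = \O(k^{-1/ns})$, and then Weyl-type inequalities (the eigenvalue/singular-value comparison from \cite[Corollary IV.3.4]{Gohb}) give $\widetilde{N}(R) \leq C R^{ns}$ for the eigenvalues of $(z-P)^{-1}$, hence for the Ruelle resonances. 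What your approach buys is brevity once Theorem \ref{thm:main-strong} is in hand; what the paper's approach buys is an independent proof that can feed into Theorem \ref{thm:main-strong}.
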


\begin{remark}
Proposition \ref{prop:borne-resonances} is an immediate consequence of Theorem \ref{thm:main-strong} and Jensen's formula \cite[Theorem 1.2.1]{Boas}. However, we will need to prove Proposition \ref{prop:borne-resonances} before Theorem \ref{thm:main-strong}. Notice also that Theorem \ref{thm:main-strong} and Hadamard's Factorization Theorem (see \cite[\S 2.7]{Boas}) imply that there is a canonical factorization for the dynamical determinant.
\end{remark}

\begin{remark}
It is well-known that the finiteness of the order of the dynamical determinant implies that a \emph{trace formula} associated with $P$ holds (see \cite[Proposition 1.5]{jezequelGlobalTraceFormula2019} for a precise statement). In particular, we prove here that the trace formula associated with $P$ holds when the data are Gevrey. However, this is not a new result since \cite[Corollary 1.8]{jezequelGlobalTraceFormula2019} states that the trace formula holds for a larger class of regularity than Gevrey.
\end{remark}

We will also give a statement about the $\G^{s}$ wave front sets (in the sense of \S \ref{sec:wfs}) of the resonant states. To do so, we need to introduce the decomposition $T^* M = E_0^* \oplus E_s^* \oplus E_u^*$ of the cotangent bundle of $M$. We define the space $E_0^*$ as the annihilator of $E_u \oplus E_s$, the space $E_s^*$ as the annihilator of $E_0 \oplus E_s$ and the space $E_u^*$ as the annihilator of $E_0 \oplus E_u$. Then we have the following statement:

\begin{prop}\label{prop:wfs_resonant_state}
Under the assumption of Theorem \ref{thm:main-strong}, if $f$ is a resonant state for $P = X+V$ then the Gevrey wave front set $\WF_{\G^{s}}(f)$ (see Definition \ref{def:wavefrontset}) of $f$ is contained in the stable codirection $E_s^*$.
\end{prop}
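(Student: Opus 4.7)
The plan is to combine the flow-adapted Lagrangian construction of Section \ref{sec:Flow-adapted-spaces} with the Gevrey wave-front characterization given by Lemma \ref{lemma:wavefrontset_HLambda}. First, I would use the spectral theory in Section \ref{sec:Flow-adapted-spaces} to realise the Ruelle--Pollicott resonance $\lambda$ as a genuine eigenvalue (up to a finite Jordan block) of $P = X + V$ acting on $\mathcal{H}_\Lambda^0$ for a suitable $(\tau_0,s)$-adapted Lagrangian $\Lambda = \exp(H_G^{\omega_I})(T^\ast M)$, with $G = \tau_0 h^{1-1/s} G_0$ and $G_0 \in S_{KN}^{1/s}$ the flow-adapted escape function built in that section. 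In particular the resonant state $f$ belongs to $\mathcal{H}_\Lambda^0$.

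Second, I would exploit the defining microlocal feature of the escape function $G_0$: for an Anosov flow, $G_0$ can be arranged to decrease strictly along the Hamiltonian flow of the principal symbol $\sigma(X/i)(\alpha) = \alpha_\xi(X(\alpha_x))$ outside any prescribed conic neighbourhood of the stable codirection $E_s^\ast$. Concretely, given any $\alpha_0 \in T^\ast M \setminus (E_s^\ast \cup \set{0})$, one can produce a conic neighbourhood $\Gamma$ of $\alpha_0$ on which $G_0$ is classically elliptic of order $1/s$ and strictly negative. Applied to $f \in \mathcal{H}_\Lambda^0$, Lemma \ref{lemma:wavefrontset_HLambda} then yields $\alpha_0 \notin \WF_{\G^s}(f)$, and since $\alpha_0$ is arbitrary outside $E_s^\ast$ we conclude $\WF_{\G^s}(f) \subseteq E_s^\ast$.

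The main obstacle I anticipate is the usual tension between the two roles played by $\tau_0$: realising $\lambda$ as a discrete eigenvalue on $\mathcal{H}_\Lambda^0$ typically requires $\tau_0$ large enough to push the essential spectral radius past $\Re \lambda$, whereas Lemma \ref{lemma:wavefrontset_HLambda} applies only when $\tau_0$ and $h$ are small. The resolution, in the spirit of \cite{FauSjo}, is that resonant states are intrinsic: they depend only on $\lambda$ and not on the particular Lagrangian used to see them, as long as $\mathcal{H}_\Lambda^0$ detects $\lambda$. Thus, having produced $f$ with one choice of $\Lambda$, I would construct an auxiliary escape function $G_0^{(\alpha_0)}$ tailored so that it is negative and classically elliptic near $\alpha_0$ while still admitting a small parameter $\tau_0^{(\alpha_0)}$ for which the corresponding Lagrangian $\Lambda^{(\alpha_0)}$ still places $\lambda$ in the discrete part of $\Spec(P)$, and check via a standard deformation argument (linking the two spaces through the analytic continuation of the resolvent, exactly as in the construction of the Ruelle--Pollicott spectrum) that $f \in \mathcal{H}_{\Lambda^{(\alpha_0)}}^0$. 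This localisation-and-gluing step, together with verifying that the resolvents computed on $\mathcal{H}_\Lambda^0$ and $\mathcal{H}_{\Lambda^{(\alpha_0)}}^0$ have coinciding residues at $\lambda$, is where the real work lies; once it is in place, Lemma \ref{lemma:wavefrontset_HLambda} closes the argument directly.
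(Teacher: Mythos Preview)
Your approach is essentially the paper's own: use the intrinsic nature of resonant states together with Lemma \ref{lemma:wavefrontset_HLambda}, applied to an escape function $G_0$ chosen (via Lemma \ref{lemma:escape-function}(i)) to be negative and classically elliptic of order $1/s$ outside an arbitrary conic neighbourhood $\mathscr{C}^s$ of $E_s^*$.

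However, the ``main obstacle'' you anticipate is a phantom in this setting, and your proposed workaround (auxiliary $G_0^{(\alpha_0)}$ plus a deformation/gluing argument) is unnecessary. The tension you describe---needing $\tau_0$ large to push essential spectrum past $\Re\lambda$, versus $\tau_0$ small for Lemma \ref{lemma:wavefrontset_HLambda}---is the familiar situation in the $\mathcal{C}^\infty$ theory, but it \emph{does not arise here}. Theorem \ref{thm:spectral-theory} shows that for \emph{any} admissible escape function $G_0$ from Lemma \ref{lemma:escape-function} and \emph{any} sufficiently small $\tau$, the resolvent $(z-P)^{-1}$ on $\mathcal{H}_\Lambda^0$ is compact (indeed Schatten), so the spectrum of $P$ on $\mathcal{H}_\Lambda^0$ is discrete on all of $\C$ and equals the full Ruelle spectrum. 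There is no essential spectral bound depending on $\tau_0$: every resonance $\lambda$, however deep, is already a genuine eigenvalue on $\mathcal{H}_\Lambda^0$ with the resonant states as eigenvectors. This is precisely the gain bought by the Gevrey I-Lagrangian machinery over the $\mathcal{C}^\infty$ anisotropic spaces.

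The paper's proof is therefore a single paragraph: given $\mathscr{C}^s$, pick $G_0$ as in Lemma \ref{lemma:escape-function} with that $\mathscr{C}^s$; the resonant state $f$ lies in the resulting $\mathcal{H}_\Lambda^0$ by Theorem \ref{thm:spectral-theory}; Lemma \ref{lemma:wavefrontset_HLambda} gives $\WF_{\G^s}(f)\subseteq\mathscr{C}^s$; intersect over all $\mathscr{C}^s$.
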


\begin{remark}
Proposition \ref{prop:wfs_resonant_state} may seem surprising to readers familiar with papers on Ruelle resonances from the microlocal community (for instance \cite{FauSjo,DZdet,dyatlovRuelleZetaFunction2017}). Indeed, in these papers resonant states have their wave front sets in $E_u^*$. The difference is due to the fact that we study resonant states associated with the Koopman operator (the operator \eqref{eq:transfer} for $t \geq 0$) while \cite{FauSjo,DZdet,dyatlovRuelleZetaFunction2017} study resonant states associated with the transfer operator (the operator \eqref{eq:transfer} for $t \leq 0$). One can go from one to another by reversing time, which exchanges stable and unstable direction, so that our result is coherent with the literature. Notice also that in \cite{Faure-Roy-Sjostrand-08}, while studying the Koopman operator, resonant states have their wave front set contained in $E_u^*$. This is just because \cite{Faure-Roy-Sjostrand-08} uses a different convention for the definition of $E_u^*$ and $E_s^*$ than \cite{FauSjo,DZdet,dyatlovRuelleZetaFunction2017}. Finally, it is well-known in the dynamical community that resonant states are ``smooth in the stable direction'' (if we study the Koopman operator). This is precisely what happens here: in practical terms, having its wave front set in $E_s^*$ means ``being smooth in the stable direction''. This may seem strange and is due to the convention for the definition of $E_u^*$ and $E_s^*$ that we borrowed from \cite{FauSjo} (we have in particular that $\dim E_u^* = \dim E_s$).
\end{remark}

While it may seem obvious to specialists, let us notice that our results extend to the case of vector bundles. Let $F \to M$ be a complex $\G^s$ vector bundle. Assume that $\phi_t$ lifts to a $\G^s$ one parameter subgroup of vector bundle automorphisms $\p{L_t}_{t \in \R}$ of $F$. In this context, we may define the Koopman operator $\mathcal{L}_t$ for $t \in \R$ and $u$ a smooth section of $F$ by $\mathcal{L}_t u(x) = L_{-t}(u\circ \phi_t(x))$ (this is the natural analogue of \eqref{eq:transfer} in the scalar case). Then, we may replace in the results above the operator $P = X +V$ by the operator $P$ define on smooth sections of $F$ by 
\begin{equation*}
\begin{split}
P u = \frac{\mathrm{d}}{\mathrm{d}t}(\mathcal{L}_t u)|_{t = 0}.
\end{split}
\end{equation*}
To highlight the similarity with the scalar case, we may choose a $\G^s$ connection $\nabla$ on $F$. Then the operator $P$ writes $P = \nabla_X + A$ where $A$ is a $\G^s$ section of the bundle of automorphisms of $F$. We see here that the principal symbol of $P$ is scalar, hence the eventual functional analytic complications that could arise when introducing vector bundles are restricted to the sub-principal level and are consequently dealt with easily. 

Concerning the algebra, the traces and determinants computations in the bundle case are carried out using the bundle version of Guillemin's Trace Formula \cite{guillemin-lectures-spectral-theory-77}. Hence, in this context the dynamical determinant writes (for $\Re z  \gg 1$)
\begin{equation*}
\begin{split}
\zeta_{F}(z) = \exp\p{- \sum_{\gamma} \frac{T_\gamma^{\#}}{T_\gamma} \frac{\Tr\p{L^\gamma} e^{- z T_\gamma}}{\va{\det\p{I - \mathcal{P}_\gamma}}}}.
\end{split}
\end{equation*}
Here, if $\gamma$ is a periodic orbit of $\phi_t$ and $x$ a point of the image of $\gamma$, the endomorphism $L^\gamma$ is the restriction of $L_{T_\gamma}$ to $F_x$ (the conjugacy class of $L^\gamma$ is well-defined).

Using a trick due to Ruelle to write zeta functions as alternate products of dynamical determinants (as in \cite{GLP, DZdet}), the bundle version of Theorem \ref{thm:main} implies that the \emph{Ruelle zeta function}, i.e.
\[
\zeta_R(z) := \exp\left( -  \sum_{\gamma} \frac{T_\gamma^\#}{T_\gamma} e^{- z T_\gamma} \right),
\]
associated with a $\G^s$ Anosov flow, has order less than $ns$.

Returning back to the scalar case, we will also use the tools that we developed in order to study stochastic and deterministic perturbations of Gevrey Anosov flows, respectively in \S \ref{sec:viscosite} and in \S \ref{sec:linear_response}. In \S \ref{sec:viscosite}, we consider a perturbation of the operator $P = X + V$ of the form
\begin{equation*}
\begin{split}
P_\epsilon = P + \epsilon \Delta,
\end{split}
\end{equation*}
where $\epsilon \geq 0$ and $\Delta$ is a self-adjoint, negative and elliptic of order $m > 1$ (in the classical sense) $\G^s$ pseudor. The operator $P_\epsilon$ acting on $L^2$ (with its natural domain) has its real part $\Re V + \epsilon \Delta$ bounded from above, hence its resolvent set is non-empty (and it generates a strongly continuous semi-group). When $\epsilon > 0$, the operator $P_\epsilon$ is elliptic, so that its resolvent is compact and $P_\epsilon$ has discrete spectrum $\sigma_{L^2}\p{P_\epsilon}$ on $L^2\p{M}$. We know from \cite[Theorem 1]{dyatlovStochasticStabilityPollicott2015} that $\sigma_{L^2}\p{P_\epsilon}$ converges locally to the Ruelle spectrum of $P$. We will prove a global version of this result. To do so, we need to introduce a new distance to compare spectra. If $z \in \C$, we define the distance $d_z$ on $\C \cup \set{\infty} \setminus {z}$ by
\begin{equation*}
\begin{split}
d_z(x,y) = \va{\frac{1}{z-x} - \frac{1}{z-y}}.
\end{split}
\end{equation*}
We will prove in \S \ref{sec:viscosite} the following ``global'' version of \cite[Theorem 1]{dyatlovStochasticStabilityPollicott2015}.

\begin{theorem}\label{theorem:viscosite}
Under the assumption of Theorem \ref{thm:main-strong}, if $h$ is small enough, then for every $p > n s$ and $z \in \R_+$ large enough, there is a constant $C > 0$ such that for every $\epsilon > 0$ small enough, we have
\begin{equation*}
\begin{split}
d_{z,H}\p{\sigma_{\textup{Ruelle}}\p{P} \cup \set{\infty}, \sigma_{L^2}\p{P_\epsilon} \cup \set{\infty}} \leq C \va{\ln \epsilon}^{- \frac{1}{p}}.
\end{split}
\end{equation*}
Here, $d_{z,H}$ denotes the Hausdorff distance associated to the distance $d_z$ and $\sigma_{\textup{Ruelle}}\p{P}$ the Ruelle spectrum of $P$.
\end{theorem}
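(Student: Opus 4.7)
The plan is to realize both $P$ and $P_\epsilon$ as closed operators on a common I-Lagrangian Hilbert space $\mathcal{H}_\Lambda^0$ coming from the proof of Theorem \ref{thm:main-strong}, and then compare their resolvents in a Schatten norm. First I would recall from the proof of Theorem \ref{thm:main-strong} that, for a suitable $(\tau_0,s)$-adapted Lagrangian $\Lambda$ and $z_0 \in \R$ sufficiently large, the resolvent $(z_0-P)^{-1}$ exists as a bounded operator on $\mathcal{H}_\Lambda^0$ and belongs to the Schatten class $\mathcal{S}_p$ for every $p > ns$ (this is where Proposition \ref{propvalsing} combined with the Gevrey escape function gives the required singular-value decay). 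Its non-zero eigenvalues are exactly the $1/(z_0-\lambda)$ for $\lambda$ ranging over the Ruelle spectrum of $P$, which is precisely the reason to work with the distance $d_{z_0}$.

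Next, I would show that for every $\epsilon > 0$ small, the $L^2$-spectrum $\sigma_{L^2}(P_\epsilon)$ coincides, in a sufficiently large half-plane, with the spectrum of $P_\epsilon$ acting on $\mathcal{H}_\Lambda^0$, and that $(z_0-P_\epsilon)^{-1}$ also lies in $\mathcal{S}_p(\mathcal{H}_\Lambda^0)$ for $p > ns$. The first point follows from the standard argument recalled in the introduction of this chapter: for $\Re z \gg 1$, both resolvents are the Laplace transform of the perturbed semigroup, so they agree as operators from $\G^s(M)$ to $\U^s(M)$; analytic continuation then identifies the two spectra. The second point comes from writing $P_\epsilon = P + \epsilon\Delta$ and using Proposition \ref{lemma:boundedness} (applied to $\Delta$ as a $\G^s$ pseudor) together with a parametrix argument, just as for $P$.

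The core technical step is a Schatten-norm comparison of resolvents on $\mathcal{H}_\Lambda^0$: I want a bound of the form
\begin{equation*}
\left\| (z_0-P)^{-1} - (z_0-P_\epsilon)^{-1} \right\|_{\mathcal{S}_p(\mathcal{H}_\Lambda^0)} \leq \frac{C}{|\ln \epsilon|}.
\end{equation*}
Starting from the resolvent identity
\begin{equation*}
(z_0-P)^{-1} - (z_0-P_\epsilon)^{-1} = -\epsilon\, (z_0-P)^{-1}\, \Delta\, (z_0-P_\epsilon)^{-1},
\end{equation*}
the factor $\epsilon$ has to beat the $O(1/\epsilon)$ growth of $\Delta(z_0-P_\epsilon)^{-1}$ coming from the ellipticity of $\epsilon\Delta$. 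The strategy is to calibrate the Lagrangian deformation with $\epsilon$, typically taking $\tau_0 \sim 1/|\ln\epsilon|$, so that the action of $\Delta$ on $\mathcal{H}_\Lambda^0$ remains uniformly controlled by the Gevrey escape function, while the Schatten-$p$ norm of $(z_0-P)^{-1}$ stays bounded. An interpolation (or iterated resolvent identity) between the $L^2$ elliptic gain and the $\mathcal{H}_\Lambda^0$ calculus then converts the $\epsilon \cdot (1/\epsilon)$ cancellation into the claimed logarithmic smallness.

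Once this Schatten estimate is in hand, the conclusion follows from a standard perturbation bound: for compact operators $A,B$ on a Hilbert space with $A-B \in \mathcal{S}_p$, the non-zero spectra (enlarged by $\{0\}$) satisfy
\begin{equation*}
d_H\left(\sigma(A)\cup\{0\},\, \sigma(B)\cup\{0\}\right) \leq C \left\|A-B\right\|_{\mathcal{S}_p}^{1/p}.
\end{equation*}
Applied to $A=(z_0-P)^{-1}$ and $B=(z_0-P_\epsilon)^{-1}$, and unwinding the definition of $d_{z_0,H}$ (which identifies $0$ in the resolvent picture with $\infty$ in the spectral picture), this yields exactly $d_{z_0,H}(\sigma_{\mathrm{Ruelle}}(P)\cup\{\infty\}, \sigma_{L^2}(P_\epsilon)\cup\{\infty\}) \leq C |\ln\epsilon|^{-1/p}$ for $p > ns$, and a simple compactness/rescaling argument extends the bound to arbitrary $z \in \R_+$ large.

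The main obstacle will be the Schatten-norm estimate with logarithmic precision. Two coupled difficulties live there: (i) controlling a Gevrey pseudor $\Delta$ of order $m > 1$ on a Gevrey adapted Lagrangian whose associated escape function has order only $1/s$ — this requires a careful version of Proposition \ref{lemma:boundedness} in which the constants are tracked with respect to the deformation parameter $\tau_0$; and (ii) tuning $\tau_0$ as a function of $\epsilon$ so that the naive $\epsilon \cdot (1/\epsilon)$ cancellation in the resolvent identity produces a genuine small factor rather than an $O(1)$ bound. This calibration, together with the Hausdorff-distance exponent $1/p$, is what produces the $|\ln\epsilon|^{-1/p}$ rate.
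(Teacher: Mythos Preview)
Your overall architecture (work on a fixed $\mathcal{H}_\Lambda^0$, compare resolvents, pass to Hausdorff distance of spectra) matches the paper, but the mechanism by which the logarithm appears is misidentified, and this creates a real gap.

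The ``standard perturbation bound'' you invoke, namely $d_H(\sigma(A)\cup\{0\},\sigma(B)\cup\{0\})\le C\|A-B\|_{\mathcal{S}_p}^{1/p}$, is not a theorem for general (non-normal) compact operators. Resolvents of Schatten-class operators can grow like $\exp(c\,d(\lambda,\sigma)^{-p})$ near the spectrum, so a small perturbation in any norm can move eigenvalues by much more than a power of that norm. The correct statement, which is what the paper actually uses (Bandtlow's Theorem~5.2), has the shape: if $A$ and $B$ lie in a fixed $\mathcal{S}_p$-ball, then $d_H(\sigma(A),\sigma(B))\le C\,f_p\!\left(\|A-B\|_{\mathrm{op}}^{-1}\right)^{-1}$ with $f_p(x)\sim(\ln x)^{1/p}$. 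The logarithm is produced by this resolvent-growth phenomenon, not by a logarithmic smallness of $A-B$.

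Accordingly, the paper does \emph{not} aim for $\|(z-P)^{-1}-(z-P_\epsilon)^{-1}\|_{\mathcal{S}_p}\le C/|\ln\epsilon|$. It works on a \emph{fixed} Lagrangian $\Lambda$ (no $\epsilon$-dependent calibration of $\tau_0$) and proves two things: (i) a polynomial \emph{operator-norm} bound $\|(z-P)^{-1}-(z-P_\epsilon)^{-1}\|_{\mathcal{H}_\Lambda^0\to\mathcal{H}_\Lambda^0}\le C\epsilon^{\delta/m}$, obtained from the resolvent identity together with the interpolation $\|(z-P_\epsilon)^{-1}\|_{\mathcal{H}_\Lambda^{\delta-m}\to\mathcal{H}_\Lambda^0}\le C\epsilon^{\delta/m-1}$ (Lemma~\ref{lemma:interpolation_implicite}); and (ii) a \emph{uniform} bound $\|(z-P_\epsilon)^{-1}\|_{\mathcal{H}_\Lambda^0\to\mathcal{H}_\Lambda^\delta}\le C$, which via Proposition~\ref{propvalsing} gives uniform $\mathcal{S}_p$-membership for all $\epsilon\in[0,1]$. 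Plugging (i) and (ii) into Bandtlow's bound yields $d_H\le C(\ln\epsilon^{-\delta/m})^{-1/p}=C'|\ln\epsilon|^{-1/p}$. Your proposed calibration $\tau_0\sim 1/|\ln\epsilon|$ is therefore unnecessary and would in fact be harmful, since it makes the underlying Hilbert space $\epsilon$-dependent and destroys the uniform $\mathcal{S}_p$ bound that Bandtlow requires. Finally, for the identification $\sigma_{L^2}(P_\epsilon)=\sigma_{\mathcal{H}_\Lambda^0}(P_\epsilon)$, the paper does not use a Laplace-transform/semigroup argument (it is not clear $P_\epsilon$ generates a semigroup on $\mathcal{H}_\Lambda^0$); instead it uses an elliptic-regularity/Lagrangian-interpolation argument (Lemma~\ref{lemma:equivalence_des_spectres}).
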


\begin{remark}
Theorem \ref{theorem:viscosite} also applies if $\Delta$ is a classical differential operator with $\G^s$ coefficients (for instance, the Laplacian associated to a $\G^s$ metric). Indeed, one only needs to replace $\Delta$ by $h^m \Delta$ and $\epsilon$ by $h^{-m}\epsilon$ in Theorem \ref{theorem:viscosite}.
\end{remark}

In the proof of Theorem \ref{theorem:viscosite}, we will use ideas similar to those of \cite{Zworski-Galkowski-2}, with the necessary modifications in our different context (in particular, we need to deal with the flow direction and the fact that we are dealing with operators of order $1$). The convergence in Theorem \ref{theorem:viscosite} seems to be very weak. However, we think that it is not reasonable to expect too fast a convergence in such a global result. Indeed, when we add the pseudo-differential operator $\Delta$ to $P$ in order to form $P_\epsilon$, since $\Delta$ has higher order, we can expect that the spectrum of $P_\epsilon$ looks globally like the spectrum of $\Delta$, rather than like the Ruelle spectrum of $P$. Indeed, the higher order operator will be predominant at higher frequencies. Furthermore, the spectrum of $\Delta$ is contained in $\R_-$ while we expect some kind of vertical structure for the Ruelle spectrum of $P$ (see for instance \cite{ltfzwor,faureBandStructureRuelle2013}). Hence, we may expect $\sigma_{L^2}\p{P_\epsilon}$  to be some kind of ``flattened'' version of the Ruelle spectrum of $P$, and its global structure is thus very different from the actual Ruelle spectrum of $P$.

Finally, \S \ref{sec:linear_response} will be dedicated to the study of deterministic perturbations of an Anosov flow. Such perturbations have already been studied by functional analytic methods, see \cite{Butterley-Liverani-07,bulicor}, and we will focus on the particularity of our highly regular context. The main results of \S \ref{sec:linear_response} are of technical natures and cannot be stated now. Let us just notice that we will prove in \S \ref{sec:linear_response} a deterministic analogue of Theorem \ref{theorem:viscosite} (see Corollary \ref{cor:lin_resp_global}) and Theorem \ref{theorem:SRB} on the dependence of the SRB measure of a real-analytic Anosov flow on the flow. The main point here is that in the real-analytic case we are able to set up a Kato theory for real-analytic Anosov flow (Theorem \ref{thm:lanalyticite_engendre_lanalyticite}).

\section{I-Lagrangian spaces adapted to a Gevrey Anosov flow}
\label{sec:Flow-adapted-spaces}

From now on, $s \in \left[1,+ \infty\right[$ is fixed, $X$ is a $\G^s$ vector field on a $n$-dimensional $\G^s$ manifold $M$ that generates an Anosov flow $\p{\phi_t}_{t \in \R}$, and $V : M \to \C$ is a $\G^s$ function. We define the differential operator $P = X +V$, and the associated Koopman operator is given by \eqref{eq:transfer}. Without loss of generality, we may assume that $M$ is endowed with a structure of real-analytic Riemannian manifold (compatible with its $\G^s$ structure, see Remark \ref{remark:structure_analytique}). 

The machinery from \S \ref{sec:Grauert} is then available, in particular we denote by $\smash{\widetilde{M}}$ a complex neighbourhood for $M$. According to Theorem \ref{thm:existence-good-transform}, there is an analytic FBI transform $T$ on $M$ such that $T^* T = I$. As above we set $S = T^*$. In order to apply the results from the previous chapter to the operator $P$, we need first to find a suitable $\p{\tau_0,s}$-adapted Lagrangian $\Lambda$. The Lagrangian $\Lambda$ will be defined by \eqref{eq:adapted_Lagrangian} where the symbol $G$ is defined by $G = \tau G_0$ where $\tau \ll h^{1 - 1/s}$ and $G_0$ is a so-called \emph{escape function}. The construction of $G_0$ is detailed in \S\ref{sec:escape_function} (see Lemma \ref{lemma:escape-function}). In \S \ref{sec:spectral_theory}, we will then describe the spectral theory of $P$ on the related I-Lagrangian space.

\subsection{Constructing an escape function}\label{sec:escape_function}

Recall that the decomposition $TM = E_0 \oplus E_u \oplus E_s$ of the tangent bundle induces a dual decomposition $T^* M = E_0^* \oplus E_u^* \oplus E_s^*$ of the cotangent bundle. Here, $E_0^* = (E_u \oplus E_s)^{\perp}$, the space $E_u^*= (E_0 \oplus E_u)^\perp$ and $E_s^* = (E_0 \oplus E_s)^\perp$. We denote by $p : T^* M \to \C$ the principal symbol of the semi-classical differential operator $hP$. We recall for $\alpha = (\alpha_x,\alpha_\xi) \in T^* M$,
\begin{equation}\label{eq:extensionp}
\begin{split}
p(\alpha) = i \alpha_\xi\p{X(\alpha_x)}.
\end{split}
\end{equation}
To apply the machinery presented in the previous part, we will need an almost analytic extension for $p$. We construct it in the following way: we take a $\G^s$ almost analytic extension $\widetilde{X}$ for $X$, given by Lemma \ref{lemma:almost-analytic-extension-Gs} if $s > 1$ (we just take $\widetilde{X} = X$ if $s=1$), and then we set for $\alpha \in \p{T^* M}_{\epsilon_0}$ (for some small $\epsilon_0 > 0$)
\begin{equation}\label{eq:defptilde}
\begin{split}
\tilde{p}(\alpha) = i \alpha_\xi\p{\widetilde{X}\p{\alpha_x}}.
\end{split}
\end{equation}
It will be important when constructing the escape function $G_0$ that this almost analytic extension is linear in $\alpha_\xi$. We are now ready to construct $G_0$.

\begin{lemma}\label{lemma:escape-function}
Let $\mathscr{C}^0$ and $\mathscr{C}^s$ be conical neighbourhoods respectively of $E_0^*$ and of $E_s^*$ in $T^* M$. Let $\delta \geq 0$. Then there are arbitrarily small $\epsilon_0 > \epsilon_1 > 0$ and a symbol $G_0$ of order $\delta$ on $(T^* M)_{\epsilon_0}$, supported in $(T^* M)_{\epsilon_1}$ with the following properties:
\begin{enumerate}[label=(\roman*)]
\item the restriction of $G_0$ to $T^* M$ is negative and classically elliptic of order $\delta$ outside of $\mathscr{C}^s$;
\item the restriction of $\set{G_0,\Re \tilde{p}}$ to $T^* M$ is negative and classically elliptic of order $\delta$ outside of $\mathscr{C}^0$;
\item if $s=1$, there are $C,\epsilon_2 > 0$ such that $\set{G_0,\Re \tilde{p}} \leq C$ on $(T^* M)_{\epsilon_2} $. If $s > 1$, there is $\epsilon_2$ such that for every $N > 0$ there is $C_N > 0$ such that for all $\alpha \in (T^* M)_{\epsilon_2}$ we have $\set{G_0,\Re \tilde{p}}(\alpha) \leq C_N(1 + \va{\Im \alpha_x}^N \jap{\va{\alpha}}^\delta)$.
\end{enumerate}
Here, the Poisson Bracket $\set{G_0,\Re \tilde{p}}$ is the one associated with the real symplectic form $\omega_I$ on $(T^* M)_{\epsilon_0}$.
\end{lemma}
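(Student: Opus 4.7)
The strategy is to construct $G_0$ in two stages: first build a real-valued $\mathcal{G}^s$ escape function $g$ directly on $T^\ast M$, adapted to the symplectic lift of $\phi_t$, and then extend it to the Grauert tube via a real-part cutoff applied to an almost analytic extension. In local coordinates $\alpha_x = x + iy$, $\alpha_\xi = \xi + i\eta$ and using the almost-analyticity of $\widetilde X$, a direct computation at real points gives
\begin{equation*}
\{G_0, \Re \tilde p\}_{\omega_I}\big|_{T^\ast M} = X(x)\cdot \partial_x G_0 - (\xi\cdot \partial_x X)\cdot \partial_\xi G_0 = H_{p_1}(G_0|_{T^\ast M}),
\end{equation*}
where $H_{p_1}$ is the Hamiltonian vector field of $p_1(x,\xi) = \xi\cdot X(x)$, i.e.\ the generator of the symplectic lift $\Phi_t$ of $\phi_t$. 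This reduces conditions (i)--(ii) to the classical Faure--Sj\"ostrand escape function problem for $g$ on $T^\ast M$.

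For the construction of $g$ I would follow the standard recipe in the Gevrey category: pick $\mathcal{G}^s$ zero-homogeneous (in $\xi$) cutoffs $\chi_0, \chi_u, \chi_s$ summing to $1$ at infinity, supported in arbitrarily narrow conical neighborhoods of $E_0^\ast$, $E_u^\ast$, $E_s^\ast$ with $\mathrm{supp}\,\chi_s \subset \mathscr{C}^s$ and $\mathrm{supp}\,\chi_0 \subset \mathscr{C}^0$, and set
\begin{equation*}
g(\alpha) = -(1 - \chi_0(\alpha))\,\langle\xi\rangle^\delta\cdot\frac{1}{T}\int_0^T (\chi_u - \chi_s)(\Phi_t(\alpha))\,\mathrm{d}t
\end{equation*}
for some sufficiently large $T$. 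Hyperbolicity of $\Phi_t$ ensures that the average is close to $+1$ for $\alpha \notin \mathscr{C}^0 \cup \mathscr{C}^s$, making $g$ negative and elliptic of order $\delta$ outside $\mathscr{C}^s$, and, after direct differentiation, $H_{p_1}g$ negative and elliptic of order $\delta$ outside $\mathscr{C}^0$ and bounded from above on all of $T^\ast M$ (the flow-invariant factor $(1-\chi_0)$ controls the contribution near $\mathscr{C}^0$). For $s > 1$ Gevrey regularity of $g$ is inherited from that of $\Phi_t$; the analytic case $s = 1$ requires replacing the $\mathcal{G}^s$ cutoffs by analytic approximations, which is a standard adaptation.

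To extend to the Grauert tube, I take $\widetilde g$ a Gevrey almost analytic extension of $g$ via Lemma \ref{lemma:almost-analytic-extension-Gs} (a genuine holomorphic extension when $s = 1$), pick $\chi \in \mathcal{G}^s(\R_+)$ with $\chi \equiv 1$ on $[0, 1/4]$ and $\mathrm{supp}\,\chi \subset [0,1]$, and set
\begin{equation*}
G_0(\alpha) := \chi(|\Im \alpha|^2 / \epsilon_1^2)\,\Re \widetilde g(\alpha).
\end{equation*}
This $G_0$ is real-valued, lies in $S^\delta((T^\ast M)_{\epsilon_0})$, is supported in $(T^\ast M)_{\epsilon_1}$, and coincides with $g$ on $T^\ast M$. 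Properties (i) and (ii), which are conditions at the real points, follow immediately from the corresponding properties of $g$ together with the reduction identity above.

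The main obstacle is verifying condition (iii). Using the algebraic identity $\{f, \bar h\}_{\omega_I} = 0$ for $f$ holomorphic and $\bar h$ anti-holomorphic yields, on $\{\chi \equiv 1\}$,
\begin{equation*}
\{G_0, \Re \tilde p\}_{\omega_I} = -\Im \{\widetilde g, \tilde p\}_\omega + E,
\end{equation*}
where $\{\cdot,\cdot\}_\omega$ denotes the holomorphic Poisson bracket and $E$ collects the Cauchy--Riemann error terms in $\bar\partial \widetilde g$ and $\bar\partial \tilde p$. In the analytic case $s = 1$, $E \equiv 0$ and the principal term equals the real part of the holomorphic extension of $H_{p_1}g|_{T^\ast M}$; its upper bound on $T^\ast M$ propagates to a sufficiently thin tube by Taylor expansion in $|\Im \alpha|$, since the Cauchy--Riemann equations kill the first-order correction and the second-order one is absorbed by taking $\epsilon_2$ small. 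For $s > 1$, Remark \ref{remark:aae_for_symbol} bounds $|\bar\partial \widetilde g|$ and $|\bar\partial \tilde p|$ by $C \langle|\alpha|\rangle^\bullet \exp(-c |\Im\alpha|^{-1/(s-1)})$; the stretched exponential is $\mathcal{O}(|\Im\alpha_x|^N)$ for every $N$, providing the announced $|\Im\alpha_x|^N \langle|\alpha|\rangle^\delta$ contribution from $E$, while the holomorphic principal part is handled as in the analytic case.
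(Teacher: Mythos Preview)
Your argument for property (iii) has a genuine gap, and it is precisely the obstruction the paper flags in its introduction and again in \S\ref{sec:symplectic-geometry} (the discussion around \eqref{eq:shifted-symbol}): an escape function built on $T^\ast M$ and then almost-analytically extended does not give a bounded Poisson bracket on a fixed-width tube.

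Concretely, your principal term $-\Im\{\tilde g,\tilde p\}_\omega$ is (for $s=1$) the imaginary part of a holomorphic symbol $h$ of order $\delta$, whose real part vanishes on $T^\ast M$. You are right that the Cauchy--Riemann equations kill the first-order Taylor correction of $\Im h$ in the normal variables $(y,\eta)=(\Im\alpha_x,\Im\alpha_\xi)$. But the second-order correction does not vanish: in Kohn--Nirenberg size it is
\[
\O\!\big(|y|^2\jap{\xi}^{\delta}+|y||\eta|\jap{\xi}^{\delta-1}+|\eta|^2\jap{\xi}^{\delta-2}\big),
\]
and on $(T^\ast M)_{\epsilon_2}$ (where $|y|\leq\epsilon_2$, $|\eta|\leq\epsilon_2\jap{\xi}$) each term is $\O(\epsilon_2^2\jap{\alpha}^{\delta})$. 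For $\delta>0$ this is \emph{unbounded} no matter how small $\epsilon_2$ is, so ``absorbed by taking $\epsilon_2$ small'' fails. The same $\eta^2$-contribution also violates the $s>1$ version of (iii), since it carries no factor of $|\Im\alpha_x|$ at all; and your error term $E$ has the same defect, because $\bar\partial\tilde g$ is controlled by $|\Im\alpha|_{KN}$ (involving $\Im\alpha_\xi$), not by $|\Im\alpha_x|$ alone. A secondary issue is that for $s=1$ there are no compactly supported analytic cutoffs on $T^\ast M$, so the ``standard adaptation'' you invoke for $g$ itself is not available.

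The paper's remedy is to abandon the extend-from-the-reals paradigm and construct $G_0$ \emph{directly on the tube} as a time-average along the flow $\Theta_t=\exp(-tH^{\omega_I}_{\Re\tilde p})$ of the very Hamiltonian whose bracket one must control (formula \eqref{eq:defG}). Then $\{G_0,\Re\tilde p\}=-H^{\omega_I}_{\Re\tilde p}G_0$ is an exact telescoping expression \eqref{eq:evolution}: boundary values of $m(\Theta_t(\alpha))|\Theta_{t,\xi}(\alpha)|^{\delta}$ plus a term handled by the identity $H^{\omega_I}_{\Re\tilde p}+H^{\omega_R}_{\Im\tilde p}=0$ (exact when $\tilde p$ is holomorphic, $\O(|\Im\alpha_x|^\infty)$ otherwise since $\tilde p$ is linear in $\alpha_\xi$). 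The boundary terms are then estimated using hyperbolic cone dynamics that the paper first verifies on $T^\ast M\otimes\C$ and then perturbs to a thin complex neighbourhood. This is what buys the uniform upper bound on the tube that your Taylor argument cannot deliver.
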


Before proving Lemma \ref{lemma:escape-function}, let us explain why we need our escape function to satisfy these properties. We recall that a symbol is said to be ``classically elliptic of order $\delta$'' if it is greater than $\jap{\va{\alpha}}^\delta$ when $\alpha$ is large enough. The point (i) is here to control the $\G^{s}$ wave front set of elements of the set $\mathcal{H}_\Lambda^0$, in particular of resonant states, using Lemma \ref{lemma:wavefrontset_HLambda}. The point (ii) will be used in the proof of Lemma \ref{lmhypoell} to deduce the hypoellipticity of the operator $P$ acting on $\mathcal{H}_\Lambda^0$ from the multiplication formula, Proposition \ref{propmultform}. Finally, the point (iii) will be used in the proof of Proposition \ref{prop:semigroup} to show that the Koopman operator \eqref{eq:transfer} defines a continuous semi-group on $\mathcal{H}_\Lambda^0$. This property is what ensures that the spectrum of $P$ on our spaces has a dynamical meaning. Even though point (ii) seems to be the most crucial one, since it is the one that allows us to enter the world of Schatten operators and eventually prove Theorem \ref{thm:main-strong}, the importance of (iii) could not be overestimated. Finally, notice that, in order to apply Lemma \ref{lemma:TPS-close-diagonal} in the most favorable case for us, it will be natural in the following to choose $\delta = 1/s$.

The proof of Lemma \ref{lemma:escape-function} will take the rest of this section. 
\begin{proof}[Proof of Lemma \ref{lemma:escape-function}]
We want to understand $\set{G_0,\Re \tilde{p}}$ in order to control how the real part of $\tilde{p}$ evolves under the flow of $H_{G_0}^{\omega_I}$. However, since $\set{G_0,\Re \tilde{p}} = - \set{\Re \tilde{p}, G_0}$, we may understand $\set{G_0,\Re \tilde{p}}$ by controlling how $G_0$ evolves under the flow of $-H_{\Re \tilde{p}}^{\omega_I}$. Hence, we need to understand the dynamics of this flow. To do so, we may multiply $\smash{\widetilde{X}}$ by a bump function identically equals to $1$ near $M$ (since we only claim properties for $G_0$ near $T^* M$ and we will not use the high regularity of $\smash{\widetilde{X}}$ in this proof). Then, it follows from the formula \eqref{eq:I-hamilton-field} that the flow of $-H_{\Re \tilde{p}}^{\omega_I}$ is complete. We denote this flow by $\p{\Theta_t}_{t \in \R}$ and write
\begin{equation*}
\begin{split}
\Theta_t(\alpha) = \p{\Theta_{t,x}(\alpha), \Theta_{t,\xi}(\alpha)}.
\end{split}
\end{equation*}
Using \eqref{eq:I-hamilton-field}, we see that $H_{\Re \tilde{p}}^{\omega_I}$ is given in coordinates $\p{x+iy ,\xi + i \eta}$ with $\smash{\widetilde{X}} = (\smash{\widetilde{X}_1},\dots, \smash{\widetilde{X_n}})$ by
\begin{equation}\label{eq:detaillonsunpeu}
\begin{split}
& -H_{\Re \tilde{p}}^{\omega_I} = \sum_{j = 1}^n \Re \widetilde{X_j} \frac{\partial}{\partial x_j} + \Im \widetilde{X_j} \frac{\partial}{\partial y_j} - \p{\jap{\xi,\frac{\partial \Im \widetilde{X}}{\partial y_j}} + \jap{\eta,\frac{\partial \Re \widetilde{X}}{\partial y_j}}} \frac{\partial}{\partial \xi_j} \\ & \qquad \qquad \qquad \qquad \qquad \qquad \qquad \qquad \quad - \p{\jap{\xi,\frac{\partial \Im \widetilde{X}}{\partial x_j}} + \jap{\eta,\frac{\partial \Re \widetilde{X}}{\partial x_j}}} \frac{\partial}{\partial \eta_j}.
\end{split}
\end{equation}
Indeed, it follows from \eqref{eq:extensionp} that in such coordinates we have
\begin{equation*}
\begin{split}
\Re \tilde{p} = - \jap{\xi,\Im \widetilde{X}} - \jap{\eta,\Re \widetilde{X}}.
\end{split}
\end{equation*}
From \eqref{eq:detaillonsunpeu}, we see that the projection $\Theta_{t,x}$ of $\Theta_t$ is in fact given by the formula 
\begin{equation*}
\begin{split}
\Theta_{t,x}(\alpha) = \tilde{\phi}_t(\alpha_x),
\end{split}
\end{equation*}
where $(\tilde{\phi}_t)_{t \in \R}$ denotes the flow of $\widetilde{X}$ (in particular, the restriction of $\tilde{\phi}_t$ to $M$ is $\phi_t$). Then, we notice that in \eqref{eq:detaillonsunpeu}, the component of $-H_{\Re \tilde{p}}^{\omega_I}$ along $\partial/ \partial \xi$ and $\partial/ \partial \eta$ is linear in $\p{\xi,\eta}$. It implies that
\begin{equation*}
\begin{split}
\Theta_{t,\xi}(\alpha) = L_t(\alpha_x)(\alpha_\xi),
\end{split}
\end{equation*}
where $L_t(\alpha_x)$ is a $\R$-linear application from $T^*_{\alpha_x} \smash{\widetilde{M}}$ to $\smash{T^*_{\tilde{\phi}_t(\alpha_x)} \widetilde{M}}$ (that depends smoothly on $t$ and $\alpha_x$). Now, since $\smash{\widetilde{X}}$ satisfies the Cauchy--Riemann equations and is tangent to $M$ on $M$, we find that, for $y = 0$, in the same system of coordinates as \eqref{eq:detaillonsunpeu}, we have
\begin{equation}\label{eq:CRishere}
\begin{split}
\frac{\partial \Re \widetilde{X}}{\partial y} = - \frac{\partial \Im \widetilde{X}}{\partial x} = 0 \textup{ and } \frac{\partial \Im \widetilde{X}}{\partial y} = \frac{\partial \Re \widetilde{X}}{\partial x} = \frac{\partial X}{\partial x}.
\end{split}
\end{equation} 
By uniqueness in the Cauchy--Lipschitz Theorem, we find by plugging \eqref{eq:CRishere} in \eqref{eq:detaillonsunpeu} that, for $x \in M$ and $t \in \R$ we have
\begin{equation}\label{eq:cestLt}
\begin{split}
L_t(x) = {}^T \! \p{\mathrm{d}\phi_t(x)^{-1}}. 
\end{split}
\end{equation}
Hence, the hyperbolicity of $\phi_t$ will have important consequences on the dynamics of $\Theta_t$. Let us ``complexify'' the bundles $E_{0,u,s}^\ast$. For $x\in M$, we denote by $\smash{E_{0}^{\C,*},E_u^{\C,*}}$ and $\smash{E_s^{\C,*}}$ the complexification of $E_0^*, E_u^*$ and $E_s^*$, considering linear forms valued in $\C$ instead of $\R$. For instance, for $x \in M$, we write $\smash{E_{0,x}^{\C,*}}$ for the subspace of $T_x^* M \otimes \C$ consisting of $\R$-linear maps from $T_x M$ to $\C$ that vanish on $E^u \oplus E^s$ (or, under a natural identification, of $\C$-linear forms on $\smash{T_x^* \widetilde{M}}$ that vanish on $E^u \oplus E^s$). From the fact that $T_ x M = E_{0,x} \oplus E_{u,x} \oplus E_{s,x}$ is a totally real subspace of maximal dimension of $T_x \smash{\widetilde{M}}$, we deduce that $\smash{T_x^* \widetilde{M} =  E_{0,x}^{\C,*} \oplus E_{u,x}^{\C,*} \oplus E_{s,x}^{\C,*}}$. Since $E_{0,x}, E_{u,x}$ and $E_{s,x}$ depends in a Hölder-continuous fashion on $x \in M$, so do $\smash{E_{0,x}^{\C,*}, E_{u,x}^{\C,*}}$ and $\smash{E_{s,x}^{\C,*}}$. Consequently, we may extend continuously $\smash{E_{0}^{\C,*},E_u^{\C,*}}$ and $\smash{E_s^{\C,*}}$ to $\smash{\widetilde{M}}$. Then, if $\smash{\widetilde{M}}$ is small enough, we have $\smash{T_x \widetilde{M} = E_{0,x}^{\C,*} \oplus E_{u,x}^{\C,*} \oplus E_{s,x}^{\C,*}}$ for all $x \in \widetilde{M}$. \emph{A priori}, this decomposition is only invariant under $L_t$ for $t \in \R$ and $x \in M$. If $\sigma \in \smash{T_x^* \widetilde{M}}$ then we write $\sigma = \sigma_0 + \sigma_u + \sigma_s$ for the decomposition of $\sigma$ under $\smash{T_x \widetilde{M} = E_{0,x}^{\C,*} \oplus E_{u,x}^{\C,*} \oplus E_{s,x}^{\C,*}}$. Then, we put a real Riemannian metric on $\smash{\widetilde{M}}$ and define for $x \in \widetilde{M}$ and $\gamma > 0$ the cones 
\begin{equation*}
\begin{split}
C_u^\gamma(x) = \set{ \sigma \in T_x^* \widetilde{M} : \va{\sigma_0} + \va{\sigma_s} \leq \gamma \va{\sigma_u}}
\end{split}
\end{equation*}
and
\begin{equation*}
\begin{split}
C_s^\gamma(x) = \set{ \sigma \in T_x^* \widetilde{M} : \va{\sigma_0} + \va{\sigma_u} \leq \gamma \va{\sigma_s}}.
\end{split}
\end{equation*}
Without loss of generality, we may assume that $\mathscr{C}^0$ is closed and does not intersect $E_u^* \oplus E_s^* \setminus \set{0}$. We may also assume that there is a closed conic neighbourhood $\smash{\mathscr{C}^{\C,0}}$ of $\smash{E_0^{\C,*}}$ in $\smash{T^* \widetilde{M}}$ such that $\smash{\mathscr{C}^{\C,0} \cap T^* M = \mathscr{C}^0}$. Choose then a small closed conic neighbourhood $\mathscr{C}^{0s}$ of $E_s^{*} \oplus E_0^{*}$ in $T^* M$, that does not intersect $E_u^* \setminus \set{0}$.

From \eqref{eq:cestLt} and standard arguments in hyperbolic dynamics, there are a large $T_0 > 0$, small $0 < \gamma_1 < \gamma$, some $\lambda > 1$ and a constant $C_1 >0$ such that if $\alpha = (\alpha_x,\alpha_\xi) \in T^* M \otimes \C$, and $T_1\geq T_0$ then:
\begin{enumerate}[label=\alph*.]
\item either $\Theta_{T_1,\xi}(\alpha) \in C_u^{\gamma_1}(\tilde{\phi}_{T_1}(\alpha_x))$ or $\Theta_{-T_0,\xi}(\alpha) \in C_s^{\gamma_1}(\tilde{\phi}_{-T_0}(\alpha_x))$ or we have $\alpha\in \mathscr{C}^{\C,0}$;
\item if $\Theta_{-T_0,\xi}(\alpha) \in C_u^\gamma(\tilde{\phi}_{-T_0}(\alpha_x))$ then $\Theta_{T_1,\xi}(\alpha) \in C_u^{\gamma_1}(\tilde{\phi}_{T_1}(\alpha_x))$ and we have $\va{\Theta_{T_1,\xi}(\alpha)} \geq \lambda \va{\Theta_{-T_0,\xi}(\alpha)}$;
\item if $\Theta_{T_1,\xi}(\alpha) \in C_s^\gamma(\tilde{\phi}_{T_1}(\alpha_x))$ then $\Theta_{-T_0,\xi}(\alpha) \in C_s^{\gamma_1}(\tilde{\phi}_{-T_0}(\alpha_x))$ and we have $\va{\Theta_{T_1,\xi}(\alpha)} \leq \lambda^{-1} \va{\Theta_{-T_0,\xi}(\alpha)}$;
\item if $\alpha \in T^* M$ does not belong to $\mathscr{C}^{0s}$, then, for $t \geq 0$, we have that $\Theta_{t,\xi}(\alpha) \notin C_s^{\gamma}(\tilde{\phi}_{t}(\alpha_x))$, and, for $t \geq T_0$, we have $\Theta_{t,\xi}(\alpha) \in C_u^{\gamma_1}(\tilde{\phi}_{t}(\alpha_x))$ and $\va{\Theta_{t,\xi}(\alpha)} \geq C_1^{-1} \va{\alpha_\xi}$.
\end{enumerate}
Since we ask here for $\alpha \in T^* M \otimes \C$, these are consequences of the hyperbolicity of $\phi_t$, that is, it only relies on the dynamic on $M$. We want to apply a perturbation argument to show that b and c remain true on a small complex neighbourhood of $M$, but we need first to fix the value of $T_1$. Hence, we fix the value of $T_1$, large enough such that we have
\begin{equation}\label{eq:choix_de_T_1}
\begin{split}
\sup_{\alpha \in T^* M \setminus \set{0}} \frac{1}{\va{\alpha_\xi}^\delta}\int_{-T_0}^0 \va{\Theta_{t,\xi}(\alpha)}^\delta \mathrm{d}t < \frac{T_1 - T_0}{2 C_1^\delta}.
\end{split}
\end{equation}

Now that $T_1$ is fixed, it follows from a perturbation argument that, up to taking a smaller $\lambda$, a smaller $\gamma$, a larger $\gamma_1$ and a larger $C_1$, the properties b and c above remain true when $(x,\sigma) \in T^* \p{M}_{\epsilon_0}$, for some small $\epsilon_0 > 0$ (and \eqref{eq:choix_de_T_1} remains true since we asked for a strict inequality). Then, we choose a symbol $m \in S_{KN}^0(T^* \p{M}_{\epsilon_0})$ of order $0$ on $T^*(M)_{\epsilon_0}$, valued in $\left[-1,1\right]$, with the following properties:
\begin{itemize}
\item if $x \in (M)_{\epsilon_0}$ and $\sigma \in T^*_x \widetilde{M} \setminus \p{C_u^\gamma(x) \cup C_s^\gamma(x)}$ or $\sigma$ is near $0$ then $m(x,\sigma) = 0$;
\item there is $C > 0$ such that if $x \in (M)_{\epsilon_0}$ and $\sigma \in C_s^{\gamma_1}(x)$ satisfies $\va{\sigma} \geq C$ then $m(x,\sigma) = 1$;
\item there is $C > 0$ such that if $x \in (M)_{\epsilon_0}$ and $\sigma \in C_u^{\gamma_1}(x)$ satisfies $\va{\sigma} \geq C$ then $m(x,\sigma) = - 1$;
\item $m$ is non-positive on $\mathcal{C}^\gamma_u$ and non-negative on $\mathcal{C}^\gamma_s$.
\end{itemize}
We also choose a $\mathcal{C}^\infty$ function $\chi : \R \to \left[0,1\right]$ vanishing on $\left]- \infty, 1\right]$ and taking value $1$ on $\left[2,+ \infty \right[$. Then, we may define $G_0$ near $T^* M \otimes \C$ by the formula
\begin{equation}\label{eq:defG}
\begin{split}
G_0(\alpha) = \int_{-T_0}^{T_1} m(\Theta_t(\alpha)) \va{\Theta_{t,\xi}(\alpha)}^\delta \mathrm{d}t - A \chi(\va{\alpha_\xi}) \va{\Re\p{\alpha_\xi\p{\widetilde{X}(\alpha_x)}}}^\delta,
\end{split}
\end{equation}
where $A$ is a large constant to be chosen later. Then we multiply $G_0$ by a bump function to satisfy the claim on the support. It does not change the value of $G_0$ near $T^* M$ and hence it will not interfere with the properties (i),(ii) and (iii). We may consequently use Formula \eqref{eq:defG} to prove (i),(ii) and (iii).

We start by proving (i). Take $\alpha \in T^* M \setminus \mathscr{C}^s$ and write
\begin{equation}\label{eq:ellipG}
\begin{split}
G_0(\alpha) & = \int_{-T_0}^{0} m(\Theta_t(\alpha)) \va{\Theta_{t,\xi}(\alpha)}^\delta \mathrm{d}t + \int_{0}^{T_1} m(\Theta_t(\alpha)) \va{\Theta_{t,\xi}(\alpha)}^\delta \mathrm{d}t \\ & \qquad \qquad \qquad \qquad \qquad - A \chi(\va{\alpha_\xi}) \va{\alpha_\xi\p{\widetilde{X}(\alpha_x)}}^\delta.
\end{split}
\end{equation}
The first two terms in \eqref{eq:ellipG} are symbols of order $\delta$ that do not depend on $A$. The last term is elliptic of order $\delta$ on a conical neighbourhood of $\mathscr{C}^0$ (since we assumed that $\mathscr{C}^0$ is closed and does not intersect $E_u^* \oplus E_s^* \setminus \set{0}$). Hence, if $A$ is large enough, $G_0$ is classically elliptic and negative on a conical neighbourhood of $\mathscr{C}^0$ in $T^* M$.  In fact, we see that this  conical neighbourhood may be chosen arbitrarily large as soon as it does not intersect $E_u^* \oplus E_s^* \setminus \set{0}$. Then, if $\alpha$ does not belong to this neighbourhood of $\mathscr{C}^0$, since it does not belong to $\mathscr{C}^s$ either, we see that $\alpha \notin \mathscr{C}^{0s}$ (provided that $\mathscr{C}^{0s}$ has been chosen narrow enough), and hence property d. above gives that, provided $\alpha_\xi$ is large enough,
\begin{equation*}
\begin{split}
\int_{0}^{T_1} m(\Theta_t(\alpha)) \va{\Theta_{t,\xi}(\alpha)}^\delta \mathrm{d}t & \leq \int_{T_0}^{T_1} m(\Theta_t(\alpha)) \va{\Theta_{t,\xi}(\alpha)}^\delta \mathrm{d}t \\
            & \leq - \int_{T_0}^{T_1} \va{\Theta_{t,\xi}(\alpha)}^\delta \mathrm{d}t \\
            & \leq - \frac{T_1 - T_0}{C_1^\delta} \va{\alpha_\xi}^\delta. 
\end{split}
\end{equation*}
Hence our choice of $T_1$ (see \eqref{eq:choix_de_T_1}) ensures that $G_0$ is negative and elliptic of order $\delta$ outside of $\mathscr{C}^{0s}$ (the last term in \eqref{eq:ellipG} is always non-positive). We just proved (i).

Now, we prove (ii). To do so we compute
\begin{equation}\label{eq:evolution}
\begin{split}
\set{G_0, \Re \tilde{p}} & = - \set{\Re \tilde{p},G_0} = -H_{\Re \tilde{p}}^{\omega_I} G_0 \\
   & = m(\Theta_{T_1}(\alpha)) \va{\Theta_{T_1,\xi}(\alpha)}^\delta - m(\Theta_{-T_0}(\alpha)) \va{\Theta_{-T_0,\xi}(\alpha)}^\delta \\ & \qquad \qquad \qquad + A H_{\Re \tilde{p}}^{\omega_I}\p{\chi(\va{\alpha_\xi}) \va{\Re\p{\alpha_\xi\p{\widetilde{X}(\alpha_x)}}}^\delta}.
\end{split}
\end{equation}
We start by considering the term
\begin{equation}\label{eq:premierterme}
\begin{split}
m(\Theta_{T_1}(\alpha)) \va{\Theta_{T_1,\xi}(\alpha)}^\delta - m(\Theta_{-T_0}(\alpha)) \va{\Theta_{-T_0,\xi}(\alpha)}^\delta.
\end{split}
\end{equation}
Assuming that $\alpha$ does not belong to $\mathscr{C}^0$, we know that $\Theta_{T_1,\xi}(\alpha)$ belongs to $\mathcal{C}_u^{\gamma_1}(\tilde{\phi}_{T_1}(\alpha_x))$ or $\Theta_{-T_0,\xi}(\alpha)$ belongs to $\mathcal{C}_s^{\gamma_1}(\tilde{\phi}_{-T_0}(\alpha_x))$. Let us assume for instance that $\Theta_{T_1,\xi}(\alpha)$ belongs to $\mathcal{C}_u^{\gamma_1}(\tilde{\phi}_{T_1}(\alpha_x))$ (the other case is symmetric). Then again there are two possibilities: either $\Theta_{-T_0,\xi}(\alpha)$ belongs to $\mathcal{C}_u^{\gamma}(\tilde{\phi}_{-T_0}(\alpha_x))$ or it does not. If it does then (for $\va{\alpha_\xi}$ large enough)
\begin{equation*}
\begin{split}
& m(\Theta_{T_1}(\alpha)) \va{\Theta_{T_1,\xi}(\alpha)}^\delta - m(\Theta_{-T_0}(\alpha)) \va{\Theta_{-T_0,\xi}(\alpha)}^\delta \\ & \qquad \qquad \leq - \va{\Theta_{T_1,\xi}(\alpha)}^\delta + \va{\Theta_{-T_0,\xi}(\alpha)}^\delta \\
    & \qquad \qquad \leq - \p{\lambda^\delta - 1} \va{\Theta_{-T_0,\xi}(\alpha)}^\delta \\
    & \qquad \qquad \leq - C^{-1} \va{\alpha_\xi}^\delta,
\end{split}
\end{equation*}
for some $C > 0$. If $\Theta_{-T_0,\xi}(\alpha)$ does not belong to $\mathcal{C}^\gamma_u$, then the situation is even simpler since the term $ - m(\Theta_{-T_0,\xi}(\alpha)) \va{\Theta_{-T_0,\xi}(\alpha)}^\delta$ is non-positive. Hence, the term \eqref{eq:premierterme} is negative and elliptic of order $\delta$ outside of $\mathscr{C}^0$. We focus now on the other term in \eqref{eq:evolution}. To do so, we introduce the Hamiltonian vector field $H_{\Im \tilde{p}}^{\omega_R}$ of $\Im \tilde{p}$ with respect to the real symplectic form $\omega_R$ and write:
\begin{equation}\label{eq:Leibniz}
\begin{split}
& H_{\Re \tilde{p}}^{\omega_I}\p{\chi(\va{\alpha_\xi}) \va{\Re\p{\alpha_\xi\p{\widetilde{X}(\alpha_x)}}}^\delta} \\ & \quad  = H_{\Re \tilde{p}}^{\omega_I}\p{\chi\p{\va{\alpha_\xi}}} \va{\Re\p{\alpha_\xi\p{\widetilde{X}(\alpha_x)}}}^\delta - \chi\p{\va{\alpha_\xi}} H_{\Im \tilde{p}}^{\omega_R}\p{\va{\Re\p{\alpha_\xi\p{\widetilde{X}(\alpha_x)}}}^\delta} \\
     & \qquad \qquad \qquad \qquad \qquad \qquad \qquad + \p{H_{\Im \tilde{p}}^{\omega_R} + H_{\Re \tilde{p}}^{\omega_I}}\p{\va{\Re\p{\alpha_\xi\p{\widetilde{X}(\alpha_x)}}}^\delta}.
\end{split}
\end{equation}
The first term in the right hand side of \eqref{eq:Leibniz} is compactly supported and may consequently be ignored. Since $\Re(\alpha_\xi(\widetilde{X}(\alpha_x))) = \Im \tilde{p}(\alpha)$, the second term is equal to $0$. To estimate the last term in \eqref{eq:Leibniz}, we work in local coordinates $(x+iy,\xi + i \eta)$. In these coordinates, we may compute $H_{\Im \tilde{p}}^{\omega_R}$ as we did for $H_{\Re \tilde{p}}^{\omega_I}$ in \eqref{eq:detaillonsunpeu}, and we find that
\begin{equation}\label{eq:Cragain}
\begin{split}
H_{\Im \tilde{p}}^{\omega_R} + H_{\Re \tilde{p}}^{\omega_I} & = \sum_{j=1}^n\p{ \jap{\xi,\frac{\partial \Im \widetilde{X}}{\partial y_j} - \frac{\partial \Re \widetilde{X}}{\partial x_j}} + \jap{\eta,\frac{\partial \Re \widetilde{X}}{\partial y_j}+ \frac{\partial \Im \widetilde{X}}{\partial x_j} }} \frac{\partial}{\partial \xi_j} \\
      & \qquad + \p{\jap{\xi,\frac{\partial \Re \widetilde{X}}{\partial y_j} + \frac{\partial \Im \widetilde{X}}{\partial x_j}} + \jap{\eta,\frac{\partial \Re \widetilde{X}}{\partial x_j} - \frac{\partial \Im \widetilde{X}}{\partial y_j} }} \frac{\partial}{\partial \eta_j}.
\end{split}
\end{equation}
Since $\widetilde{X}$ satisfies the Cauchy--Riemann equation on $M$, we find that the vector field $H_{\Im \tilde{p}}^{\omega_R} + H_{\Re \tilde{p}}^{\omega_I}$ vanishes on $T^* M$ (even on $T^* M \otimes \C$ in fact). It follows that outside of a compact set, the only non-zero term in the right hand side of \eqref{eq:evolution} is \eqref{eq:premierterme}. The property (ii) follows.

It remains to prove (iii). The analysis is based on \eqref{eq:evolution} again. We start by studying the term \eqref{eq:premierterme}. Let $\alpha \in \p{T^* M}_{\epsilon_2}$. If $\Theta_{T_1,\xi}(\alpha) \in \mathcal{C}^{\gamma_1}_u$ or $\Theta_{-T_0,\xi}(\alpha) \in \mathcal{C}_s^{\gamma_1}$, then the analysis from the proof of (ii) applies, and we see that \eqref{eq:premierterme} is non-positive for $\alpha_\xi$ large enough. Otherwise, $\Theta_{-T_0,\xi}(\alpha) \notin \mathcal{C}_u^{\gamma}$ and $\Theta_{T_1,\xi}(\alpha) \notin \mathcal{C}^{\gamma}_s$ and both terms in \eqref{eq:premierterme} are non-positive. Thus, the term \eqref{eq:premierterme} is always non-positive when $\va{\alpha_\xi}$ is large enough. Hence, this term is bounded from above and we may focus on the other term in \eqref{eq:evolution}, which is given by\eqref{eq:Leibniz}. The two first terms in the right hand side of \eqref{eq:Leibniz} are dealt with as in the proof of (ii): the first is compactly supported and the second is identically equal to $0$. To deal with the last one, we notice that, since $\widetilde{X}$ satisfies the Cauchy--Riemann equation at infinite order on $M$, the expression \eqref{eq:CRishere} for $H_{\Im \tilde{p}}^{\omega_R} + H_{\Re \tilde{p}}^{\omega_I}$ in coordinates imply that the last term in \eqref{eq:Leibniz} is a $\O(\va{\Im \alpha_x}^{\infty} \jap{\alpha}^\delta)$. This settles the case $s > 1$. To deal with the case $s = 1$, just notice that in that case $\widetilde{X}$ is holomorphic on $\smash{\widetilde{M}}$ and consequently $H_{\Im \tilde{p}}^{\omega_R} + H_{\Re \tilde{p}}^{\omega_I}= 0$.  
\end{proof}

Now, that we are equipped with a good escape function, we are in position to apply the tools from the previous chapter to study the spectral theory of $P = X+V$.

\subsection{Spectral theory for the generator of the flow}\label{sec:spectral_theory}

With the notations of the previous section, we set $\delta = 1/s$ and let $G_0$ be an escape function given by Lemma \ref{lemma:escape-function} (for arbitrary $\mathscr{C}^0$ and $\mathscr{C}^u$, we only assume that $\mathscr{C}^0$ is closed and does not intersect $E_u^* \oplus E_s^* \setminus \set{0}$). Then we define $G = \tau G_0$ and $\Lambda = e^{H_G^{\omega_I}} T^* M$ with $\tau = c \tau_0 h^{1 - 1/s}$, where $c$ and $\tau_0$ are small. Notice that if $c$ is small enough, then $\Lambda$ is a $\p{\tau_0,s}$-adapted Lagrangian (in the sense of Definition \ref{def:adapted-Lagrangian}), and hence the results from the previous chapter apply to the $\G^s$ semi-classical pseudor $hP$. We will not consider the asymptotic $h \to 0$, and hence we will assume that $h$ is fixed, small enough so that the results from the previous chapter apply. We will also assume that $\tau_0$ is small enough for the same reason.

We want now to study the spectral theory of the operator $P = X+V$ on the space $\mathcal{H}_\Lambda^0$ defined by \eqref{eq:def-HMk}. Notice that, according to Proposition \ref{lemma:boundedness}, if $h$ and $\tau_0$ are small enough, then the operator $hP$ (and hence $P$) is bounded from $\mathcal{H}_{\Lambda}^k$ to $\mathcal{H}_{\Lambda}^{k-1}$ for every $k \in \R$.

We will start by proving:
\begin{lemma}\label{lemma:well-defined}
$P$ defines a closed operator on $\mathcal{H}_{\Lambda}^0$ with domain
\begin{equation*}
\begin{split}
D(P) = \set{u \in \mathcal{H}_{\Lambda}^{0} : P u \in \mathcal{H}_{\Lambda}^0}.
\end{split}
\end{equation*}
\end{lemma}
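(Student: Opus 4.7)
The plan is to show the graph of $P$ is closed in $\mathcal{H}_\Lambda^0 \times \mathcal{H}_\Lambda^0$ by exploiting the fact that $P$ is already bounded between the scale of spaces $\mathcal{H}_\Lambda^k$.

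First I would observe that, since $P = X + V$ is a first order differential operator with $\G^s$ coefficients, it is in particular a $\G^s$ pseudor of order $1$. Therefore, by Proposition \ref{lemma:boundedness}, for every $k \in \R$ the operator $P$ is bounded from $\mathcal{H}_\Lambda^k$ to $\mathcal{H}_\Lambda^{k-1}$. Specializing to $k = 0$, we get a continuous map $P : \mathcal{H}_\Lambda^0 \to \mathcal{H}_\Lambda^{-1}$. In particular, the domain $D(P)$ described in the statement is well-defined, as the preimage of $\mathcal{H}_\Lambda^0 \subseteq \mathcal{H}_\Lambda^{-1}$ by this bounded map.

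Next I would use that the scale $(\mathcal{H}_\Lambda^k)_{k \in \R}$ is decreasing in $k$: the inclusion $\mathcal{H}_\Lambda^0 \hookrightarrow \mathcal{H}_\Lambda^{-1}$ is continuous (in fact compact by Proposition \ref{propvalsing}, but only continuity is used here). Now take a sequence $(u_n)_{n \in \N}$ in $D(P)$ such that $u_n \to u$ in $\mathcal{H}_\Lambda^0$ and $P u_n \to v$ in $\mathcal{H}_\Lambda^0$. From the continuity of $P : \mathcal{H}_\Lambda^0 \to \mathcal{H}_\Lambda^{-1}$, we obtain $Pu_n \to Pu$ in $\mathcal{H}_\Lambda^{-1}$. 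On the other hand, the continuous inclusion $\mathcal{H}_\Lambda^0 \hookrightarrow \mathcal{H}_\Lambda^{-1}$ shows that $Pu_n \to v$ in $\mathcal{H}_\Lambda^{-1}$ as well. By uniqueness of limits in $\mathcal{H}_\Lambda^{-1}$, we conclude $Pu = v$, which belongs to $\mathcal{H}_\Lambda^0$ by hypothesis; hence $u \in D(P)$ and $Pu = v$.

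This argument is essentially soft, with no real obstacle: everything is a formal consequence of the two facts that $\mathcal{H}_\Lambda^k \subseteq \mathcal{H}_\Lambda^{k-1}$ continuously and that $P$ is bounded between these spaces, both of which are already established. The only mildly subtle point is to make sure the inclusion $\mathcal{H}_\Lambda^0 \hookrightarrow \mathcal{H}_\Lambda^{-1}$ is indeed continuous as a map between the Hilbert spaces we have defined, and not just as abstract sets of hyperfunctions; this follows from writing $u = S_\Lambda T_\Lambda u$ (valid on $\mathcal{H}_\Lambda^0$ by Lemma \ref{lemma:shift-ST}) and applying the obvious bound $\|T_\Lambda u\|_{L^2_{-1}(\Lambda)} \leq \|T_\Lambda u\|_{L^2_0(\Lambda)}$ from the definitions \eqref{eq:def_espace_L2}--\eqref{eq:def-norme-HMk}.
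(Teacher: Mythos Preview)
Your proof is correct and follows essentially the same approach as the paper: both use that $P$ is bounded from $\mathcal{H}_\Lambda^0$ to $\mathcal{H}_\Lambda^{-1}$ (via Proposition \ref{lemma:boundedness}) together with the continuous inclusion $\mathcal{H}_\Lambda^0 \hookrightarrow \mathcal{H}_\Lambda^{-1}$ to identify the limit. Your additional remark justifying the continuity of this inclusion is a helpful elaboration, though it also follows immediately from the definition of the norms since $\jap{\va{\alpha}}^{-2} \leq 1$.
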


In order to ensure that the spectral theory of $P$ on $\mathcal{H}_\Lambda^0$ has a dynamical meaning, we will then prove the following lemma.

\begin{prop}\label{prop:semigroup}
The operator $P$ is the generator of a strongly continuous semi-group $\p{\mathcal{L}_t}_{t \in \R}$ on $\mathcal{H}_\Lambda^0$. Moreover, if $t \geq 0$ and $u \in \mathcal{H}_\Lambda^0 \cap L^2\p{M}$ then $\mathcal{L}_t u$ is given by the expression \eqref{eq:transfer}.
\end{prop}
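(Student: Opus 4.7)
The plan is to verify the Hille--Yosida hypotheses for $P$ acting on $\mathcal{H}_\Lambda^0$. The operator $P$ is closed by Lemma~\ref{lemma:well-defined}, and densely defined because, by Corollary~\ref{cor:completude}, the space $E^{1,R_0}(M)$ embeds continuously and densely in $\mathcal{H}_\Lambda^0$ for $R_0$ large enough, and for any $u \in E^{1,R_0}(M)$ one has $Pu = Xu + Vu \in \mathcal{G}^s(M) \subset \mathcal{H}_\Lambda^0$. What remains is the resolvent bound $\|(z-P)u\|_{\mathcal{H}_\Lambda^0} \geq (\Re z - C)\|u\|_{\mathcal{H}_\Lambda^0}$ for $\Re z > C$ together with surjectivity of $z-P$ for at least one such $z$.

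\medskip

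The first step is the dissipativity estimate $\Re \langle Pu, u\rangle_{\mathcal{H}_\Lambda^0} \leq C\,\|u\|^2$. Apply the multiplication formula (Proposition~\ref{propmultform}) with $f=1$, $m_1=m_2=0$, to the $\mathcal{G}^s$ pseudor $P$ of order $1$: for $u \in \mathcal{H}_\Lambda^\infty$,
\[
\langle Pu, u\rangle_{\mathcal{H}_\Lambda^0}
= \int_\Lambda p_\Lambda(\alpha)\,|T_\Lambda u(\alpha)|^2\,e^{-2H/h}\,\mathrm{d}\alpha
+ O(h)\,\|u\|^2_{\mathcal{H}_\Lambda^0},
\]
where $p_\Lambda$ is the restriction to $\Lambda$ of an almost analytic extension of the principal symbol $p(\alpha) = i\alpha_\xi(X(\alpha_x))$. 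The key point is that $\Re p_\Lambda$ is bounded above on $\Lambda$: since $\Re \tilde p$ vanishes on $T^*M$, the fundamental theorem of calculus along the flow of $H_G^{\omega_I}$ gives, for $\alpha = \exp(H_G^{\omega_I})(\beta) \in \Lambda$,
\[
\Re p_\Lambda(\alpha) = c\tau_0 h^{1-1/s}\int_0^1 \{G_0, \Re \tilde p\}\!\big(\exp(t H_G^{\omega_I})(\beta)\big)\,\mathrm{d}t.
\]
Lemma~\ref{lemma:escape-function}(iii), combined with the tube bound $|\Im \alpha_x| \lesssim \tau_0 h^{1-1/s}\langle|\alpha|\rangle^{1/s-1}$ on $\Lambda$ from Lemma~\ref{lmdist}, shows that the integrand is uniformly bounded (for $s > 1$ one chooses $N$ large in point (iii) so that $|\Im \alpha_x|^N \langle|\alpha|\rangle^{1/s}$ is bounded on $\Lambda$, while for $s=1$ it is immediate). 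Hence $\Re p_\Lambda \leq C$ on $\Lambda$, which yields the claimed dissipativity on $\mathcal{H}_\Lambda^\infty$; a Friedrichs-type regularisation $u \mapsto S_\Lambda\,\chi(\epsilon\langle|\alpha|\rangle^2)\,T_\Lambda u$, in the spirit of the proof of Corollary~\ref{corollary:densite}, extends it to $D(P)$.

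\medskip

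The main obstacle is surjectivity of $z-P$; the cleanest route is via the Hilbert-space adjoint $P^*$ of $P$ in $\mathcal{H}_\Lambda^0$. Rewriting $\langle Pu, v\rangle_{\mathcal{H}_\Lambda^0} = \langle T_\Lambda P S_\Lambda T_\Lambda u, T_\Lambda v\rangle_{L^2_0(\Lambda)}$ and applying Proposition~\ref{proptoeplitz} with $f=1$ identifies $B_\Lambda T_\Lambda P S_\Lambda B_\Lambda$ with the Toeplitz operator $B_\Lambda p_\Lambda B_\Lambda$ modulo a negligible remainder; its Hilbert-space adjoint on $\mathcal{H}_{\Lambda,\FBI}^0$ is the Toeplitz operator with symbol $\overline{p_\Lambda}$, whose real part coincides with $\Re p_\Lambda \leq C$. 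The same estimate therefore gives $\Re \langle P^* v, v\rangle_{\mathcal{H}_\Lambda^0} \leq C\,\|v\|^2$, so $\bar z - P^*$ is injective for $\Re z > C$. Combined with the closed-range property of $z-P$ coming from its dissipativity, Hahn--Banach yields surjectivity, and Hille--Yosida produces the strongly continuous semi-group $(\mathcal{L}_t)_{t \geq 0}$ on $\mathcal{H}_\Lambda^0$ generated by $P$. For the identification with \eqref{eq:transfer}: the Koopman formula \eqref{eq:transfer} already defines a $C_0$ semi-group on $L^2(M)$ with generator $X+V$, so for $u \in E^{1,R_0}(M) \subset L^2(M) \cap \mathcal{H}_\Lambda^0$ both semi-groups solve the same Cauchy problem $v'(t) = P v(t)$, $v(0) = u$, in $\mathcal{D}'(M)$ and hence coincide by uniqueness of this transport problem. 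For general $u \in L^2(M) \cap \mathcal{H}_\Lambda^0$, Corollary~\ref{corollary:densite} provides a sequence $u_n \in E^{1,R_0}(M)$ converging to $u$ simultaneously in $L^2(M)$ and in $\mathcal{H}_\Lambda^0$, and passing to the limit identifies $\mathcal{L}_t u$ with the Koopman formula.
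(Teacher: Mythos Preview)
Your proof is correct and follows essentially the same Hille--Yosida strategy as the paper: bound $\Re p_\Lambda$ via integration of $\{G_0,\Re\tilde p\}$ along the $H_G^{\omega_I}$-flow using Lemma~\ref{lemma:escape-function}(iii), apply the multiplication formula for dissipativity, handle the adjoint to get surjectivity, and identify with the Koopman semi-group by uniqueness on a dense set via Corollary~\ref{corollary:densite}. One simplification worth noting: for the adjoint estimate you route through Proposition~\ref{proptoeplitz} to realise $P^*$ as a Toeplitz operator with symbol $\overline{p_\Lambda}$, whereas the paper simply observes $\Re\langle P^*u,u\rangle = \Re\langle u,Pu\rangle = \Re\langle Pu,u\rangle$, which gives the same dissipativity bound for $P^*$ immediately. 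The extension of the estimate from $\mathcal{H}_\Lambda^\infty$ to $D(P)$ and $D(P^*)$ is packaged in the paper as the Approximation Lemma~\ref{lmapprox} (with regularisers $S_\Lambda B_\Lambda \chi_\epsilon B_\Lambda T_\Lambda$ rather than your $S_\Lambda \chi_\epsilon T_\Lambda$, the extra $B_\Lambda$'s making the commutator $[P,I_\epsilon]$ easier to control); your gesture toward Friedrichs regularisation is the right idea but would need those details filled in.
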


We will then prove the following key lemma that will be used with Proposition \ref{propvalsing} to prove that the resolvent of $P$ is in a Schatten class. 

\begin{lemma}[Hypo-ellipticity of $P$]\label{lmhypoell}
There is a constant $C > 0$ such that for every $u \in D(P)$ we have $u \in \mathcal{H}_{\Lambda}^{\delta}$ and
\begin{equation*}
\begin{split}
\n{u}_{\mathcal{H}_{\Lambda}^{\delta}} \leq C\p{\n{u}_{\mathcal{H}_{\Lambda}^{0}} + \n{Pu}_{\mathcal{H}_{\Lambda}^{0}}},
\end{split}
\end{equation*}
where we recall that we set $\delta = 1/s$.
\end{lemma}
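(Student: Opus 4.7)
The plan is to extract the hypoelliptic a priori estimate from the multiplication formula (Proposition \ref{propmultform}) applied to $P$ with two carefully chosen test multipliers on $\Lambda$: one exploiting the negativity of the subprincipal symbol provided by the escape function away from the flow codirection, the other exploiting the ordinary ellipticity of $p$ itself near $E_0^*$. The starting observation is that the principal symbol $p(\alpha) = i\alpha_\xi(X(\alpha_x))$ of $hP$ is purely imaginary on $T^*M$ (i.e.\ $\Re p|_{T^*M} \equiv 0$), so a Taylor expansion along $\exp(H_G^{\omega_I})$ at unit time gives, for $\alpha = \exp(H_G^{\omega_I})(\alpha_0)$ with $\alpha_0 \in T^*M$,
\begin{equation*}
\Re p_\Lambda(\alpha) = \tau\, \{G_0, \Re \tilde p\}_I(\alpha_0) + O(\tau^2\langle\alpha\rangle^{2\delta-1}),
\end{equation*}
where $\{G_0,\Re\tilde p\}_I := H_{G_0}^{\omega_I}(\Re\tilde p)$. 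Here I use the same error bound as in \eqref{eq:shifted-symbol}, which I would verify directly via \eqref{eq:I-hamilton-field}. By Lemma \ref{lemma:escape-function}(ii) and our choice $\tau = c\tau_0 h^{1-1/s}$, the leading term is bounded above by $-C^{-1}\tau\langle\alpha\rangle^\delta$ outside (a slightly shrunk version of) the conical neighbourhood $\mathscr{C}^0$ of $E_0^*$, and for $\tau_0$ small the $O(\tau^2\langle\alpha\rangle^{2\delta-1})$ remainder (which is lower order since $2\delta-1\leq\delta$) is absorbed. On $\mathscr{C}^0$, since $E_0^*$ is dual to the flow direction, $|p|$ itself is elliptic of order $1$, and by continuity $|p_\Lambda|\geq c\langle\alpha\rangle$ on $\mathscr{C}^0\cap\Lambda$ for $\tau_0$ small.

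Next, I would fix nonnegative symbols $\chi_0,\chi_1 \in S^0_{KN}(\Lambda)$ with $\chi_0^2 + \chi_1^2 \geq c >0$ on $\Lambda$, $\chi_0$ supported inside $\mathscr{C}^0$ and $\chi_1$ outside, and apply Proposition \ref{propmultform} with the two multipliers
\begin{equation*}
f_1 := \chi_1^2 \langle\alpha\rangle^{\delta} \in S^{\delta}_{KN}(\Lambda), \qquad f_0 := \chi_0^2\, \overline{p_\Lambda}/|p_\Lambda|^{2(1-\delta)}\in S^{2\delta-1}_{KN}(\Lambda),
\end{equation*}
so that $\Re(f_0 p_\Lambda) = \chi_0^2|p_\Lambda|^{2\delta}\geq C^{-1}\chi_0^2\langle\alpha\rangle^{2\delta}$ on $\mathscr{C}^0$ while $\Re(f_1 p_\Lambda) = f_1 \Re p_\Lambda \leq -C^{-1}\tau\chi_1^2\langle\alpha\rangle^{2\delta}$ off $\mathscr{C}^0$. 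Taking real parts of the two instances of the multiplication formula, summing them with a weight $\tau$ to balance the two regions, and estimating $|\langle f_i T_\Lambda Pu, T_\Lambda u\rangle| \leq \|Pu\|_{\mathcal{H}_\Lambda^0}\|u\|_{\mathcal{H}_\Lambda^{\mathrm{ord}(f_i)}}$ (together with $\|u\|_{\mathcal{H}_\Lambda^{2\delta-1}}\leq C(\|u\|_{\mathcal{H}_\Lambda^0}+\|u\|_{\mathcal{H}_\Lambda^\delta})$ by interpolation, since $2\delta-1\leq\delta$), I obtain
\begin{equation*}
c\tau\, \|u\|_{\mathcal{H}_\Lambda^\delta}^2 \leq C\big(\|Pu\|_{\mathcal{H}_\Lambda^0}\|u\|_{\mathcal{H}_\Lambda^\delta} + h\|u\|_{\mathcal{H}_\Lambda^0}^2 + h\|u\|_{\mathcal{H}_\Lambda^\delta}^2\big)
\end{equation*}
for $u \in \mathcal{H}_\Lambda^\infty$. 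For $h$ small enough the term $h\|u\|_\delta^2$ is absorbed on the left, and Young's inequality applied to $\|Pu\|_0\|u\|_\delta$ then yields the desired bound $\|u\|_{\mathcal{H}_\Lambda^\delta}^2 \leq C(\|Pu\|_{\mathcal{H}_\Lambda^0}^2 + \|u\|_{\mathcal{H}_\Lambda^0}^2)$ for $u \in \mathcal{H}_\Lambda^\infty$.

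To promote the estimate to all of $D(P)$, I would regularize $u \in D(P)$ by $u_n := S_\Lambda \eta_n T_\Lambda u \in \mathcal{H}_\Lambda^\infty$ with $\eta_n \in C^\infty_c(\Lambda)$ increasing to $1$; by boundedness of $\Pi_\Lambda$ on every $L^2_k(\Lambda)$ (Proposition \ref{lemma:boundedness}) and a commutator estimate, one checks $u_n \to u$ and $Pu_n \to Pu$ in $\mathcal{H}_\Lambda^0$, so applying the a priori bound to $u_n - u_m$ shows $(u_n)$ is Cauchy in $\mathcal{H}_\Lambda^\delta$ and the estimate passes to the limit. The main technical obstacle is isolating Step~1: because $G_0$ is constructed intrinsically on the complex side (not as the real part of an almost analytic extension of a symbol on $T^*M$), the formula \eqref{eq:shifted-symbol} does not apply verbatim, and one must run a direct Taylor computation using \eqref{eq:I-hamilton-field} and the Cauchy--Riemann relations satisfied by $\tilde X$ at infinite order on $T^*M$ to verify that, at first order in $\tau$, $\Re p_\Lambda - \Re p|_{T^*M}$ coincides with $\tau\{G_0,\Re\tilde p\}_I|_{T^*M}$, which is exactly the quantity controlled by Lemma \ref{lemma:escape-function}(ii). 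Once this identification is made, the rest of the argument is a standard Gårding-type combination.
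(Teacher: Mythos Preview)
Your approach is correct and follows the same overall strategy as the paper: apply the multiplication formula (Proposition~\ref{propmultform}) with well-chosen multipliers, exploiting the escape-function negativity of $\Re p_\Lambda$ away from $E_0^*$ and the genuine order-$1$ ellipticity of $p_\Lambda$ near $E_0^*$, then pass to $D(P)$ by approximation. The paper's implementation differs only tactically: instead of your two-region split $\chi_0,\chi_1$ with the complex multiplier $f_0 = \chi_0^2\,\overline{p_\Lambda}/|p_\Lambda|^{2(1-\delta)}$ on the flow cone, the paper uses a three-way partition $f_+,f_0,f_-$ based on the sign of $\Im p/\langle|\alpha|\rangle$, taking the \emph{imaginary} part of the pairing on the $f_\pm$ regions (where $|\Im p_\Lambda|\gtrsim\langle|\alpha|\rangle$) and the real part on $f_0$ (where the escape function is active). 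Your single complex multiplier on the flow cone is a neat repackaging of the paper's $f_\pm$ step; the paper's version is slightly more elementary in that every multiplier is real.

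Two small points to tighten. First, your $f_0$ is only a genuine $S_{KN}^{2\delta-1}$ symbol once you insert an additional cutoff removing a neighbourhood of the zero section, since $p_\Lambda$ vanishes there; the missing compact region contributes only an $\|u\|_{\mathcal{H}_\Lambda^0}^2$ term. Second, your $h$-bookkeeping in the displayed inequality is slightly off: dividing the multiplication formula for $hP$ by $h$, the main term carries a factor $1/h$ and the error is $C\|u\|_0\|u\|_\delta$ (no $h$), so the inequality reads $\tfrac{c\tau}{h}\|u\|_\delta^2 \leq C(\|Pu\|_0+\|u\|_0)\|u\|_\delta$, which still gives the claim since $h,\tau$ are fixed. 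For the passage to $D(P)$, the paper uses the Approximation Lemma~\ref{lmapprox} (with $I_\epsilon = S_\Lambda B_\Lambda\chi_\epsilon B_\Lambda T_\Lambda$ and the Toeplitz calculus for the commutator) followed by Fatou's lemma on $\|u_n\|_{\mathcal{H}_\Lambda^\delta}$; your Cauchy-sequence argument works too, but note that the commutator bound for $[P, S_\Lambda\eta_n T_\Lambda]$ is exactly what Lemma~\ref{lmapprox} establishes.
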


Notice that in the case $s = 1$, in which we have $\delta = 1$, the operator $P$ is in fact elliptic on $\mathcal{H}_{\Lambda}^0$. With Proposition \ref{propvalsing}, we deduce then from Lemma \ref{lmhypoell} that $P$ has a good spectral theory on $\mathcal{H}_\Lambda^0$. More precisely, we have:

\begin{theorem}\label{thm:spectral-theory}
If $z$ is any element in the resolvent set of $P$, then the resolvent $(z-P)^{-1} : \mathcal{H}_\Lambda^0 \to \mathcal{H}_\Lambda^0$ is compact and if $\p{\sigma_k}_{k \geq 0}$ denotes the sequence of its singular values, we have
\begin{equation*}
\begin{split}
\sigma_k \underset{k \to + \infty}{=} \O\p{k^{-\frac{1}{ns}}}.
\end{split}
\end{equation*}
In particular, the operator $(z-P)^{-1}$ is in the Schatten class $\mathcal{S}_p$ for any $p > ns$. Consequently, $P$ has discrete spectrum on $\mathcal{H}_\Lambda^0$, and this spectrum is the Ruelle spectrum of $P$. The eigenvectors of $P$ acting on $\mathcal{H}_\Lambda^0$ are also the resonant states for $P$. If $N(R)$ denotes the number of Ruelle resonances of modulus less than $R$, we have
\begin{equation*}
\begin{split}
N(R) \underset{R \to + \infty}{=} \O\p{R^{n s}}.
\end{split}
\end{equation*}
\end{theorem}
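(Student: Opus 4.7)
\medskip

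\noindent\textbf{Proof proposal for Theorem \ref{thm:spectral-theory}.} The plan is to combine the hypo\nobreakdash-elliptic estimate (Lemma \ref{lmhypoell}) with the singular-value bound on the inclusions of the scale $\mathcal{H}_\Lambda^k$ (Proposition \ref{propvalsing}), and then to identify the resulting discrete spectrum with the Ruelle--Pollicott resonances via Proposition \ref{prop:semigroup} and the resolvent formula \eqref{eq:a_quoi_doit_ressembler_une_resolvante}.

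First I would check that the resolvent set of $P$ on $\mathcal{H}_\Lambda^0$ is non-empty. Proposition \ref{prop:semigroup} produces a strongly continuous semigroup on $\mathcal{H}_\Lambda^0$ with generator $P$, and the Hille--Yosida theorem then guarantees that $\{z\in\C:\Re z>\omega_0\}$ lies in the resolvent set for some $\omega_0\in\R$, with $(z-P)^{-1}$ bounded on $\mathcal{H}_\Lambda^0$. Pick any such $z$. Given $f\in\mathcal{H}_\Lambda^0$, the element $u=(z-P)^{-1}f$ lies in $D(P)$, so Lemma \ref{lmhypoell} applies and gives
\begin{equation*}
\n{u}_{\mathcal{H}_\Lambda^{\delta}}\leq C\bigl(\n{u}_{\mathcal{H}_\Lambda^0}+\n{Pu}_{\mathcal{H}_\Lambda^0}\bigr)\leq C'\n{f}_{\mathcal{H}_\Lambda^0}.
\end{equation*}
Thus $(z-P)^{-1}$ factors as the composition $\mathcal{H}_\Lambda^0\to\mathcal{H}_\Lambda^\delta\hookrightarrow\mathcal{H}_\Lambda^0$. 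Proposition \ref{propvalsing} (with $m=\delta=1/s$ and $q=0$) asserts that the second arrow has singular values $\mu_k=O(k^{-\delta/n})=O(k^{-1/(ns)})$, and combining with boundedness of the first arrow, the ideal property of Schatten classes yields $\sigma_k((z-P)^{-1})=O(k^{-1/(ns)})$, so $(z-P)^{-1}\in\mathcal{S}_p$ for any $p>ns$. This already shows that $(z-P)^{-1}$ is compact; the resolvent identity propagates this to every $z$ in the resolvent set, and analytic Fredholm theory then tells us that the spectrum of $P$ on $\mathcal{H}_\Lambda^0$ is discrete, consisting only of eigenvalues of finite algebraic multiplicity.

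Next I would identify this discrete spectrum with the Ruelle--Pollicott spectrum of $P$. For $\Re z$ sufficiently large, Proposition \ref{prop:semigroup} and the resolvent formula for the generator of a strongly continuous semigroup yield, for every $u\in\mathcal{H}_\Lambda^0$,
\begin{equation*}
(z-P)^{-1}u=\int_0^{+\infty}e^{-zt}\mathcal{L}_t u\,\mathrm{d}t,
\end{equation*}
where on the dense subspace $E^{1,R}(M)\subseteq\mathcal{H}_\Lambda^0$ (Corollary \ref{corollary:densite}, intersected with $L^2$ through Corollary \ref{corollary:sobolev}) the action $\mathcal{L}_t u$ coincides with the Koopman operator \eqref{eq:transfer}. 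Since smooth functions are dense in $L^2(M)$ and in $\mathcal{H}_\Lambda^0$, and since the inclusion $\mathcal{H}_\Lambda^0\hookrightarrow \mathcal{U}^{\tilde s}(M)$ provided by Corollary \ref{cor:completude} (applied with any $\tilde s<s$, or directly via Lemmas \ref{lemma:T_non_stationary}--\ref{lemma:wavefrontset_HLambda}) is continuous, the meromorphic continuation of $z\mapsto(z-P)^{-1}:\mathcal{C}^\infty(M)\to\mathcal{D}'(M)$ obtained through $\mathcal{H}_\Lambda^0$ must agree, by the analytic continuation principle recalled in \S\ref{part:Anosov-flow}, with the one defined through the usual $L^2$ theory. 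Hence the poles of the resolvent on $\mathcal{H}_\Lambda^0$ are exactly the Ruelle--Pollicott resonances of $P$, and the range of the spectral projector at each such pole coincides with the space of resonant states (the algebraic multiplicities match by the very definition of those projectors as residues).

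Finally, the counting estimate $N(R)=O(R^{ns})$ follows from the Schatten bound: fixing any $z$ in the resolvent set, the non-zero eigenvalues of $(z-P)^{-1}$ are $(z-\lambda)^{-1}$ as $\lambda$ ranges over eigenvalues of $P$, counted with algebraic multiplicity, and by Weyl's inequality the number of such eigenvalues of modulus at least $t$ is bounded by the number of singular values $\sigma_k\geq t$, which is $O(t^{-ns})$. Translating this back through $|z-\lambda|\leq R+|z|$ gives $N(R)=O(R^{ns})$. The main obstacle I anticipate is the identification step with Ruelle--Pollicott resonances: one must verify carefully that the Laplace-transform formula really holds on $\mathcal{H}_\Lambda^0$ and that $\mathcal{H}_\Lambda^0$ sits between $\mathcal{C}^\infty(M)$ and $\mathcal{D}'(M)$ with enough continuity to legitimize the uniqueness-of-meromorphic-continuation argument; everything else is an essentially formal consequence of Lemma \ref{lmhypoell} and Proposition \ref{propvalsing}.
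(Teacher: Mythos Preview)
Your approach is the same as the paper's: factor $(z-P)^{-1}$ through $\mathcal{H}_\Lambda^\delta$ via Lemma~\ref{lmhypoell}, invoke Proposition~\ref{propvalsing} for the singular-value decay, and identify the spectrum with Ruelle resonances through the Laplace transform formula and analytic continuation. The hypo-elliptic and identification steps are fine.

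There is, however, a genuine error in your counting argument. You claim that ``by Weyl's inequality the number of eigenvalues of modulus at least $t$ is bounded by the number of singular values $\sigma_k\geq t$''. This is false: a $2\times 2$ matrix with eigenvalues $1,1$ and singular values $2,\tfrac12$ already violates it. What Weyl's inequalities do give (see \cite[Corollary IV.3.4]{Gohb}) is $\sum_{k<N}|\mu_k|^p\leq\sum_{k<N}\sigma_k^p$ for any $p>0$, with the $\mu_k$ ordered by decreasing modulus. The paper exploits this by choosing $p$ with $\delta p/n<1$ and writing
\[
\frac{\widetilde{N}(R)}{R^p}\leq\sum_{k<\widetilde{N}(R)}|\mu_k|^p\leq\sum_{k<\widetilde{N}(R)}\sigma_k^p\leq C\sum_{k<\widetilde{N}(R)}(1+k)^{-\delta p/n}\leq C\,\widetilde{N}(R)^{1-\delta p/n},
\]
which yields $\widetilde{N}(R)=O(R^{n/\delta})=O(R^{ns})$. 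Replace your sentence by this computation and the proof goes through.
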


Notice that Theorem \ref{thm:spectral-theory} implies Proposition \ref{prop:borne-resonances}. We will see that it also implies Proposition \ref{prop:wfs_resonant_state}. Theorem \ref{thm:main-strong} will be proved in the following section as a corollary of Theorem \ref{thm:spectral-theory}. The key tool in the proof of these results will be the multiplication formula Proposition \ref{propmultform}. We start by proving Lemma \ref{lemma:well-defined}.

\begin{proof}[Proof of Lemma \ref{lemma:well-defined}]
Let $(u_m)_{m \in \N}$ be a sequence of elements of $D(P)$ such that $(u_m)_{m \in \N}$ converges to some $u \in \mathcal{H}_{\Lambda}^0$ and $(Pu_m)_{m \in \N}$ converges to some $v \in \mathcal{H}_{\Lambda}^0$. According to Proposition \ref{lemma:boundedness}, $P$ is bounded from $\mathcal{H}_\Lambda^0$ to $\mathcal{H}_\Lambda^{-1}$ so that $(P u_m)_{m \in \N}$ tends to $Pu$ in $\mathcal{H}_\Lambda^{-1}$. Since $\mathcal{H}_\Lambda^0$ is continuously included in $\mathcal{H}_\Lambda^{-1}$, it follows that $Pu = v \in \mathcal{H}_\Lambda^0$, and hence that $P$ is closed.
\end{proof}

Then we prove an approximation lemma that will sometimes be useful (recall that $\mathcal{H}_\Lambda^\infty$ is defined by \eqref{eq:def_H_infini}).

\begin{lemma}[Approximation Lemma]\label{lmapprox}
Let $R_0$ be large enough. Then, the domains $D(P)$ and $D(P^*)$ of $P$ and its adjoint on $\mathcal{H}_\Lambda^0$ both contain $E^{1,R_0}$. Moreover, for every $u \in D(P)$ there is a sequence of $\p{u_n}_{n \in \N}$ of elements of $\mathcal{H}_\Lambda^\infty$ that tends to $u$ in $\mathcal{H}_{\Lambda}^0$ and such that $\p{Pu_n}_{n \in \N}$ tends to $Pu$ in $\mathcal{H}_{\Lambda}^0$. The same is true if we replace $P$ by $P^*$.
\end{lemma}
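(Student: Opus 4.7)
The plan has two parts. First, I would establish the inclusion $E^{1,R_0} \subseteq D(P) \cap D(P^*)$ for $R_0$ large enough. Since $P = X + V$ is a first-order differential operator with $\Gs$ coefficients, if $u \in E^{1,R_0}$ is bounded holomorphic on $(M)_{1/R_0}$, then by extending the coefficients of $P$ holomorphically and using Cauchy estimates on a slightly smaller Grauert tube, $Pu$ is bounded holomorphic on $(M)_{1/R_1}$ for some $R_1 \geq R_0$, i.e. $Pu \in E^{1,R_1}$. For $\tau_0$ small enough (depending on $R_1$), Corollary \ref{cor:completude} yields $E^{1,R_1} \subseteq \mathcal{H}_\Lambda^k$ for every $k$, so $Pu \in \mathcal{H}_\Lambda^0$ and $u \in D(P)$. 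For $P^*$, I would introduce the formal $L^2(M)$-adjoint $P^\dagger$ (another $\Gs$ first-order differential operator) and use a contour shift argument in the spirit of Lemma \ref{lemma:shift-ST}: for $u,v \in E^{1,R_0}$, the pairing $\jap{Pu,v}_{\mathcal{H}_\Lambda^0}$, written as an integral of $T(Pu)\overline{Tv}e^{-2H/h}$ over $\Lambda$, can be deformed to an integral over $T^*M$ thanks to analyticity and the rapid decay of both factors; this identifies it with $\jap{Pu,v}_{L^2(M)} = \jap{u,P^\dagger v}_{L^2(M)}$, and reversing the shift gives $\jap{Pu,v}_{\mathcal{H}_\Lambda^0} = \jap{u, P^\dagger v}_{\mathcal{H}_\Lambda^0}$. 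Since $P^\dagger v \in E^{1,R_1} \subseteq \mathcal{H}_\Lambda^0$, this shows $v \in D(P^*)$ with $P^* v = P^\dagger v$.

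Second, for the core property, given $u \in D(P)$, I would mollify by $u_\epsilon = S_\Lambda \chi_\epsilon T_\Lambda u$ where $\chi_\epsilon(\alpha) = \chi(\epsilon \jap{\alpha})$ with $\chi \in \mathcal{C}_c^\infty(\R)$ and $\chi(0) = 1$. Because $\chi_\epsilon T_\Lambda u$ is compactly supported on $\Lambda$ and $K_S$ is analytic, standard estimates (as in Lemma \ref{lemma:S_BMT}) give $u_\epsilon \in E^{1,R_0} \subseteq \mathcal{H}_\Lambda^\infty$ for a uniform $R_0$. Since $T_\Lambda u_\epsilon = \Pi_\Lambda \chi_\epsilon T_\Lambda u$ and $\chi_\epsilon T_\Lambda u \to T_\Lambda u$ in $L^2_0(\Lambda)$ by dominated convergence, the boundedness of $\Pi_\Lambda$ on $L^2_0(\Lambda)$ together with $\Pi_\Lambda T_\Lambda u = T_\Lambda u$ yields $u_\epsilon \to u$ in $\mathcal{H}_\Lambda^0$.

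The delicate point is $Pu_\epsilon \to Pu$ in $\mathcal{H}_\Lambda^0$. Setting $\rho_\epsilon = S_\Lambda \chi_\epsilon T_\Lambda$, I would decompose
\[
Pu_\epsilon - Pu = (\rho_\epsilon - I) Pu + [P, \rho_\epsilon] u,
\]
the first term tending to $0$ as in the previous paragraph because $Pu \in \mathcal{H}_\Lambda^0$. On the FBI side, using the identity $T_\Lambda P = K_\Lambda T_\Lambda$ with $K_\Lambda = T_\Lambda P S_\Lambda$ and $K_\Lambda \Pi_\Lambda = K_\Lambda$, the commutator rewrites as
\[
T_\Lambda [P, \rho_\epsilon] u = [K_\Lambda, \chi_\epsilon] T_\Lambda u + (I - \Pi_\Lambda)(\chi_\epsilon - 1) T_\Lambda Pu,
\]
and the last summand tends to $0$ in $L^2_0(\Lambda)$ by dominated convergence (since $T_\Lambda P u \in L^2_0(\Lambda)$) and boundedness of $I - \Pi_\Lambda$. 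The main obstacle is then showing $[K_\Lambda, \chi_\epsilon] T_\Lambda u \to 0$ in $L^2_0(\Lambda)$. Its kernel is $(\chi_\epsilon(\alpha) - \chi_\epsilon(\beta)) K_\Lambda(\alpha, \beta)$, and, using Lemma \ref{lemma:TPS-close-diagonal}, the bound $|\chi_\epsilon(\alpha) - \chi_\epsilon(\beta)| \leq C\, d_{KN}(\alpha,\beta)\,\mathbf{1}_{\epsilon \jap{\alpha} \sim 1}$ combined with the Gaussian concentration of $K_\Lambda$ at scale $\sqrt{h/\jap{\alpha}}$ yields only the naive bound from $L^2_0(\Lambda)$ to $L^2_{-1/2}(\Lambda)$, which is not uniform in $\epsilon$. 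The point is then to exploit that $T_\Lambda u$ lies in the image of $T_\Lambda$ and that $K_\Lambda T_\Lambda u = T_\Lambda P u \in L^2_0(\Lambda)$ (this is precisely the condition $u \in D(P)$) to upgrade the bound into strong convergence to $0$ in $L^2_0(\Lambda)$: a further integration by parts in the oscillatory representation of $K_\Lambda$ near the diagonal, or equivalently the Toeplitz representation of Proposition \ref{proptoeplitz}, gains the missing power of $\jap{\alpha}^{-1/2}$ needed to exactly compensate the blow-up on the support $\jap{\alpha} \sim 1/\epsilon$. The same scheme applied to $P^*$, using the identification $P^* = P^\dagger$ on $E^{1,R_0}$ from the first step and the fact that $(\rho_\epsilon)^* = \rho_\epsilon$ up to similar commutator terms, yields the corresponding statement for $P^*$.
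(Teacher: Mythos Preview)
Your argument has two genuine gaps.

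\textbf{The identification of $P^*$ via contour shift is wrong.} You propose to show $\jap{Pu,v}_{\mathcal{H}_\Lambda^0}=\jap{u,P^\dagger v}_{\mathcal{H}_\Lambda^0}$ by deforming the integral $\int_\Lambda T(Pu)\,\overline{Tv}\,e^{-2H/h}\,\mathrm{d}\alpha$ back to $T^*M$. But this integrand is not holomorphic in $\alpha$: the factor $\overline{Tv(\alpha)}$ is anti-holomorphic and $e^{-2H/h}$ is not even defined off $\Lambda$ in any holomorphic way. Lemma~\ref{lemma:shift-ST} works precisely because both $K_S(x,\cdot)$ and $Tu$ are holomorphic, which is not the situation here. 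In fact the conclusion you aim for is false: the $\mathcal{H}_\Lambda^0$-adjoint of $P$ is \emph{not} the formal $L^2(M)$-adjoint $P^\dagger$ once $\Lambda\neq T^*M$. The paper instead uses the Toeplitz representation (Proposition~\ref{proptoeplitz}) to write $B_\Lambda T_\Lambda P S_\Lambda B_\Lambda=B_\Lambda(\sigma+R)B_\Lambda$, defines $\widetilde{P}=S_\Lambda B_\Lambda(\bar{\sigma}+R^*)B_\Lambda T_\Lambda$, and checks directly that $P^*=\widetilde{P}$ on its domain.

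\textbf{The commutator step is incomplete.} You correctly isolate $[K_\Lambda,\chi_\epsilon]T_\Lambda u$ as the main term and correctly observe that the naive kernel bound only yields $L^2_0\to L^2_{-1/2}$ uniformly, which does not suffice. Your claim that ``a further integration by parts'' or ``the Toeplitz representation'' recovers the missing half-order is not substantiated: with the mollifier $\rho_\epsilon=S_\Lambda\chi_\epsilon T_\Lambda$, the object $[K_\Lambda,\chi_\epsilon]$ does not sit inside the Toeplitz calculus, because $\chi_\epsilon$ is not sandwiched between Bergman projectors. The paper avoids this by choosing the mollifier $I_\epsilon=S_\Lambda B_\Lambda\chi_\epsilon B_\Lambda T_\Lambda$ instead. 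Then $T_\Lambda[P,I_\epsilon]S_\Lambda$ restricted to $\mathcal{H}_{\Lambda,\FBI}^0$ equals $[B_\Lambda T_\Lambda P S_\Lambda B_\Lambda,\,B_\Lambda\chi_\epsilon B_\Lambda]$, and Propositions~\ref{proptoeplitz} and~\ref{propcomptoep} give this as $B_\Lambda(\sigma\#\chi_\epsilon-\chi_\epsilon\#\sigma)B_\Lambda$ plus negligible, with the commutator symbol uniformly of order $0$ (the principal parts cancel). This yields uniform boundedness of $[P,I_\epsilon]$ on $\mathcal{H}_\Lambda^0$; strong convergence to $0$ then follows from the easy fact that $[P,I_\epsilon]v\to 0$ for $v\in E^{1,R_0}$ together with density. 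The same scheme applied to $\widetilde{P}$ gives the statement for $P^*$.
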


\begin{proof}
First of all, $D(P)$ contains $\mathcal{H}_\Lambda^1$, and hence $E^{1,R_0}$ for $R_0$ large enough according to Corollary \ref{cor:completude}.

Let $\chi$ be a $\mathcal{C}^\infty$ function from $\R$ to $\left[0,1\right]$ compactly supported and identically equals to $1$ near $0$. Then define for $\epsilon > 0$ small the function $\chi_\epsilon$ on $\Lambda$ by $\chi_\epsilon(\alpha) = \chi(\epsilon \jap{\va{\alpha}})$. Then the $\chi_\epsilon$'s form a family of uniform symbols of order $0$ on $\Lambda$. Hence, the operators
\begin{equation*}
\begin{split}
I_\epsilon = S_\Lambda B_\Lambda \chi_\epsilon B_\Lambda T_\Lambda
\end{split}
\end{equation*}
are uniformly bounded on $\mathcal{H}_{\Lambda}^{0}$. One easily checks that $I_\epsilon$ converges to the identity in strong operator topology when $\epsilon$ tends to $0$. In particular, if $u \in D(P)$, we have that $I_\epsilon u$ tends to $u$ in $\mathcal{H}_{\Lambda}^{0}$ when $\epsilon$ tends to $0$. Moreover, we have
\begin{equation}\label{eqbete}
\begin{split}
P I_\epsilon u = I_\epsilon P u + \left[P,I_\epsilon\right] u
\end{split}
\end{equation}
and $I_\epsilon P u$ tends to $Pu$ in $\mathcal{H}_{\Lambda}^{0}$ when $\epsilon$ tends to $0$. Then, we may write
\begin{equation*}
\begin{split}
\left[P,I_\epsilon\right] = S_\Lambda\left[B_\Lambda T_\Lambda P S_\Lambda B_\Lambda, B_\Lambda \chi_\epsilon B_\Lambda\right]T_\Lambda.
\end{split}
\end{equation*}
Then using Propositions \ref{proptoeplitz} and \ref{propcomptoep}, we see that the $\left[P,I_\epsilon\right]$'s are uniformly bounded on $\mathcal{H}_{\Lambda}^{0}$. But if $v \in E^{1,R_0}$, then $T_\Lambda u$ is rapidly decaying and thus $I_\epsilon u$ tends to $u$ in $\mathcal{H}_{\Lambda}^{1}$ and since $P$ is bounded from $\mathcal{H}_{\Lambda}^{1}$ to $\mathcal{H}_{\Lambda}^{0}$, we see that $\left[P,I_\epsilon\right]v$ tends to $0$ in $\mathcal{H}_{\Lambda}^{0}$. By a standard argument, this implies that $\left[P,I_\epsilon\right]$ tends to $0$ in strong operator topology on $\mathcal{H}_{\Lambda}^{0}$. Hence, from \eqref{eqbete}, we see that $P I_\epsilon u$ tends to $Pu$ in $\mathcal{H}_{\Lambda}^{0}$. Finally, thanks to Remark \ref{remark:boundedness}, the $I_\epsilon u$'s belong to $\mathcal{H}_\Lambda^\infty$.

We turn now to the study of the adjoint $P^*$ of $P$. To do so, we apply Proposition \ref{proptoeplitz} to find a symbol $\sigma$ of order $1$ on $\Lambda$ and a negligible operator $R$ such that 
\begin{equation*}
\begin{split}
B_\Lambda T_\Lambda P S_\Lambda B_\Lambda = B_\Lambda\p{\sigma + R} B_\Lambda.
\end{split}
\end{equation*}
Then define the operator $\widetilde{P}$ by 
\begin{equation*}
\begin{split}
\widetilde{P} = S_\Lambda B_\Lambda \p{\bar{\sigma} + R^*} B_\Lambda T_\Lambda,
\end{split}
\end{equation*}
where $R^*$ is the formal adjoint of $R$. Since $\bar{\sigma}$ is a symbol of order $1$, we find that $\widetilde{P}$ is bounded from $\mathcal{H}_\Lambda^k$ to $\mathcal{H}_\Lambda^{k-1}$ for every $k \in \R$. Now, if $u,v \in \mathcal{H}_\Lambda^\infty$ then we have (the scalar product is in $\mathcal{H}_\Lambda^0$)
\begin{equation}\label{eq:construire_ladjoint}
\begin{split}
\jap{Pu,v} = \jap{u, \widetilde{P}v}.
\end{split}
\end{equation} 
Using the approximation property for $P$ that we just proved, we find that \eqref{eq:construire_ladjoint} remains true for $u \in D(P)$ and $v \in \mathcal{H}_\Lambda^\infty$. Hence, $\mathcal{H}_\Lambda^\infty$ (and in particular $E^{1,R_0}$) is contained in the domain of $P^*$ and if $v \in  \mathcal{H}_\Lambda^\infty$ then $P^*v = \widetilde{P} v$. Now, if $v$ belongs to the domain of $P^*$, we find that, for $u \in \mathcal{H}_\Lambda^\infty$,
\begin{equation}\label{eq:reconnaitre_ladjoint}
\begin{split}
\jap{u,P^*v} = \jap{Pu,v} = \jap{u, \widetilde{P}v}.
\end{split}
\end{equation}
Here, the last bracket makes sense because $u \in \mathcal{H}_\Lambda^{1}$ and $\widetilde{P}v \in \mathcal{H}_\Lambda^{-1}$. Since $E^{1,R_0}$ (and hence $\mathcal{H}_\Lambda^\infty$) is dense in $\mathcal{H}_\Lambda^{1}$, Lemma \ref{lemma:dual} implies with \eqref{eq:reconnaitre_ladjoint} that $P^* v = \widetilde{P}v$ (in particular $\widetilde{P}v$ belongs to $\mathcal{H}_\Lambda^0$). Now, since $\widetilde{P}$ has the same structure as $P$, we may prove the approximation property for $P^*$ as we did for $P$.
\end{proof}

\begin{proof}[Proof of Proposition \ref{prop:semigroup}]
We will apply the Hille--Yosida Theorem to prove that $P$ is the generator of a strongly continuous semi-group. We denote by $p_\Lambda$ the restriction to $\Lambda$ of the almost analytic extension $\tilde{p}$ of the principal symbol of $hP$ given by \eqref{eq:defptilde}. It follows from (iii) in Lemma \ref{lemma:escape-function} that there is a constant $C > 0$ such that $\Re p_\Lambda \leq C$. Indeed, since $\Re \tilde{p}$ vanishes on $T^* M$, the value of $\Re p_\Lambda$ is obtained by integrating $\set{G_0,\Re \tilde{p}}$ on an orbit of time $\tau$ of the flow of $H_{G_0}^{\omega_I}$. This gives the upper bound on $\Re p_\Lambda$ in the case $s=1$. In the case $s >1$, we need in addition to notice that, as in Lemma \ref{lmdist}, we remain at distance at most $\jap{\va{\alpha}}^{\delta - 1}$ of the reals. Hence, $\Re p_\Lambda$ is less than $C_N \jap{\va{\alpha}}^{\delta + N(\delta - 1)}$ (for any $N > 0$ and some constant $C_N > 0$). Since $\delta < 1$ in the case $s > 1$, the bound on $\Re p_\Lambda$ follows by taking $N$ large enough.

We see by Proposition \ref{propmultform} that, up to making $C$ larger (depending on $h$, that we recall is fixed), we have for $u \in \mathcal{H}_\Lambda^\infty$ 
\begin{equation*}
\begin{split}
\re \langle -P u,u \rangle_{\mathcal{H}_{\Lambda}^0} & \geq -\frac{1}{h}\langle \Re p_\Lambda T_\Lambda P u ,T_\Lambda u \rangle_{L_0^2\p{\Lambda}} - C \n{u}_{\mathcal{H}_{\Lambda}^{0}}^2 \\
    & \geq - 2C \n{u}_{\mathcal{H}_{\Lambda}^{0}}^2.
\end{split}
\end{equation*}
Hence, if $z \in \C$, we have
\begin{equation*}
\begin{split}
\re \langle \p{z-P} u,u \rangle_{\mathcal{H}_{\Lambda}^{0}} & \geq \p{\Re z - 2C} \n{u}_{\mathcal{H}_{\Lambda}^{0}}^2.
\end{split}
\end{equation*}
By Cauchy--Schwarz, we find that
\begin{equation}\label{eqinversion}
\begin{split}
\n{\p{z-P}u}_{\mathcal{H}_{\Lambda}^{0}} & = \frac{\n{\p{z-P}u}_{\mathcal{H}_{\Lambda}^{0}} \n{u}_{\mathcal{H}_{\Lambda}^{0}}}{\n{u}_{\mathcal{H}_{\Lambda}^{0}}} \geq \frac{\va{\langle \p{z-P} u,u \rangle_{\mathcal{H}_{\Lambda}^{0}}}}{\n{u}_{\mathcal{H}_{\Lambda}^{0}}} \\
   & \geq \frac{\re\langle \p{z-P} u,u \rangle_{\mathcal{H}_{\Lambda}^{0}}}{\n{u}_{\mathcal{H}_{\Lambda}^{0}}} \geq \p{\Re z - 2C} \n{u}_{\mathcal{H}_{\Lambda}^{0}},
\end{split}
\end{equation}
for $u \in \mathcal{H}_\Lambda^\infty$. By Lemma \ref{lmapprox}, this estimate remains true when $u \in D(P)$. This proves that if $\Re z > 2C$, then the operator $z-P$ is injective and its image is closed. To prove that the image of $z-P$ is dense, notice that if $u \in \mathcal{H}_\Lambda^\infty$  then
\begin{equation*}
\begin{split}
\Re \jap{(z-P)^*u,u}_{\mathcal{H}_\Lambda^0}  = \Re\jap{(z-P)u,u}_{\mathcal{H}_{\Lambda}^0},
\end{split}
\end{equation*}
and consequently \eqref{eqinversion} still holds when $z-P$ is replaced by $(z-P)^*$ (for $u \in \mathcal{H}_\Lambda^\infty$, but it implies the same result for $u \in D(P^*)$ by Lemma \ref{lmapprox}). Hence, $(z-P)^*$ is injective, and thus the image of $z-P$ is closed. Consequently, $z-P$ is invertible and from \eqref{eqinversion}, we see that
\begin{equation*}
\begin{split}
\n{\p{z- P}^{-1}} \leq \frac{1}{\Re z - 2C},
\end{split}
\end{equation*}
for the operator norm on $\mathcal{H}_{\Lambda}^{0}$. Hence, the Hille--Yosida Theorem applies (the domain of $P$ is dense since it contains $E^{1,R_0}$), and we know that $P$ is the generator of a strongly continuous semi-group. 

Denote by $(\smash{\widetilde{\mathcal{L}}_t})_{t \geq 0}$ the semi-group generated by $P$ on $\mathcal{H}_\Lambda^0$ and $\p{\mathcal{L}_t}_{t \geq 0}$ the semi-group on $L^2\p{M}$ defined by \eqref{eq:transfer}. We want to prove that for $t \geq 0$ and $u \in \mathcal{H}_\Lambda^0 \cap L^2\p{M}$ we have $\mathcal{L}_t u = \smash{\widetilde{\mathcal{L}}_t} u$. Thanks to the semi-group property, we only need to prove it for $t \in \left[0,t_0\right]$ for some small $t_0 > 0$. Then, since elements of $L^2\p{M} \cap \mathcal{H}_\Lambda^0$ may be simultaneously approximated in $L^2\p{M}$ and in $\mathcal{H}_\Lambda^0$ by elements of $E^{1,R_0}$ (according to Corollary \ref{corollary:densite}), we only need to prove the equality for $u \in E^{1,R_0}$. Now, there is a $t_0 > 0$ and a $R_1 > 0$ such that for $u \in E^{1,R_0}$, the curve $\gamma : \left[0,t_0\right] \ni t \mapsto \mathcal{L}_t u$ is $\mathcal{C}^1$ in $E^{s,R_1}$ with $\gamma'(t) = P \gamma(t)$. Provided that $\tau_0$ is small enough, $E^{s,R_1}$ is continuously included in $\mathcal{H}_\Lambda^0$ (see Corollary \ref{cor:completude}) and hence the curve $\gamma$ has the same property in $\mathcal{H}_\Lambda^0$. Consequently, we have $\gamma(t) = \widetilde{\mathcal{L}}_t u$ for $t \in \left[0,t_0\right]$, according to \cite[Proposition 3.1.11]{arendtVectorvaluedLaplaceTransforms2011a}, ending the proof of the proposition.
\end{proof}

We turn now to the proof of the hypo-ellipticity of the operator $P$.

\begin{proof}[Proof of Lemma \ref{lmhypoell}]
Assume first that $u \in \mathcal{H}_\Lambda^\infty$. Let $\chi_+,\chi_-$ and $\chi_0$ be $\mathcal{C}^\infty$ functions from $\R \to \left[0,1\right]$ such that $\chi_+ + \chi_0 + \chi_- = 1$, and, for some small $\eta > 0$, the function $\chi_0$ is supported in $\left[- \eta,\eta\right]$, the function $\chi_-$ is supported in $\left]- \infty, -\eta/2\right]$ and the function $\chi_{+}$ is supported in $\left[\eta/2, + \infty \right[$. Then define for $\sigma \in \set{+,-,0}$ the symbol $f_\sigma$ on $\Lambda$ by
\begin{equation*}
\begin{split}
f_\sigma(\alpha) = \chi_{\sigma}\p{-i \frac{p\p{e^{-\tau H_{G_0}^{\omega_I}}(\alpha)}}{\jap{\va{\alpha}}}}.
\end{split}
\end{equation*}
Then notice that if $\alpha$ is in the support of $f_+$ then we have
\begin{equation}\label{eqimfplus}
\begin{split}
\Im p_\Lambda(\alpha) &  = \Im p\p{e^{-\tau H_{G_0}^{\omega_I}}(\alpha)} + \O\p{\tau \jap{\va{\alpha}}} \\
     & \geq \frac{\eta}{4} \jap{\va{\alpha}},
\end{split}
\end{equation}
provided that $\tau$ is small enough (depending on $\eta$). And similarly, if $\alpha$ belongs to the support of $f_-$ we have
\begin{equation}\label{eqimfmoins}
\begin{split}
\Im p_\Lambda(\alpha) \leq - \frac{\eta}{4} \jap{\va{\alpha}}.
\end{split}
\end{equation}
If $\alpha$ belongs to the support of $f_0$ then we have 
$$
\va{p\p{e^{-\tau H_{G_0}}(\alpha)}} \leq \eta \jap{\va{\alpha}} \leq C \eta \jap{\va{e^{- \tau H_{G_0}}(\alpha)}}.
$$
Hence, either $e^{-\tau H_{G_0}}(\alpha)$ is small, either it does not belong to $\mathscr{C}^0$ (provided that $\eta$ is small enough, we use here the assumption that $\mathscr{C}^0$ does not intersect $E_u^* \oplus E_s^*$). In the second case, we may apply (ii) in Lemma \ref{lemma:escape-function} to find that
\begin{equation}\label{eqimf0}
\begin{split}
\Re p_\Lambda(\alpha) & = \Re p\p{e^{- \tau H_{G_0}^{\omega_I}}(\alpha)} + \tau \set{G_0, \Re \tilde{p}} + \O\p{\tau^2 \jap{\va{\alpha}}^{2 \delta-1 }} \\
      & \leq - \frac{1}{C} \jap{\va{\alpha}}^\delta + C,
\end{split}
\end{equation}
provided that $\tau$ is small enough. Here, we added the constant $C$ so that \eqref{eqimf0} remains true for any $\alpha$ in the support $f_0$.

Now, Proposition \ref{propmultform} and \eqref{eqimf0} give that (the constant $C$ may vary from one line to another)
\begin{equation*}
\begin{split}
& - \re \langle \jap{\va{\alpha}}^\delta f_0 T_\Lambda P u , T_\Lambda u \rangle \\ & \qquad \qquad \qquad \geq - \frac{1}{h}\int_{\Lambda} \jap{\va{\alpha}}^\delta f_0(\alpha) \Re p_\Lambda(\alpha) \va{T_\Lambda u(\alpha)}^2 \mathrm{d}\alpha - C \n{u}_{\mathcal{H}_{\Lambda}^{0}} \n{u}_{\mathcal{H}_{\Lambda}^{\delta}} \\
    & \qquad \qquad \qquad \geq \frac{1}{C} \int_{\Lambda} f_0(\alpha) \jap{\va{\alpha}}^{2\delta} \va{T_\Lambda u(\alpha)}^2 \mathrm{d}\alpha - C \n{u}_{\mathcal{H}_{\Lambda}^{0}} \n{u}_{\mathcal{H}_{\Lambda}^{\delta}}.
\end{split}
\end{equation*}
Applying Cauchy--Schwarz formula, we find then that
\begin{equation}\label{eqhypzero}
\begin{split}
\int_{\Lambda} f_0(\alpha) \jap{\va{\alpha}}^{2\delta} \va{T_\Lambda u(\alpha)}^2 \mathrm{d}\alpha \leq C \n{u}_{\mathcal{H}_{\Lambda}^{\delta}}\p{\n{Pu}_{\mathcal{H}_{\Lambda}^{0}} + \n{u}_{\mathcal{H}_{\Lambda}^{0}}}.
\end{split}
\end{equation}
Working similarly with \eqref{eqimf0} replaced by the better estimates \eqref{eqimfplus} and \eqref{eqimfmoins}, we find that \eqref{eqhypzero} still holds when $f_0$ is replaced by $f_+$ or $f_-$. Summing these three estimates, we get
\begin{equation}\label{eqhypcarre}
\begin{split}
\n{u}_{\mathcal{H}_{\Lambda}^{\delta}}^2 \leq C \n{u}_{\mathcal{H}_{\Lambda}^{\delta}}\p{\n{Pu}_{\mathcal{H}_{\Lambda}^{0}} + \n{u}_{\mathcal{H}_{\Lambda}^{0}}}.
\end{split}
\end{equation}
Since the result is trivial when $u = 0$, we may divide by $\n{u}_{\mathcal{H}_{\Lambda}^{\delta}}$ in \eqref{eqhypcarre} to end the proof of the lemma when $u \in \mathcal{H}_\Lambda^\infty$.

We deal now with the general case $u \in D(P)$. Let $\p{u_n}_{n \in \N}$ be a sequence of elements of $\mathcal{H}_\Lambda^\infty$ as in Lemma \ref{lmapprox}. Since we already dealt with the case of elements of $\mathcal{H}_\Lambda^\infty$ we know that for some $C > 0$ and all $n \in \N$ we have
\begin{equation*}
\begin{split}
\n{u_n}_{\mathcal{H}_{\Lambda}^{\delta}} \leq C\p{\n{u}_{\mathcal{H}_{\Lambda}^{0}} + \n{Pu}_{\mathcal{H}_{\Lambda}^{0}}}.
\end{split}
\end{equation*}
In addition, $T_{\Lambda} u_n$ converges pointwise to $T_\Lambda u$ and hence the result follows by Fatou's Lemma.
\end{proof}

We can now prove Theorem \ref{thm:spectral-theory}.

\begin{proof}[Proof of Theorem \ref{thm:spectral-theory}]
Let $z$ be any element of the resolvent set of $P$ (the resolvent set of $P$ is non-empty according to Proposition \ref{prop:semigroup}). If $u \in \mathcal{H}_\Lambda^0$ then we have that
\begin{equation*}
\begin{split}
P(z-P)^{-1} u = z(z-P)^{-1} u - u.
\end{split}
\end{equation*}
Hence, $(z-P)^{-1}$ and $P(z-P)^{-1}$ are both bounded from $\mathcal{H}_\Lambda^0$ to itself and, consequently, Lemma \ref{lmhypoell} implies that $\p{z-P}^{-1}$ is bounded from $\mathcal{H}_\Lambda^0$ to $\mathcal{H}_\Lambda^\delta$. Hence, Proposition \ref{propvalsing} implies that $\p{z-P}^{-1}$, as an operator from $\mathcal{H}_\Lambda^0$ to itself, is compact, with the announced estimates on its singular values.

We prove the estimates on the number of eigenvalues of $P$ before proving that these eigenvalues are indeed the Ruelle resonances. Let $z \in \C$ be any point in the resolvent set of $P$ and denote by $\widetilde{N}(R)$ the number of eigenvalues of $(z-P)^{-1}$ of modulus larger than $R^{-1}$. Then let $(\mu_k)_{k \in \N}$ denote the sequence of eigenvalues of $(z-P)^{-1}$, ordered so that $(|\mu_k|)_{k \in \N}$ is decreasing, and $(\sigma_k)_{k \in \N}$ the sequence of its singular values, and choose $p > 0$ such that $\delta p/n < 1$. According to \cite[Corollary IV.3.4]{Gohb}, we have then for every $R > 0$ that
\begin{equation*}
\begin{split}
\frac{\widetilde{N}(R)}{R^p} & \leq \sum_{k =0}^{\widetilde{N}(R) - 1} \va{\mu_k}^p \leq \sum_{k = 0}^{\widetilde{N}(R) - 1} \sigma_k^p \leq C \sum_{k = 0}^{\widetilde{N}(R) - 1} \p{1+k}^{- \frac{\delta p}{n}} \\
     & \leq C\widetilde{N}(R)^{1 - \frac{\delta p}{n}}.
\end{split}
\end{equation*}
Here, we applied the estimates on singular values that we just proved, and $C$ may vary from one line to another. It follows that $\widetilde{N}(R) \leq C^{n/\delta p} R^{n/\delta}$. The estimates on $N(R)$ follows since, if $\p{\lambda_k}_{k \in \N}$ denotes the sequence of eigenvalues of $P$, we have the relation $\mu_k = (z-\lambda_k)^{-1}$ (up to reordering, and recall that $\delta = 1/s$).

It remains to prove that the eigenvalues of $P$ acting on $\mathcal{H}_\Lambda^0$ are indeed the Ruelle resonances of $P$. To do so let $R_0 > 0$ be large enough, and assume that $\tau_0$ is small enough, so that \eqref{eq:des_inclusions} holds with $s = 1$ and that $E^{1,R_0}$ is dense in $\mathcal{H}_\Lambda^0$ (see Corollary \ref{corollary:densite}). We also assume that $R_0$ is large enough so that $E^{1,R_0}$ is dense in $\mathcal{C}^\infty\p{M}$ (see Corollary \ref{cor:density-E1R}). Denote by $i_0$ the inclusion of $E^{1,R_0}$ in $\mathcal{H}_\Lambda^0$ and by $i_1$ the inclusion of $\mathcal{H}_\Lambda^0$ in $(E^{1,R_0})'$. Then, we define
\begin{equation*}
\begin{split}
\widetilde{R}(z) = i_1 \circ (z-P)^{-1} \circ i_0 : E^{1,R_0} \to \p{E^{1,R_0}}'.
\end{split}
\end{equation*}

Then, if we denote by $i$ the inclusion of $E^{1,R_0}$ into $\mathcal{C}^\infty\p{M}$ and by $j$ the inclusion of $\mathcal{D}'\p{M}$ into $(E^{1,R_0})'$, we see that
\begin{equation}\label{eq:ce_sont_les_memes}
\begin{split}
\widetilde{R}(z) = j \circ R(z) \circ i,
\end{split}
\end{equation}
where $R(z)$ is defined by \eqref{eq:a_quoi_doit_ressembler_une_resolvante} and seen as an operator from $\mathcal{C}^\infty\p{M}$ to $\mathcal{D}'\p{M}$. Indeed, when $\Re z$ is large \eqref{eq:ce_sont_les_memes} follows from Proposition \ref{prop:semigroup} since $\widetilde{R}(z)$ and $R(z)$ are both obtained as the Laplace transform of the family of operators \eqref{eq:transfer}. The equality \eqref{eq:ce_sont_les_memes} then follows for any $z$ by analytic continuation. Integrating on small circles, we see that \eqref{eq:ce_sont_les_memes} is also satisfied by the residues of $\widetilde{R}(z)$ and $R(z)$. Since these residues have finite rank and since $E^{1,R_0}$ is dense in $\mathcal{C}^\infty\p{M}$, it follows that the eigenvalues of $P$ on $\mathcal{H}_\Lambda^0$ (the poles of $\widetilde{R}(z)$) are the Ruelle resonances of $P$ (the poles of $R(z)$) counted with multiplicity (the rank of the associated residues). For the same reason, the resonant spaces (the images of the residues) also coincide.
\end{proof}

As announced, we can now give the proof of Proposition \ref{prop:wfs_resonant_state}.

\begin{proof}[Proof of Proposition \ref{prop:wfs_resonant_state}]
Since the eigenvectors of $P$ acting on $\mathcal{H}_\Lambda^0$ are the resonant states of $P$, they do not depend on $G_0$ (as soon as $G_0$ is as in Lemma \ref{lemma:escape-function}) nor on $\tau$ (provided that $\tau$ is small enough). Hence, if $\mathscr{C}^s$ is a conic neighbourhood of $E_s^*$ in $T^* M$, we may choose $G_0$ negative and elliptic of order $\delta = 1/s$ outside of $\mathscr{C}^s$. Then, taking $\tau$ small enough, we may apply Lemma \ref{lemma:wavefrontset_HLambda} to see that if $u$ is a resonant state for $P$ then $\WF_{\G^{s}}\p{u} \subseteq \mathscr{C}^s$. Since $\mathscr{C}^s$ is an arbitrary conic neighbourhood of $E_s^*$, it follows that $\WF_{\G^{s}}(u) \subseteq E_s^*$.
\end{proof}

\section{Traces and I-Lagrangian spaces}\label{sec:order}

At the heart of many ``microlocal'' results lie a trace formula, that links (purely) spectral information to geometric or dynamical data. Theorems \ref{thm:main} and \ref{thm:main-strong} are no exception. Their proof will be given in this section, and this will give a glimpse of the relation between I-Lagrangian spaces and traces.

In order to show that $\zeta_{X,V}$ has finite order (under our Gevrey assumption), we will relate it to a regularized determinant associated with the resolvent of $P$. This will be based on the following version of Guillemin's trace formula:
\begin{lemma}\label{lmtraceresolvante}
If the real part of $z$ is positive and large enough and $m$ is an integer such that $m > s n$, then the operator $\p{z-P}^{-m}$ acting on $\mathcal{H}_{\Lambda}^{0}$ is trace class and
\begin{equation}\label{eqformtrace}
\begin{split}
\Tr \p{(z-P)^{-m}} = \frac{1}{\p{m-1}!} \sum_{\gamma} \frac{T_\gamma^{\#} e^{\int_\gamma V}}{\va{\det\p{I - \mathcal{P}_\gamma}}} T_\gamma^{m-1} e^{- z T_\gamma}.
\end{split}
\end{equation}
Here, we use the notations defined in the introduction of the chapter (after \eqref{eq:zetaXV}).
\end{lemma}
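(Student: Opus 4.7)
The plan is to combine the Schatten estimates of Theorem \ref{thm:spectral-theory} with the Laplace representation of the resolvent powers in order to reduce the trace computation to a flat-trace calculation on the FBI side, where the kernel of $T_\Lambda \mathcal{L}_t S_\Lambda$ can be analyzed via the composition formalism from Chapter \ref{part:FBI}.

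First, I would settle the trace-class property. Theorem \ref{thm:spectral-theory} gives $(z-P)^{-1}\in \mathcal{S}_p$ for every $p>ns$, so Hölder's inequality for Schatten norms yields $(z-P)^{-m}\in \mathcal{S}_{p/m}$, which lies in $\mathcal{S}_1$ as soon as $m>ns$. Next, by Proposition \ref{prop:semigroup} the operator $P$ generates a strongly continuous semigroup $(\mathcal{L}_t)_{t\geq 0}$ on $\mathcal{H}_\Lambda^0$ whose growth is controlled by a constant depending only on $\Re p_\Lambda$; consequently, for $\Re z$ large enough,
\[
(z-P)^{-m} \;=\; \frac{1}{(m-1)!}\int_0^{+\infty} t^{m-1} e^{-zt}\,\mathcal{L}_t\,\mathrm{d}t,
\]
the integral being absolutely convergent in the operator norm of $\mathcal{H}_\Lambda^0$.

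Second, I would transfer the trace to the FBI side. Since $S_\Lambda T_\Lambda = I$ on $\mathcal{H}_\Lambda^0$ (Lemma \ref{lemma:shift-ST}) and $\Pi_\Lambda = T_\Lambda S_\Lambda$ acts as the identity on $\mathcal{H}_{\Lambda,\FBI}^0$,
\[
\Tr_{\mathcal{H}_\Lambda^0}\bigl((z-P)^{-m}\bigr) = \Tr_{L^2_0(\Lambda)}\bigl(\Pi_\Lambda T_\Lambda (z-P)^{-m} S_\Lambda \Pi_\Lambda\bigr),
\]
and the right-hand side equals the integral on the diagonal of the reduced kernel of the operator $T_\Lambda (z-P)^{-m} S_\Lambda$, by the argument underlying Lemma \ref{lemma:trace_pseudo}. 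Interchanging trace and integral (legitimate once one expresses the resolvent power as a norm-convergent Bochner integral and uses the $\mathcal{S}_1$ bound), the problem becomes to identify
\[
\Tr_{\mathcal{H}_\Lambda^0}\bigl((z-P)^{-m}\bigr) \;=\; \frac{1}{(m-1)!}\int_0^{+\infty} t^{m-1}e^{-zt}\,\mathrm{tr}^{\flat}\mathcal{L}_t\,\mathrm{d}t,
\]
where $\mathrm{tr}^\flat \mathcal{L}_t$ denotes the distribution on $\R_+^*$ obtained by restricting the reduced FBI kernel of $T_\Lambda \mathcal{L}_t S_\Lambda$ to the diagonal of $\Lambda\times\Lambda$ and integrating against $\mathrm{d}\alpha$.

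Third, and this is where the main work lies, I would compute $\mathrm{tr}^\flat\mathcal{L}_t$ via stationary phase. The Schwartz kernel of $\mathcal{L}_t$ is the weighted delta on the graph of $\phi_t$, so $T_\Lambda \mathcal{L}_t S_\Lambda$ is an FIO whose phase is the generating function of the symplectic lift $\Phi_t$ of $\phi_t$ conjugated through $\Phi_{TS}$, weighted by the symbol $\exp(\int_0^t V\circ \phi_\tau \mathrm{d}\tau)$. The diagonal condition $\alpha = \beta$ on $\Lambda\times\Lambda$ selects, modulo exponentially small terms in $h^{-1}\langle|\alpha|\rangle$, the fibers $T_x^\ast M$ with $\phi_t(x)=x$, that is, the periodic orbits of period $t$. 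A stationary-phase analysis in the fiber variables, performed as in the proof of Lemma \ref{lemma:TPS-close-diagonal} but now around the (transverse, by Anosov hyperbolicity) fixed-point set of $\mathrm{d}\phi_t$ in $E_u\oplus E_s$, produces a Jacobian $|\det(I-\mathcal{P}_\gamma)|$ in the denominator. The length factor $T_\gamma^\#$ appears from the $\R$-action along the flow direction (the orbit contributes a $\delta(t-T_\gamma)$ times its primitive length), and the amplitude carries the factor $\exp(\int_\gamma V)$ coming from the symbol of $\mathcal{L}_t$. The outcome is
\[
\mathrm{tr}^\flat \mathcal{L}_t \;=\; \sum_\gamma \frac{T_\gamma^{\#} e^{\int_\gamma V}}{|\det(I-\mathcal{P}_\gamma)|}\,\delta(t-T_\gamma),
\]
valid as an identity of distributions on $\R_+^*$, and substituting into the integral representation gives \eqref{eqformtrace}.

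The main obstacle is this last stationary-phase step: one has to argue that the contributions of non-periodic orbits to the diagonal integral are negligible (a non-stationary phase argument, as in Lemma \ref{lemma:TPS-far-diagonal}, applied to the composition of the flow with the FBI kernel), and then extract the correct pre-factor at each periodic orbit. The transversality coming from the Anosov property ensures that the critical manifold of the phase is exactly the union of closed orbits, cut transversally within $E_u\oplus E_s$; the Gevrey regularity of $X$ is used only to guarantee that the remainder in the stationary-phase expansion, together with the off-diagonal contributions, decay faster than any negative power of $\langle|\alpha|\rangle$, so that they are integrable on $\Lambda$ and do not disturb the identification of the leading term.
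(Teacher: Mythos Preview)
Your trace-class argument and Laplace representation are fine, and the idea of computing $\mathrm{tr}^\flat\mathcal{L}_t$ as a sum over periodic orbits is exactly Guillemin's trace formula. But there is a genuine gap in the interchange of trace and integral. You write that this is ``legitimate once one expresses the resolvent power as a norm-convergent Bochner integral and uses the $\mathcal{S}_1$ bound'', but norm convergence of the integral together with the \emph{limit} lying in $\mathcal{S}_1$ does not permit moving the trace inside: the integrand $t^{m-1}e^{-zt}\mathcal{L}_t$ is never trace class (indeed $\mathcal{L}_t$ is invertible for every $t$), so $\int_0^\infty \Tr(\,\cdot\,)\,\mathrm{d}t$ is meaningless as written. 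Relatedly, the ``restriction to the diagonal and integration'' you invoke for $T_\Lambda\mathcal{L}_tS_\Lambda$ is not covered by Lemma \ref{lemma:trace_pseudo}, which applies to Toeplitz operators $B_\Lambda\sigma B_\Lambda$ with $\sigma$ of order $<-n$; for small $t$, the operator $\mathcal{L}_t$ is close to the identity and its FBI kernel is not integrable on the diagonal.

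The paper circumvents this by a two-step regularization rather than a direct interchange. First, one composes with $\mathcal{L}_\rho$ for small $\rho>0$ and uses $\Tr(z-P)^{-m}=\lim_{\rho\to 0}\Tr\mathcal{L}_\rho(z-P)^{-m}$ (this pushes the integral in $t$ away from $t=0$). Second, one sandwiches by smoothing operators $I_\epsilon=S e^{-\epsilon\langle\alpha\rangle^2}T$, so that $I_\epsilon\mathcal{L}_\rho(z-P)^{-m}I_\epsilon$ has a smooth kernel. The key observation is then that this smoothed operator has the \emph{same nonzero eigenvalues} whether viewed on $\mathcal{H}_\Lambda^0$ or on $L^2(M)$ (because the $I_\epsilon$ map into $E^{1,R_0}$), so by Lidskii the traces agree, and on $L^2$ the trace coincides with the flat trace. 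Finally one invokes the wave-front set bounds for $(z-P)^{-1}$ from \cite{DZdet} to show the flat trace of $\mathcal{L}_\rho(z-P)^{-m}$ is well-defined and computes it via Guillemin's formula, then lets $\epsilon,\rho\to 0$. Your proposed stationary-phase computation on the FBI side would essentially reprove Guillemin's formula from scratch; the paper instead routes the problem back to $M$ and cites existing machinery.
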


The fundamental reason for which this result holds is that whenever we can give a reasonable meaning to $\Tr_{|\mathcal{H}^0_\Lambda} (z-P)^{-m}$, it formally does not depend on $\Lambda$. Indeed, this trace should coincide with the ``flat trace'' of $(z- P)^{-m}$ given by formal integration of it Schwartz kernel on the diagonal of $M \times M$. See \cite{DZdet} for an extensive discussion of the notion of flat trace in the context of Anosov flows.

\begin{proof}
That $(z-P)^{-m}$ is trace class results from Theorem \ref{thm:spectral-theory}. For $\Re z$ large enough, the convergence of the right hand side of \eqref{eqformtrace} is provided by Margulis' bound \cite{margulisAspectsTheoryAnosov2004} on the number of closed geodesics for an Anosov flow (see also \cite[Lemma 2.2]{DZdet}).

First, recall the semi-group $\p{\mathcal{L}_t}_{t \geq 0}$ generated by $P$ (see Proposition \ref{prop:semigroup}), given by the formula \eqref{eq:transfer} on $\mathcal{H}_\Lambda^0 \cap L^2\p{M}$. It follows from the semi-group property and from \cite[Theorem IV.5.5]{Gohb} that
\begin{equation*}
\begin{split}
\Tr_{\mathcal{H}^0_\Lambda} (z-P)^{-m} = \lim_{\rho \to 0} \Tr_{\mathcal{H}^0_\Lambda} \mathcal{L}_\rho (z-P)^{-m}.
\end{split}
\end{equation*}
Thus, \eqref{eqformtrace} will follow if we can prove that for $\rho > 0$ small enough we have
\begin{equation*}
\begin{split}
\Tr_{\mathcal{H}^0_\Lambda} \mathcal{L}_\rho (z-P)^{-m} = \frac{1}{\p{m-1}!} \sum_{\gamma} \frac{T_\gamma^{\#} e^{\int_\gamma V}}{\va{\det\p{I - \mathcal{P}_\gamma}}} \p{T_\gamma - \rho}_+^{m-1} e^{- z \p{T_\gamma- \rho}},
\end{split}
\end{equation*}
where $x_+ = \max(x,0)$ denotes the positive part of a real number $x$. Indeed, the convergence of the right hand side when $\rho$ tends to zero is obtained by dominated convergence.

Let then $\rho > 0$ be small enough. Then, as in the proof of Corollary \ref{cor:density-E1R}, introduce the operators $I_\epsilon = S \exp(-\epsilon \langle\alpha\rangle^2) T$ for $\epsilon > 0$. These are regularizing operators that map $(E^{1,R_0})'$ to $E^{1,R_0}$ for $R_0>0$ large enough. In particular, $I_\epsilon \mathcal{L}_\rho (z-P)^{-m} I_\epsilon$ is of trace class on $L^2$ and $\mathcal{H}^0_\Lambda$ (actually on any reasonable space where $\mathcal{L}_\rho (z-P)^{-m}$ is bounded). Here, it makes sense to discuss the operator $I_\epsilon \mathcal{L}_\rho (z-P)^{-m} I_\epsilon$ acting on $L^2$ or $\mathcal{H}^0_\Lambda$ because $I_\epsilon$ is valued in $E^{1,R_0}$. Moreover, thanks to Proposition \ref{prop:semigroup}, we may use the expression \eqref{eq:transfer} for $\mathcal{L}_t$ and then $\mathcal{L}_\rho (z-P)^{-m}$ writes for $\Re z \gg 1$
\begin{equation}\label{eq:une_forme_integrale}
\begin{split}
\mathcal{L}_\rho (z-P)^{-m} = \frac{1}{\p{m-1}!} \int_{0}^{+ \infty} e^{-z (t- \rho)} \p{t - \rho}_+^{m - 1} \mathcal{L}_t \mathrm{d}t.
\end{split}
\end{equation}

On the other hand, $\p{I_\epsilon}_{\epsilon > 0}$ is a family of $\G^1$ pseudors, uniformly of order $0$ as $\epsilon\to 0$ (this is an immediate consequence of the proof of Lemma \ref{lemma:ST-pseudo} in which we used the same regularization procedure to study the composition $ST$), and they converge strongly to the identity when $\epsilon$ tends to $0$, both as operators on $L^2\p{M}$ and $\mathcal{H}_\Lambda^0$ (or any Sobolev space, see Corollary \ref{corollary:sobolev} and the proof of Corollary \ref{corollary:densite}). We deduce then from \cite[Theorem IV.5.5]{Gohb} that
\[
\Tr_{\mathcal{H}^0_\Lambda} \mathcal{L}_\rho (z-P)^{-m} = \lim_{\epsilon\to 0} \Tr_{\mathcal{H}^0_\Lambda} (I_\epsilon \mathcal{L}_\rho (z-P)^{-m} I_\epsilon).
\]

We consider now $u$ a generalized eigenvector of $I_\epsilon \mathcal{L}_\rho(z-P)^{-m} I_\epsilon$, associated to an eigenvalue $\lambda\neq 0$. Since $I_\epsilon$ maps $\mathcal{H}^0_\Lambda$ continuously in $E^{1,R_0}$ for some $R_0>0$, we deduce that if $u\in \mathcal{H}^0_\Lambda$, then $u \in E^{1,R_0}$. In particular, $u\in L^2$. Reciprocally, if $u\in L^2$, then $u\in \mathcal{H}^0_\Lambda$. Since $I_\epsilon \mathcal{L}_\rho(z-P)^{-m} I_\epsilon$ has the same eigenvectors and eigenvalues on $L^2$ and $\mathcal{H}^0_\Lambda$, according to Lidskii's theorem, its trace is the same on both spaces, so that we have
\begin{equation*}
\begin{split}
\Tr_{\mathcal{H}^0_\Lambda} \mathcal{L}_\rho (z-P)^{-m} = \lim_{\epsilon\to 0} \Tr_{L^2} (I_\epsilon \mathcal{L}_\rho (z-P)^{-m} I_\epsilon).
\end{split}
\end{equation*}

In order to compute the limit when $\epsilon$ tends to $0$ of the trace of $I_\epsilon \mathcal{L}_\delta(z-P)^{-m} I_\epsilon$, we will use the notion of flat trace. We refer to \cite{DZdet} and references therein for basic properties of this object and insightful discussion of its properties in the context of Anosov flows.

Let us show that the flat trace of $\mathcal{L}_\rho(z-P)^{-m}$ is well-defined. Recall that the flat trace of an operator $A : \mathcal{C}^\infty\p{M} \to \mathcal{D}'\p{M}$ is defined if the intersection between its wave front set $WF'(A)$ (defined in \cite[C.1]{DZdet}) and the conormal to the diagonal
\begin{equation*}
\begin{split}
\Delta(T^\ast M) = \{ (x,\xi;x,\xi)\ |\ (x,\xi)\in T^\ast M \} \subseteq T^* M \times T^* M
\end{split}
\end{equation*}
is empty. Then, according to \cite[Proposition 3.3]{DZdet}, the wave front set of $(z - P)^{-1}$ is contained in 
\begin{equation}\label{eq:localisastion_WF_resolvante}
\begin{split}
\Delta(T^\ast M) \cup \Omega_- \cup E^\ast_s \times E^\ast_u,
\end{split}
\end{equation}
where
\[
\Omega_- = \{ (x,\xi; \phi_t(x),{}^T \mathrm{d}\phi_t(x) ^{-1} \xi))\ |\ t\geq 0,\ \xi\p{X(x)} = 0 \}.
\]
(we have adjusted the signs because they study the transfer operator instead of the Koopman operator). Then, by \cite[Proposition E.40]{dyatlovMathematicalTheoryScattering2019}, we see that the wave front set of $\p{z - P}^{-m}$ is also contained in \eqref{eq:localisastion_WF_resolvante}. Then, from \cite[Theorem 8.2.4]{hormanderAnalysisLinearPartial2003}, we know that the wave front set of $\mathcal{L}_\rho \p{z - P}^{-m}$ is contained in
\begin{equation*}
\begin{split}
F_\rho \cup \Omega_-^{\rho} \cup E^\ast_s \times E^\ast_u,
\end{split}
\end{equation*}
where $\Omega_-^{\rho}$ is defined by replacing the condition $t \geq 0$ by $t \geq \rho$ in the definition of $\Omega_-$ and
\begin{equation*}
\begin{split}
F_\rho = \{ (x,\xi; \phi_\rho(x),{}^T \mathrm{d}\phi_\rho(x) ^{-1} \xi))\ |\ (x,\xi) \in T^* M \}
\end{split}
\end{equation*}
In particular, provided that $\rho > 0$ is shorter than the length of all periodic orbits of the flow $\phi_t$, we see that the wave front set of $\mathcal{L}_\rho \p{z - P}^{-m}$ does not intersect $\Delta\p{T^* M}$ (the wave front set of an operator does not intersect the zero section), so that the flat trace of $\mathcal{L}_\rho \p{z - P}^{-m}$ is well-defined.

In order to compute this flat trace, we just notice that the argument used in \cite[Section 4]{DZdet} to compute the flat trace of $\mathcal{L}_\rho (z - P)^{-1}$ also applies to $\mathcal{L}_\rho (z - P)^{-m}$. Let us just mention that this argument is based on Guillemin trace formula \cite{guillemin-lectures-spectral-theory-77}: for $t > 0$ we have the distributional equality (see \cite[Appendix B]{DZdet} for details)
\[
\Tr^\flat \mathcal{L}_t = \sum_{\gamma} \frac{ T_\gamma^\# \exp\int_\gamma V }{|\det 1 - \mathcal{P}_\gamma|} \delta(t-T_\gamma).
\]
Consequently, recalling \eqref{eq:une_forme_integrale}, we find without surprise that
\begin{equation*}
\begin{split}
\Tr^\flat \p{\mathcal{L}_\rho (z-P)^{-m}} = \frac{1}{\p{m-1}!} \sum_{\gamma} \frac{T_\gamma^{\#} e^{\int_\gamma V}}{\va{\det\p{I - \mathcal{P}_\gamma}}} \p{T_\gamma - \rho}_+^{m-1} e^{- z \p{T_\gamma- \rho}}.
\end{split}
\end{equation*}
Hence, we only need to prove that
\begin{equation*}
\begin{split}
\lim_{\epsilon \to 0} \Tr_{|L^2}\p{I_\epsilon \mathcal{L}_\rho (z - P)^{-m} I_\epsilon} = \Tr^\flat \p{\mathcal{L}_\rho (z-P)^{-m}}.
\end{split}
\end{equation*}
Since for $\epsilon > 0$ the operator $I_\epsilon \mathcal{L}_\rho (z - P)^{-m} I_\epsilon$ has a $\mathcal{C}^\infty$ kernel, its trace acting on $L^2\p{M}$ coincides with its flat trace. Consequently, we want to prove that
\begin{equation}\label{eq:laconvergencequilfaut}
\begin{split}
\lim_{\epsilon \to 0} \Tr^{\flat}\p{I_\epsilon \mathcal{L}_\rho (z - P)^{-m} I_\epsilon} = \Tr^\flat \p{\mathcal{L}_\rho (z-P)^{-m}}.
\end{split}
\end{equation}

According to \cite[Definition 8.2.2 and Theorem 8.2.4]{Hormander-1}, in order to prove \eqref{eq:laconvergencequilfaut}, we only need to prove that the Schwartz kernel of $I_\epsilon \mathcal{L}_\rho (z - P)^{-m} I_\epsilon$ converges weakly to the kernel of $\mathcal{L}_\rho (z - P)^{-m}$ (when $\epsilon$ tends to $0$) and that the wave front set condition needed to define the flat trace is uniformly satisfied by the $I_\epsilon \mathcal{L}_\rho (z - P)^{-m} I_\epsilon$'s.

In order to check the wave front set condition, we just use the fact that the $I_\epsilon$'s form a family of pseudors uniformly of order $0$, so that the Schwartz kernel of $I_\epsilon \mathcal{L}_\rho (z - P)^{-m} I_\epsilon$ is the image of the kernel of $\mathcal{L}_\rho (z - P)^{-m}$ by a pseudor $J_\epsilon = I_\epsilon \otimes {}^t I_\epsilon$ uniformly of order $0$ when $\epsilon$ tends to $0$. To get the weak convergence, we just need to recall from the proof of Corollary \ref{cor:density-E1R} that $I_\epsilon$ converges pointwise to the identity on any Sobolev space on $M$ (see also Corollary \ref{corollary:sobolev}), so that $J_\epsilon$ has the same property on $M \times M$.
\end{proof}

With Lemma \ref{lmtraceresolvante}, we are ready to relate the dynamical determinant $\zeta_{X,V}$ with a regularized determinant. See \cite[Chapter XI]{Gohb} for the general theory of regularized determinants.

\begin{lemma}\label{lmfactorisation}
Let $z$ be a complex number with large and positive real part and $m$ be the smallest integer strictly larger than $ns$. Let $Q_z$ be the polynomial of order at most $m-1$
\begin{equation*}
\begin{split}
Q_z(\lambda) = - \sum_{\ell = 0}^{m-1} \p{\sum_{\gamma} \frac{T_\gamma^{\#} e^{\int_\gamma V} e^{- z T_\gamma} T_\gamma^{\ell - 1}}{\va{\det\p{I - \mathcal{P}_\gamma}}}}\frac{\p{z - \lambda}^{\ell}}{\ell!}.
\end{split}
\end{equation*}
Then for every $\lambda \in \C$ we have
\begin{equation*}
\begin{split}
\zeta_{X,V}(\lambda) = \textup{det}_m\p{I + (\lambda - z) \p{z - P}^{-1}} \exp\p{Q_z(\lambda)},
\end{split}
\end{equation*}
where $\textup{det}_m$ denotes the regularized determinant of order $m$. 
\end{lemma}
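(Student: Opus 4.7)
The plan is to establish the identity first in a neighborhood of $\lambda = z$ with $\Re z$ large, by matching a term-by-term expansion of both sides, and then extend to all $\lambda \in \C$ by analytic continuation.

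Both sides are entire functions of $\lambda$: the left hand side by the Giulietti--Liverani--Pollicott theorem invoked after \eqref{eq:zetaXV}; on the right, Theorem \ref{thm:spectral-theory} yields $(z-P)^{-1} \in \mathcal{S}_m$ since $m > ns$, so $(\lambda-z)(z-P)^{-1}$ is an entire $\mathcal{S}_m$-valued function of $\lambda$ and the regularized determinant $\det\nolimits_m$ is entire on $\mathcal{S}_m$, while $Q_z$ is polynomial. It therefore suffices to verify the equality on any nonempty open set.

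For $|\lambda-z|$ smaller than the inverse spectral radius of $(z-P)^{-1}$, the standard power series formula for the regularized determinant gives
\[
-\log\det\nolimits_m\!\p{I + (\lambda-z)(z-P)^{-1}} \;=\; \sum_{k\geq m}\frac{(-1)^k(\lambda-z)^k}{k}\,\Tr\!\p{(z-P)^{-k}}.
\]
Since every $k\geq m$ satisfies $k > ns$, each trace is given by Lemma \ref{lmtraceresolvante} as a sum over periodic orbits. Provided $\Re z$ is taken large enough and $|\lambda-z|$ small compared to $\Re z$, the Margulis-type bound on the number of periodic orbits recalled in the proof of Lemma \ref{lmtraceresolvante} ensures absolute convergence of the resulting double sum, and a Fubini interchange produces
\[
\sum_\gamma \frac{T_\gamma^{\#}\, e^{\int_\gamma V}\, e^{-zT_\gamma}}{T_\gamma\,|\det(I-\mathcal{P}_\gamma)|}\!\left(e^{-T_\gamma(\lambda-z)} - \sum_{k=0}^{m-1}\frac{(-T_\gamma(\lambda-z))^k}{k!}\right).
\]
The first piece, using $e^{-zT_\gamma}e^{-T_\gamma(\lambda-z)} = e^{-\lambda T_\gamma}$, is exactly $-\log\zeta_{X,V}(\lambda)$ (still valid as a convergent series since $\Re\lambda$ is large). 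The second piece, after replacing $(-1)^k(\lambda-z)^k = (z-\lambda)^k$ and identifying coefficients, is precisely $-Q_z(\lambda)$ by the definition of $Q_z$. Rearranging yields $\log\zeta_{X,V}(\lambda) = \log\det\nolimits_m(\cdots) + Q_z(\lambda)$ on this open set, and the identity theorem for holomorphic functions extends it to all of $\C$ after exponentiation.

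The only delicate step is the Fubini interchange of the sums over $k$ and $\gamma$: it requires simultaneously the Schatten-class statement of Theorem \ref{thm:spectral-theory} (so that each individual trace is finite and the series expansion of $\det\nolimits_m$ is valid), the Guillemin trace formula of Lemma \ref{lmtraceresolvante} (to expand the traces), and a quantitative count of periodic orbits. All three are in hand, so the plan should go through without further obstacle.
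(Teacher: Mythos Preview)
Your proof is correct and follows essentially the same approach as the paper: both expand $\log\det_m$ as a power series near $\lambda=z$, insert the trace formula of Lemma~\ref{lmtraceresolvante}, interchange the sums over $k$ and $\gamma$ via the Margulis bound, and then identify the two resulting pieces as $-\log\zeta_{X,V}(\lambda)$ and $-Q_z(\lambda)$ before invoking analytic continuation. Your preamble explaining why both sides are entire (via GLP and the Schatten property from Theorem~\ref{thm:spectral-theory}) is a welcome clarification that the paper leaves implicit.
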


\begin{proof}
By analytic continuation principle, we only need to prove this result for $\lambda$ close to $z$. For such a $\lambda$ the regularized determinant is defined by 
\begin{equation*}
\begin{split}
\\ & \textup{det}_m\p{I + (\lambda - z) \p{z - P}^{-1}} \\& \qquad \qquad = \exp\p{- \sum_{\ell \geq m}\frac{\p{z- \lambda}^\ell}{\ell} \Tr\p{\p{z-P}^{-\ell}}} \\
     & \qquad \qquad = \exp\p{ - \sum_{\ell \geq m}\frac{(z-\lambda)^\ell}{\ell!} \sum_{\gamma} \frac{T_\gamma^{\#} e^{\int_\gamma V}}{\va{\det\p{I - \mathcal{P}_\gamma}}} T_\gamma^{\ell-1} e^{- z T_\gamma}} \\
    \intertext{According to Lemma \ref{lmtraceresolvante}, this is:}
     & \qquad \qquad = \exp\p{ - \sum_{\gamma}\frac{T_\gamma^{\#}}{T_\gamma} \frac{e^{\int_\gamma V} e^{- z T_\gamma}}{\va{\det\p{I - \mathcal{P}_\gamma}}} \sum_{\ell \geq m} \frac{\p{(z-\lambda)T_\gamma}^\ell}{\ell!}} \\
     & \qquad \qquad = \exp\p{ - \sum_{\gamma}\frac{T_\gamma^{\#}}{T_\gamma} \frac{e^{\int_\gamma V} e^{- z T_\gamma}}{\va{\det\p{I - \mathcal{P}_\gamma}}} \p{ e^{\p{z- \lambda} T_\gamma} - \sum_{\ell = 0}^{m-1} \frac{\p{(z-\lambda)T_\gamma}^\ell}{\ell!}}} \\
     & \qquad \qquad = \zeta_{X,V}(\lambda) e^{- Q_z\p{\lambda}}.
\end{split}
\end{equation*}
The applications of Fubini's Theorem are justified when $\Re z  \gg 1$ and $\va{z - \lambda}$ small enough by Margulis' bound.
\end{proof}

We are now ready to prove Theorem \ref{thm:main-strong}.

\begin{proof}[Proof of Theorem \ref{thm:main-strong}]
Let $m$ be as in Lemma \ref{lmfactorisation}. Recall the Weierstrass primary factor (the second expression is valid when $\va{\lambda} < 1$)
\begin{equation*}
\begin{split}
E(\lambda,m-1) = \p{1 - \lambda} \exp\p{\sum_{\ell = 1}^{m-1} \frac{1}{\ell} \lambda^\ell} = \exp\p{- \sum_{\ell = m}^{+ \infty} \frac{1}{\ell} \lambda^\ell}.
\end{split}
\end{equation*}
It follows from Lidskii's Trace Theorem that when $\Re z \gg 1$ and $\lambda \in \C$ we have
\begin{equation}\label{eq:factorisation_canonique}
\begin{split}
\textup{det}_m\p{I - (z - \lambda) (z-P)^{-1}} & = \prod_{k = 0}^{+ \infty} E\p{\frac{\lambda-z}{\lambda_k-z},m-1},
\end{split}
\end{equation}
where $(\lambda_k)_{k \in \N}$ denotes the sequence of Ruelle resonances of $P$. We want to use this expression with Lemma \ref{lmfactorisation} in order to prove Theorem \ref{thm:main-strong}, but let us make an observation first. If $\lambda$ is a complex number such that 
\begin{equation}\label{eq:proche_des_reels_positifs}
\begin{split}
\va{\frac{\lambda}{\va{\lambda}} - 1} \leq \frac{1}{2}
\end{split}
\end{equation}
then we have $\Re \lambda \geq \va{\lambda}/2$. Hence, using the expression \eqref{eq:zetaXV} and dominated convergence, we see that when $\va{\lambda}$ tends to $+ \infty$ while satisfying \eqref{eq:proche_des_reels_positifs}, the function $\zeta_{X,V}(\lambda)$ tends to $1$. In particular, $\zeta_{X,V}(\lambda)$ remains bounded when $\lambda$ satisfies \eqref{eq:proche_des_reels_positifs}. Hence, we may assume in the following that
\begin{equation}\label{eq:loin_des_reels_positifs}
\begin{split}
\va{\frac{\lambda}{\va{\lambda}} - 1} \geq \frac{1}{2},
\end{split}
\end{equation}
and that $\va{\lambda}$ is large of course. When $\va{\lambda}$ is large enough, we may apply Lemma \ref{lmfactorisation} with $z = \va{\lambda}$. Then, notice that $Q_{\va{\lambda}}(\lambda)$ tends to $0$ when $\va{\lambda}$ tends to $+ \infty$ and thus we may ignore the factor $\exp\p{Q_z(\lambda)}$ from Lemma \ref{lmfactorisation}. The other factor is given by \eqref{eq:factorisation_canonique} (with $z = \va{\lambda}$).

Notice that if $\va{\lambda_k} \geq 5 \va{\lambda}$ then $\va{\lambda - \va{\lambda}}/\va{\lambda_k - \va{\lambda}} \leq 1/2$ and hence
\begin{equation}\label{eq:lesgrands}
\begin{split}
\log \va{E\p{\frac{\lambda-\va{\lambda}}{\lambda_k-\va{\lambda}},m-1}} & \leq 2 \va{\frac{\lambda - \va{\lambda}}{\lambda_k - \va{\lambda}}}^m \leq 2^{m+1}\p{\frac{ \va{\lambda}}{\va{\lambda_k} - \va{\lambda}}}^m \\ & \leq 2  \p{\frac{5}{2}}^{m} \va{\frac{\lambda}{\lambda_k}}^{m}.
\end{split}
\end{equation}
On the other hand if $\va{\lambda_k} < 5 \va{\lambda}$, we have, since we assume \eqref{eq:loin_des_reels_positifs},
\begin{equation*}
\begin{split}
\va{\frac{\lambda - \va{\lambda}}{\lambda_k - \va{\lambda}}} & \geq \frac{1}{2} \frac{1}{\va{\frac{\lambda_k}{\va{\lambda}}-1}} \geq \frac{1}{12}.
\end{split}
\end{equation*} 
Hence, we have
\begin{equation}\label{eq:lautre_cas}
\begin{split}
\log \va{E\p{\frac{\lambda-\va{\lambda}}{\lambda_k-\va{\lambda}},m-1}} & \leq \log\va{1 - \frac{\lambda - \va{\lambda}}{\lambda_k - \va{\lambda}} } + \sum_{\ell = 1}^{m-1} \frac{1}{\ell} \va{\frac{\lambda - \va{\lambda}}{\lambda_k - \va{\lambda}}}^{\ell} \\
    & \leq \va{\frac{\lambda - \va{\lambda}}{\lambda_k - \va{\lambda}}} + \sum_{\ell = 1}^{m-1} \va{\frac{\lambda - \va{\lambda}}{\lambda_k - \va{\lambda}}}^{\ell} \\
    & \leq \p{\sum_{\ell = 0}^{m-1} 12^{m-1-\ell}} \va{\frac{\lambda - \va{\lambda}}{\lambda_k - \va{\lambda}}}^{m-1} \\
    & \leq 12^m \va{\frac{\lambda - \va{\lambda}}{\lambda_k - \va{\lambda}}}^{m-1}.
\end{split}
\end{equation}
Then, introduce a constant $C$ such that $\Re \lambda_k \leq C$ for all $k \in \N$ (such a constant exists because $P$ is the generator of a strongly continuous semi-group) and notice that for $\va{\lambda}$ large enough, we have
\begin{equation*}
\begin{split}
\va{\frac{\lambda - \va{\lambda}}{\lambda_k - \va{\lambda}}} \leq \frac{2}{\va{\frac{\lambda_k - C}{\va{\lambda}} - 1 + \frac{C}{\va{\lambda}}}} \leq \frac{2}{1 - \frac{C}{\va{\lambda}}} \leq 4.
\end{split}
\end{equation*}
And thus, \eqref{eq:lautre_cas} becomes:
\begin{equation}\label{eq:lespetits}
\begin{split}
\log \va{E\p{\frac{\lambda-\va{\lambda}}{\lambda_k-\va{\lambda}},m-1}} \leq \frac{48^m}{4}.
\end{split}
\end{equation}
Now, gathering \eqref{eq:lesgrands} and \eqref{eq:lespetits}, that are valid respectively when $\va{\lambda_k} \geq 5 \va{\lambda}$ and $\va{\lambda_k} < 5 \va{\lambda}$, we find that
\begin{equation}\label{eq:on_y_est_presque}
\begin{split}
& \log \va{\textup{det}_m\p{I - (\va{\lambda} - \lambda) (\va{\lambda}-P)^{-1}}} \\ & \qquad \qquad\leq 2 \times \p{\frac{5}{2}}^{m} \va{\lambda}^m \sum_{\va{\lambda_k} \geq 5 \va{\lambda}} \va{\lambda_k}^{-m} + \frac{48^m}{4} \# \set{k \in \N : \va{\lambda_k} < 5 \va{\lambda}}.
\end{split}
\end{equation}
Then, from the counting bound in Theorem \ref{thm:spectral-theory}, we see that
\begin{equation}\label{eq:chtok}
\begin{split}
\# \set{k \in \N : \va{\lambda_k} < 5 \va{\lambda}} = \O\p{\va{\lambda}^{n s}},
\end{split}
\end{equation}
and that,
\begin{equation}\label{eq:chtak}
\begin{split}
\sum_{\va{\lambda_k} \geq 5 \va{\lambda}} \va{\lambda_k}^{-m} = \O\p{\va{\lambda}^{n s-m}}.
\end{split}
\end{equation}
We end the proof of Theorem \ref{thm:main-strong} by plugging \eqref{eq:chtok} and \eqref{eq:chtak} in \eqref{eq:on_y_est_presque}.
\end{proof}

\section{Perturbative results}
\label{sec:perturbative-results}

\subsection{Ruelle resonances and stochastic perturbations}\label{sec:viscosite}

This section is dedicated to the proof of Theorem \ref{theorem:viscosite}. Recall that we are considering perturbations of $P$ of the form
\begin{equation*}
\begin{split}
P_\epsilon \coloneqq P + \epsilon \Delta,
\end{split}
\end{equation*}
where $\epsilon \geq 0$ and $\Delta$ is a $\G^s$ semi-classical pseudor, self-adjoint, negative and classically elliptic of order $m > 1$. The proof of Theorem \ref{theorem:viscosite} will be split into two parts. We will first prove that the spectrum of $P_\epsilon$ on $\mathcal{H}_\Lambda^0$ is discrete and coincides with $\sigma_{L^2}\p{P_\epsilon}$, and then prove the convergence results working directly on $\mathcal{H}_\Lambda^0$. In this section, we will need to work with other adapted Lagrangians than $\Lambda$ that we introduced in \S \ref{sec:spectral_theory}. We will denote these adapted Lagrangians by $\Lambda'$, keeping the notation $\Lambda$ for the Lagrangian that we introduced to study $P$. We start with a technical lemma.

\begin{lemma}\label{lemma:technique_P_epsilon}
Let $\tau_0$ be small enough and $\Lambda'$ be a $\p{\tau_0, s}$-adapted Lagrangian. Then, for $h$ small enough, for every $N > 0, \epsilon > 0, \lambda \in \C$ and $k \in \N$, there is a constant $C$ such that for every $u \in \mathcal{H}_{\Lambda'}^{km}$, we have
\begin{equation*}
\begin{split}
\n{u}_{\mathcal{H}_{\Lambda'}^{k m}} \leq C \p{\n{\p{P_\epsilon - \lambda}^k u}_{\mathcal{H}_{\Lambda'}^0} + \n{u}_{\mathcal{H}_{\Lambda'}^{-N}}}.
\end{split}
\end{equation*}
\end{lemma}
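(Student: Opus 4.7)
The plan is to construct a parametrix for $(P_\epsilon-\lambda)^k$ using the Toeplitz calculus from Section 3 and deduce the estimate in the standard way. The key observation is that $P_\epsilon$ is a $\G^s$ pseudor of order $m$ whose principal symbol is
\[
p_{P_\epsilon}(\alpha) = p(\alpha) + \epsilon \sigma_\Delta(\alpha),
\]
where $\sigma_\Delta$ is real, non-positive, and classically elliptic of order $m>1$, while $p$ is only of order $1$. Hence, for any fixed $\epsilon>0$ and $\lambda\in\C$, there exists $R_0=R_0(\epsilon,\lambda)$ such that on $\{\jap{\va{\alpha}}\geq R_0\}$ we have
\[
\va{p(\alpha) + \epsilon \sigma_\Delta(\alpha) - \lambda} \geq \frac{\epsilon}{C}\jap{\va{\alpha}}^m.
\]
Taking almost analytic extensions and restricting to $\Lambda'$ (via Remark \ref{remark:aae_for_symbol}), the same bound (with a slightly different constant) holds for $p_{P_\epsilon,\Lambda'}-\lambda$, provided $\tau_0$ is small enough. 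Thus $(P_\epsilon-\lambda)^k$ is a $\G^s$ pseudor of order $km$ with principal symbol $(p_{P_\epsilon}-\lambda)^k$, elliptic of order $km$ outside a compact set of $\Lambda'$.

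Next I would pass to the Toeplitz side. By Proposition \ref{proptoeplitz} applied to the pseudor $(P_\epsilon-\lambda)^k$ (which is $\G^s$ by stability under composition, Proposition \ref{prop:composition_pseudors}), there exists a symbol $\sigma_k\in S_{KN}^{km}(\Lambda')$ with principal part $(p_{P_\epsilon,\Lambda'}-\lambda)^k$ such that
\[
B_{\Lambda'} T_{\Lambda'} (P_\epsilon-\lambda)^k S_{\Lambda'} B_{\Lambda'} = B_{\Lambda'} \sigma_k B_{\Lambda'} + \mathcal{N}_1,
\]
with $\mathcal{N}_1$ negligible. Choose a bump function $\chi$ equal to $1$ on $\{\jap{\va{\alpha}}\leq 2R_0\}$ and supported in $\{\jap{\va{\alpha}}\leq 3R_0\}$, and set $e_k^{(0)} = (1-\chi)/\sigma_k$, which belongs to $S_{KN}^{-km}(\Lambda')$ since $\sigma_k$ is bounded below by $\epsilon^k C^{-k}\jap{\va{\alpha}}^{km}$ on the support of $(1-\chi)$. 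Iterating Proposition \ref{propcomptoep}, I would build $e_k\in S_{KN}^{-km}(\Lambda')$ satisfying
\[
(B_{\Lambda'} e_k B_{\Lambda'})\circ(B_{\Lambda'}\sigma_k B_{\Lambda'}) = B_{\Lambda'} + B_{\Lambda'} r_k B_{\Lambda'} + \mathcal{N}_2,
\]
where $r_k\in S_{KN}^{-\infty}(\Lambda')$ (a symbol of order less than any prescribed $-L$) is compactly supported and $\mathcal{N}_2$ is negligible.

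Applying this to $T_{\Lambda'} u$ gives, after sandwiching by $S_{\Lambda'}$ and $T_{\Lambda'}$ and using Proposition \ref{lemma:boundedness} to pass back from the FBI side,
\[
u = S_{\Lambda'} B_{\Lambda'} e_k B_{\Lambda'} T_{\Lambda'} (P_\epsilon-\lambda)^k u - S_{\Lambda'} B_{\Lambda'} r_k B_{\Lambda'} T_{\Lambda'} u + \mathcal{N}_3 u,
\]
with $\mathcal{N}_3$ negligible. The first term is bounded in $\mathcal{H}_{\Lambda'}^{km}$ by $C\n{(P_\epsilon-\lambda)^k u}_{\mathcal{H}_{\Lambda'}^0}$ because $B_{\Lambda'} e_k B_{\Lambda'}$ maps $L^2_0(\Lambda')$ into $L^2_{km}(\Lambda')$ (Remark \ref{remark:boundedness}). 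The second and third terms involve operators that, by choosing $L$ large enough (above $km+N$), map $\mathcal{H}_{\Lambda'}^{-N}$ continuously into $\mathcal{H}_{\Lambda'}^{km}$. Combining yields the announced inequality.

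The only real point of care is that the constants in the parametrix depend on $\epsilon$ and $\lambda$ through the threshold $R_0$, but since the statement allows $C$ to depend on all of $\epsilon$, $\lambda$, $k$, $N$, this is unproblematic; the main obstacle is therefore only bookkeeping, namely checking that the iterated Toeplitz parametrix construction can be carried out to an arbitrarily large number of steps so that the compactly supported remainder $r_k$ has arbitrarily negative order and the negligible remainders are absorbed into the $\mathcal{H}_{\Lambda'}^{-N}$ term.
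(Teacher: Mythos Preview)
Your argument is correct and complete; the minor terminological slip is that $r_k$ is not literally compactly supported but rather the sum of a compactly supported piece (coming from the cutoff $\chi$) and a piece of order $\leq -L$ from the parametrix iteration, both of which map $L^2_{-N}(\Lambda')$ to $L^2_{km}(\Lambda')$ once $L\geq km+N$. With that cosmetic correction, the parametrix construction goes through exactly as you describe.

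The paper, however, takes a different route: instead of inverting $\sigma_k$ via a parametrix, it exploits the sign of the real part. It forms the Toeplitz symbol $\sigma_{\epsilon,k,\lambda}$ of $\jap{\va{\alpha}}^{mk}(P_\epsilon-\lambda)^k$ on $\Lambda'$, observes that its real part satisfies $\Re\sigma_{\epsilon,k,\lambda}(\alpha)\leq -C^{-1}\jap{\va{\alpha}}^{2mk}+C\jap{\va{\alpha}}^{-N+mk}$ (driven by the negativity of $\Re\sigma_\Delta$), and combines this with the Cauchy--Schwarz bound $\va{\Re\jap{\jap{\va{\alpha}}^{mk}T(P_\epsilon-\lambda)^k u,Tu}}\leq \n{u}_{\mathcal{H}^{mk}}\n{(P_\epsilon-\lambda)^k u}_{\mathcal{H}^0}$ to obtain the estimate directly. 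This is the same quadratic-form technique used earlier for the genuinely hypo-elliptic operator $P$ (Lemma~\ref{lmhypoell}), where no parametrix is available because $p_\Lambda$ is not elliptic. Your approach is the more standard one for elliptic operators and is arguably more transparent here, since $P_\epsilon$ really is elliptic once $\epsilon>0$; the paper's choice keeps the presentation stylistically uniform with the hypo-elliptic case and avoids setting up the iterated parametrix.
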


\begin{proof}
We use the assumption that $\tau_0$ and $h$ are small enough to be able to apply Proposition \ref{proptoeplitz} and write 
\begin{equation*}
\begin{split}
B_{\Lambda'} T_{\Lambda'} P S_{\Lambda'} B_{\Lambda'} = B_{\Lambda'} \sigma_{P} B_{\Lambda'} + R_1 \textup{ and } B_{\Lambda'} T_{\Lambda'} \Delta S_{\Lambda'} B_{\Lambda'} = B_{\Lambda'} \sigma_{\Delta} B_{\Lambda'} + R_2,
\end{split}
\end{equation*}
where $\sigma_P$ and $\sigma_{\Delta}$ are symbols on $\Lambda'$ (of order $1$ and $m$ respectively), and $R_1$ and $R_2$ are negligible operators. Then, provided that $\tau_0$ is small enough, it follows from the ellipticity of $\Delta$ that there is a constant $C > 0$ such that for every $\alpha \in \Lambda'$, we have
\begin{equation}\label{eq:ellipticite_de_Delta}
\begin{split}
\Re \sigma_{\Delta}(\alpha) \leq - \frac{1}{C} \jap{\va{\alpha}}^m + C.
\end{split}
\end{equation}
Then, applying Proposition \ref{propcomptoep}, we see that
\begin{equation}\label{eq:ecriture_dun_symbole}
\begin{split}
B_{\Lambda'} \jap{\va{\alpha}}^{mk} B_{\Lambda'} T_\Lambda' \p{P_\epsilon - \lambda}^k S_{\Lambda'} B_{\Lambda'} = B_{\Lambda'} \sigma_{\epsilon,k,\lambda} B_{\Lambda'} + R_3,
\end{split}
\end{equation}
where $R_3$ is negligible and $\sigma_{\epsilon,k,\lambda}$ is a symbol of order $2mk$. Moreover, it follows from \eqref{eq:ellipticite_de_Delta} that for some $C >0$ and every $\alpha \in \Lambda'$ we have
\begin{equation}\label{eq:borne_peu_uniforme}
\begin{split}
\Re \sigma_{\epsilon,k,\lambda}(\alpha) \leq - \frac{1}{C} \jap{\va{\alpha}}^{2mk} + C \jap{\va{\alpha}}^{-N + mk},
\end{split}
\end{equation}
where the constant $C >0$ is allowed to depend on $\epsilon,\Lambda', N,\lambda$ and $k$. By Cauchy--Schwarz inequality, we find that (the scalar product is in $L^2_0\p{\Lambda'}$)
\begin{equation}\label{eq:on_a_utilise_CS}
\begin{split}
\va{\Re \jap{\jap{\va{\alpha}}^{mk} T_{\Lambda'} \p{P_\epsilon - \lambda}^k u, T_{\Lambda'} u }} \leq \n{u}_{\mathcal{H}_{\Lambda'}^{mk}} \n{\p{P_\epsilon - \lambda}^k u}_{\mathcal{H}_{\Lambda'}^0}.
\end{split}
\end{equation}
Then, using \eqref{eq:ecriture_dun_symbole} and \eqref{eq:borne_peu_uniforme}, we find that (for some new constant $C >0$, we also apply Cauchy--Schwarz inequality)
\begin{equation*}
\begin{split}
\Re \jap{\jap{\va{\alpha}}^{mk} T_{\Lambda'} \p{P_\epsilon - \lambda}^k u, T_{\Lambda'} u } & = \Re \jap{\sigma_{\epsilon,k,\lambda} T_{\Lambda'} u, T_{\Lambda'}u} + \Re \jap{R_3 T_{\Lambda'} u , T_{\Lambda'} u } \\
       & \leq - \frac{1}{C} \n{u}_{\mathcal{H}_{\Lambda'}^{mk}}^2 + C \n{u}_{\mathcal{H}_{\Lambda'}^{-N}} \n{u}_{\mathcal{H}_{\Lambda'}^{mk}}.
\end{split}
\end{equation*}
Hence, with \eqref{eq:on_a_utilise_CS}, we find that
\begin{equation*}
\begin{split}
\frac{1}{C} \n{u}_{\mathcal{H}_{\Lambda'}^{mk}}^2  \leq \n{u}_{\mathcal{H}_{\Lambda'}^{mk}} \n{\p{P_\epsilon - \lambda}^k u}_{\mathcal{H}_{\Lambda'}^0} + C \n{u}_{\mathcal{H}_{\Lambda'}^{-N}} \n{u}_{\mathcal{H}_{\Lambda'}^{mk}},
\end{split}
\end{equation*}
and the result follows.
\end{proof}

We prove now that $P_\epsilon$ has still discrete spectrum after the Lagrangian perturbation of $T^* M$.

\begin{lemma}
Assume that $\tau_0$ is small enough and let $\Lambda'$ be a $\p{\tau_0,s}$-adapted Lagrangian. Then, for $h$ small enough and $\epsilon > 0$, the operator $P_\epsilon$ acting on $\mathcal{H}_{\Lambda'}^{0}$ with domain
\begin{equation*}
\begin{split}
D\p{P_\epsilon} = \set{u \in \mathcal{H}_{\Lambda'}^{0} : P_\epsilon u \in \mathcal{H}_{\Lambda'}^{0}} = \mathcal{H}_{\Lambda'}^{m}
\end{split}
\end{equation*}
has a discrete spectrum $\sigma_{\mathcal{H}_{\Lambda'}^0}\p{P_\epsilon}$ made of eigenvalues of finite multiplicity.
\end{lemma}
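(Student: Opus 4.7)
The plan is to mimic the strategy used for $P$ in Theorem \ref{thm:spectral-theory}, replacing the hypoellipticity Lemma \ref{lmhypoell} by its elliptic strengthening Lemma \ref{lemma:technique_P_epsilon}, which is available thanks to $\epsilon>0$ and the classical ellipticity of $\Delta$ of order $m>1$. The goal is to exhibit at least one $\lambda_0 \in \C$ in the resolvent set of $P_\epsilon : \mathcal{H}_{\Lambda'}^0 \to \mathcal{H}_{\Lambda'}^0$ for which $(\lambda_0-P_\epsilon)^{-1}$ is compact; discreteness of the spectrum and finiteness of multiplicities then follow from the analytic Fredholm theorem applied to the meromorphic family $(\lambda-P_\epsilon)^{-1}$.

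First I would verify that $P_\epsilon$ is well defined and closed on $\mathcal{H}_{\Lambda'}^0$ with the domain $\mathcal{H}_{\Lambda'}^m$. By Proposition \ref{lemma:boundedness}, $P = X+V$ is bounded from $\mathcal{H}_{\Lambda'}^k$ to $\mathcal{H}_{\Lambda'}^{k-1}$ and $\Delta$ is bounded from $\mathcal{H}_{\Lambda'}^k$ to $\mathcal{H}_{\Lambda'}^{k-m}$, so $P_\epsilon$ sends $\mathcal{H}_{\Lambda'}^m$ into $\mathcal{H}_{\Lambda'}^0$. Conversely, if $u \in \mathcal{H}_{\Lambda'}^0$ satisfies $P_\epsilon u \in \mathcal{H}_{\Lambda'}^0$, then Lemma \ref{lemma:technique_P_epsilon} applied with $k=1$, $\lambda=0$ and $N=0$ yields $u \in \mathcal{H}_{\Lambda'}^m$, together with the estimate $\|u\|_{\mathcal{H}_{\Lambda'}^m} \leq C(\|P_\epsilon u\|_{\mathcal{H}_{\Lambda'}^0} + \|u\|_{\mathcal{H}_{\Lambda'}^0})$. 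Closedness is then immediate: if $u_n \to u$ and $P_\epsilon u_n \to v$ in $\mathcal{H}_{\Lambda'}^0$, then $(P_\epsilon u_n)$ is Cauchy in $\mathcal{H}_{\Lambda'}^0$, so by the a priori estimate $(u_n)$ is Cauchy in $\mathcal{H}_{\Lambda'}^m$, hence $u \in \mathcal{H}_{\Lambda'}^m$ and $P_\epsilon u = v$.

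Next I would show that the resolvent set of $P_\epsilon$ is non-empty and contains a half plane $\{\Re \lambda > A_\epsilon\}$ for some large $A_\epsilon$. This is where the key use of $\epsilon \Delta$ enters. Using Proposition \ref{proptoeplitz} and Proposition \ref{propcomptoep} as in the proof of Lemma \ref{lemma:technique_P_epsilon}, one writes the Toeplitz symbol of $P_\epsilon$ on $\Lambda'$ and obtains, provided $\tau_0$ is small enough, a pointwise bound of the form
\[
\Re \sigma_{P_\epsilon}(\alpha) \leq C - \frac{\epsilon}{C}\langle|\alpha|\rangle^m
\]
for some $C>0$ depending on $\epsilon$; this is the analogue of \eqref{eq:ellipticite_de_Delta} in the proof of Lemma \ref{lemma:technique_P_epsilon}. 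The multiplication formula Proposition \ref{propmultform} then gives a coercivity estimate
\[
\Re \langle (\lambda-P_\epsilon)u, u\rangle_{\mathcal{H}_{\Lambda'}^0} \geq (\Re \lambda - A_\epsilon) \|u\|_{\mathcal{H}_{\Lambda'}^0}^2 + \tfrac{1}{C_\epsilon}\|u\|_{\mathcal{H}_{\Lambda'}^{m/2}}^2
\]
for $u \in \mathcal{H}_{\Lambda'}^\infty$, and the same estimate for $(\lambda-P_\epsilon)^*$. A standard functional analytic argument (as in the proof of Proposition \ref{prop:semigroup}) together with the approximation Lemma \ref{lmapprox} adapted to $P_\epsilon$ then shows that $(\lambda-P_\epsilon)$ is bijective on its natural domain for $\Re \lambda > A_\epsilon$, and the resolvent satisfies $\|(\lambda-P_\epsilon)^{-1}\|_{\mathcal{H}_{\Lambda'}^0 \to \mathcal{H}_{\Lambda'}^0} \leq (\Re\lambda - A_\epsilon)^{-1}$.

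Finally, for such a $\lambda_0$, the a priori estimate from the first step improves to $\|(\lambda_0-P_\epsilon)^{-1}\|_{\mathcal{H}_{\Lambda'}^0 \to \mathcal{H}_{\Lambda'}^m} < +\infty$, since $P_\epsilon(\lambda_0 - P_\epsilon)^{-1} = \lambda_0(\lambda_0-P_\epsilon)^{-1} - I$ is bounded on $\mathcal{H}_{\Lambda'}^0$. By Proposition \ref{propvalsing} (with $m>1>0$), the inclusion $\mathcal{H}_{\Lambda'}^m \hookrightarrow \mathcal{H}_{\Lambda'}^0$ is compact, and thus $(\lambda_0-P_\epsilon)^{-1}$ is compact on $\mathcal{H}_{\Lambda'}^0$. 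The analytic Fredholm theorem, applied to the family $\lambda \mapsto I - (\lambda-\lambda_0)(\lambda_0-P_\epsilon)^{-1}$, then yields the discreteness of the spectrum with eigenvalues of finite algebraic multiplicity. The main obstacle in this plan is the coercivity/invertibility step: one needs to ensure that the lower bound on $-\Re\sigma_{P_\epsilon}$ obtained on $\Lambda'$ is strong enough to compensate both the unboundedness of $\Im \sigma_P$ in the flow direction (the source of the non-ellipticity of $P$ itself) and the error terms introduced by the Toeplitz calculus; here it is crucial that $m>1$, so that $\epsilon\langle|\alpha|\rangle^m$ eventually dominates the linear growth of $\Im \sigma_P$.
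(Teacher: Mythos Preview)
Your overall strategy matches the paper's proof exactly: identify the domain, show the resolvent set is non-empty using negativity of $\Re\sigma_\Delta$ (as in Proposition~\ref{prop:semigroup}), then get compactness of the resolvent from the a~priori estimate Lemma~\ref{lemma:technique_P_epsilon} combined with Proposition~\ref{propvalsing}.

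There is one small gap in your domain identification step. You write that if $u\in\mathcal{H}_{\Lambda'}^0$ with $P_\epsilon u\in\mathcal{H}_{\Lambda'}^0$, then Lemma~\ref{lemma:technique_P_epsilon} with $k=1$ gives $u\in\mathcal{H}_{\Lambda'}^m$. But that lemma is stated as an \emph{a~priori} estimate for $u$ already in $\mathcal{H}_{\Lambda'}^{m}$; it does not by itself upgrade regularity. The paper handles this by a parametrix construction via the Toeplitz calculus (Proposition~\ref{propcomptoep}): since $\sigma_{P_\epsilon}$ is elliptic of order $m$ on $\Lambda'$ when $\epsilon>0$, one builds $Q$ with $QP_\epsilon=I+R$, $R$ negligible, and reads off $u=QP_\epsilon u-Ru\in\mathcal{H}_{\Lambda'}^m$. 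Alternatively you could mimic the end of the proof of Lemma~\ref{lmhypoell}: approximate $u$ by $I_\epsilon u\in\mathcal{H}_{\Lambda'}^\infty$, apply the a~priori bound, and use Fatou.

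One minor remark on your closing comment: the quantity that $\epsilon\langle|\alpha|\rangle^m$ must dominate in the coercivity step is not $\Im\sigma_P$ but rather the possible growth of $\Re\sigma_P$ on a \emph{general} $(\tau_0,s)$-adapted Lagrangian $\Lambda'$ (where, unlike on the specific $\Lambda$ of \S\ref{sec:spectral_theory}, $\Re p_{\Lambda'}$ is not bounded above but only $O(\langle|\alpha|\rangle)$). The conclusion that $m>1$ is what makes the argument go through is correct.
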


\begin{proof}
By a parametrix construction (using Proposition \ref{propcomptoep}), we prove that $D\p{P_\epsilon} = \mathcal{H}_{\Lambda'}^{m}$. As in the proof of Proposition \ref{prop:semigroup}, we use the negativity of the real part of the symbol of $\Delta$ to prove that the resolvent set of $P_\epsilon$ acting on $\mathcal{H}_{\Lambda'}^{0}$ is non-empty. Finally, we apply Lemma \ref{lemma:technique_P_epsilon} with $\lambda = 0, k = 1$ and $N = 0$ to see that the resolvent of $P_\epsilon$ sends $\mathcal{H}_{\Lambda'}^0$ continuously into $\mathcal{H}_{\Lambda'}^{m}$, and is consequently compact as an endomorphism on $\mathcal{H}_{\Lambda'}^{0}$ (recall Proposition \ref{propvalsing}).
\end{proof}

We prove now that the spectrum of an elliptic operator is unchanged under small Lagrangian deformations. The proof of Lemma \ref{lemma:equivalence_des_spectres} is an adaptation of the proof of \cite[Lemma 7.8]{Zworski-Galkowski-2}.

\begin{lemma}\label{lemma:equivalence_des_spectres}
Assume that $\tau_0$ is small enough and let $\Lambda'$ be a $\p{\tau_0,s}$-adapted Lagrangian. Then, for $h$ small enough and every $\epsilon > 0$, we have $\sigma_{\mathcal{H}_{\Lambda'}^0}\p{P_\epsilon} = \sigma_{L^2}\p{P_\epsilon}$. The (generalized) eigenvectors also coincide (in particular, they belong to both $L^2\p{M}$ and $\mathcal{H}_{\Lambda'}^0$).
\end{lemma}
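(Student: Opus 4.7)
The strategy is to exploit that for $\epsilon > 0$ the pseudor $P_\epsilon = P + \epsilon \Delta$ is $\G^s$ semi-classically elliptic of order $m > 1$: the elliptic principal symbol of $\epsilon\Delta$ dominates the order-one symbol of $P$ at high frequencies, so for $h$ small enough this gives $\G^s$ ellipticity in the sense of Proposition \ref{prop:regularite_elliptique}. Both eigenspaces will be identified with the same finite-dimensional space of $\G^s$ functions on $M$.

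For the inclusion $\sigma_{L^2}(P_\epsilon) \subseteq \sigma_{\mathcal{H}_{\Lambda'}^0}(P_\epsilon)$, I would start from a generalized eigenvector $u \in L^2$ satisfying $(P_\epsilon - \lambda)^k u = 0$. Since $(P_\epsilon - \lambda)^k$ is a $\G^s$ semi-classically elliptic pseudor of order $mk$, and $L^2(M) \subset \mathcal{U}^s(M)$ on a compact manifold, Proposition \ref{prop:regularite_elliptique} gives $u \in \G^s(M)$. I then show that, provided $\tau_0$ is small enough, the inclusion $\G^s(M) \hookrightarrow \mathcal{H}_{\Lambda'}^0$ holds: this follows from the pointwise decay estimate of Lemma \ref{lemma:T_non_stationary}, combined with the bound \eqref{eq:taille_de_H} on the action $H$ along $\Lambda'$, which together yield integrability of $|T_{\Lambda'}u|^2 \langle|\alpha|\rangle^{2k} e^{-2H/h}$ for every $k$.

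For the reverse inclusion I plan a meromorphic continuation argument. First, given $u \in \mathcal{H}_{\Lambda'}^0$ with $(P_\epsilon - \lambda)^k u = 0$, the domain characterization $D(P_\epsilon) = \mathcal{H}_{\Lambda'}^m$ combined with the hypoelliptic a priori estimate of Lemma \ref{lemma:technique_P_epsilon} implies by iteration that $u \in \mathcal{H}_{\Lambda'}^{mj}$ for every $j \geq k$, hence $u \in \mathcal{H}_{\Lambda'}^\infty$. Second, I would show that the two resolvents of $P_\epsilon$, on $L^2$ and on $\mathcal{H}_{\Lambda'}^0$, viewed as operator-valued meromorphic functions $E^{1,R_0} \to (E^{1,R_0})'$, coincide. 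For $\Re z \gg 1$ both are given by the Laplace transform of a Koopman semigroup $\mathcal{L}_t^\epsilon$ associated with $P_\epsilon$, consistently defined on both spaces by an argument parallel to Proposition \ref{prop:semigroup}, using that $\Re \sigma_\Delta$ stays of the form $-\langle|\alpha|\rangle^m/C + C$ on $\Lambda'$ by the smallness of $\tau_0$ (as in \eqref{eq:ellipticite_de_Delta}). Both resolvents admit meromorphic continuations to $\C$ with finite-rank residues, and they agree on the common dense subspace $E^{1,R_0}$ for $\Re z$ large; by analytic continuation their poles and residues must match. Following the final step of the proof of Theorem \ref{thm:spectral-theory}, this identifies the generalized eigenvectors on both sides, realized as the common finite-dimensional image of the spectral projectors acting on $E^{1,R_0}$.

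The main obstacle will be verifying that $P_\epsilon$ generates a strongly continuous semigroup on $\mathcal{H}_{\Lambda'}^0$ whose restriction to $E^{1,R_0}$ coincides with the $L^2$ semigroup. This requires a Hille--Yosida bound analogous to that of Proposition \ref{prop:semigroup}: one must control $\Re \langle P_\epsilon u, u\rangle_{\mathcal{H}_{\Lambda'}^0}$ from above, which via the multiplication formula (Proposition \ref{propmultform}) reduces to a uniform upper bound on $\Re \sigma_P|_{\Lambda'} + \epsilon \Re \sigma_\Delta|_{\Lambda'}$; the ellipticity of $\Delta$ together with the smallness of $\tau_0$ ensures this bound, after which uniqueness on $E^{1,R_0}$ (where both semigroups solve the same Cauchy problem) yields the compatibility needed for the meromorphic identification.
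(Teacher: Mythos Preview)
Your approach is genuinely different from the paper's, and while parts of it could be made to work, there are real gaps as written.

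\textbf{The first inclusion.} You invoke Proposition~\ref{prop:regularite_elliptique} for $(P_\epsilon-\lambda)^k$, claiming it is semi-classically elliptic. It is not: the symbol $p+\epsilon\sigma_\Delta-\lambda$ can vanish on a compact subset of $T^*M$ whenever $\lambda$ lies in the range of $p+\epsilon\sigma_\Delta$, which is precisely where eigenvalues live. The operator is only \emph{classically} elliptic, and Proposition~\ref{prop:regularite_elliptique} as stated does not apply. The remark following it hints at more flexibility, but you would have to supply the argument.

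\textbf{The second inclusion.} The more serious issue is matching the two resolvents for $\Re z\gg 1$ via semigroups. In the proof of Theorem~\ref{thm:spectral-theory} this worked because the Koopman semigroup has the explicit formula~\eqref{eq:transfer}, which makes sense simultaneously on $L^2$ and on $\mathcal{H}_{\Lambda'}^0$; Proposition~\ref{prop:semigroup} then identifies it with the abstract semigroup. For $P_\epsilon$ there is no explicit formula: the two semigroups are produced by separate applications of Hille--Yosida, and your appeal to ``uniqueness on $E^{1,R_0}$'' begs the question of in which topology the Cauchy problem $\dot u=P_\epsilon u$ has a unique solution. You would need to show, say, that the $L^2$ semigroup preserves a space contained in $\mathcal{H}_{\Lambda'}^0$ (e.g.\ via parabolic $\G^s$-smoothing), or compare the resolvents directly without passing through semigroups---but either route requires nontrivial additional work.

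\textbf{What the paper does instead.} The paper avoids both issues with a deformation argument (adapted from \cite{Zworski-Galkowski-2}). It interpolates between $\Lambda'$ and $T^*M$ via a family $\Lambda'_r=\exp(H^{\omega_I}_{\chi(r\langle|\alpha|\rangle)G'})(T^*M)$, which is uniformly $(c\tau_0,s)$-adapted. For a generalized eigenvector $u\in\mathcal{H}_{\Lambda'}^0$, since $\Lambda'_r$ and $\Lambda'$ coincide outside a compact set, $u\in\mathcal{H}_{\Lambda'_r}^0$ for every $r>0$. The elliptic estimate of Lemma~\ref{lemma:technique_P_epsilon} holds \emph{uniformly} in $r$ and gives $\|u\|_{\mathcal{H}_{\Lambda'_r}^0}\le C\|u\|_{\mathcal{H}_{\Lambda'_r}^{-1}}$; a high/low frequency splitting then yields a bound on $\|u\|_{\mathcal{H}_{\Lambda'_r}^0}$ independent of $r$, and Fatou's lemma as $r\to 0$ gives $u\in L^2$. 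This is direct, symmetric, and never needs to compare resolvents or semigroups across spaces.
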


\begin{proof}
We only need to prove that, for $\epsilon > 0$, the generalized eigenvectors of $P_\epsilon$ on $\mathcal{H}_{\Lambda'}^0$ and $L^2\p{M}$ coincide. Let us prove for instance that the generalized eigenvectors of $P_\epsilon$ that belong to $\mathcal{H}_{\Lambda'}^0$ also belong to $L^2\p{M}$ (the other way is similar). Let $G'$ be the symbol that defines $\Lambda'$, as in Definition \ref{def:adapted-Lagrangian}. Choose a $\mathcal{C}^\infty$ function $\chi : \R \to \left[0,1\right]$ such that $\chi(x) = 0$ for $x \leq 1$ and $\chi(x) = 1$ for $x \geq 2$. Then define for $r \in \R$ the symbol
\begin{equation*}
\begin{split}
G'_r(\alpha) = \chi\p{r \jap{\va{\alpha}}}G'(\alpha).
\end{split}
\end{equation*}
Notice that for $r > 0$ large enough $G'_r = G'$ and that $G'_0 = 0$. For $r \in \R$, let $\Lambda'_r$ denote the Lagrangian defined by $G'_r$, using \eqref{eq:adapted_Lagrangian}. Notice that, since $\Lambda'$ is $\p{\tau_0,s}$-adapted, then for some $c > 0$ the Lagrangians $\Lambda'_r$ are uniformly $\p{c \tau_0, s}$-adapted. Hence, if $\tau_0$ and $h$ are small enough, we can apply Lemma \ref{lemma:technique_P_epsilon} uniformly to the Lagrangians $\Lambda'_r$. Let $u \in \mathcal{H}_{\Lambda'}^0$ be such that there are $\lambda \in \C, k \in \N^*$ and $\epsilon > 0$ such that $\p{P_\epsilon - \lambda}^k u = 0$. We notice that, for every $r > 0$, the Lagrangians $\Lambda'_r$ and $\Lambda'$ coincide outside of a compact set. Hence, so do $T_{\Lambda'}u$ and $T_{\Lambda'_r} u$. Since $T_{\Lambda'_r} u$ is continuous, it is bounded on any compact set and consequently, for every $r > 0$, we have $u \in \mathcal{H}_{\Lambda'_r}^{0}$. Hence, we may apply Lemma \ref{lemma:technique_P_epsilon} to find a constant $C > 0$ such that for every $r > 0$ we have
\begin{equation*}
\begin{split}
\n{u}_{\mathcal{H}_{\Lambda'_r}^0} \leq C \n{u}_{\mathcal{H}_{\Lambda'_r}^{-1}}.
\end{split}
\end{equation*}
Here, the constant $C$ does not depend on $r > 0$, since the Lagrangians $\Lambda'_r$ satisfy uniformly the hypotheses of Lemma \ref{lemma:technique_P_epsilon}.
Then, for large $M > 0$, we have, for $r > 0$,
\begin{equation*}
\begin{split}
\n{u}_{\mathcal{H}_{\Lambda'_r}^0} & \leq C\p{ \n{\mathbf{1}_{\set{\jap{\va{\alpha}} \leq M}}\jap{\va{\alpha}}^{-1} T_{\Lambda'_r} u}_{L^2} + \n{\mathbf{1}_{\set{\jap{\va{\alpha}} > M}} \jap{\va{\alpha}}^{-1} T_{\Lambda'_r} u}_{L^2}} \\
       & \leq C\p{\n{\mathbf{1}_{\set{\jap{\va{\alpha}} \leq M}} \jap{\va{\alpha}}^{-1} T_{\Lambda'_r} u}_{L^2} + M^{-1} \n{u}_{\mathcal{H}_{\Lambda'_r}^0}}.
\end{split}
\end{equation*}
Thus, if $M \geq 2 C$, we find that
\begin{equation*}
\begin{split}
\n{u}_{\mathcal{H}_{\Lambda'_r}^0} \leq 2C \n{\mathbf{1}_{\set{\jap{\va{\alpha}} \leq M}} \jap{\va{\alpha}}^{-1} T_{\Lambda'_r} u}_{L^2} \leq C',
\end{split}
\end{equation*}
where the constant $C' > 0$ may depend on $u$ but not on $r$ (we only use the fact that $T u$ is continuous on a conical neighbourhood of $T^*M$ and hence bounded on compact sets). Since
\begin{equation*}
\begin{split}
\n{u}_{\mathcal{H}_{\Lambda'_r}^0} = \n{T_{\Lambda'_r} u}_{L^2},
\end{split}
\end{equation*}
we may apply Fatou's Lemma (after a change of variable to write the norm as the square root of an integral over $T^* M$ for instance) to find that (notice that $\Lambda'_0 = T^* M$)
\begin{equation*}
\begin{split}
\n{u}_{L^2\p{M}} = \n{u}_{\mathcal{H}_{\Lambda'_0}^0} \leq C' < + \infty.
\end{split}
\end{equation*}
Hence, $u$ belongs to $L^2\p{M}$, and the proof of the lemma is over.
\end{proof}

By choosing for $\Lambda'$ the adapted Lagrangian defined by the symbol $G'(\alpha) = - \tau_1 \jap{\va{\alpha}}^{s }$ with $\tau_1 \ll 1$, we deduce from Lemma \ref{lemma:equivalence_des_spectres} and Lemma \ref{lemma:wavefrontset_HLambda} the following result, that was already known for less general $\G^s$ pseudors (see for instance \cite[Théorème 4.3]{baouendiRegulariteAnalytiqueIteres1972}, we could adapt our proof to deal with any elliptic $\G^s$ pseudor in the sense of Definition \ref{def:gevrey_pseudor}). 

\begin{corollary}
There is $R > 0$ such that, for every $\epsilon > 0$, the $L^2$ eigenvectors of $P_\epsilon$ belongs to $E^{s,R}\p{M}$.
\end{corollary}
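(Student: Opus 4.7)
The plan is to build a single adapted Lagrangian $\Lambda'$, independent of $\epsilon$, on which two facts coexist: (a) the $L^2$ spectrum of $P_\epsilon$ is faithfully represented on $\mathcal{H}^0_{\Lambda'}$, and (b) every element of $\mathcal{H}^0_{\Lambda'}$ is automatically $\G^s$, with a $\G^s$-radius of convergence bounded independently of $\epsilon$. Concretely, fix $\tau_1>0$ very small and set $G_0'(\alpha) = -\jap{\va{\alpha}}^{1/s}$, then $G' = \tau_1 h^{1-1/s} G_0'$, and let $\Lambda' = \exp(H_{G'}^{\omega_I})(T^*M)$. Because $G_0'$ is a symbol of order $1/s$ satisfying the bounds of Definition \ref{def:adapted-Lagrangian} with $\tau_0$ as small as we wish (take $\tau_1$ small), the manifold $\Lambda'$ is a $(\tau_0,s)$-adapted Lagrangian, where $\tau_0$ can be chosen to meet the smallness requirements of both Lemma \ref{lemma:equivalence_des_spectres} and Lemma \ref{lemma:wavefrontset_HLambda}. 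Observe that neither $\tau_1$ nor $h$ nor $\Lambda'$ depend on $\epsilon$.

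First I would invoke Lemma \ref{lemma:equivalence_des_spectres}: for $h$ small enough (depending only on $\Lambda'$, not on $\epsilon$) the $L^2$ spectrum and the $\mathcal{H}^0_{\Lambda'}$-spectrum of $P_\epsilon$ coincide, and the (generalized) eigenvectors are the same. In particular, any $L^2$ eigenvector $u$ of $P_\epsilon$ lies in $\mathcal{H}^0_{\Lambda'}$.

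The second step is to apply Lemma \ref{lemma:wavefrontset_HLambda}. The escape function $G_0'$ is globally negative and globally classically elliptic of order $1/s$, so every nonzero $\alpha \in T^*M$ is contained in a conical neighborhood on which the hypotheses of that lemma hold, with uniform ellipticity constants. Inspecting its proof, the bound it produces on $Tu(\beta)$ is an $\O(\exp(-\langle\beta\rangle^{1/s}/(Ch^{1/s})))$ which is uniform over $\beta$ in a conical neighbourhood; global ellipticity of $G_0'$ lets us patch finitely many such neighbourhoods (using compactness of the unit cosphere) to obtain the same bound for all large $\beta \in T^*M$. Combined with the trivial bound on $Tu$ on a bounded region, this yields $u \in E^{s,R}(M)$ with an $R>0$ that depends only on $\Lambda'$ (through $\tau_1$ and $h$) and not on $\epsilon$ or on the particular eigenvector $u$. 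This produces the desired uniform $R$ and concludes the proof.

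The main conceptual point (and only subtlety) is uniformity in $\epsilon$: Lemma \ref{lemma:equivalence_des_spectres} already delivers this because $\Lambda'$ and $h$ are chosen first, and then the conclusion holds for all $\epsilon>0$. The expected obstacle, if any, would be verifying that the constants in Lemma \ref{lemma:wavefrontset_HLambda} can be chosen uniformly across the conical decomposition of $T^*M\setminus\{0\}$; this follows because $G_0'$ is rotation-invariant in the fiber variable (up to symbolic error coming from the Kohn--Nirenberg metric) and classically elliptic, so only one ellipticity constant is needed.
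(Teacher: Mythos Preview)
Your proposal is correct and follows essentially the same approach as the paper: choose $\Lambda'$ defined by the globally negative, elliptic symbol $G'(\alpha)=-\tau_1\jap{\va{\alpha}}^{1/s}$ (with $\tau_1$ small), then combine Lemma~\ref{lemma:equivalence_des_spectres} with Lemma~\ref{lemma:wavefrontset_HLambda}. The paper's proof is a single sentence citing these two lemmas, and your version merely makes explicit the global patching and uniformity-in-$\epsilon$ considerations that are implicit there.
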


Now that we proved that the spectrum of $P_\epsilon$ is invariant under Lagrangian deformation (in the sense of Lemma \ref{lemma:equivalence_des_spectres}), we may go back to the case $\Lambda' = \Lambda$ (the Lagrangian deformation introduced in \S \ref{sec:spectral_theory} to study $P = P_0$). We assume in addition that $\tau_0$ and $h$ are small enough so that the machinery from the first chapter applies to both $P$ and $\Delta$. First, we prove that the hypo-ellipticity still holds for small $\epsilon \geq 0$.

\begin{lemma}\label{lemma:bornes_resolvantes}
Assume that $h$ and $\tau_0$ are small enough. Then, for every $k \in \R$, if $z$ is a large enough positive real number, then there is a constant $C > 0$ such that, for every $\epsilon \in \left[0,1\right]$ the number $z$ belongs to the resolvent set of $P_\epsilon$ acting on $\mathcal{H}_\Lambda^k$ and
\begin{equation}\label{eq:borne_uniforme_resolvante}
\begin{split}
\n{\p{z-P_\epsilon}^{-1}}_{\mathcal{H}_\Lambda^k \to \mathcal{H}_\Lambda^k} \leq C.
\end{split}
\end{equation}
Moreover, for every $\epsilon \in \left]0,1\right]$, the resolvent $\p{z-P_\epsilon}^{-1}$ is bounded from $\mathcal{H}_\Lambda^k$ to $\mathcal{H}_\Lambda^{m+k}$ and
\begin{equation}\label{eq:borne_resolvante_regularisante}
\begin{split}
\n{\p{z-P_\epsilon}^{-1}}_{\mathcal{H}_\Lambda^k \to \mathcal{H}_\Lambda^{k+m}} \leq \frac{C}{\epsilon}.
\end{split}
\end{equation}
\end{lemma}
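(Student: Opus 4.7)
The strategy is to combine the multiplication formula (Proposition \ref{propmultform}) applied separately to $P$ and to $\Delta$, exploiting that the principal symbol $p$ of $P$ has $\Re p_\Lambda$ bounded above (as established in the proof of Proposition \ref{prop:semigroup}) and that the principal symbol $\delta$ of $\Delta$ satisfies $\Re \delta_\Lambda(\alpha) \leq -C^{-1}\langle|\alpha|\rangle^m + C$ on $\Lambda$ when $\tau_0$ is small (using the classical ellipticity of $\Delta$ and a perturbation argument, as in Lemma \ref{lemma:ouestlesymbole}). The Lagrangian space will play a very passive role here: the ``bad'' part of the symbol $p$ is controlled modulo $\mathcal{H}_\Lambda^0$, while $\Delta$ remains elliptic in the Lagrangian picture and will provide all the gain of regularity.

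For \eqref{eq:borne_uniforme_resolvante} in the case $k=0$, I would apply Proposition \ref{propmultform} with $f=1$ to both $P$ (with $m_1=m_2=0$) and $\Delta$ (with $m_1=m_2=(m-1)/2$) to get, for $u \in \mathcal{H}_\Lambda^\infty$,
\[
\Re \langle T_\Lambda(-P_\epsilon)u, T_\Lambda u\rangle_{L^2_0(\Lambda)} \geq \frac{\epsilon}{C}\|u\|_{\mathcal{H}_\Lambda^{m/2}}^2 - C(1+\epsilon)\|u\|_{\mathcal{H}_\Lambda^0}^2 - Ch\p{\|u\|_{\mathcal{H}_\Lambda^0}^2 + \epsilon \|u\|_{\mathcal{H}_\Lambda^{(m-1)/2}}^2}.
\]
For $h$ small enough, the $h$-errors are absorbed into the positive $\mathcal{H}_\Lambda^{m/2}$ term (since $(m-1)/2 < m/2$) and into a constant times $\|u\|_{\mathcal{H}_\Lambda^0}^2$. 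Picking $\Re z$ large enough so that $\Re z - C(1+\epsilon) \geq 1$ uniformly for $\epsilon \in [0,1]$, the resulting estimate gives $\Re\langle(z-P_\epsilon)u,u\rangle \geq \|u\|_{\mathcal{H}_\Lambda^0}^2$, whence Cauchy--Schwarz yields $\|u\|_{\mathcal{H}_\Lambda^0} \leq \|(z-P_\epsilon)u\|_{\mathcal{H}_\Lambda^0}$ on $\mathcal{H}_\Lambda^\infty$; density via Lemma \ref{lmapprox} extends this to the domain of $P_\epsilon$. The same estimate for the formal adjoint (whose symbol has the same structural properties) provides surjectivity, so $z \in \rho(P_\epsilon)$ with the desired uniform bound.

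For \eqref{eq:borne_resolvante_regularisante}, the plan is to reapply the multiplication formula with the weight $f(\alpha) = \langle|\alpha|\rangle^m$: one gets, taking orders into account with $m_1+m_2 = 2m-1$ for $\Delta$ and $m_1+m_2 = m$ for $P$,
\[
\Re\langle \langle|\alpha|\rangle^m T_\Lambda(-P_\epsilon)u, T_\Lambda u\rangle \geq \frac{\epsilon}{C}\|u\|_{\mathcal{H}_\Lambda^m}^2 - C(1+\epsilon)\|u\|_{\mathcal{H}_\Lambda^{m/2}}^2 - Ch\|u\|_{\mathcal{H}_\Lambda^{m-1/2}}^2 - Ch\|u\|_{\mathcal{H}_\Lambda^m}\|u\|_{\mathcal{H}_\Lambda^0}.
\]
After absorbing the $h$-contributions into the leading $\epsilon C^{-1}\|u\|_{\mathcal{H}_\Lambda^m}^2$ (at the cost of a factor $1/\epsilon$ in front of $\|u\|_{\mathcal{H}_\Lambda^0}^2$, via $ab \leq \eta a^2 + \eta^{-1}b^2$), a Cauchy--Schwarz estimate on the left-hand side together with the $k=0$ bound yields $\|u\|_{\mathcal{H}_\Lambda^m} \leq (C/\epsilon)\|(z-P_\epsilon)u\|_{\mathcal{H}_\Lambda^0}$, which is exactly \eqref{eq:borne_resolvante_regularisante} for $k=0$.

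The extension to arbitrary $k\in\R$ is done, as in the reductions of \S\ref{subsec:singuar_values}, by conjugating by the invertible Toeplitz operator $B_\Lambda \langle|\alpha|\rangle^k B_\Lambda$ (which is an isomorphism $\mathcal{H}_{\Lambda,\FBI}^{\ell} \to \mathcal{H}_{\Lambda,\FBI}^{\ell-k}$ by Propositions \ref{proptoeplitz}--\ref{propcomptoep}): the conjugated operator $B_\Lambda \langle|\alpha|\rangle^k B_\Lambda T_\Lambda P_\epsilon S_\Lambda B_\Lambda \langle|\alpha|\rangle^{-k} B_\Lambda$ differs from $B_\Lambda T_\Lambda P_\epsilon S_\Lambda B_\Lambda$ only by an operator of one order lower (by symbolic calculus) and with principal symbol still satisfying the required sign, so the same energy estimates go through with uniform constants. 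The only real technical point I foresee is bookkeeping the $h$-remainders from the multiplication formula so they are small compared to $\epsilon$; this forces to fix $h$ first (with bounds independent of $\epsilon$) and then let $\epsilon\in[0,1]$ vary freely, as the statement allows.
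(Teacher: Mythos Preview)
Your proposal is correct and follows essentially the same energy-estimate strategy as the paper. The paper, however, organizes things slightly more directly: rather than proving the case $k=0$ and then conjugating by $B_\Lambda\jap{\va{\alpha}}^k B_\Lambda$, it applies Proposition~\ref{proptoeplitz} with the weight $f=\jap{\va{\alpha}}^{2k}$ (resp.\ $f=\jap{\va{\alpha}}^{2k+m}$ for the regularizing bound) from the start, so that the computed scalar product is exactly $\jap{P_\epsilon u,u}_{\mathcal{H}_\Lambda^k}$. This builds the general $k$ into the argument without an extra conjugation step and makes the sign conditions $\Re(\jap{\va{\alpha}}^{2k}\# p_\Lambda)\le C\jap{\va{\alpha}}^{2k}$ and $\Re(\jap{\va{\alpha}}^{2k}\#\sigma_\Delta)\le C$ immediate from the leading symbols. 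For the second bound the paper likewise avoids your Young-inequality detour: since all the ``bad'' terms on the right are already of the form $C\n{u}_{\mathcal{H}_\Lambda^k}\n{u}_{\mathcal{H}_\Lambda^{k+m}}$ (your $Ch\n{u}_m\n{u}_0$ from Proposition~\ref{propmultform} is of this very shape), one simply divides the resulting inequality by $\n{u}_{\mathcal{H}_\Lambda^{k+m}}$, as in the proof of Lemma~\ref{lmhypoell}, obtaining $\epsilon\n{u}_{\mathcal{H}_\Lambda^{k+m}}\le C(\n{P_\epsilon u}_{\mathcal{H}_\Lambda^k}+\n{u}_{\mathcal{H}_\Lambda^k})$ and then invoking \eqref{eq:borne_uniforme_resolvante}. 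Both routes are valid; the paper's is a bit shorter.
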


\begin{proof}
We may write again (using Proposition \ref{proptoeplitz})
\begin{equation*}
\begin{split}
B_\Lambda T_\Lambda P S_\Lambda B_\Lambda = B_\Lambda p_\Lambda B_\Lambda + R_1 \textup{ and } B_\Lambda T_\Lambda \Delta S_\Lambda B_\Lambda = B_\Lambda \sigma_\Delta B_\Lambda + R_2,
\end{split}
\end{equation*}
where $p_\Lambda$ and $\sigma_\Delta$ are symbols of order respectively $1$ and $m$. The operators $R_1$ and $R_2$ are negligible. Moreover, $\Lambda$ has been constructed so that for some $C > 0$ and all $\alpha \in \Lambda$ we have $\Re p_\Lambda(\alpha) \leq C$ (see the proof of Proposition \ref{prop:semigroup}).
Furthermore, by ellipticity of $\Delta$, up to making $C$ larger, we also have, for every $\alpha \in \Lambda$,
\begin{equation*}
\begin{split}
\Re \sigma_\Delta(\alpha) \leq - \frac{1}{C} \jap{\va{\alpha}}^m + C.
\end{split}
\end{equation*}
Then, we apply Proposition \ref{proptoeplitz} to write
\begin{equation*}
\begin{split}
B_\Lambda \jap{\va{\alpha}}^{2k} T_\Lambda P S_\Lambda B_\Lambda = B_\Lambda \jap{\va{\alpha}}^{2k} \# p_\Lambda B_\Lambda + R_1'
\end{split}
\end{equation*}
and
\begin{equation*}
\begin{split}
B_\Lambda \jap{\va{\alpha}}^{2k} T_\Lambda \Delta S_\Lambda B_\Lambda = B_\Lambda \jap{\va{\alpha}}^{2k} \# \sigma_\Delta B_\Lambda + R_2',
\end{split}
\end{equation*}
where $R_1'$ and $R_2'$ are negligible. The symbols $\jap{\va{\alpha}}^{2k} \# p_\Lambda$ and $\jap{\va{\alpha}}^{2k} \# \sigma_{\Delta}$ are given at leading order by $p_\Lambda \jap{\va{\alpha}}^{2k}$ and $\sigma_{\Delta} \jap{\va{\alpha}}^{2k}$. Hence, for some large $C > 0$ and all $\alpha \in \Lambda$ we have
\begin{equation*}
\begin{split}
\Re \p{\jap{\va{\alpha}}^{2k} \# p_\Lambda} \p{\alpha} \leq C \jap{\va{\alpha}}^{2k} \textup{ and } \Re \p{\jap{\va{\alpha}}^{2k} \# \sigma_\Delta} (\alpha) \leq C.
\end{split}
\end{equation*}
Consequently, for $u \in \mathcal{H}_\Lambda^{m+k}$ we have that (up to making $C$ larger and with $\epsilon \in \left[0,1\right]$)
\begin{equation*}
\begin{split}
\Re \jap{ P_\epsilon u ,u }_{\mathcal{H}_\Lambda^k} & = \Re \jap{\jap{\va{\alpha}}^{2k} T_\Lambda P_\epsilon u , T_\Lambda u } \\
       &  = \Re \jap{ \p{\jap{\va{\alpha}}^{2k} \# p_\Lambda + \epsilon\jap{\va{\alpha}}^{2k} \# \sigma_\Delta} T_\Lambda u , T_\Lambda u} \\ & \qquad \qquad \qquad \qquad \qquad \qquad \qquad + \Re \jap{\p{R_1' + \epsilon R_2'} T_\Lambda u , T_\Lambda u} \\
       & \leq 2 C \jap{\jap{\va{\alpha}}^{2k} T_\Lambda u , T_\Lambda u} + C \n{u}_{\mathcal{H}_\Lambda^k}^2 \\
       & \leq 3 C \n{u}_{\mathcal{H}_\Lambda^k}^2.
\end{split}
\end{equation*}
As in the proof of Proposition \ref{prop:semigroup}, it follows that if $z$ is a real number such that $z > 3C$, then $z$ belongs to the resolvent set of $P_\epsilon$ and
\begin{equation*}
\begin{split}
\n{\p{z - P_\epsilon}^{-1}}_{\mathcal{H}_\Lambda^k \to \mathcal{H}_\Lambda^k} \leq \frac{1}{z - 3C}.
\end{split}
\end{equation*}
In particular, \eqref{eq:borne_uniforme_resolvante} holds for such a $z$. We turn now to the proof of \eqref{eq:borne_resolvante_regularisante}. To do so, we proceed as above, replacing the factor $\jap{\va{\alpha}}^{2k}$ by $\jap{\va{\alpha}}^{2 k + m}$, then as above we have
\begin{equation*}
\begin{split}
\Re \p{\jap{\va{\alpha}}^{2k+ m} \# p_\Lambda} \p{\alpha} \leq C \jap{\va{\alpha}}^{2k+ m}
\end{split}
\end{equation*} 
and
\begin{equation*}
\begin{split}
\Re \p{\jap{\va{\alpha}}^{2k+ m} \# \sigma_\Delta} (\alpha) \leq - \frac{1}{C} \jap{\va{\alpha}}^{2k + 2 m} + C \jap{\va{\alpha}}^{2k + m}.
\end{split}
\end{equation*}
It follows that (for a larger $C > 0$, the scalar product is in $L^2_0\p{\Lambda}$)
\begin{equation*}
\begin{split}
\Re \jap{\jap{\va{\alpha}}^{2k+m} T_\Lambda P_\epsilon u, T_\Lambda u } & \leq C \n{u}_{\mathcal{H}_\Lambda^k} \n{u}_{\mathcal{H}_\Lambda^{m+k}} - \frac{1}{C} \epsilon \n{u}_{\mathcal{H}_\Lambda^{m+k}}^2.
\end{split}
\end{equation*}
And, as in the proof of Lemma \ref{lmhypoell}, we find that
\begin{equation*}
\begin{split}
\epsilon \n{u}_{\mathcal{H}_\Lambda^{m+k}} \leq C\p{\n{P_\epsilon u}_{\mathcal{H}_\Lambda^{k}} + \n{u}_{\mathcal{H}_\Lambda^{k}}},
\end{split}
\end{equation*}
and \eqref{eq:borne_resolvante_regularisante} follows using \eqref{eq:borne_uniforme_resolvante}.
\end{proof}

We want now to use Lemma \ref{lemma:bornes_resolvantes} to deduce other resolvent bounds that are needed for the proof of Theorem \ref{theorem:viscosite}. The idea behind the proof of Lemma \ref{lemma:interpolation_implicite} below is that for $k_0,k_1 \in \R$ and $\theta \in \left[0,1\right]$ then the complex interpolation space $[\mathcal{H}_\Lambda^{k_0}, \mathcal{H}_\Lambda^{k_1} ]_{\theta}$ is $\mathcal{H}_\Lambda^{k_\theta}$ (with equivalent norms), where $k_\theta = \p{1 - \theta} k_0 + \theta k_1$. We will not prove this claim, since the interpolation bound that we need for the proof of Lemma \ref{lemma:interpolation_implicite} follows directly from H\"older's inequality.

\begin{lemma}\label{lemma:interpolation_implicite}
Assume that $h$ and $\tau_0$ are small enough. Let $z$ be a large enough positive real number. Then there is a constant $C> 0$ such that for every $\epsilon \in \left[0,1\right]$ we have (recall that $\delta = 1/s$)
\begin{equation}\label{eq:resolvantes_se_rapprochent}
\begin{split}
\n{\p{z- P}^{-1} - \p{z - P_\epsilon}^{-1}}_{\mathcal{H}_\Lambda^0 \to \mathcal{H}_\Lambda^0} \leq C \epsilon^{\frac{\delta}{m}},
\end{split}
\end{equation}
and
\begin{equation}\label{eq:uniforme_regularise}
\begin{split}
\n{\p{z - P_\epsilon}^{-1}}_{\mathcal{H}_\Lambda^0 \to \mathcal{H}_\Lambda^\delta} \leq C.
\end{split}
\end{equation}
\end{lemma}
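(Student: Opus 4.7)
The plan is to establish \eqref{eq:uniforme_regularise} first by a direct resolvent-identity argument, and then to deduce \eqref{eq:resolvantes_se_rapprochent} from it by the interpolation argument sketched in the paragraph preceding the lemma. Both parts rest on Lemma \ref{lemma:bornes_resolvantes} together with the hypoellipticity of $P$ (Lemma \ref{lmhypoell}), which yields $\|(z-P)^{-1}\|_{\mathcal{H}_\Lambda^0 \to \mathcal{H}_\Lambda^\delta} \leq C$ for $z$ in the resolvent set of $P$: indeed both $(z-P)^{-1}$ and $P(z-P)^{-1} = z(z-P)^{-1}-I$ are bounded on $\mathcal{H}_\Lambda^0$ by the $\epsilon=0$ case of \eqref{eq:borne_uniforme_resolvante}, so Lemma \ref{lmhypoell} applies to every element of the range of $(z-P)^{-1}$.

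For \eqref{eq:uniforme_regularise}, the plan is to use the resolvent identity
\begin{equation*}
(z-P_\epsilon)^{-1} = (z-P)^{-1} - \epsilon\,(z-P)^{-1}\Delta\,(z-P_\epsilon)^{-1}
\end{equation*}
and bound each term as an operator $\mathcal{H}_\Lambda^0 \to \mathcal{H}_\Lambda^\delta$. The first term is controlled by $C\|u\|_{\mathcal{H}_\Lambda^0}$ by the hypoellipticity of $P$ recalled above. For the second term, compose the three maps $(z-P_\epsilon)^{-1}: \mathcal{H}_\Lambda^0 \to \mathcal{H}_\Lambda^m$ with norm $\leq C/\epsilon$ (from \eqref{eq:borne_resolvante_regularisante}), $\Delta: \mathcal{H}_\Lambda^m \to \mathcal{H}_\Lambda^0$ with norm $\leq C$ (Proposition \ref{lemma:boundedness}, since $\Delta$ is a $\mathcal{G}^s$ pseudor of order $m$), and $(z-P)^{-1}: \mathcal{H}_\Lambda^0 \to \mathcal{H}_\Lambda^\delta$ with norm $\leq C$ again by hypoellipticity. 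The explicit $\epsilon$ prefactor cancels the $1/\epsilon$, giving the uniform bound \eqref{eq:uniforme_regularise}.

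For \eqref{eq:resolvantes_se_rapprochent}, apply the symmetric form of the resolvent identity
\begin{equation*}
(z-P)^{-1} - (z-P_\epsilon)^{-1} = -\epsilon\,(z-P_\epsilon)^{-1}\,\Delta\,(z-P)^{-1},
\end{equation*}
and bound the composition
\begin{equation*}
\mathcal{H}_\Lambda^0 \xrightarrow{(z-P)^{-1}} \mathcal{H}_\Lambda^\delta \xrightarrow{\Delta} \mathcal{H}_\Lambda^{\delta-m} \xrightarrow{(z-P_\epsilon)^{-1}} \mathcal{H}_\Lambda^0.
\end{equation*}
The first two arrows are uniformly bounded by Lemma \ref{lmhypoell} and by the order of $\Delta$ respectively. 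The essential step is the third arrow, which is handled by complex interpolation between the bounds of Lemma \ref{lemma:bornes_resolvantes} applied at scale $\delta-m$: $(z-P_\epsilon)^{-1}:\mathcal{H}_\Lambda^{\delta-m}\to\mathcal{H}_\Lambda^{\delta-m}$ with norm $\leq C$ from \eqref{eq:borne_uniforme_resolvante}, and $(z-P_\epsilon)^{-1}:\mathcal{H}_\Lambda^{\delta-m}\to\mathcal{H}_\Lambda^{\delta}$ with norm $\leq C/\epsilon$ from \eqref{eq:borne_resolvante_regularisante}. Taking interpolation parameter $\theta = 1-\delta/m \in [0,1)$, the identification $[\mathcal{H}_\Lambda^{k_0},\mathcal{H}_\Lambda^{k_1}]_\theta = \mathcal{H}_\Lambda^{(1-\theta)k_0+\theta k_1}$ mentioned in the paragraph before the lemma yields $(z-P_\epsilon)^{-1}:\mathcal{H}_\Lambda^{\delta-m}\to\mathcal{H}_\Lambda^{0}$ with norm $\leq C\epsilon^{\delta/m-1}$. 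Multiplying by the prefactor $\epsilon$ produces exactly the rate $C\epsilon^{\delta/m}$ claimed in \eqref{eq:resolvantes_se_rapprochent}.

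The principal technical obstacle is the interpolation identification between the spaces $\mathcal{H}_\Lambda^k$ (and, in particular, its extension to negative indices since $\delta-m<0$). Per the authors' indication, the argument factors through the FBI transform $T_\Lambda$: on the image side, the scale $L^2_k(\Lambda) = L^2(\Lambda,\langle|\alpha|\rangle^{2k}e^{-2H/h}\mathrm{d}\alpha)$ satisfies $[L^2_{k_0},L^2_{k_1}]_\theta = L^2_{k_\theta}$ by H\"older's inequality, from which Stein's complex interpolation theorem applied to the analytic family $(z-P_\epsilon)^{-1}$ (regarded as a fixed operator, independent of the interpolation parameter) gives the norm estimate above. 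The extension of the scale to negative indices is compatible with the duality pairing of Lemma \ref{lemma:dual}, which justifies interpreting $\mathcal{H}_\Lambda^{\delta-m}$ as $(\mathcal{H}_\Lambda^{m-\delta})'$ and transports the bounds of Lemma \ref{lemma:bornes_resolvantes} (which a priori are stated for $k\in\mathbb{R}$) to the scale needed here.
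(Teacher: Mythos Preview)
Your proof is correct and follows essentially the same route as the paper's: the resolvent identity, the hypoellipticity of $P$ for the map $\mathcal{H}_\Lambda^0\to\mathcal{H}_\Lambda^\delta$, and interpolation for the crucial bound $\|(z-P_\epsilon)^{-1}\|_{\mathcal{H}_\Lambda^{\delta-m}\to\mathcal{H}_\Lambda^0}\leq C\epsilon^{\delta/m-1}$. Two minor points of comparison. First, for \eqref{eq:uniforme_regularise} the paper uses the \emph{same} resolvent identity $(z-P)^{-1}-(z-P_\epsilon)^{-1}=-\epsilon(z-P_\epsilon)^{-1}\Delta(z-P)^{-1}$ as for \eqref{eq:resolvantes_se_rapprochent}, routing through $\mathcal{H}_\Lambda^0\to\mathcal{H}_\Lambda^\delta\to\mathcal{H}_\Lambda^{\delta-m}\to\mathcal{H}_\Lambda^\delta$; your opposite ordering via $\mathcal{H}_\Lambda^0\to\mathcal{H}_\Lambda^m\to\mathcal{H}_\Lambda^0\to\mathcal{H}_\Lambda^\delta$ works just as well. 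Second, and more to the point of your last paragraph, the interpolation step is more elementary than you make it: since by definition $\|v\|_{\mathcal{H}_\Lambda^k}=\|T_\Lambda v\|_{L^2_k(\Lambda)}$ for \emph{all} $k\in\R$ (the scale is not built by duality), one applies H\"older's inequality directly to the fixed function $g=T_\Lambda(z-P_\epsilon)^{-1}u$, namely
\[
\int_\Lambda |g|^2 e^{-2H/h}\,\mathrm{d}\alpha
\;\leq\;
\Bigl(\int_\Lambda |g|^2\langle|\alpha|\rangle^{2(\delta-m)}e^{-2H/h}\,\mathrm{d}\alpha\Bigr)^{\delta/m}
\Bigl(\int_\Lambda |g|^2\langle|\alpha|\rangle^{2\delta}e^{-2H/h}\,\mathrm{d}\alpha\Bigr)^{1-\delta/m},
\]
which is exactly $\|v\|_{\mathcal{H}_\Lambda^0}\leq\|v\|_{\mathcal{H}_\Lambda^{\delta-m}}^{\delta/m}\|v\|_{\mathcal{H}_\Lambda^\delta}^{1-\delta/m}$ and then Lemma \ref{lemma:bornes_resolvantes}. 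There is no issue with negative indices and no need for Stein's theorem or the duality of Lemma \ref{lemma:dual}; this is what the paper means by saying the interpolation bound ``follows directly from H\"older's inequality''.
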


\begin{proof}
Write
\begin{equation}\label{eq:differences_resolvantes}
\begin{split}
\p{z- P}^{-1} - \p{z- P_\epsilon}^{-1} = -\epsilon \p{z-P_\epsilon}^{-1} \Delta \p{z - P}^{-1}.
\end{split}
\end{equation}
From Proposition \ref{lemma:boundedness} and Lemma \ref{lmhypoell}, we know that $\Delta$ is bounded from $\mathcal{H}_\Lambda^{\delta}$ to $\mathcal{H}_\Lambda^{\delta - m}$ and that $(z - P)^{-1}$ is bounded from $\mathcal{H}_\Lambda^0$ to $\mathcal{H}_\Lambda^{\delta}$. Hence, we see with \eqref{eq:borne_resolvante_regularisante} that, for some $C > 0$ and every $\epsilon \in \left[0,1\right]$,
\begin{equation*}
\begin{split}
\n{\p{z-P_\epsilon}^{-1} \Delta \p{z - P}^{-1}}_{\mathcal{H}_\Lambda^0 \to \mathcal{H}_\Lambda^\delta} \leq \frac{C}{\epsilon},
\end{split}
\end{equation*}
and \eqref{eq:uniforme_regularise} follows by \eqref{eq:differences_resolvantes}.

Notice that, since $\Delta \p{z - P}^{-1}$ is bounded from $\mathcal{H}_\Lambda^0$ to $\mathcal{H}_\Lambda^{\delta - m}$, the estimate \eqref{eq:resolvantes_se_rapprochent} will follow from \eqref{eq:differences_resolvantes} if we are able to prove that, for some $C > 0$ and every $\epsilon \in \left[0,1\right]$,
\begin{equation}\label{eq:interpolation_resolvante}
\begin{split}
\n{\p{z- P_\epsilon}^{-1}}_{\mathcal{H}_\Lambda^{\delta - m} \to \mathcal{H}_\Lambda^0} \leq C \epsilon^{\frac{\delta}{m} - 1}.
\end{split}
\end{equation}
To see that \eqref{eq:interpolation_resolvante} holds, just notice that if $u \in \mathcal{H}_\Lambda^{\delta - m}$ then by H\"older's inequality we have
\begin{equation*}
\begin{split}
\n{\p{z - P_\epsilon}^{-1}u}_{\mathcal{H}_\Lambda^0} & \leq \n{\p{z - P_\epsilon}^{-1} u}_{\mathcal{H}_\Lambda^\delta}^{1 - \frac{\delta}{m}} \n{\p{z - P_\epsilon}^{-1} u}_{\mathcal{H}_\Lambda^{\delta - m}}^{\frac{\delta}{m}} 
\end{split}
\end{equation*}
and apply Lemma \ref{lemma:bornes_resolvantes}.
\end{proof}

We are now ready to prove Theorem \ref{theorem:viscosite}. 

\begin{proof}[Proof of Theorem \ref{theorem:viscosite}]
The proof relies on the results from \cite{bandtlowEstimatesNormsResolvents2004}. First recall that, $\lambda \in \sigma\p{P_\epsilon} \cup\set{\infty}$ if and only if $1/(z-\lambda)$ belongs to the spectrum of $\sigma((z - P_\epsilon)^{-1})$. Hence, we have
\begin{equation}\label{eq:chgt_de_variable_Hausdorff}
\begin{split}
d_{z,H}\p{\sigma\p{P}\cup\set{\infty},\sigma\p{P_\epsilon}\cup \set{\infty}} = d_{H}\p{\sigma\p{\p{z - P}^{-1}}, \sigma\p{\p{z - P_\epsilon}^{-1}}},
\end{split}
\end{equation}
where $d_{H}$ denotes the usual Hausdorff distance on compact subsets of $\C$. Then choose $p > n/\delta = ns$. It follows from Lemma \ref{lemma:interpolation_implicite} and Proposition \ref{propvalsing} that $\p{ z - P_\epsilon}^{-1}$ (seen as an endomorphism of $\mathcal{H}_\Lambda^0$) is uniformly in the Schatten class $S_p$ for $\epsilon \in \left[0,1\right]$. Consequently, it follows from \cite[Theorem 5.2]{bandtlowEstimatesNormsResolvents2004} and \eqref{eq:resolvantes_se_rapprochent} that, for some $C > 0$ and every $\epsilon \in \left[0,1\right]$,
\begin{equation*}
\begin{split}
d_H\p{\sigma\p{\p{z - P_\epsilon}^{-1}}, \sigma\p{\p{z - P}^{-1}}} \leq C f_p\p{\frac{\epsilon^{- \frac{\delta}{m}}}{C}}^{-1}.
\end{split}
\end{equation*}
Here, $f_p$ is the inverse of the function $ g_p : \R_+ \ni x \mapsto x \exp\p{a_p x^p + b_p} \in \R_+$ for some $a_p,b_p > 0$. We see that $f_p(x) \underset{x \to + \infty}{\sim} \p{\ln x}^{\frac{1}{p}}$. Hence, for some new constant $C > 0$, we have that
\begin{equation*}
\begin{split}
d_H\p{\sigma\p{\p{z - P_\epsilon}^{-1}}, \sigma\p{\p{z - P}^{-1}}} \leq C \va{\ln \epsilon}^{-\frac{1}{p}},
\end{split}
\end{equation*}
and the result follows from \eqref{eq:chgt_de_variable_Hausdorff}.
\end{proof}

\subsection{Linear response}\label{sec:linear_response}

We want now to apply our machinery to study how the Ruelle spectrum vary under a deterministic perturbation of the dynamics. To do so, we consider a perturbation 
\begin{equation*}
\begin{split}
\epsilon \mapsto X_\epsilon
\end{split}
\end{equation*}
of our vector field $X = X_0$. Here, the perturbation is defined for $\epsilon$ in a neighbourhood of $0$ and is assumed to be (at least) $\mathcal{C}^\infty$ from this neighbourhood of zero to a space of $\G^s$ sections of the tangent bundle of $M$ (see Remark \ref{remark:Gs_sections}). We can also consider a perturbation $\epsilon \mapsto V_\epsilon$ with the same features and then form for $\epsilon$ near zero the operator
\begin{equation}\label{eq:P_epsilon_lin_resp}
\begin{split}
P_\epsilon \coloneqq X_\epsilon + V_\epsilon.
\end{split}
\end{equation}
This kind of perturbation of the operator $P$ is very different from the one we considered in the last section.

Notice that, when $\epsilon$ is close enough to $0$, then the vector field $X_\epsilon$ generates an Anosov flow, so that the Ruelle spectrum of $P_\epsilon$ is well-defined. It is then natural to wonder how this spectrum varies with $\epsilon$. The situation is pretty well-understood in finite differentiability and in the $\mathcal{C}^\infty$ case (see \cite{Butterley-Liverani-07,bulicor,bonthonneauFlowindependentAnisotropicSpace2018}).

Let us consider the most simple example: in the $\mathcal{C}^\infty$ case, if $P_0$ has a simple resonances $\lambda_0$, then it will extend to a $\mathcal{C}^\infty$ family of simple resonances $\epsilon \mapsto \lambda_\epsilon$ (provided that the perturbation is $\mathcal{C}^\infty$). However, even if the perturbation $\epsilon \mapsto X_\epsilon$ is real-analytic in the $\mathcal{C}^\infty$ category, then we do not know that the family $\epsilon \mapsto \lambda_\epsilon$ is real-analytic (in fact, it is reasonable to expect that it is not). We will see below that if the perturbation $\epsilon \mapsto X_\epsilon$ is real-analytic in the real-analytic category, then the family $\epsilon \mapsto \lambda_\epsilon$ is real-analytic (this is an immediate consequence of Theorem \ref{thm:lanalyticite_engendre_lanalyticite}). Our first result is the following (the operator $P_\epsilon$ is the one defined by \eqref{eq:P_epsilon_lin_resp}), and the Lagrangian $\Lambda$ is the one that we introduced in \S \ref{sec:spectral_theory}.

\begin{prop}\label{prop:dependance_de_la_resolvante}
Assume that $\delta > 1/2$ (i.e. $s < 2$). Let $\ell \in \R_+ \setminus \N$. Assume that $h$ and $\tau$ are small enough. Then for $\epsilon$ small enough, the spectrum of $P_\epsilon$ acting on $\mathcal{H}_\Lambda^0$ is the Ruelle spectrum of $P_\epsilon$. Moreover, if $k =(\ell+1)\delta - \ell$ and $r$ is a large enough positive real number, then the the map
\begin{equation*}
\begin{split}
\epsilon \mapsto \p{r - P_{\epsilon}}^{-1} \in \mathcal{L}\p{\mathcal{H}_\Lambda^0, \mathcal{H}_\Lambda^{k}}
\end{split}
\end{equation*}
is $\mathcal{C}^\ell$ on a neighbourhood of $0$.
\end{prop}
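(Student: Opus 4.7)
\emph{Plan.} First I would verify that the machinery developed for $P = P_0$ in \S\ref{sec:spectral_theory} extends uniformly to $P_\epsilon$ for $\epsilon$ near $0$. Since the Anosov splitting depends continuously (in the Hölder category) on the generator, and the cones $\mathscr{C}^0,\mathscr{C}^s$ in Lemma \ref{lemma:escape-function} can be chosen a priori slightly larger than needed for $X_0$, the escape function $G_0$ can be selected so that its defining properties (i)--(iii) hold uniformly for $P_\epsilon$ in a neighbourhood of $\epsilon=0$. The Lagrangian $\Lambda$ is then $(\tau_0,s)$-adapted to each such $P_\epsilon$, and Theorem \ref{thm:spectral-theory} applied to $P_\epsilon$ identifies its spectrum on $\mathcal{H}_\Lambda^0$ with the Ruelle spectrum. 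Arguing as in the proof of Proposition \ref{prop:semigroup} via the multiplication formula \ref{propmultform}, a sufficiently large real $r>0$ lies in the resolvent set of every $P_\epsilon$ near $0$, and the resolvent $(r-P_\epsilon)^{-1}$ is uniformly bounded from $\mathcal{H}_\Lambda^0$ to $\mathcal{H}_\Lambda^\delta$ through the hypoellipticity Lemma \ref{lmhypoell} (the constant in Lemma \ref{lmhypoell} depends only on symbolic bounds on the principal symbol of $P_\epsilon$, which are uniform in $\epsilon$).

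Next, by assumption $\epsilon\mapsto P_\epsilon = X_\epsilon+V_\epsilon$ is smooth into the space of first-order $\mathcal{G}^s$ pseudors, so each derivative $\partial_\epsilon^i P_\epsilon$ is uniformly bounded from $\mathcal{H}_\Lambda^a$ to $\mathcal{H}_\Lambda^{a-1}$ (Proposition \ref{lemma:boundedness}). Formal differentiation of the resolvent identity $(r-P_\epsilon)^{-1}-(r-P_0)^{-1} = (r-P_\epsilon)^{-1}(P_\epsilon-P_0)(r-P_0)^{-1}$ yields a Faà di Bruno--type expansion
\begin{equation*}
\partial_\epsilon^j (r-P_\epsilon)^{-1} = \sum_{\substack{j_1+\cdots+j_m=j\\ j_i\geq 1}} c_{j_1,\ldots,j_m}\, (r-P_\epsilon)^{-1}(\partial_\epsilon^{j_1}P_\epsilon)(r-P_\epsilon)^{-1}\cdots(\partial_\epsilon^{j_m}P_\epsilon)(r-P_\epsilon)^{-1}.
\end{equation*}
Each term contains $j+1$ resolvent factors and $j$ factors $\partial_\epsilon^{j_i} P_\epsilon$; tracing the regularity starting in $\mathcal{H}_\Lambda^0$, each resolvent gains $\delta$ and each $P$-derivative loses $1$, so the whole composition maps $\mathcal{H}_\Lambda^0$ boundedly into $\mathcal{H}_\Lambda^{(j+1)\delta - j}$. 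The justification that these partial sums are indeed the successive derivatives of $\epsilon\mapsto(r-P_\epsilon)^{-1}$ is a standard telescoping argument on increments, carried out at each order in the operator norm $\mathcal{L}(\mathcal{H}_\Lambda^0,\mathcal{H}_\Lambda^{(j+1)\delta-j})$.

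Finally, to pass from $\mathcal{C}^j$ regularity with $j=\lfloor \ell\rfloor$ to $\mathcal{C}^\ell$ with Hölder exponent $\ell-j$, I would establish that $\partial_\epsilon^j(r-P_\epsilon)^{-1}$ is Lipschitz in $\epsilon$ as an operator from $\mathcal{H}_\Lambda^0$ into the weaker space $\mathcal{H}_\Lambda^{(j+2)\delta-(j+1)}$: an increment in $\epsilon$ produces, via Taylor expansion, one extra factor of $\partial_\epsilon P_\epsilon$ (loss of $1$) absorbed by one extra resolvent factor (gain of $\delta$), i.e.\ a net loss of $1-\delta$. Real interpolation between the $\mathcal{C}^0$ bound into $\mathcal{H}_\Lambda^{(j+1)\delta-j}$ and the Lipschitz bound into $\mathcal{H}_\Lambda^{(j+2)\delta-(j+1)}$ then gives Hölder regularity of exponent $\ell-j$ into $\mathcal{H}_\Lambda^{k}$ with $k=(j+1)\delta-j - (\ell-j)(1-\delta) = (\ell+1)\delta-\ell$, as claimed. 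The exclusion $\ell\notin\N$ is exactly what allows the standard real-interpolation step to produce a nontrivial Hölder exponent.

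The main obstacle is threefold: obtaining the escape function and the multiplication-formula constants uniformly on a neighbourhood of $X_0$ in the $\mathcal{G}^s$ topology (so that Lemma \ref{lmhypoell} holds with $\epsilon$-independent constants, and so that $\Lambda$ is genuinely adapted to every $P_\epsilon$); justifying the formal differentiation rigorously at each integer order through iterated resolvent increments; and performing the interpolation carefully so that the non-integer exponent $\ell$ produces precisely the regularity target $\mathcal{H}_\Lambda^{(\ell+1)\delta-\ell}$. The assumption $\delta>1/2$ keeps the regularity index $k=(\ell+1)\delta-\ell$ meaningful for $\ell$ up to and somewhat beyond $1$, which is what is needed for the linear-response applications in the sequel.
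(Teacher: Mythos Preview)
Your induction-plus-interpolation scheme for the regularity of $\epsilon\mapsto(r-P_\epsilon)^{-1}$ is essentially the paper's argument, and your tracking of the regularity index $k=(\ell+1)\delta-\ell$ is correct.

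The gap is in the first half of your plan. You claim that by taking the cones $\mathscr{C}^0,\mathscr{C}^s$ slightly larger one can arrange for properties (i)--(iii) of Lemma~\ref{lemma:escape-function} to hold uniformly in $\epsilon$, so that Proposition~\ref{prop:semigroup} and Lemma~\ref{lmhypoell} apply verbatim to $P_\epsilon$ on the \emph{fixed} Lagrangian $\Lambda$. Properties (i) and (ii) are indeed open and survive perturbation, but property (iii) does not: the upper bound $\{G_0,\Re\tilde{p}\}\le C$ is obtained from the identity $\{G_0,\Re\tilde{p}_0\}=-H_{\Re\tilde p_0}^{\omega_I}G_0$ together with the specific construction of $G_0$ as an integral along the flow of $-H_{\Re\tilde p_0}^{\omega_I}$. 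For $\epsilon\neq 0$ one only gets $\{G_0,\Re\tilde p_\epsilon\}=\{G_0,\Re\tilde p_0\}+\{G_0,\Re(\tilde p_\epsilon-\tilde p_0)\}$, and the second term is a genuine symbol of order $\delta$ of size $O(\epsilon)$, hence unbounded. The paper states this explicitly just before introducing the auxiliary elliptic operator $\Delta$: ``it does not need to be true anymore when $\epsilon\neq 0$ \dots\ it is not clear anymore that $P_\epsilon$ is the generator of a strongly continuous semi-group on $\mathcal{H}_\Lambda^0$''. So your appeal to Proposition~\ref{prop:semigroup} to place $r$ in the resolvent set, and then to Theorem~\ref{thm:spectral-theory} to identify the spectrum with the Ruelle spectrum, does not go through.

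The paper's substitute is twofold. First, Lemma~\ref{lemma:des_resolvantes_partout} builds $(r-P_\epsilon)^{-1}$ directly from an approximate inverse $S_\Lambda(r-p_{\Lambda,\epsilon})^{-1}T_\Lambda$ plus a Neumann series; the remainder has operator norm $O(r^{1/(2\delta)-1})$, and it is precisely here that $\delta>1/2$ is needed (not, as you suggest, merely to keep the target index $k$ positive). Second, the identification of $\sigma_{\mathcal{H}_\Lambda^0}(P_\epsilon)$ with the Ruelle spectrum is obtained indirectly: one adds $\nu\Delta$, uses Lemma~\ref{lemma:equivalence_des_spectres} to identify $\sigma_{\mathcal{H}_\Lambda^0}(P_{\epsilon,\nu})$ with $\sigma_{L^2}(P_{\epsilon,\nu})$, lets $\nu\to 0$, and invokes the stochastic-stability result of Dyatlov--Zworski on the $L^2$ side while showing norm-resolvent convergence on the $\mathcal{H}_\Lambda^0$ side.
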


As in the case of stochastic perturbations, we may prove a global bound on the way the spectrum of $P_\epsilon$ tends to the spectrum of $P$. Indeed, using the same arguments as in \S \ref{sec:viscosite}, it follows from Proposition \ref{prop:dependance_de_la_resolvante} that :

\begin{corollary}\label{cor:lin_resp_global}
Assume that $\delta > 1/2$ (i.e. $s < 2$). Then, for every $p > ns$ and $r \in \R_+$ large enough, there is a constant $C > 0$ such that for every $\epsilon$ close enough to $0$ we have
\begin{equation*}
\begin{split}
d_{r,H}\p{\sigma\p{P} \cup \set{\infty}, \sigma\p{P_\epsilon} \cup \set{\infty}} \leq C \va{\ln \va{\epsilon}}^{- \frac{1}{p}}.
\end{split}
\end{equation*}
\end{corollary}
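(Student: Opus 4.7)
The plan is to imitate exactly the proof of Theorem \ref{theorem:viscosite}, with the role of the explicit rate $\epsilon^{\delta/m}$ in \eqref{eq:resolvantes_se_rapprochent} now played by a Hölder rate extracted from Proposition \ref{prop:dependance_de_la_resolvante}. Fix $p > ns = n/\delta$ and take $r \in \R_+$ large enough so that it lies in the resolvent set of $P$ and of all $P_\epsilon$ for $\epsilon$ near $0$ on $\mathcal{H}_\Lambda^0$. As in the proof of Theorem \ref{theorem:viscosite}, the spectral mapping $\lambda \mapsto 1/(r-\lambda)$ yields
\[
d_{r,H}\p{\sigma(P)\cup\{\infty\},\sigma(P_\epsilon)\cup\{\infty\}} = d_{H}\p{\sigma\p{(r-P)^{-1}},\sigma\p{(r-P_\epsilon)^{-1}}},
\]
so it suffices to estimate the Hausdorff distance between the two compact subsets of $\C$ on the right.

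Since $\delta>1/2$, we have $\delta/(1-\delta)>1$; pick $\ell\in \left]0,1\right[\setminus \N$ with $\ell$ close to $0$ enough that $k \coloneqq (\ell+1)\delta-\ell$ satisfies $k>n/p$. In particular $k>0$, so Proposition \ref{prop:dependance_de_la_resolvante} applies: the family $\epsilon\mapsto (r-P_\epsilon)^{-1}$ is $\mathcal{C}^\ell$ into $\mathcal{L}(\mathcal{H}_\Lambda^0,\mathcal{H}_\Lambda^k)$ on a neighbourhood of $0$. Since $\ell\in\left]0,1\right[$, the $\mathcal{C}^\ell$ property is exactly uniform Hölder continuity of order $\ell$, which gives a constant $C>0$ such that, for $\epsilon$ near $0$,
\[
\n{(r-P)^{-1}}_{\mathcal{H}_\Lambda^0\to \mathcal{H}_\Lambda^k} + \n{(r-P_\epsilon)^{-1}}_{\mathcal{H}_\Lambda^0\to \mathcal{H}_\Lambda^k} \leq C,
\]
and
\[
\n{(r-P)^{-1}-(r-P_\epsilon)^{-1}}_{\mathcal{H}_\Lambda^0\to \mathcal{H}_\Lambda^k} \leq C\va{\epsilon}^{\ell}.
\]

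Now the inclusion $\mathcal{H}_\Lambda^k\hookrightarrow \mathcal{H}_\Lambda^0$ is compact, and by Proposition \ref{propvalsing} its singular values are $\O(k^{-k/n})$; since $k>n/p$, this inclusion belongs to the Schatten class $\mathcal{S}_p$. Composing it with the two $\mathcal{L}(\mathcal{H}_\Lambda^0,\mathcal{H}_\Lambda^k)$-bounds above yields (up to enlarging $C$) that $(r-P_\epsilon)^{-1}$ is uniformly bounded in $\mathcal{S}_p(\mathcal{H}_\Lambda^0)$ for $\epsilon$ near $0$, together with the convergence estimate
\[
\n{(r-P)^{-1}-(r-P_\epsilon)^{-1}}_{\mathcal{H}_\Lambda^0\to \mathcal{H}_\Lambda^0} \leq C \va{\epsilon}^{\ell}.
\]
At this point we are exactly in the setting of the end of the proof of Theorem \ref{theorem:viscosite}: applying \cite[Theorem 5.2]{bandtlowEstimatesNormsResolvents2004} with Schatten index $p$ gives a constant $C>0$ with
\[
d_{H}\p{\sigma((r-P)^{-1}),\sigma((r-P_\epsilon)^{-1})} \leq C f_p\p{\va{\epsilon}^{-\ell}/C}^{-1},
\]
where $f_p$ is the inverse of $x\mapsto x\exp(a_p x^p+b_p)$ and satisfies $f_p(x)\sim (\ln x)^{1/p}$ as $x\to+\infty$. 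Absorbing $\ell$ and the multiplicative constants, this is an $\O(\va{\ln\va{\epsilon}}^{-1/p})$ bound, which combined with the spectral reduction above yields the claimed estimate. The only slightly delicate check is the Schatten-regularity constraint $k>n/p$: this is the reason we need $\delta>1/2$, which allows us to take $\ell>0$ while keeping $k$ arbitrarily close to $\delta$ and hence larger than $n/p$ for any $p>ns$; without this, one could not simultaneously get both uniform $\mathcal{S}_p$-boundedness and a positive Hölder rate of convergence, which is the real content of the argument.
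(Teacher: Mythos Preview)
Your proof is correct and follows essentially the same route the paper indicates: reduce via $\lambda\mapsto (r-\lambda)^{-1}$ to a Hausdorff-distance estimate on resolvent spectra, extract from Proposition~\ref{prop:dependance_de_la_resolvante} a uniform $\mathcal{H}_\Lambda^0\to\mathcal{H}_\Lambda^k$ bound together with a H\"older rate of convergence, feed the Schatten bound from Proposition~\ref{propvalsing} through the inclusion $\mathcal{H}_\Lambda^k\hookrightarrow\mathcal{H}_\Lambda^0$, and conclude with \cite[Theorem~5.2]{bandtlowEstimatesNormsResolvents2004} exactly as in the proof of Theorem~\ref{theorem:viscosite}.

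One small remark on your closing paragraph: the constraint $k>n/p$ for small $\ell$ is automatic for any $\delta\in(0,1]$, since $k\to\delta$ as $\ell\to 0^+$ and $n/p<\delta$ whenever $p>ns$. The hypothesis $\delta>1/2$ is not needed for this compatibility check; it is required earlier, in the proof of Proposition~\ref{prop:dependance_de_la_resolvante} itself (the weak-multiplication remainder in \eqref{eq:mult_faible} lands in $L^2_{-1/2}$, and one needs $\delta>1/2$ to absorb it). Also, in ``singular values are $\O(k^{-k/n})$'' the first $k$ should be the summation index, not the regularity exponent.
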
 

Finally, in the real-analytic case we are able to improve Proposition \ref{prop:dependance_de_la_resolvante} in the following way.

\begin{theorem}\label{thm:lanalyticite_engendre_lanalyticite}
Assume that $s=1$ and that the perturbations $\epsilon \mapsto X_\epsilon$ and $\epsilon \mapsto V_\epsilon$ are real-analytic. Assume that $h$ and $\tau$ are small enough. Then, for $r \in \R_+$ large enough, the map
\begin{equation*}
\begin{split}
\epsilon \mapsto \p{r - P_{\epsilon}}^{-1} \in \mathcal{L}\p{\mathcal{H}_\Lambda^0, \mathcal{H}_\Lambda^{1}}
\end{split}
\end{equation*}
is real-analytic on a neighbourhood of zero.
\end{theorem}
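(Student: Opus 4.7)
The strategy is to complexify the parameter $\epsilon$ and view $\epsilon\mapsto P_\epsilon$ as a holomorphic family of bounded operators from $\mathcal{H}_\Lambda^1$ to $\mathcal{H}_\Lambda^0$, then conclude by Neumann series / Kato perturbation theory. The whole advantage of the analytic case $s=1$ is that holomorphy of the \emph{datum} $\epsilon\mapsto (X_\epsilon,V_\epsilon)$ transfers directly to holomorphy of every object we have built on top of it, which is precisely what fails when $s>1$ and forces Proposition \ref{prop:dependance_de_la_resolvante} to give only finite regularity.

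First I would set up the holomorphic extension. Real-analyticity of $\epsilon\mapsto X_\epsilon$ and $\epsilon\mapsto V_\epsilon$ in the $\G^1$ category means, by the description in \S \ref{sec:Grauert} and Remark \ref{remark:Gs_sections}, that there exists $R>0$ and $\eta>0$ such that these maps extend holomorphically from $\{|\epsilon|<\eta\}\subset\C$ to $E^{1,R}(M;TM)$ and $E^{1,R}(M)$ respectively; equivalently, they extend to bounded holomorphic sections of the relevant bundles on a fixed Grauert tube $(M)_{\epsilon_0}$, with norms uniform in $\epsilon$. Consequently the principal symbol of $hP_\epsilon$, which is $\tilde p_\epsilon(\alpha)=i\alpha_\xi(\widetilde{X_\epsilon}(\alpha_x))+hV_\epsilon(\alpha_x)$ (formula \eqref{eq:defptilde}, no almost-analytic extension needed since $s=1$), is a holomorphic family in $\epsilon$ of elements of $S^{1,1}((T^*M)_{\epsilon_0})$ with uniform symbolic bounds.

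Next I would push this analyticity through the FBI machinery. Because the Lagrangian $\Lambda$ was fixed once and for all (depending only on $X_0$), the operator $T_\Lambda P_\epsilon S_\Lambda$ is, uniformly in $\epsilon$ small, an FIO associated with $\Delta_\Lambda$ in the sense of Definition \ref{def:oscillating_kernel}, with phase $\Phi_{TS}^\circ$ independent of $\epsilon$ and symbol depending holomorphically on $\epsilon$: indeed, inspecting the proofs of Lemmas \ref{lemma:TPS-far-diagonal} and \ref{lemma:TPS-close-diagonal}, the symbol is constructed by stationary phase applied to an integrand that is holomorphic in $(\alpha,\beta,\epsilon)$ and whose $\epsilon$-dependence enters only through $\tilde p_\epsilon$. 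By the Schur-test argument of Proposition \ref{lemma:boundedness}, the resulting family
\[
\epsilon\longmapsto P_\epsilon\in\mathcal{L}\bigl(\mathcal{H}_\Lambda^1,\mathcal{H}_\Lambda^0\bigr)
\]
is therefore holomorphic on a complex neighbourhood of $0$. To verify holomorphy rigorously one may use Morera: test against vectors and check that $\epsilon\mapsto\langle P_\epsilon u,v\rangle_{\mathcal{H}_\Lambda^0}$ is holomorphic for every $u\in\mathcal{H}_\Lambda^1$ and $v\in\mathcal{H}_\Lambda^0$, which reduces, via the multiplication formula (Proposition \ref{propmultform}), to a Lebesgue-integrated dependence on $\tilde p_\epsilon$.

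For the final step, fix $r\in\R_+$ large enough that $r-P_0$ is invertible from its domain onto $\mathcal{H}_\Lambda^0$. By Theorem \ref{thm:spectral-theory} together with Lemma \ref{lmhypoell}, which in the case $s=1$ gives $\delta=1$ and hence genuine ellipticity, $(r-P_0)^{-1}$ is bounded from $\mathcal{H}_\Lambda^0$ to $\mathcal{H}_\Lambda^1$. Write
\[
r-P_\epsilon=(r-P_0)\bigl(I-(r-P_0)^{-1}(P_\epsilon-P_0)\bigr),
\]
where the second factor is an operator on $\mathcal{H}_\Lambda^1$ that depends holomorphically on $\epsilon$ and equals $I$ at $\epsilon=0$. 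Inverting by Neumann series yields holomorphy of $\epsilon\mapsto(r-P_\epsilon)^{-1}\in\mathcal{L}(\mathcal{H}_\Lambda^0,\mathcal{H}_\Lambda^1)$ on a neighbourhood of $0$, and restricting to real $\epsilon$ gives the stated real-analyticity. The main obstacle, and essentially the only delicate point, is the middle step: pushing holomorphic $\epsilon$-dependence through the FBI side construction uniformly in phase space, which in turn rests crucially on the fact that for $s=1$ one can work with honest holomorphic extensions rather than almost analytic extensions whose $\bar\partial$-errors would destroy holomorphy in $\epsilon$.
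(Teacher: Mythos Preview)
Your strategy is correct and matches the paper's in structure: complexify $\epsilon$, observe that $\epsilon\mapsto P_\epsilon\in\mathcal L(\mathcal H_\Lambda^1,\mathcal H_\Lambda^0)$ is holomorphic, and deduce holomorphy of the resolvent. The differences are minor and worth noting.

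For the middle step you work harder than necessary. The paper does not push holomorphy through the FBI kernel description; it simply observes that $P_\epsilon=X_\epsilon+V_\epsilon$ is a first-order differential operator whose coefficients are holomorphic in $\epsilon$ (valued in a fixed $E^{1,R}$), and then invokes Proposition~\ref{lemma:boundedness} for the uniform $\mathcal H_\Lambda^1\to\mathcal H_\Lambda^0$ bound. Weak holomorphy of $\epsilon\mapsto P_\epsilon u$ is then immediate from the linear dependence of $P_\epsilon$ on $(X_\epsilon,V_\epsilon)$, no need to inspect the stationary-phase construction of $T_\Lambda P_\epsilon S_\Lambda$.

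For the final step the paper takes a slightly different route: rather than the Neumann factorization $r-P_\epsilon=(r-P_0)\bigl(I-(r-P_0)^{-1}(P_\epsilon-P_0)\bigr)$, it first establishes (via the approximate-inverse construction of Lemma~\ref{lemma:des_resolvantes_partout}, which extends verbatim to complex $\epsilon$ since only the size of the perturbation matters when $\delta=1$) that $(r-P_\epsilon)^{-1}$ exists with uniform $\mathcal H_\Lambda^0\to\mathcal H_\Lambda^1$ bound on a complex disc, and then writes down the Cauchy integral $Q_\epsilon=\frac{1}{2\pi i}\int_\gamma\frac{(r-P_w)^{-1}}{w-\epsilon}\,dw$ and checks directly that $Q_\epsilon(r-P_\epsilon)=I$. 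Your Neumann-series argument is arguably more direct and avoids the detour through Lemma~\ref{lemma:des_resolvantes_partout}; the paper's route has the small advantage of giving a resolvent that is manifestly uniformly bounded on the whole disc rather than just on a possibly smaller Neumann-series neighbourhood. Either way the conclusion is the same.
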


Theorem \ref{thm:lanalyticite_engendre_lanalyticite} allows to apply Kato theory on analytic perturbations of operators \cite{Kato} and to deduce in particular Theorem \ref{theorem:SRB}.

In order to prove Proposition \ref{prop:dependance_de_la_resolvante}, we recall that Proposition \ref{lemma:weak-mult} allows us to write for $\epsilon$ near $0$:
\begin{equation}\label{eq:mult_faible}
\begin{split}
T_\Lambda P_\epsilon S_\Lambda = p_{\Lambda,\epsilon} \Pi_\Lambda + R_\epsilon,
\end{split}
\end{equation}
where $p_{\Lambda,\epsilon}$ is the restriction to $\Lambda$ of an almost analytic extension of the principal symbol of $P_\epsilon$ and $R_\epsilon$ is a bounded operator from $L^2_0\p{\Lambda}$ to $L^2_{-\frac{1}{2}}\p{\Lambda}$ (the bound is uniform when $\epsilon$ remains in a neighbourhood of $0$).

When constructing $\Lambda$, we took care to ensure that the real part of $p_{\Lambda,0}$ is bounded from above. Unfortunately, it does not need to be true anymore when $\epsilon \neq 0$. This issue will create some technical difficulties: it is not clear anymore that $P_\epsilon$ is the generator of a strongly continuous semi-group on $\mathcal{H}_\Lambda^0$ for instance. This is why we need the additional assumption $\delta > 1/2$.

For technical reasons, we need to introduce a negative elliptic self-adjoint $\G^s$ semi-classical pseudor $\Delta$. We denote by $m$ the order of $\Delta$ and assume for convenience that $1 < m \leq 2 \delta$. Then for $\nu \geq 0$ and $\epsilon$ near $0$, we form the operator
\begin{equation*}
\begin{split}
P_{\epsilon,\nu} = P_\epsilon + \nu \Delta.
\end{split}
\end{equation*}
According to Lemma \ref{lemma:equivalence_des_spectres}, when $\nu > 0$, the operator $P_{\epsilon,\nu}$ has discrete spectrum on $\mathcal{H}_\Lambda^0$, and this spectrum converges to the Ruelle spectrum of $P$ when $\nu$ tends to $0$ (see the proof of Theorem \ref{theorem:viscosite}). Notice that we can also apply Proposition \ref{lemma:weak-mult} to $\Delta$ in order to write
\begin{equation}\label{eq:mult_faible_delta}
\begin{split}
T_\Lambda \Delta S_\Lambda = \sigma_{\Delta} \Pi_\Lambda + R_\Delta,
\end{split}
\end{equation}
where $\sigma_\Delta$ is a symbol of order $m$ and the operator $R_\Delta$ is bounded between the spaces $L^2_0\p{\Lambda}$ and $L^2_{- m + \frac{1}{2}}\p{\Lambda}$. We need now to prove the following key lemma.

\begin{lemma}\label{lemma:hypo_elliptique_symbole}
Assume that $\tau$ is small enough. There are $r_0 > 0$ and a constant $C > 0$, such that for every real number $r \geq r_0$, every $\alpha \in \Lambda$, every $\nu \geq 0$ and $\epsilon$ near $0$, we have
\begin{equation*}
\begin{split}
\va{r - p_{\Lambda,\epsilon} (\alpha) - \nu \sigma_{\Delta}(\alpha)} \geq \frac{1}{C}\p{r + \max\p{\jap{\va{\alpha}}^\delta, \nu \jap{\va{\alpha}}^m}}.
\end{split}
\end{equation*}
\end{lemma}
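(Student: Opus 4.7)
The plan is to prove the lemma by a two-region argument, exploiting the real part of the symbol where the escape function is elliptic and the imaginary part where it is not. Write $\alpha = e^{H_G^{\omega_I}}(\alpha_0)$ with $\alpha_0 \in T^*M$, and use the fundamental theorem of calculus along the deformation:
\[
p_{\Lambda,\epsilon}(\alpha) = \tilde{p}_\epsilon(\alpha_0) + \int_0^1 \{G,\tilde{p}_\epsilon\}(e^{tH_G^{\omega_I}}(\alpha_0))\,dt,
\]
with a completely analogous expansion for $\sigma_\Delta$. Since $p_\epsilon(\alpha_0) = i\alpha_{0,\xi}(X_\epsilon(\alpha_{0,x})) + \text{l.o.t.}$ is purely imaginary on $T^*M$, we have $\Re \tilde{p}_\epsilon(\alpha_0) = 0$, and the real part of $p_{\Lambda,\epsilon}$ on $\Lambda$ comes entirely from the bracket term and the almost-analytic error, controlled using Remark \ref{remark:aae_for_symbol}. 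Since $\epsilon \mapsto X_\epsilon$ is smooth in the Gevrey topology, for $\epsilon$ near $0$ the Poisson bracket $\{G_0,\Re\tilde{p}_\epsilon\}$ is an arbitrarily small perturbation of $\{G_0,\Re\tilde{p}\}$ in the Kohn--Nirenberg class, so parts (ii) and (iii) of Lemma~\ref{lemma:escape-function} hold with $\mathscr{C}^0$ replaced by a slightly enlarged conical neighborhood $\mathscr{C}^0_1$ (and possibly a larger constant $C$).

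Outside $\mathscr{C}^0_1$, the bracket is negative and classically elliptic of order $\delta$, so integration yields $\Re p_{\Lambda,\epsilon}(\alpha) \leq -\tau\langle|\alpha|\rangle^\delta/C + C'$. Combined with the ellipticity of $\Delta$, which gives $\Re\sigma_\Delta(\alpha) \leq -\langle|\alpha|\rangle^m/C + C'$ on $\Lambda$ (by the same type of expansion and $\delta < m$), we obtain
\[
\Re(r - p_{\Lambda,\epsilon}(\alpha) - \nu\sigma_\Delta(\alpha)) \;\geq\; r + \tfrac{\tau}{C}\langle|\alpha|\rangle^\delta + \tfrac{\nu}{C}\langle|\alpha|\rangle^m - C'(1+\nu),
\]
which for $r \geq r_0$ large is bounded below by $\bigl(r + \langle|\alpha|\rangle^\delta + \nu\langle|\alpha|\rangle^m\bigr)/C''$, which suffices.

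Inside $\mathscr{C}^0_1$, I use the imaginary part. Since $\mathscr{C}^0_1$ is a small conical neighborhood of $E_0^*$ (which can be made arbitrarily small by choosing it away from $E_u^*\oplus E_s^*$, as allowed in Lemma~\ref{lemma:escape-function}), the covector $\alpha_{0,\xi}$ is nearly collinear with the dual of $X_\epsilon(\alpha_{0,x})$, so $|\alpha_{0,\xi}(X_\epsilon(\alpha_{0,x}))| \geq \langle|\alpha|\rangle/C$ for $\langle|\alpha|\rangle$ large. The Lagrangian deformation and the $\epsilon$-perturbation contribute corrections of size $O(\tau\langle|\alpha|\rangle^\delta)$ and $O(\epsilon\langle|\alpha|\rangle)$ respectively to $\Im p_{\Lambda,\epsilon}$; since $\delta < 1$ (or $\delta = 1$ but $\tau$ small) and $\epsilon$ is small, these are absorbed, giving $|\Im p_{\Lambda,\epsilon}(\alpha)| \geq \langle|\alpha|\rangle/(2C)$ for $\langle|\alpha|\rangle$ large. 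The imaginary part of $\nu\sigma_\Delta$ is similarly small compared to its real part since $\Delta$ is self-adjoint (so $\Im\tilde{\sigma}_\Delta$ vanishes on $T^*M$, giving $\Im\sigma_\Delta = O(\tau\langle|\alpha|\rangle^{m-1+\delta})$ on $\Lambda$, dominated by the estimates we need). Hence $|\Im(r - p_{\Lambda,\epsilon} - \nu\sigma_\Delta)| \geq \langle|\alpha|\rangle/(3C)$, which yields the $\langle|\alpha|\rangle^\delta$ lower bound. Simultaneously, in this region one still has $\Re p_{\Lambda,\epsilon} \leq C$ uniformly in $\epsilon$ (by integrating the upper bound from Lemma~\ref{lemma:escape-function}(iii), applied to $\tilde{p}_\epsilon$ whose almost-analytic extension inherits the same flatness of Cauchy--Riemann on the reals since $\Re\tilde{p}_\epsilon$ vanishes there). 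Thus $\Re(r - p_{\Lambda,\epsilon} - \nu\sigma_\Delta) \geq r - C + \nu\langle|\alpha|\rangle^m/C - C'\nu \geq (r + \nu\langle|\alpha|\rangle^m)/C''$ for $r$ large, which yields the $\nu\langle|\alpha|\rangle^m$ lower bound. Taking the maximum of real and imaginary parts finishes the estimate.

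The main obstacle is Step 4 (the region inside $\mathscr{C}^0_1$), where one must verify uniformly in $\epsilon$ that (a) the upper bound $\Re p_{\Lambda,\epsilon} \leq C$ survives the perturbation, and (b) the imaginary-part ellipticity of size $\langle|\alpha|\rangle$ is not destroyed by the Lagrangian deformation (of size $\tau\langle|\alpha|\rangle^\delta$) or the deformation of $X_\epsilon$. Both require the perturbation $\epsilon \mapsto X_\epsilon$ to be smooth in a topology that controls the almost-analytic extensions uniformly, which is guaranteed by the assumption that it is $\mathcal{C}^\infty$ into $\mathcal{G}^s$-sections of $TM$ together with the construction of Gevrey almost-analytic extensions via Lemma~\ref{lemma:almost-analytic-extension-Gs}.
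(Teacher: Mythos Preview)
Your two-region strategy matches the paper's, and the argument outside $\mathscr{C}^0_1$ is correct. Inside $\mathscr{C}^0_1$, however, both of your separate estimates fail.

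First, the bound $\Re p_{\Lambda,\epsilon} \leq C$ does not survive the perturbation. The upper bound in Lemma~\ref{lemma:escape-function}(iii) relies on the fact that $-H^{\omega_I}_{\Re\tilde p}$ applied to the integral part of $G_0$ telescopes, because $G_0$ is built from the flow $\Theta_t$ of that very vector field. For $\epsilon \neq 0$ you would be applying $-H^{\omega_I}_{\Re\tilde p_\epsilon}$ to the \emph{same} $G_0$, and there is no telescoping; your perturbation argument only yields $\{G_0, \Re\tilde p_\epsilon\} \leq C + C\epsilon\langle|\alpha|\rangle^\delta$, so $\Re p_{\Lambda,\epsilon}$ is not uniformly bounded. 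The paper says this explicitly just before the lemma and uses instead only the cruder $|\Re p_{\Lambda,\epsilon}| \leq C_1^{-1}\langle|\alpha|\rangle$ with $C_1$ arbitrarily large as $\tau \to 0$.

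Second, your imaginary-part estimate $|\Im(r - p_{\Lambda,\epsilon} - \nu\sigma_\Delta)| \geq \langle|\alpha|\rangle/(3C)$ can fail: your own bound gives $\nu|\Im\sigma_\Delta| = O(\nu\tau\langle|\alpha|\rangle^{m-1+\delta})$, and since $m - 1 + \delta$ may exceed $1$ (it always does in the analytic case $\delta = 1$, where $m > 1$), this term eventually overwhelms $|\Im p_{\Lambda,\epsilon}| \sim \langle|\alpha|\rangle$.

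The missing idea is to \emph{add} the lower bounds on $\Re z$ and $|\Im z|$ (where $z = r - p_{\Lambda,\epsilon} - \nu\sigma_\Delta$) rather than take their maximum, using $\sqrt{2}\,|z| \geq \Re z + |\Im z|$. Then the gain $+\tfrac{\nu}{C}\langle|\alpha|\rangle^m$ from $-\nu\Re\sigma_\Delta$ in the real part compensates the loss $-\tfrac{\nu}{C_1}\langle|\alpha|\rangle^m$ from $\nu\Im\sigma_\Delta$ in the imaginary part, and symmetrically the gain $+\tfrac{1}{C}\langle|\alpha|\rangle$ from $|\Im p_{\Lambda,\epsilon}|$ absorbs the loss $-\tfrac{1}{C_1}\langle|\alpha|\rangle$ coming from $\Re p_{\Lambda,\epsilon}$. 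Taking $\tau$ small makes $C_1 > C$, and the sum is bounded below by $c\bigl(r + \langle|\alpha|\rangle + \nu\langle|\alpha|\rangle^m\bigr) - C'$, which gives the lemma.
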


\begin{proof}
The definition of the Lagrangian $\Lambda$ ensures that there are constants $C, C_1 > 0$ such that for every $\alpha \in \Lambda$ we have
\begin{equation*}
\begin{split}
\Re \sigma_{\Delta} \leq - \frac{1}{C} \jap{\va{\alpha}}^m + C, \qquad \va{\Im \sigma_{\Delta}(\alpha)} \leq \frac{1}{C_1} \jap{\va{\alpha}}^m +C,
\end{split}
\end{equation*}
and
\begin{equation*}
\begin{split}
\va{\Re p_{\Lambda,\epsilon}(\alpha)} \leq \frac{1}{C_1} \jap{\va{\alpha}} +C.
\end{split}
\end{equation*}
Moreover, the constant $C_1 > 0$ may be chosen arbitrarily large by imposing $\tau$ to be small. In addition, the ellipticity conditions that are satisfied by $p_{\Lambda,0}$ are preserved under small perturbations, so that for $\epsilon$ small enough and up to making $C$ larger we have
\begin{equation}\label{eq:alternative_elliptique}
\begin{split}
\Re p_{\Lambda,\epsilon} (\alpha)\leq - \frac{1}{C} \jap{\va{\alpha}}^\delta + C \textup{ or } \va{\Im p_{\Lambda,\epsilon}} \geq \frac{1}{C} \jap{\va{\alpha}} - C.
\end{split}
\end{equation}

We start by writing that
\begin{equation}\label{eq:hashtag_Cauchy_Schwarz}
\begin{split}
\va{r - p_{\Lambda,\epsilon} - \nu \sigma_{\Delta}} \geq \frac{\sqrt{2}}{2}\p{ \va{r - \Re p_{\Lambda,\epsilon}  - \nu \Re \sigma_{\Delta}} + \va{\Im p_{\Lambda,\epsilon} + \nu \Im \sigma_{\Delta}}}.
\end{split}
\end{equation}
Now, if the first alternative holds in \eqref{eq:alternative_elliptique}, we just write that (for $\nu \leq 1$ and $r \geq 6 C$)
\begin{equation*}
\begin{split}
r - \Re p_{\Lambda,\epsilon}(\alpha)  - \nu \Re \sigma_{\Delta}(\alpha) & \geq r + \frac{1}{C} \jap{\va{\alpha}}^\delta - C + \frac{\nu}{C} \jap{\va{\alpha}}^m - C \\
        & \geq \min \p{\frac{1}{3}, \frac{1}{C}}\p{r+ \max\p{\jap{\va{\alpha}}^\delta, \nu \jap{\va{\alpha}}^m}}.
\end{split}
\end{equation*}
Hence, we may focus on the second case in \eqref{eq:alternative_elliptique}. In that case, we have
\begin{equation*}
\begin{split}
\va{\Im p_{\Lambda,\epsilon}\p{\alpha} + \nu \Im \sigma_{\Delta}\p{\alpha}} & \geq \va{\Im p_{\Lambda,\epsilon}\p{\alpha} } - \nu \va{\Im \sigma_{\Delta}\p{\alpha}} \\
    & \geq \frac{1}{C} \jap{\va{\alpha}} - \frac{\nu}{C_1} \jap{\va{\alpha}}^m - 2C.
\end{split}
\end{equation*}
On the other hand, we still have the general bound
\begin{equation*}
\begin{split}
\va{ r - \Re p_{\Lambda,\epsilon}\p{\alpha} - \nu \Re \sigma_{\Delta}\p{\alpha}} & \geq r - \frac{1}{C_1} \jap{\va{\alpha}} + \frac{\nu}{C} \jap{\va{\alpha}}^m - (1+\nu) C, 
\end{split}
\end{equation*}
and consequently \eqref{eq:hashtag_Cauchy_Schwarz} gives that
\begin{equation*}
\begin{split}
& \va{r - p_{\Lambda,\epsilon}\p{\alpha} - \nu \sigma_{\Delta}\p{\alpha}} \\ & \qquad \qquad \qquad \geq \frac{\sqrt{2}}{2}\p{r + \p{\frac{1}{C} - \frac{1}{C_1}} \jap{\va{\alpha}} + \nu \p{\frac{1}{C} - \frac{1}{C_1}} \jap{\va{\alpha}}^m - C\p{3 + \nu}}.
\end{split}
\end{equation*}
Then, by taking $\tau$ small enough, we ensure that $C_1 > C$, and the result follows (we get rid of the term $C\p{3 + \nu}$ by taking $r_0$ large enough).
\end{proof}

Now, we use Lemma \ref{lemma:hypo_elliptique_symbole} to get a new construction for the resolvent $\p{r- P_{\epsilon,\nu}}^{-1}$ (the positivity argument \emph{a priori} does not work when $\nu = 0$ and $\epsilon \neq 0$).

\begin{lemma}\label{lemma:des_resolvantes_partout}
Assume that $h$ and $\tau$ are small enough. Then there is $r_0$ such that, for $\epsilon$ and $\nu \geq 0$ small enough, if $r \geq r_0$ then $r$ belongs to the resolvent set of $P_{\epsilon,\nu}$ acting on $\mathcal{H}_\Lambda^0$ (with its natural domain). Moreover, for every $k \in \R$, if $r > 0$ is large enough then the resolvent $(r - P_{\epsilon,\nu})^{-1}$ is bounded from $\mathcal{H}_\Lambda^k$ to $\mathcal{H}_{\Lambda}^{k+\delta}$ with uniform bound in $\epsilon,\nu$.
\end{lemma}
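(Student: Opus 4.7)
The plan is to construct a parametrix for $r - P_{\epsilon,\nu}$ directly on the FBI side, exploiting the uniform ellipticity of the shifted principal symbol provided by Lemma~\ref{lemma:hypo_elliptique_symbole}. Set
\[
a_{r,\epsilon,\nu}(\alpha) := r - p_{\Lambda,\epsilon}(\alpha) - \nu\,\sigma_{\Delta}(\alpha),
\]
so that $|a_{r,\epsilon,\nu}(\alpha)| \geq C^{-1}\bigl(r + \langle|\alpha|\rangle^{\delta} + \nu\langle|\alpha|\rangle^{m}\bigr)$ uniformly in $\epsilon,\nu$ small. Dividing and differentiating, one checks that $q_{r,\epsilon,\nu} := 1/a_{r,\epsilon,\nu}$ is a symbol lying uniformly in $S^{-\delta}_{KN}(\Lambda)$, and in fact in $S^{-m}_{KN}(\Lambda)$ on regions where $\nu\langle|\alpha|\rangle^{m}$ dominates. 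This is the candidate principal symbol of the resolvent.

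First I would define the approximate right inverse
\[
E_{r,\epsilon,\nu} := S_{\Lambda}\,B_{\Lambda}\, q_{r,\epsilon,\nu}\, B_{\Lambda}\, T_{\Lambda},
\]
which by Remark~\ref{remark:boundedness} is bounded from $\mathcal{H}_{\Lambda}^{k}$ to $\mathcal{H}_{\Lambda}^{k+\delta}$ uniformly in $\epsilon,\nu$. Using the weak multiplication formulas \eqref{eq:mult_faible} and \eqref{eq:mult_faible_delta} together with the Toeplitz composition law (Proposition~\ref{propcomptoep}), a direct computation on the FBI side gives
\[
(r - P_{\epsilon,\nu})\,E_{r,\epsilon,\nu} = \mathrm{Id} + K_{r,\epsilon,\nu},
\]
where the leading symbolic part of the remainder comes from the discrepancy $a_{r,\epsilon,\nu}\# q_{r,\epsilon,\nu} - 1 \in h S^{-1}_{KN}(\Lambda)$, while the remaining contributions are generated by the off-diagonal error operators $R_\epsilon$ and $\nu R_\Delta$ in \eqref{eq:mult_faible}--\eqref{eq:mult_faible_delta}, each absorbed using the extra factor of $a_{r,\epsilon,\nu}^{-1}$ provided by $q_{r,\epsilon,\nu}$. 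The crucial point is that each occurrence of $q_{r,\epsilon,\nu}$ in $K_{r,\epsilon,\nu}$ contributes at least a factor $r^{-1}$ (and even $r^{-1/2}$ when combined with the $L^2_0 \to L^2_{-1/2}$ estimate on $R_\epsilon$), so that on $\mathcal{H}_\Lambda^k$ the operator norm of $K_{r,\epsilon,\nu}$ tends to $0$ as $r \to \infty$, uniformly in $\epsilon,\nu$ small.

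For $r \geq r_0$ large enough, Neumann series then yields a uniformly bounded right inverse $(\mathrm{Id} + K_{r,\epsilon,\nu})^{-1}$ and an exact right inverse
\[
R(r) := E_{r,\epsilon,\nu}\bigl(\mathrm{Id} + K_{r,\epsilon,\nu}\bigr)^{-1},
\]
which inherits the mapping property $\mathcal{H}_{\Lambda}^{k} \to \mathcal{H}_{\Lambda}^{k+\delta}$ from $E_{r,\epsilon,\nu}$ with uniform bound. A symmetric construction on the left (or duality via Lemma~\ref{lemma:dual}) gives a left inverse, and the two coincide by a standard argument, proving that $r$ lies in the resolvent set of $P_{\epsilon,\nu}$ on $\mathcal{H}_\Lambda^k$ with $(r - P_{\epsilon,\nu})^{-1} = R(r)$. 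To check that this is really the resolvent for the natural domain inside $\mathcal{H}_\Lambda^0$, I would argue as in Lemma~\ref{lemma:well-defined} that $P_{\epsilon,\nu}$ is closed and adapt the density argument of Lemma~\ref{lmapprox}.

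The main obstacle will be the uniformity of the remainder $K_{r,\epsilon,\nu}$ in the degenerate regime $\nu = 0$, $\epsilon \neq 0$: here one loses the high-order ellipticity coming from $\Delta$, and only the subprincipal gain of order $\delta$ (sharper than $1$ when $s > 1$) is available. Consequently, every error term produced by the symbolic calculus or by the remainders $R_\epsilon$, $R_\Delta$ has to be bookkept carefully as a map between the correct scale of spaces $\mathcal{H}_\Lambda^{\bullet}$, so that composing with $E_{r,\epsilon,\nu}$ and its iterates yields an operator norm $o_r(1)$ in the limit $r \to \infty$. This is where one uses that the constants in the multiplication formula (Proposition~\ref{propmultform}) and in Propositions~\ref{proptoeplitz}--\ref{propcomptoep} depend only on a finite number of symbolic seminorms of $p_{\Lambda,\epsilon}$ and $\sigma_\Delta$, which are themselves continuous in the perturbation parameter; the smallness of $\tau$ already enforced in Lemma~\ref{lemma:hypo_elliptique_symbole} guarantees that the relevant constants stay bounded as $\epsilon \to 0$.
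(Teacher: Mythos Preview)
Your strategy is the same as the paper's (parametrix on the FBI side, then Neumann series), but there are two points where your write-up diverges from what actually makes the argument go through.

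First, the paper's approximate inverse is simply $A_{r,\epsilon,\nu}=S_\Lambda\, q_{r,\epsilon,\nu}\, T_\Lambda$, with bare pointwise multiplication by $q=1/a$ rather than the Toeplitz sandwich $S_\Lambda B_\Lambda q B_\Lambda T_\Lambda$. Because $q\cdot a=1$ exactly, the computation
\[
A_{r,\epsilon,\nu}(r-P_{\epsilon,\nu})=I+S_\Lambda\, q_{r,\epsilon,\nu}\,(R_\epsilon+\nu R_\Delta)\,T_\Lambda
\]
follows directly from the weak multiplication formulas \eqref{eq:mult_faible}--\eqref{eq:mult_faible_delta}, with \emph{no} Toeplitz composition error $a\#q-1$. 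Your use of $B_\Lambda$ is harmless but introduces extra terms that you then have to control; it buys nothing here.

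Second, and more importantly, your quantitative claim on the remainder is not correct. The norm of $S_\Lambda\, q\, R_\epsilon\, T_\Lambda$ on $\mathcal H_\Lambda^0$ is governed by
\[
\sup_{\alpha\in\Lambda}\frac{\langle|\alpha|\rangle^{1/2}}{r+\langle|\alpha|\rangle^{\delta}}\;\asymp\;r^{\frac{1}{2\delta}-1},
\]
not $r^{-1}$ or $r^{-1/2}$; similarly the $\nu R_\Delta$ contribution gives $O(r^{-1/(2m)})$. The exponent $\tfrac{1}{2\delta}-1$ is negative precisely when $\delta>1/2$, and this is exactly where the standing hypothesis $s<2$ of Proposition~\ref{prop:dependance_de_la_resolvante} enters the proof. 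Your ``each occurrence of $q$ contributes $r^{-1}$'' would make the lemma hold for all $s\ge 1$, which the paper does not claim. So the obstacle you flag in your last paragraph is real, and its resolution is precisely this sup computation together with the restriction $\delta>1/2$.
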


\begin{proof}
Let $r \in \R_+$ be large enough. The formulae \eqref{eq:mult_faible} and \eqref{eq:mult_faible_delta} suggest to consider the following approximate inverse for $r - P_{\epsilon,\nu}$ :
\begin{equation*}
\begin{split}
A_{r,\epsilon,\nu} \coloneqq S_\Lambda \frac{1}{r - p_{\Lambda,\epsilon} - \nu \sigma_{\Delta}} T_\Lambda.
\end{split}
\end{equation*}
According to Lemma \ref{lemma:hypo_elliptique_symbole}, if $r$ is large enough and $\epsilon$ and $\nu$ are close enough to $0$, the operator $A_{r,\epsilon,\nu}$ is well-defined and is bounded from $\mathcal{H}_\Lambda^0$ to $\mathcal{H}_{\Lambda}^\delta$ and from $\mathcal{H}_\Lambda^{- \delta}$ to $\mathcal{H}_\Lambda^0$ with uniform bounds. Then, we compute
\begin{equation*}
\begin{split}
A_{r,\epsilon,\nu} \p{r - P_{\epsilon,\nu}} & =  S_\Lambda \frac{1}{r - p_{\Lambda,\epsilon} - \nu \sigma_{\Delta}} T_\Lambda (r - P_{\epsilon,\nu}) S_\Lambda T_\Lambda \\
    & = I + S_\Lambda \frac{1}{r - p_{\Lambda,\epsilon} - \nu \sigma_{\Delta}} \p{R_\epsilon + \nu R_{\Delta}} T_\Lambda.
\end{split}
\end{equation*}
In order to control the remainder term, notice using Lemma \ref{lemma:hypo_elliptique_symbole} that the multiplication by $(r - p_{\Lambda,\epsilon} - \nu \sigma_{\Delta})^{-1}$ is bounded from $L^2_{- \frac{1}{2}}\p{\Lambda}$ to $L^2_0\p{\Lambda}$ with norm
\begin{equation*}
\begin{split}
\sup_{\alpha \in \Lambda} \frac{\jap{\va{\alpha}}^{\frac{1}{2}}}{|r - p_{\Lambda,\epsilon}(\alpha) - \nu \sigma_{\Delta}(\alpha)|} \leq C \sup_{t \in \R_+} \frac{t}{r + t^{2 \delta}} \leq C \frac{\p{2 \delta - 1}^{1 - \frac{1}{2 \delta}}}{2 \delta} r^{\frac{1}{2 \delta} - 1}.
\end{split}
\end{equation*}
Similarly, if $0 < \nu \leq 1$, then the multiplication by $(r - p_{\Lambda,\epsilon} - \nu \sigma_{\Delta})^{-1}$ is bounded from $L^2_{- m + \frac{1}{2}}\p{\Lambda}$ to $L^2_0\p{\Lambda}$ with norm
\begin{equation*}
\begin{split}
\sup_{\alpha \in \Lambda} \frac{\jap{\va{\alpha}}^{m - \frac{1}{2}}}{|r - p_{\Lambda,\epsilon}(\alpha) - \nu \sigma_{\Delta}(\alpha)|} \leq \frac{C}{\nu} \sup_{t \in \R_+} \frac{t}{r + t^{\frac{m}{m - \frac{1}{2}}}} \leq C \frac{\p{2 m -1}^{1 - \frac{1}{2m}}}{2m} \frac{r^{-\frac{1}{2 m}}}{\nu}.
\end{split}
\end{equation*}
Hence, we see that the operator
\begin{equation*}
\begin{split}
S_\Lambda \frac{1}{r - p_{\Lambda,\epsilon} - \nu \sigma_{\Delta}} \p{R_\epsilon + \nu R_{\Delta}} T_\Lambda
\end{split}
\end{equation*}
is bounded from $\mathcal{H}_\Lambda^0$ to itself with norm an $\O(r^{-\frac{1}{2m}} + r^{\frac{1}{2 \delta} - 1})$ when $r \to + \infty$ (uniformly in $\epsilon$ and $\nu$ near $0$). Consequently, if $r$ is large enough, we may invert the operator
\begin{equation*}
\begin{split}
I + S_\Lambda \frac{1}{r - p_{\Lambda,\epsilon} - \nu \sigma_{\Delta}} \p{R_\epsilon + \nu R_{\Delta}} T_\Lambda
\end{split}
\end{equation*}
by mean of Neumann series and
\begin{equation*}
\begin{split}
\p{I + S_\Lambda \frac{1}{r - p_{\Lambda,\epsilon} - \nu \sigma_{\Delta}} \p{R_\epsilon + \nu R_{\Delta}} T_\Lambda}^{-1} A_{r,\epsilon,\nu}
\end{split}
\end{equation*}
is a left inverse for $r - P_{\epsilon,\nu}$. We construct similarly a right inverse for $r - P_{\epsilon,\nu}$. Indeed, in \eqref{eq:mult_faible} we may replace $p_{\Lambda,\epsilon} \Pi_\Lambda$ by $\Pi_{\Lambda} p_{\Lambda,\epsilon}$ (with a different remainder of course), and similarly in \eqref{eq:mult_faible_delta}.
Finally, if $r$ is large enough, it belongs to the resolvent set of $P_{\epsilon,\nu}$ for $\epsilon$ and $\nu$ near $0$, and the resolvent writes
\begin{equation}\label{eq:une_autre_resolvante}
\begin{split}
\p{ r - P_{\epsilon,\nu}}^{-1} = A_{r,\epsilon,\nu} \p{I + \widetilde{R}_{\epsilon,\nu}}^{-1},
\end{split}
\end{equation}
where the operator $\widetilde{R}_{\epsilon,\nu}$ is bounded on $\mathcal{H}_\Lambda^0$ with norm less than $\frac{1}{2}$. Proceeding as above, we see that, for $k \in \R$ and $r > 0$ large enough, the operator $\widetilde{R}_{\epsilon,\nu}$ is also bounded on $\mathcal{H}_\Lambda^{k}$ with norm less than $\frac{1}{2}$ and it follows from \eqref{eq:une_autre_resolvante} and Lemma \ref{lemma:hypo_elliptique_symbole} that $\p{ r- P_{\epsilon,\nu}}^{-1}$ is bounded from $\mathcal{H}_\Lambda^k$ to $\mathcal{H}_\Lambda^{k + \delta}$.
\end{proof}

\begin{proof}[Proof of Proposition \ref{prop:dependance_de_la_resolvante}]
First of all, from Lemma \ref{lemma:des_resolvantes_partout}, we know that the resolvent $\p{r- P_{\epsilon,\nu}}^{-1}$ is compact and hence $P_{\epsilon,\nu}$ has discrete spectrum on $\mathcal{H}_\Lambda^0$. This fact holds in particular for $P_{\epsilon} = P_{\epsilon,0}$. In order to see that the spectrum of $P_\epsilon$ acting on $\mathcal{H}_\Lambda^0$ coincides with its Ruelle spectrum, notice that
\begin{equation*}
\begin{split}
\p{r - P_{\epsilon,\nu}}^{-1} = \p{r - P_{\epsilon}}^{-1} + \nu \p{r - P_{\epsilon}}^{-1} \Delta \p{r - P_{\epsilon,\nu}}^{-1}.
\end{split}
\end{equation*}
Then, using Proposition \ref{lemma:boundedness} and Lemma \ref{lemma:des_resolvantes_partout} (recall that the order of $\Delta$ is less than $2 \delta$), we see that $\p{r - P_{\epsilon,\nu}}^{-1}$ converges to $\p{r - P_{\epsilon}}^{-1}$ in operator norm when $\nu$ tends to $0$. It follows that the spectrum of $P_{\epsilon,\nu}$ converges to the spectrum of $P_\epsilon$, but thanks to Lemmas \ref{lemma:equivalence_des_spectres} and \ref{lemma:interpolation_implicite}, the spectrum of $P_{\epsilon,\nu}$ converges to the Ruelle spectrum of $P_\epsilon$. Hence, the spectrum of $P_\epsilon$ acting on $\mathcal{H}_\Lambda^0$ is the Ruelle spectrum of $P_\epsilon$.

We prove now the regularity of the map $\epsilon \mapsto \p{r - P_\epsilon}^{-1}$. We start with the case $\ell \in \left]0,1\right[$ by writing
\begin{equation*}
\begin{split}
\p{r- P_{\epsilon'}}^{-1} = \p{ r- P_\epsilon}^{-1} + \p{r - P_\epsilon}^{-1}\p{P_{\epsilon'} - P_\epsilon} \p{ r - P_{\epsilon'}}^{-1}. 
\end{split}
\end{equation*}
Then we notice that, for some constant $C > 0$, we have
\begin{equation}\label{eq:distance_resolvante_lip}
\begin{split}
& \n{\p{r - P_{\epsilon'}}^{-1} - \p{r - P_\epsilon}^{-1}}_{\mathcal{H}_\Lambda^0 \to \mathcal{H}_\Lambda^{2 \delta - 1}} \\ & \qquad \qquad \quad \leq \n{\p{r - P_{\epsilon'}}^{-1}}_{\mathcal{H}_\Lambda^{\delta - 1} \to \mathcal{H}_\Lambda^{2 \delta - 1}} \n{P_\epsilon' - P_\epsilon}_{\mathcal{H}_\Lambda^\delta \to \mathcal{H}_{\Lambda}^{\delta - 1}} \n{\p{ r - P_{\epsilon'}}^{-1}}_{\mathcal{H}_\Lambda^0 \to \mathcal{H}_\Lambda^\delta} \\
     & \qquad \qquad \quad \leq C \va{\epsilon - \epsilon'}.
\end{split}
\end{equation}
Here, we applied Proposition \ref{lemma:boundedness} and Lemma \ref{lemma:des_resolvantes_partout}. It follows also from Lemma \ref{lemma:des_resolvantes_partout} that, up to making $C$ larger, we have
\begin{equation}\label{eq:distance_resolvante_bornee}
\begin{split}
\n{\p{r - P_{\epsilon'}}^{-1} - \p{r - P_\epsilon}^{-1}}_{\mathcal{H}_\Lambda^0 \to \mathcal{H}_\Lambda^{\delta}} \leq C. 
\end{split}
\end{equation}
Applying H\"older's inequality as in the proof of Lemma \ref{lemma:interpolation_implicite}, we deduce from \eqref{eq:distance_resolvante_lip} and \eqref{eq:distance_resolvante_bornee} that, for $\epsilon,\epsilon'$ near $0$, we have
\begin{equation*}
\begin{split}
\n{\p{r - P_{\epsilon'}}^{-1} - \p{r - P_\epsilon}^{-1}}_{\mathcal{H}_\Lambda^0 \to \mathcal{H}_\Lambda^{\p{1 - \ell}\delta + \ell \p{2 \delta - 1}}} \leq C \va{\epsilon - \epsilon'}^\ell. 
\end{split}
\end{equation*}
The result is then proved when $\ell \in \left]0,1\right[$.

We turn now to the case $\ell \in \left]1,2\right[$ (the general result will follow by induction). Letting $\dot{P}_\epsilon$ denotes the derivative of $\epsilon \mapsto P_\epsilon$, we write, for $\epsilon,\epsilon'$ near $0$,
\begin{equation}\label{eq:dev_lineaire_resolvante}
\begin{split}
& \p{r - P_{\epsilon'}}^{-1} - \p{ r - P_\epsilon}^{-1} - \p{\epsilon' - \epsilon} \p{r - P_\epsilon}^{-1} \dot{P}_\epsilon \p{r - P_\epsilon}^{-1} \\
     & \qquad \qquad \qquad = \p{ r - P_\epsilon}^{-1} \p{ P_{\epsilon'} - P_\epsilon - (\epsilon' - \epsilon) \dot{P}_\epsilon} \p{ r - P_\epsilon}^{-1} \\
     & \qquad \qquad \qquad \qquad \qquad + \p{ r - P_\epsilon}^{-1}\p{P_{\epsilon'} - P_\epsilon} \p{\p{r - P_{\epsilon'}}^{-1} - \p{r - P_{\epsilon}}^{-1}}.
\end{split}
\end{equation}
From Proposition \ref{lemma:boundedness} and Taylor's formula, we see that (provided $\tau$ is small enough)
\begin{equation}\label{eq:Taylor_ordre_2}
\begin{split}
\n{ P_{\epsilon'} - P_\epsilon - (\epsilon' - \epsilon) \dot{P}_\epsilon}_{\mathcal{H}_\Lambda^\delta \to \mathcal{H}_\Lambda^{\delta - 1}} \leq C \va{\epsilon - \epsilon'}^{\ell},
\end{split}
\end{equation}
and
\begin{equation}\label{eq:Taylor_ordre_1}
\begin{split}
\n{P_{\epsilon'} - P_\epsilon}_{\mathcal{H}_{\Lambda}^{k +1 - \delta} \to \mathcal{H}_\Lambda^{k - \delta}} \leq C \va{\epsilon - \epsilon'}.
\end{split}
\end{equation}
Then, we find, applying the previous case (with $\ell- 1$ instead of $\ell$), that
\begin{equation}\label{eq:holder_resolvante}
\begin{split}
\n{\p{r - P_\epsilon}^{-1} - \p{r - P_{\epsilon'}}^{-1}}_{\mathcal{H}_\Lambda^0 \to \mathcal{H}_\Lambda^{k+1 - \delta}} \leq C \va{\epsilon' - \epsilon}^{\ell - 1}.
\end{split}
\end{equation}
Putting \eqref{eq:Taylor_ordre_2}, \eqref{eq:Taylor_ordre_1}, \eqref{eq:holder_resolvante} and Lemma \ref{lemma:des_resolvantes_partout} in \eqref{eq:dev_lineaire_resolvante}, we find that, for a new constant $C > 0$ and $\epsilon,\epsilon'$ near $0$, we have
\begin{equation*}
\begin{split}
& \n{\p{r - P_{\epsilon'}}^{-1} - \p{ r - P_\epsilon}^{-1} - \p{\epsilon' - \epsilon} \p{r - P_\epsilon}^{-1} \dot{P}_\epsilon \p{r - P_\epsilon}^{-1}}_{\mathcal{H}_\Lambda^0 \to \mathcal{H}_\Lambda^k} \\ & \qquad \qquad \qquad \qquad \qquad \qquad \qquad \qquad \qquad \qquad \qquad \qquad \qquad \qquad \leq C \va{\epsilon' - \epsilon}^\ell.
\end{split}
\end{equation*}
It follows that the map $\epsilon \mapsto \p{r- P_\epsilon}^{-1} \in \mathcal{L}\p{\mathcal{H}_\Lambda^0, \mathcal{H}_\Lambda^k}$ is differentiable with derivative
\begin{equation}\label{eq:ceci_est_une_derivee}
\begin{split}
\epsilon \mapsto \p{r- P_\epsilon}^{-1} \dot{P}_\epsilon \p{r - P_\epsilon}^{-1}.
\end{split}
\end{equation}
Moreover, this derivative is $\p{\ell-1}$-H\"older, since, for $\epsilon,\epsilon'$ near $0$, we may write
\begin{equation*}
\begin{split}
& \p{\epsilon - \epsilon'} \p{\p{r- P_\epsilon}^{-1} \dot{P}_\epsilon \p{r - P_\epsilon}^{-1} - \p{r- P_{\epsilon'}}^{-1} \dot{P}_{\epsilon'} \p{r - P_{\epsilon'}}^{-1}} \\
      & \qquad \quad = \p{\p{r - P_{\epsilon}}^{-1} - \p{ r - P_{\epsilon'}}^{-1} - \p{\epsilon - \epsilon'} \p{r - P_{\epsilon'}}^{-1} \dot{P}_{\epsilon'} \p{r - P_{\epsilon'}}^{-1}} \\
      & \qquad \qquad \quad \quad  + \p{\p{r - P_{\epsilon'}}^{-1} - \p{ r - P_\epsilon}^{-1} - \p{\epsilon' - \epsilon} \p{r - P_\epsilon}^{-1} \dot{P}_\epsilon \p{r - P_\epsilon}^{-1}}.
\end{split}
\end{equation*}

To get the result in the case $\ell > 2$, we proceed by induction. By applying the case $\ell - 1$, we find as above that the map $\epsilon \mapsto \p{r- P_\epsilon}^{-1} \in \mathcal{L}\p{\mathcal{H}_\Lambda^0, \mathcal{H}_\Lambda^k}$ is $\lfloor \ell \rfloor - 1$ times differentiable. Moreover, its $(\lfloor l \rfloor - 1)$th derivative is given by a formula of the form 
\begin{equation*}
\begin{split}
\sum_{\ell_1 + \dots + \ell_r = \lfloor \ell \rfloor - 1} a_{\ell_1,\dots,\ell_r} \p{r - P_\epsilon}^{-1} \frac{\mathrm{d}^{\ell_1}}{\mathrm{d}\epsilon^{\ell_1}}\p{P_\epsilon} \dots \p{r - P_\epsilon}^{-1} \frac{\mathrm{d}^{\ell_r}}{\mathrm{d}\epsilon^{\ell_r}}\p{P_\epsilon} \p{r - P_\epsilon}^{-1},
\end{split}
\end{equation*}
where the $a_{\ell_1,\dots,\ell_r}$'s are integral coefficients. Now, reasoning as in the case $\ell \in \left]1,2\right[$, we see that each term in this sum is $\mathcal{C}^{\ell - \lfloor \ell \rfloor - 1}$ with the expected derivative, ending the proof of the proposition.
\end{proof}

\begin{proof}[Proof of Theorem \ref{thm:lanalyticite_engendre_lanalyticite}]
One could prove Theorem \ref{thm:lanalyticite_engendre_lanalyticite} by showing that $\epsilon \mapsto \p{r - P_\epsilon}^{-1}$ is the sum of its Taylor series at $0$ on a neighbourhood of $0$. However, we will rather rely on Cauchy's formula.

From the analyticity assumption on $\epsilon \mapsto P_\epsilon$, we know that we may extend this map to a holomorphic map on a neighbourhood of $0$. It follows then from Proposition \ref{lemma:boundedness} that $\epsilon \mapsto P_\epsilon$ defines a holomorphic family of operators from $\mathcal{H}_\Lambda^1$ to $\mathcal{H}_\Lambda^0$ on a neighbourhood of $0$ (in particular, it satisfies Cauchy's formula). Notice however that when $\epsilon$ is complex then $X_\epsilon$ is a first order differential operator that may not necessarily be interpreted as a vector field on $M$ (this is a section of $T M \otimes \C$).

Working as in the proof of Proposition \ref{prop:dependance_de_la_resolvante}, we see that if $r \in \R_+$ is large enough then, for $\epsilon$ in a complex neighbourhood of $0$, the operator $(r - P_\epsilon)^{-1}$ is well-defined and sends $\mathcal{H}_\Lambda^0$ into $\mathcal{H}_\Lambda^1$ with uniform bound. The fact that $X_\epsilon$ is not necessarily a vector field does not play any role here, since the proof of Proposition \ref{prop:dependance_de_la_resolvante} in the case $s=1$ only relies on the fact that $P_\epsilon$ is a small perturbation of $P_0$ (this is true since $\delta = 1$ so that the ellipticity condition on the real part of $p_{\Lambda,\epsilon}$ is stable by small perturbations). Consequently, we may define for $\epsilon$ near $0$ the operator
\begin{equation*}
\begin{split}
Q_\epsilon = \frac{1}{2 i \pi} \int_{\gamma} \frac{\p{r - P_w}^{-1}}{w - \epsilon} \mathrm{d}w,
\end{split}
\end{equation*}
where $\gamma$ is a small circle around $0$ in $\C$. By interverting series and integral, we see that $Q_\epsilon$ is holomorphic in $\epsilon$ near $0$. Moreover, for $\epsilon$ near $0$, we have
\begin{equation*}
\begin{split}
Q_\epsilon \p{ r- P_\epsilon} & = \frac{1}{2 i \pi}\int_\gamma \frac{\p{r-P_w}^{-1}}{w- \epsilon}\p{ \frac{1}{2i \pi} \int_\gamma \frac{\p{r -P_z}}{z-\epsilon} \mathrm{d}z } \mathrm{d}w \\
               & = \frac{1}{2 i \pi}\int_\gamma \frac{1}{w- \epsilon}\p{ \frac{1}{2i \pi} \int_\gamma \frac{1}{z-\epsilon} \mathrm{d}z } \mathrm{d}w \\ & \qquad \qquad \qquad \qquad + \frac{1}{2 i \pi}\int_\gamma \frac{\p{r-P_w}^{-1}}{w- \epsilon}\p{ \frac{1}{2i \pi} \int_\gamma \frac{\p{P_w -P_z}}{z-\epsilon} \mathrm{d}z } \mathrm{d}w \\
               & = I + \frac{1}{2 i \pi}\int_\gamma \frac{\p{r-P_w}^{-1}}{w- \epsilon}\p{P_w - P_\epsilon} \mathrm{d}w = I.
\end{split}
\end{equation*}
Thus, $Q_\epsilon = \p{r - P_\epsilon}^{-1}$ and the result follows.
\end{proof}

We deduce now Theorem \ref{theorem:SRB} from Theorem \ref{thm:lanalyticite_engendre_lanalyticite}.

\begin{proof}[Proof of Theorem \ref{thm:lanalyticite_engendre_lanalyticite}]
The uniqueness of the SRB measure for $\epsilon$ near $0$ follows from \cite[Lemma 5.1]{Butterley-Liverani-07} and the fact that Ruelle resonances are intrinsically defined.

From \cite[Lemma 5.1]{Butterley-Liverani-07}, we also know that for $\epsilon$ near $0$, the point $0$ is a simple eigenvalues for $X_\epsilon$ acting on $\mathcal{H}_\Lambda^0$, and that the associated normalized left eigenvector $\mu_\epsilon$ is the SRB measure for the Anosov flow generated by $X_\epsilon$. Noticing that $\mu_\epsilon$ is an eigenvector for $\p{r - X_\epsilon}^{-1}$ associated with the eigenvalue $r^{-1}$, it follows from Theorem \ref{thm:lanalyticite_engendre_lanalyticite} and Kato theory that the dependence of $\mu_\epsilon$ on $\epsilon$ is real-analytic, when we see $\mu_\epsilon$ as an element of the dual of $\mathcal{H}_\Lambda^0$. Now, for every $R > 0$, the space $E^{1,R}$ is continuously contained in $\mathcal{H}_\Lambda^0$ when $\tau$ is small enough, and the result follows.
\end{proof}

\bibliographystyle{alpha}
\bibliography{biblio}

\end{document}